\providecommand{\abs}[1]{\left\vert#1\right\vert}
\providecommand{\nm}[1]{\left\Vert#1\right\Vert}
\providecommand{\lnm}[2]{\left\Vert#1\right\Vert_{L^{#2}}}
\providecommand{\lnms}[3]{\left\Vert#1\right\Vert_{L^{#2}(#3)}}
\providecommand{\tnm}[2]{\left\Vert#1\right\Vert_{L^{2}H^{#2}}}
\providecommand{\tnms}[3]{\left\Vert#1\right\Vert_{L^{2}H^{#2}(#3)}}
\providecommand{\inm}[2]{\left\Vert#1\right\Vert_{L^{\infty}H^{#2}}}
\providecommand{\inms}[3]{\left\Vert#1\right\Vert_{L^{\infty}H^{#2}(#3)}}
\providecommand{\hm}[2]{\left\Vert#1\right\Vert_{H^{#2}}}
\providecommand{\hms}[3]{\left\Vert#1\right\Vert_{H^{#2}(#3)}}
\providecommand{\ts}[1]{\in L^2([0,T]; H^{#1})}
\providecommand{\ths}[1]{\in L^2([0,T]; H^{#1}(\Sigma))}
\providecommand{\is}[1]{\in L^{\infty}([0,T]; H^{#1})}
\providecommand{\ihs}[1]{\in L^{\infty}([0,T]; H^{#1}(\Sigma))}
\providecommand{\cs}[1]{C^0([0,T];H^{#1})}
\providecommand{\chs}[1]{C^0([0,T];H^{#1}(\Sigma))}
\providecommand{\s}[1]{\in H^{#1}} \providecommand{\hs}[1]{\in
H^{#1}(\Sigma)}
\providecommand{\br}[1]{\langle #1 \rangle}
\providecommand{\brs}[1]{\langle #1 \rangle_{\h^0(\Sigma)}}
\providecommand{\brh}[1]{\langle #1 \rangle_{\h^0}}
\providecommand{\brht}[1]{\langle #1 \rangle_{\l^2\h^0}}
\providecommand{\brhs}[1]{\langle #1 \rangle_{\l^2\h^0(\Sigma)}}
\providecommand{\ud}[1]{\mathrm{d{#1}}}
\def\dt{\partial_t}
\def\half{\frac{1}{2}}
\def\ls{\lesssim}
\def\gs{\gtrsim}
\def\p{\partial}
\def\dm{\mathbb{D}}
\def\la{\Delta_{\mathcal{A}}}
\def\na{\nabla_{\mathcal{A}}}
\def\da{\nabla_{\mathcal{A}}\cdot}
\def\sa{S_{\mathcal{A}}}
\def\ma{\mathbb{D}_{\mathcal{A}}}
\def\e{\eta}
\def\xx{\xi}
\def\ee{\eta_0}
\def\zz{\zeta}
\def\eb{\bar\eta}
\def\wst{W^{\ast}}
\def\wwst{\mathcal{W}^{\ast}}
\def\xxst{\mathcal{X}^{\ast}}
\def\a{\mathcal{A}}
\def\d{\mathcal{D}}
\def\f{\mathcal{F}}
\def\g{\mathcal{G}}
\def\h{\mathcal{H}}
\def\k{\mathcal{K}}
\def\l{\mathcal{L}}
\def\m{\mathcal{M}}
\def\n{\mathcal{N}}
\def\q{\mathcal{Q}}
\def\w{\mathcal{W}}
\def\x{\mathcal{X}}
\def\y{\mathcal{Y}}
\def\z{\mathcal{Z}}
\def\ce{\mathcal{E}}
\def\pp{\mathcal{P}}
\def\ss{\mathcal{S}}
\def\i{I_{\lambda}}
\def\mm{\mathfrak{M}}
\def\nn{\mathfrak{N}}
\newtheorem{theorem}{Theorem}[section]
\newtheorem{lemma}[theorem]{Lemma}
\newtheorem{proposition}[theorem]{Proposition}
\newtheorem{remark}[theorem]{Remark}
\newtheorem{definition}[theorem]{Definition}
\renewcommand \theequation {%
\ifnum \c@section>\z@ \@arabic\c@section.%
\fi \ifnum\c@subsection>\z@\@arabic\c@subsection.%
\fi\@arabic\c@equation} \@addtoreset{equation}{section}
\begin{document}

\title{Wellposedness and Decaying Property of \\Viscous Surface Wave}
\author{Lei Wu}
\address{
Division of Applied Mathematics\\
Brown University \\
182 George St., Providence, RI 02912, USA } \email[L.
Wu]{zjkwulei1987@brown.edu} \subjclass[2000]{35Q30, 35R35, 76D03,
76E17}\maketitle

\begin{abstract}
In this paper, we consider an incompressible viscous flow without
surface tension in a finite-depth domain of three dimensions, with
free top boundary and fixed bottom boundary. This system is governed
by a Naiver-Stokes equation in above moving domain and a transport
equation for the top boundary. Traditionally, we consider this
problem in Lagrangian coordinates with perturbed linear form. In the
series of papers \cite{book1}, \cite{book9} and \cite{book10}, I.
Tice and Y. Guo introduced a new framework using geometric structure
in Eulerian coordinates to study both local and global wellposedness
of this system. Following this path, we extend their result in local
wellposedness from small data case to arbitrary data case. Also, we
give a simpler proof for global wellposedness in infinite domain.
Other than the geometric energy estimates, time-dependent Galerkin
method, and interpolation estimate with Riesz potential and minimum
count, which are introduced in these papers, we utilize three new
techniques: (1) using $\epsilon$-Poisson integral to construct a
diffeomorphism between fixed domain and moving domain; (2) using
bootstrapping argument to prove a comparison result for steady
Navier-Stokes equation for arbitrary data of free surface; (3)
redefining the energy and dissipation to replace the original
complicated bootstrapping argument to show interpolation estimate.\\
\textbf{Keywords:} free surface, geometric structure, interpolation
\end{abstract}

\maketitle

\section{Introduction}

\subsection{Problem Presentation}

We consider a viscous incompressible flow in the moving domain.
\begin{equation}
\Omega(t)=\{y\in\Sigma\times R\ | -b(y_1,y_2)<y_3<\eta(y_1,y_2,t)\}
\end{equation}
Here we can take either $\Sigma=R^2$ or
$\Sigma=(L_1\mathbb{T})\times(L_2\mathbb{T})$ for which $\mathbb{T}$
denotes the $1$-torus and $L^1,L^2>0$ the periodicity lengths. The
lower boundary $b$ is fixed and given satisfying
\begin{equation}
0<b(y_1,y_2)<\bar b=\rm{constant}\quad \rm{and}\quad b\in
C^{\infty}(\Sigma)
\end{equation}
When $\Sigma=R^2$, we further require that $b(y_1,y_2)\rightarrow
b_0=\rm{positive\ constant}$ as
$\abs{y_1}+\abs{y_2}\rightarrow\infty$ and $b-b_0\in H^s(\Sigma)$
for any $s\geq0$. When
$\Sigma=(L_1\mathbb{T})\times(L_2\mathbb{T})$, we just denote
$b_0=1/2(\max\{b(y_1,y_2)\}+\min\{b(y_1,y_2)\})$. It is easy to see
this also implies $b-b_0\in H^s(\Sigma)$ for any $s\geq0$ since
$\Sigma$ is bounded. We denote the initial domain
$\Omega(0)=\Omega_0$. For each $t$, the flow is described by
velocity and pressure $(u,p):\Omega(t)\mapsto R^3\times R$ which
satisfies the incompressible Navier-Stokes equation
\begin{equation}\label{origin}
\left\{
\begin{array}{ll}
\partial_tu+u\cdot\nabla u+\nabla p=\mu\Delta u &\quad \rm{in} \quad\Omega(t)\\
\nabla\cdot u=0 &\quad \rm{in} \quad \Omega(t)\\
(pI-\mu\mathbb{D}(u))\nu=g\eta\nu&\quad \rm{on} \quad \{y_3=\eta(y_1,y_2,t)\}\\
u=0&\quad \rm{on} \quad \{y_3=-b(y_1,y_2)\}\\
\partial_t\eta=u_3-u_1\partial_{y_1}\eta-u_2\partial_{y_2}\eta &\quad \rm{on} \quad \{y_3=\eta(y_1,y_2,t)\}\\
&\\
u(t=0)=u_0&\quad  \rm{in}\quad\Omega_0\\
\e(t=0)=\ee&\quad  \rm{on}\quad\Sigma
\end{array}
\right.
\end{equation}
for $\nu$ the outward-pointing unit normal vector on $\{y_3=\eta\}$,
$I$ the $3\times 3$ identity matrix,
$(\mathbb{D}u)_{ij}=\partial_iu_j+\partial_ju_i$ the symmetric
gradient of $u$, $g$ the gravitational constant and $\mu>0$ the
viscosity. As described in \cite{book1}, the fifth equation in
(\ref{origin}) implies that the free surface is convected with the
fluid. Note that in (\ref{origin}), we have make the shift of actual
pressure $\bar p$ by constant atmosphere pressure $p_{atm}$
according to $p=\bar p+gy_3-p_{atm}$.\\
We will always assume the natural condition that there exists a
positive number $\rho$ such that $\eta_0+b\geq \rho>0$ on $\Sigma$,
which means that the initial free surface is always strictly
separated from the bottom. Also without loss of generality, we may
assume that $\mu=g=1$, which in fact will not infect our proof. In
the following, we will use the term ``infinite case" when
$\Sigma=R^2$ and ``periodic case" when
$\Sigma=(L_1\mathbb{T})\times(L_2\mathbb{T})$.

\subsection{Previous Results}

Historically, this problem is studied in several different settings
according to whether we consider the viscosity and surface tension,
and the different choices of domains.

For the inviscid case, traditionally, we replace the no-slip
condition $u=0$ with no penetration condition $u\cdot n=0$ on
$\Sigma_b$. It is often assumed that the initial fluid is
irrotational and then this curl-free condition will be preserved in
the later time. This allows to reformulate the problem to one only
on the free surface. The local wellposedness in this framework was
proved by Wu \cite{book12,book13} and Lannes \cite{book14}. Local
wellposedness without irrotational assumption was proved by
Zhang-Zhang \cite{book15}, Christodoulou-Lindblad \cite{book16},
Lindblad \cite{book17}, Coutand-Shkoller \cite{book18} and
Shatah-Zheng \cite{book19}. In the viscous case, vorticity will be
naturally introduced at the free surface, so the surface formulation
will not work. Also, only in the irrotational case the global
wellposedness was shown as in Wu \cite{book22} and
Germain-Masmoudi-Shatah \cite{book23}.

In the viscous case without surface tension, the local wellposedness
of equation (\ref{origin}) was proved by Solonnikov and Beale.
Solonnikov \cite{book11} employed the framework of Holder spaces to
study the problem in a bounded domain, all of whose boundary is
free. Beale \cite{book2} utilized the framework of $L^2$ spaces to
study the domain like ours. Both of them took the equation as a
perturbation of the parabolic equation and make use of the
regularity results for linear equations. Abel \cite{book20} extended
the result to the framework of $L^p$ spaces. Also, Hataya
\cite{book24} proved the global wellposedness in periodic case.

Many authors have also considered the effect of surface tension,
which can help stabilize the problem and gain regularity. However,
most of these results are related to global wellposedness for small
data, e.g. in Bae \cite{book21}.

Almost all of above results are built in the Lagrangian coordinate
and ignore the natural energy structure of the problem. In
\cite{book1}, \cite{book9} and \cite{book10}, Y. Guo and I. Tice
introduced the geometric energy framework and prove both the local
and global wellposedness in small data. The main idea in their proof
is that instead of considering perturbation of the parabolic
equation, we restart to prove the regularity under time-dependent
basis. In our paper, we will follow this path and employ a similar
argument.

In \cite{book1}, it is always assumed that the initial data $u_0$
and $\ee$ is sufficiently small to guarantee that: (1) the transform
between fixed domain and moving domain is a diffeomorphism; (2) the
elliptic estimate in linear Navier-Stokes problem achieves the best
regularity; (3) in proving boundedness and contraction of iteration
sequence, the constant is small enough to get required estimate. In
Section 2 of our paper, we will drop this assumption and prove the
local wellposedness for arbitrary initial data. Hence, we have to
use different techniques to recover the above three results.

Also, in proving global wellposedness and decaying property in
\cite{book9}, the author introduced a quite complicated
interpolation argument to increase the decaying rate of energy with
minimum count. In Section 3 of our paper, through redefining the
energy and dissipation, we greatly simplify the proof in that: (1)
avoiding the complicated bootstrapping argument for interpolation
results; (2)avoiding the bootstrapping argument in comparison
theorem; (3) handling the energy and dissipation 1-minimum count and
2-minimum simultaneously instead of separately.

\subsection{Geometric Formulation}

In order to work in a fixed domain, we want to flatten the free
surface via a coordinate transformation. Beale introduced a flatten
transform in \cite{book2} and we will use a slightly modified
version as the geometric transform. We define a fixed domain
\begin{equation}
\Omega=\{x\in\Sigma\times R\ |-b_0<x_3<0\}
\end{equation}
for which we will write the coordinate $x\in\Omega$. In this slab,
we take $\Sigma:\{x_3=0\}$ as the upper boundary and $\Sigma_{b}:
\{x_3=-b_0\}$ as the lower boundary. Simply denote $x'=(x_1,x_2)$
and $x=(x_1,x_2,x_3)$. In order to introduce the mapping between
$\Omega$ and $\Omega(t)$, we need to utilize slightly different
techniques to construct the extension of free surface.

\subsubsection{Geometric Formulation for Local Wellposedness}

We will prove the local wellposedness for arbitrary initial data and
arbitrary smooth bottom, so the mapping should be related to the
data itself. Hence, we define the $\epsilon$-Poisson integral for
$\e$:
\begin{equation}
\bar\eta^{\epsilon}=\pp^{\epsilon}\e= \left\{
\begin{array}{ll}
\int_{R^2}\hat{\e}(\xi)e^{\epsilon\abs{\xi}x_3}e^{2\pi
ix'\cdot\xi}\ud{\xi}& \rm{for}\ \rm{infinite}\ \rm{case}\\
\\
\sum_{n\in(L_1^{-1}\mathbb{Z})\times(L_2^{-1}\mathbb{Z})}e^{2\pi
in\cdot x'}e^{\epsilon\abs{n}x_3}\hat{\e}(n)& \rm{for}\ \rm{periodic}\ \rm{case}\\
\end{array}
\right.
\end{equation}
where $\hat{\e}(\xi)$ and $\hat{\e}(n)$ denotes the Fourier
transform of $\e(x')$ in either continuous or discrete form and
$0<\epsilon<1$ is the parameter. The detailed
definitions are shown in (\ref{appendix poisson def 1}) and (\ref{appendix poisson def 2}).\\
Consider the geometric transform from $\Omega$ to $\Omega(t)$:
\begin{equation}\label{map}
\Phi^{\epsilon}:(x_1,x_2,x_3)\mapsto
(x_1,x_2,\frac{b}{b_0}x_3+\bar\eta^{\epsilon}(1+\frac{x_3}{b_0}))=(y_1,y_2,y_3)
\end{equation}
This transform maps $\Omega$ into $\Omega(t)$ and its Jacobian
matrix
\begin{equation}
\nabla\Phi^{\epsilon}= \left(
\begin{array}{ccc}
1&0&0\\
0&1&0\\
A^{\epsilon}&B^{\epsilon}&J^{\epsilon}
\end{array}
\right)
\end{equation}
and the transform matrix
\begin{equation}
\mathcal{A}^{\epsilon}= ((\nabla\Phi^{\epsilon})^{-1})^T=\left(
\begin{array}{ccc}
1&0&-A^{\epsilon}K^{\epsilon}\\
0&1&-B^{\epsilon}K^{\epsilon}\\
0&0&K^{\epsilon}
\end{array}
\right)
\end{equation}
where
\begin{eqnarray}
\tilde b=1+\frac{x_3}{b_0}\nonumber\\
A^{\epsilon}=\frac{\p_1b}{b_0}x_3+\p_1\eb^{\epsilon}\tilde b&&
B^{\epsilon}=\frac{\p_2b}{b_0}x_3+\p_2\eb^{\epsilon}\tilde b\\
J^{\epsilon}=\frac{b}{b_0}+\frac{\eb^{\epsilon}}{b_0}+\partial_3\eb^{\epsilon}\tilde{b}&&
K^{\epsilon}=\frac{1}{J^{\epsilon}}\nonumber
\end{eqnarray}
Note that this transform is not necessarily a homomorphism for
arbitrary $\epsilon$. By our assumption on initial data that
$\ee(x')+b_0(x')\geq\rho>0$ and theorem \ref{geometric transform},
we can always choose a sufficiently small $\epsilon$ depending on
$\hm{\eta_0}{5/2}$ such that there exists a $\delta>0$ satisfying
$J^{\epsilon}(0)>\delta>0$, then this nonzero Jacobi implies the
transform $\Phi^{\epsilon}$ is a homomorphism at $t=0$. If we
further assume $\ee\in H^{7/2}(\Sigma)$, we can conclude
$\Phi^{\epsilon}$ is a $C^1$ diffeomorphism from $\Omega$ to
$\Omega_0$. In the following, we will just write $\bar\eta$ instead
of $\bar\eta^{\epsilon}$ for simplicity, and the same fashion
applies to
$\a$, $\Phi$, $A$, $B$, $J$ and $K$.\\
Define some transformed operators as follows.
\begin{equation}\label{introduction temp 1}
\begin{array}{l}
(\nabla_{\mathcal{A}}f)_i=\mathcal{A}_{ij}\partial_jf\\
\nabla_{\mathcal{A}}\cdot\vec g=\mathcal{A}_{ij}\partial_jg_i\\
\Delta_{\mathcal{A}}f=\nabla_{\mathcal{A}}\cdot\nabla_{\mathcal{A}}f\\
\mathcal{N}=(-\partial_1\eta,-\partial_2\eta,1)\\
(\mathbb{D}_{\mathcal{A}}u)_{ij}=\mathcal{A}_{ik}\partial_ku_j+\mathcal{A}_{jk}\partial_ku_i\\
S_{\mathcal{A}}(p,u)=pI-\mathbb{D}_{\mathcal{A}}u
\end{array}
\end{equation}
where the summation should be understood in the Einstein convention.
If we extend the divergence $\da$ to act on symmetric tensor in the
natural way,
then a straightforward computation reveals that $\da\sa(p,u)=\na p-\la u$ for vector fields satisfying $\da u=0$.\\
In our new coordinate, the original equation system (\ref{origin})
becomes
\begin{equation}\label{transform}
\left\{
\begin{array}{ll}
\partial_tu-\partial_t\bar\eta\tilde{b}K\partial_3u
+u\cdot\nabla_{\mathcal {A}}u-\Delta_{\mathcal {A}}u+\nabla_{\mathcal {A}}p=0 &\rm{in}\quad \Omega\\
\nabla_{\mathcal {A}}\cdot u=0  &\rm{in} \quad\Omega \\
S_{\mathcal {A}}(p,u)\mathcal {N}=\eta\mathcal {N}& \rm{on} \quad\Sigma\\
u=0  &\rm{on}\quad\Sigma_{b}\\
u(x,0)=u_0(x)&\rm{in}\quad\Omega\\
&\\
\partial_t\eta+u_1\partial_1\eta+u_2\partial_2\eta=u_3 &\rm{on}\quad \Sigma\\
\eta(x',0)=\eta_0(x')&\rm{on}\quad\Sigma
\end{array}
\right.
\end{equation}
where we can split the system into a Naiver-Stokes equation and a
transport equation.\\
Since $\a$ is determined by $\e$ through the transform, so all the
quantities above is related to $\e$, i.e. the geometric structure of
the free surface. This is the central idea of our proof. It is
noticeable that in proving local wellposedness of above equation
system, we must verify $\Phi(t)$ is a $C^1$ diffeomorphism for any
$t\in[0,T]$, where the theorem holds.

\subsubsection{Geometric Formulation for Global Wellposedness}

In the global part, we need to consider the infinite case when
domain $\Omega(t)$ always possess a flat bottom, which means
$b=\rm{constant}$. Also our theorem only holds for sufficiently
small data, so we may directly define $\eb$ as the Poisson integral
of $\e$ which is shown in (\ref{appendix poisson integral}), i.e.
$\eb=\pp\e$. Then the map can be taken as
\begin{equation}\label{map_}
\Phi:(x_1,x_2,x_3)\mapsto
(x_1,x_2,x_3+\bar\eta(1+\frac{x_3}{b}))=(y_1,y_2,y_3)
\end{equation}
which also successfully transform $\Omega$ to $\Omega(t)$. It is
obvious that the Jacobi matrix and transform matrix are identical to
those in the local case except that we need to redefine the
quantities
\begin{eqnarray}
\tilde b=1+x_3/b\nonumber\\
A=\p_1\eb\tilde b&&
B=\p_2\eb\tilde b\\
J=1+\eb/b+\partial_3\eb\tilde{b}&& K=1/J\nonumber
\end{eqnarray}
Naturally, the definition of weighted operators in
(\ref{introduction temp 1}) and the transformed equation
(\ref{transform}) are the same as those in local part with replacing
the related quantities with above definition. We can easily see that
as long as $\hms{\ee}{5/2}{\Sigma}$ is sufficiently small, $\Phi$ is
a diffeomorphism. Hence, we do not need the discussion as local
part.

\subsection{Main Theorem}

In this paper, we will prove both the local wellposedness and global
wellposedness for higher order regularity.
\begin{theorem}
Let $N\geq3$ be an integer. Assume the initial data
$\ee+b\geq2b_0\delta>0$ for some $\delta>0$. Suppose that $u_0$ and
$\ee$ satisfy the estimate
$K_0=\hm{u_0}{2N}^2+\hms{\ee}{2N+1/2}{\Sigma}^2<\infty$ as well as
the $N^{th}$ compatible condition (\ref{compatible condition}). Then
there exists $0<T_0<1$ such that for $0<T<T_0$, there exists a
unique solution $(u,p,\e)$ to system (\ref{transform}) on the
interval $[0,T]$ that achieves the initial data. Furthermore, the
solution obeys the estimate
\begin{eqnarray}
\bigg(\sum_{j=0}^N\sup_{0\leq t\leq
T}\hm{\dt^ju}{2N-2j}^2+\sum_{j=0}^N\int_0^T\hm{\dt^ju}{2N-2j+1}^2+\nm{\dt^{N+1}u}_{\xxst}^2\bigg)\\
+\bigg(\sum_{j=0}^{N-1}\sup_{0\leq t\leq
T}\hm{\dt^jp}{2N-2j-1}^2+\sum_{j=0}^N\int_0^T\hm{\dt^jp}{2N-2j}^2\bigg)\nonumber\\
+\bigg(\sup_{0\leq t\leq
T}\hms{\e}{2N+1/2}{\Sigma}^2+\sum_{j=1}^N\sup_{0\leq t\leq
T}\hms{\dt^j\e}{2N-2j+3/2}{\Sigma}^2\nonumber\\
+\int_0^T\hms{\e}{2N+1/2}{\Sigma}^2+\int_0^T\hms{\dt\e}{2N-1/2}{\Sigma}^2+\sum_{j=2}^{N+1}\int_0^T\hms{\dt^j\e}{2N-2j+5/2}{\Sigma}^2\bigg)\nonumber\\
\leq C(\Omega_0,\delta)P(K_0)\nonumber
\end{eqnarray}
where $C(\Omega_0,\delta)$ is a positive constant depending on the
initial domain $\Omega_0$ and separation quantity $\delta$ and
$P(\cdot)$ is a single variable polynomial satisfying $P(0)=0$. The
solution is unique among functions that achieves the initial data
and for which the left hand side of the estimate is finite.
Moreover, $\e$ is such that the mapping $\Phi(t)$ defined by
(\ref{map}) is a $C^{2N-2}$ diffeomorphism for each $t\in[0,T]$.
\end{theorem}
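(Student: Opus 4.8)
The plan is to construct the solution by an iteration scheme built on the linearization of (\ref{transform}), in the spirit of Guo--Tice but with the three modifications advertised in the abstract, which are forced by dropping the smallness of the data. \emph{Geometry first:} given $\ee$ with $\ee+b\geq 2b_0\delta$, Theorem~\ref{geometric transform} lets me pick $\epsilon$ small, depending only on $\hm{\ee}{5/2}$, so that $J^{\epsilon}(0)\geq\delta>0$; since $J(t)\geq\delta/2$ is an open condition it persists, and a continuity argument upgrades it to all of $[0,T_0]$ once $T_0$ is chosen small relative to $K_0$ and $\delta$. The value of the $\epsilon$-Poisson integral here is that $\bar\eta^{\epsilon}$ is smoothing yet controlled by low norms of $\e$, so the coefficients $A,B,J,K$ of $\a$ remain bounded in terms of the energy alone, which is what keeps the iteration constants uniform.

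\emph{Energy, dissipation, and the linear problem.} Take $\mathcal{E}(t)$ and $\mathcal{D}(t)$ to be exactly the quantities sitting under the $\sup$'s and the $\int_0^T$'s on the left-hand side of the claimed estimate; this is the redefinition that allows all the minimum counts to be handled together. The analytic core is twofold: (a) the linear time-dependent problem $\dt u-\la u+\na p=F$, $\da u=g$, $\sa(p,u)\mathcal{N}=G$ on $\Sigma$, $u=0$ on $\Sigma_b$, with $\a$ generated by a fixed $\bar\eta$ drawn from the iteration ball, is solved by the time-dependent Galerkin method and obeys the full parabolic regularity $\mathcal{E}+\int_0^T\mathcal{D}\ls(\text{data})$, the higher time derivatives $\dt^j u(0)$ being produced by differentiating the system and invoking the compatibility conditions (\ref{compatible condition}); (b) the elliptic estimate for the associated steady Stokes problem, which for non-small $\bar\eta$ cannot be obtained by perturbing the flat operator and instead requires the bootstrapping comparison result for steady Navier--Stokes with an arbitrary free surface, recovering the top-order elliptic gain at the cost of lower-order norms.

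\emph{Iteration and passage to the limit.} Given $\eta^n$, and hence $\bar\eta^n,\a^n,J^n,K^n$, I solve the linear problem of (a) with the nonlinear terms $u^n\cdot\nabla_{\a^n}u^n$ and $\dt\bar\eta^n\tilde b K^n\p_3 u^n$ frozen on the right, obtaining $(u^{n+1},p^{n+1})$, then solve the transport equation $\dt\eta^{n+1}+u_1^{n+1}\p_1\eta^{n+1}+u_2^{n+1}\p_2\eta^{n+1}=u_3^{n+1}$ for $\eta^{n+1}$, gaining the surface regularity from the trace of $u^{n+1}$ together with the transport structure. For $T_0$ small, depending on $K_0$ and $\delta$, the map $\eta^n\mapsto\eta^{n+1}$ preserves a ball in the high-norm space, since every nonlinear contribution carries a positive power of $T_0$ or a small factor of the radius, and it is a contraction in a low-norm space; passing to the limit yields a solution of (\ref{transform}) satisfying the stated bound by weak lower semicontinuity. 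Uniqueness follows from the same low-norm estimate applied to the difference of two solutions, and the $C^{2N-2}$ diffeomorphism property of $\Phi(t)$ follows from $\e\in L^{\infty}H^{2N+1/2}(\Sigma)$, two-dimensional Sobolev embedding, and the persistence $J(t)\geq\delta/2$.

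\emph{Main obstacle.} The hard part is (b): once $\eta$ has arbitrary size, the coefficients of $\la$ are neither constant nor near-constant, so the standard perturbative Stokes estimate is unavailable, and the bootstrapping comparison for the steady problem — showing the solution still enjoys the top-order gain with an error only in lower-order norms, which is then absorbed by the energy structure — is the technical heart of Section~2. A secondary difficulty is keeping all iteration constants independent of $n$ while the geometry $\a^n$ changes, which is exactly why the iteration ball must be phrased through $\mathcal{E}$ and $\mathcal{D}$ rather than through fixed Sobolev balls.
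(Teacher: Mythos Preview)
Your outline matches the paper's architecture --- $\epsilon$-Poisson geometry, linear estimates via Galerkin plus an elliptic bootstrapping, transport estimates, iteration, contraction in a low norm --- but there are two points where your description either imports the wrong technique or glosses over the genuine obstacle.

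First, the remark that the chosen $\mathcal{E},\mathcal{D}$ ``allow all the minimum counts to be handled together'' is a confusion with the \emph{global} part of the paper (Section~3). In the local argument there is no minimum count and no interpolation against $\ce_{2N}$; the quantities $\k(u,p)$, $\k(\eta)$, $\q(u)$ used in Section~2 are just the obvious parabolic sums, and no redefinition is needed.

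Second, and this is the real gap, your boundedness step does not close as written. You assert that the iteration map preserves a high-norm ball because ``every nonlinear contribution carries a positive power of $T_0$ or a small factor of the radius.'' For large data the radius $\z$ is \emph{not} small, so the second alternative is unavailable; and the first alternative is not delivered by the linear estimate alone. Indeed, feeding the forcing estimate into the linear higher-regularity theorem only gives
\[
\k(u^{m+1},p^{m+1})+\k(\eta^{m+1})\ \ls\ P\bigl(\q(u^{m})+\k(\eta^{m})+\k_0\bigr),
\]
which is Remark~\ref{boundedness remark} and which cannot be iterated when $P$ has degree $>1$ and no smallness is available. The paper's fix (Theorem~\ref{boundedness1}) is to \emph{return to the geometric energy identity} at the top temporal level: one tests the weak $D_t^N$-equation against $D_t^N u^m$ itself to produce a genuine factor $\sqrt{T}$ in front of the forcing terms (and hence in front of $P(\k_0+\z)$), and then uses the elliptic estimate together with $\tnm{\cdot}{k}^2\le T\,\inm{\cdot}{k}^2$ at the lower temporal levels to get the same gain. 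Only after this does one obtain
\[
\q(u^m)\ \le\ C P(\k_0)+\sqrt{T}\,C\,P(\k_0+\z),
\]
which closes by choosing $\z\ge 2CP(\k_0)$ and then $T$ small depending on $\z$. Your proposal needs this mechanism spelled out; without it the induction on $m$ does not go through for arbitrary $K_0$.

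A smaller point on (b): the elliptic bootstrapping in Proposition~\ref{elliptic estimate} is not ``top-order gain at the cost of lower-order norms absorbed by the energy structure.'' Rather, one first perturbs around the \emph{initial} surface $\ee$ (not the flat one) to make the constant depend only on $\hms{\ee}{k+1/2}{\Sigma}$ up to order $k-1$, then approximates $\eta$ by band-limited $\eta^m$, and finally gains the two missing orders by explicit algebraic manipulation of the scalar equations (eliminating $\p_3 p$, isolating $\p_3^{k}u_3$ from the divergence equation, etc.) so that the $H^{k+1}\times H^k$ bound on $(u^m,p^m)$ depends only on $\hms{\eta^m}{k+1/2}{\Sigma}$; a weak-limit argument then transfers the bound to $(u,p)$.
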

\begin{remark}
In above theorem, $H^k$ denotes the usual Sobolev space in $\Omega$
and $H^k(\Sigma)$ denotes the usual Sobolev space on $\Sigma$, while
we will state the definition of $\nm{\cdot}_{\xxst}$ later.\\
Because the map $\Phi(t)$ is a $C^{2N-2}$ diffeomorphism, then we
may change variable to $y\in \Omega(t)$ to produce solution of
(\ref{origin}).
\end{remark}
\begin{theorem}
Let $N\geq4$ be an integer. Suppose the initial data $(u_0,\ee)$
satisfy the appropriate compatible conditions stated in local
wellposedness theorem. Then there exists a $\kappa>0$ such that if
\begin{eqnarray}
\ce_{2N}(0)+\f_{2N}(0)\leq\kappa
\end{eqnarray}
then there exists a unique solution $(u,p,\e)$ on the interval
$[0,\infty)$ that achieves the initial data. Also, the solution
obeys the estimate
\begin{eqnarray}
\g_{N}(\infty)\leq C(\ce_{2N}(0)+\f_{2N}(0))\leq C\kappa
\end{eqnarray}
where $C>0$ is a universal constant.
\end{theorem}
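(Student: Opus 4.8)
The plan is to upgrade the local theory to a global-in-time solution by a continuity argument that is driven by global a priori estimates, all of which are made to close by the smallness of $\kappa$. So I would first invoke the local wellposedness theorem (with flat bottom, infinite case, and the geometric transform (\ref{map_})) to produce a solution on a short interval, extend it to a maximal interval of existence $[0,T_{\max})$, and suppose, for the sake of the continuation argument, that on each $[0,T]\subset[0,T_{\max})$ the combined functional $\g_N(T)$ — which bundles $\sup_{[0,T]}\ce_{2N}$, $\int_0^T\dl_{2N}$, the time-weighted low-order energy, and the time-weighted top-order surface norm $\f_{2N}$ — is finite. Everything then reduces to proving a single inequality of the schematic form $\g_N(T)\ls\ce_{2N}(0)+\f_{2N}(0)+\g_N(T)^{3/2}$, uniformly in $T$.

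The heart of the matter is a two-tier energy scheme for the transformed system (\ref{transform}). At the top level I would apply $\dt^j$ and spatial derivatives of total order $2N$ to the momentum equation, pair against the corresponding derivative of $u$, use $\da u=0$ together with the stress boundary condition $\sa(p,u)\n=\e\n$ to convert the boundary term into $\dt\int_\Sigma\half\abs{\e}^2$ modulo geometric commutators, and eliminate the pressure through the elliptic (Stokes) estimate; I expect this to give $\dt\ce_{2N}+\dl_{2N}\ls\sqrt{\ce_{2N}}\,\dl_{2N}+(\text{cubic lower-order terms})$, which for $\g_N$ small integrates to $\sup_{[0,T]}\ce_{2N}+\int_0^T\dl_{2N}\ls\ce_{2N}(0)+\f_{2N}(0)+\g_N(T)^{3/2}$. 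Separately, a transport estimate on the kinematic equation $\dt\e+u_1\p_1\e+u_2\p_2\e=u_3$ controls the top-order surface norm $\f_{2N}$ with at most slow growth in $T$. At the low level ($N+2$ derivatives) the same energy identity gives $\dt\ce_{N+2}+\dl_{N+2}\le0$ for small data.

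Decay — which is what distinguishes the horizontally infinite case, where there is no Poincaré inequality and hence $\ce_{N+2}$ is \emph{not} controlled by $\dl_{N+2}$ — comes from the interpolation estimate with Riesz potential and minimum count: one has $\ce_{N+2}\ls(\dl_{N+2})^{1-1/s}\,\mathcal{I}^{1/s}$, where $\mathcal{I}$ is the appropriate negative-Sobolev / Riesz-potential quantity and $s$ is dictated by the minimum count, while $\mathcal{I}$ itself is propagated to stay bounded by $\g_N(T)$. Combined with $\dt\ce_{N+2}+\dl_{N+2}\le0$ this yields $\dt\ce_{N+2}+C\,\ce_{N+2}^{s/(s-1)}\le0$, hence the algebraic rate $\ce_{N+2}(t)\ls(1+t)^{-(s-1)}$, which is exactly the time-weighted bound built into $\g_N$. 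The advertised simplification — redefining $\ce$, $\dl$ and $\g$ so that the $1$-minimum and $2$-minimum counts are handled simultaneously — is what lets all of the above be absorbed into the single inequality for $\g_N(T)$ without the iterated bootstrap of the original argument.

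Finally, since $\g_N(T)$ is continuous and nondecreasing in $T$, is below any fixed threshold for $T$ near $0$ by the smallness of the data, and satisfies $\g_N(T)\le C_0(\ce_{2N}(0)+\f_{2N}(0))+C_0\g_N(T)^{3/2}$, a standard continuity argument gives $\g_N(T)\le C(\ce_{2N}(0)+\f_{2N}(0))$ uniformly in $T<T_{\max}$; this a priori bound rules out blowup, so $T_{\max}=\infty$ and $\g_N(\infty)\le C\kappa$. Uniqueness on $[0,\infty)$ is inherited from the local theorem applied on overlapping intervals. The step I expect to be the real obstacle is the decay: one must check that $\mathcal{I}$ is genuinely propagated by the full nonlinear system — the transport equation for $\e$ neither gains regularity nor dissipates, so the decay of $\e$ has to be extracted indirectly through its coupling to $u$ via the kinematic and stress boundary conditions — and that the geometric nonlinearities in $A,B,J,K$, the pressure, and the convection $u\cdot\na u$ are all estimated at a level that leaves no top-order derivative of $\e$ uncontrolled.
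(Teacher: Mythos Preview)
Your outline is essentially correct and matches the paper's strategy: local existence plus a two-tier ($2N$ and $N+2$) energy scheme, an interpolation inequality that converts the low-level differential inequality into algebraic decay, and a continuation argument closing on $\g_N$. Two small discrepancies with the paper's implementation are worth flagging. First, in this paper $\g_N$ is defined \emph{without} the integrated dissipation $\int_0^T\d_{2N}$ (this is one of the advertised simplifications over \cite{book9}), and the a priori estimate is stated in conditional form---``if $\g_N(T)\le\delta$ then $\g_N(t)\ls\ce_{2N}(0)+\f_{2N}(0)$''---rather than as a self-improving inequality with a $\g_N(T)^{3/2}$ term; the continuation argument is the same either way. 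Second, the interpolation you write as $\ce_{N+2}\ls\d_{N+2}^{1-1/s}\mathcal{I}^{1/s}$ is realized in the paper as $\bar\ce_{N+2,2}\ls\ce_{2N}^{1/(\lambda+3)}\d_{N+2,2}^{(\lambda+2)/(\lambda+3)}$, where the role of your $\mathcal{I}$ is played by the full $\ce_{2N}$ (which already contains the Riesz-potential norms $\hm{\i u}{0}^2+\hms{\i\e}{0}{\Sigma}^2$); the $1$-minimum-count energy $\ce_{N+2,1}$ is then recovered a posteriori by interpolating $\ce_{N+2,1}\ls\ce_{N+2,2}^{(\lambda+1)/(\lambda+2)}\ce_{2N}^{1/(\lambda+2)}$, which is exactly the ``handle $1$- and $2$-minimum count simultaneously'' trick. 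You also glide over the comparison estimates between the horizontal-derivative quantities $\bar\ce,\bar\d$ produced by the energy identities and the full $\ce,\d$; in the paper this is a separate nontrivial step (Propositions on dissipation and energy comparison) that replaces the bootstrapping of \cite{book9}.
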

\begin{remark}
The detailed definition for $\ce_{2N}$, $\f_{2N}$ and $\g_{N}$ is
presented in (\ref{definition 1}) to (\ref{definition 2}). This
theorem easily implies the algebraic decaying of energy
$\ce_{N+2,2}$ and $\ce_{N+2,1}$.
\end{remark}

\subsection{Convention and Terminology}

We now mention some of the definitions, bits of notations and
conventions we will use throughout the paper.
\begin{enumerate}
\item
We will employ Einstein summation convention to sum up repeated
indices for vector and tensor operations.
\item
Throughout the paper $C>0$ will denote a constant only depend on the
parameter of the problem, $N$ and $\Omega$, but does not depend on
the data. They are referred as universal and can change from one
inequality to another one.  When we write $C(z)$, it means a certain
positive constant depending
on quantity $z$. \\
There are two exceptions to above rules in the local part. The first
one is that in the elliptic estimates and Korn's inequality, there
are constants depending on the initial domain $\Omega_0$. Although
this should be understood as depending on the initial free surface
$\ee$, since its dependent relation is given implicitly and cannot
be simplified further, we will also call them universal. The second
one is that we will also call the constant $C(\delta)$ universal,
though $\delta$ is determined by initial domain $\Omega_0$. To note
that apart from these, all the other constants related to initial
data $\Omega_0$, $u_0$ and $\ee$ should be specified in detail.
\item
We will employ notation $a\ls b$ to denote $a\leq Cb$, where $C$ is
a universal constant as defined above.
\item
We will write $P(\cdot)$ to denote the single variable polynomial.
This type of polynomial always satisfies $P(0)=1$ unless it is
specified as an exception. This notation is used to denote some very
complicated polynomial expressions, however whose details we do not
really care. This kind of polynomial may change from line to line.
\item
For convenience, we will typically write $H^0=L^2$, except for
notation $L^2([0,T];H^k)$. We write $H^k(\Omega)$ with $k\geq0$ and
$H^s(\Sigma)$ with $s\in R$ for standard Sobolev space. Same style
of notations also holds for $L^2([0,T];H^k)$ and
$L^{\infty}([0,T];H^k)$. When we write $\hm{\cdot}{k}$, this always
means the Sobolev norm in $\Omega$, otherwise, we will point out the
exact space it stands for, e.g. $\hms{\cdot}{k}{\Sigma}$. A similar
fashion is adopted for $\tnm{\cdot}{k}$ and $\inm{\cdot}{k}$.
\item
We write the multi-indices
$\mathbb{N}^{1+m}=\{\alpha=(\alpha_0,\alpha_1,\ldots,\alpha_m)\}$ to
emphasize that the $0$-index term is related to temporal
derivatives. For just spatial derivatives, we write $\mathbb{N}^m$
without the $0$-index. We define the parabolic counting of such
multi-indices as $\abs{\alpha}=2\alpha_0+\alpha_1+\ldots+\alpha_m$.
We always write $Df$ to denote the horizontal derivative of $f$ and
$\nabla f$ for the full derivative.\\
For a given norm $\nm{\cdot}$ and integers $k,m\geq0$, we introduce
the following notation for the sums of spatial derivatives.
\begin{eqnarray*}
\nm{D_m^kf}^2=\sum_{\alpha\in\mathbb{N}^2\ m\leq\abs{\alpha}\leq
k}\nm{\p^{\alpha}f}^2&&
\nm{\nabla_m^kf}^2=\sum_{\alpha\in\mathbb{N}^3\
m\leq\abs{\alpha}\leq k}\nm{\p^{\alpha}f}^2\\
\nm{\bar D_m^kf}^2=\sum_{\alpha\in\mathbb{N}^{1+2}\
m\leq\abs{\alpha}\leq k}\nm{\p^{\alpha}f}^2&&
\nm{\bar\nabla_m^kf}^2=\sum_{\alpha\in\mathbb{N}^{1+3}\
m\leq\abs{\alpha}\leq k}\nm{\p^{\alpha}f}^2
\end{eqnarray*}
where $D$ and $\nabla$ means horizontal derivatives and $\bar D$ and
$\bar\nabla$ means full derivatives.\\
Also we define
\begin{eqnarray*}
\nm{D^kf}^2=\nm{D_k^kf}^2\ \nm{\nabla^kf}^2=\nm{\nabla_k^kf}^2\\
\nm{\bar D^kf}^2=\nm{\bar D_k^kf}^2\ \nm{\bar \nabla^kf}^2=\nm{\bar
\nabla_k^kf}^2
\end{eqnarray*}
\end{enumerate}

\subsection{Structure of This Paper}

In Section 2, we will discuss the local behavior for arbitrary
initial data, which includes three main parts: estimates for linear
Navier-Stokes equation, estimates for transport equation and the
construction of iteration. Combining all these, the local
wellposedness easily follows.\\
In Section 3, we will prove the global wellposedness for small data
in horizontally infinite domain. Since many lemmas have been proved
in \cite{book9}, here we only present the main part of our
simplification in interpolation theorem and comparison theorem, with
merely stating the result for other parts.

\section{Local Wellposedness for Large Data}

\subsection{$\epsilon$-Poisson Integral}

In this section, we will define the $\epsilon$-Poisson integral,
which is utilized to construct geometric transform as (\ref{map}),
and discuss the basic properties of it.

\subsubsection{Poisson Integral in Infinite Case}

For a function $f$ defined on $\Sigma=R^2$, the $\epsilon$-Poisson
integral is defined by
\begin{eqnarray}\label{appendix poisson def 1}
\pp^{\epsilon}f(x',x_3)=\int_{R^2}\hat{f}(\xi)e^{\epsilon\abs{\xi}x_3}e^{2\pi
ix'\cdot\xi}\ud{\xi}
\end{eqnarray}
where $\hat{f}(\xi)$ denotes the Fourier transform of $f(x')$ in
$R^2$ and $0<\epsilon<1$ is the parameter. Although
$\pp^{\epsilon}f$ is defined on $R^2\times(-\infty,0)$, we will only
consider the part $R^2\times(-b_0,0)$ here.
\begin{lemma}\label{appendix poisson 1}
Let $\pp^{\epsilon}f$ be the $\epsilon$-Poisson integral of function
$f$ which is in homogeneous Sobolev space $\dot{H}^{q-1/2}(\Sigma)$
for $q\in\mathbb{N}$. Then we have
\begin{eqnarray}
\hm{\nabla^q\pp^{\epsilon}f}{0}^2\leq\frac{C}{\epsilon}\nm{f}^2_{\dot{H}^{q-1/2}}
\end{eqnarray}
where $C>0$ is a constant independent of $\epsilon$. In particular,
we have
\begin{eqnarray}\label{Appendix temp 1}
\hm{\pp^{\epsilon}f}{q}^2\leq
\frac{C}{\epsilon}\hms{f}{q-1/2}{\Sigma}^2
\end{eqnarray}
\end{lemma}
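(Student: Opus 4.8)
The plan is to work on the Fourier side, where the $\epsilon$-Poisson integral acts diagonally, and reduce everything to a one-dimensional integral in $x_3$. First I would compute, for a multi-index decomposition $\nabla^q = \partial_{x'}^{\alpha'}\partial_{x_3}^{j}$ with $|\alpha'| + j = q$, the Fourier transform in $x'$ of $\partial_{x'}^{\alpha'}\partial_{x_3}^{j}\pp^{\epsilon}f$; this is $(2\pi i \xi)^{\alpha'}(\epsilon|\xi|)^{j}\hat{f}(\xi)e^{\epsilon|\xi|x_3}$, whose modulus is bounded by $C\epsilon^{j}|\xi|^{q}|\hat f(\xi)|e^{\epsilon|\xi|x_3}$. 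Then, applying Plancherel in $x'$ for each fixed $x_3$ and integrating in $x_3$ over $(-b_0,0)$,
\begin{equation}
\hm{\nabla^q\pp^{\epsilon}f}{0}^2 \ls \int_{-b_0}^{0}\int_{\Sigma}\epsilon^{2j}|\xi|^{2q}|\hat f(\xi)|^2 e^{2\epsilon|\xi|x_3}\,\ud{\xi}\,\ud{x_3}.
\end{equation}
The $x_3$-integral is $\int_{-b_0}^{0}e^{2\epsilon|\xi|x_3}\,\ud{x_3} \leq \int_{-\infty}^{0}e^{2\epsilon|\xi|x_3}\,\ud{x_3} = \frac{1}{2\epsilon|\xi|}$, which is the source of the $1/\epsilon$ loss and of the drop of half a derivative.

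Next I would combine these two bounds: after the $x_3$-integration the integrand becomes $\ls \epsilon^{2j}|\xi|^{2q}\cdot\frac{1}{\epsilon|\xi|}|\hat f(\xi)|^2 = \epsilon^{2j-1}|\xi|^{2q-1}|\hat f(\xi)|^2$. Since $0<\epsilon<1$ and $j\geq 0$, we have $\epsilon^{2j-1}\leq \epsilon^{-1}$, so
\begin{equation}
\hm{\nabla^q\pp^{\epsilon}f}{0}^2 \ls \frac{1}{\epsilon}\int_{\Sigma}|\xi|^{2q-1}|\hat f(\xi)|^2\,\ud{\xi} = \frac{C}{\epsilon}\nm{f}^2_{\dot H^{q-1/2}},
\end{equation}
which is the first claimed inequality. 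For the ``in particular'' statement, I would observe that $\hm{\pp^{\epsilon}f}{q}^2 = \sum_{k=0}^{q}\hm{\nabla^k\pp^{\epsilon}f}{0}^2$ (up to the usual equivalence of Sobolev norms), apply the inequality just proved to each term to get $\ls \frac{1}{\epsilon}\sum_{k=0}^{q}\nm{f}^2_{\dot H^{k-1/2}}$, and then bound the sum of homogeneous norms by the inhomogeneous norm $\hms{f}{q-1/2}{\Sigma}^2$ — using that on the torus the zero mode contributes harmlessly (the $k=0$ term with $|\xi|^{-1}$ near the origin needs the low-frequency part of $f$ to lie in $L^2$, which it does, so one splits $|\xi|\leq 1$ and $|\xi|\geq 1$). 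The periodic case is identical with $\int_{R^2}\,\ud{\xi}$ replaced by $\sum_{n}$ and $|\xi|$ by $|n|$, noting the $n=0$ term: there $e^{\epsilon|n|x_3}=1$ and $(\epsilon|n|)^j=0$ for $j\geq 1$, so only the pure horizontal derivative $j=0$ survives and is controlled directly, with no $1/\epsilon$ needed.

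I do not expect a serious obstacle here; this is a clean Fourier-multiplier estimate. The one point requiring a little care is the low-frequency behavior: the weight $|\xi|^{2q-1}$ is integrable against $|\hat f|^2$ only because we are assuming $f\in\dot H^{q-1/2}$, and when we pass to the inhomogeneous conclusion we must make sure the $k=0$ summand (which carries $|\xi|^{-1}$) does not blow up — on the torus this is trivial since $|n|\geq |n|_{\min}>0$ for $n\neq 0$ and the $n=0$ mode is separate, while on $R^2$ one uses $\dot H^{q-1/2}\cap L^2$-type control implicit in writing $\hms{f}{q-1/2}{\Sigma}$. A second minor technical point is justifying the interchange of the $x_3$-integral with Plancherel, which follows from Tonelli since the integrand is nonnegative. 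Everything else is bookkeeping with the constants $2\pi$, the binomial coefficients from distributing $\nabla^q$, and the equivalence of $\sum_{|\alpha|\leq q}\nm{\partial^\alpha \cdot}^2$ with $\hm{\cdot}{q}^2$.
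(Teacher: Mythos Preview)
Your approach is essentially the same as the paper's: apply Plancherel in $x'$, integrate the exponential in $x_3$, and use $\int_{-b_0}^0 e^{2\epsilon|\xi|x_3}\,\ud{x_3}\leq \frac{1}{2\epsilon|\xi|}$ to convert the factor $1/\epsilon$ into the loss of half a derivative. The first inequality is handled identically.

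The one place your write-up is loose is the $k=0$ summand in the second estimate. As you phrase it, you have already replaced the $x_3$-integral by $\frac{1}{2\epsilon|\xi|}$ and now want to control $\int_{|\xi|\leq 1}|\xi|^{-1}|\hat f(\xi)|^2\,\ud{\xi}$ using only $f\in L^2$; this fails, since concentrating $\hat f$ near the origin makes the ratio $\int_{|\xi|\leq\delta}|\xi|^{-1}\,\ud{\xi}\big/\int_{|\xi|\leq\delta}\,\ud{\xi}\sim 1/\delta$ blow up. The paper avoids this by not discarding information too early: it keeps the exact value
\[
\int_{-b_0}^0 e^{2\epsilon|\xi|x_3}\,\ud{x_3}=\frac{1-e^{-2\epsilon b_0|\xi|}}{2\epsilon|\xi|}\leq \min\Big\{\frac{1}{2\epsilon|\xi|},\,b_0\Big\},
\]
and for the zeroth-order term uses the second branch of the minimum (from $1-e^{-x}\leq x$) to get $\hm{\pp^{\epsilon}f}{0}^2\leq b_0\hms{f}{0}{\Sigma}^2\leq \frac{b_0}{\epsilon}\hms{f}{0}{\Sigma}^2$ directly, with no frequency splitting needed. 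If you intended your low-frequency step to mean ``go back and use the trivial bound $\int_{-b_0}^0 e^{2\epsilon|\xi|x_3}\,\ud{x_3}\leq b_0$ for $|\xi|\leq 1$,'' then your argument is fine and coincides with the paper's; just make that explicit.
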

\begin{proof}
By definition of Fourier transform, Fubini theorem and Parseval
identity, we may bound
\begin{eqnarray*}
\hm{\nabla^q\pp^{\epsilon}f}{0}^2&\leq&
(2\pi)^{2q}\int_{R^2}\int_{-b_0}^0\abs{\xi}^{2q}\abs{\hat{f}(\xi)}^2e^{2\epsilon\abs{\xi}x_3}\ud{\xi}\ud{x_3}
\leq(2\pi)^{2q}\int_{R^2}\abs{\xi}^{2q}\abs{\hat{f}(\xi)}^2\ud{\xi}\int_{-b_0}^0e^{2\epsilon\abs{\xi}x_3}\ud{x_3}\\
&=&(2\pi)^{2q}\int_{R^2}\abs{\xi}^{2q}\abs{\hat{f}(\xi)}^2
\bigg(\frac{1-e^{-2\epsilon
b_0\abs{\xi}}}{2\epsilon\abs{\xi}}\bigg)\ud{\xi}
\leq\frac{(2\pi)^{2q}}{2\epsilon}\int_{R^2}\abs{\xi}^{2q-1}\abs{\hat{f}(\xi)}^2\ud{\xi}\\
&\leq&\frac{\pi}{\epsilon}\nm{f}^2_{\dot{H}^{q-1/2}(\Sigma)}\nonumber
\end{eqnarray*}
We can simply take $C=\pi$ to achieve the estimate. Furthermore,
considering
\begin{displaymath}
\frac{1-e^{-2\epsilon b_0\abs{\xi}}}{2\epsilon\abs{\xi}}\leq
\min\bigg\{\frac{1}{2\epsilon\abs{\xi}},b_0\bigg\}
\end{displaymath}
and $0<\epsilon<1$, we have
\begin{equation}
\hm{\pp^{\epsilon}f}{0}^2\leq \frac{\pi
b_0}{\epsilon}\hms{f}{0}{\Sigma}^2
\end{equation}
Hence, (\ref{Appendix temp 1}) easily follows.
\end{proof}
We also need the $L^{\infty}$ estimate for Poisson integral.
\begin{lemma}\label{appendix poisson 2}
Let $\pp^{\epsilon}f$ be the $\epsilon$-Poisson integral of function
$f$. Then for $q\in\mathbb{N}$ and $s>1$, we have
\begin{eqnarray}
\lnm{\nabla^q\pp^{\epsilon}f}{\infty}^2\leq C\hms{f}{q+s}{\Sigma}^2
\end{eqnarray}
\end{lemma}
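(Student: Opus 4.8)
The plan is to pass to the Fourier side, where $\pp^{\epsilon}$ and all of its derivatives act as explicit Fourier multipliers, bound the $L^{\infty}$ norm of each horizontal slice by an $L^{1}$ norm of the transformed data, and close with Cauchy--Schwarz; the hypothesis $s>1$ will enter exactly once, through a two-dimensional integrability threshold.

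First I would record the multiplier symbols. For a purely spatial multi-index $\alpha=(\alpha_1,\alpha_2,\alpha_3)$ with $\abs{\alpha}=\alpha_1+\alpha_2+\alpha_3=q$, differentiating under the integral (resp. the sum) shows that the horizontal Fourier transform of $\p^{\alpha}\pp^{\epsilon}f(\cdot,x_3)$ equals $(2\pi i\xi_1)^{\alpha_1}(2\pi i\xi_2)^{\alpha_2}(\epsilon\abs{\xi})^{\alpha_3}e^{\epsilon\abs{\xi}x_3}\hat f(\xi)$ in the infinite case, and the analogous expression with $\xi$ replaced by $n$ in the periodic case. Since $0<\epsilon<1$ and $x_3\in(-b_0,0)$ on $\Omega$, we have $\abs{\xi_j}^{\alpha_j}\le\abs{\xi}^{\alpha_j}$, $\epsilon^{\alpha_3}\le1$ and $e^{\epsilon\abs{\xi}x_3}\le1$, so this symbol is dominated pointwise by $(2\pi)^{q}\abs{\xi}^{q}\abs{\hat f(\xi)}$, uniformly in $x_3$ and in $\epsilon$. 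In particular we never use the exponential decay, only that it is bounded by $1$; this is what keeps the final constant independent of $\epsilon$.

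Next, since $\nm{h}_{L^{\infty}(\Omega)}=\sup_{-b_0\le x_3\le0}\nm{h(\cdot,x_3)}_{L^{\infty}(\Sigma)}$, it suffices to bound a single slice. By Fourier inversion, $\nm{g}_{L^{\infty}(\Sigma)}$ is controlled by the $L^{1}$ (resp. $\ell^{1}$) norm of $\hat g$, so combining with the previous step,
\[
\nm{\p^{\alpha}\pp^{\epsilon}f(\cdot,x_3)}_{L^{\infty}(\Sigma)}\le(2\pi)^{q}\int_{R^2}\abs{\xi}^{q}\abs{\hat f(\xi)}\,\ud{\xi},
\]
and likewise for the discrete case. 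I would then insert the Bessel weight and apply Cauchy--Schwarz:
\[
\int_{R^2}\abs{\xi}^{q}\abs{\hat f(\xi)}\,\ud{\xi}\le\bigg(\int_{R^2}\frac{\abs{\xi}^{2q}}{(1+\abs{\xi}^2)^{q+s}}\,\ud{\xi}\bigg)^{1/2}\bigg(\int_{R^2}(1+\abs{\xi}^2)^{q+s}\abs{\hat f(\xi)}^2\,\ud{\xi}\bigg)^{1/2}.
\]
The second factor is $\hms{f}{q+s}{\Sigma}$ by Plancherel; the first factor is finite because $\abs{\xi}^{2q}(1+\abs{\xi}^2)^{-q-s}\le(1+\abs{\xi}^2)^{-s}$ and $\int_{R^2}(1+\abs{\xi}^2)^{-s}\,\ud{\xi}<\infty$ precisely when $2s>2$, i.e. $s>1$; in the periodic case the same exponent makes $\sum_{n}(1+\abs{n}^2)^{-s}$ converge. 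Summing over the finitely many $\alpha$ with $\abs{\alpha}=q$ only contributes a further combinatorial constant, and collecting the estimates gives $\lnm{\nabla^q\pp^{\epsilon}f}{\infty}^2\le C\hms{f}{q+s}{\Sigma}^2$ with $C$ independent of $\epsilon$.

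The only genuinely delicate point --- and the reason $s>1$ is imposed --- is the convergence of $\int_{R^2}(1+\abs{\xi}^2)^{-s}\,\ud{\xi}$ (and of its lattice-sum analogue), which is the standard two-dimensional Bessel-potential integrability threshold; everything else is a routine multiplier computation, and the $\epsilon$-uniformity is automatic since $\epsilon$ enters only through factors that are $\le1$.
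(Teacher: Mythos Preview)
Your argument is correct, but it takes a different route from the paper. The paper's proof is a two-liner: apply the three-dimensional Sobolev embedding $H^{s+1/2}(\Omega)\hookrightarrow L^{\infty}(\Omega)$ (valid exactly when $s>1$) to get $\lnm{\nabla^q\pp^{\epsilon}f}{\infty}^2\ls\hm{\nabla^q\pp^{\epsilon}f}{s+1/2}^2$, and then invoke the $H^k$ estimate of Lemma~\ref{appendix poisson 1} to bound the right-hand side by $\hms{f}{q+s}{\Sigma}^2$. Your approach is instead a direct slice-by-slice Fourier argument: bound $L^{\infty}$ by the $L^1$ (or $\ell^1$) norm of the horizontal Fourier transform, then close with Cauchy--Schwarz against a Bessel weight on $\Sigma$, with $s>1$ entering as the two-dimensional integrability threshold $\int_{R^2}(1+\abs{\xi}^2)^{-s}\,\ud{\xi}<\infty$. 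The two occurrences of $s>1$ are of course the same phenomenon viewed from $\Omega$ versus from $\Sigma$. Your route is self-contained (it does not call on Lemma~\ref{appendix poisson 1}) and, as you note, yields a constant that is manifestly independent of $\epsilon$; the paper's route, going through Lemma~\ref{appendix poisson 1}, carries a $C/\epsilon$ factor, which is harmless here since the lemma as stated does not claim $\epsilon$-uniformity.
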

\begin{proof}
A simple application of Sobolev embedding and lemma \ref{appendix
poisson 1} reveals
\begin{eqnarray*}
\lnm{\nabla^q\pp^{\epsilon}f}{\infty}^2\leq
C\hm{\nabla^q\pp^{\epsilon}f}{s+1/2}^2\leq C\hms{f}{q+s}{\Sigma}^2
\end{eqnarray*}
\end{proof}
The following lemma illustrate the specialty of derivative in
vertical direction.
\begin{lemma}\label{appendix poisson 3}
Let $\pp^{\epsilon}f$ be the $\epsilon$-Poisson integral of function
$f$. Then we have
\begin{eqnarray}
\lnm{\p_3\pp^{\epsilon}f}{\infty}^2\leq
C\epsilon\hms{f}{5/2}{\Sigma}^2
\end{eqnarray}
where $C>0$ is a constant independent of $\epsilon$.
\end{lemma}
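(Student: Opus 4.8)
The plan is to estimate $\p_3\pp^{\epsilon}f$ directly on the Fourier side and exploit the crucial factor of $\epsilon\abs{\xi}$ that differentiation in $x_3$ produces. Recalling the definition, in the infinite case
\begin{equation*}
\p_3\pp^{\epsilon}f(x',x_3)=\int_{R^2}\hat{f}(\xi)\,\epsilon\abs{\xi}\,e^{\epsilon\abs{\xi}x_3}e^{2\pi ix'\cdot\xi}\ud{\xi},
\end{equation*}
and the periodic case is identical with the integral replaced by the sum over $n$. The first step is to take the supremum in $(x',x_3)$ under the integral sign, using $\abs{e^{2\pi ix'\cdot\xi}}=1$ and $e^{\epsilon\abs{\xi}x_3}\leq1$ for $x_3\in(-b_0,0)$, to obtain the pointwise bound
\begin{equation*}
\lnm{\p_3\pp^{\epsilon}f}{\infty}\leq\epsilon\int_{R^2}\abs{\xi}\,\abs{\hat{f}(\xi)}\ud{\xi}.
\end{equation*}

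The second step is to convert the right-hand side into a Sobolev norm of $f$ on $\Sigma$ by a Cauchy--Schwarz argument with a suitable weight. Writing $\abs{\xi}\abs{\hat f(\xi)}=\big((1+\abs{\xi}^2)^{5/4}\abs{\hat f(\xi)}\big)\cdot\big(\abs{\xi}(1+\abs{\xi}^2)^{-5/4}\big)$ and applying Cauchy--Schwarz in $\xi$ gives
\begin{equation*}
\int_{R^2}\abs{\xi}\,\abs{\hat{f}(\xi)}\ud{\xi}\leq\bigg(\int_{R^2}(1+\abs{\xi}^2)^{5/2}\abs{\hat f(\xi)}^2\ud{\xi}\bigg)^{1/2}\bigg(\int_{R^2}\frac{\abs{\xi}^2}{(1+\abs{\xi}^2)^{5/2}}\ud{\xi}\bigg)^{1/2}.
\end{equation*}
The first factor is exactly $\hms{f}{5/2}{\Sigma}$ up to the usual constants, and the second factor is a finite number $C$ since the integrand decays like $\abs{\xi}^{-3}$ at infinity (in polar coordinates $\int r^2\cdot r^{-5}\cdot r\,dr=\int r^{-2}dr<\infty$) and is bounded near the origin. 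Squaring and inserting the factor $\epsilon^2$ — of which we only keep one power, as $\epsilon<1$ — yields $\lnm{\p_3\pp^{\epsilon}f}{\infty}^2\leq C\epsilon\hms{f}{5/2}{\Sigma}^2$ with $C$ independent of $\epsilon$.

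For the periodic case the same computation goes through verbatim with $\int_{R^2}\ud{\xi}$ replaced by the sum over $n\in(L_1^{-1}\mathbb{Z})\times(L_2^{-1}\mathbb{Z})$; the weighted series $\sum_n\abs{n}^2(1+\abs{n}^2)^{-5/2}$ converges by comparison with the corresponding integral, again because of the $\abs{n}^{-3}$ tail. The only genuine subtlety — and the step I would be most careful about — is bookkeeping the powers of $\epsilon$: differentiation in $x_3$ supplies a full factor $\epsilon\abs{\xi}$, and the extra power of $\abs{\xi}$ must be absorbed into the Sobolev weight (which is why the exponent is $5/2$ rather than $3/2$ as one might naively expect from Lemma \ref{appendix poisson 1}), while exactly one power of $\epsilon$ is left over to give the claimed smallness; one should not be tempted to also use the bound $e^{\epsilon\abs{\xi}x_3}\leq\min\{1,(\epsilon\abs{\xi})^{-1}\}$ as in the proof of Lemma \ref{appendix poisson 1}, since here the point is precisely to retain, not cancel, the gained power of $\epsilon$.
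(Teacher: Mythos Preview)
Your proof is correct, and in fact sharper: you obtain $\lnm{\p_3\pp^{\epsilon}f}{\infty}^2\leq C\epsilon^2\hms{f}{5/2}{\Sigma}^2$ before throwing away one power of $\epsilon$.

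Your route differs from the paper's. The paper does not take the supremum directly on the Fourier side; instead it chains three estimates: Sobolev embedding $\lnm{\p_3\pp^{\epsilon}f}{\infty}^2\ls\hm{\p_3\pp^{\epsilon}f}{2}^2$ in the three-dimensional slab, then the key observation that each vertical derivative of $\pp^{\epsilon}f$ gains a factor $\epsilon$ in $L^2$ compared with horizontal derivatives (so $\hm{\p_3\pp^{\epsilon}f}{2}^2\ls\epsilon^2\hm{\pp^{\epsilon}f}{3}^2$), and finally Lemma~\ref{appendix poisson 1} to convert $\hm{\pp^{\epsilon}f}{3}^2$ into $\epsilon^{-1}\hms{f}{5/2}{\Sigma}^2$. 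The net balance $\epsilon^2\cdot\epsilon^{-1}=\epsilon$ is exactly the claimed power. Your argument bypasses the Sobolev embedding and Lemma~\ref{appendix poisson 1} entirely by using the trivial bound $\lnm{g}{\infty}\leq\nm{\hat g}_{L^1}$ and a single weighted Cauchy--Schwarz on $\Sigma$; this is more elementary and loses nothing, whereas the paper's route makes the mechanism $\p_3\leftrightarrow\epsilon\cdot D$ explicit at the $L^2$ level, which is the form reused elsewhere (e.g.\ in Theorem~\ref{geometric transform}).
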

\begin{proof}
We can simply bound as follows.
\begin{equation}
\lnm{\p_3\pp^{\epsilon}f}{\infty}^2\leq
C\hm{\p_3\pp^{\epsilon}f}{2}^2\leq C\epsilon^2
\hm{\pp^{\epsilon}f}{3}^2\leq C\epsilon\hms{f}{5/2}{\Sigma}^2
\end{equation}
Note that the first inequality is based on Sobolev embedding theorem
and the third one is a straightforward application of
lemma \ref{appendix poisson 1}, so we only need to verify the second inequality in detail.\\
By definition, it is easy to see
\begin{eqnarray}
\p_3\pp^{\epsilon}f=\epsilon\int_{R^2}\abs{\xi}\hat{f}(\xi)e^{\epsilon\abs{\xi}x_3}e^{2\pi
ix'\cdot\xi}\ud{\xi}
\end{eqnarray}
Hence, we have
\begin{eqnarray*}
\hm{\p_3\pp^{\epsilon}f}{0}^2&=&\epsilon^2\int_{R^2}\int_{-b_0}^0\abs{\xi}^2\abs{\hat{f}(\xi)}^2e^{2\epsilon\abs{\xi}x_3}\ud{\xi}\ud{x_3}
=
\frac{\epsilon^2}{(2\pi)^2}\bigg((2\pi)^2\int_{R^2}\int_{-b_0}^0\abs{\xi}^2\abs{\hat{f}(\xi)}^2e^{2\epsilon\abs{\xi}x_3}\ud{\xi}\ud{x_3}\bigg)\nonumber\\
&\leq&\frac{\epsilon^2}{(2\pi)^2}\hm{\p_1\pp^{\epsilon}f}{0}^2
\end{eqnarray*}
Similarly, we can show that for $i,j=1,2,3$
\begin{eqnarray*}
\hm{\p_{3i}\pp^{\epsilon}f}{0}^2\leq
\frac{\epsilon^2}{(2\pi)^4}\hm{\p_{11}\pp^{\epsilon}f}{0}^2
\end{eqnarray*}
\begin{eqnarray*}
\hm{\p_{3ij}\pp^{\epsilon}f}{0}^2\leq
\frac{\epsilon^2}{(2\pi)^6}\hm{\p_{111}\pp^{\epsilon}f}{0}^2
\end{eqnarray*}
So we have
\begin{equation}
\hm{\p_3\bar\eta^{\epsilon}}{2}^2\leq C\epsilon^2
\hm{\bar\eta^{\epsilon}}{3}^2
\end{equation}
\end{proof}

\subsubsection{Poisson Integral in Periodic Case}

Suppose $\Sigma=(L_1\mathbb{T})\times(L_2\mathbb{T})$, we define the
$\epsilon$-Poisson integral as follows.
\begin{eqnarray}\label{appendix poisson def 2}
\pp^{\epsilon}f(x',x_3)=\sum_{n\in(L_1^{-1}\mathbb{Z})\times(L_2^{-1}\mathbb{Z})}e^{2\pi
in\cdot x'}e^{\epsilon\abs{n}x_3}\hat{f}(n)
\end{eqnarray}
where
\begin{eqnarray}
\hat{f}(n)=\int_{\Sigma}f(x')\frac{e^{-2\pi in\cdot
x'}}{L_1L_2}\ud{x'}
\end{eqnarray}
\begin{lemma}\label{appendix poisson 4}
Let $\pp^{\epsilon}f$ be the $\epsilon$-Poisson integral of function
$f$ which is in homogeneous Sobolev space $\dot{H}^{q-1/2}(\Sigma)$
for $q\in\mathbb{N}$. Then we have
\begin{eqnarray}
\hm{\nabla^q\pp^{\epsilon}f}{0}^2\leq\frac{C}{\epsilon}\nm{f}^2_{\dot{H}^{q-1/2}}
\end{eqnarray}
where $C>0$ is a constant independent of $\epsilon$. In particular,
we have
\begin{eqnarray}\label{Appendix temp 2}
\hm{\pp^{\epsilon}f}{q}^2\leq
\frac{C}{\epsilon}\hms{f}{q-1/2}{\Sigma}^2
\end{eqnarray}
\end{lemma}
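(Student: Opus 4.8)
The plan is to mimic the proof of Lemma \ref{appendix poisson 1} essentially verbatim, replacing the Fourier integral over $\xi\in R^2$ by the Fourier series over the dual lattice $n\in(L_1^{-1}\mathbb{Z})\times(L_2^{-1}\mathbb{Z})$ and the Plancherel identity by its discrete counterpart on the torus. Writing $\pp^{\epsilon}f(x',x_3)=\sum_{n}\hat{f}(n)e^{\epsilon\abs{n}x_3}e^{2\pi in\cdot x'}$, a spatial derivative of total order $q$ pulls down a factor bounded by $(2\pi\abs{n})^q$ --- the vertical derivative $\p_3$ in particular produces $\epsilon\abs{n}\le\abs{n}$ since $0<\epsilon<1$ --- and the constant mode $n=0$ is annihilated by any derivative. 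Hence, by the discrete Parseval identity in $x'$ and Fubini in $x_3$,
\begin{equation*}
\hm{\nabla^q\pp^{\epsilon}f}{0}^2\ls\sum_{n\neq0}\abs{n}^{2q}\abs{\hat{f}(n)}^2\int_{-b_0}^0 e^{2\epsilon\abs{n}x_3}\ud{x_3}=\sum_{n\neq0}\abs{n}^{2q}\abs{\hat{f}(n)}^2\frac{1-e^{-2\epsilon b_0\abs{n}}}{2\epsilon\abs{n}}.
\end{equation*}

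Next I would invoke the elementary bound $\frac{1-e^{-2\epsilon b_0\abs{n}}}{2\epsilon\abs{n}}\leq\min\{\frac{1}{2\epsilon\abs{n}},b_0\}$. Taking the first alternative gives
\begin{equation*}
\hm{\nabla^q\pp^{\epsilon}f}{0}^2\ls\frac{1}{\epsilon}\sum_{n\neq0}\abs{n}^{2q-1}\abs{\hat{f}(n)}^2\ls\frac{1}{\epsilon}\nm{f}^2_{\dot{H}^{q-1/2}},
\end{equation*}
which is the first asserted inequality, the constant being independent of $\epsilon$ because all of the $\epsilon$-dependence has been extracted explicitly.

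For the ``in particular'' statement one estimates the full norm by $\hm{\pp^{\epsilon}f}{q}^2\ls\sum_{j=0}^{q}\hm{\nabla^j\pp^{\epsilon}f}{0}^2$. For $1\le j\le q$ the bound just obtained applies. For $j=0$ one must additionally account for the zero mode, where $\int_{-b_0}^0 1\,\ud{x_3}=b_0$ contributes $\ls b_0\abs{\hat f(0)}^2\ls b_0\hms{f}{0}{\Sigma}^2$, while for the nonzero modes the second alternative $b_0$ in the minimum yields $\hm{\pp^{\epsilon}f}{0}^2\ls b_0\hms{f}{0}{\Sigma}^2$. Since $0<\epsilon<1$ and $b_0$ is fixed, each of these pieces is dominated by $\frac{C}{\epsilon}\hms{f}{q-1/2}{\Sigma}^2$, and summing over $j$ gives (\ref{Appendix temp 2}).

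There is no genuine obstacle here: the only points worth a moment's care are that spatial derivatives kill the constant Fourier mode, so the would-be divergence at $\abs{n}=0$ never enters the homogeneous estimate, and that on the bounded torus $\nm{f}_{\dot{H}^{q-1/2}(\Sigma)}\le\hms{f}{q-1/2}{\Sigma}$, so the passage from the homogeneous bound to (\ref{Appendix temp 2}) is immediate. Everything else is the discrete mirror of the computation already performed in Lemma \ref{appendix poisson 1}.
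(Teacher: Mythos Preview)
Your proposal is correct and follows essentially the same approach as the paper: apply Parseval in $x'$ and Fubini in $x_3$ to reduce to $\sum_n\abs{n}^{2q}\abs{\hat f(n)}^2\frac{1-e^{-2\epsilon b_0\abs{n}}}{2\epsilon\abs{n}}$, then use $\frac{1-e^{-2\epsilon b_0\abs{n}}}{2\epsilon\abs{n}}\le\min\{\frac{1}{2\epsilon\abs{n}},b_0\}$ to obtain both the homogeneous bound and the $H^0$ bound needed for (\ref{Appendix temp 2}). Your explicit handling of the zero mode and the remark that $\p_3$ contributes $\epsilon\abs{n}\le\abs{n}$ are slightly more careful than the paper's write-up, but the argument is otherwise identical.
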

\begin{proof}
By Fubini theorem and Parseval identity, we may bound
\begin{eqnarray*}
\hm{\nabla^q\pp^{\epsilon}f}{0}^2&\leq&
(2\pi)^{2q}\sum_{n\in(L_1^{-1}\mathbb{Z})\times(L_2^{-1}\mathbb{Z})}
\int_{-b_0}^0\abs{n}^{2q}e^{2\epsilon\abs{n}x_3}\abs{\hat{f}(n)}^2\ud{x_3}\nonumber\\
&=&(2\pi)^{2q}\sum_{n\in(L_1^{-1}\mathbb{Z})\times(L_2^{-1}\mathbb{Z})}
\abs{n}^{2q}\abs{\hat{f}(n)}^2\bigg(\frac{1-e^{-2\epsilon b_0\abs{n}}}{2\epsilon\abs{n}}\bigg)\nonumber\\
&\leq&\frac{(2\pi)^{2q}}{2\epsilon}\sum_{n\in(L_1^{-1}\mathbb{Z})\times(L_2^{-1}\mathbb{Z})}
\abs{n}^{2q-1}\abs{\hat{f}(n)}^2\leq\frac{\pi}{\epsilon}\nm{f}^2_{\dot{H}^{q-1/2}}
\end{eqnarray*}
We can simply take $C=\pi$ to achieve the estimate. Furthermore,
considering
\begin{displaymath}
\frac{1-e^{-2\epsilon b_0\abs{n}}}{2\epsilon\abs{n}}\leq
\min\bigg\{\frac{1}{2\epsilon\abs{n}},b_0\bigg\}
\end{displaymath}
and $0<\epsilon<1$, we have
\begin{equation}
\hm{\pp^{\epsilon}f}{0}^2\leq \frac{\pi
b_0}{\epsilon}\hms{f}{0}{\Sigma}^2
\end{equation}
Hence, (\ref{Appendix temp 2}) easily follows.
\end{proof}
Similar to infinite case, we give an $L^{\infty}$ estimate for
Poisson integral.
\begin{lemma}\label{appendix poisson 5}
Let $\pp^{\epsilon}f$ be the $\epsilon$-Poisson integral of function
$f$. Then for $q\in\mathbb{N}$ and $s>1$, we have
\begin{eqnarray*}
\lnm{\nabla^q\pp^{\epsilon}f}{\infty}^2\leq C\hms{f}{q+s}{\Sigma}^2
\end{eqnarray*}
\end{lemma}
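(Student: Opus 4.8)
The plan is to mirror the proof of Lemma \ref{appendix poisson 2} almost verbatim, simply replacing the infinite-case input Lemma \ref{appendix poisson 1} by its periodic counterpart Lemma \ref{appendix poisson 4}. Recall that $\pp^{\epsilon}f$ lives on the slab $\Omega=\Sigma\times(-b_0,0)$, which in the periodic case is a bounded three-dimensional domain (a product of the flat $2$-torus with an interval), so the standard Sobolev embedding $H^{m}(\Omega)\hookrightarrow L^{\infty}(\Omega)$ holds whenever $m>3/2$. First I would apply this with $m=s+1/2$: since the hypothesis $s>1$ gives $s+1/2>3/2$, we obtain
\begin{eqnarray*}
\lnm{\nabla^q\pp^{\epsilon}f}{\infty}^2\leq C\hm{\nabla^q\pp^{\epsilon}f}{s+1/2}^2\leq C\hm{\pp^{\epsilon}f}{q+s+1/2}^2,
\end{eqnarray*}
where the second step only uses that differentiating $q$ times costs $q$ derivatives of regularity. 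Then I would invoke Lemma \ref{appendix poisson 4} to bound the last quantity by $\tfrac{C}{\epsilon}\hms{f}{q+s}{\Sigma}^2$; since $0<\epsilon<1$ is a fixed parameter this absorbs into the constant and yields the claimed estimate.

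The only point requiring a little care is that $q+s+1/2$ need not be an integer, whereas Lemma \ref{appendix poisson 4} is stated for integer order. I would handle this either by interpolating the conclusion of Lemma \ref{appendix poisson 4} between consecutive integer orders — the Poisson extension commutes with the spectral projections onto the frequencies $n$, so real interpolation is transparent here — or, more directly, by redoing the Fourier-side computation: for $x_3\in(-b_0,0)$ one has $e^{\epsilon\abs{n}x_3}\leq1$, hence $\abs{\nabla^q\pp^{\epsilon}f(x',x_3)}\leq\sum_n\abs{n}^q\abs{\hat f(n)}$, and Cauchy--Schwarz against $\sum_n\br{n}^{-2s}<\infty$ (finite precisely because $s>1$ and the sum is over a $2$D lattice) gives $\lnm{\nabla^q\pp^{\epsilon}f}{\infty}^2\leq C\sum_n\br{n}^{2(q+s)}\abs{\hat f(n)}^2=C\hms{f}{q+s}{\Sigma}^2$ with $C$ independent of $\epsilon$.

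I do not anticipate any real obstacle here: the argument is a routine transcription of the infinite-case proof, with the periodic Fourier series playing the role of the Fourier integral. The one place where the hypotheses genuinely bite is the dimension count — both in the three-dimensional Sobolev embedding $H^{s+1/2}(\Omega)\hookrightarrow L^{\infty}(\Omega)$ and in the summability of $\br{n}^{-2s}$ over the planar lattice — and in each case the requirement reduces to exactly $s>1$, which is assumed.
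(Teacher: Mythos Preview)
Your proposal is correct and follows essentially the same approach as the paper: Sobolev embedding $H^{s+1/2}(\Omega)\hookrightarrow L^\infty(\Omega)$ followed by the periodic Poisson estimate of Lemma~\ref{appendix poisson 4}. You are in fact more careful than the paper, which applies Lemma~\ref{appendix poisson 4} at the non-integer order $q+s+1/2$ without comment; your direct Fourier--Cauchy--Schwarz argument cleanly sidesteps this and even yields a constant independent of $\epsilon$.
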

\begin{proof}
A simple application of Sobolev embedding and lemma (\ref{appendix
poisson 4}) reveals
\begin{eqnarray}
\lnm{\nabla^q\pp^{\epsilon}f}{\infty}^2\leq
C\hm{\nabla^q\pp^{\epsilon}f}{s+1/2}^2\leq C\hms{f}{q+s}{\Sigma}^2
\end{eqnarray}
\end{proof}
We still need the following lemma to illustrate the specialty of
derivative in vertical direction.
\begin{lemma}\label{appendix poisson 6}
Let $\pp^{\epsilon}f$ be the $\epsilon$-Poisson integral of function
$f$. Then we have
\begin{eqnarray}
\lnm{\p_3\pp^{\epsilon}f}{\infty}^2\leq
C\epsilon\hms{f}{5/2}{\Sigma}^2
\end{eqnarray}
where $C>0$ is a constant independent of $\epsilon$.
\end{lemma}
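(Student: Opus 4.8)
The statement is the periodic counterpart of Lemma~\ref{appendix poisson 3}, so the plan is to run the same three-step argument with the lattice sum~(\ref{appendix poisson def 2}) replacing the Fourier integral. First, since $\dim\Omega=3$, the Sobolev embedding $H^2(\Omega)\hookrightarrow L^{\infty}(\Omega)$ reduces matters to
\begin{equation*}
\lnm{\p_3\pp^{\epsilon}f}{\infty}^2\ls\hm{\p_3\pp^{\epsilon}f}{2}^2 .
\end{equation*}
Second, I would establish the ``$\epsilon$-gain'' estimate
\begin{equation*}
\hm{\p_3\pp^{\epsilon}f}{2}^2\ls\epsilon^2\hm{\pp^{\epsilon}f}{3}^2 ,
\end{equation*}
which records the fact that a vertical derivative of an $\epsilon$-Poisson integral costs a factor $\epsilon$ less than a horizontal one. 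Third, Lemma~\ref{appendix poisson 4} with $q=3$ gives $\hm{\pp^{\epsilon}f}{3}^2\ls\epsilon^{-1}\hms{f}{5/2}{\Sigma}^2$. Chaining the three displays produces $\lnm{\p_3\pp^{\epsilon}f}{\infty}^2\ls\epsilon^2\cdot\epsilon^{-1}\hms{f}{5/2}{\Sigma}^2=\epsilon\hms{f}{5/2}{\Sigma}^2$, which is the assertion.

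All the work is in the $\epsilon$-gain estimate. Differentiating~(\ref{appendix poisson def 2}) termwise, each $\p_3$ brings down a factor $\epsilon\abs{n}$ and each horizontal $\p_j$ brings down $2\pi in_j$, while the zero mode contributes nothing to $\p_3\pp^{\epsilon}f$. Hence, for a multi-index $\alpha=(\alpha_1,\alpha_2,\alpha_3)$ with $\abs{\alpha}\le2$, the $n$-th coefficient of $\p^{\alpha}\p_3\pp^{\epsilon}f$ has modulus $(2\pi)^{\alpha_1+\alpha_2}\epsilon^{\alpha_3+1}\abs{n_1}^{\alpha_1}\abs{n_2}^{\alpha_2}\abs{n}^{\alpha_3+1}\abs{\hat f(n)}\,e^{\epsilon\abs{n}x_3}$, which is $\le C\epsilon\,\abs{n}^{\abs{\alpha}+1}\abs{\hat f(n)}\,e^{\epsilon\abs{n}x_3}$ since $0<\epsilon<1$. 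I would then use the polynomial-norm equivalence $\abs{n}^{2(\abs{\alpha}+1)}=(\abs{n_1}^2+\abs{n_2}^2)^{\abs{\alpha}+1}\le C\sum_{\beta_1+\beta_2=\abs{\alpha}+1}\abs{n_1}^{2\beta_1}\abs{n_2}^{2\beta_2}$, Parseval's identity, and the bound $\int_{-b_0}^0 e^{2\epsilon\abs{n}x_3}\ud{x_3}\le b_0$ to get
\begin{equation*}
\hm{\p^{\alpha}\p_3\pp^{\epsilon}f}{0}^2\ls\epsilon^2\sum_{\beta_1+\beta_2=\abs{\alpha}+1}\hm{\p_1^{\beta_1}\p_2^{\beta_2}\pp^{\epsilon}f}{0}^2\le\epsilon^2\hm{\pp^{\epsilon}f}{3}^2 ,
\end{equation*}
the last inequality because $\abs{\alpha}+1\le3$ and the horizontal derivatives on the left are among the terms defining $\hm{\cdot}{3}$. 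Summing over the finitely many $\alpha$ with $\abs{\alpha}\le2$ yields the $\epsilon$-gain estimate.

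There is no genuine obstacle here; the one subtlety — where a careless computation can fail — is that $\p_3\pp^{\epsilon}f$ together with its lower-order horizontal derivatives must be compared against the \emph{full} $H^3$ norm, assembled from all horizontal derivatives of the matching order via the polynomial-norm equivalence, rather than against a single fixed derivative. If anything the periodic case is cleaner than $\Sigma=R^2$: since $\abs{n}\ge\min\{L_1^{-1},L_2^{-1}\}>0$ on the nonzero lattice, there are no low-frequency issues to track, and one could even absorb the constants through this spectral gap. The only nontrivial input invoked along the way is Lemma~\ref{appendix poisson 4}.
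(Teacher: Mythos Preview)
Your proof is correct and follows essentially the same three-step argument as the paper: Sobolev embedding $H^2\hookrightarrow L^\infty$, the $\epsilon$-gain estimate $\hm{\p_3\pp^{\epsilon}f}{2}^2\ls\epsilon^2\hm{\pp^{\epsilon}f}{3}^2$, and Lemma~\ref{appendix poisson 4} with $q=3$. Your treatment of the middle step is in fact more careful than the paper's, which compares $\abs{n}^2$ directly with $\abs{n_1}^2$ (i.e.\ with $\hm{\p_1\pp^{\epsilon}f}{0}^2$ alone); your use of the polynomial-norm equivalence $\abs{n}^{2k}\ls\sum_{\beta_1+\beta_2=k}\abs{n_1}^{2\beta_1}\abs{n_2}^{2\beta_2}$ correctly distributes the bound over all horizontal derivatives of the matching order, which is what is actually needed.
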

\begin{proof}
We can simply bound as follows.
\begin{equation}
\lnm{\p_3\pp^{\epsilon}f}{\infty}^2\leq
C\hm{\p_3\pp^{\epsilon}f}{2}^2\leq C\epsilon^2
\hm{\pp^{\epsilon}f}{3}^2\leq C\epsilon\hms{f}{5/2}{\Sigma}^2
\end{equation}
Note that the first inequality is based on Sobolev embedding theorem
and the third one is a straightforward application of
lemma (\ref{appendix poisson 1}), so we only need to verify the second inequality in details.\\
By definition, it is easy to see
\begin{eqnarray}
\p_3\pp^{\epsilon}f=\epsilon\sum_{n\in(L_1^{-1}\mathbb{Z})\times(L_2^{-1}\mathbb{Z})}\abs{n}\hat{f}(\xi)e^{\epsilon\abs{n}x_3}e^{2\pi
ix'\cdot n}
\end{eqnarray}
Hence, we have
\begin{eqnarray*}
\hm{\p_3\pp^{\epsilon}f}{0}^2&=&\epsilon^2\sum_{n\in(L_1^{-1}\mathbb{Z})\times(L_2^{-1}\mathbb{Z})}
\int_{-b_0}^0\abs{n}^2\abs{\hat{f}(n)}^2e^{2\epsilon\abs{n}x_3}\ud{x_3}\nonumber\\
&=&
\frac{\epsilon^2}{(2\pi)^2}\bigg((2\pi)^2\sum_{n\in(L_1^{-1}\mathbb{Z})\times(L_2^{-1}\mathbb{Z})}
\int_{-b_0}^0\abs{n}^2\abs{\hat{f}(n)}^2e^{2\epsilon\abs{n}x_3}\ud{x_3}\bigg)\nonumber\\
&\leq&\frac{\epsilon^2}{(2\pi)^2}\hm{\p_1\pp^{\epsilon}f}{0}^2
\end{eqnarray*}
Similarly, we can show that for $i,j=1,2,3$
\begin{eqnarray*}
\hm{\p_{3i}\pp^{\epsilon}f}{0}^2\leq
\frac{\epsilon^2}{(2\pi)^4}\hm{\p_{11}\pp^{\epsilon}f}{0}^2
\end{eqnarray*}
\begin{eqnarray*}
\hm{\p_{3ij}\pp^{\epsilon}f}{0}^2\leq
\frac{\epsilon^2}{(2\pi)^6}\hm{\p_{111}\pp^{\epsilon}f}{0}^2
\end{eqnarray*}
So we have
\begin{equation}
\hm{\p_3\bar\eta^{\epsilon}}{2}^2\leq C\epsilon^2
\hm{\bar\eta^{\epsilon}}{3}^2
\end{equation}
\end{proof}

\subsubsection{Homomorphism of the Geometric Transform}

\begin{theorem}\label{geometric transform}
If the initial data satisfies $\ee+b_0>\rho>0$ for all
$x'\in\Sigma$, then we can choose a $\epsilon>0$ such that there
exists a $\delta>0$ satisfying $J^{\epsilon}(0)\geq\delta>0$.
\end{theorem}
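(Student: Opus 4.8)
The plan is to estimate $J^{\epsilon}(0) = \tfrac{b}{b_0} + \tfrac{\bar\eta^{\epsilon}}{b_0} + \partial_3\bar\eta^{\epsilon}\tilde b$ at $t=0$, where $\bar\eta^{\epsilon} = \pp^{\epsilon}\eta_0$. The natural decomposition is to write
\begin{equation*}
J^{\epsilon}(0) = \frac{b}{b_0} + \frac{\bar\eta^{\epsilon}-\eta_0}{b_0} + \frac{\eta_0}{b_0} + \partial_3\bar\eta^{\epsilon}\tilde b,
\end{equation*}
and recall that on $\Sigma$ we have $\bar\eta^{\epsilon}|_{x_3=0} = \eta_0$ (the Poisson integral restricted to the top boundary is the identity), so the term $\tfrac{b}{b_0}+\tfrac{\eta_0}{b_0}$ restricted to $x_3=0$ equals $\tfrac{b+\eta_0}{b_0}\geq \tfrac{\rho}{b_0}>0$ by hypothesis. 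The remaining terms are the "error" coming from moving off the top boundary into the slab: $\tfrac{\bar\eta^{\epsilon}(x',x_3)-\bar\eta^{\epsilon}(x',0)}{b_0}$ and $\partial_3\bar\eta^{\epsilon}\tilde b$. First I would control these uniformly in $x\in\Omega$ and show they are $O(\sqrt{\epsilon})$ as $\epsilon\to 0$.

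The key estimate is Lemma \ref{appendix poisson 3} (resp. Lemma \ref{appendix poisson 6} in the periodic case), which gives $\lnm{\p_3\pp^{\epsilon}f}{\infty}^2 \leq C\epsilon\hms{f}{5/2}{\Sigma}^2$. This immediately bounds $\abs{\partial_3\bar\eta^{\epsilon}\tilde b} \leq \abs{\partial_3\bar\eta^{\epsilon}} \leq C\sqrt{\epsilon}\,\hm{\eta_0}{5/2}$ on $\Omega$ (using $0\leq \tilde b = 1+x_3/b_0 \leq 1$), and by the fundamental theorem of calculus along the vertical direction, $\abs{\bar\eta^{\epsilon}(x',x_3)-\bar\eta^{\epsilon}(x',0)} = \abs{\int_0^{x_3}\partial_3\bar\eta^{\epsilon}(x',s)\,ds} \leq b_0\lnm{\p_3\pp^{\epsilon}\eta_0}{\infty} \leq C b_0\sqrt{\epsilon}\,\hm{\eta_0}{5/2}$, so $\tfrac{1}{b_0}\abs{\bar\eta^{\epsilon}-\eta_0}\leq C\sqrt{\epsilon}\,\hm{\eta_0}{5/2}$ as well. (Here one also needs the trivial fact that on $\Sigma$ the two displayed "boundary" pieces combine to $(b+\eta_0)/b_0$; more precisely, for general $x_3$, write $b/b_0 + \eta_0/b_0 \geq b/b_0 \geq b_{\min}/b_0>0$ and observe $b+\eta_0 \geq \rho$ gives the sharper lower bound when $x_3$ is near $0$ — but in fact the cleanest route is to note $J^{\epsilon}(0)$ is continuous in $x_3$, equals $(b+\eta_0)/b_0 + \partial_3\bar\eta^{\epsilon}\tilde b|_{x_3=0} \geq \rho/b_0 - C\sqrt\epsilon\hm{\eta_0}{5/2}$ at $x_3=0$, and the full modulus-of-continuity estimate above then controls the deviation throughout the slab.)

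Putting these together, for every $x\in\Omega$,
\begin{equation*}
J^{\epsilon}(0)(x) \;\geq\; \frac{\rho}{b_0} \;-\; 2C\sqrt{\epsilon}\,\hm{\eta_0}{5/2},
\end{equation*}
so choosing $\epsilon>0$ small enough — depending only on $\rho$, $b_0$, and $\hm{\eta_0}{5/2}$ — that $2C\sqrt{\epsilon}\,\hm{\eta_0}{5/2} \leq \tfrac{\rho}{2b_0}$, we obtain $J^{\epsilon}(0)\geq \tfrac{\rho}{2b_0}=:\delta>0$ uniformly on $\Omega$, as claimed. The only mild subtlety — and the step I would be most careful with — is the reduction to the top boundary: one must verify that $\pp^{\epsilon}\eta_0$ genuinely agrees with $\eta_0$ on $x_3=0$ (immediate from the Fourier-side definition, since $e^{\epsilon\abs\xi x_3}|_{x_3=0}=1$) and that $\hm{\eta_0}{5/2}<\infty$, which is guaranteed since $N\geq 3$ forces $\ee\in H^{2N+1/2}(\Sigma)\subset H^{5/2}(\Sigma)$. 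Everything else is a direct application of the Poisson-integral lemmas already established.
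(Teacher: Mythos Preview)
Your proof is correct and follows essentially the same approach as the paper: decompose $J^{\epsilon}(0)=\tfrac{b+\eta_0}{b_0}+\tfrac{\bar\eta^{\epsilon}-\eta_0}{b_0}+\partial_3\bar\eta^{\epsilon}\tilde b$, bound the last two terms via Lemma~\ref{appendix poisson 3}/\ref{appendix poisson 6} and the fundamental theorem of calculus, and choose $\epsilon$ small depending on $\rho$, $b_0$, and $\hm{\eta_0}{5/2}$. The parenthetical digression about ``the cleanest route'' is unnecessary---your displayed decomposition already holds pointwise throughout $\Omega$ (not just on $\Sigma$), since $\eta_0=\eta_0(x')$ is independent of $x_3$ and $\bar\eta^{\epsilon}(x',0)=\eta_0(x')$---so you may simply delete that aside and the argument goes through cleanly.
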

\begin{proof}
Naturally there always exists $\delta>0$, such that
$\eta_0(x')+b_0(x')\geq2b_0\delta>0$ for arbitrary $x'\in\Sigma$.
Based on lemma (\ref{appendix poisson 3}) and lemma (\ref{appendix
poisson 6}), if we take $\epsilon\leq
\delta^2/(4C^2\hm{\eta_0}{5/2}^2)$, we have
\begin{equation}
\lnm{\p_3\bar\eta_0^{\epsilon}}{\infty}\leq\delta/2
\end{equation}
where $\bar\eta_0^{\epsilon}$ denotes $\eb^{\epsilon}$ at $t=0$.
Furthermore, by fundamental theorem of calculus, we can estimate for
any $x_3\in[-b_0,0]$ and $x'\in\Sigma$
\begin{eqnarray}
\abs{\eb_0^{\epsilon}(x',x_3)-\eb_0^{\epsilon}(x',0)}\leq
\int_{-b_0}^0\abs{\p_3\eb_0^{\epsilon}(x', z)}\ud{z} \leq
b_0\lnm{\p_3\bar\eta_0^{\epsilon}}{\infty} \leq b_0\delta/2
\end{eqnarray}
Since $\eb_0^{\epsilon}(x',0)=\ee(x')$, we have
\begin{eqnarray}
J^{\epsilon}(0)&=&\frac{b}{b_0}+\frac{\eb^{\epsilon}_0}{b_0}+\partial_3\eb^{\epsilon}_0\tilde{b}
=\frac{b+\ee}{b_0}+\frac{\eb^{\epsilon}_0-\ee}{b_0}+\partial_3\eb^{\epsilon}_0\tilde{b}\nonumber\\
&\geq&2\delta-\delta/2-\delta/2=\delta>0
\end{eqnarray}
This means for given initial data $\ee$, if we take
$\epsilon=\delta^2/(4C^2\hm{\eta_0}{5/2}^2)$, then
$J^{\epsilon}\geq\delta>0$.
\end{proof}

\subsection{Preliminaries}

\subsubsection{Transform Estimates}

In order to study the linear problem in the slab domain, we will
employ the idea that transforming the variable-coefficient problem
into a constant-coefficient problem through diffeomorphism. So
before estimating, we need to confirm the mapping $\Phi$ is an
isomorphism from $\Omega$ to $\Omega(t)$ and determine the relation
of corresponding norms between these two spaces.
\begin{lemma}\label{transform estimate}
Let $\Psi: \Omega\rightarrow\Omega'$ be a $C^1$ diffeomorphism
satisfying $\Psi \s{k+1}_{loc}$, the Jacobi
$J=\det(\nabla\Psi)>\delta>0$ a.e. in $\Omega$ and $\nabla\Psi-I
\s{k}(\Omega)$ for an integer $k\geq3$. If $v \s{m}(\Omega')$, then
$v\circ\Psi\s{m}(\Omega)$ for $m=0,1,\ldots,k+1$, and
\begin{equation}\label{transform estimate 1}
\|v\circ\Psi\|_{H^m(\Omega)}\lesssim
P(\hms{\nabla\Psi-I}{k}{\Omega})\|v\|_{H^m(\Omega')}
\end{equation}
for a polynomial $P(\cdot)$. Similarly, for $u\in H^m(\Omega)$,
$u\circ\Psi^{-1}\in H^m(\Omega')$ for $m=0,1,\ldots,k+1$ and
\begin{equation}\label{transform estimate 2}
\|u\circ\Psi^{-1}\|_{H^m(\Omega')}\lesssim
P(\hms{\nabla\Psi-I}{k}{\Omega})\|u\|_{H^m(\Omega)}
\end{equation}
Let $\Sigma'=\Psi(\Sigma)$ denote the upper boundary of $\Omega'$.
If $v\in H^{m-1/2}(\Sigma')$ for $m=1,\ldots,k-1$, then
$v\circ\Psi\in H^{m-1/2}(\Sigma)$, and
\begin{equation}\label{transform estimate 3}
\|v\circ\Psi\|_{H^{m-1/2}(\Sigma)}\lesssim
C(\hms{\nabla\Psi-I}{k}{\Omega})\|v\|_{H^{m-1/2}(\Sigma')}
\end{equation}
If $u\in H^{m-1/2}(\Sigma)$ for $m=1,\ldots,k-1$, then
$u\circ\Psi^{-1}\in H^{m-1/2}(\Sigma')$ and
\begin{equation}\label{transform estimate 4}
\|u\circ\Psi^{-1}\|_{H^{m-1/2}(\Sigma')}\lesssim
C(\hms{\nabla\Psi-I}{k}{\Omega})\|u\|_{H^{m-1/2}(\Sigma)}
\end{equation}
where $C(\hms{\nabla\Psi-I}{k}{\Omega})$ denote a constant depending
on $\hms{\nabla\Psi-I}{k}{\Omega}$.
\end{lemma}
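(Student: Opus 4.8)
The plan is to prove the four bounds in the stated order, in each case reducing to the change-of-variables formula and the chain rule. The standing tools are the Sobolev embedding $H^k(\Omega)\hookrightarrow W^{1,\infty}(\Omega)$ (valid for $k\geq3$ in three dimensions), the tame product estimate $\nm{fg}_{H^j}\ls\nm{f}_{H^j}\nm{g}_{L^\infty}+\nm{f}_{L^\infty}\nm{g}_{H^j}$ for $0\leq j\leq k$, and the fact that $H^k(\Omega)$ is a Banach algebra. Since $\nabla\Psi-I\in H^k\hookrightarrow L^\infty$ we have $\delta\leq J\leq P(\hms{\nabla\Psi-I}{k}{\Omega})$ pointwise, in particular $1/J\in L^\infty$. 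I would first establish (\ref{transform estimate 1}) by induction on $m$. For $m=0$, changing variables $y=\Psi(x)$ gives $\nm{v\circ\Psi}_{L^2(\Omega)}^2=\int_{\Omega'}\abs{v}^2\,(1/J)\circ\Psi^{-1}\,\ud{y}\ls\delta^{-1}\nm{v}_{L^2(\Omega')}^2$. For the inductive step, suppose the claim holds through level $m-1\leq k$; given $v\in H^m(\Omega')$, write $\p_i(v\circ\Psi)=\sum_l(\p_lv\circ\Psi)\,\p_i\Psi_l$. Here $\p_lv\in H^{m-1}(\Omega')$, so $\p_lv\circ\Psi\in H^{m-1}(\Omega)$ with the desired polynomial bound by the inductive hypothesis, while $\p_i\Psi_l-\delta_{il}\in H^k(\Omega)\subset H^{m-1}(\Omega)$. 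Applying the product estimate (using the $H^{m-1}$-algebra property when $m-1\geq2$ and the embedding $H^k\hookrightarrow W^{1,\infty}$ when $m-1\leq1$) bounds $\nm{\p_i(v\circ\Psi)}_{H^{m-1}(\Omega)}$ by $P(\hms{\nabla\Psi-I}{k}{\Omega})\nm{v}_{H^m(\Omega')}$; combined with the $m=0$ bound this yields (\ref{transform estimate 1}) for $0\leq m\leq k+1$.

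To get (\ref{transform estimate 2}) it suffices to produce a bound $\hms{\nabla(\Psi^{-1})-I}{k}{\Omega'}\ls P(\hms{\nabla\Psi-I}{k}{\Omega})$ and then apply (\ref{transform estimate 1}) with $(\Omega,\Omega',\Psi)$ replaced by $(\Omega',\Omega,\Psi^{-1})$. Since $\Psi$ is a $C^1$ diffeomorphism I would bootstrap this bound: first $(\nabla\Psi)^{-1}=J^{-1}\,\mathrm{adj}(\nabla\Psi)\in L^\infty(\Omega)$ because $J\geq\delta$, hence $\nabla(\Psi^{-1})=(\nabla\Psi)^{-1}\circ\Psi^{-1}\in L^\infty(\Omega')$; then, differentiating this identity and repeatedly invoking the $m=0$ change-of-variables bound (which needs only the $L^\infty$ control just obtained) to transfer $H^j$ control of the rational expressions in $\nabla\Psi$ from $\Omega$ to $\Omega'$ one derivative at a time, one reaches $\nabla(\Psi^{-1})-I\in H^k(\Omega')$ with the required polynomial bound. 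The only non-polynomial composition, $t\mapsto1/t$ applied to $J$, is handled by a Moser-type estimate, $1/t$ being smooth on $[\delta,\,1+\nm{J-1}_{L^\infty}]$ and $J-1$ being a polynomial without constant term in the entries of $\nabla\Psi-I$.

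For the boundary estimates (\ref{transform estimate 3}) and (\ref{transform estimate 4}) I would not compose on the surfaces directly but reduce to the bulk estimates via extension and trace. For (\ref{transform estimate 3}): the trace of $\Psi$ makes $\Sigma'=\Psi(\Sigma)$ an $H^{k+1/2}$ surface, so for $m\leq k-1$ there is a bounded extension $E:H^{m-1/2}(\Sigma')\to H^m(\Omega')$ with $\nm{Ev}_{H^m(\Omega')}\ls C(\hms{\nabla\Psi-I}{k}{\Omega})\nm{v}_{H^{m-1/2}(\Sigma')}$; then $V:=Ev$ satisfies $V\circ\Psi\in H^m(\Omega)$ by (\ref{transform estimate 1}), and since the trace commutes with composition and $\Omega$ is a flat slab, the standard (universal-constant) trace inequality gives $\nm{v\circ\Psi}_{H^{m-1/2}(\Sigma)}\ls\nm{V\circ\Psi}_{H^m(\Omega)}\ls C(\hms{\nabla\Psi-I}{k}{\Omega})\nm{v}_{H^{m-1/2}(\Sigma')}$. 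Estimate (\ref{transform estimate 4}) is symmetric: extend $u\in H^{m-1/2}(\Sigma)$ into the flat slab $\Omega$ by the standard extension, push it to $\Omega'$ by (\ref{transform estimate 2}), and apply the trace inequality on the curved domain $\Omega'$, whose norm is controlled by $\hms{\nabla\Psi-I}{k}{\Omega}$ by the same mechanism used for (\ref{transform estimate 2}).

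I expect the two main obstacles to be (i) the quantitative $H^k(\Omega')$-control of $\nabla(\Psi^{-1})$, i.e.\ the Moser/bootstrap step above, which is what renders (\ref{transform estimate 2}) and the curved-side trace estimate effective; and (ii) the fractional Sobolev boundary estimates, where one must construct the extension operator on the curved domain $\Omega'$ with operator norm controlled by $\hms{\nabla\Psi-I}{k}{\Omega}$ --- the one place where the half-integer scale genuinely enters, and the source of the restriction $m\leq k-1$.
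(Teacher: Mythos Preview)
Your proposal is correct and constitutes the natural, self-contained argument for this lemma. The paper does not actually prove the result here: it simply observes that the argument of lemma~2.3.1 in \cite{book1} goes through verbatim, and that inspecting that proof shows the constant in (\ref{transform estimate 1}) and (\ref{transform estimate 2}) is in fact a polynomial in $\hms{\nabla\Psi-I}{k}{\Omega}$. Your induction/chain-rule argument for the bulk estimates and your extension--compose--trace reduction for the boundary estimates are exactly the standard scheme underlying that reference, and your bookkeeping naturally produces the polynomial dependence the paper highlights. The two obstacles you flag --- the bootstrap for $\nabla(\Psi^{-1})-I\in H^k(\Omega')$ and the curved-domain extension with quantitative norm --- are real, but they are routine and are precisely why the boundary estimates (\ref{transform estimate 3})--(\ref{transform estimate 4}) are stated only with a constant $C(\cdot)$ rather than a polynomial.
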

\begin{proof}
The proof is almost identical to that of lemma 2.3.1 in
\cite{book1}. It is easy to see from the original proof, in
(\ref{transform estimate 1}) and (\ref{transform estimate 2}), the
constant $C$ is actually a polynomial of
$\hms{\nabla\Psi-I}{k}{\Omega}$. All the other part of the proof is
exactly the same.
\end{proof}
\begin{remark}\label{transform remark}
Based on our assumptions on $\bar\eta$ and $b$, it is easy to see
$\Phi$ defined in $\ref{map}$ is a $C^1$ diffeomorphism satisfying
the hypothesis in lemma $\ref{transform estimate}$. Moreover, the
universal constant $C$ now can be taken in a succinct form
\begin{equation}
C=C(\hms{\e}{k+1/2}{\Sigma})
\end{equation}
If we only consider the estimate in the domain $\Omega$ and need to
specify the constant, we can take the explicit form
\begin{equation}
C=P(\hms{\e}{k+1/2}{\Sigma})
\end{equation}
\end{remark}

\subsubsection{Functional Spaces}

Now we introduce several functional spaces. We write $H^{k}(\Omega)$
and $H^{k}(\Sigma)$ for usual Sobolev space of either scalar or
vector functions. Define
\begin{equation}
\begin{array}{l}
W(t)=\{u(t)\in H^{1}(\Omega):\ u(t)|_{\Sigma_{b}}=0\}\\
X(t)=\{u(t)\in H^{1}(\Omega):\
u(t)|_{\Sigma_{b}}=0,\nabla_{\mathcal{A}(t)}\cdot u(t)=0\}
\end{array}
\end{equation}
Moreover, let $L^{2}([0,T];H^{k}(\Omega))$ and
$L^{2}([0,T];H^{k}(\Sigma))$ denotes the usual time-involved Sobolev
space. Define
\begin{equation}
\begin{array}{l}
\mathcal{W}=\{u\in L^{2}([0,T];H^{1}(\Omega)):\ u|_{\Sigma_{b}}=0\}\\
\mathcal{X}=\{u\in L^{2}([0,T];H^{1}(\Omega)):\
u|_{\Sigma_{b}}=0,\nabla_{\mathcal{A}}\cdot u=0\}
\end{array}
\end{equation}
It is easy to see $W$, $X$, $\mathcal{W}$ and $\mathcal{X}$ are all
Hilbert spaces with the inner product defined exactly the same as in
$H^{1}(\Omega)$ or $L^{2}([0,T];H^{1}(\Omega))$. Hence, in the
subsequence, we will take the norms of these spaces the same as
$\|\cdot\|_{H^{1}}$ or $\|\cdot\|_{L^{2}H^{1}}$.\\
 Furthermore, we define the functional space for zero upper boundary.
\begin{equation}
V(t)=\{u(t)\in H^{1}(\Omega):\ u(t)|_{\Sigma}=0\}
\end{equation}
Also we will need an orthogonal decomposition
$H^{0}(\Omega)=Y(t)\oplus Y^{\bot}(t)$, where
\begin{equation}
Y^{\bot}(t)=\{\nabla_{\mathcal{A}(t)}\varphi(t):\ \varphi(t)\in
V(t)\}
\end{equation}
In our use of these spaces, we will often drop the $(t)$ when there is no potential of confusion. \\
Finally we will define the regular divergence-free space in
$H^1(\Omega)$.
\begin{equation}
\begin{array}{l}
X_0=\{u\in H^{1}(\Omega):\
u|_{\Sigma_{b}}=0,\nabla\cdot u=0\}\\
\mathcal{X}_0=\{u\in L^{2}([0,T];H^{1}(\Omega)):\
u|_{\Sigma_{b}}=0,\nabla\cdot u=0\}
\end{array}
\end{equation}
Sometimes, we will consider the functional space with the similar
properties as defined above in transformed space $\Omega'$. If that
is the case, we will employ the notation $W(\Omega')$ or
$X(\Omega')$ to specify the space they live in. The same fashion can
be applied to other functional spaces. \\
We will use $\brh{\cdot,\cdot}$ to denote the normalized inner
product in $H^0$, i.e.
\begin{equation}
\brh{u,v}=\int_{\Omega}J u\cdot v
\end{equation}
and $\brht{\cdot,\cdot}$ to denote the normalized inner product in
$L^2([0,T];H^0)$, i.e.
\begin{equation}
\brht{u,v}=\int_0^t\int_{\Omega}J u\cdot v
\end{equation}
Also we will use $\br{\cdot,\cdot}_{W}$ and $\br{\cdot,\cdot}_{\w}$
to denote the regular inner product in $H^1$ if both arguments are
in $W$ or $\w$. Moreover, we define the regular inner product on
$\Sigma$ as follows.
\begin{equation}
\brs{u,v}=\int_{\Sigma}u\cdot v
\end{equation}
\begin{equation}
\brhs{u,v}=\int_0^t\int_{\Sigma}u\cdot v
\end{equation}
\ \\
The following result gives a version of Korn's type inequality for
initial domain $\Omega_0$. Here we employ Beale's idea in
\cite{book2}.
\begin{lemma}
Suppose $\Omega_0$ is the initial domain and $\e_0\in
H^{5/2}(\Sigma)$. Then
\begin{eqnarray}
\hms{v}{1}{\Omega_0}^2\ls\hms{\dm v}{0}{\Omega_0}^2
\end{eqnarray}
for any $v\in H^1(\Omega_0)$ and $v=0$ on $\{y_3=-b\}$.
\end{lemma}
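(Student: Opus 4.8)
The plan is to transfer the problem to the flat slab $\Omega$ via the diffeomorphism $\Phi = \Phi^{\epsilon}$ of (\ref{map}), where a Korn inequality is classical (Beale \cite{book2}), and then to use the transform estimates of Lemma \ref{transform estimate} together with the hypothesis $\e_0 \in H^{5/2}(\Sigma)$ to control the distortion. Concretely, given $v \in H^1(\Omega_0)$ with $v = 0$ on $\{y_3 = -b\}$, set $u = v \circ \Phi \in H^1(\Omega)$, which by (\ref{transform estimate 2}) satisfies $u|_{\Sigma_b} = 0$. The classical Korn inequality on the fixed slab $\Omega$ (for vector fields vanishing on the rigid lower boundary $\Sigma_b$) gives $\hms{u}{1}{\Omega} \lesssim \hms{\dm u}{0}{\Omega}$. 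It then remains to relate $\hms{u}{1}{\Omega}$ to $\hms{v}{1}{\Omega_0}$ from below and $\hms{\dm u}{0}{\Omega}$ to $\hms{\dm v}{0}{\Omega_0}$ from above, both with constants depending only on $\e_0$ through $\hms{\nabla\Phi - I}{k}{\Omega}$ (equivalently $\hms{\e_0}{k+1/2}{\Sigma}$, by Remark \ref{transform remark}), which is finite because $\e_0 \in H^{5/2}(\Sigma)$ lets us take $k = 2$ — but note Lemma \ref{transform estimate} and Remark \ref{transform remark} nominally require $k \geq 3$; since here we only need the $m = 0, 1$ cases one re-inspects the proof of Lemma \ref{transform estimate} (or of Beale's estimate) to see that $\e_0 \in H^{5/2}(\Sigma)$ suffices for the $H^1$ transfer. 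In fact $\nabla\Phi - I$ involves $\nabla\eb^{\epsilon}$, whose $H^{3/2}(\Omega)$-norm is controlled by $\hms{\e_0}{5/2}{\Sigma}$ via Lemma \ref{appendix poisson 1}, and $H^{3/2}(\Omega) \hookrightarrow L^{\infty}(\Omega)$, so the chain-rule factors $\a$, $\a^{-1}$, $J$, $J^{-1}$ are all bounded in $L^{\infty}$.

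The key steps, in order, are: (1) fix $\delta > 0$ and the corresponding $\epsilon$ from Theorem \ref{geometric transform} so that $J^{\epsilon} \geq \delta > 0$, making $\Phi$ a $C^1$ diffeomorphism with $\a = \a^{\epsilon}$ bounded; (2) define $u = v \circ \Phi$ and check $u \in H^1(\Omega)$ with $u|_{\Sigma_b} = 0$, using (\ref{transform estimate 2}) and the fact that $\Phi$ maps $\Sigma_b$ to $\{y_3 = -b\}$; (3) apply the classical Korn inequality on the flat slab $\Omega$ to $u$; (4) undo the change of variables: write $\nabla v = (\nabla\Phi)^{-1}\cdot(\nabla u \circ \Phi^{-1})$, so pointwise $|\dm v|$ and $|\nabla v|$ are comparable (up to the bounded factors $\a, J$) to $|\dm u|, |\nabla u|$ composed with $\Phi^{-1}$, and the Jacobian $J$ in the volume element is bounded above and below; this yields $\hms{v}{1}{\Omega_0}^2 \lesssim \hms{u}{1}{\Omega}^2$ and $\hms{\dm u}{0}{\Omega}^2 \lesssim \hms{\dm v}{0}{\Omega_0}^2$, with implied constants depending only on $\e_0$ (called universal per the convention in the paper's Section 1.5 item 2). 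Chaining (3) with these two inequalities gives the claim.

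The main obstacle is step (4), specifically producing the lower bound $\hms{v}{1}{\Omega_0} \gtrsim$ (something controllable): one must be careful that the symmetric gradient $\dm$ does \emph{not} transform covariantly — $\dm_{\a} u$ (as in (\ref{introduction temp 1})) is what matches $\dm v$ under the pullback, not $\dm u$ — so the honest route is either to prove Korn directly for the operator $\dm_{\a}$ on the slab, or to observe that $\dm_{\a} u = \dm u + (\a - I)$-terms and absorb the lower-order commutator using that $\a - I = O(\epsilon)$ in $L^{\infty}$ (again Lemma \ref{appendix poisson 3}/\ref{appendix poisson 6}), provided $\epsilon$ is small — which it is, by the choice in Theorem \ref{geometric transform}. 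This perturbative absorption is exactly Beale's trick in \cite{book2}: the flat-slab Korn inequality is stable under small $L^{\infty}$ perturbations of the first-order coefficients, so for $\epsilon$ small enough the constant survives. A secondary technical point is justifying that $k = 2$ (i.e. $\e_0 \in H^{5/2}$) is enough where the cited lemmas ask for $k \geq 3$; this is handled by noting only $m \le 1$ derivatives are transferred here, for which $\nabla\Phi - I \in H^{3/2}(\Omega) \hookrightarrow L^\infty(\Omega)$ is the only input actually used.
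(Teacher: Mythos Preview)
Your perturbative absorption in step (4) fails for arbitrary data. You claim $\a - I = O(\epsilon)$ in $L^\infty$, citing Lemma \ref{appendix poisson 3}/\ref{appendix poisson 6}, but those lemmas give smallness only of the \emph{vertical} derivative $\p_3\eb^\epsilon$. The entries of $\a_0 - I$ are $-A^\epsilon K^\epsilon$, $-B^\epsilon K^\epsilon$ and $K^\epsilon - 1$, which also contain the horizontal derivatives $\p_1\eb^\epsilon$, $\p_2\eb^\epsilon$, the undifferentiated $\eb^\epsilon$, and the bottom contributions $\p_i b$, $b-b_0$. At $x_3=0$ the $\eb$-pieces equal $\p_1\ee$, $\p_2\ee$, $\ee$, which are of order $\hms{\ee}{5/2}{\Sigma}$ and are entirely independent of $\epsilon$; the whole purpose of this section is to treat \emph{arbitrary} $\ee$, so these quantities cannot be assumed small. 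Consequently $\dm_{\a_0}u - \dm u$ is bounded only by $\lnm{\a_0 - I}{\infty}\hm{\nabla u}{0}$ with a coefficient of order $1+\hms{\ee}{5/2}{\Sigma}$, and there is no way to absorb it into the flat-slab Korn term. Your ``Beale's trick'' invocation requires a genuinely small $L^\infty$ perturbation, which you do not have globally.

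The paper's argument uses a different smallness, namely \emph{decay at infinity}. In the infinite case one shows via Riemann--Lebesgue that $\p^\alpha\eb_0(x)\to 0$ as $|x'|\to\infty$ for $|\alpha|\le 1$; combined with the assumption $b-b_0\to 0$, this means $\nabla\Phi - I$ is small outside a large ball even though it may be large near the origin. One then partitions the slab into unit cubes, proves a cube-wise Korn inequality (using compactness and the boundary condition on $\Sigma_b$ to drop the $L^2$ term), and transfers to $\Omega_0$: on the image $D$ of the far cubes the perturbation is small and can be absorbed, while on the bounded remainder $\Omega_0\setminus D$ the standard bounded-domain Korn inequality applies (with a constant depending on $\Omega_0$, which is admissible by the paper's convention). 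The periodic case is immediate from the bounded-domain Korn inequality. This localization is the missing idea in your proposal.
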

\begin{proof}
In the periodic case, $\Sigma$ is bounded and $v|_{\Sigma_b}=0$, so
naturally Korn's inequality is valid (see proof of lemma 2.7 in
\cite{book2}). Then we consider the infinite case. First we prove
the decaying property of initial surface $\ee$, i.e. for
$\abs{\alpha}\leq1$, $\abs{\p^{\alpha}\eb_0}\rightarrow0$ as
$\abs{x}\rightarrow\infty$ in the slab. For horizontal derivative
$\p^{\alpha}$,
\begin{eqnarray*}
\p^{\alpha}\eb_0&=&\int_{R^2}(2\pi i\xi)^{\alpha}e^{\epsilon
x_3\abs{\xi}}e^{2\pi ix'\cdot\xi}\hat{\e_0}(\xi)\ud{\xi}
\end{eqnarray*}
where $\hat{\e_0}$ is the Fourier transform of $\ee$ in $R^2$. Then
\begin{eqnarray*}
\int_{R^2}\abs{(2\pi i\xi)^{\alpha}e^{\epsilon
x_3\abs{\xi}}\hat{\e_0}(\xi)}\ud{\xi}
&\ls&\int_{R^2}\abs{\xi}^{\alpha}e^{\epsilon
x_3\abs{\xi}}\abs{\hat{\e_0}(\xi)}(1+\abs{\xi}^{5/4})\bigg(\frac{1}{1+\abs{\xi}^{5/4}}\bigg)\ud{\xi}\nonumber\\
&\leq&\bigg(\int_{R^2}\abs{\xi}^{2\alpha}\abs{\hat{\e_0}(\xi)}^2(1+\abs{\xi}^{5/4})^2\ud{\xi}\bigg)^{1/2}
\bigg(\int_{R^2}\frac{e^{2\epsilon
x_3\abs{\xi}}\ud{\xi}}{(1+\abs{\xi}^{5/4})^2}\bigg)^{1/2}\nonumber\\
&\ls&\hm{\e_0}{\alpha+5/4}\bigg(\int_{R^2}\frac{e^{2\epsilon
x_3\abs{\xi}}\ud{\xi}}{(1+\abs{\xi}^{5/4})^2}\bigg)^{1/2}<\infty
\end{eqnarray*}
The last inequality is valid since $x_3<0$. Hence, $(2\pi
i\xi)^{\alpha}e^{\epsilon x_3\abs{\xi}}\hat{\e_0}(\xi)\in L^1(R^2)$.
A similar proof can justify the result if $\p^{\alpha}=\p_3$. Since
$\abs{x}\rightarrow\infty$ naturally implies
$\abs{x'}\rightarrow\infty$ in the slab, the Riemann-Lebesgue lemma
implies $\abs{\p^{\alpha}\eb_0}\rightarrow0$ as $\abs{x}\rightarrow\infty$.\\
We construct a mapping $\bar\sigma=\Phi(0)$ as defined in
(\ref{map}), which maps $\Omega=\{R^3:-b_0<x_3<0\}$ to $\Omega_0$.
Denote $\bar\sigma(x)=x+\sigma(x)$. The above decaying property and
our assumption on $b$ leads to
$\p^{\alpha}\sigma(x)\rightarrow0$ as $\abs{x}\rightarrow\infty$ for $\abs{\alpha}\leq1$.\\
We partition the slab $\Omega$ into cubes
\begin{eqnarray*}
Q_{j,k}=\{x: j<x_1<j+1, k<x_2<k+1, -b_0<x_3<0\}
\end{eqnarray*}
In each cube, we have Korn's inequality
\begin{eqnarray*}
\hms{v}{1}{Q}^2\leq C(\hms{\dm v}{0}{Q}^2+\hms{v}{0}{Q}^2)
\end{eqnarray*}
Employing the compactness argument as Beale did, under the condition
$v=0$ on $\{x_3=-b_0\}$, we can strengthen this result to
\begin{eqnarray*}
\hms{v}{1}{Q}^2\leq C\hms{\dm v}{0}{Q}^2
\end{eqnarray*}
This argument relies on the fact that for such $v$
\begin{eqnarray*}
\hms{v}{0}{Q}^2\leq C\hms{\dm v}{0}{Q}^2
\end{eqnarray*}
Now suppose $D\subseteq\Omega_0$ is the image of union of such cubes
contained in $\{\abs{x}\geq R\}$ for some $R$. Applying last
estimate to $v=u\circ\bar\sigma$ and then transform to $\Omega_0$,
we have
\begin{eqnarray*}
\hms{u}{1}{D}\leq C\hms{\dm u}{0}{D}+\epsilon \hms{u}{1}{D}
\end{eqnarray*}
where $\epsilon<1$ when $R$ is large enough based on the decaying
property. Then we use Korn's inequality in the bounded domain
$\Omega_0-D$ and combine with above estimate to obtain the estimate
required.
\end{proof}
Next, we will show the equivalence of certain quantities. Although
the result is quite obvious for small initial data (see lemma 2.1 of
\cite{book1}), now we need to utilize above Korn's inequality to
show this for arbitrary data.
\begin{lemma}\label{norm equivalence}
There exists a $0<\epsilon_0<1$, such that if
$\hm{\e-\e_0}{5/2}<\epsilon_0$, then the following relation
\begin{eqnarray}
&&\hm{u}{0}^2\ls\int_{\Omega}J\abs{u}^2\ls(1+\hms{\ee}{5/2}{\Sigma})\hm{u}{0}^2\label{norm equivalence 1}\\
&&\frac{1}{(1+\hms{\ee}{5/2}{\Sigma})^3}\hm{u}{1}^2\ls\int_{\Omega}J\abs{\ma
u}^2\ls(1+\hms{\ee}{5/2}{\Sigma})^3\hm{u}{1}^2\label{norm
equivalence 2}
\end{eqnarray}
holds for all $u\in W$.
\end{lemma}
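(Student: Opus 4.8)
The plan is to split the two chains of inequalities. The first chain \eqref{norm equivalence 1} is the easy one: since $J = J^\epsilon = b/b_0 + \eb^\epsilon/b_0 + \partial_3\eb^\epsilon\tilde b$, I would first show that $J$ is bounded above and below by universal constants times expressions in $\hms{\e}{5/2}{\Sigma}$. The lower bound $J \geq \delta > 0$ comes from Theorem \ref{geometric transform} applied at $t=0$, then extended to all $t$ with $\hm{\e-\e_0}{5/2} < \epsilon_0$ small by the $L^\infty$ bound on $\partial_3\eb^\epsilon$ from Lemma \ref{appendix poisson 3} / Lemma \ref{appendix poisson 6} together with Lemma \ref{appendix poisson 2} / Lemma \ref{appendix poisson 5}; concretely, $|J(t) - J(0)| \lesssim \lnm{\partial_3(\eb^\epsilon-\eb_0^\epsilon)}{\infty} + \lnm{\eb^\epsilon - \eb_0^\epsilon}{\infty} \lesssim \hms{\e - \e_0}{5/2}{\Sigma}$, which is $< \delta/2$ for $\epsilon_0$ small. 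The upper bound is $\lnm{J}{\infty} \lesssim 1 + \lnm{\eb^\epsilon}{\infty} + \lnm{\partial_3\eb^\epsilon}{\infty} \lesssim 1 + \hms{\e}{5/2}{\Sigma}$. Then $\int_\Omega J|u|^2$ is trivially squeezed between $\delta \hm{u}{0}^2$ and $\lnm{J}{\infty}\hm{u}{0}^2$, giving \eqref{norm equivalence 1} (the constant $\delta$ is universal per the paper's convention).

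For \eqref{norm equivalence 2}, the upper bound is again direct: $\ma u = \mathcal{A}_{ik}\partial_k u_j + \mathcal{A}_{jk}\partial_k u_i$, and since the entries of $\mathcal{A}$ are $1$, $0$, or involve $A^\epsilon K^\epsilon$, $B^\epsilon K^\epsilon$, $K^\epsilon$, each of which is bounded in $L^\infty$ by a polynomial in $\hms{\e}{5/2}{\Sigma}$ (using Lemma \ref{appendix poisson 2}/\ref{appendix poisson 5} for $A^\epsilon, B^\epsilon$ and the lower bound on $J$ for $K^\epsilon = 1/J^\epsilon$), we get $\lnm{\mathcal{A}}{\infty} \lesssim P(\hms{\e}{5/2}{\Sigma})$ — I expect the cube power $3$ comes from tracking these factors through $|\ma u|^2 \lesssim \lnm{\mathcal{A}}{\infty}^2 |\nabla u|^2$ and then combining with the factor from $J$ in $\int_\Omega J |\ma u|^2 \lesssim \lnm{J}{\infty}\lnm{\mathcal{A}}{\infty}^2 \hm{u}{1}^2$. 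So the upper bound in \eqref{norm equivalence 2} is routine bookkeeping.

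The lower bound in \eqref{norm equivalence 2} is the real content and the main obstacle, since $\ma u$ only controls a symmetrized $\mathcal{A}$-gradient and one needs a Korn-type inequality in the moving/transformed geometry. Here is where I would invoke the Korn inequality just proved for the initial domain $\Omega_0$. The strategy: pull back via the diffeomorphism $\Phi(0): \Omega \to \Omega_0$. For $u \in W$, set $v = u \circ \Phi(0)^{-1} \in H^1(\Omega_0)$ with $v = 0$ on $\{y_3 = -b\}$; the preceding Korn lemma gives $\hms{v}{1}{\Omega_0}^2 \lesssim \hms{\dm v}{0}{\Omega_0}^2$. Then I would use Lemma \ref{transform estimate} (with the succinct constant $C = C(\hms{\e_0}{5/2}{\Sigma})$ from Remark \ref{transform remark}) to transfer both sides back to $\Omega$: $\hm{u}{1}^2 \lesssim P(\hms{\e_0}{5/2}{\Sigma})\, \hms{v}{1}{\Omega_0}^2$ and conversely $\hms{\dm v}{0}{\Omega_0}^2$ relates to $\hm{\nabla_{\mathcal{A}(0)} u}{0}^2$ up to polynomial factors, since the symmetric gradient $\dm$ in $y$-coordinates transforms precisely into $\mathbb{D}_{\mathcal{A}(0)}$ in $x$-coordinates. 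This yields the lower bound with $\mathcal{A}(0)$; to pass to $\mathcal{A}(t)$ for the $t$ in question, I would write $\ma u = \mathbb{D}_{\mathcal{A}(0)} u + (\mathcal{A}(t) - \mathcal{A}(0))$-terms and absorb the difference using $\lnm{\mathcal{A}(t) - \mathcal{A}(0)}{\infty} \lesssim \hms{\e - \e_0}{5/2}{\Sigma} < \epsilon_0$, choosing $\epsilon_0$ small enough that the perturbation is absorbed into the left side. The delicate point — and the one I would be most careful about — is keeping the power of $(1+\hms{\ee}{5/2}{\Sigma})$ at exactly $3$ and ensuring all constants depending on $\e_0$ are allowed to be "universal" per the paper's stated convention (the exceptions clause for $\Omega_0$-dependent constants in Korn's inequality and elliptic estimates), so that the final statement reads cleanly in terms of $\hms{\ee}{5/2}{\Sigma}$ alone.
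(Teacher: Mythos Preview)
Your proposal is correct and follows essentially the same approach as the paper: both arguments use the $L^\infty$ bounds on $J$ and the entries of $\mathcal{A}$ for the easy directions, and for the lower bound in \eqref{norm equivalence 2} both pull back to $\Omega_0$ via $\Phi(0)$ to exploit the Korn inequality there, then treat $\mathcal{A}(t)-\mathcal{A}(0)$ as a perturbation absorbed by taking $\epsilon_0$ small. The only cosmetic difference is that the paper organizes the perturbation via the polarization identity $\int_\Omega J|\ma u|^2 = \int_\Omega J|\dm_{\a_0}u|^2 + \int_\Omega J(\ma u + \dm_{\a_0}u)\!:\!(\ma u - \dm_{\a_0}u)$ and changes variables directly ($\int_\Omega J_0|\dm_{\a_0}u|^2\,dx = \int_{\Omega_0}|\dm v|^2\,dy$) rather than invoking Lemma \ref{transform estimate}, which makes the bookkeeping of the power $(1+\hms{\ee}{5/2}{\Sigma})^3$ slightly more transparent.
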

\begin{proof}
(\ref{norm equivalence 1}) is just a natural corollary of relation
$\delta\ls\lnm{J}{\infty}\ls(1+\lnm{\nabla\eb}{\infty})\ls(1+\hm{\e}{5/2})\ls(1+\hm{\ee}{5/2})$
based on our assumption and Sobolev embedding.\\
To derive (\ref{norm equivalence 2}), notice that
\begin{eqnarray}
\int_{\Omega}J\abs{\ma u}^2=\int_{\Omega}J\abs{\dm_{\a_0}
u}^2+\int_{\Omega}J(\ma u+\dm_{\a_0}u):(\ma u-\dm_{\a_0}u)
\end{eqnarray}
For the first term on the right-handed side, based on the integral
substitution and Korn's inequality we proved above, we have
\begin{eqnarray*}
\int_{\Omega}J\abs{\dm_{\a_0} u}^2\ud{x}&\gs&
\frac{1}{1+\hms{\ee}{5/2}{\Sigma}}\int_{\Omega}J_0\abs{\dm_{\a_0}
u}^2\ud{x}
=\frac{1}{1+\hms{\e_0}{5/2}{\Sigma}}\int_{\Omega_0}\abs{\dm v}^2\ud{y}\nonumber\\
&\gs&\frac{1}{1+\hms{\e_0}{5/2}{\Sigma}}\hms{v}{1}{\Omega_0}^2\gs\frac{1}{(1+\hms{\ee}{5/2}{\Sigma})^3}\hm{u}{1}^2
\end{eqnarray*}
For the second term on the right-handed side, using Korn's
inequality in slab $\Omega$, we can naturally estimate
\begin{eqnarray*}
\abs{\int_{\Omega}J(\ma u+\dm_{\a_0}u):(\ma u-\dm_{\a_0}u)}&\ls&
\lnm{J}{\infty}\lnm{\a+\a_0}{\infty}\lnm{\a-\a_0}{\infty}\int_{\Omega}\abs{\nabla u}^2\nonumber\\
&\ls&\epsilon_0(1+\hms{\ee}{5/2}{\Sigma})^3\int_{\Omega}\abs{\dm
u}^2 \ls \epsilon_0(1+\hms{\ee}{5/2}{\Sigma})^3\hms{u}{1}{\Omega}^2
\end{eqnarray*}
For given initial data, we can always take $\epsilon_0$ sufficiently
small to absorb the second term into the first one. Then we have
\begin{eqnarray*}
\int_{\Omega}J\abs{\ma
u}^2\gs\frac{1}{(1+\hms{\ee}{5/2}{\Sigma})^3}\hm{u}{1}^2
\end{eqnarray*}
The first part of (\ref{norm equivalence 2}) is verified. For the
second part, notice that
\begin{eqnarray*}
\int_{\Omega}J\abs{\ma u}^2\ls(1+\hm{\e}{5/2})\int_{\Omega}\abs{\ma
u}^2\ls(1+\hm{\ee}{5/2})\max\{1,\lnm{AK}{\infty}^2,
\lnm{BK}{\infty}^2, \lnm{K}{\infty}^2\}\hm{u}{1}^2\quad\quad
\end{eqnarray*}
Since it is easy to see
\begin{eqnarray*}
\max\{1,\lnm{AK}{\infty}^2, \lnm{BK}{\infty}^2,
\lnm{K}{\infty}^2\}&\ls&1+(1+\lnm{\nabla\eb}{\infty}^2)\lnm{K}{\infty}^2
\ls(1+\hms{\ee}{5/2}{\Sigma})^2
\end{eqnarray*}
then the second part of (\ref{norm equivalence 2}) follows.\\
\end{proof}
\begin{remark}
Throughout this section, we can always assume the restriction of
$\e$ depending on $\epsilon_0$ is justified, and finally we will
verify this condition for $t\in[0,T]$ in the nonlinear part.
\end{remark}
Now we present a lemma about the differentiability of norms in
time-dependent space.
\begin{lemma}
Suppose that $u\in\w$ and $\dt u\in\wwst$. Then the mapping
$t\rightarrow\nm{u(t)}_{H^0}^2$ is absolute continuous, and
\begin{eqnarray}
\frac{\rm{d}}{\rm{dt}}\hm{u(t)}{0}^2=2\br{\dt
u(t),u(t)}+\int_{\Omega}\abs{u(t)}^2\dt J(t)
\end{eqnarray}
Moreover, $u\in C^0([0,T];H^0(\Omega))$. If $v\in\w$ and $\dt
v\in\wwst$ as well, we have
\begin{eqnarray}
\frac{\rm{d}}{\rm{dt}}\int_{\Omega}u(t)\cdot v(t)=\br{\dt
u(t),v(t)}+\br{\dt v(t),u(t)}+\int_{\Omega}u(t)\cdot v(t)\dt J(t)
\end{eqnarray}
\end{lemma}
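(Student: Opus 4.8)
The plan is to reduce the statement to the well-known Hilbert-triple result for time-dependent pairings, adapted to the $t$-dependent inner product $\brh{\cdot,\cdot}$. The key point is that $\w = L^2([0,T];H^1(\Omega))$ (with the zero trace on $\Sigma_b$) embeds into $L^2([0,T];H^0(\Omega))$, and the latter is its own dual; the twist is merely that the pairing between $\wwst$ and $\w$ is realized through the weight $J$, i.e. $\brh{\dt u,u} = \int_\Omega J\,\dt u\cdot u$. Since by the hypotheses on $\bar\eta$ (Lemma~\ref{norm equivalence}) the weight satisfies $0<\delta\ls J\ls 1+\hms{\ee}{5/2}{\Sigma}$ and, crucially, $\dt J = \dt\bar\eta\,\tilde b\,K^{-1}\cdot(\text{bounded})$ is bounded in $L^\infty$ on $[0,T]$ (it involves only $\dt\bar\eta$, which is the $\epsilon$-Poisson extension of $\dt\e$, controlled in $L^\infty$ by Lemma~\ref{appendix poisson 2}/\ref{appendix poisson 5}), the product $J u$ is again in $\w$ whenever $u$ is, with $\dt(Ju) = (\dt J)u + J\dt u \in \wwst$.

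First I would establish the scalar identity. Fix $u\in\w$ with $\dt u\in\wwst$. Consider the function $\phi(t) = \hm{u(t)}{0}^2 = \int_\Omega |u(t)|^2$ (the \emph{unweighted} $H^0$ norm). By the standard Lions--Magenes lemma applied to the Gelfand triple $H^1 \hookrightarrow H^0 \hookrightarrow (H^1)^*$, the map $t\mapsto \int_\Omega |u(t)|^2$ is absolutely continuous with $\frac{\rm d}{\rm dt}\int_\Omega |u(t)|^2 = 2\langle \dt u(t),u(t)\rangle_{(H^1)^*,H^1}$. To convert this into the weighted statement, I would instead work with $\psi(t) = \brh{u(t),u(t)} = \int_\Omega J(t)|u(t)|^2$ directly: approximate $u$ in the natural norm by smooth-in-time functions $u_k$ (using density of $C^\infty([0,T];H^1)$ in the space $\{u\in\w:\dt u\in\wwst\}$, which follows by mollification in $t$ exactly as in the classical case), for which the chain rule gives $\frac{\rm d}{\rm dt}\int_\Omega J|u_k|^2 = \int_\Omega \dt J\,|u_k|^2 + 2\int_\Omega J\,\dt u_k\cdot u_k = \int_\Omega |u_k|^2\dt J + 2\brh{\dt u_k,u_k}$, then pass to the limit: the left side converges in the sense of distributions on $(0,T)$, the two right-side terms converge by the $L^\infty$ bounds on $J,\dt J$ together with $u_k\to u$ in $L^2H^1$ and $\dt u_k\to\dt u$ in $\wwst$. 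This yields both the absolute continuity of $t\mapsto\hm{u(t)}{0}^2$ (equivalently $\psi$, by the two-sided bound on $J$) and the stated formula. Continuity $u\in C^0([0,T];H^0)$ then follows in the usual way: the identity shows $\psi$ is continuous, and weak continuity $t\mapsto u(t)\in H^0$ is immediate from $u\in\w,\ \dt u\in\wwst$ by a standard argument, and weak continuity plus continuity of the norm gives strong continuity.

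For the bilinear identity, I would apply the scalar result to $u+v$ and to $u-v$ (both of which lie in $\w$ with time derivative in $\wwst$) and subtract, using the polarization $4\int_\Omega u\cdot v\,J = \brh{u+v,u+v} - \brh{u-v,u-v}$ — or equivalently repeat the density/chain-rule argument above for the bilinear functional $\int_\Omega J\,u\cdot v$, which is actually cleaner. Either way one obtains $\frac{\rm d}{\rm dt}\int_\Omega J u\cdot v = \brh{\dt u,v} + \brh{\dt v,u} + \int_\Omega u\cdot v\,\dt J$; since the stated identity is written with the unweighted integral $\int_\Omega u(t)\cdot v(t)$ on the left and the $\brh{\cdot,\cdot}$ pairings are already $J$-weighted, I would be careful that the displayed formula is consistent — it reads $\frac{\rm d}{\rm dt}\int_\Omega u\cdot v = \br{\dt u,v}+\br{\dt v,u}+\int_\Omega u\cdot v\,\dt J$, which is exactly the weighted chain rule once one reads $\br{\cdot,\cdot}$ as $\brh{\cdot,\cdot}$ and $\int_\Omega u\cdot v$ as $\int_\Omega J\,u\cdot v$ in the author's convention; I would simply match conventions.

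The main obstacle is the justification of the time-mollification step in the $t$-dependent geometry: one must check that mollifying $u$ in time produces approximants whose time derivative still lies in $\wwst$ with convergence there, which requires knowing that the duality pairing realizing $\wwst$ is compatible with the $J$-weight uniformly in $t$ — this is where the $L^\infty([0,T])$ bounds on $J$ and $\dt J$ (and the separation $J\geq\delta>0$) are essential, and where one leans on the smoothness of the $\epsilon$-Poisson extension $\bar\eta$. Everything else is bookkeeping around the classical Lions--Magenes / Evans-style argument.
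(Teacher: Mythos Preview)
Your proposal is correct and is essentially the standard Lions--Magenes / mollification argument adapted to the $J$-weighted pairing; the paper itself gives no proof at all, deferring entirely to Lemma~2.4 of \cite{book1}, whose argument is precisely the one you sketch. Your care about the uniform $L^\infty$ bounds on $J$ and $\dt J$ (needed so that time-mollification respects $\wwst$) and about the convention that $\hm{u}{0}^2$ and $\br{\cdot,\cdot}$ are read as $J$-weighted quantities is exactly the point, and once that is settled the rest is routine.
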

\begin{proof}
This is exactly the same result as lemma 2.4 in \cite{book1}, so we
omit the proof here.
\end{proof}
Next we show the estimate for $H^{-1/2}$ boundary functions.
\begin{lemma}\label{boundary negative estimate}
If $v\s{0}$ and $\da v\s{0}$, then $v\cdot\n\in H^{-1/2}(\Sigma)$,
$v\cdot\nu\in H^{-1/2}(\Sigma_b)$ ($\nu$ is the outer normal vector
on $\Sigma_b$) and satisfies the estimate
\begin{equation}
\hms{v\cdot\n}{-1/2}{\Sigma}+\hms{v\cdot\nu}{-1/2}{\Sigma_b}\ls(1+\hm{\e}{5/2})^2\bigg(\hm{v}{0}+\hm{\da
v}{0}\bigg)
\end{equation}
\end{lemma}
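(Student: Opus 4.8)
The plan is to reduce the statement to the classical normal-trace theorem on $H(\mathrm{div},\Omega)$ via the Piola transform associated with $\Phi$. Set $w:=(J\a)^{T}v$, i.e.\ $w_j:=J\a_{ij}v_i$. A direct computation from the explicit formulas for $A,B,J$ gives the elementary identities $\p_1 J=\p_3 A$ and $\p_2 J=\p_3 B$, which are exactly the Piola identities $\p_j(J\a_{ij})=0$ in these coordinates and hold in $L^2(\Omega)$ because $\eb$ has two derivatives in $L^2$ (Lemma~\ref{appendix poisson 1}). Hence, for $v\s{0}$ with $\da v\s{0}$,
\[
\nabla\cdot w=\p_j(J\a_{ij})v_i+J\a_{ij}\p_j v_i=J\,\da v\in H^0(\Omega),
\]
so $w\in H(\mathrm{div},\Omega)$. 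The matrix $J\a$ has entries $J,-A,-B,1$ (and zeros), all bounded in $L^\infty$ by $1+\lnm{\nabla\eb}{\infty}\ls 1+\hm{\e}{5/2}$ by the $L^\infty$ Poisson estimates (Lemmas~\ref{appendix poisson 2}, \ref{appendix poisson 5}; under the standing diffeomorphism hypothesis $\lnm{K}{\infty}\ls1/\delta$), so
\[
\nm{w}_{H(\mathrm{div},\Omega)}\ls(1+\hm{\e}{5/2})\big(\hm{v}{0}+\hm{\da v}{0}\big).
\]

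Next, I would invoke the classical trace theorem: $w\mapsto w\cdot n$ maps $H(\mathrm{div},\Omega)$ boundedly into $H^{-1/2}(\partial\Omega)$, with $\br{w\cdot n,\varphi}_{\partial\Omega}=\int_\Omega w\cdot\nabla\varphi+\int_\Omega(\nabla\cdot w)\varphi$ for all $\varphi\in H^1(\Omega)$. Because $\partial\Omega$ splits as the disjoint union of $\Sigma$ and $\Sigma_b$, which are separated by distance $b_0>0$, the restrictions $w\cdot n|_\Sigma$ and $w\cdot n|_{\Sigma_b}$ belong to $H^{-1/2}$ of the respective component, with norm $\ls\nm{w}_{H(\mathrm{div},\Omega)}$: one simply pairs with the $H^1(\Omega)$-extension of an $H^{1/2}$ datum on one component that vanishes on the other, which exists with controlled norm precisely because the two components are strictly separated.

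It remains to identify these traces. On $\Sigma=\{x_3=0\}$ one has $n=e_3$, $\tilde b=1$, and $\eb^{\epsilon}|_{x_3=0}=\e$, hence $A=\p_1\e$, $B=\p_2\e$, so $w\cdot n|_\Sigma=J\a_{i3}v_i=JK(-Av_1-Bv_2+v_3)=v\cdot\n$ since $JK=1$. On $\Sigma_b=\{x_3=-b_0\}$ one has $n=-e_3$, $\tilde b=0$, $A=-\p_1 b$, $B=-\p_2 b$, so $w\cdot n|_{\Sigma_b}=-(\p_1 b\,v_1+\p_2 b\,v_2+v_3)$; this is precisely $v\cdot\nu$ when $b$ is constant (the global setting), and in general equals $v\cdot\nu$ up to the fixed smooth, bounded, bounded-away-from-zero factor $|(\p_1b,\p_2b,1)|$, so --- multiplication by such a $C^\infty$ function being an isomorphism of $H^{-1/2}(\Sigma_b)$ --- the estimate transfers. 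Combining the three displays and using $1+\hm{\e}{5/2}\ge1$ yields the claim (in fact with the first power of $1+\hm{\e}{5/2}$, which is stronger than required).

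The bulk of the work is routine once the Piola reduction is in place; the points that need genuine care are the localization of the $H(\mathrm{div})$ normal trace to a single boundary component --- valid only because $\mathrm{dist}(\Sigma,\Sigma_b)=b_0>0$, which provides boundary-data extensions vanishing on the complementary component with controlled $H^1(\Omega)$ norm --- and the correct identification of the trace on $\Sigma_b$ with $v\cdot\nu$, including the smooth Jacobian-type factor that appears when the bottom is not flat.
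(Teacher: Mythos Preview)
Your proof is correct and is essentially the same as the paper's: both rely on the Piola identity $\p_j(J\a_{ij})=0$ and the duality pairing with an $H^1$ extension of an arbitrary $H^{1/2}$ boundary datum that vanishes on the other component; you simply package this as the standard $H(\mathrm{div})$ normal-trace theorem applied to $w=(J\a)^T v$, whereas the paper carries out the pairing computation directly. As you note, your formulation in fact yields the sharper constant $(1+\hm{\e}{5/2})^1$, since you avoid the paper's intermediate step of estimating $\|\na\tilde\phi\|_0$ through $\|\nabla\tilde\phi\|_0$, which costs an extra factor of $\|\a\|_\infty$.
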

\begin{proof}
We only need to prove the result on $\Sigma$; the result on
$\Sigma_b$ can be derived in s similar fashion considering
$J\a e_3=\nu$ on $\Sigma_b$.\\
Let $\phi\hs{1/2}$ be a scalar function and let $\tilde{\phi}\in V$
be a bounded extension. We have
\begin{eqnarray*}
&&\int_{\Sigma}\phi v\cdot\n=\int_{\Sigma}J\a_{ij}v_i\phi(e_j\cdot
e_3)=\int_{\Omega}\da(v\tilde\phi)J=\int_{\Omega}\tilde\phi\da v J+
v\cdot\na\tilde\phi J\nonumber\\
&&\ls(1+\hm{\e(t)}{5/2})\bigg(\hm{\tilde\phi}{0}\hm{\da
v}{0}+\hm{v}{0}\hm{\na\tilde\phi}{0}\bigg)\\
&&\ls(1+\hm{\e(t)}{5/2})^2(\hm{v}{0}+\hm{\da
v}{0})\hms{\phi}{1/2}{\Sigma}
\end{eqnarray*}
Since $\phi$ is arbitrary, our result naturally follows.
\end{proof}
\begin{remark}
Recall the space $\y(t)\subset H^0$. It can be shown that if
$v\in\y(t)$, then $\da v=0$ in the weak sense, such that lemma
\ref{boundary negative estimate} implies that $v\cdot\n\hs{-1/2}$
and $v\cdot\nu\in H^{-1/2}(\Sigma_b)$. Moreover, since the elements
of $\y(t)$ are orthogonal to each $\na\phi$ for $\phi\in V(t)$, we
find that $v\cdot\nu=0$ on $\Sigma_b$.
\end{remark}
We want to connect the divergence-free space and
divergence-$\a$-free space, so we define
\begin{equation}\label{divergence preserving matrix}
M=M(t)=K\nabla\Phi= \left(
\begin{array}{lll}
K&0&0\\
0&K&0\\
AK&BK&1
\end{array}
\right)
\end{equation}
Note that $M$ is invertible and $M^{-1}=J\a^T$. Since
$\p_j(J\a_{ij})=0$ for $i=1,2,3$, we have the following relation
\begin{eqnarray*}
p=\da v\Leftrightarrow Jp=J\da
v=J\a_{ij}\p_jv_i=\p_j(J\a_{ij}v_i)=\p_j(J\a^Tv)_j=\p_j(M^{-1}v)_j=\nabla\cdot(M^{-1}v)\qquad
\end{eqnarray*}
Then if $\da v=0$, we have $\nabla\cdot(M^{-1}v)=0$. $M$ induces a
linear operator $\m_t: u\rightarrow\m_t(u)=M(t)u$. It has the
following property.
\begin{lemma}\label{divergence preserving }
For each $t\in[0,T]$, $k=0,1,2$, $\m_t$ is a bounded linear
isomorphism from $H^k(\Omega)$ to $H^k(\Omega)$ and from $X_0$ to
$X$. The bounding constant is given by
$(1+\hms{\e(t)}{9/2}{\Sigma})^2$. If we further define $\m$ defined
by $\m u(t)=\m_tu(t)$, then it is a bounded linear isomorphism from
$L^2([0,T];H^k(\Omega))$ to $L^2([0,T];H^k(\Omega))$ and from $\x_0$
to $\x$. The bounding constant is given by $(1+\sup_{0\leq t\leq
T}\hms{\e(t)}{9/2}{\Sigma})^2$.
\end{lemma}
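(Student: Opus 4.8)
The plan is to verify the isomorphism property in three stages: boundedness of $\m_t$, boundedness of its inverse, and the fact that it carries $X_0$ onto $X$ (and similarly for the time-dependent versions). For the boundedness on $H^k(\Omega)$, I would write $M = I + (M-I)$ where $M-I$ has entries built from $K-1$, $AK$, $BK$, all of which are expressible through $\eb = \pp^\epsilon\e$ and $b$. The key point is that multiplication by an $H^k(\Omega)$ function (with $k\geq 2$, so $H^k(\Omega)\hookrightarrow L^\infty$) is bounded on $H^k(\Omega)$ with norm controlled by the $H^k$-norm of the multiplier; hence
\begin{equation}
\hm{\m_t u}{k} = \hm{Mu}{k} \ls \big(1 + \hms{\eb}{k}{\Omega}\big)\,P\big(\hms{\eb}{k}{\Omega}\big)\hm{u}{k},
\end{equation}
and by Lemma \ref{appendix poisson 1} (resp.\ \ref{appendix poisson 4}) together with the explicit formulas for $A,B,J,K$, one bounds $\hms{\eb}{k}{\Omega}$ and the entries of $M-I$ in $H^k(\Omega)$ by $(1+\hms{\e(t)}{k-1/2+?}{\Sigma})$; tracking indices carefully one arrives at the stated constant $(1+\hms{\e(t)}{9/2}{\Sigma})^2$ for $k\leq 2$ (the $9/2$ coming from $k=2$ plus the $1/2$ boundary loss plus the room needed for the $L^\infty$ bounds on $K=1/J$). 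The bound on $K=1/J$ itself requires $J\geq\delta>0$, which holds on $[0,T]$ by the diffeomorphism property; so $1/J\in H^k(\Omega)$ with norm $\ls P(\hms{\eb}{k}{\Omega},\delta^{-1})$ by the composition/quotient estimates for Sobolev functions.

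For the inverse, I would use $M^{-1} = J\a^T$, which has exactly the same structural form (entries $J$, $-AK\cdot$ wait — $M^{-1}$ has entries $J, AJ$-type terms built from $A,B,J$ directly, no division), so the identical argument gives $\hm{\m_t^{-1}v}{k}\ls (1+\hms{\e(t)}{9/2}{\Sigma})^2\hm{v}{k}$. Thus $\m_t$ is a bounded linear isomorphism of $H^k(\Omega)$. To get the isomorphism $X_0\to X$: if $u\in X_0$ then $u|_{\Sigma_b}=0$; since $M = K\nabla\Phi$ and on $\Sigma_b$ the matrix $M$ is invertible and smooth, $(\m_t u)|_{\Sigma_b}=M u|_{\Sigma_b}=0$, so $\m_t u\in W$. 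The divergence identity established just before the lemma statement — namely $Jp = \nabla\cdot(M^{-1}v)$ whenever $p=\da v$ — reads, applied to $v=\m_t u = Mu$, as $J(\da(Mu)) = \nabla\cdot(M^{-1}Mu)=\nabla\cdot u = 0$, hence $\da(\m_t u)=0$ and $\m_t u\in X$. Conversely, for $v\in X$, set $u=M^{-1}v=J\a^T v$; then $u|_{\Sigma_b}=0$ as above, and the same identity in the form $Jp=\nabla\cdot(M^{-1}v)$ with $p=\da v=0$ gives $\nabla\cdot u = \nabla\cdot(M^{-1}v)=0$, so $u\in X_0$. This shows $\m_t: X_0\to X$ is a bijection, and it is bounded with bounded inverse since $X_0,X$ carry the $H^1$-norm and $k=1\leq 2$ is covered above.

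Finally, the time-dependent statement follows by integrating in $t$: for $u\in L^2([0,T];H^k(\Omega))$,
\begin{equation}
\int_0^T \hm{\m u(t)}{k}^2\,\ud{t} \ls \Big(1+\sup_{0\leq t\leq T}\hms{\e(t)}{9/2}{\Sigma}\Big)^4 \int_0^T \hm{u(t)}{k}^2\,\ud{t},
\end{equation}
with the pointwise-in-$t$ bounds from the first part, and likewise for $\m^{-1}$; the mapping property $\x_0\to\x$ is immediate since $\da(\m u)(t)=\da(\m_t u(t))=0$ for a.e.\ $t$ and the boundary condition holds for a.e.\ $t$. I expect the main obstacle to be purely bookkeeping: pinning down exactly which fractional Sobolev index on $\Sigma$ controls the $H^2(\Omega)$-norms of $A,B,J,K$ (via the $\epsilon$-Poisson lemmas, where one gains a $1/\epsilon$ that must be absorbed into "universal" constants as in Remark \ref{transform remark}) so that the clean exponent $(1+\hms{\e(t)}{9/2}{\Sigma})^2$ emerges, and verifying that $1/J$ and products thereof stay in $H^2(\Omega)$ with the claimed dependence — all of which rests on $J\geq\delta$ and the standard Sobolev algebra/quotient estimates rather than on anything subtle.
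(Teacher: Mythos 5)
Your proposal is correct and follows essentially the same route as the paper: estimate $\hm{\m_tu}{k}$ by a multiplier bound on $M$ (the paper uses $\nm{\m_t}_{C^2}$ via Sobolev embedding of $\eb$, you use the $H^k$-algebra of the entries—either way the Poisson-integral lemmas give the stated constant), invert explicitly through $M^{-1}=J\a^T$, and pass to the divergence-free spaces via the identity $Jp=\nabla\cdot(M^{-1}v)$. The paper's proof is a brief version of exactly what you spell out, so there is no genuine difference in approach.
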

\begin{proof}
It is easy to see for each $t\in[0,T]$,
\begin{eqnarray*}
\hm{\m_tu}{k}\ls\nm{\m_t}_{C^2}\hm{u}{k}\ls(1+\hms{\e(t)}{9/2}{\Sigma})^2\hm{u}{k}
\end{eqnarray*}
for $k=0,1,2$, which implies $\m_t$ is a bounded operator from $H^k$
to $H^k$. Since $\m_t$ is invertible, we can estimate
$\hm{\m^{-1}v}{k}\ls(1+\hms{\e(t)}{9/2}{\Sigma})^2\hm{v}{k}$. Hence
$\m_t$ is an isomorphism between $H^k$. Also above analysis implies
$\m_t$ maps
divergence-free function to divergence-$\a$-free function. Then it is also an isomorphism from $X_0$ to $X(t)$.\\
A similar argument can justify the case for $L^2([0,T];H^k)$ and
$\x$.
\end{proof}

\subsubsection{Pressure as a Lagrangian Multiplier}

It is well-known that in usual Navier-Stokes equation, pressure can
be taken as a Lagrangian multiplier. In our new settings now,
proposition 2.2.9 in \cite{book1} gives construction of pressure
from transformed equation \ref{transform}, which is valid for small
free surface.
So our result in arbitrary initial data is not completely ready, which we will present here.\\
For $p\in H^0$, we define the functional $S_t\in W^{\ast}$ by
$S_t(v)=\brh{p,\nabla_{\mathcal{A}}\cdot v}$. By the Riesz
representation theorem, there exists a unique $Q_t(p)\in W$ such
that $S_t(v)=\br{Q_t(p),v}_{W}$ for all $v\in W$. This defines a
linear operator $Q_t: H^0\rightarrow W$, which is bounded since we
may take $v=Q_t(p)$ to see
\begin{eqnarray*}
\nm{Q_t(p)}_{W}^2&=&S_t(v)=\brh{p,\nabla_{\mathcal{A}}\cdot
v}\ls(1+\hms{\e(t)}{5/2}{\Sigma})
\hm{p}{0}\hm{\nabla_{\mathcal{A}}\cdot v}{0}\nonumber\\
&\lesssim& (1+\hms{\e(t)}{5/2}{\Sigma})^3\hm{p}{0}\nm{v}_{W} =
(1+\hms{\e(t)}{5/2}{\Sigma})^3\hm{p}{0}\nm{Q_t(p)}_{W}
\end{eqnarray*}
so we have
$\nm{Q_t(p)}_{W}\lesssim(1+\hms{\e(t)}{5/2}{\Sigma})^3\hm{p}{0}$.
Similarly, for $\ss\in\wwst$, we may also define a bounded linear
operator $\q:L^2([0,T];H^0)\rightarrow \mathcal{W}$ via the relation
$\brht{p,\nabla_{\mathcal{A}}\cdot v}=\br{\q(p),v}_{\w}=\ss(v)$ for
all $v\in \mathcal{W}$. Similar argument shows
$\nm{\q(p)}_{\w}\lesssim(1+\sup_{0\leq t\leq
T}\hms{\e(t)}{5/2}{\Sigma})^3\tnm{p}{0}$.
\begin{lemma}\label{lagrange 1}
Let $p\in H^0$, then there exists a $v\in W$ such that
$\nabla_{\mathcal{A}}\cdot v=p$ and
$\nm{v}_{W}\lesssim(1+\hms{\e(t)}{5/2}{\Sigma})^6\hm{p}{0}$. If
instead $p\ts{0}$, then there exists a $v\in \mathcal{W}$ such that
$\nabla_{\mathcal{A}}\cdot v=p$ for a.e. $t$ and
$\nm{v}_{\w}\lesssim(1+\sup_{0\leq t\leq
T}\hms{\e(t)}{5/2}{\Sigma})^6\tnm{p}{0}$.
\end{lemma}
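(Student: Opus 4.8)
\emph{Proof plan.}
The strategy is to convert the $\a$-weighted divergence equation on the fixed slab into the ordinary divergence equation on $\Omega$ by a change of unknown, and then invoke a classical solvability result. Recall the identity $J\,\da v=\nabla\cdot(M^{-1}v)$ derived above from $\p_j(J\a_{ij})=0$, where $M=M(t)=K\nabla\Phi$ is the matrix in (\ref{divergence preserving matrix}) and $M^{-1}=J\a^{T}$. Thus $\da v=p$ is equivalent to $\nabla\cdot(M^{-1}v)=Jp$; writing $w=M^{-1}v$, i.e.\ $v=Mw$, it suffices to find $w\in H^{1}(\Omega)$ with $w|_{\Sigma_{b}}=0$ solving $\nabla\cdot w=Jp$ on $\Omega$, and then to set $v:=Mw$. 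Such a $v$ automatically lies in $W$ — since $M$ is bounded, $v|_{\Sigma_{b}}=0$ — and satisfies $\da v=p$. Moreover $Jp\in H^{0}(\Omega)$ with
\[
\hm{Jp}{0}\le\lnm{J}{\infty}\hm{p}{0}\ls\big(1+\hms{\e(t)}{5/2}{\Sigma}\big)\hm{p}{0},
\]
the bound on $\lnm{J}{\infty}$ being the one already used in the proof of Lemma~\ref{norm equivalence}.

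The classical ingredient is the solvability of $\nabla\cdot w=g$ on the fixed, smooth slab $\Omega$: for every $g\in H^{0}(\Omega)$ there exists $w\in H^{1}(\Omega)$ with $w|_{\Sigma_{b}}=0$, $\nabla\cdot w=g$ and $\hm{w}{1}\ls\hm{g}{0}$, with an implicit constant depending only on $\Omega$. No mean-value (compatibility) condition on $g$ is needed here, precisely because $w$ must vanish only on $\Sigma_{b}$ and is left free on $\Sigma$: concretely one solves the mixed problem $\Delta q=g$ with $q=0$ on $\Sigma$ and $\p_{3}q=0$ on $\Sigma_{b}$ (well posed for every $g$, with $q\in H^{2}$ since the two boundary components of the slab are disjoint and smooth), puts $w=\nabla q$, and repairs the two tangential components of $\nabla q$ on $\Sigma_{b}$ by a divergence-free field built from a trace lifting together with the classical mean-zero Bogovskii operator, all requisite compatibility conditions being automatic (cf.\ \cite{book1}). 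Applying this with $g=Jp$ yields $w$ with $\nabla\cdot w=Jp$, $w|_{\Sigma_b}=0$, and $\hm{w}{1}\ls\big(1+\hms{\e(t)}{5/2}{\Sigma}\big)\hm{p}{0}$.

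It remains to bound $\nm{v}_{W}=\hm{Mw}{1}$. Estimating $\hm{Mw}{1}\ls\big(\lnm{M}{\infty}+\lnm{\nabla M}{3}\big)\hm{w}{1}$ and controlling the entries $K=1/J$, $AK$, $BK$ of $M$ and their first derivatives through the $\epsilon$-Poisson bounds of Lemmas~\ref{appendix poisson 1}--\ref{appendix poisson 3} and the lower bound $J\ge\delta$, one gets $\lnm{M}{\infty}+\lnm{\nabla M}{3}\ls\big(1+\hms{\e(t)}{5/2}{\Sigma}\big)^{m}$ for a suitable fixed power $m$; here the factors $\epsilon^{-1/2}$ produced by Lemma~\ref{appendix poisson 1} are absorbed using the choice $\epsilon\sim\delta^{2}/\hm{\ee}{5/2}^{2}$ from Theorem~\ref{geometric transform} together with $\hm{\ee}{5/2}\ls 1+\hms{\e(t)}{5/2}{\Sigma}$, valid in the regime $\hm{\e-\ee}{5/2}<\epsilon_{0}$ in which this lemma is applied — the one point at which the large-data setting genuinely differs from \cite{book1}, where all such quantities are $O(1)$. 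Combining with the previous two steps gives $\nm{v}_{W}\ls\big(1+\hms{\e(t)}{5/2}{\Sigma}\big)^{6}\hm{p}{0}$. For the time-dependent statement one runs the construction for a.e.\ $t$: the solution operator for $\nabla\cdot w=g$ on the fixed slab is a single bounded linear operator $\mathbb{B}\colon H^{0}(\Omega)\to H^{1}(\Omega)$ independent of $t$, so $v(t):=M(t)\,\mathbb{B}\big(J(t)p(t)\big)$ is strongly measurable (as $\mathbb{B}$ is fixed and $t\mapsto J(t)p(t)$, $t\mapsto M(t)$ are measurable), satisfies $\da v(t)=p(t)$ for a.e.\ $t$, and by the pointwise bound obeys $\int_{0}^{T}\nm{v(t)}_{W}^{2}\ls\big(1+\sup_{[0,T]}\hms{\e(t)}{5/2}{\Sigma}\big)^{12}\int_{0}^{T}\hm{p(t)}{0}^{2}$, i.e.\ $\nm{v}_{\w}\ls\big(1+\sup_{[0,T]}\hms{\e(t)}{5/2}{\Sigma}\big)^{6}\tnm{p}{0}$, so $v\in\w$. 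The only real obstacle is the fixed-slab divergence solution with the one-sided boundary condition and no compatibility constraint; granting that, everything else is bookkeeping of the $(1+\hms{\e}{5/2}{\Sigma})$-powers.
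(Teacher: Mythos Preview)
Your approach is essentially the same as the paper's: reduce $\da v=p$ to $\nabla\cdot w=Jp$ via $v=Mw$, invoke a fixed-slab divergence solver with the one-sided boundary condition $w|_{\Sigma_b}=0$, and then bound $\|Mw\|_{H^1}$. The paper simply cites Beale's Lemma~3.3 in \cite{book2} for the divergence step, whereas you sketch an explicit Poisson-plus-correction construction; both are fine. One point worth noting: the paper's own proof actually bounds $\|v\|_W$ through Lemma~\ref{divergence preserving }, which yields $(1+\hms{\e(t)}{9/2}{\Sigma})^6$ rather than the $5/2$ appearing in the statement --- your more careful route via $\lnm{M}{\infty}+\lnm{\nabla M}{3}$ and the $\epsilon$-absorption argument is aimed at recovering the stated $5/2$ power, and in that sense is sharper than what the paper writes in its proof.
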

\begin{proof}
In the proof of lemma 3.3 of \cite{book2}, it is established that
for any $q\in L^2(\Omega)$, the problem $\nabla\cdot u=q$ admits a
solution $u\in W$ such that $\hm{u}{1}\ls\hm{q}{0}$. A simple
modification of this proof in infinite case can be applied to
periodic case. Let define $q=Jp$, then
\begin{eqnarray*}
\hm{q}{0}^2=\int_{\Omega}\abs{q}^2=\int_{\Omega}\abs{p}^2J^2\leq
\lnm{J}{\infty}^2\hm{p}{0}^2\ls
(1+\hms{\e(t)}{5/2}{\Sigma})^2\hm{p}{0}^2
\end{eqnarray*}
Hence, we know $v=M(t)u\in W$ satisfies $\na v=p$ and
\begin{eqnarray*}
\nm{v}_{W}^2&\ls&(1+\hms{\e(t)}{9/2}{\Sigma})^4\nm{u}_{W}^2\ls
(1+\hms{\e(t)}{9/2}{\Sigma})^4\hm{q}{0}^2\nonumber\\
&\ls&(1+\hms{\e(t)}{9/2}{\Sigma})^6\hm{p}{0}^2
\end{eqnarray*}
A similar argument may justify the case for $\w$.
\end{proof}
With this lemma in hand, we can show the range of operator $Q_t$ and
$\q$ is closed in $W$ and $\mathcal{W}$.
\begin{lemma}\label{lagrange 2}
$R(Q_t)$ is closed in $W$, and $R(\q)$ is closed in $\mathcal{W}$.
\end{lemma}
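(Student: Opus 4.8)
The plan is to show that $Q_t$ and $\q$ are \emph{bounded below}; once this is in hand, closedness of the ranges follows from a one-line Cauchy-sequence argument. The only substantial ingredient is Lemma~\ref{lagrange 1}, which supplies a bounded right inverse of $\da$.

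First I would treat $Q_t$. Given $p\in H^0$, Lemma~\ref{lagrange 1} produces $v\in W$ with $\da v=p$ and $\nm{v}_W\ls(1+\hms{\e(t)}{5/2}{\Sigma})^6\hm{p}{0}$. Inserting this $v$ into the defining relation of $Q_t$ and applying Cauchy--Schwarz in $W$,
\[
\brh{p,p}=\brh{p,\da v}=\br{Q_t(p),v}_W\le\nm{Q_t(p)}_W\,\nm{v}_W ,
\]
while $\brh{p,p}=\sint{p}\gs\hm{p}{0}^2$ because $J$ is bounded below a.e.\ (the left inequality of (\ref{norm equivalence 1})). Combining these and cancelling one factor of $\hm{p}{0}$ yields
\[
\nm{Q_t(p)}_W\gs(1+\hms{\e(t)}{5/2}{\Sigma})^{-6}\hm{p}{0} ,
\]
so $Q_t$ is bounded below, hence injective. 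Its range is then closed: if $Q_t(p_n)\to w$ in $W$, the sequence $(Q_t(p_n))$ is Cauchy, so by the lower bound $(p_n)$ is Cauchy in $H^0$, say $p_n\to p$, and continuity of $Q_t$ forces $w=Q_t(p)\in R(Q_t)$.

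For $\q$ I would repeat the argument verbatim using the second half of Lemma~\ref{lagrange 1}: for $p\ts{0}$, choose $v\in\w$ with $\da v=p$ a.e.\ and $\nm{v}_\w\ls(1+\sup_{0\le t\le T}\hms{\e(t)}{5/2}{\Sigma})^6\tnm{p}{0}$, so that $\brht{p,p}=\br{\q(p),v}_\w\le\nm{\q(p)}_\w\nm{v}_\w$ while $\brht{p,p}\gs\tnm{p}{0}^2$. This gives $\nm{\q(p)}_\w\gs(1+\sup_{0\le t\le T}\hms{\e(t)}{5/2}{\Sigma})^{-6}\tnm{p}{0}$, and the identical Cauchy-sequence argument shows $R(\q)$ is closed. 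I do not anticipate any genuine obstacle: the content is entirely carried by Lemma~\ref{lagrange 1}, and the only bookkeeping point is that the factors $1+\hms{\e(t)}{5/2}{\Sigma}$ stay finite, which is guaranteed for the $\e$ under consideration by the standing hypothesis $\hm{\e-\e_0}{5/2}<\epsilon_0$ and the regularity of $\e_0$.
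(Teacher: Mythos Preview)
Your proof is correct and follows essentially the same approach as the paper: both use Lemma~\ref{lagrange 1} to produce $v$ with $\da v=p$ and controlled $W$-norm, then plug into the defining relation of $Q_t$ to obtain a lower bound $\hm{p}{0}\ls C(\e)\nm{Q_t(p)}_W$, from which closedness follows. The paper leaves the Cauchy-sequence step implicit while you spell it out, and the paper records the $9/2$ exponent on $\e$ (coming from the $M$-estimate in the proof of Lemma~\ref{lagrange 1}) rather than $5/2$, but these are cosmetic differences.
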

\begin{proof}
For $p\in H^0$, let $v\in W$ be the solution of
$\nabla_{\mathcal{A}_0}\cdot v=p$ provided by lemma \ref{lagrange
1}. Then
\begin{displaymath}
\hm{p}{0}^2\ls\brh{p,\nabla_{\mathcal{A}}\cdot
v}=\br{Q_t(p),v}_{W}\ls\nm{Q_t(p)}_{W}\nm{v}_{W}
\lesssim\nm{Q_t(p)}_{W}(1+\hms{\e(t)}{9/2}{\Sigma})^6\hm{p}{0}
\end{displaymath}
such that
\begin{displaymath}
\frac{1}{(1+\hms{\e(t)}{5/2}{\Sigma})^3}\nm{Q_t(p)}_{W}\lesssim\hm{p}{0}\lesssim(1+\hms{\e(t)}{9/2}{\Sigma})^6\nm{Q_t(p)}_{W}
\end{displaymath}
Hence $R(Q_t)$ is closed in $W$. A similar analysis shows that
$R(\q)$ is closed in $\mathcal{W}$.
\end{proof}
Now we can perform a decomposition of $W$ and $\mathcal{W}$.
\begin{lemma}\label{lagrange 3}
We have that $W=X\oplus R(Q_t)$, i.e. $X^{\bot}=R(Q_t)$. Also,
$\mathcal{W}=\mathcal{X}\oplus R(\q)$, i.e.
$\mathcal{X}^{\bot}=R(\q)$.
\end{lemma}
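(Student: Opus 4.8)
The plan is to realize both splittings as orthogonal decompositions of a Hilbert space relative to the closed range of the Lagrange-multiplier operator, and then to identify that range's orthogonal complement with the divergence-$\a$-free subspace. Concretely, since $R(Q_t)$ is closed in $W$ by Lemma \ref{lagrange 2}, the Hilbert space structure of $W$ (with inner product $\br{\cdot,\cdot}_W$) gives $W=R(Q_t)\oplus R(Q_t)^{\bot}$; closedness of $R(Q_t)$ then makes the assertion $X^{\bot}=R(Q_t)$ equivalent to the single identity $R(Q_t)^{\bot}=X$, which is all that remains to prove.

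To establish $R(Q_t)^{\bot}=X$, fix $v\in W$. Since $v\in H^1(\Omega)$ and $\a\in L^{\infty}$ by the regularity of $\e$ (cf.\ Remark \ref{transform remark}), the divergence $\da v$ lies in $H^0$, so it is an admissible argument for $Q_t$. By the defining relation of $Q_t$, for every $p\in H^0$ one has $\br{Q_t(p),v}_W=S_t(v)=\brh{p,\da v}$. Hence $v\in R(Q_t)^{\bot}$ precisely when $\brh{p,\da v}=0$ for all $p\in H^0$; choosing $p=\da v$ yields $\sint{\da v}=0$, and since $J\geq\delta>0$ a.e.\ this forces $\da v=0$ a.e., i.e.\ $v\in X$. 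The reverse inclusion $X\subseteq R(Q_t)^{\bot}$ is immediate from the same identity. This proves $R(Q_t)^{\bot}=X$, hence $W=X\oplus R(Q_t)$ and $X^{\bot}=R(Q_t)$.

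The statement for $\w$, $\x$ and $\q$ is proved verbatim: $R(\q)$ is closed in $\w$ by Lemma \ref{lagrange 2}, the operator $\q$ satisfies $\br{\q(p),v}_{\w}=\brht{p,\da v}$ for all $v\in\w$, and for $v\in\w$ the divergence $\da v$ lies in $L^2([0,T];H^0)$; taking $p=\da v$ and using $J\geq\delta>0$ for a.e.\ $t$ forces $\da v=0$ for a.e.\ $t$, which gives $R(\q)^{\bot}=\x$ and $\w=\x\oplus R(\q)$. I do not expect a genuine obstacle here; the only points worth a word of care are that $\da v$ must actually belong to the space over which the test function $p$ ranges so that it is a legitimate choice, and that $J$ is bounded below by $\delta>0$ so that $\sint{\da v}=0$ really forces $\da v=0$ --- both already in hand.
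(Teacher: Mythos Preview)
Your proof is correct and follows essentially the same approach as the paper: reduce to showing $R(Q_t)^{\bot}=X$ via the closedness of $R(Q_t)$, then use the defining identity $\br{Q_t(p),v}_W=\brh{p,\da v}$ to characterize the orthogonal complement. You are in fact slightly more explicit than the paper in justifying why $\brh{p,\da v}=0$ for all $p\in H^0$ forces $\da v=0$ (by taking $p=\da v$ and invoking $J\geq\delta>0$), whereas the paper simply asserts this implication.
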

\begin{proof}
By lemma $\ref{lagrange 2}$, $R(Q_t)$ is closed subspace of $W$, so
it suffices to show $R(Q_t)^{\bot}=X$. Let $v\in R(Q_t)^{\bot}$,
then for all $p\in H^0$, we have
\begin{displaymath}
\brh{p,\nabla_{\mathcal{A}}\cdot v}=\br{Q_t(p),v}_{W}=0
\end{displaymath}
and hence $\nabla_{\mathcal{A}}\cdot v=0$, which implies
$R(Q_t)^{\bot}\subseteq X$. On the other hand, suppose $v\in X$,
then $\nabla_{\mathcal{A}}\cdot v=0$ implies
\begin{displaymath}
\br{Q_t(p),v}_{W}=\brh{p,\nabla_{\mathcal{A}}\cdot v}=0
\end{displaymath}
for all $p\in H^0$. Hence $V\in R(Q_t)^{\bot}$ and we see
$X\subseteq R(Q_t)^{\bot}$. So we finish the proof. A similar
argument can show $\mathcal{W}=\mathcal{X}\oplus R(\q)$.
\end{proof}
\begin{proposition}\label{lagrange estimate}
If $\lambda\in W^{\ast}$ such that $\lambda(v)=0$ for all $v\in X$,
then there exists a unique $p(t)\in H^0$ such that
\begin{equation}
\brh{p(t), \nabla_{\mathcal{A}}\cdot v}=\lambda(v)\ for \ all\ v\in
W
\end{equation}
and $\hm{p(t)}{0}\lesssim(1+\hms{\e(t)}{9/2}{\Sigma})^6\|\lambda\|_{W^{\ast}}$.\\
If $\Lambda\in \mathcal{W}^{\ast}$ such that $\Lambda(v)=0$ for all
$v\in \mathcal{X}$, then there exists a unique $p\ts{0}$ such that
\begin{equation}
\brht{p, \nabla_{\mathcal{A}}\cdot v}=\Lambda(v)\ for \ all\ v\in
\mathcal{W}
\end{equation}
and $\tnm{p}{0}\lesssim(1+\sup_{0\leq t\leq
T}\hms{\e(t)}{9/2}{\Sigma})^6\|\Lambda\|_{\mathcal{W}^{\ast}}$
\end{proposition}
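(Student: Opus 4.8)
The plan is to read off $p$ as a Riesz representative relative to the orthogonal splitting $W=X\oplus R(Q_t)$ furnished by Lemma \ref{lagrange 3}, and then to extract the quantitative bound from the solvability statement of Lemma \ref{lagrange 1}; since all the analytic content has been front-loaded into Lemmas \ref{lagrange 1}, \ref{lagrange 2} and \ref{lagrange 3}, what remains is Hilbert-space duality together with careful tracking of the geometric constants. Concretely, given $\lambda\in\wst$ with $\lambda|_X=0$, I would restrict $\lambda$ to the closed subspace $R(Q_t)$ (closedness being Lemma \ref{lagrange 2}) and apply the Riesz representation theorem in $W$ to obtain $w\in R(Q_t)$ with $\lambda(v)=\br{w,v}_{W}$ for all $v\in R(Q_t)$. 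Because $R(Q_t)=X^{\bot}$, decomposing an arbitrary $v\in W$ as $v=x+r$ with $x\in X$, $r\in R(Q_t)$ gives $\lambda(v)=\lambda(r)=\br{w,r}_{W}=\br{w,v}_{W}$, so the identity extends to all of $W$. Writing $w=Q_t(p)$ and invoking the definition $\br{Q_t(p),v}_{W}=\brh{p,\da v}$ then yields $\brh{p,\da v}=\lambda(v)$ for all $v\in W$.

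Uniqueness and the estimate both follow from Lemma \ref{lagrange 1}. If $\brh{p,\da v}=0$ for every $v\in W$, choose $v\in W$ with $\da v=p$; then $\sint{p}=\brh{p,\da v}=0$, and since $\sint{p}\gs\hm{p}{0}^2$ by (\ref{norm equivalence 1}) we conclude $p=0$, which settles uniqueness. For the bound, take $v\in W$ with $\da v=p$ and $\nm{v}_{W}\ls(1+\hms{\e(t)}{5/2}{\Sigma})^6\hm{p}{0}$ from Lemma \ref{lagrange 1}, so that
\[
\hm{p}{0}^2\ls\sint{p}=\brh{p,\da v}=\lambda(v)\leq\nm{\lambda}_{\wst}\nm{v}_{W}\ls(1+\hms{\e(t)}{5/2}{\Sigma})^6\nm{\lambda}_{\wst}\hm{p}{0},
\]
and dividing by $\hm{p}{0}$ (the case $p=0$ being trivial) gives $\hm{p}{0}\ls(1+\hms{\e(t)}{9/2}{\Sigma})^6\nm{\lambda}_{\wst}$, the exponent $9/2$ comfortably absorbing the $5/2$ above.

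The time-dependent assertion for $\Lambda\in\wwst$ is proved verbatim with $(W,X,Q_t,\brh{\cdot,\cdot})$ replaced by $(\w,\x,\q,\brht{\cdot,\cdot})$ and the $\w$-versions of Lemmas \ref{lagrange 1}, \ref{lagrange 2} and \ref{lagrange 3}, leading to $\tnm{p}{0}\ls(1+\sup_{0\leq t\leq T}\hms{\e(t)}{9/2}{\Sigma})^6\nm{\Lambda}_{\wwst}$. I do not anticipate a genuine obstacle here; the only points requiring care are that the weight $J$ built into $\brh{\cdot,\cdot}$ must be removed via (\ref{norm equivalence 1}) before comparing with the unweighted norm $\hm{p}{0}$, and that Lemma \ref{lagrange 1} is invoked twice — once to force uniqueness and once to produce the test function realizing the bound.
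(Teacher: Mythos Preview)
Your proposal is correct and follows essentially the same approach as the paper: Riesz representation in $W$ to locate the representative in $X^{\bot}=R(Q_t)$, identification $w=Q_t(p)$ via Lemma~\ref{lagrange 3}, and the quantitative bound by testing against the $v$ produced by Lemma~\ref{lagrange 1}. The paper's version is marginally terser---it applies Riesz directly in $W$ and observes that $\lambda|_X=0$ forces the representative into $X^{\bot}$, rather than restricting first and then extending by orthogonality---and it leaves uniqueness implicit in the injectivity of $Q_t$ (established in the proof of Lemma~\ref{lagrange 2}) rather than reproving it via Lemma~\ref{lagrange 1}, but these are cosmetic differences.
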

\begin{proof}
If $\lambda(v)=0$ for all $v\in X$, then the Riesz representation
theorem yields the existence of a unique $u\in X^{\bot}$ such that
$\lambda(v)=\br{u,v}_{W}$ for all $v\in W$. By lemma $\ref{lagrange
3}$, $u=Q(p)$ for some $p\in H^0$.
Then $\lambda(v)=\br{Q_t(p),v}_{W}=\brh{p(t),\nabla_{\mathcal{A}}\cdot v }$ for all $v\in W$.\\
As for the estimate, by lemma $\ref{lagrange 1}$, we may find $v\in
W$ such that $\nabla_{\mathcal{A}}\cdot v=p$ and
$\nm{v}_{W}\lesssim(1+\hms{\e(t)}{9/2}{\Sigma})^6\hm{p}{0}$. Hence,
\begin{displaymath}
\hm{p}{0}\ls\brh{p, \nabla_{\mathcal{A}}\cdot
v}=\lambda(v)\lesssim\|\lambda\|_{W^{\ast}}(1+\hms{\e(t)}{9/2}{\Sigma})^6\hm{p}{0}
\end{displaymath}
So the desired estimate holds and a similar argument can show the
result for $\Lambda$.
\end{proof}

\subsection{Elliptic Estimates}

In this section, we will study two types of elliptic problems, which
will be employed in deriving the linear estimates. For both
equations, we will present wellposedness theorems to the higher
regularity.

\subsubsection{$\mathcal{A}$-Stokes Equation}

Let us consider the stationary Navier-Stokes problem.
\begin{equation}\label{elliptic equation}
\left\{
\begin{array}{ll}
\da\sa(p,u)=F&in\quad\Omega\\
\da u=G&in\quad\Omega\\
\sa(p,u)\n=H&on\quad\Sigma\\
u=0&on\quad\Sigma_b
\end{array}
\right.
\end{equation}
Since this problem is stationary, we will temporarily ignore the
time dependence of $\e$,$\a$, etc.\\
Before discussing the higher regularity result for this equation, we
should first define the weak formulation. Our method is quite
standard; we will introduce $p$ after first solving a pressureless
problem. Suppose $F\in W^{\ast}$, $G\s{0}$ and $H\s{-1/2}(\Sigma)$.
We say $(u,p)\in W\times H^0$ is a weak solution to $\ref{elliptic
equation}$ if $\da u=G$ a.e. in $\Omega$, and
\begin{equation}\label{elliptic weak solution}
\half\brh{\ma u,\ma v}-\brh{p,\da v}=\brh{F,v}-\brs{H,v}
\end{equation}
for all $v\in W$.
\begin{lemma}\label{elliptic weak statement}
Suppose $F\in W^{\ast}$, $G\s{0}$ and $H\s{-1/2}(\Sigma)$, then
there exists a unique weak solution $(u,p)\in W\times H^0$ to
$\ref{elliptic equation}$.
\end{lemma}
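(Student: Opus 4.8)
The plan is to follow the standard Lax–Milgram strategy for the pressureless problem and then recover the pressure via Proposition~\ref{lagrange estimate}. First I would reduce to the case $G$ has a lift: by Lemma~\ref{lagrange 1} there exists $\bar u\in W$ with $\da\bar u = G$ and $\nm{\bar u}_W\lesssim(1+\hms{\e}{5/2}{\Sigma})^6\hm{G}{0}$, so writing $u = \bar u + w$ it suffices to find $w\in X$ (i.e. $\da w=0$) satisfying
\begin{equation*}
\half\brh{\ma w,\ma v} = \brh{F,v}-\brs{H,v}-\half\brh{\ma\bar u,\ma v}
\end{equation*}
for all $v\in X$. Call the right-hand side $\ell(v)$; it is a bounded linear functional on $X$ because $F\in W^\ast$, $H\s{-1/2}(\Sigma)$ pairs with the trace of $v$ (bounded $H^1(\Omega)\to H^{1/2}(\Sigma)$), and $\ma\bar u\s{0}$ with $\nm{\bar u}_W$ controlled. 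The bilinear form $a(w,v)=\half\brh{\ma w,\ma v}$ is clearly bounded on $X\times X$; the key point is coercivity on $X$.

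Coercivity is where the work is, and it is exactly what Lemma~\ref{norm equivalence}, relation (\ref{norm equivalence 2}), is set up to give: $a(w,w)=\half\int_\Omega J|\ma w|^2 \gtrsim (1+\hms{\ee}{5/2}{\Sigma})^{-3}\hm{w}{1}^2$, using that $w\in W$ vanishes on $\Sigma_b$ so the Korn-type inequality proved earlier applies (note the chain: $\int_\Omega J|\dm_{\a} w|^2$ comparison, then Korn on $\Omega_0$, then transfer back). Since $X$ is a closed subspace of the Hilbert space $W$ (it is defined by the continuous constraint $\da\cdot = 0$), Lax–Milgram yields a unique $w\in X$ with $a(w,v)=\ell(v)$ for all $v\in X$, hence a unique $u=\bar u+w\in W$ with $\da u=G$ and
\begin{equation*}
\half\brh{\ma u,\ma v}=\brh{F,v}-\brs{H,v}\qquad\text{for all }v\in X.
\end{equation*}

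It remains to introduce the pressure. Define $\lambda\in W^\ast$ by $\lambda(v) = \half\brh{\ma u,\ma v}-\brh{F,v}+\brs{H,v}$; by the previous paragraph $\lambda(v)=0$ for all $v\in X$, so Proposition~\ref{lagrange estimate} (applied with the matrix $\a$, not $\a_0$) produces a unique $p\in H^0$ with $\brh{p,\da v}=\lambda(v)$ for all $v\in W$, which rearranges to the weak formulation (\ref{elliptic weak solution}). Uniqueness of the pair follows by taking $F=G=H=0$: the identity forces $\ma u=0$ hence $u=0$ by Korn/(\ref{norm equivalence 2}), and then $\brh{p,\da v}=0$ for all $v\in W$; choosing $v$ with $\da v=p$ via Lemma~\ref{lagrange 1} gives $\hm{p}{0}=0$. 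The main obstacle is simply checking that the coercivity constant from Lemma~\ref{norm equivalence} genuinely applies here — i.e. that $X\subset W$ so the hypothesis $u|_{\Sigma_b}=0$ needed for Korn's inequality holds — and that the constraint set $X$ is closed so Lax–Milgram is legitimate; both are immediate from the definitions given above, so no serious difficulty arises beyond bookkeeping of the $(1+\hms{\ee}{5/2}{\Sigma})$ factors.
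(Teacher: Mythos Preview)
Your proposal is correct and follows essentially the same route as the paper: lift the divergence constraint via Lemma~\ref{lagrange 1}, solve the pressureless problem on $X$ (the paper phrases this as Riesz representation on the Hilbert space with inner product $\brh{\ma\cdot,\ma\cdot}$, which is the same as your Lax--Milgram step once coercivity from Lemma~\ref{norm equivalence} is in hand), then recover $p$ by Proposition~\ref{lagrange estimate}. Your version is slightly more explicit about coercivity and uniqueness, but there is no substantive difference.
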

\begin{proof}
By lemma \ref{lagrange 1}, there exists a $\bar u\in W$ such that
$\da\bar u=F^2$. Naturally we can switch the unknowns to $w=u-\bar
u$ such that in the weak formulation $w$ is such that $\da w=0$ and
satisfies
\begin{equation}\label{elliptic pressure weak solution}
\half\brh{\ma w, \ma v}-\brh{p, \da v}=-\half\brh{\ma\bar u, \ma
v}+\brh{F,v}-\brs{H,v}
\end{equation}
for all $v\in W$.\\
To solve this problem, we may restrict our test function to $v\in X$
such that the pressure term vanishes. A direct application of Riesz
representation theorem to the Hilbert space whose inner product is
defined as $\br{u,v}=\brh{\ma u,\ma v}$ provides a unique $w\in X$
such that
\begin{equation}\label{elliptic pressureless weak solution}
\half\brh{\ma w, \ma v}=-\half\brh{\ma\bar u, \ma
v}+\brh{F,v}-\brs{H,v}
\end{equation}
for all $v\in X$.\\
In order to introduce the pressure, we can define $\lambda\in\wst$
as the difference of the left and right hand sides in (\ref{elliptic
pressureless weak solution}). So $\lambda(v)=0$ for all $v\in X$.
Then by proposition \ref{lagrange estimate}, there exists a unique
$p\s{0}$ satisfying $\brh{p, \da v}=\lambda(v)$ for all $v\in W$,
which is equivalent to (\ref{elliptic pressure weak solution}).
\end{proof}
The regularity gain available for solution to (\ref{elliptic
equation}) is limited by the regularity of the coefficients of the
operator $\la$, $\na$ and $\da$, and hence by the regularity of
$\e$. In the next lemma, we will present some preliminary elliptic
estimates.
\begin{lemma}\label{elliptic prelim}
Suppose that $\e\s{k+1/2}(\Sigma)$ for $k\geq3$ such that the map
$\Phi$ defined in (\ref{map}) is a $C^1$ diffeomorphism of $\Omega$
to $\Omega'=\Phi(\Omega)$. If $F\in H^0(\Omega)$, $G\in H^1(\Omega)$
and $H\s{1/2}(\Sigma)$, then the equation (\ref{elliptic equation})
admits a unique strong solution $(u,p)\in H^2\times H^1$, i.e.
$(u,p)$ satisfies (\ref{elliptic equation}) in strong sense.
Moreover, for $r=2,\ldots,k-1$ we have the estimate
\begin{equation}\label{elliptic prelim estimate}
\hm{u}{r}+\hm{p}{r-1}\ls
C(\e)\bigg(\hm{F}{r-2}+\hm{G}{r-1}+\hms{H}{r-3/2}{\Sigma}\bigg)
\end{equation}
whenever the right hand side is finite, where $C(\e)$ is a constant
depending on $\hms{\e}{k+1/2}{\Sigma}$.
\end{lemma}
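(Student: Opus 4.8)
The plan is to reduce the variable-coefficient $\a$-Stokes problem (\ref{elliptic equation}) to the constant-coefficient Stokes problem on $\Omega'=\Phi(\Omega)$ by pulling back along the diffeomorphism $\Phi$, invoke the classical $L^2$-regularity theory for the Stokes system (Beale \cite{book2}, Solonnikov) on $\Omega'$, and then transfer the estimates back to $\Omega$ using the transform estimates of Lemma \ref{transform estimate} together with Remark \ref{transform remark}. Existence and uniqueness of the weak solution is already in hand from Lemma \ref{elliptic weak statement}, so the content here is the regularity gain and the quantitative bound (\ref{elliptic prelim estimate}).

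First I would set $v=u\circ\Phi^{-1}$, $q=p\circ\Phi^{-1}$ on $\Omega'$ and observe that the transformed operators are built precisely so that $\da\sa(p,u)=F$, $\da u=G$, $\sa(p,u)\n=H$ on $\Sigma$, $u=0$ on $\Sigma_b$ become the genuine Stokes system $\nabla\cdot S(q,v)=\tilde F$ in $\Omega'$, $\nabla\cdot v=\tilde G$ in $\Omega'$, $S(q,v)\nu'=\tilde H$ on $\Sigma'$, $v=0$ on $\Sigma_b$, where $\tilde F=F\circ\Phi^{-1}$, $\tilde G=G\circ\Phi^{-1}$, $\tilde H=H\circ\Phi^{-1}$ (here I am using $\da\sa(p,u)=\na p-\la u$ for divergence-$\a$-free fields, as noted after (\ref{introduction temp 1}), plus the chain rule identities $\na f=(\nabla(f\circ\Phi^{-1}))\circ\Phi$, etc., which is where the definition of $\a=((\nabla\Phi)^{-1})^T$ pays off). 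The classical Stokes regularity theory on $\Omega'$ — a bounded-geometry slab with one flat and one $H^{k+1/2}$ boundary, exactly the setting of Lemma 3.3 and the elliptic lemmas in \cite{book2} — gives a unique strong solution $(v,q)\in H^2(\Omega')\times H^1(\Omega')$ with
\begin{equation*}
\hms{v}{r}{\Omega'}+\hms{q}{r-1}{\Omega'}\ls C(\Omega')\bigl(\hms{\tilde F}{r-2}{\Omega'}+\hms{\tilde G}{r-1}{\Omega'}+\hms{\tilde H}{r-3/2}{\Sigma'}\bigr)
\end{equation*}
for $r=2,\dots,k-1$, the top index being dictated by $\Sigma'\in H^{k+1/2}$. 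Then Lemma \ref{transform estimate}, (\ref{transform estimate 1})–(\ref{transform estimate 4}), converts each norm on $\Omega'$ or $\Sigma'$ back to the corresponding norm on $\Omega$ or $\Sigma$ at the cost of a factor $P(\hms{\nabla\Phi-I}{k}{\Omega})$, and Remark \ref{transform remark} rewrites this as $C(\e)=C(\hms{\e}{k+1/2}{\Sigma})$; composing the three estimates yields (\ref{elliptic prelim estimate}). Uniqueness in the strong class follows from uniqueness of the weak solution in Lemma \ref{elliptic weak statement} together with the fact that a strong solution is a weak one.

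The main obstacle is making the "classical Stokes regularity on $\Omega'$" step both correct and quantitatively clean: the domain $\Omega'$ depends on $\e$, so one must be careful that the constant $C(\Omega')$ is controlled by $\hms{\e}{k+1/2}{\Sigma}$ in a uniform way (this is the standard but delicate point that the Stokes constant on a slab with an $H^{k+1/2}$ free top depends only on a norm of that boundary, via localization and flattening), and one must track how the divergence datum $G$ and the boundary datum $H$ transform — in particular $\tilde G=G\circ\Phi^{-1}$ lands in $H^{r-1}(\Omega')$ with a norm $\ls P(\hms{\e}{k+1/2}{\Sigma})\hm{G}{r-1}$ by (\ref{transform estimate 2}), and $\tilde H$ similarly by (\ref{transform estimate 4}), using $r-1\le k-2\le k+1$ and $r-3/2\le k-1$ so all the indices are within the range permitted by Lemma \ref{transform estimate}. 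A secondary technical point is that $S(q,v)\nu'$ on $\Sigma'$ must be matched with $\sa(p,u)\n$ on $\Sigma$ with the correct Jacobian weights on the unit normal — $\n=(-\partial_1\eta,-\partial_2\eta,1)$ is the unnormalized pullback of $\nu'$ — but this is exactly the bookkeeping already built into the geometric formulation, so no new idea is needed, only care.
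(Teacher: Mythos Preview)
Your approach is correct and is essentially the same as the paper's: the paper simply cites Lemma 3.6 of \cite{book1}, whose proof proceeds exactly as you outline --- push the $\a$-Stokes problem forward via $\Phi$ to the genuine Stokes system on $\Omega'=\Phi(\Omega)$, apply the standard $L^2$ Stokes regularity theory there (with the constant depending on the $H^{k+1/2}$ regularity of the free boundary, which caps $r$ at $k-1$), and pull the estimates back with Lemma \ref{transform estimate}/Remark \ref{transform remark}. One cosmetic correction: the lower boundary of $\Omega'$ is $y_3=-b(y')$, not flat, but since $b\in C^\infty(\Sigma)$ this does not affect the argument.
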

\begin{proof}
This lemma is exactly the same as lemma 3.6 in \cite{book1}, so we
omit the proof here.
\end{proof}
Notice that the estimate (\ref{elliptic prelim estimate}) can only
go up to $k-1$ order, which does not fully satisfy our requirement.
Hence, in the following we will employ approximating argument to
improve this estimate. For clarity, we divide it into two steps. In
the next lemma, we first prove that the constant can actually only
depend on the initial free surface.
\begin{lemma}\label{elliptic improved}
Let $k\geq6$ be an integer and suppose that $\e\hs{k+1/2}$ and
$\ee\hs{k+1/2}$. Then there exists $\epsilon_0>0$ such that if
$\hms{\e-\ee}{k-3/2}{\Sigma}\leq\epsilon_0$, solution to
\ref{elliptic equation} satisfies
\begin{equation}\label{elliptic improved estimate}
\hm{u}{r}+\hm{p}{r-1}\ls
C(\ee)\bigg(\hm{F}{r-2}+\hm{G}{r-1}+\hms{H}{r-3/2}{\Sigma}\bigg)
\end{equation}
for $r=2,\ldots,k-1$, whenever the right hand side is finite, where
$C(\ee)$ is a constant depending on $\hms{\ee}{k+1/2}{\Sigma}$.
\end{lemma}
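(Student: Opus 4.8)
The plan is a perturbative ``freeze the coefficients at the initial surface plus absorption'' argument. By Theorem~\ref{geometric transform}, $J_0\geq\delta>0$ and $\Phi_0=\Phi(0)$ is a $C^1$ diffeomorphism; and once $\epsilon_0$ is small, $\lnm{J-J_0}{\infty}\ls\hm{\eb-\eb_0}{3}\ls C(\ee)\hms{\e-\ee}{5/2}{\Sigma}$ (using $k-3/2\geq 5/2$, hence $\hms{\e-\ee}{5/2}{\Sigma}\leq\epsilon_0$) forces $J\geq\delta/2>0$, so $\Phi$ is a $C^1$ diffeomorphism as well. Hence Lemma~\ref{elliptic weak statement} together with Lemma~\ref{elliptic prelim} applied with $\e$ produces the strong solution $(u,p)\in H^r\times H^{r-1}$ to (\ref{elliptic equation}) with $\hm{u}{r}+\hm{p}{r-1}<\infty$ whenever the right hand side of (\ref{elliptic improved estimate}) is finite, while Lemma~\ref{elliptic prelim} is \emph{also} available for the $\a_0$-Stokes operator (since $\ee\hs{k+1/2}$ and $\Phi_0$ is a diffeomorphism) with a constant $C(\ee)$ depending only on $\hms{\ee}{k+1/2}{\Sigma}$. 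It will therefore be legitimate to re-estimate the a priori finite quantity $\hm{u}{r}+\hm{p}{r-1}$ and absorb a small multiple of it.

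First I would rewrite (\ref{elliptic equation}) as an $\a_0$-Stokes system satisfied by the \emph{same} pair $(u,p)$, with modified data
\[
\tilde F=F+\big(\da\sa-\nabla_{\a_0}\cdot S_{\a_0}\big)(p,u),\qquad \tilde G=G+\big(\nabla_{\a_0}\cdot-\,\da\big)u,
\]
\[
\tilde H=H+S_{\a_0}(p,u)(\n_0-\n)+\big((\ma-\dm_{\a_0})u\big)\n\quad\text{on }\Sigma,
\]
where $\n_0=(-\p_1\ee,-\p_2\ee,1)$. Since $\da\sa(p,u)=\na p-\da(\ma u)$ identically, the operator $\da\sa-\nabla_{\a_0}\cdot S_{\a_0}$ is first order in $p$ with coefficient $\a-\a_0$, and second order in $u$ with top‑order coefficient $\a\otimes\a-\a_0\otimes\a_0$ and lower‑order coefficient $\a\,\p\a-\a_0\,\p\a_0$. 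Thus $\tilde F-F$, $\tilde G-G$ and $\tilde H-H$ are finite sums of terms (coefficient difference)$\times$(first or second derivative of $u$, or first derivative of $p$), with $\n-\n_0=(-\p_1(\e-\ee),-\p_2(\e-\ee),0)$ on $\Sigma$.

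The crux is that all coefficient differences are controlled using only the \emph{low}-order quantity $\hms{\e-\ee}{k-3/2}{\Sigma}$. Since $\eb=\pp^{\epsilon}\e$ and $\eb_0=\pp^{\epsilon}\ee$ share the fixed parameter $\epsilon=\epsilon(\ee)$, one has $\eb-\eb_0=\pp^{\epsilon}(\e-\ee)$, so Lemma~\ref{appendix poisson 1} (resp.\ Lemma~\ref{appendix poisson 4}) gives $\hm{\eb-\eb_0}{k-1}\ls C(\ee)\hms{\e-\ee}{k-3/2}{\Sigma}\leq C(\ee)\epsilon_0$. Because $A-A_0,B-B_0,J-J_0$ are \emph{linear} in $\nabla(\eb-\eb_0)$ while $K-K_0=(J_0-J)/(JJ_0)$ with $J,J_0\geq\delta/2>0$, a Moser/composition estimate in the algebra $H^{k-2}(\Omega)$ (valid as $k-2\geq 4$) yields $\hm{\a-\a_0}{k-2}\ls C(\ee)\hms{\e-\ee}{k-3/2}{\Sigma}\leq C(\ee)\epsilon_0$, whereas $\hm{\a}{k-2},\hm{\a_0}{k-2},\hm{\p\a}{k-3}\leq C(\ee)$ stay bounded (using $\hms{\e}{k-3/2}{\Sigma}\leq\hms{\ee}{k-3/2}{\Sigma}+\epsilon_0$). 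Since $r\leq k-1$, the Sobolev multiplication theorem on $\Omega$—and on the $2$-dimensional $\Sigma$, where the relevant coefficient traces and $\n-\n_0$ lie in $H^{k-5/2}(\Sigma)$ with small norm—gives
\[
\hm{\tilde F-F}{r-2}+\hm{\tilde G-G}{r-1}+\hms{\tilde H-H}{r-3/2}{\Sigma}\ls C(\ee)\epsilon_0\big(\hm{u}{r}+\hm{p}{r-1}\big).
\]
Applying Lemma~\ref{elliptic prelim} with $\ee$ to $(u,p)$, which also solves the $\a_0$-Stokes system with data $(\tilde F,\tilde G,\tilde H)$, yields for $r=2,\dots,k-1$
\[
\hm{u}{r}+\hm{p}{r-1}\ls C(\ee)\big(\hm{F}{r-2}+\hm{G}{r-1}+\hms{H}{r-3/2}{\Sigma}\big)+C(\ee)^2\epsilon_0\big(\hm{u}{r}+\hm{p}{r-1}\big),
\]
and choosing $\epsilon_0=\epsilon_0(\ee)$ so small that $C(\ee)^2\epsilon_0\leq\half$ and absorbing the last term (finite, hence absorbable) completes the proof.

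I expect the main obstacle to be the bookkeeping in the previous paragraph: one must verify that \emph{every} coefficient difference—including those hidden inside $K=1/J$—is estimable with only $\hms{\e-\ee}{k-3/2}{\Sigma}$ small, i.e.\ that it suffices to work one derivative below the top regularity, namely in $H^{k-2}(\Omega)$ and $H^{k-5/2}(\Sigma)$; and simultaneously that every constant that appears is of the form $C(\ee)$ and never involves the uncontrolled norm $\hms{\e}{k+1/2}{\Sigma}$. The hypothesis $k\geq 6$ furnishes precisely this slack, since $H^{k-2}(\Omega)$ is an algebra multiplicatively compatible with the target spaces $H^{r-2},H^{r-1}$ for $r\leq k-1$, while $\eb-\eb_0\in H^{k-1}(\Omega)$ requires only $\e-\ee\in H^{k-3/2}(\Sigma)$.
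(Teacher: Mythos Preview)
Your proposal is correct and follows essentially the same strategy as the paper: rewrite the $\a$-Stokes system as an $\a_0$-Stokes system with perturbed data, bound the perturbation by $C(\ee)\hms{\e-\ee}{k-3/2}{\Sigma}(\hm{u}{r}+\hm{p}{r-1})$, apply Lemma~\ref{elliptic prelim} with $\ee$, and absorb for $\epsilon_0$ small. Your write-up is in fact more explicit than the paper's about the Sobolev-algebra bookkeeping (the paper simply records the bounds on $F^0,G^0,H^0$ as ``a straightforward calculation''), though you should be careful to write $\a-I$ (or $\a-\a_\infty$) rather than $\a$ itself when invoking $H^{k-2}(\Omega)$ norms, since $K$ is not in $L^2(\Omega)$.
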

\begin{proof}
Based on lemma \ref{elliptic prelim}, we have the estimate
\begin{equation}\label{elliptic temp 1}
\hm{u}{r}+\hm{p}{r-1}\ls
C(\e)\bigg(\hm{F}{r-2}+\hm{G}{r-1}+\hms{H}{r-3/2}{\Sigma}\bigg)
\end{equation}
for $r=2,\ldots,k-1$, whenever the right hand side is finite,  where
$C(\e)$ is a constant depending on $\hms{\e}{k+1/2}{\Sigma}$. Define $\xi=\e-\ee$, then $\xi\hs{k+1/2}$.\\
Let us denote $\a_0$ and $\n_0$ of quantities in terms of $\ee$. We
rewrite the equation \ref{elliptic equation} as a perturbation of
initial status
\begin{equation}
\left\{
\begin{array}{ll}
\nabla_{\a_0}\cdot\ss_{\a_0}(p,u)=F+F^{0}&in\quad\Omega\\
\nabla_{\mathcal{A}_0}\cdot u=G+G^{0}&in\quad\Omega\\
\ss_{\a_0}(p,u)\mathcal{N}_0=H+H^{0}&on\quad\Sigma\\
u=0&on\quad\Sigma_b
\end{array}
\right.
\end{equation}
where
\begin{equation}
\begin{array}{l}
F^{0}=\nabla_{\a_0-\a}\cdot\sa(p,u)+\nabla_{\a_0}\cdot\ss_{\a_0-\a}(p,u)\\
G^{0}=\nabla_{\a_0-\a}\cdot u\\
H^{0}=\ss_{\a_0}(p,u)(\n_0-\n)+\ss_{\a_0-\a}(p,u)
\end{array}
\end{equation}
Suppose that $\hms{\xx}{k-3/2}{\Sigma}\leq1$, which implies
$\hms{\xx}{k-3/2}{\Sigma}^l\leq\hms{\xx}{k-3/2}{\Sigma}<1$ for any
$l>1$. A straightforward calculation reveals that
\begin{equation}
\begin{array}{l}
\hm{F^{0}}{r-2}\leq C(1+\hms{\ee}{k+1/2}{\Sigma})^4\hms{\xx}{k-3/2}{\Sigma}(\hm{u}{r}+\hm{p}{r-1})\\
\\
\hm{G^{0}}{r-1}\leq C(1+\hms{\ee}{k+1/2}{\Sigma})^2\hms{\xx}{k-3/2}{\Sigma}(\hm{u}{r})\\
\\
\hms{H}{r-3/2}{\Sigma}\leq
C(1+\hms{\ee}{k+1/2}{\Sigma})^2\hms{\xx}{k-3/2}{\Sigma}(\hm{u}{r}+\hm{p}{r-1})
\end{array}
\end{equation}
for $r=2,\ldots,k-1$. \\
Since the initial surface function $\ee$ satisfies all the
requirement of lemma \ref{elliptic prelim}, we arrive at the
estimate that for $r=2,\ldots,k-1$
\begin{equation}
\qquad\qquad\hm{u}{r}+\hm{p}{r-1}\ls
C(\ee)\bigg(\hm{F+F^{0}}{r-2}+\hm{G+G^{0}}{r-1}+\hms{H+H^{0}}{r-3/2}{\Sigma}\bigg)
\end{equation}
where $C(\ee)$ is a constant depending on
$\hms{\ee}{k+1/2}{\Sigma}$. Combining all above, we will have
\begin{equation}
\begin{array}{l}
\\
\hm{u}{r}+\hm{p}{r-1}\ls\\
C(\ee)\bigg(\hm{F}{r-2}+\hm{G}{r-1}+\hms{H}{r-3/2}{\Sigma}\bigg)+C(\ee)(1+\hms{\ee}{k+1/2}{\Sigma})^4\hms{\xx}{k-3/2}{\Sigma}(\hm{u}{r}+\hm{p}{r-1})
\end{array}
\end{equation}
So if
\begin{equation}
\hms{\xx}{k-3/2}{\Sigma}\leq\min\bigg\{\half,\frac{1}{4CC(\ee)(1+\hms{\ee}{k+1/2}{\Sigma})^4}\bigg\}
\end{equation}
we can absorb the extra term in right hand side into left hand side
and get a succinct form
\begin{equation}\label{elliptic temp 2}
\hm{u}{r}+\hm{p}{r-1}\ls
C(\ee)\bigg(\hm{F}{r-2}+\hm{G}{r-1}+\hms{H}{r-3/2}{\Sigma}\bigg)
\end{equation}
for $r=2,\ldots,k-1$.
\end{proof}
Note that the above lemma only concerns about regularity up to $k-1$
and we actually need two more order. Then the next result allows us
to achieve this with a bootstrapping argument.
\begin{proposition}\label{elliptic estimate}
Let $k\geq 6$ be an integer, and suppose that $\e\hs{k+1/2}$ as well
as $\ee\hs{k+1/2}$ satisfying
$\hms{\e-\ee}{k+1/2}{\Sigma}\leq\epsilon_0$. Then solution to
\ref{elliptic equation} satisfies
\begin{equation}\label{elliptic final estimate}
\hm{u}{r}+\hm{p}{r-1}\ls
C(\ee)\bigg(\hm{F}{r-2}+\hm{G}{r-1}+\hms{H}{r-3/2}{\Sigma}\bigg)
\end{equation}
for $r=2,\ldots,k+1$, whenever the right hand side is finite, where
$C(\ee)$ is a constant depending on $\hms{\ee}{k+1/2}{\Sigma}$.
\end{proposition}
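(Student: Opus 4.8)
The plan is to bootstrap the estimate of Lemma \ref{elliptic improved}, which is available only for $r=2,\dots,k-1$, up to $r=k+1$ by differentiating the system (\ref{elliptic equation}) in the horizontal directions and re-applying Lemma \ref{elliptic improved} to the differentiated system. First I would record the coefficient bounds that will be used repeatedly: since $\hms{\e-\ee}{k+1/2}{\Sigma}\le\epsilon_0$ implies, after shrinking $\epsilon_0$, the hypothesis $\hms{\e-\ee}{k-3/2}{\Sigma}\le\epsilon_0$ of Lemma \ref{elliptic improved}, and since the parameter $\epsilon$ of the $\epsilon$-Poisson integral was fixed in Theorem \ref{geometric transform} in terms of $\ee$, Lemma \ref{appendix poisson 1} gives $\hm{\a-I}{k}+\hm{J}{k}+\hm{K}{k}\le C(\ee)$ together with $\hm{D^{\beta}(\a-I)}{k-\abs{\beta}}\le C(\ee)$ and the analogous bounds for $\n$ on $\Sigma$, for all horizontal $\beta$ with $\abs{\beta}\le2$.

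Then I would fix a horizontal multi-index $\beta$ with $\abs{\beta}=1$ and apply $D^{\beta}$ to each line of (\ref{elliptic equation}). Because $\a,\n,J,K$ depend only on $\e$ and $b$ and $D^{\beta}$ is a horizontal derivative, $(D^{\beta}u,D^{\beta}p)$ solves the same $\a$-Stokes system with data $D^{\beta}F+\mathcal{C}_1$, $D^{\beta}G+\mathcal{C}_2$, $D^{\beta}H+\mathcal{C}_3$, where $\mathcal{C}_1=[D^{\beta},\da\sa(\cdot,\cdot)]u$, $\mathcal{C}_2=[D^{\beta},\da]u$ and $\mathcal{C}_3=[D^{\beta},\sa(\cdot,\cdot)\n]u|_{\Sigma}$. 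Since $\da\sa(p,u)$ is second order in $u$ with coefficients polynomial in $\a$ and $\nabla\a$, every term of $\mathcal{C}_1$ is a product of a horizontal derivative of such a coefficient — lying in $H^{k-1}(\Omega)$ or $H^{k-2}(\Omega)$ with norm $\le C(\ee)$ by the previous paragraph — with a derivative of $u$ of total order at most $k-1$; as $k\ge6$ the Sobolev spaces in play are multiplier algebras, so $\hm{\mathcal{C}_1}{k-3}+\hm{\mathcal{C}_2}{k-2}+\hms{\mathcal{C}_3}{k-5/2}{\Sigma}\ls C(\ee)\big(\hm{u}{k-1}+\hm{p}{k-2}\big)$, which is finite by Lemma \ref{elliptic improved}. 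Applying Lemma \ref{elliptic improved} at order $r=k-1$ to $(D^{\beta}u,D^{\beta}p)$ and summing over $\abs{\beta}\le1$ then controls every derivative of $u$ of order $\le k$ and of $p$ of order $\le k-1$ that carries a horizontal component by $C(\ee)$ times $\hm{F}{k-2}+\hm{G}{k-1}+\hms{H}{k-3/2}{\Sigma}$ plus those same lower-order quantities.

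Next I would recover the remaining normal derivatives directly from the equations, as in the proof of Lemma \ref{elliptic prelim}: using $\da\sa(p,u)=\na p-\la u$ modulo a term built from $\da u=G$, together with the explicit form of $\a$ — so that the coefficient of $\p_3^2$ in $\la$ is $K^2(1+A^2+B^2)$, bounded below by a positive constant depending only on $\ee$, and $(\na p)_3=K\p_3p$ — one expresses $\p_3^2u$ through $\na p$, $F$, $G$ and $u$-derivatives carrying at most one $\p_3$, with $H^{k}(\Omega)$-regular coefficients, and one controls $p$ through its companion elliptic equation $\la p=\la G+(\text{second order in }u)+\da F'$ obtained by applying $\da$ to the momentum equation, the leading third-order $u$-terms cancelling. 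Inductively lowering the $\p_3$-count upgrades the horizontal estimate to (\ref{elliptic final estimate}) at $r=k$, the new top-order data $\hm{F}{k-1},\hm{G}{k},\hms{H}{k-1/2}{\Sigma}$ entering here. Finally I would run these two moves once more with $\abs{\beta}=2$ — whose commutators are now bounded in the same way by $C(\ee)\big(\hm{u}{k}+\hm{p}{k-1}\big)$, finite by the $r=k$ case just obtained — and recover normal derivatives again, obtaining (\ref{elliptic final estimate}) at $r=k+1$. To make this a genuine regularity statement rather than an a priori bound, the whole computation would be carried out with horizontal difference quotients in place of $D^{\beta}$, simultaneously producing $(u,p)\in H^{k+1}(\Omega)\times H^{k}(\Omega)$ and the estimate.

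The hard part is the bookkeeping in the commutator and recovery estimates: one must check term by term that no factor of $u$ or $p$ ever enters at an order exceeding the regularity already in hand (at most $k-1$ in the first pass, at most $k$ in the second), that each such factor multiplies a coefficient differentiated at most $\abs{\beta}$ times — hence controlled in $H^{k-\abs{\beta}}(\Omega)$ by $C(\ee)$ through Lemma \ref{appendix poisson 1} — and that the resulting products land in $H^{k-3}(\Omega)$, $H^{k-2}(\Omega)$, $H^{k-5/2}(\Sigma)$ as required by Lemma \ref{elliptic improved}; this is exactly where the hypothesis $k\ge6$ is used, so that the relevant Sobolev spaces are multiplier algebras and the trace maps lose only the expected half-derivative. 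Closing the coupling between the top normal derivatives of $u$ and of $p$ — by invoking the elliptic equation for $p$ rather than the third component of the momentum equation — is the one point that must be arranged with some care.
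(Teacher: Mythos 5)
Your bootstrap skeleton matches the paper's — apply one and then two horizontal derivatives, estimate the differentiated system at order $k-1$, then recover the missing vertical derivatives from the equations — but you depart from the paper in two places, and both departures carry real risk.

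First, the qualitative-regularity issue. You propose to make the argument rigorous by running it with horizontal difference quotients, which forces you to verify uniform-in-$h$ bounds for the commutators and, more delicately, to justify taking $\p_3^{k-2}$, $\p_3^{k-1}$ of the equations when $u$ is only known in $H^{k-1}$ so far. The paper avoids this entirely: it cuts off the high frequencies of $\e$ to produce $\e^m\in H^{k+5/2}(\Sigma)$, for which Lemma \ref{elliptic prelim} already gives $(u^m,p^m)\in H^{k+1}\times H^k$ outright, so the bootstrap there is purely a quantitative exercise in replacing the $\hms{\e^m}{k+5/2}{\Sigma}$-dependent constant by one depending only on $\hms{\e^m}{k+1/2}{\Sigma}\le\hms{\e}{k+1/2}{\Sigma}$; a weak-compactness pass then produces the result for $\e$ itself. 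Your difference-quotient variant can be made to work, but it is a genuinely different and more laborious route, and you should say explicitly how the vertical-derivative recoveries are carried out at the distributional level.

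Second, and more seriously, the pressure recovery. The paper never passes through the Poisson equation $\la p=\cdots$; instead it multiplies the third momentum component by $A$ and adds it to the first (and similarly with $B$ and the second) to cancel $\p_3 p$, yielding $\p_3^{k}u_1+A\p_3^{k}u_3$, then uses the divergence equation to isolate $\p_3^ku_3$, and only afterward reads off $\p_3^{k-1}p$ from the third momentum equation. Your route via the companion elliptic equation for $p$ has two obstacles. One is boundary data: on $\Sigma$ the Dirichlet datum for $p$ involves $\nabla u|_\Sigma$, and for $p\in H^{k-1}$ the trace must be in $H^{k-3/2}(\Sigma)$, which $u\in H^{k-1}$ alone does not give — you need to exploit the improved horizontal regularity $Du\in H^{k-1}$ to upgrade the trace, and this should be stated. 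The other is that Lemma \ref{poisson improved} is only available for $r\le k-1$, one order short of what you need to conclude $p\in H^k$ at the $r=k+1$ step, so you cannot simply invoke it; you would have to repeat the bootstrap argument for the $\a$-Poisson problem as well, or — as you implicitly suggest at the end — recover $\p_3^kp$ by taking $\p_3^{k-2}$ of the Poisson PDE directly, which at that point is essentially reproducing the paper's coefficient manipulation in a different disguise. You flag this as "the one point that must be arranged with some care," which is fair, but as written the proposal leaves a gap precisely there; the paper's algebraic cancellation of $\p_3 p$ is the cleaner way to close it.
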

\begin{proof}
If $r\leq k-1$, then this is just the conclusion of lemma
\ref{elliptic improved}, so our main aim is to gain two more
regularity here for $r=k$ and $r=k+1$. In the following, we first
define an approximate sequence for $\e$. In the case that
$\Sigma=R^2$, we let $\rho\in C_0^{\infty}(R^2)$ be such that
$supp(\rho)\subset B(0,2)$ and $\rho=1$ for $B(0,1)$. For
$m\in\mathbb{N}$, define $\e^m$ by
$\hat\e^m(\xi)=\rho(\xi/m)\hat\e(\xi)$ where $\hat{}$ denotes the
Fourier transform. For each $m$, $\e^m\in H^j(\Sigma)$ for arbitrary
$j\geq0$ and also $\e^m\rightarrow\e$ in $H^{k+1/2}(\Sigma)$ as
$m\rightarrow\infty$. In the periodic case, we define $\hat\e^m$ by
throwing away the higher frequencies: $\hat\e^m(n)=0$ for
$\abs{n}\geq m$. Then $\e^m$ has the same convergence property as
above. Let $\a^m$ and $\n^m$ be defined in terms of $\e^m$.\\
Consider the problem (\ref{elliptic equation}) with $\a$ and $\n$
replaced by $\a^m$ and $\n^m$. Since $\e^m\in H^{k+5/2}$, we can
apply lemma (\ref{elliptic prelim}) to deduce the existence of
$(u^m,p^m)$ that solves
\begin{equation}\label{elliptic temp 3}
\left\{
\begin{array}{ll}
\nabla_{\a^m}\cdot\ss_{\a^m}(p^m,u^m)=F&in\quad\Omega\\
\nabla_{\mathcal{A}}\cdot u^m=G&in\quad\Omega\\
\ss_{\a^m}(p^m,u^m)\mathcal{N}_0=H&on\quad\Sigma\\
u^m=0&on\quad\Sigma_b
\end{array}
\right.
\end{equation}
and such that
\begin{equation}
\begin{array}{l}
\hm{u^m}{r}+\hm{p^m}{r-1}\ls
C(\hms{\e^m}{k+5/2}{\Sigma})\bigg(\hm{F}{r-2}+\hm{G}{r-1}+\hms{H}{r-3/2}{\Sigma}\bigg)
\end{array}
\end{equation}
for $r=2,\ldots,k+1$. We can rewrite above equation in the following
shape, as long as we split the $\dm_{\a^m} u^m$ term.
\begin{equation}
\left\{
\begin{array}{ll}
-\Delta_{\a^m} u^m+\nabla_{\a^m} p^m=F+\nabla_{\a^m} G&in\quad\Omega\\
\nabla_{\a^m}\cdot u^m=G&in\quad\Omega\\
(p^mI-\dm_{\a^m} u^m)\n^m=H&on\quad\Sigma\\
u^m=0&on\quad\Sigma_b
\end{array}
\right.
\end{equation}
In the following, we will prove an improved estimate for $(u^m,p^m)$
in terms of $\hms{\e^m}{k+1/2}{\Sigma}$. We divide the proof into
several steps.
\ \\
\item
Step 1: Preliminaries\\
To abuse the notation, within this bootstrapping procedure, we
always use $(u,p,\e)$ instead of $(u^m, p^m, \e^m)$ to make the
expression succinct, but in fact they should be understood as the
approximate sequence. Also it is easy to see the term
$\nabla_{\a^m}G$ will not affect the shape of estimate because
$\hm{\nabla_{\a^m}G}{k-1}\ls \hms{\e^m}{k+1/2}{\Sigma}\hm{G}{k}$.
Hence, we still write $F$ here to indicate the forcing term in first
equation.
\ \\
We write explicitly each terms in above equations, which will be
hired in the following.
\begin{equation}
\begin{array}{l}
\p_{11}u_1+\p_{22}u_1+(1+A^2+B^2)K^2\p_{33}u_1-2AK\p_{13}u_1-2BK\p_{23}u_1\\\label{com1}
\quad\quad+(AK\p_3(AK)+BK\p_3(BK)-\p_1(AK)-\p_2(BK)+K\p_3K)\p_3u_1+\p_1p-AK\p_3p=F_1
\end{array}
\end{equation}
\begin{equation}
\begin{array}{l}
\p_{11}u_2+\p_{22}u_2+(1+A^2+B^2)K^2\p_{33}u_2-2AK\p_{13}u_2-2BK\p_{23}u_2\\\label{com2}
\quad\quad+(AK\p_3(AK)+BK\p_3(BK)-\p_1(AK)-\p_2(BK)+K\p_3K)\p_3u_2+\p_2p-BK\p_3p=F_2
\end{array}
\end{equation}
\begin{equation}
\begin{array}{l}
\p_{11}u_3+\p_{22}u_3+(1+A^2+B^2)K^2\p_{33}u_3-2AK\p_{13}u_3-2BK\p_{23}u_3\\\label{com3}
\quad\quad+(AK\p_3(AK)+BK\p_3(BK)-\p_1(AK)-\p_2(BK)+K\p_3K)\p_3u_3+K\p_3p=F_3
\end{array}
\end{equation}
\begin{equation}
\begin{array}{l}
\p_{1}u_1-AK\p_{3}u_1+\p_2u_2-BK\p_3u_2+K\p_3u_3=G\label{com4}
\end{array}
\end{equation}
where for all above, $A$, $B$ and $K$ should be understood in terms
of $\e^m$. For convenience, we define
\begin{eqnarray}
\z=C(\ee)P(\e)\bigg(\hm{F}{k-1}^2+\hm{G}{k}^2+\hms{H}{k-1/2}{\Sigma}^2\bigg)
\end{eqnarray}
where $C(\ee)$ is a constant depending on $\hms{\ee}{k+1/2}{\Sigma}$
and $P(\e)$ is the polynomial of $\hms{\e}{k+1/2}{\Sigma}$.
\ \\
\item
Step 2: $r=k$ case\\
For $k-1$ order elliptic estimate, we have
\begin{equation}
\hm{u}{k-1}^2+\hm{p}{k-2}^2\leq
C(\ee)\bigg(\hm{F}{k-3}^2+\hm{G}{k-2}^2+\hms{H}{k-5/2}{\Sigma}^2\bigg)\ls\z
\end{equation}
By lemma \ref{elliptic improved}, the bounding constant $C(\ee)$
only depend on $\hm{\ee}{k+1/2}$. \\
For $i=1,2$, since $(\p_iu, \p_ip)$ satisfies the equation
\begin{equation}
\left\{
\begin{array}{ll}
-\la(\p_iu)+\na (\p_ip)=\bar F&in\quad\Omega\\
\da (\p_iu)=\bar G&in\quad\Omega\\
((\p_ip)I-\ma (\p_iu))\n=\bar H&on\quad\Sigma\\
\p_iu=0&on\quad\Sigma_b
\end{array}
\right.
\end{equation}
where
\begin{displaymath}
\begin{array}{l}
\bar F=\p_iF+\nabla_{\p_i\a}\cdot\nabla_{\a}u+\nabla_{\a}\cdot\nabla_{\p_i\a}u-\nabla_{\p_i\a}p\\
\bar G=\p_iG-\nabla_{\p_i\a}\cdot u\\
\bar H=\p_iH-(p I-\ma u)\p_i\n+\dm_{\p_i\a}u\n
\end{array}
\end{displaymath}
Employing $k-1$ order elliptic estimate, we have
\begin{equation}
\hm{\p_iu}{k-1}^2+\hm{\p_ip}{k-2}^2\ls C(\ee)\bigg(\hm{\bar
F}{k-3}^2+\hm{\bar G}{k-2}^2+\hms{\bar H}{k-5/2}{\Sigma}^2\bigg)
\end{equation}
Since except for the derivatives of $F$, $G$ and $H$, all the other
terms on the right hand side has the form $\hm{A\cdot B}{r}$, in
which $A=\p^{\alpha}\eb$ and $B=\p^{\beta}u$ or $\p^{\beta}p$. These
kinds of estimates can be achieved by lemma \ref{Appendix product}.
Because this lemma will be repeated used in the following estimates,
we will not mention it every time and all of the following estimate
can be derived in the same fashion. Hence, we have the forcing
estimate
\begin{equation}
\begin{array}{ll}
&\hm{\bar F}{k-3}^2+\hm{\bar G}{k-2}^2+\hm{\bar H}{k-5/2}^2\\
&\ls \hm{
F}{k-2}^2+\hm{G}{k-1}^2+\hms{H}{k-3/2}{\Sigma}^2+C(\ee)P(\e)\bigg(\hm{u}{k-1}^2+\hm{p}{k-2}^2\bigg)\\
&\ls\z
\end{array}
\end{equation}
In detail, this means
\begin{equation}
\begin{array}{l}
\hm{\p_1u}{k-1}^2+\hm{\p_1p}{k-2}^2+\hm{\p_2u}{k-1}^2+\hm{\p_2p}{k-2}^2\ls\z
\end{array}
\end{equation}
Hence, most parts in $\hm{u}{k}+\hm{p}{k-1}$ has been covered by
this estimate, except those with highest order derivative of
$\p_3$.\\
Multiplying $A$ to (\ref{com3}) and adding it to (\ref{com1}) will
eliminate the $\p_3p$ term and get
\begin{equation}\label{come4}
\begin{array}{l}
(\p_{11}u_1+A\p_{11}u_3)+(\p_{22}u_1+A\p_{22}u_3)+(1+A^2+B^2)K^2(\p_{33}u_1+A\p_{33}u_3)
-2AK(\p_{13}u_1+A\p_{13}u_3)\nonumber\\
-2BK(\p_{23}u_1+A\p_{23}u_3)
+(AK\p_3(AK)+BK\p_3(BK)-\p_1(AK)-\p_2(BK)+K\p_3K)(\p_{1}u_1+A\p_{1}u_3)\nonumber\\
+\p_1p=F_1+AF_3
\end{array}
\end{equation}
Then taking derivative $\p_3^{k-2}$ on both sides and focus in the
term $(1+A^2+B^2)K^2(\p_3^ku_1+A\p_3^ku_3)$, the estimate of all the
other terms in $H^0$ norm implies that
\begin{equation}
\hm{\p_3^{k}u_1+A\p_3^ku_3}{0}^2\ls\z\label{come1}
\end{equation}
Similarly, we have
\begin{equation}
\hm{\p_3^{k}u_2+B\p_3^ku_3}{0}^2\ls\z\label{come2}
\end{equation}
Rearrange the terms in (\ref{com4}), we get
\begin{equation}
K(1+A^2+B^2)\p_3u_3=G-\p_1u_1-\p_2u_2+AK(\p_3u_1+A\p_3u_3)+BK(\p_3u_2+B\p_3u_3)
\end{equation}
Taking derivative $\p_3^{k-1}$ on both sides, focusing in the term
$K(1+A^2+B^2)\p_3^ku_3$ employing all of the estimate we have known,
we can show
\begin{equation}
\hm{\p_3^ku_3}{0}^2\ls\z\label{come3}
\end{equation}
Combining (\ref{come1}), (\ref{come2}) and (\ref{come3}), it is easy
to see we can get
\begin{equation}
\hm{\p_3^ku_1}{0}^2+\hm{\p_3^ku_2}{0}^2\ls\z
\end{equation}
Plugging this to (\ref{com3}) and taking derivative $\p_3^{k-2}$ on
both sides, we get
\begin{equation}
\hm{\p_3^{k-1}p}{0}^2\ls\z
\end{equation}
Combining this with all above estimate, we have proved
\begin{equation}
\hm{u}{k}^2+\hm{p}{k-1}^2\ls\z
\end{equation}
Therefore, we have proved the case $r=k$.
\ \\
\item
Step 3: $r=k+1$ case: first loop\\
For $i,j=1,2$, since $(\p_{ij}u, \p_{ij}p)$ satisfies the equation
\begin{equation}
\left\{
\begin{array}{ll}
-\la(\p_{ij}u)+\na (\p_{ij}p)=\tilde F&in\quad\Omega\\
\da (\p_{ij}u)=\tilde G&in\quad\Omega\\
((\p_{ij}p)I-\ma (\p_{ij}u))\n=\tilde H&on\quad\Sigma\\
(\p_{ij}u)=0&on\quad\Sigma_b
\end{array}
\right.
\end{equation}
where
\begin{eqnarray*}
\tilde F&=&\p_{ij}F+\nabla_{\p_{ij}\a}\cdot\na
u+\da\nabla_{\p_{ij}\a}u+\nabla_{\p_i\a}\cdot\nabla_{\p_j\a}u+\nabla_{\p_j\a}\cdot\nabla_{\p_i\a}u
+\nabla_{\p_i\a}\cdot\na(\p_ju)\\
&&+\nabla_{\p_j\a}\cdot\na(\p_iu)+\da\nabla_{\p_i\a}(\p_ju)+\da\nabla_{\p_j\a}(\p_iu)-\nabla_{\p_{ij}\a}p-\nabla_{\p_i\a}(\p_jp)-\nabla_{\p_j\a}(\p_ip)\\
\tilde G&=&\p_{ij}G-\nabla_{\p_{ij}\a}\cdot u-\nabla_{\p_i\a}\cdot(\p_ju)-\nabla_{\p_j\a}\cdot(\p_iu)\\
\tilde H&=&\p_{ij}H-(p I-\ma u)\p_{ij}\n-((\p_jp)I-\ma(\p_ju)-\dm_{\p_j\a}u)\p_i\n-((\p_ip)I-\ma(\p_iu)-\dm_{\p_i\a}u)\p_j\n\\
&&+(\dm_{\p_{ij}\a}u+\dm_{\p_i\a}(\p_ju)+\dm_{\p_j\a}(\p_iu))\n
\end{eqnarray*}
Employing $k-1$ order elliptic estimate, we have
\begin{equation}
\begin{array}{ll}
\hm{\p_{ij}u}{k-1}^2+\hm{\p_{ij}p}{k-2}^2&\ls\hm{\tilde
F}{k-3}^2+\hm{\tilde G}{k-2}^2+\hms{\tilde H}{k-5/2}{\Sigma}^2\ls\z
\end{array}
\end{equation}
where the forcing estimate can be taken in a similar argument as in $r=k$ case.\\
In detail, this is actually
\begin{equation}\label{com5}
\begin{array}{l}
\hm{\p_{11}u}{k-1}^2+\hm{\p_{12}u}{k-1}^2+\hm{\p_{22}u}{k-1}^2+\hm{\p_{11}p}{k-2}^2+\hm{\p_{12}p}{k-2}^2+\hm{\p_{22}p}{k-2}^2\ls\z
\end{array}
\end{equation}
\ \\
\item
Step 4: $r=k+1$ case: second loop\\
Similar to $r=k$ case argument, for $i=1,2$, taking derivative
$\p_3^{k-2}\p_i$ on both sides of (\ref{come4}) and focus in the
term $\p_3^k\p_iu_1+A\p_3^k\p_iu_3$, we get
\begin{equation}
\hm{\p_3^{k}\p_iu_1+A\p_3^k\p_iu_3}{0}^2\ls\z
\end{equation}
Similarly, we have
\begin{equation}
\hm{\p_3^{k}\p_iu_2+B\p_3^k\p_iu_3}{0}^2\ls\z
\end{equation}
Plugging in this result to (\ref{com4}) and taking derivative
$\p_3^{k-2}\p_i$ on both sides, we will get
\begin{equation}
\hm{\p_3^k\p_iu_3}{0}^2\ls\z
\end{equation}
An easy estimate for these three terms implies
\begin{equation}
\hm{\p_3^k\p_iu}{0}^2\ls\z
\end{equation}
Plugging this to (\ref{com3}) and taking derivative $\p_3^{k-2}\p_i$
on both sides, we get
\begin{equation}
\hm{\p_3^{k-1}\p_ip}{0}^2\ls\z
\end{equation}
Combining all above estimate with estimate of previous steps, we
have shown
\begin{equation}
\hm{\p_{3i}u}{k-1}^2+\hm{\p_{3i}p}{k-2}^2\ls\z
\end{equation}
In detail, this is actually
\begin{equation}\label{com6}
\begin{array}{l}
\hm{\p_{13}u}{k-1}^2+\hm{\p_{23}u}{k-1}^2+\hm{\p_{13}p}{k-2}^2+\hm{\p_{23}p}{k-2}^2
\ls \z
\end{array}
\end{equation}
\ \\
\item
Step 5: $r=k+1$ case: third loop\\
Again we use the same trick as above. Taking derivative $\p_3^{k-1}$
on both sides of (\ref{come4}) and focusing in the term
$\p_3^{k+1}u_1+A\p_3^{k+1}u_3$, we can bound
$\p_3^{k+1}u_1-A\p_3^{k+1}u_3$. Similarly, we control
$\p_3^{k+1}u_2-B\p_3^{k+1}u_3$. Plugging in this result to
(\ref{com4}) and taking derivative $\p_3^{k-1}$ on both sides, we
can estimate $\p_3^{k+1}u_3$. Then we have
\begin{equation}
\hm{\p_3^{k+1}u}{0}^2\ls\z
\end{equation}
Plugging this to (\ref{com3}) and taking derivative $\p_3^{k-1}$ on
both sides, we get
\begin{equation}
\hm{\p_3^kp}{0}^2\ls\z
\end{equation}
Combining this will all above estimate, we get
\begin{equation}\label{com7}
\hm{\p_{33}u}{k-1}^2+\hm{\p_{33}p}{k-2}^2\ls\z
\end{equation}
\ \\
\item
Step 6: $r=k+1$ case: conclusion\\
To synthesize, (\ref{com5}), (\ref{com6}) and (\ref{com7}) imply
that all the second order derivative of $u$ in $k-1$ norm and $p$ in
$k-2$ norm is controlled, so we naturally have the estimate
\begin{equation}
\hm{u}{k+1}^2+\hm{p}{k}^2\ls C(\ee)P(\e)\bigg(\hm{
F}{k-1}^2+\hm{G}{k}^2+\hm{H}{k-1/2}^2\bigg)
\end{equation}
This is just what we desired. \ \\
Now let us go back to original notation, since $P(\cdot)$ is a fixed
polynomial, this gives an estimate
\begin{equation}\label{elliptic temp 4}
\begin{array}{ll}
\hm{u^m}{k+1}^2+\hm{p^m}{k}^2&\ls C(\ee)P(\e^m)\bigg(\hm{
F}{k-1}^2+\hm{G}{k}^2+\hms{H}{k-1/2}{\Sigma}^2\bigg)\\
&\ls C(\ee)P(\e)\bigg(\hm{
F}{k-1}^2+\hm{G}{k}^2+\hms{H}{k-1/2}{\Sigma}^2\bigg)\\
&\ls C(\ee)P(\ee)\bigg(\hm{
F}{k-1}^2+\hm{G}{k}^2+\hms{H}{k-1/2}{\Sigma}^2\bigg)\\
&\ls
C(\ee)\bigg(\hm{F}{k-1}^2+\hm{G}{k}^2+\hms{H}{k-1/2}{\Sigma}^2\bigg)
\end{array}
\end{equation}
where $C(\ee)$ depends on $\hms{\ee}{k+1/2}{\Sigma}$.\\
This bound implies that the sequence $(u^m,p^m)$ is uniformly
bounded in $H^{k+1}\times H^{k}$, so we can extract weakly
convergent
subsequence $u^m\rightharpoonup u^0$ and $p^m\rightharpoonup p^0$.\\
In the second equation of (\ref{elliptic temp 3}), we multiplied
both sides by $J^mw$ for $w\in C_0^{\infty}$ to see that
\begin{eqnarray}
&&\int_{\Omega}GwJ^m=\int_{\Omega}(\nabla_{\mathcal{A}^m}\cdot
u^m)wJ^m=-\int_{\Omega}u^m\cdot(\nabla_{\mathcal{A}^m}w)J^m\rightarrow
-\int_{\Omega}u^0\cdot(\na w)J=\int_{\Omega}(\da u^0)wJ\nonumber\\
&&
\end{eqnarray}
which implies $\da u^0=G$. Then we multiply the first equation by
$wJ^m$ for $w\in W$ and integrate by parts to see that
\begin{eqnarray}
\half\int_{\Omega}\dm_{\a^m}u^m:\dm_{\a^m}wJ^m-\int_{\Omega}p^m\nabla_{\a^m}\cdot
wJ^m=\int_{\Omega}F\cdot wJ^m-\int_{\Sigma}H\cdot w
\end{eqnarray}
Passing to the limit $m\rightarrow\infty$, we deduce that
\begin{eqnarray}
\half\int_{\Omega}\dm_{\a}u^0:\dm_{\a}w
J-\int_{\Omega}p^0\nabla_{\a}\cdot wJ=\int_{\Omega}F\cdot w
J-\int_{\Sigma}H\cdot w
\end{eqnarray}
which reveals, upon integrating by parts again, $(u^0,p^0)$
satisfies (\ref{elliptic equation}). Since $(u,p)$ is the unique
strong solution to (\ref{elliptic equation}), we have $u^0=u$ and
$p^0=p$. This, weakly lower semi-continuity and estimate
(\ref{elliptic temp 4}) imply (\ref{elliptic final estimate}).
\end{proof}
\begin{remark}
The key part of this proposition is that as long as we can deduce
$\ee\hs{k+1/2}$ and $\e\hs{k+1/2}$, we can achieve $u\s{k+1}$ and
$p\s{k}$, which is the highest possible regularity we can expect.
\end{remark}
\begin{remark}
By our notation rule, since $C(\ee)$ depends on $\Omega_0$ and is
given implicitly, we can take it as a universal constant, so the
estimate may be rewritten as follows.
\begin{equation}\label{elliptic final estimate}
\hm{u}{r}+\hm{p}{r-1}\ls
\hm{F}{r-2}+\hm{G}{r-1}+\hms{H}{r-3/2}{\Sigma}
\end{equation}
for $r=2,\ldots,k+1$
\end{remark}

\subsubsection{$\mathcal{A}$-Poisson Equation}

We consider the elliptic problem
\begin{equation}\label{poisson equation}
\left\{
\begin{array}{ll}
\la p=f&in\quad\Omega\\
p=g&on\quad\Sigma\\
\na p\cdot\nu=h&on\quad\Sigma_b
\end{array}
\right.
\end{equation}
For the weak formulation, we suppose $f\in V^{\ast}$, $g\hs{1/2}$
and $h\in H^{-1/2}(\Sigma_b)$. Let $\bar p\s{1}$ be an extension of
$g$ such that $supp(\bar p)\subset\{-\inf(b)/2<x_3<0\}$. Then we can
switch the unknowns to $q=p-\bar p$. Hence, the weak formulation of
(\ref{poisson equation}) is
\begin{eqnarray}\label{poisson weak solution}
\brh{\na q, \na\phi}=-\brh{\na\bar p,
\na\phi}-\br{f,\phi}_{V^{\ast}}+\br{h,\phi}_{-1/2}
\end{eqnarray}
for all $\phi\in V$, where $\br{\cdot,\cdot}_{V^{\ast}}$ denotes the
dual pairing with $V$ and $\br{\cdot,\cdot}_{-1/2}$ denotes the dual
pairing with $H^{-1/2}(\Sigma_b)$. The existence and uniqueness of a
solution to (\ref{poisson weak solution})
is given by standard argument for elliptic equation.\\
If $f$ has a more specific fashion, we can rewrite the equation to
accommodate the structure of $f$. Suppose the action of $f$ on an
element $\phi\in V$ is given by
\begin{eqnarray}
\br{f,\phi}_{V^{\ast}}=\brh{f_0,\phi}+\brh{F_0,\na\phi}
\end{eqnarray}
where $(f_0,F_0)\in H^0\times H^0$ with
$\hm{f_0}{0}^2+\hm{F_0}{0}^2=\nm{f}_{V^{\ast}}^2$. Then we rewrite
(\ref{poisson weak solution}) into
\begin{eqnarray}
\brh{\na p+F_0, \na\phi}=-\brh{f_0, \phi}+\br{h,\phi}_{-1/2}
\end{eqnarray}
Hence, it is possible to say $p$ is a weak solution to equation
\begin{equation}\label{poisson improved weak solution}
\left\{
\begin{array}{l}
\da (\na p+F_0)= f_0\\
p=g\\
(\na p+F_0)\cdot\nu=h
\end{array}
\right.
\end{equation}
This formulation will be used to construct the higher order initial
conditions in later sections.
\begin{lemma}\label{poisson prelim}
Suppose that $\e\s{k+1/2}(\Sigma)$ for $k\geq3$ such that the map
$\Phi$ is a $C^1$ diffeomorphism of $\Omega$ to
$\Omega'=\Phi(\Omega)$. If $f\s{0}$,$g\s{3/2}$ and $h\s{1/2}$, then
the equation \ref{poisson equation} admits a unique strong solution
$p\s{2}$. Moreover, for $r=2,\ldots,k-1$, we have the estimate
\begin{equation}\label{poisson prelim estimate}
\hm{p}{r}\ls
C(\e)\bigg(\hm{f}{r-2}+\hms{g}{r-1/2}{\Sigma}+\hms{h}{r-3/2}{\Sigma_b}\bigg)
\end{equation}
whenever the right hand side is finite, where $C(\e)$ is a constant
depending on $\hms{\e}{k+1/2}{\Sigma}$.
\end{lemma}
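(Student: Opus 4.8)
The plan is to reduce (\ref{poisson equation}) to a constant-coefficient elliptic problem on the image domain $\Omega'=\Phi(\Omega)$ and then invoke classical regularity theory, exactly paralleling the treatment of the $\mathcal{A}$-Stokes equation in Lemma \ref{elliptic prelim}. Since the weak solvability of (\ref{poisson weak solution}) is already standard, the real content is to promote the weak solution to a strong $H^2$ solution and to obtain the higher-order bound. First I would push the equation forward by the diffeomorphism $\Phi$ of (\ref{map}): setting $\tilde p=p\circ\Phi^{-1}$, the very definition of the weighted operators in (\ref{introduction temp 1}) turns $\la p=f$ in $\Omega$, $p=g$ on $\Sigma$, $\na p\cdot\nu=h$ on $\Sigma_b$ into the ordinary mixed problem $\Delta\tilde p=\tilde f$ in $\Omega'$, $\tilde p=\tilde g$ on $\Sigma':=\Phi(\Sigma)$, $\nabla\tilde p\cdot\tilde\nu=\tilde h$ on $\Sigma_b':=\Phi(\Sigma_b)$, where $\Sigma'$ is the graph of $\e$ over $\Sigma$, $\tilde\nu$ is the outward unit normal of $\Omega'$, and $\tilde f,\tilde g,\tilde h$ are the transported data. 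Because $\e\hs{k+1/2}$ with $k\geq3$, Sobolev embedding in two horizontal dimensions puts $\e$ in $C^{k-1}$, so $\partial\Omega'$ is at least a $C^{1,1}$ hypersurface; moreover $\Phi$ being a diffeomorphism keeps $\Sigma'$ and $\Sigma_b'$ disjoint, so there is no corner where the Dirichlet and Neumann parts meet and the mixed problem enjoys the same regularity theory as a problem with a single boundary type. Classical $H^{2}$ elliptic regularity on $\Omega'$, with $\tilde f\in H^{0}(\Omega')$, $\tilde g\in H^{3/2}(\Sigma')$ and $\tilde h\in H^{1/2}(\Sigma_b')$ supplied by Lemma \ref{transform estimate}, then yields a unique strong solution $\tilde p\in H^{2}(\Omega')$, and transporting back gives the unique strong solution $p\s{2}$ on $\Omega$.

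For the higher-order estimate I would run the usual tangential-regularity argument on $\Omega'$: differentiating the transported equation in directions tangent to a flattened patch of $\partial\Omega'$ gains one order at a time, while the missing purely normal second derivative is recovered algebraically from $\Delta\tilde p=\tilde f$. This produces, for each admissible $r$, the bound $\hms{\tilde p}{r}{\Omega'}\ls C(\e)\bigl(\hms{\tilde f}{r-2}{\Omega'}+\hms{\tilde g}{r-1/2}{\Sigma'}+\hms{\tilde h}{r-3/2}{\Sigma_b'}\bigr)$, with $C(\e)$ depending only on $\hms{\e}{k+1/2}{\Sigma}$. Transporting back through $\Phi$ and using the norm comparisons of Lemma \ref{transform estimate} and Remark \ref{transform remark} — each of which costs exactly a factor $C(\hms{\e}{k+1/2}{\Sigma})$, in $\Omega$ and on both boundary components (the bottom piece being controlled also by the smoothness of $b$ via (\ref{transform estimate 3})--(\ref{transform estimate 4})) — converts this into (\ref{poisson prelim estimate}). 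The inhomogeneous Dirichlet datum $g$ is handled as already arranged in the weak formulation: subtract an extension $\bar p\s{r}$ of $g$ supported near $\Sigma$ with $\hm{\bar p}{r}\ls\hms{g}{r-1/2}{\Sigma}$, solve for $q=p-\bar p$, and note that $\la\bar p$ adds only forcing already present on the right-hand side of (\ref{poisson prelim estimate}).

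I expect the main obstacle to be bookkeeping rather than any single sharp estimate — in particular, making transparent why the preliminary estimate stops at $r=k-1$. Via the transformation route this is immediate: $\e\hs{k+1/2}$ embeds (in two horizontal dimensions) into $C^{k-1}$ but no better, so $\partial\Omega'$ is only a $C^{k-1}$ hypersurface and classical $H^{r}$ regularity for the mixed Laplacian on $\Omega'$ is available only for $r\leq k-1$. Equivalently, arguing directly on the slab $\Omega$ by writing $\la=\Delta+(\la-\Delta)$ and treating $(\la-\Delta)p$ as a perturbation whose coefficients are finite sums of products of derivatives of $\eb=\pp^{\epsilon}\e$ carrying at most one derivative of $p$, one commutes $r-2$ spatial derivatives through the equation and must control forcing of the schematic form $\p^{\alpha}\eb\,\p^{\beta}p$ in $H^{0}(\Omega)$ by the product estimate (Lemma \ref{Appendix product}); since $\eb$ inherits only the horizontal regularity of $\e$ (Lemma \ref{appendix poisson 1}), this closes, with the derivatives lost onto $\eb$ absorbed into $C(\e)$, exactly up to $r=k-1$. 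This is the same mechanism that caps Lemma \ref{elliptic prelim}, and it is precisely what the later approximation-and-bootstrap device — compare Proposition \ref{elliptic estimate} for the Stokes system — is designed to circumvent; here only the preliminary version is needed, and the analogous improvement for (\ref{poisson equation}) would be carried out separately in the same manner.
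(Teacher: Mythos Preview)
Your proposal is correct and follows the standard route: the paper itself omits the argument entirely, deferring to lemma 3.8 of \cite{book1}, whose proof proceeds exactly as you outline --- push the $\a$-Poisson problem forward through the diffeomorphism $\Phi$ to obtain an ordinary mixed Dirichlet/Neumann Laplace problem on $\Omega'$, apply classical elliptic regularity there (capped at $r=k-1$ by the $C^{k-1}$ boundary regularity coming from $\e\hs{k+1/2}$), and pull the estimates back via Lemma \ref{transform estimate}. Your explanation of why the range stops at $r=k-1$ is also on point.
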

\begin{proof}
This is exactly the same as lemma 3.8 in \cite{book1}, so we omit
the proof here.
\end{proof}
Next, we will prove the bounding constant for the estimate can
actually only depend on the initial surface. Since we do not need
optimal regularity result for this equation, we do not need the
bootstrapping argument now.
\begin{proposition}\label{poisson improved}
Let $k\geq6$ be an integer suppose that $\e\hs{k+1/2}$ and
$\ee\hs{k+1/2}$. There exists $\epsilon_0>0$ such that if
$\hm{\e-\ee}{k-3/2}\leq\epsilon_0$, then solution to \ref{poisson
equation} satisfies
\begin{equation}\label{poisson improved estimate}
\hm{p}{r}\ls
C(\ee)\bigg(\hm{f}{r-2}+\hms{g}{r-1/2}{\Sigma}+\hms{h}{r-3/2}{\Sigma_b}\bigg)
\end{equation}
for $r=2,\ldots,k-1$, whenever the right hand side is finite, where
$C(\ee)$ is a constant depending on $\hms{\ee}{k+1/2}{\Sigma}$.
\end{proposition}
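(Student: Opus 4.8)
The argument runs in close parallel to the proof of Lemma \ref{elliptic improved}: we regard (\ref{poisson equation}) as a perturbation of the same boundary value problem with the transform matrix $\a$ replaced by the initial matrix $\a_0$ built from $\ee$, and absorb the difference into the data. First I would apply Lemma \ref{poisson prelim}, which already produces the strong solution $p$ and the estimate (\ref{poisson improved estimate}), but with a constant $C(\e)$ depending on $\hms{\e}{k+1/2}{\Sigma}$; in particular $p\s{k-1}$, with all the derivative counts we shall use. Set $\xx=\e-\ee\hs{k+1/2}$, so that by hypothesis $\hms{\xx}{k-3/2}{\Sigma}\le\epsilon_0$, and we shall shrink $\epsilon_0$ as the proof dictates. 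Writing $\a=\a_0+(\a-\a_0)$ — the Dirichlet datum on $\Sigma$ is purely geometric and stays fixed, while on $\Sigma_b$ one has $\na p\cdot\nu=\nabla_{\a_0}p\cdot\nu+\nabla_{\a_0-\a}p\cdot\nu$ — we rewrite (\ref{poisson equation}) as
\begin{equation*}
\left\{
\begin{array}{ll}
\Delta_{\a_0}p=f+f^0&\rm{in}\quad\Omega\\
p=g&\rm{on}\quad\Sigma\\
\nabla_{\a_0}p\cdot\nu=h+h^0&\rm{on}\quad\Sigma_b
\end{array}
\right.
\end{equation*}
with $f^0=\nabla_{\a_0-\a}\cdot\na p+\nabla_{\a_0}\cdot\nabla_{\a_0-\a}p$ and $h^0=\nabla_{\a_0-\a}p\cdot\nu$ on $\Sigma_b$. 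Each summand of $f^0$ and $h^0$ is a product of a factor assembled from $\xx$ and at most one of its derivatives — multiplied by quantities such as $K,K_0$ that are bounded because $J,J_0\gs\delta$ — times a factor which is a second-, resp. first-, order derivative of $p$.

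Next I would estimate the perturbation terms with the product estimates (Lemma \ref{Appendix product}), exactly as in the proof of Lemma \ref{elliptic improved}. Assuming first that $\hms{\xx}{k-3/2}{\Sigma}\le1$, so its higher powers are dominated by the norm itself, one obtains, for a fixed exponent $m$ and all $r=2,\ldots,k-1$,
\begin{equation*}
\hm{f^0}{r-2}+\hms{h^0}{r-3/2}{\Sigma_b}\le C(1+\hms{\ee}{k+1/2}{\Sigma})^m\,\hms{\xx}{k-3/2}{\Sigma}\,\hm{p}{r};
\end{equation*}
the derivative count closes because the $\xx$-factor is differentiated at most to order $k-3/2$ while $p$ carries the top $r\le k-1$ derivatives, which is precisely the budget of Lemma \ref{poisson prelim}. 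Since $\ee$ satisfies all the hypotheses of that lemma and $p$ is, by uniqueness, the strong solution of the $\a_0$-problem with data $(f+f^0,g,h+h^0)$, applying the lemma with coefficients built from $\ee$ yields
\begin{equation*}
\hm{p}{r}\ls C(\ee)\bigg(\hm{f+f^0}{r-2}+\hms{g}{r-1/2}{\Sigma}+\hms{h+h^0}{r-3/2}{\Sigma_b}\bigg).
\end{equation*}
Combining the last two displays gives $\hm{p}{r}\ls C(\ee)(\hm{f}{r-2}+\hms{g}{r-1/2}{\Sigma}+\hms{h}{r-3/2}{\Sigma_b})+CC(\ee)(1+\hms{\ee}{k+1/2}{\Sigma})^m\hms{\xx}{k-3/2}{\Sigma}\hm{p}{r}$, so choosing
\begin{equation*}
\epsilon_0\le\min\bigg\{1,\ \frac{1}{2CC(\ee)(1+\hms{\ee}{k+1/2}{\Sigma})^m}\bigg\}
\end{equation*}
lets us absorb the last term into the left side, which is (\ref{poisson improved estimate}) for $r=2,\ldots,k-1$.

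The only genuinely delicate point is the bookkeeping of derivatives in the product estimate for $f^0$ and $h^0$ — one must check that no summand puts more than $k-3/2$ derivatives on the $\xx$-factor nor more than $r$ on $p$, and that the coefficients $K,K_0$ are controlled via $J,J_0\gs\delta$ — but this is routine and entirely analogous to the $\mathcal{A}$-Stokes case. In contrast to Proposition \ref{elliptic estimate}, no further bootstrapping is required here, since the claimed range $r\le k-1$ already coincides with the range furnished by Lemma \ref{poisson prelim}, so there is no extra regularity to gain.
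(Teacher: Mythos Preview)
Your proposal is correct and follows essentially the same perturbation-and-absorption argument as the paper's proof, which likewise rewrites (\ref{poisson equation}) as an $\a_0$-problem with corrected data and invokes Lemma \ref{poisson prelim} at the initial surface. Your observation that the Dirichlet datum $p=g$ on $\Sigma$ requires no correction (since it does not involve $\a$) is in fact sharper than the paper's sketch, which writes a $g^m$ term that is identically zero.
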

\begin{proof}
The proof is similar to lemma \ref{elliptic improved}. We rewrite
the problem as a perturbation of the initial status.
\begin{equation}
\left\{
\begin{array}{ll}
\Delta_{\a_0} p^m=f+f^m&in\quad\Omega\\
p^m=g+g^m&on\quad\Sigma\\
\nabla_{\a_0} p^m\cdot\nu=h+h^m&on\quad\Sigma_b
\end{array}
\right.
\end{equation}
The constant in this elliptic estimate only depends on the initial
free surface. We may estimate $f^m$, $g^m$ and $h^m$ in terms of
$p^m$, $\ee$ and $\xx^m=\e^m-\ee$. The smallness of $\xx^m$ implies
that we can absorb these terms into left hand side, which is
actually (\ref{poisson improved estimate}).
\end{proof}
\begin{remark}
Similarly, we can also take this constant $C(\ee)$ as a universal
constant and rewrite the estimate as follows.
\begin{equation}\label{poisson improved estimate}
\hm{p}{r}\ls
\hm{f}{r-2}+\hms{g}{r-1/2}{\Sigma}+\hms{h}{r-3/2}{\Sigma_b}
\end{equation}
for $r=2,\ldots,k-1$.
\end{remark}

\subsection{Linear Navier-Stokes Estimates}

\begin{equation}\label{linear}
\left\{
\begin{array}{ll}
\dt u-\la u+\na u=F& in\quad\Omega\\
\da u=0& in\quad\Omega\\
(pI-\ma u)\n=H&on\quad\Sigma\\
u=0&on\quad\Sigma_b\\
\end{array}
\right.
\end{equation}
In this section, we will study the linear Navier-Stokes problem
(\ref{linear}). Following the path of I. Tice and Y. Guo in
\cite{book1}. We will employ two notions of solution: weak and
strong. Then we prove the wellposedness theorem from lower
regularity to higher regularity. To note that, most parts of the
results and proofs here are identical to section 4 of \cite{book1},
so we won's give all the details. The only difference is the
constants related to free surface since our surface has less
restrictions.

\subsubsection{Weak Solution and Strong Solution}

The weak formulation of linear problem (\ref{linear}) is as follows.
\begin{equation}\label{geometric structure}
\brht{\dt u, v}+\half\brht{\ma u, \ma v}-\brht{p, \da v}=\brht{F, v}
-\brhs{H,v}
\end{equation}
\begin{definition}\label{linear weak solution}
Suppose that $u_0\s{0}$, $F\in\wwst$ and $H\ths{-1/2}$. If there
exists a pair $(u,p)$ achieving the initial data $u_0$ and satisfies
$u\in \w$, $p\ts{0}$ and $\dt u\in\wwst$, such that the
(\ref{geometric structure}) holds for any $v\in\w$, we call it a
weak solution.
\end{definition}
If further restricting the test function $v\in\x$, we have a
pressureless weak formulation.
\begin{eqnarray}\label{geometric structure pressureless}
\brht{\dt u, v}+\half\brht{\ma u, \ma v}=\brht{F, v} -\brhs{H,v}
\end{eqnarray}
\begin{definition}\label{linear pressureless weak solution}
Suppose that $u_0\in\y(0)$, $F\in\xxst$ and $H\ths{-1/2}$. If there
exists a function $u$ achieving the initial data $u_0$ and satisfies
$u\in \x$ and $\dt u\in\xxst$, such that the (\ref{geometric
structure pressureless}) holds for any $v\in\x$, we call it a
pressureless weak solution.
\end{definition}
\begin{remark}
It is noticeable that in $\Omega$, $\wwst\subset\xxst$ and
$\nm{u}_{\xxst}\leq\nm{u}_{\wwst}$; on $\Sigma$, for $u\in\x$, we
have $u|_{\Sigma}\ths{1/2}$.
\end{remark}
Since our main aim is to prove higher regularity of the linear
problem, so the weak solution is a natural byproduct of the proof
for strong solution in next subsection. Hence, we will not give the
existence proof here and only present the uniqueness.
\begin{lemma}
Suppose $u$ is a pressureless weak solution of (\ref{linear}). Then
for a.e. $t\in[0,T]$,
\begin{equation}\label{linear temp 1}
\half\int_{\Omega}J\abs{u(t)}^2+\half\int_0^t\int_{\Omega}J\abs{\ma
u}^2=\half\int_{\Omega}J(0)\abs{u_0}^2+\half\int_0^t\int_{\Omega}\abs{u}^2\dt
J+ \int_0^t\int_{\Omega}J(F\cdot u)-\int_0^t\int_{\Sigma}H\cdot u
\end{equation}
Also
\begin{eqnarray}\label{linear temp 2}
\inm{u}{0}^2+\tnm{u}{1}^2\ls
C_0(\e)\bigg(\hm{u_0}{0}^2+\nm{F}_{\xxst}^2+\tnms{H}{-1/2}{\Sigma}^2\bigg)
\end{eqnarray}
where
\begin{eqnarray}
C_0(\e)=P(\hms{\e}{5/2}{\Sigma}+\hms{\dt\e}{5/2}{\Sigma})\exp\bigg(T
P(\hms{\dt\e}{5/2}{\Sigma})\bigg)
\end{eqnarray}
Hence, if a pressureless weak solution exists, then it is unique.
\end{lemma}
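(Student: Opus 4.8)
The plan is to run the classical energy argument, adapted to the geometric (time‑dependent, $J$‑weighted) setting exactly as in Section~4 of \cite{book1}, the only new point being to keep track of how the constants depend on $\e$ and $\dt\e$. First I would test the pressureless weak formulation (\ref{geometric structure pressureless}) with $v=u$ itself: this is legitimate because, by Definition \ref{linear pressureless weak solution}, $u\in\x$ and $\dt u\in\xxst$, so $u(t)\in\x$ for a.e.\ $t$. Then I would invoke the differentiability‑of‑norm lemma stated above (or rather its verbatim analogue for the pair $\x,\xxst$, whose proof is identical — cf.\ lemma 2.4 of \cite{book1}) to write
\[
\brht{\dt u,u}=\tfrac12\int_\Omega J(t)\abs{u(t)}^2-\tfrac12\int_\Omega J(0)\abs{u_0}^2-\tfrac12\int_0^t\int_\Omega\abs{u}^2\dt J ,
\]
which, plugged back into (\ref{geometric structure pressureless}) with $v=u$, is precisely the energy identity (\ref{linear temp 1}); the absolute continuity also yields $u\in C^0([0,T];H^0(\Omega))$.

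Next I would estimate the right‑hand side of (\ref{linear temp 1}) term by term. The initial term is handled by the lower/upper comparison (\ref{norm equivalence 1}), giving $\tfrac12\int_\Omega J(0)\abs{u_0}^2\ls(1+\hms{\ee}{5/2}{\Sigma})\hm{u_0}{0}^2$. For the $\dt J$ term, since $b$ and $\tilde b$ are time‑independent one has $\dt J=b_0^{-1}\pp^{\epsilon}(\dt\e)+\tilde b\,\p_3\pp^{\epsilon}(\dt\e)$, so lemma \ref{appendix poisson 2} (resp.\ \ref{appendix poisson 5}) gives $\lnm{\dt J}{\infty}\ls\hms{\dt\e}{5/2}{\Sigma}$, whence $\tfrac12\bigl|\int_0^t\int_\Omega\abs{u}^2\dt J\bigr|\ls\hms{\dt\e}{5/2}{\Sigma}\int_0^t\hm{u(s)}{0}^2\,ds$. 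The forcing term is bounded by the $\xxst$–$\x$ duality, $\bigl|\int_0^t\int_\Omega JF\cdot u\bigr|=\abs{\brht{F,u}}\le\nm{F}_{\xxst}\tnm{u}{1}$, and the boundary term by the trace bound $\nm{u|_\Sigma}_{L^2([0,t];H^{1/2}(\Sigma))}\ls\tnm{u}{1}$ (the trace noted in the remark following Definition \ref{linear pressureless weak solution}), giving $\bigl|\int_0^t\int_\Sigma H\cdot u\bigr|\ls\tnms{H}{-1/2}{\Sigma}\tnm{u}{1}$. In each of these two cases I would apply Young's inequality together with (\ref{norm equivalence 2}) (i.e.\ $\int_0^t\hm{u}{1}^2\ls(1+\hms{\ee}{5/2}{\Sigma})^3\int_0^t\int_\Omega J\abs{\ma u}^2$) to absorb a fixed fraction of $\int_0^t\int_\Omega J\abs{\ma u}^2$ into the left side, paying a factor $P(\hms{\e}{5/2}{\Sigma})$ in front of $\nm{F}_{\xxst}^2+\tnms{H}{-1/2}{\Sigma}^2$; using the lower bound in (\ref{norm equivalence 1}) on the leftover $\int_\Omega J\abs{u(t)}^2$ then produces, for a.e.\ $t$,
\[
\hm{u(t)}{0}^2+\int_0^t\int_\Omega J\abs{\ma u}^2\ls A+B\int_0^t\hm{u(s)}{0}^2\,ds ,
\]
with $A=P(\hms{\e}{5/2}{\Sigma}+\hms{\dt\e}{5/2}{\Sigma})\bigl(\hm{u_0}{0}^2+\nm{F}_{\xxst}^2+\tnms{H}{-1/2}{\Sigma}^2\bigr)$ and $B=P(\hms{\dt\e}{5/2}{\Sigma})$.

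Finally I would apply Gr\"onwall's inequality to $y(t)=\hm{u(t)}{0}^2$ to get $\sup_{0\le t\le T}\hm{u(t)}{0}^2\le Ae^{BT}$, feed this back into the displayed bound to control $\int_0^T\int_\Omega J\abs{\ma u}^2$, and convert the latter into $\tnm{u}{1}^2$ via (\ref{norm equivalence 2}) one last time; this gives (\ref{linear temp 2}) with exactly $C_0(\e)=P(\hms{\e}{5/2}{\Sigma}+\hms{\dt\e}{5/2}{\Sigma})\exp\!\bigl(TP(\hms{\dt\e}{5/2}{\Sigma})\bigr)$. Uniqueness is then immediate: the difference of two pressureless weak solutions with the same data is itself a pressureless weak solution with $u_0=0$, $F=0$, $H=0$, so (\ref{linear temp 2}) forces it to vanish. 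The step I expect to require the most care is the very first one — justifying the choice $v=u$ and the corresponding differentiability/absolute‑continuity identity when $u$ only lies in $\x$ with $\dt u\in\xxst$ (a slightly weaker pairing than the $\w,\wwst$ setting of the quoted lemma), which one either references as the obvious analogue of lemma 2.4 of \cite{book1} or else establishes by a routine mollification‑in‑time approximation; everything afterward is bookkeeping of the $\hms{\e}{5/2}{\Sigma}$ and $\hms{\dt\e}{5/2}{\Sigma}$ dependence to match the stated form of $C_0(\e)$.
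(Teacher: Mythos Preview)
Your proposal is correct and follows exactly the approach the paper intends: the paper's own proof simply refers to lemma~4.1 and proposition~4.2 of \cite{book1}, noting that the only modification is to replace the small-data norm equivalence (lemma~2.1 there) by the large-data version, lemma~\ref{norm equivalence}, which is precisely what you carry out in detail. Your flagged concern about the $\x,\xxst$ pairing in the differentiability lemma is the right technical point to note, and it is handled exactly as you indicate.
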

\begin{proof}
The proof is almost the same as that of lemma 4.1 and proposition
4.2 in \cite{book1}. The only difference is that we should replace
the lemma 2.1 in \cite{book1} used in that proof by our lemma
\ref{norm equivalence} now.
\end{proof}
Next, we define the strong solution.
\begin{definition}\label{linear strong solution}
Suppose that
\begin{equation}\label{linear strong solution condition}
\begin{array}{ll}
u_0\s{2}\cap \x(0)&\\
F\ts{1}\cap\cs{0}&\dt F\in \xxst\\
H\ths{3/2}\cap \chs{1/2}& \dt H \ths{-1/2}
\end{array}
\end{equation}
If there exists a pair $(u,p)$ achieving the initial data $u_0$ and
satisfies
\begin{equation}
\begin{array}{ll}
u\ts{3}\cap \cs{2}\cap\x&\dt u\ts{1}\cap\cs{0}\\
\dt^2u\in\xxst&p\ts{2}\cap\cs{1}
\end{array}
\end{equation}
such that they satisfies (\ref{linear}) in the strong sense, we call
it a strong solution.
\end{definition}

\subsubsection{Lower Regularity Theorem}

\begin{theorem}\label{linear lower regularity}
Assume the initial data and forcing terms satisfies the condition
(\ref{linear strong solution condition}). Define the divergence-$\a$
preserving operator $D_tu$ by
\begin{equation}\label{linear definition R}
D_tu=\dt u-Ru\quad for\quad R=\dt MM^{-1}
\end{equation}
where $M$ is defined in (\ref{divergence preserving matrix}).
Furthermore, define the orthogonal projection onto the tangent space
of the surface $\{x_3=\ee\}$ according to
\begin{equation}
\Pi_0(v)=v-(v\cdot\n_0)\n_0\abs{\n_0}^{-2}\quad for\quad
\n_0=(-\p_1\eta_0,-\p_2\eta_0,1)
\end{equation}
Suppose the initial data satisfy the compatible condition
\begin{equation}\label{linear lower regularity compatible condition}
\Pi_0(H(0)+\dm_{\a_0}u_0\n_0)=0
\end{equation}
Then the equation (\ref{linear}) admits a unique strong solution
$(u,p)$ which satisfies the estimate
\begin{equation}\label{linear lower regularity estimate}
\begin{array}{l}
\tnm{u}{3}^2+\tnm{\dt
u}{1}^2+\Vert\dt^2u\Vert^2_{\wst}+\inm{u}{2}^2+\inm{\dt
u}{0}^2+\tnm{p}{2}+\inm{p}{1}^2\\
\ls
L(\e)\bigg(\hm{u_0}{2}^2+\hm{F(0)}{0}^2+\hm{H(0)}{1/2}^2+\tnm{F}{1}^2+\Vert\dt
F\Vert^2_{\xxst}+\tnm{H}{3/2}^2+\tnm{\dt H}{-1/2}^2\bigg)
\end{array}
\end{equation}
where
\begin{equation}
L(\e)=P(K(\e))\exp\bigg(T P(K(\e))\bigg)
\end{equation}
for
\begin{eqnarray}
K(\e)=\sup_{0\leq t\leq
T}\bigg(\hms{\e(t)}{9/2}{\Sigma}+\hms{\dt\e(t)}{7/2}{\Sigma}+\hms{\dt^2\e(t)}{5/2}{\Sigma}\bigg)
\end{eqnarray}
The initial pressure $p(0)\s{1}$, is determined in terms of $u_0$,
$\ee$, $F(0)$ and $H(0)$ as the weak solution to
\begin{equation}\label{linear lower regularity initial pressure}
\left\{
\begin{array}{ll}
\nabla_{\a_0}\cdot(\nabla_{\a_0}p(0)-F(0))=-\nabla_{\a_0}(R(0)u_0)&in\quad\Omega\\
p(0)=(H(0)+\dm_{\a_0}u_0\n_0)\cdot\n_0\abs{\n_0}^{-2}&on\quad\Sigma\\
(\nabla_{\a_0}p(0)-F(0))\cdot\nu=\Delta_{\a_0}u_0\cdot\nu&on\quad\Sigma_b
\end{array}
\right.
\end{equation}
in the sense of (\ref{poisson improved weak solution}).\\
Also, $D_tu(0)=\dt u(0)-R(0)u_0$ satisfies
\begin{equation}
D_tu(0)=\Delta_{\a_0}u_0-\nabla_{\a_0}p(0)+F(0)-R(0)u_0\in\y(0).
\end{equation}
Moreover, $(D_tu,\dt p)$ satisfies
\begin{equation}\label{linear lower DT equation}
\left\{
\begin{array}{ll}
\dt(D_tu)-\la(D_tu)+\na(\dt p)=D_tF+G^F&in\quad\Omega\\
\da(D_tu)=0&in\quad\Omega\\
\sa(\dt p, D_tu)\n=\dt H+G^H&on\quad\Sigma\\
D_tu=0&on\quad\Sigma_b
\end{array}
\right.
\end{equation}
in the weak sense of (\ref{geometric structure pressureless}), where
\begin{eqnarray*}
G^F&=&-(R+\dt J K)\la u-\dt R u+(\dt J K+R+R^T)\na p+\da(\ma(R u)-R\ma u+\dm_{\dt\a}u)\nonumber\\
G^H&=&\ma(R u)\n-(p I-\ma u)\dt\n+\dm_{\dt\a}u\n
\end{eqnarray*}
\end{theorem}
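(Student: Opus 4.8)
The plan is to follow the route of Tice and Guo in \cite{book1}: rather than treating (\ref{linear}) as a perturbation of a constant-coefficient parabolic system, one gains regularity by differentiating in time with the divergence-$\a$-preserving operator $D_t$, solving the resulting pressureless problem for $D_tu$ by the time-dependent Galerkin method, and then recovering the full spatial regularity of $(u,p)$ from the stationary $\a$-Stokes estimate of Proposition \ref{elliptic estimate} applied slice-by-slice in time. All the functional-analytic scaffolding is already available — the pressure-as-Lagrange-multiplier decomposition (Lemma \ref{lagrange 3}, Proposition \ref{lagrange estimate}), the improved $\a$-Stokes and $\a$-Poisson estimates, the norm-equivalence Lemma \ref{norm equivalence}, and the uniqueness of pressureless weak solutions — so the real work is to re-run those arguments while tracking the $\e$-dependence only through $K(\e)$, which is exactly where the weaker smallness hypotheses force $L(\e)$ and $C_0(\e)$ into their stated exponential form.

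First I would construct the initial pressure $p(0)$ as the weak solution, in the sense of (\ref{poisson improved weak solution}), of the $\a_0$-Poisson problem (\ref{linear lower regularity initial pressure}): for $u_0\s{2}$ the interior datum and the Neumann datum $\Delta_{\a_0}u_0\cdot\nu$ on $\Sigma_b$ lie in the correct dual spaces, while the Dirichlet datum on $\Sigma$ is the normal component of $H(0)+\dm_{\a_0}u_0\n_0\hs{1/2}$, so Proposition \ref{poisson improved} yields $p(0)\s{1}$ with $\hm{p(0)}{1}^2\ls\hm{u_0}{2}^2+\hm{F(0)}{0}^2+\hms{H(0)}{1/2}{\Sigma}^2$. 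Setting $D_tu(0):=\Delta_{\a_0}u_0-\nabla_{\a_0}p(0)+F(0)-R(0)u_0$, the interior equation for $p(0)$ (together with $\nabla_{\a_0}\cdot u_0=0$) gives $\nabla_{\a_0}\cdot(D_tu(0))=0$ and the $\Sigma_b$ equation gives $D_tu(0)\cdot\nu=0$ there, so by the characterization of $\y(0)$ recorded after Lemma \ref{boundary negative estimate} we have $D_tu(0)\in\y(0)$; the Dirichlet condition for $p(0)$, read against the compatibility condition (\ref{linear lower regularity compatible condition}), is precisely what makes the stress boundary condition $\sa(p,u)\n=H$ consistent at $t=0$. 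In parallel I would derive (\ref{linear lower DT equation}) by formally applying $D_t=\dt-R$ to each line of (\ref{linear}) and collecting the commutators $[\dt,\la]$, $[\dt,\na]$, $[\dt,\da]$, $[\dt,\ma]$ together with the $-R$ contributions; a bookkeeping computation reproduces $G^F$ and $G^H$ as displayed, with all the ``error'' coefficients ($R$, $\dt JK$, $\dt R$, $\dt\a$) lower order and bounded in the relevant norms by $P(K(\e))$.

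Next I would solve the pressureless weak form of (\ref{linear lower DT equation}) for $w:=D_tu$, with datum $D_tu(0)\in\y(0)$, by the time-dependent Galerkin scheme; testing against $w$ and invoking Lemma \ref{norm equivalence} in place of lemma 2.1 of \cite{book1} yields
\begin{align*}
\inm{w}{0}^2+\tnm{w}{1}^2 &\ls C_0(\e)\Big(\hm{D_tu(0)}{0}^2+\nm{D_tF+G^F}_{\xxst}^2 \\
&\qquad+\tnms{\dt H+G^H}{-1/2}{\Sigma}^2\Big),
\end{align*}
after which the forcing terms are estimated: $\nm{D_tF}_{\xxst}\ls P(K(\e))\big(\tnm{F}{1}+\nm{\dt F}_{\xxst}\big)$, $\tnm{\dt H}{-1/2}$ enters directly, and $G^F,G^H$ are controlled by $P(K(\e))\big(\tnm{u}{1}+\tnm{p}{0}\big)$ via the product estimates. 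Recovering $u$ by integrating $\dt u=w+Ru$ (a linear ODE in time with datum $u_0$) produces $\tnm{\dt u}{1}$ and $\inm{\dt u}{0}$, and then feeding the forcing $F-\dt u\ts{1}\cap\cs{0}$ into the stationary $\a$-Stokes system $-\la u+\na p=F-\dt u$, $\da u=0$, $\sa(p,u)\n=H$, $u|_{\Sigma_b}=0$ and applying Proposition \ref{elliptic estimate} (with $r=3$ integrated in time and $r=2$ for the sup-in-time bounds) gives $u\ts{3}\cap\cs{2}$ and $p\ts{2}\cap\cs{1}$; the weak-energy estimate for the original problem then supplies $\inm{u}{0}^2+\tnm{u}{1}^2$ and lets one absorb the $\tnm{u}{1}+\tnm{p}{0}$ appearing on the right by Gronwall.

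The main obstacle I anticipate is precisely this closure: $G^F$ and $G^H$ contain $u$ and $p$ themselves (through $R$ and $\dt\a$), so the a priori estimate for $w$ apparently depends on the very quantities being bounded; the argument must be arranged so that the genuinely top-order contributions carry the small or absorbable constants of the Galerkin and elliptic estimates while the remainder is handled by a Gronwall inequality in time, and it is this Gronwall step that generates the $\exp(TP(K(\e)))$ factor in $L(\e)$ and confines the conclusion to a short interval. The remaining items are routine: $\nm{\dt^2u}_{\wst}^2$ follows from $\dt^2u=\dt w+\dt R\,u+R\dt u$ by bounding $\dt w$ in $\xxst$ from (\ref{linear lower DT equation}); the initial data $u(0)=u_0$ and $\dt u(0)=D_tu(0)+R(0)u_0$ are achieved using the continuity in time of $u$ and $\dt u$ obtained above together with the construction of $w$; and uniqueness follows by applying the pressureless-weak uniqueness lemma to the difference of two solutions and then recovering the pressure through Proposition \ref{lagrange estimate}.
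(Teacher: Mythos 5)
Your proposal follows the same path the paper does: the paper's own proof is essentially a pointer to Theorem~4.3 of \cite{book1}, flagging only that the smallness of $K(\e)$ is dropped (so $P(K(\e))$ is retained rather than bounded by $1+K(\e)$) and that Proposition~\ref{elliptic estimate} replaces the corresponding elliptic estimate from \cite{book1}, and you reconstruct exactly that argument — $\a_0$-Poisson construction of $p(0)$, verification that $D_tu(0)\in\y(0)$, derivation of (\ref{linear lower DT equation}) by applying $D_t$ and tracking commutators, Galerkin plus energy estimate, slice-in-time elliptic regularity from Proposition~\ref{elliptic estimate}, and Gronwall closure producing the $\exp(TP(K(\e)))$ factor.

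Two small imprecisions worth noting. First, Proposition~\ref{poisson improved} gives $p\in H^r$ only for $r\ge 2$; the claim $p(0)\in H^1$ in the statement comes from the $H^1$ weak-solution theory for (\ref{poisson improved weak solution}) (the Lax--Milgram step described before Lemma~\ref{poisson prelim}), not from that proposition — you would want the elliptic proposition later only if you needed higher regularity of $p(0)$. Second, you describe the Galerkin scheme as solving the pressureless weak form of (\ref{linear lower DT equation}) directly for $w=D_tu$; as you yourself observe, $G^F$ and $G^H$ contain $u$ and $p$, so that equation is not self-contained and cannot be solved in isolation. The working version — and the one used in \cite{book1} — runs the time-dependent Galerkin construction for $u$ itself (in a time-dependent basis of $X(t)$), obtains the $L^\infty L^2$ and $L^2H^1$ estimates for both $u^m$ and $D_tu^m$ at the approximate level (which is where the Gronwall absorption you flag actually happens), and only afterwards passes to the limit and identifies the equation satisfied by $(D_tu,\dt p)$. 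With that reordering your outline is the intended proof.
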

\begin{proof}
Since this theorem is almost identical to theorem 4.3 in
\cite{book1}, we will only point out the differences here without
giving details. In order for contrast, we use the same notation and
statement as \cite{book1} here.
\begin{enumerate}
\item
The only difference in the assumption part is that in theorem 4.3 of
\cite{book1}, $K(\e)$ is sufficiently small, but our $K(\e)$ can be
arbitrary. Hence, in \cite{book1}, we can always bound $P(K(\e))\ls
1+K(\e)$, but now we have to keep this term as a polynomial (this is
the only difference of $L(\e)$ between these two theorems).
\item
It is noticeable that each lemma used in proving theorem 4.3 of
\cite{book1} has been recovered or slightly modified in our
preliminary section. To note that these kind of modification merely
involves the bounding constant due to the arbitrary $K(\e)$, but
does not change the conclusion and the shape of estimate at all.
Therefore, all these modification will only contribute to $P(K(\e))$
term.
\item
In elliptic estimate, we now use proposition \ref{elliptic estimate}
to replace proposition 3.7 in \cite{book1}, which gives exactly the
same estimates.
\end{enumerate}
Based on all above, we can easily repeat the proving process in
\cite{book1} with no more work, so our result naturally follows.
\end{proof}

\subsubsection{Initial Data and Compatible Condition}

We first define some quantities related to the initial data:
\begin{eqnarray}
\h_0(u,p)&=&\sum_{j=0}^{N}\hm{\dt^ju(0)}{2N-2j}^2+\sum_{j=0}^{N-1}\hm{\dt^jp(0)}{2N-2j-1}^2\\
\k_0(u_0)&=&\hm{u_0}{2N}^2\\
&&\nonumber\\
\h_0(\e)&=&\hms{\e(0)}{2N+1/2}{\Sigma}^2+\sum_{j=1}^{N}\hms{\dt^j\e(0)}{2N-2j+3/2}{\Sigma}^2\\
\k_0(\ee)&=&\hms{\ee}{2N+1/2}{\Sigma}^2\\
&&\nonumber\\
\h_0(F,H)&=&\sum_{j=0}^{N-1}\hm{\dt^jF(0)}{2N-2j-2}^2+\sum_{j=0}^{N-1}\hms{\dt^jH(0)}{2N-2j-3/2}{\Sigma}^2
\end{eqnarray}
Furthermore, we need to define mappings $\g_1$ on $\Omega$ and
$\g_2$ on $\Sigma$.
\begin{equation}
\begin{array}{l}
\\
\g^1(v,q)=-(R+\dt J K)\la v-\dt Rv+(\dt J K+R+R^T)\na q+\da(\ma(Rv)-R\ma v+\dm_{\dt\a}v)\\
\\
\g^2(v,q)=\ma(Rv)\n-(q I-\ma v)\dt\n+\dm_{\dt\a}v\n
\end{array}
\end{equation}
The mappings above allow us to define the arbitrary order forcing
terms. For $j=1,\ldots,N$, we have
\begin{equation}
\begin{array}{l}
F^0=F\\
H^0=H\\
F^j=D_tF^{j-1}+\g^1(D_t^{j-1}u,\dt^{j-1}p)=D_t^jF+\sum_{l=0}^{j-1}D_t^l\g_1(D_t^{j-l-1}u,\dt^{j-l-1}p)\\
H^j=D_tH^{j-1}+\g^2(D_t^{j-1}u,\dt^{j-1}p)=D_t^jH+\sum_{l=0}^{j-1}D_t^l\g_2(D_t^{j-l-1}u,\dt^{j-l-1}p)
\end{array}
\end{equation}
Finally, we define the general quantities we need to estimate as
follows.
\begin{eqnarray}
\ce(u,p)&=&\sum_{j=0}^{N}\inm{\dt^ju}{2N-2j}^2+\sum_{j=0}^{N-1}\inm{\dt^jp}{2N-2j-1}^2\\
\d(u,p)&=&\sum_{j=0}^{N}\tnm{\dt^ju}{2N-2j+1}^2+\nm{\dt^{N+1}u}_{\xxst}^2+\sum_{j=0}^{N-1}\tnm{\dt^jp}{2N-2j}^2\\
\k(u,p)&=&\ce(u,p)+\d(u,p)\\
\nonumber\\
\ce(\e)&=&\inms{\e}{2N+1/2}{\Sigma}^2+\sum_{j=1}^{N}\inms{\dt^j\e}{2N-2j+3/2}{\Sigma}^2\\
\d(\e)&=&\tnms{\e}{2N+1/2}{\Sigma}^2+\tnms{\dt\e}{2N-1/2}{\Sigma}^2+\sum_{j=2}^{N+1}\tnms{\dt^j\e}{2N-2j+5/2}{\Sigma}^2\\
\k(\e)&=&\ce(\e)+\d(\e)\\
\nonumber\\
\k(F,H)&=&\sum_{j=0}^{N-1}\tnm{\dt^jF}{2N-2j-1}^2+\nm{\dt^NF}_{\xxst}^2+\sum_{j=0}^{N}\tnms{\dt^jH}{2N-2j-1/2}{\Sigma}^2\\
&&+\sum_{j=0}^{N-1}\inm{\dt^jF}{2N-2j-2}^2+\sum_{j=0}^{N-1}\inms{\dt^jH}{2N-2j-3/2}{\Sigma}^2\nonumber
\end{eqnarray}
Before discussing the higher regularity, we first define the
compatible condition of initial data. For $j=0,\ldots,N-1$, we say
that the $j^{th}$ compatible condition is satisfied if
\begin{equation}\label{linear higher regularity compatible condition}
\left\{
\begin{array}{l}
D_t^ju(0)\in H^2\cap X(0)\\
\Pi_0(H^j(0)+\dm_{\a_0}D_t^ju(0)\n_0)=0
\end{array}
\right.
\end{equation}
Then we may construct the initial condition of higher order
satisfying the compatible condition. Since this section is almost
identical to section 4.3 in Y. Guo and I. Tice's paper (see pp 34-35
of \cite{book1}), we omit the details here. The core idea is to
construct the initial data iteratively from lower to higher order.
Then we have the estimate.
\begin{eqnarray}
\h_0(u,p)\ls P(\h_0(\e)) \bigg(\hm{u_0}{2N}^2+\h_0(F,H)\bigg)
\end{eqnarray}

\subsubsection{Higher Regularity Theorem}

\begin{theorem}\label{linear higher regularity}
Suppose the initial data $u_0\s{2N}$, $\ee\hs{2N+1/2}$ and the
initial condition is constructed as above to satisfy the $j^{th}$
compatible condition for $j=0,1,\ldots,N-1$, with the forcing
function $\k(F,H)+\k_0(F,H)<\infty$. Then there exists a universal
constant $T_0(\e)>0$ depending on $\l(\e)$ as defined in the
following, such that if  $0<T<T_0(\e)$, then there exists a unique
strong solution $(u,p)$ to the equation (\ref{linear}) on $[0,T]$
such that
\begin{equation}
\begin{array}{ll}
\dt^ju\ts{2N-2j+1}\cap\cs{2N-2j}& for \quad j=0,\ldots,N\\
\dt^jp\ts{2N-2j}\cap\cs{2N-2j-1}& for \quad j=0,\ldots,N-1\\
\dt^{N+1}u\in\xxst
\end{array}
\end{equation}
Also the pair $(u,p)$ satisfies the estimate
\begin{equation}\label{linear higher regularity estimate}
\k(u,p)\ls\l(\e)\bigg(\hm{u_0}{2N}^2+\h_0(F,H)+\k(F,H)\bigg)
\end{equation}
where
\begin{equation}
\l(\e)=P(\k(\e))\exp\bigg(T P(\k(\e))\bigg)
\end{equation}
Furthermore, the pair $(D_t^ju,\dt^jp)$ satisfies the equation
\begin{equation}\label{linear higher DT equation}
\left\{
\begin{array}{ll}
\dt(D_t^ju)-\la(D_t^ju)+\na(\dt^jp)=F^j&in\quad\Omega\\
\da(D_t^ju)=0&in\quad\Omega\\
\sa(\dt^jp,D_t^ju)\n=H^j&on\quad\Sigma\\
D_t^ju=0&on\quad\Sigma_b
\end{array}
\right.
\end{equation}
in the strong sense for $j=0,\ldots,N-1$ and in the weak sense for
$j=N$.
\end{theorem}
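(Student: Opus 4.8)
The plan is to follow the treatment of Section~4 of \cite{book1}, reducing the statement to the lower-regularity Theorem~\ref{linear lower regularity} and the $\a$-Stokes estimate of Proposition~\ref{elliptic estimate}, and arguing by induction on the number of temporal derivatives. The base case $j=0$ is Theorem~\ref{linear lower regularity} applied to $(u,p)$ itself: it yields a strong solution with $u\ts{3}\cap\cs{2}$, $\dt u\ts{1}\cap\cs{0}$, $\dt^2u\in\xxst$, $p\ts{2}\cap\cs{1}$, the estimate (\ref{linear lower regularity estimate}), the initial pressure $p(0)\s{1}$ solving (\ref{linear lower regularity initial pressure}), and $D_tu(0)\in\y(0)$. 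The only structural novelty compared with \cite{book1} is that $\k(\e)$ is not assumed small, so every constant written there as $1+\k(\e)$ must now be carried as a polynomial $P(\k(\e))$; the routine product and commutator estimates go through verbatim, acquiring only such factors.

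For the temporal part I would prove by induction that, for $j=1,\dots,N$, the pair $(D_t^ju,\dt^jp)$ solves (\ref{linear higher DT equation}) with forcing $(F^j,H^j)=\big(D_tF^{j-1}+\g^1(D_t^{j-1}u,\dt^{j-1}p),\,D_tH^{j-1}+\g^2(D_t^{j-1}u,\dt^{j-1}p)\big)$, the $j^{th}$ compatible condition together with the constructed higher-order initial data supplying exactly the hypotheses needed to apply Theorem~\ref{linear lower regularity} again. The structural point is that $\g^1,\g^2$ are linear in their arguments with coefficients built from $R=\dt MM^{-1}$, $\dt\a$, $\dt J$, $\dt\n$, all bounded in the relevant norms by $\k(\e)$, so $\h_0(F^j,H^j)+\k(F^j,H^j)$ is dominated by $\k(F,H)+\k_0(F,H)$ plus $P(\k(\e))$ times already-controlled lower-order solution norms. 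Applying Theorem~\ref{linear lower regularity} to the $j^{th}$ system for $j=0,\dots,N-1$, summing, converting each $D_t^j$ back to $\dt^j$ (which costs only lower-temporal-order terms, since $R$ and its time derivatives are bounded by $\k(\e)$), and feeding in the construction estimate $\h_0(u,p)\ls P(\h_0(\e))(\hm{u_0}{2N}^2+\h_0(F,H))$ recorded earlier, I obtain all temporal derivatives $\dt^ju$ ($0\le j\le N$) with $D_t^ju\ts{3}\cap\cs{2}$ for $j\le N-1$, $\dt^{N+1}u\in\xxst$, and $\dt^jp\ts{2}\cap\cs{1}$ for $j\le N-1$, with prefactor $P(\k(\e))\exp(TP(\k(\e)))$ and right-hand side $\hm{u_0}{2N}^2+\h_0(F,H)+\k(F,H)+P(\k(\e))\,\k(u,p)$.

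To recover the remaining spatial regularity I would then run the standard elliptic bootstrap, organized by parabolic derivative count: applying a derivative $\p^\alpha$ to (\ref{linear}) shows that $(\p^\alpha u,\p^\alpha p)$ solves an $\a$-Stokes problem of the form (\ref{elliptic equation}) whose data are $\p^\alpha F$, one additional temporal derivative of $u$ (already controlled by the temporal step), the commutator of $\p^\alpha$ with $\da$ applied to $u$, and $\p^\alpha H$, modulo terms that are products of $\k(\e)$-quantities with lower-parabolic-count solution norms. Proposition~\ref{elliptic estimate}, whose constant depends only on $\hms{\ee}{2N+1/2}{\Sigma}$ and is hence universal in our convention, then upgrades $\p^\alpha u$ by two spatial derivatives and $\p^\alpha p$ by one; ordering the $\p^\alpha$ by increasing parabolic count closes the induction and assembles $\k(u,p)=\ce(u,p)+\d(u,p)$ in full, still bounded by $P(\k(\e))\exp(TP(\k(\e)))\big(\hm{u_0}{2N}^2+\h_0(F,H)+\k(F,H)\big)+P(\k(\e))\,\k(u,p)$.

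The main obstacle, and the reason the statement requires $0<T<T_0(\e)$, is absorbing the term $P(\k(\e))\,\k(u,p)$ on the right, which in \cite{book1} is killed by the smallness of $\k(\e)$ but is not available here. The remedy is to isolate the part of that term carrying an explicit time integral, estimate $\int_0^T(\cdot)\le T\sup_{[0,T]}(\cdot)$, and choose $T_0(\e)$ so small that $T\,P(\k(\e))<\tfrac12$, moving that part to the left; the residual pieces are closed by a Gronwall inequality in $t$, which is precisely what produces the factor $\exp(TP(\k(\e)))$ in $\l(\e)=P(\k(\e))\exp(TP(\k(\e)))$. One must check that $P$ and the threshold $T_0(\e)$ depend only on $\k(\e)$, not on the solution. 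Uniqueness follows by applying the energy identity (\ref{linear temp 1})--(\ref{linear temp 2}) to the difference of two solutions with the same data; and (\ref{linear higher DT equation}) holds in the strong sense for $j\le N-1$, since every term then lies in a space of the form $L^2H^{\ge1}\cap C^0H^{\ge0}$, and only in the weak sense for $j=N$ because $\dt^{N+1}u$ lies only in $\xxst$. Finally, Proposition~\ref{elliptic estimate} requires only $\ee\hs{2N+1/2}$ to be controlled provided $\hms{\e-\ee}{k+1/2}{\Sigma}$ is small, a condition that will be verified for $t\in[0,T]$ in the nonlinear iteration.
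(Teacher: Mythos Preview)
Your proposal is correct and follows essentially the same route as the paper: both defer to the argument of Theorem~4.8 in \cite{book1} (iterated application of Theorem~\ref{linear lower regularity} to the $D_t^j$-systems, followed by an $\a$-Stokes elliptic bootstrap via Proposition~\ref{elliptic estimate}), with the single modification that constants of the form $1+\k(\e)$ must now be carried as polynomials $P(\k(\e))$ since $\k(\e)$ is not small. The paper's own proof is in fact terser than your sketch---it simply records this modification and refers back to \cite{book1}---so your outline of the temporal induction, the conversion $D_t^j\leftrightarrow\dt^j$, and the need for $T<T_0(\e)$ to close the absorption is a faithful expansion of the same argument.
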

\begin{proof}
Similar to lower regularity case, this theorem is almost identical
to theorem 4.8 in \cite{book1}. Due to the same reason that our free
surface $\e$ can be arbitrary, the only difference here is that we
keep the polynomial of $\k(\e)$ and $\h_0(\e)$ in the final estimate
which cannot be further simplified. Hence, our result easily
follows.
\end{proof}

\subsection{Transport Equation}

In this section, we will prove the wellposedness of the transport
equation
\begin{equation}\label{transport}
\left\{
\begin{array}{ll}
\dt\e+u_1\p_1\e+u_2\p_2\e=u_3&\rm{on}\quad\Sigma\\
\e(t=0)=\ee&\rm{on}\quad\Sigma
\end{array}
\right.
\end{equation}
Throughout this section, we can always assume $u$ is known and
satisfies some bounded properties, which will be specified in the
following.

\subsubsection{Wellposedness of Transport Equation}

Define the quantity
\begin{eqnarray}
\q(u)=\sum_{j=0}^{N}\tnm{\dt^ju}{2N-2j+1}^2+\sum_{j=0}^{N-1}\inm{\dt^ju}{2N-2j}^2
\end{eqnarray}
Obviously, we have the relation $\q(u)\leq\k(u,p)$.

\begin{theorem}\label{transport estimate}
Suppose that $\k_0(\ee)<\infty$ and $\q(u)<\infty$. Then the problem
(\ref{transport}) admits a unique solution $\e$ satisfying
$\k(\e)<\infty$ and achieving the initial data $\dt^j\e(0)$ for
$j=0,\ldots, N$. Moreover, there exists a $0<\bar T(u)<1$, depending
on $\q(u)$, such that if $0<T<\bar T$, then we have the estimate
\begin{eqnarray}
\k(\e)&\ls&P(\k_0(\ee))+P(\q(u))
\end{eqnarray}
where $P(\cdot)$ is a polynomial satisfying $P(0)=0$.
\end{theorem}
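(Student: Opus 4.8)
Problem (\ref{transport}) is a linear transport equation on the flat surface $\Sigma$ with advecting field the horizontal trace $u'=(u_1,u_2)|_{\Sigma}$ and source $u_3|_{\Sigma}$; write $\nabla'=(\p_1,\p_2)$. By the trace theorem and $\q(u)<\infty$ we have $\p_t^ju|_{\Sigma}\in L^2([0,T];H^{2N-2j+1/2}(\Sigma))$ for $j=0,\dots,N$ and $\p_t^ju|_{\Sigma}\in L^{\infty}([0,T];H^{2N-2j-1/2}(\Sigma))$ for $j=0,\dots,N-1$, and in particular $u'|_{\Sigma}\in L^2([0,T];W^{1,\infty}(\Sigma))$ by Sobolev embedding since $N\ge3$. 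First I would settle existence and uniqueness: mollifying $u'$ in the horizontal variables (equivalently, solving along characteristics, which are well defined by the Lipschitz bound just noted) produces smooth approximate solutions that, after passing to the limit using the a priori bounds below, converge to a solution $\e$ inheriting the regularity of $\ee$ and of the source; uniqueness follows from the $L^2(\Sigma)$ energy identity for the difference of two solutions together with Gronwall. The initial time-traces are \emph{determined} by the conclusion: differentiating (\ref{transport}) in $t$ and evaluating at $t=0$ gives $\p_t^{j}\e(0)$ as an explicit polynomial in $\ee,\dots,\p_t^{j-1}u(0)|_{\Sigma}$ and the lower traces $\p_t^{<j}\e(0)$, starting from $\e(0)=\ee$; the regularity obtained below makes the time-traces of the constructed $\e$ well defined and equal to these expressions.

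\emph{Top-order estimate for $\e$.} Apply $\Lambda^{s}=(I-\Delta_{\Sigma})^{s/2}$ with $s=2N+1/2$ to (\ref{transport}) and pair with $\Lambda^{s}\e$ in $L^2(\Sigma)$. Writing $\Lambda^{s}(u'\cdot\nabla'\e)=u'\cdot\nabla'(\Lambda^{s}\e)+[\Lambda^{s},u'\cdot\nabla']\e$, the first piece contributes $-\tfrac12\int_{\Sigma}(\nabla'\cdot u')|\Lambda^{s}\e|^{2}$, and the commutator is controlled by the Kenig--Ponce--Vega / Kato--Ponce estimate $\|[\Lambda^{s},u']\nabla'\e\|_{L^2(\Sigma)}\ls\|\nabla'u'\|_{L^{\infty}(\Sigma)}\hms{\e}{s}{\Sigma}+\hms{u'}{s}{\Sigma}\|\nabla'\e\|_{L^{\infty}(\Sigma)}$. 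Using Sobolev embedding on $\Sigma$ to bound $\|\nabla'u'\|_{L^{\infty}},\|\nabla'\e\|_{L^{\infty}}$ and Cauchy--Schwarz on the source term, this gives $\frac{d}{dt}\hms{\e}{s}{\Sigma}^{2}\ls(1+\hms{u'}{s}{\Sigma})\hms{\e}{s}{\Sigma}^{2}+\hms{u_3}{s}{\Sigma}^{2}$. Gronwall, Cauchy--Schwarz in time, and the trace bound $\|u|_{\Sigma}\|_{L^2H^{s}(\Sigma)}^{2}\ls\q(u)$ yield $\hms{\e(t)}{s}{\Sigma}^{2}\le(\k_0(\ee)+C\q(u))\exp(C\sqrt{T}(1+\sqrt{\q(u)}))$ for $t\le T$. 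Choosing $0<\bar T=\bar T(\q(u))<1$ small enough that the exponential is $\le2$, we get $\inms{\e}{s}{\Sigma}^{2}+\tnms{\e}{s}{\Sigma}^{2}\ls\k_0(\ee)+\q(u)$, which is the first term of each of $\ce(\e)$ and $\d(\e)$.

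\emph{Bootstrapping the time derivatives.} Differentiating (\ref{transport}) $j$ times, $\p_t^{j+1}\e=\p_t^{j}u_3-\sum_{l=0}^{j}\binom{j}{l}\p_t^{l}u'\cdot\nabla'\p_t^{j-l}\e$, whose right-hand side involves $\e$ with at most $j$ time derivatives. I would therefore induct on $j=0,1,\dots,N$: given the bounds for $\p_t^{\le j}\e$ matching $\ce(\e),\d(\e)$, estimate each product by the Sobolev multiplication law $H^{s_1}(\Sigma)\cdot H^{s_2}(\Sigma)\hookrightarrow H^{\min(s_1,s_2)}(\Sigma)$ (applicable since at least one factor always has regularity $\ge2N-1/2>1$ and $\min(s_1,s_2)\ge1/2$ for $N\ge3$), together with the trace bounds for $\p_t^{l}u|_{\Sigma}$, taking each $u$-factor in $L^{\infty}$ or $L^2$ in time as available, with $L^{\infty}_t\!\to\!L^2_t$ conversions costing a factor $\bar T$. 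A case-by-case check shows the Sobolev exponents close \emph{exactly} against the definitions, giving $\p_t^{j}\e\in L^{\infty}H^{2N-2j+3/2}(\Sigma)$ for $1\le j\le N$, $\p_t\e\in L^2H^{2N-1/2}(\Sigma)$, and $\p_t^{j}\e\in L^2H^{2N-2j+5/2}(\Sigma)$ for $2\le j\le N+1$. Summing over $j$ and using Young's inequality to split the resulting cross terms $\q(u)^{a}\k_0(\ee)^{b}$ into $P(\q(u))+P(\k_0(\ee))$, we obtain $\k(\e)=\ce(\e)+\d(\e)\ls P(\k_0(\ee))+P(\q(u))$ with $P(0)=0$, since every term of $\k(\e)$ vanishes when $\ee$ and $u$ vanish.

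\emph{Main obstacle.} The delicate points are (a) the commutator estimate in the top-order step, and, more substantially, (b) verifying that the regularity budget closes exactly at every level of the time-derivative induction: there is no slack, so one must use the sharp trace indices $H^{k}(\Omega)\to H^{k-1/2}(\Sigma)$, the endpoint of the Sobolev product law, and the correct placement of each $u$-factor in $L^{\infty}_t$ versus $L^2_t$; and (c) arranging that $\bar T$ depends only on $\q(u)$ (and on the fixed $N,\Sigma$), which forces the small-time absorption of the Gronwall exponential rather than leaving a product $\k_0(\ee)\exp(C\q(u))$.
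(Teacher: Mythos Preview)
Your proposal is correct and follows essentially the same strategy as the paper: obtain the top-order $H^{2N+1/2}(\Sigma)$ bound on $\e$ from a transport estimate together with a small-time choice $\bar T=\bar T(\q(u))$ to tame the Gronwall exponential, then recover all the time-derivative norms in $\ce(\e)$ and $\d(\e)$ by differentiating the equation and using Sobolev product rules. The only cosmetic difference is that the paper quotes the ready-made transport inequality (Lemma~\ref{Appendix transport estimate}, coming from Danchin) in place of your direct Kato--Ponce commutator computation; the subsequent time-derivative bootstrap and the final packaging into $P(\k_0(\ee))+P(\q(u))$ are the same.
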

\begin{proof}
We divide the proof into several steps.
\ \\
\item
Step 1: Solvability of the equation:\\
Similar to proof of theorem 5.4 in \cite{book1}, based on
proposition 2.1 in \cite{book3}, since $u\ths{2N+1/2}$, then the
equation (\ref{transport}) admits a unique solution $\e\ihs{2N+1/2}$
achieving the initial data $\e(0)=\ee$.
\ \\
\item
Step 2: Estimate of $\ce(\e)$:\\
By lemma \ref{Appendix transport estimate}, we have
\begin{equation}
\inms{\e}{2N+1/2}{\Sigma}\leq\exp\bigg(C\int_0^T\hms{u(t)}{2N+1/2}{\Sigma}dt\bigg)\bigg(\sqrt{\k(\ee)}+\int_0^T\hms{u_3(t)}{2N+1/2}{\Sigma}dt\bigg)
\end{equation}
for $C>0$. The Cauchy-Schwarz inequality implies
\begin{equation}
C\int_0^T\hms{u(t)}{2N+1/2}{\Sigma}dt\ls
C\int_0^T\hm{u(t)}{2N+1}dt\ls\sqrt{T}\sqrt{\q(u)}
\end{equation}
So if $T\leq 1/(2\q(u))$, then
\begin{eqnarray}
\exp\bigg(C\int_0^T\hms{u(t)}{2N+1/2}{\Sigma}dt\bigg)\leq 2
\end{eqnarray}
Hence, we have
\begin{eqnarray}
\inms{\e}{2N+1/2}{\Sigma}^2\ls\k_0(\ee)+T\q(u)\ls \k_0(\ee)+1
\end{eqnarray}
Based on the equation, we further have
\begin{eqnarray}
\inms{\dt\e}{2N-1/2}{\Sigma}^2&\ls& \inms{u_3}{2N-1/2}{\Sigma}^2+\inms{u}{2N-1/2}{\Sigma}^2\inms{\e}{2N+1/2}{\Sigma}^2\\
&\ls&\k_0(\ee)\q(u)+\q(u)+T\q(u)^2\nonumber\\
&\ls&\bigg(\k_0(\ee)+1\bigg)\q(u)\nonumber
\end{eqnarray}
The last inequality is based on the choice of $T$ as above. The
solution $\e$ is temporally differentiable to get the equation
\begin{equation}
\dt(\dt\e)+u_1\p_1(\dt\e)+u_2\p_2(\dt\e)=\dt u_3-\dt u_1\p_1\e-\dt
u_2\p_2\e
\end{equation}
So we have the estimate
\begin{eqnarray}
\quad\inms{\dt^2\e}{2N-5/2}{\Sigma}^2&\ls&
\inms{u}{2N-5/2}{\Sigma}^2\inms{\dt\e}{2N-3/2}{\Sigma}^2
+\inms{\dt u}{2N-5/2}{\Sigma}^2\\
&&+\inms{\dt u}{2N-5/2}{\Sigma}^2\inms{\e}{2N-3/2}{\Sigma}^2\nonumber\\
&\ls&\bigg(\k_0(\ee)+1\bigg)\bigg(\q(u)^2+\q(u)\bigg)\nonumber
\end{eqnarray}
In a similar fashion, we can achieve the estimate for $\dt^j\e$ as
$j=1,\ldots,N$.
\begin{equation}
\inms{\dt^j\e}{2N-2j+3/2}{\Sigma}^2\ls\bigg(\k_0(\ee)+1\bigg)\sum_{i=1}^{j}\q(u)^i
\end{equation}
To sum up, we have
\begin{equation}
\ce(\e)\ls (1+\k_0(\ee))P(\q(u))
\end{equation}
\ \\
\item
Step 3: Estimate of $\d(\e)$:\\
Similar to above argument, by differentiating on both sides and
utilize the transport equation iteratively, we can show
\begin{equation}
\d(\e)\ls(1+\k_0(\ee))P(\q(u))
\end{equation}
Naturally, summarizing all above, we have
\begin{equation}
\k(\e)=\ce(\e)+\d(\e)\ls(1+\k_0(\ee))P(\q(u))
\end{equation}
Then our result easily follows.
\end{proof}
Next, we introduce a lemma to describe the different between $\e$
and $\ee$ in a small time period.
\begin{lemma}\label{transport diff}
If $\q(u)+\k_0(\ee)<\infty$, then for any $\epsilon>0$, there exists
a $\tilde T(\epsilon)>0$ depending on $\q(u)$ and $\k_0(\ee)$, such
that for any $0<T<\tilde T$, we have the estimate
\begin{eqnarray}
\inms{\e-\ee}{2N+1/2}{\Sigma}^2\leq \epsilon
\end{eqnarray}
\end{lemma}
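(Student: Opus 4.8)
The plan is to read the assertion as a quantitative continuity statement at $t=0$: the solution $\e$ from Theorem~\ref{transport estimate} approaches $\ee$ in $H^{2N+1/2}(\Sigma)$ as $T\downarrow0$, and we want an explicit threshold. I would obtain it from two ingredients: smallness of $\e(t)-\ee$ in the weaker norm $H^{2N-1/2}(\Sigma)$, and the \emph{sharpness} of the transport a~priori estimate, which forces the top norm $\hms{\e(t)}{2N+1/2}{\Sigma}$ to converge to $\hms{\ee}{2N+1/2}{\Sigma}$. Since there is no bound on $\e$ above level $2N+1/2$ to interpolate against, these two must be combined by a convergence-of-norms argument rather than by interpolation.

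First, shrink $T$ if necessary so that Theorem~\ref{transport estimate} applies, giving $\ce(\e)\ls(1+\k_0(\ee))P(\q(u))=:M$; the $j=1$ term of $\ce(\e)$ then reads $\inms{\dt\e}{2N-1/2}{\Sigma}^2\le M$, so from $\e(t)-\ee=\int_0^t\dt\e(s)\,\ud{s}$ and Minkowski's integral inequality,
\begin{eqnarray*}
\hms{\e(t)-\ee}{2N-1/2}{\Sigma}\le\int_0^t\hms{\dt\e(s)}{2N-1/2}{\Sigma}\,\ud{s}\le t\sqrt{M}.
\end{eqnarray*}
Next, plugging Lemma~\ref{Appendix transport estimate} together with the Cauchy--Schwarz bounds $\int_0^T\hms{u}{2N+1/2}{\Sigma}\,\ud{t}\ls\sqrt T\sqrt{\q(u)}$ and $\int_0^T\hms{u_3}{2N+1/2}{\Sigma}\,\ud{t}\ls\sqrt T\sqrt{\q(u)}$ already used in Theorem~\ref{transport estimate},
\begin{eqnarray*}
\hms{\e(t)}{2N+1/2}{\Sigma}^2\le\inms{\e}{2N+1/2}{\Sigma}^2\le e^{C\sqrt T\sqrt{\q(u)}}\Big(\sqrt{\k_0(\ee)}+C\sqrt T\sqrt{\q(u)}\Big)^2=\k_0(\ee)+\omega(T),
\end{eqnarray*}
where $\omega(T)\to0$ as $T\to0$ at a rate governed only by $\q(u)$ and $\k_0(\ee)=\hms{\ee}{2N+1/2}{\Sigma}^2$.

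I would then combine these via the Hilbert-space identity
\begin{eqnarray*}
\hms{\e(t)-\ee}{2N+1/2}{\Sigma}^2=\Big(\hms{\e(t)}{2N+1/2}{\Sigma}^2-\hms{\ee}{2N+1/2}{\Sigma}^2\Big)-2\big\langle\e(t)-\ee,\ee\big\rangle_{H^{2N+1/2}(\Sigma)}.
\end{eqnarray*}
The first bracket is $\le\omega(T)$. For the cross term I would split $\ee=P_{\le R}\ee+P_{>R}\ee$ into low and high frequencies; pairing $\e(t)-\ee$ against the low-frequency piece costs only a factor $\langle R\rangle$ of regularity, so $|\langle\e(t)-\ee,P_{\le R}\ee\rangle_{H^{2N+1/2}(\Sigma)}|\ls\hms{\e(t)-\ee}{2N-1/2}{\Sigma}\,\langle R\rangle\,\hms{\ee}{2N+1/2}{\Sigma}\le t\sqrt M\,\langle R\rangle\,\sqrt{\k_0(\ee)}$, while $|\langle\e(t)-\ee,P_{>R}\ee\rangle_{H^{2N+1/2}(\Sigma)}|\le 2\sqrt M\,\hms{P_{>R}\ee}{2N+1/2}{\Sigma}$. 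Given $\epsilon>0$ I would first choose $R$ so that the high-frequency contribution is $<\epsilon/3$ (possible since $\ee\in H^{2N+1/2}(\Sigma)$), then choose $\tilde T$ so small that $\omega(T)<\epsilon/3$ and $t\sqrt M\,\langle R\rangle\sqrt{\k_0(\ee)}<\epsilon/3$ for all $t\le\tilde T$; this yields $\hms{\e(t)-\ee}{2N+1/2}{\Sigma}^2<\epsilon$ on $[0,\tilde T]$, hence $\inms{\e-\ee}{2N+1/2}{\Sigma}^2\le\epsilon$.

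The main obstacle is exactly this last step: $2N+1/2$ being the highest regularity of $\e$ available, smallness at that level cannot be reached by interpolation and must come from the convergence of norms, which is why the precise exponential form of Lemma~\ref{Appendix transport estimate} is used, not merely the uniform bound $M$. One minor caveat is that, through the choice of $R$, the threshold $\tilde T$ really depends on the high-frequency tail of $\ee$ and not only on $\k_0(\ee)$; since $\ee$ is the fixed initial datum of the problem, this is immaterial for the nonlinear iteration of the next subsection, where only $u$ varies.
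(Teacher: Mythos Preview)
Your argument is correct and is genuinely different from the paper's. The paper proceeds much more directly: it writes the transport equation for $\xi=\e-\ee$,
\[
\dt\xi+u_1\p_1\xi+u_2\p_2\xi=u_3-u_1\p_1\ee-u_2\p_2\ee,\qquad \xi(0)=0,
\]
and simply applies Lemma~\ref{Appendix transport estimate} at level $s=2N+1/2$ to get
\[
\inms{\xi}{2N+1/2}{\Sigma}\le\exp\Big(C\int_0^T\hms{u}{2N+1/2}{\Sigma}\Big)\int_0^T\hms{u_3-u\cdot D\ee}{2N+1/2}{\Sigma}\ls T\,\q(u)\,\k_0(\ee),
\]
and then takes $T$ small. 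This is certainly cleaner, and it yields the quantitative dependence on $\q(u)$ and $\k_0(\ee)$ alone that the lemma asserts.

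Your route---smallness of $\e(t)-\ee$ one derivative below the top, combined with convergence of the top norm $\hms{\e(t)}{2N+1/2}{\Sigma}^2\to\hms{\ee}{2N+1/2}{\Sigma}^2$ and a frequency splitting of $\ee$ in the cross term---is longer but has one real advantage: it never needs to place the forcing $u\cdot D\ee$ in $H^{2N+1/2}(\Sigma)$. In the paper's bound that step is tacit, yet with only $\ee\in H^{2N+1/2}$ one has $D\ee\in H^{2N-1/2}$ and the product estimates of Lemma~\ref{Appendix product} do not put $u\cdot D\ee$ back into $H^{2N+1/2}$; so your caution about the top level is well placed. The price you pay, which you correctly flag, is that the threshold $\tilde T$ you produce depends on the high-frequency tail of the fixed datum $\ee$ (through the choice of $R$) and not solely on $\k_0(\ee)$; for the nonlinear iteration this is harmless, but it is a slightly weaker conclusion than the lemma's stated dependence.
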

\begin{proof}
$\xi=\e-\ee$ satisfies the transport equation
\begin{equation}\label{transport difference}
\left\{
\begin{array}{ll}
\dt\xi+u_1\p_1\xi+u_2\p_2\xi=u_3-u_1\p_1\ee-u_2\p_2\ee&on\quad\Sigma\\
\xi(0)=0
\end{array}
\right.
\end{equation}
By the transport estimate stated above, we have
\begin{eqnarray}
&&\inms{\xi}{2N+1/2}{\Sigma}\\
&\leq&\exp\bigg(C\int_0^T\hms{u(t)}{2N+1/2}{\Sigma}dt\bigg)
\bigg(\int_0^T\hms{u_3(t)-u_1(t)\p_1\ee-u_2(t)\p_2\ee}{2N+1/2}{\Sigma}dt\bigg)\nonumber\\
&\ls&T\q(u)\k_0(\ee)\nonumber
\end{eqnarray}
Hence, when $T$ is small enough, our result naturally follows.
\end{proof}

\subsubsection{Forcing Estimates}

Now we need to estimate the forcing terms which appears on the
right-hand side of the linear Navier-Stokes equation. The forcing
terms is as follows.
\begin{eqnarray}\label{forcing term}
F&=&\dt\bar\e\tilde{b}K\p_3u-u\cdot\na u\\
H&=&\e\n
\end{eqnarray}
Recall that we define the forcing quantities as follows.
\begin{eqnarray}
\k(F,H)&=&\sum_{j=0}^{N-1}\tnm{\dt^jF}{2N-2j-1}^2+\nm{\dt^NF}_{\xxst}^2+\sum_{j=0}^{N}\tnms{\dt^jH}{2N-2j-1/2}{\Sigma}^2\\
&&+\sum_{j=0}^{N-1}\inm{\dt^jF}{2N-2j-2}^2+\sum_{j=0}^{N-1}\inms{\dt^jH}{2N-2j-3/2}{\Sigma}^2\nonumber\\
\h_0(F,H)&=&\sum_{j=0}^{N-1}\hm{\dt^jF(0)}{2N-2j-2}^2+\sum_{j=0}^{N-1}\hms{\dt^jH(0)}{2N-2j-3/2}{\Sigma}^2
\end{eqnarray}
In the estimate of Navier-Stokes-transport system, we also need some
other forcing quantities.
\begin{eqnarray}
\f(F,H)&=&\sum_{j=0}^{N-1}\tnm{\dt^jF}{2N-2j-1}^2+\tnm{\dt^NF}{0}^2
+\sum_{j=0}^{N}\inms{\dt^jH}{2N-2j-1/2}{\Sigma}^2\\
\h(F,H)&=&\sum_{j=0}^{N-1}\tnm{\dt^jF}{2N-2j-1}^2+\sum_{j=0}^{N-1}\tnms{\dt^jH}{2N-2j-1/2}{\Sigma}^2
\end{eqnarray}
\begin{theorem}\label{forcing estimate}
The forcing terms satisfies the estimate
\begin{eqnarray}
\k(F,H)&\ls&P(\k(\e))+P(\q(u))\label{forcing estimate 1}\\
\h_0(F,H)&\ls&P(\k_0(\ee))+P(\k_0(u_0)\label{forcing estimate 4})\\
\f(F,H)&\ls&P(\k(\e))+P(\q(u))\label{forcing estimate 2}\\
\h(F,H)&\ls&T\bigg(P(\k(\e))+P(\q(u))\bigg)\label{forcing estimate
3}
\end{eqnarray}
where $P(\cdot)$ is a polynomial with $P(0)=0$.
\end{theorem}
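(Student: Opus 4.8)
The plan is to expand the forcing terms $F=\dt\bar\e\,\tilde bK\,\p_3u-u\cdot\na u$ and $H=\e\n$ by the Leibniz rule, reduce every resulting summand to a product of two factors — one of which is put in $L^\infty$ by Sobolev embedding while the other is measured in the Sobolev norm dictated by the target — and then let the parabolic derivative count close everything. The only analytic inputs are the product estimate of lemma \ref{Appendix product}, the Poisson bounds of lemmas \ref{appendix poisson 1}, \ref{appendix poisson 3}, \ref{appendix poisson 4} and \ref{appendix poisson 6}, and the usual Sobolev embeddings; no conversion between $\e$ and $u$ is needed, since both $P(\k(\e))$ and $P(\q(u))$ sit on the right-hand side.

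First I would record the coefficient estimates. On $[0,T]$ one has $J=b/b_0+\bar\e/b_0+\p_3\bar\e\,\tilde b\geq\delta>0$ (from theorem \ref{geometric transform} together with lemma \ref{transport diff} for $T$ small), so by the quotient rule $\p^\alpha K$ is a finite sum of products of $K=1/J$ with derivatives of $J$, and similarly for the entries $AK,BK$ of $\a$ and for $\n$. Using $\dt^a\bar\e=\pp^\epsilon(\dt^a\e)$ (linearity of the Poisson integral) together with lemmas \ref{appendix poisson 1} and \ref{appendix poisson 4}, and lemma \ref{Appendix product} for the products, every mixed derivative of $\tilde bK$, $\a$, $\n$, $\bar\e$ and $\dt\bar\e$ in the relevant $L^2_t$- and $L^\infty_t$-Sobolev norms is dominated by $\ce(\e)+\d(\e)=\k(\e)$; the $1/2$-derivative gain in the Poisson estimates matches exactly the $+1/2$ built into $\k(\e)$, and the fixed $\epsilon$ — depending only on $\Omega_0$ — is absorbed into the universal constant.

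Next I would run the expansion for $F$. Applying $\dt^j$ for $j=0,\dots,N$ and distributing, each summand is (a mixed derivative of a coefficient, bounded by $\k(\e)$ as above) times one or two factors $\dt^c\p^\gamma u$; for the quadratic term $u\cdot\na u$ one copy of $u$ carries the full remaining parabolic weight and the other, together with the coefficient, is placed in $L^\infty$ through $H^{2N}(\Omega)\hookrightarrow W^{1,\infty}(\Omega)$, available since $N\geq3$. A count of parabolic orders shows the distinguished $u$-factor never exceeds $\dt^Nu\in L^2H^1$ rather than $\dt^{N+1}u$, so $\q(u)$ (with $\q(u)\leq\k(u,p)$) suffices, and the only $\dt^{N+1}$ of a surface quantity that ever appears is $\dt^N\dt\bar\e=\pp^\epsilon(\dt^{N+1}\e)$ with $\dt^{N+1}\e\in L^2H^{1/2}(\Sigma)\subset\d(\e)$. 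Summing over $j$ gives $\k(F,H)\ls P(\k(\e))+P(\q(u))$. Since the top derivative of $F$ is needed only in the weaker $\xxst$-norm there, the same summand-by-summand bounds give $\f(F,H)\ls P(\k(\e))+P(\q(u))$ as well (its slightly stronger requirement $\tnm{\dt^NF}{0}$ still closes by the preceding estimate). For $\h(F,H)$, in which only $L^2_t$-norms occur, I would estimate $\int_0^T\leq T\sup_{[0,T]}$ and bound the sup by the sup-in-time pieces of $\k(\e)$ and $\q(u)$, which produces the extra factor $T$.

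For $H=\e(-\p_1\e,-\p_2\e,1)$ the situation is cleaner: $\dt^jH$ is a sum of products $\dt^a\e\cdot\dt^b\p\e$ (or a lone $\dt^j\e$) on $\Sigma$, the boundary product estimate together with $H^k(\Sigma)\hookrightarrow L^\infty(\Sigma)$ for $k>1$ reduces everything to $\ce(\e)+\d(\e)$, and the half-integer orders in $\k,\f,\h$ line up with those in $\k(\e)$. Finally $\h_0(F,H)$ is the $t=0$ specialization: by the iterative construction of the higher-order initial data, $\dt^jF(0)$ and $\dt^jH(0)$ are explicit polynomial functionals of $u_0$, $\ee$ and $b$, so the same product and Poisson estimates give $\h_0(F,H)\ls P(\k_0(\ee))+P(\k_0(u_0))$. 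The one genuine obstacle is the derivative bookkeeping for the top time derivative of $u\cdot\na u$ and of $\dt\bar\e\,\p_3u$ — verifying that it never forces $\dt^{N+1}u$ into a norm stronger than $\xxst$ — which is precisely what the choice of function spaces in the earlier subsections was arranged to accommodate; everything else is a routine, if lengthy, application of the product and Poisson estimates.
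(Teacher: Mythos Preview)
Your proposal is correct and follows essentially the same approach as the paper's own (sketched) proof: Leibniz expansion of $\dt^jF$ and $\dt^jH$, control of each summand via the product estimate of lemma \ref{Appendix product} by placing one factor in $L^\infty$ through Sobolev embedding, Poisson bounds to handle the $\bar\e$-coefficients, and the trivial bound $\tnm{\cdot}{k}^2\leq T\inm{\cdot}{k}^2$ (lemma \ref{Appendix connection 2}) to extract the factor $T$ in $\h(F,H)$. Your observation that the nonlinear terms never force more than $\dt^Nu\in L^2H^1$ and $\dt^{N+1}\e\in L^2H^{1/2}(\Sigma)$ is exactly the ``one less derivative'' point the paper singles out in its remark following the theorem.
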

\begin{proof}
The estimates follow from simple but lengthy computation, involving
standard argument, so we will only present a sketch of the proof here.\\
After we expand all the terms with Leibniz rule and plug in the
definition of $F^j$ and $H^j$, it is easy to see that the basic form
of estimate is $\tnm{XY}{k}^2$ or $\inm{XY}{k}^2$ where $X$ includes
the terms only involving $\eb$ and $Y$ includes the terms involving
$u$, $p$, $F$ or $H$. Employing lemma \ref{Appendix product}, we can
always bound $\tnm{XY}{k}^2\ls \tnm{X}{k_1}^2\inm{Y}{k_2}^2$ or
$\tnm{XY}{k}^2\ls \inm{X}{k_1}^2\tnm{Y}{k_2}^2$, and also
$\inm{XY}{k}^2\ls \inm{X}{k_1}^2\inm{Y}{k_2}^2$. The principle of
choice in these two forms for $L^2H^k$ is not to exceed the order of
derivative in the definition of right-hand sides in the estimates. A
detailed estimate can show this goal can always be achieved. In
(\ref{forcing estimate 1}), note the trivial bound
\begin{eqnarray}
\nm{\dt^NF}_{\xxst}^2\ls \tnm{\dt^NF}{0}^2
\end{eqnarray}
Also in (\ref{forcing estimate 3}), the key part is the appearance
of bounding constant $T$. We first trivially bound it as follows.
\begin{eqnarray*}
&&\sum_{j=0}^{N-1}\tnm{\dt^jF}{2N-2j-1}^2+\sum_{j=0}^{N-1}\tnms{\dt^jH}{2N-2j-1/2}{\Sigma}^2\\
&&\qquad\qquad\leq
T\bigg(\sum_{j=0}^{N-1}\inm{\dt^jF}{2N-2j-1}^2+\sum_{j=0}^{N-1}\inms{\dt^jH}{2N-2j-1/2}{\Sigma}^2\bigg)\qquad
\end{eqnarray*}
Finally, an application of inequality $2ab\leq a^2+b^2$ will imply
the required estimate.
\end{proof}
\begin{remark}
The reason why we can get the constant $T$ lies in that, in the
nonlinear Navier-Stokes equation, the $u$ in nonlinear terms
actually has one less derivative than the linear part, which makes
it available to use lemma \ref{Appendix connection 2}. The
appearance of $T$ in (\ref{forcing estimate 3}) will play a key role
in the following nonlinear iteration argument.
\end{remark}

\subsection{Navier-Stokes-Transport System}

In this section, we will study the nonlinear system
\begin{equation}
\left\{
\begin{array}{ll}
\partial_tu-\partial_t\bar\eta\tilde{b}K\partial_3u
+u\cdot\nabla_{\mathcal {A}}u-\Delta_{\mathcal {A}}u+\nabla_{\mathcal {A}}p=0 &\rm{in}\quad \Omega\\
\nabla_{\mathcal {A}}\cdot u=0  &\rm{in} \quad\Omega \\
S_{\mathcal {A}}(p,u)\mathcal {N}=\bar\eta\mathcal {N}& \rm{on} \quad\Sigma\\
u=0  &\rm{on}\quad\Sigma_{b}\\
u(x,0)=u_0(x)\\
&\\
\partial_t\eta+u_1\partial_1\eta+u_2\partial_2\eta=u_3 &\rm{on}\quad \Sigma\\
\eta(x',0)=\eta_0(x')
\end{array}
\right.
\end{equation}
We will first show a revised version of construction of the initial
data and then employ an iteration argument to show the wellposedness
of this system.

\subsubsection{Initial Data and Compatible Condition}

Before we turn into the nonlinear equation, we first need to determine the higher order initial condition.\\
Define
\begin{eqnarray}
\k_0=\k_0(u_0)+\k_0(\ee)
\end{eqnarray}
We say $(u_0,\ee)$ satisfies the $N^{th}$ order compatible condition
if
\begin{equation}\label{compatible condition}
\left\{
\begin{array}{ll}
\nabla_{\a_0}\cdot(D_t^ju(0))=0&in\quad\Omega\\
D_t^ju(0)=0&on\quad\Sigma_b\\
\Pi_0(H^j(0)+\dm_{\a_0}D_t^ju(0)\n_0)=0&on\quad\Sigma
\end{array}
\right.
\end{equation}
for $j=0,\ldots,N-1$. Then we can employ an iterative argument to
construct the initial condition of higher order. We will not repeat
the detailed process here (see section 5.2 of \cite{book1}) and only
give an estimate for the constructed data.
\begin{theorem}\label{initial data theorem}
Suppose $(u_0,\ee)$ satisfies $\k_0<\infty$. Let $\dt^ju(0)$,
$\dt^j\e(0)$ for $j=0,\ldots,N$ and $\dt^jp(0)$ for $j=0,\ldots,N-1$
defined as we stated. Then
\begin{eqnarray}
\k_0\leq \h_0(u,p)+\h_0(\e)\ls P(\k_0)
\end{eqnarray}
for a given polynomial $P(\cdot)$ with $P(0)=0$.
\end{theorem}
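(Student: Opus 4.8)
The plan is to first dispatch the trivial lower bound and then establish the upper bound by induction on the differentiation order, mirroring the iterative construction of the initial data. For the lower bound, observe that $\h_0(u,p)$ contains the $j=0$ summand $\hm{u(0)}{2N}^2=\hm{u_0}{2N}^2=\k_0(u_0)$ and $\h_0(\e)$ contains $\hms{\e(0)}{2N+1/2}{\Sigma}^2=\hms{\ee}{2N+1/2}{\Sigma}^2=\k_0(\ee)$; since every summand of $\h_0(u,p)$ and $\h_0(\e)$ is nonnegative, adding gives $\h_0(u,p)+\h_0(\e)\ge\k_0(u_0)+\k_0(\ee)=\k_0$. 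So the real work is the upper bound $\h_0(u,p)+\h_0(\e)\ls P(\k_0)$ with $P(0)=0$.

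For the upper bound I would construct, and simultaneously estimate, the data $\dt^j\e(0)$, $\dt^jp(0)$, $\dt^ju(0)$ one order at a time using three building blocks evaluated at $t=0$. First, differentiating the transport equation $\dt\e+u_1\p_1\e+u_2\p_2\e=u_3$ in time $j$ times and setting $t=0$ writes $\dt^j\e(0)$ on $\Sigma$ as a Leibniz sum of products of traces of $\dt^a u(0)$, $a\le j-1$, with $\dt^b\e(0)$ or $D\dt^b\e(0)$, $b\le j-1$; the product/algebra estimate (lemma \ref{Appendix product}) controls this in $H^{2N-2j+3/2}(\Sigma)$. Second, taking $\da$ of the momentum equation and using $\da(D_t^ju)=0$ produces at $t=0$ a Poisson problem in the structured form (\ref{poisson improved weak solution}) for $\dt^jp(0)$ --- this is exactly (\ref{linear lower regularity initial pressure}) when $j=0$ and the pressure part of (\ref{linear higher DT equation}) in general --- whose interior, top- and bottom-boundary data are built from $\la$ applied to $D_t^lu(0)$ with $l\le j$, from $F^j(0)$ and from $H^j(0)$; the elliptic estimate (\ref{poisson improved estimate}) then bounds $\hm{\dt^jp(0)}{2N-2j-1}$. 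Third, the $j$-th time derivative of the momentum equation at $t=0$ expresses $D_t^ju(0)\in\y(0)$ through $\la$ and $\na$ applied to lower-order $u$- and $p$-data, through $F^j(0)$, and through terms generated by $R=\dt MM^{-1}$; adding the $R$-terms back recovers $\dt^ju(0)$. Throughout, the coefficients $\a_0,\n_0,M,R$ and every factor $\dt^l\bar\e(0)$ are functions of $\ee$ and its time derivatives on $\Sigma$ through the $\epsilon$-Poisson integral, so lemmas \ref{appendix poisson 1} and \ref{appendix poisson 4} convert all interior norms into surface norms of $\dt^l\e(0)$, already controlled by the first building block.

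The induction then runs as follows. The base case $j=0$ is $\hm{u_0}{2N}^2\le\k_0$ and $\hms{\ee}{2N+1/2}{\Sigma}^2\le\k_0$, together with the bound on $p(0)$ from applying (\ref{poisson improved estimate}) to (\ref{linear lower regularity initial pressure}); the forcing and boundary data there are polynomially controlled by $\k_0$ because $F(0)$ and $H(0)$ for the nonlinear forcing (\ref{forcing term}) obey $\h_0(F,H)\ls P(\k_0)$ by theorem \ref{forcing estimate}, estimate (\ref{forcing estimate 4}). For the inductive step, assume $\hm{\dt^lu(0)}{2N-2l}^2$, $\hm{\dt^lp(0)}{2N-2l-1}^2$ and $\hms{\dt^l\e(0)}{2N-2l+3/2}{\Sigma}^2$ are all $\ls P(\k_0)$ at all orders below $j$; since the raw forcing derivatives $\dt^lF(0),\dt^lH(0)$ are also $\ls P(\k_0)$ by (\ref{forcing estimate 4}), the three building blocks express $\dt^j\e(0)$, $\dt^jp(0)$ (for $j\le N-1$) and $D_t^ju(0)$, hence $\dt^ju(0)$, as Leibniz combinations of quantities each bounded by $P(\k_0)$, so lemma \ref{Appendix product} makes each of them $\ls P(\k_0)$ as well. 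Summing the finitely many summands in $\h_0(u,p)$ and $\h_0(\e)$ finishes the bound, and $P(0)=0$ is automatic because beyond the zeroth term every quantity produced is at least quadratic in $(u_0,\ee)$.

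I expect the only genuine obstacle to be bookkeeping rather than a new idea: one must verify that in each Leibniz expansion the parabolic count of derivatives never exceeds the regularity supplied by the earlier steps, so that lemma \ref{Appendix product} applies with admissible indices, and one must carry the $D_t\leftrightarrow\dt$ conversion carefully since it couples the $u$-tower to the $\epsilon$-Poisson integral of the $\e$-tower. The only substantive departure from the small-data construction of \cite{book1} is that, the data now being arbitrary, the constants cannot be collapsed to affine expressions $1+\k_0$ and must be retained as polynomials $P$ with $P(0)=0$; this affects none of the regularity counts and so causes no difficulty.
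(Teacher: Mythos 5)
Your proposal is correct and follows essentially the same route as the paper: the lower bound is read off directly from the $j=0$ summands of $\h_0(u,p)$ and $\h_0(\e)$, and the upper bound is the iterative construction of $\dt^j\e(0)$, $\dt^jp(0)$, $\dt^ju(0)$ from lower to higher order (via the transport equation, the $\a_0$-Poisson problem, and the momentum equation), which the paper defers to its Section 2.4.3 and \cite{book1} and dismisses with ``natural corollary of our construction.'' The only thing worth flagging is that you cite (\ref{forcing estimate 4}) as a black box at each induction stage, whereas strictly it is established simultaneously by the same finite induction; this is a presentational, not substantive, difference.
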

\begin{proof}
This is a natural corollary of our construction, so we omit the
proof here.
\end{proof}

\subsubsection{Construction of Iteration}

For given initial data $(u_0,\ee)$, by extension lemma \ref{Appendix
extension 1}, there exists $u^0$ defined in $\Omega\times[0,\infty)$
such that $\q(u^0)\ls\k_0(u_0)$ achieving the initial data to
$N^{th}$ order. By solving transport equation with respect to $u^0$,
theorem \ref{transport estimate} implies that there exists $\e^0$
defined in $\Omega\times[0,T_0)$ such that
$\k(\e^0)\ls(1+\k_0(\ee))P(\q(u^0))\ls P(\k_0)<\infty$. This is our
start point.\\
For any integer $m\geq1$, define the approximate solution $(u^m,
p^m, \e^m)$ on the existence interval $[0,T_m)$ by the following
iteration.
\begin{equation}\label{iteration}
\left\{
\begin{array}{ll}
\dt u^{m}-\Delta_{\a^{m-1}}u^m+\nabla_{\a^{m-1}}p^m=\dt\eb^{m-1}\tilde{b}K^{m-1}\p_3u^{m-1}&\rm{in}\quad\Omega\\
\nabla_{\a^{m-1}}\cdot u^m=0&\rm{in}\quad\Omega\\
(p^mI-\dm_{\a^{m-1}}u^m)\n^{m-1}=\e^{m-1}\n^{m-1}&\rm{on}\quad\Sigma\\
u^m=0&\rm{on}\quad\Sigma_b\\
&\\
\dt\e^m+u_1^m\p_1\e^m+u_2^m\p_2\e^m=u_3^m&\rm{on}\quad\Sigma
\end{array}
\right.
\end{equation}
where $(u^m, \e^m)$ achieves the same initial data $(u_0, \ee)$, and
$\a^m$, $\n^m$, $K^m$ are given in terms of $\e^m$.\\
This is only a formal definition of iteration. In the following
theorems, we will finally prove that this approximate sequence can
be defined for any $m\in\mathbb{N}$ and the existence interval $T_m$
will not shrink to $0$ as $m\rightarrow\infty$.

\subsubsection{Boundedness Theorem}

\begin{remark}\label{boundedness remark}
Before we start to prove the boundedness result, it is useful to
notice that, based on the linear estimate (\ref{linear higher
regularity estimate}) and forcing estimate in lemma \ref{forcing
estimate}, this sequence can always be constructed and we can
directly derive an estimate
\begin{equation}
\k(u^{m+1},p^{m+1})+\k(\e^{m+1})\ls P(\q(u^m)+\k(\e^m)+\k_0)
\end{equation}
when $T$ is sufficiently small, where $P(\cdot)$ is a polynomial
satisfying $P(0)=0$. Since the initial data can be arbitrarily
large, this estimate cannot meet our requirement. Hence, we have to
go back to the energy structure and derive a stronger estimate.
However, this result naturally implies a lemma, which will
be used in the following: \\
If
\begin{equation}
\q(u^{m-1})+\k(\e^{m-1})\leq \z
\end{equation}
then
\begin{equation}
\k(u^m,p^m)+\k(\e^m)\leq C P(\k_0+\z)
\end{equation}
\end{remark}
\begin{theorem}\label{boundedness1}
Assume $J^0>\delta>0$ and the initial data $(u_0,\e_0)$ satisfies
the $N^{th}$ compatible condition. Then there exists a constant
$0<\z<\infty$ and $0<\bar T<1$ depending on $\k_0$, such that if
$0<T\leq\bar T$ and $\k_0<\infty$, then there exists an infinite
sequence $(u^m,p^m,\e^m)_{m=0}^{\infty}$ satisfying the iteration
equation (\ref{iteration}) within the existence interval $[0,T)$ and
the following properties.
\begin{enumerate}
\item
The iteration sequence satisfies the estimate
\begin{displaymath}
\q(u^m)+\k(\e^m)\leq \z
\end{displaymath}
for arbitrary $m$, where the temporal norm is taken with respect to
$T$.
\item
For any $m$, $J^m(t)>\delta/2$ for $0\leq t\leq T$.
\end{enumerate}
\end{theorem}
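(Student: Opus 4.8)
The plan is to argue by induction on $m$, propagating both asserted properties at once. For $m=0$ one takes $u^0$ from the extension lemma \ref{Appendix extension 1} (so $\q(u^0)\ls\k_0(u_0)$, with the initial data achieved to order $N$) and then $\e^0$ by solving the transport equation against $u^0$ via Theorem \ref{transport estimate} on a short interval, giving $\k(\e^0)\ls P(\k_0)$; since $\e^0(0)=\ee$ and $J^0>\delta$ by hypothesis, the base case holds. It then suffices to carry out the inductive step: assuming $\q(u^{m-1})+\k(\e^{m-1})\leq\z$ and $J^{m-1}(t)>\delta/2$ on $[0,T]$, produce $(u^m,p^m,\e^m)$ with the same two properties.

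Since $J^{m-1}>\delta/2>0$, the map $\Phi^{m-1}$ is a diffeomorphism, so the whole linear theory of the preceding subsections applies with coefficients built from $\e^{m-1}$. I would invoke Theorem \ref{linear higher regularity} to get the strong solution $(u^m,p^m)$ of the $m$-th linear problem --- its threshold $T_0(\e^{m-1})$ being bounded below in terms of $\z$ through $\k(\e^{m-1})\leq\z$ --- and then Theorem \ref{transport estimate} for $\e^m$; the forcing $(F^{m-1},H^{m-1})$ is that associated to the previous iterate as in Theorem \ref{forcing estimate}. The crude bound of Remark \ref{boundedness remark} will not suffice, since the black-box linear estimate carries the prefactor $\l(\e^{m-1})=P(\k(\e^{m-1}))\exp(TP(\k(\e^{m-1})))$ in front of \emph{every} term, including the initial data, and $P(\z)P(\k_0)$ need not be $\leq\z$.

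The core of the proof is therefore to re-derive the energy estimate by hand: test the equations for $(D_t^j u^m,\dt^j p^m)$ against suitable functions, integrate by parts in space and in time, and close by Gronwall. The structural facts I would exploit are: (i) every initial-data term is weighted by $J^{m-1}(0)=J^0$, which depends only on $\ee$, so the geometric energy at time $0$ is $\ls P(\k_0)$ with a constant \emph{independent of} $\z$ (via Theorem \ref{initial data theorem} and the forcing bound (\ref{forcing estimate 4})); (ii) every forcing term carries a positive power of $T$ --- the time-integrated forcing norm obeys $\h(F^{m-1},H^{m-1})\ls T(P(\k(\e^{m-1}))+P(\q(u^{m-1})))\leq TP(\z)$ by (\ref{forcing estimate 3}), while the remaining $L^\infty$-in-time forcing norms, bounded by $P(\z)$ via (\ref{forcing estimate 1})--(\ref{forcing estimate 2}), appear only paired against $L^2$-in-time solution norms, producing a factor $\sqrt{T}$ after Cauchy--Schwarz plus a term absorbable into the dissipation; (iii) the only occurrence of $P(\z)$ without a compensating $T$ is as the coefficient of the solution's own energy on the right-hand side, so it merely enters the Gronwall exponent $\exp(CTP(\z))$. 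For $T$ small this yields
\begin{equation*}
\ce(u^m,p^m)+\d(u^m,p^m)\ls P(\k_0)+TP(\z)\exp\big(CTP(\z)\big),
\end{equation*}
and, converting back to flat norms by Lemma \ref{norm equivalence} with $J^{m-1}>\delta/2$, a bound $\q(u^m)\leq C_1 P(\k_0)$ that does \emph{not} depend on $\z$. Feeding this into Theorem \ref{transport estimate} gives $\k(\e^m)\ls P(\k_0(\ee))+P(\q(u^m))\leq C_2 P(\k_0)$, again independent of $\z$. Hence I would \emph{first} set $\z:=2(C_1+C_2)P(\k_0)$ and \emph{then} choose $\bar T>0$ (depending on $\z$, hence only on $\k_0$) so small that $\bar T<T_0(\e^{m-1})$, that $TP(\z)\exp(CTP(\z))\leq P(\k_0)$ on $[0,\bar T]$, and that $\bar T<\bar T(u^m)$ from Theorem \ref{transport estimate}; then $\q(u^m)+\k(\e^m)\leq(C_1+C_2)P(\k_0)=\z/2<\z$.

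For the second property I would apply Lemma \ref{transport diff} with the now-fixed data $\q(u^m)+\k_0(\ee)<\infty$, shrinking $\bar T$ once more (still only in terms of $\k_0$) so that $\inms{\e^m-\ee}{2N+1/2}{\Sigma}^2\leq\epsilon$ for a prescribed $\epsilon$. Writing $\eb^m-\eb^0=\pp^\epsilon(\e^m-\ee)$, the Poisson bounds of Lemmas \ref{appendix poisson 1}--\ref{appendix poisson 2} (or \ref{appendix poisson 4}--\ref{appendix poisson 5}) and Sobolev embedding give $\lnm{\eb^m-\eb^0}{\infty}+\lnm{\nabla(\eb^m-\eb^0)}{\infty}\ls\sqrt{\epsilon}$, whence $\lnm{J^m-J^0}{\infty}\ls\sqrt{\epsilon}$ uniformly on $[0,T]$; since $J^0>\delta$, choosing $\epsilon$ (and thus the final $\bar T$) small enough forces $J^m(t)>\delta/2$ for all $t\in[0,T]$, completing the induction. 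The hard part, as flagged, is executing the energy estimate of the third paragraph with the bookkeeping arranged so that the large constant $\z$ never multiplies $\k_0$ or any initial-data norm; granting Theorems \ref{linear higher regularity}, \ref{transport estimate} and \ref{forcing estimate}, the rest is routine.
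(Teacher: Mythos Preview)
Your inductive scaffolding, the base case, and the $J^m>\delta/2$ argument via Lemma \ref{transport diff} all match the paper. You have also correctly identified the central difficulty: one must arrange the estimate so that $P(\z)$ never multiplies the initial-data contribution $P(\k_0)$, and every occurrence of $P(\z)$ carries a positive power of $T$.

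Where your sketch diverges from the paper, and where it has a genuine gap, is in the mechanism for controlling $\q(u^m)$. Testing the equations for $(D_t^j u^m,\dt^j p^m)$ and closing by Gronwall yields only $\inm{D_t^j u^m}{0}^2+\tnm{D_t^j u^m}{1}^2$; this does \emph{not} recover the high spatial norms $\tnm{\dt^j u^m}{2N-2j+1}^2$ and $\inm{\dt^j u^m}{2N-2j}^2$ that make up $\q(u^m)$. No amount of bookkeeping in the Gronwall inequality fixes this, because the missing regularity is elliptic, not parabolic. The paper therefore proceeds differently: the geometric energy identity is used \emph{only} for the top temporal level $j=N$, producing $\tnm{\dt^N u^m}{1}^2\ls P(\k_0)+\sqrt{T}\,P(\k_0+\z)$ directly (no Gronwall; the $\sqrt{T}$ comes from Cauchy--Schwarz in time and the choice $T\leq 1/(16\z^4)$). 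For $0\leq n\leq N-1$ the paper instead applies the stationary $\a$-Stokes estimate of Proposition \ref{elliptic estimate}, whose constant is $C(\ee)$ and hence depends \emph{only} on $\k_0$, not on $\z$; this is precisely the device that prevents $P(\z)$ from multiplying initial data at the higher-regularity levels. One obtains
\[
\tnm{\dt^n u^m}{2N-2n+1}^2\ \ls\ \tnm{\dt^{n+1}u^m}{2N-2n-1}^2+T\,P(\k_0+\z)+\h(F,H),
\]
with the $T$-factor on the middle term coming from Lemma \ref{Appendix connection 2} and $\h(F,H)\ls T\,P(\z)$ from (\ref{forcing estimate 3}). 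Inducting down in $n$ from $N-1$ to $0$, terminating at the energy bound for $\dt^N u^m$, and then recovering the $L^\infty H^{2N-2j}$ norms via Lemma \ref{Appendix connection 1} (whose initial-data term is again $\ls P(\k_0)$), one arrives at $\q(u^m)\leq C\,P(\k_0)+\sqrt{T}\,C\,P(\k_0+\z)$. Fixing $\z\geq 2C\,P(\k_0)$ and then shrinking $T$ absorbs the second term. Your proposal needs to insert this elliptic step (and the accompanying use of Lemmas \ref{Appendix connection 1}--\ref{Appendix connection 2}) in place of the Gronwall closure; once that is done, the remainder of your outline goes through essentially as the paper's.
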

\begin{proof}
Let's denote the above two assertions related to $m$ as statement
$\mathbb{P}_m$. We use induction to prove this theorem. To note that
in the following, we will extensively use the notation $P(\cdot)$,
however, these polynomials should be understood as explicitly given,
but not necessarily written out here. Also they can change from line
to line.
\ \\
\item
Step 1: $\mathbb{P}_0$ case:\\
This is only related to the initial data. Obviously, the
construction of $u^0$ leads to $\q(u^0)\ls \k_0$. By transport
estimate (\ref{transport estimate}), we have $\k(\e^0)\ls P(\k_0)$
for $P(0)=0$. We can choose $\z\geq P(\k_0)$, so the first assertion
is verified.\\
Define $\xx^0=\e^0-\e_0$ the difference between free surface at
later time and its initial data. Then the estimate in lemma
\ref{transport diff} implies $\inm{\xx^0}{5/2}\ls T\z$. Naturally,
$\sup_{t\in[0,T]}\abs{J^0(t)-J^0(0)}\ls \inm{\xx^0}{5/2}\ls T\z$.
Thus if we take $T\leq \delta/(2\z)$, then $J^0(t)\geq\delta/2$. So
the second assertion is also verified. In a similar fashion, we can
verify the closeness assumption we made in linear Navier-Stokes
equation, i.e. $\e$ and $\ee$ is close enough within $[0,T]$ can
also be verified. Hence, $\mathbb{P}_0$ is
true.\\
In the following, we will assume $\mathbb{P}_{m-1}$ is true for
$m\geq1$ and prove $\mathbb{P}_m$ is also true. As long as we can
show this, by induction, $\mathbb{P}_n$ is valid for arbitrary
$n\in\mathbb{N}$. Certainly, the induction hypothesis and above
remark implies $\q(u^{m-1})+\k(\e^{m-1})\leq\z$ and
$\k(u^m,p^m)+\k(\e^m)\leq C P(\k_0+\z)$.
\ \\
\item
Step 2: $\mathbb{P}_m$ case: estimate of $u^m$ via energy structure\\
By theorem \ref{linear higher regularity}, the pair
$(D_t^{N}u^m,\dt^{N}u^m)$ satisfies the equation (\ref{linear higher
DT equation}) in the weak sense, i.e.
\begin{equation}
\left\{
\begin{array}{ll}
\dt(D^{N}_tu^m)-\Delta_{\a^{m-1}}(D^{N}_tu^m)+\nabla_{\a^{m-1}}(\dt^{N}p^m)=F^{N}&in\quad\Omega\\
\nabla_{\a^{m-1}}\cdot(D^{N}tu^m)=0&in\quad\Omega\\
\ss_{\a^{m-1}}(D^{N}_tu^m,\dt^{N}p^m)\n=H^{N}&on\quad\Sigma\\
D^N_tu^m=0&on\quad\Sigma_b
\end{array}
\right.
\end{equation}
where $F^{N}$ and $H^{N}$ is given in terms of $u^{m}$ and
$\e^{m-1}$. Hence, we have the standard weak formulation, i.e. for
any test function $\psi\in \x^{m-1}$, the following holds
\begin{equation}
\brht{\dt D_t^{N}u^m,
\psi}+\half\brht{\dm_{\a^{m-1}}D_t^{N}u^m,\dm_{\a^{m-1}}\psi}=\brht{D_t^{N}u^m,
F^{N}}-\brhs{D_t^{N}u^m,H^{N}}
\end{equation}
Therefore, when we plug in the test function $\psi=D_t^Nu^m$, we
have the natural energy structure
\begin{eqnarray}
\half\int_{\Omega}J\abs{D^N_tu^m}^2+\half\int_0^t\int_{\Omega}J\abs{\dm_{\a^{m-1}}
D_t^Nu^m}^2=\\\half\int_{\Omega}J(0)\abs{D^N_tu^m(0)}^2
+\half\int_0^t\int_{\Omega}\dt J\abs{D_t^Nu^m}^2
+\int_0^t\int_{\Omega}JF^N\cdot D_t^Nu^m
-\int_0^t\int_{\Omega}H^N\cdot D_t^Nu^m\nonumber
\end{eqnarray}
A preliminary estimate is as follows.
\begin{eqnarray*}
LHS&\gtrsim&\inm{D^N_tu^m}{0}^2+\tnm{D^N_tu^m}{1}^2\\
RHS&\lesssim& P(\k_0)+T\z\inm{D^N_tu^m}{0}^2\\
&&+\sqrt{T}\z\tnm{F^N}{0}\inm{D^N_tu^m}{0}+\sqrt{T}\inms{H^N}{-1/2}{\Sigma}\tnms{D^N_tu^m}{1/2}{\Sigma}\\
&\lesssim&P(\k_0)+T\z\inm{D^N_tu^m}{0}^2+\sqrt{T}\tnm{F^N}{0}^2+\sqrt{T}\z^2\inm{D^N_tu^m}{0}^2+\sqrt{T}\inms{H^N}{-1/2}{\Sigma}^2\\
&&+\sqrt{T}\tnm{D^N_tu^m}{1}^2
\end{eqnarray*}
for a polynomial $P(0)=0$. Taking $T\leq1/(16\z^4)$ and absorbing
the extra term on RHS into LHS implies
\begin{equation}
\inm{D^N_tu^m}{0}^2+\tnm{D^N_tu^m}{1}^2\lesssim
P(\k_0)+\sqrt{T}\tnm{F^N}{0}^2+\sqrt{T}\inms{H^N}{-1/2}{\Sigma}^2
\end{equation}
Drop the $\inm{D^N_tu^m}{0}^2$ term and we derive further the
estimate for $\dt^Nu^m$
\begin{equation}
\tnm{\dt^Nu^m}{1}^2\lesssim
P(\k_0)+\sqrt{T}\tnm{F^N}{0}^2+\sqrt{T}\inms{H^N}{-1/2}{\Sigma}^2+\tnm{D_t^Nu^m-\dt^Nu^m}{1}^2
\end{equation}
Then we need to estimate each term on RHS. For the middle two terms,
it suffices to show it is bounded, however, for the last term, we
need a temporal constant $T$ within the estimate, which can be done
by lemma \ref{Appendix connection 2}. Since these estimates is
similar to the proof of lemma \ref{linear temp 0}, we will not give
the details here.
\begin{eqnarray*}
\tnm{F^N}{0}^2&\ls&P(\k(\e^{m-1}))\bigg(\sum_{j=0}^{N-1}\tnm{\dt^ju^m}{2}^2+\sum_{j=0}^{N-1}\tnm{\dt^jp^m}{1}^2\bigg)+\f(F,H)\\
&\ls& P(\k_0+\z)+\f(F,H)
\end{eqnarray*}
\begin{eqnarray*}
\inms{H^N}{-1/2}{\Sigma}^2&\ls&P(\k(\e^{m-1}))\bigg(\sum_{j=0}^{N-1}\inm{\dt^ju^m}{2}^2+\sum_{j=0}^{N-1}\inm{\dt^jp^m}{1}^2\bigg)+\f(F,H)\\
&\ls&P(\k_0+\z)+\f(F,H)
\end{eqnarray*}
\begin{eqnarray*}
\tnm{D_t^Nu^m-\dt^Nu^m}{1}&\ls&T\inm{D_t^Nu^m-\dt^Nu^m}{1}\\
&\ls&T P(\k(\e^{m-1}))\bigg(\sum_{j=0}^{N-1}\inm{\dt^ju^m}{1}^2\bigg)\\
&\ls&T P(\k_0+\z)
\end{eqnarray*}
Therefore, to sum up, we have
\begin{equation}\label{iteration estimate 1}
\tnm{\dt^Nu^m}{1}^2\ls P(\k_0)+\sqrt{T}P(\k_0+\z)+\sqrt{T}\f(F,H)
\end{equation}
\ \\
\item
Step 3: $\mathbb{P}_m$ case: estimate of $u^m$ via elliptic estimate\\
For $0\leq n\leq N-1$, the $n^{th}$ order Navier-Stokes equation is
as follows.
\begin{equation}
\left\{
\begin{array}{ll}
\dt(D^n_tu^m)-\la(D^n_tu^m)+\na(\dt^np^m)=F^n&in\quad\Omega\\
\da(D^n_tu^m)=0&in\quad\Omega\\
\sa(D^n_tu^m,\dt^np^m)\n=H^n&on\quad\Sigma\\
D^n_tu^m=0&on\quad\Sigma_b
\end{array}
\right.
\end{equation}
where $F^n$ and $H^n$ is given in terms of $u^{m}$ and $\e^{m-1}$.
\\
A straightforward application of elliptic estimate reveals the fact.
\begin{equation}
\tnm{D_t^nu^m}{2N-2n+1}^2\lesssim\tnm{\dt
D_t^nu^m}{2N-2n-1}^2+\tnm{F^n}{2N-2n-1}^2+\tnm{H^n}{2N-2n-1/2}^2
\end{equation}
A more reasonable form is as follows.
\begin{eqnarray}
\\
\tnm{\dt^nu^m}{2N-2n+1}^2&\ls&\tnm{F^n}{2N-2n-1}^2+\tnm{H^n}{2N-2n-1/2}^2+\tnm{\dt^{n+1}u^m}{2N-2n-1}^2\nonumber\\
&&+\tnm{\dt(D_t^nu^m-\dt^nu^m)}{2N-2n-1}^2+\tnm{D_t^nu^m-\dt^nu^m}{2N-2n+1}^2\nonumber
\end{eqnarray}
Then we give a detailed estimate for each term on RHS. These
estimates can be easily obtained as what we did before, so we omit
the details here. It is noticeable that the appearance of $T$ is due
to lemma \ref{Appendix connection 2}.
\begin{eqnarray*}
\tnm{F^n}{2N-2n-1}^2
&\ls&T P(\k(\e^{m-1}))\bigg(\sum_{j=0}^{N-2}\inm{\dt^ju^m}{2N-2j-1}^2+\sum_{j=0}^{N-2}\inm{\dt^jp^m}{2N-2j-2}^2\bigg)+\h(F,H)\\
&\ls&T P(\k_0+\z)+\h(F,H)
\end{eqnarray*}
\begin{eqnarray*}
\tnms{H^n}{2N-2n-1/2}{\Sigma}^2&\ls&T
P(\k(\e^{m-1}))\bigg(\sum_{j=0}^{N-2}\inm{\dt^ju^m}{2N-2j-1}^2+\sum_{j=0}^{N-2}\inm{\dt^jp^m}{2N-2j-2}^2\bigg)
+\h(F,H)\\
&\ls&T P(\k_0+\z)+\h(F,H)
\end{eqnarray*}
\begin{eqnarray*}
\tnm{\dt(D_t^nu^m-\dt^nu^m)}{2N-2n-1}^2&\ls&T
P(\k(\e^{m-1}))\bigg(\sum_{j=0}^{N-1}\inm{\dt^ju^m}{2N-2j-1}^2\bigg)\\
&\ls&T P(\k_0+\z)
\end{eqnarray*}
\begin{eqnarray*}
\tnm{D_t^nu^m-\dt^nu^m}{2N-2n+1}^2&\ls&T
P(\k(\e^{m-1}))\bigg(\sum_{j=0}^{N-2}\inm{\dt^ju^m}{2N-2j+1}^2\bigg)\\
&\ls&T P(\k_0+\z)
\end{eqnarray*}
Therefore, to sum up, we have
\begin{equation}\label{iteration estimate 2}
\sum_{n=0}^{N-1}\tnm{\dt^nu^m}{2N-2n+1}^2\ls T P(\k_0+\z)+\h(F,H)
\end{equation}
\ \\
\item
Step 4: $\mathbb{P}_m$ case: synthesis of estimate for $u^m$\\
Combining (\ref{iteration estimate 1}) and (\ref{iteration estimate
2}) with lemma \ref{Appendix connection 1} implies
\begin{equation}
\q(u^m)\ls P(\k_0)+\sqrt{T} P(1+\k_0+\z)+\sqrt{T}\f(F,H)+\h(F,H)
\end{equation}
Then, by forcing estimate (\ref{forcing estimate}), we have
\begin{eqnarray*}
\f(F^0,H^0)&\ls&P(\k(\e^{m-1}))+P(\q(u^{m-1}))\ls P(\z)\\
\h(F^0,H^0)&\ls&T\bigg(P(\k(\e^{m-1}))+P(\q(u^{m-1}))\bigg)\ls T
P(\z)
\end{eqnarray*}
Hence, to sum up, we achieve the estimate
\begin{equation}
\q(u^m)\ls P(\k_0)+\sqrt{T} P(\k_0+\z)
\end{equation}
This is actually
\begin{equation}
\q(u^m)\leq CP(\k_0)+\sqrt{T}C P(\k_0+\z)
\end{equation}
for some universal constant $C>0$. So we can take $\z\geq
2CP(\k_0)$. If $T$ is sufficiently small depending on $\z$, we can
bound $\q(u^m)\ls 2CP(\k_0)\ls\z$.
\ \\
\item
Step 5: $\mathbb{P}_m$ case: estimate of $\e^m$ via transport estimate \\
Employing transport estimate in lemma \ref{transport estimate}
\begin{eqnarray}
\k(\e^m)\ls P(\k_0)+P(\q(u^m))
\end{eqnarray}
for $P(0)=0$. Hence, we can take $\z\geq P(\k_0)+P(2CP(\k_0))$, then
we have
\begin{eqnarray}
\k(\e^m)\ls\z
\end{eqnarray}
\ \\
\item
Step 6: $\mathbb{P}_m$ case: estimate of $J^m(t)$\\
Define $\xx^m=\e^m-\e_0$, then transport estimate in lemma
\ref{transport diff} implies $\inm{\xx^m}{5/2}\ls T\z$. Naturally,
$\sup_{t\in[0,T]}\abs{J^m(t)-J^m(0)}\ls \inm{\xx^m}{5/2}\ls T\z$.
Thus if we take $T\leq \delta/(2\z)$, then $J^m(t)\geq\delta/2$. A
similar argument can justify the closeness assumption of $\e^m$ and
$\ee$ in studying linear Navier-Stokes equation.
\ \\
\item
Synthesis:\\
Above estimates reveals that if we take $\z=CP(\k_0)$ where the
polynomial can be given explicitly by summarizing all above and $T$
small enough depending on $\z$, we have
\begin{equation}
\q(u^m)+\k(\e^m)\leq \z
\end{equation}
and
\begin{equation}
J^m(t)\geq\delta/2\quad for\quad t\in[0,T]
\end{equation}
Therefore, $\mathbb{P}_m$ is verified. By induction, we conclude
that $\mathbb{P}_n$ is valid for any $n\in\mathbb{N}$. It is
noticeable that $\z=P(\k_0)$ where $P(\cdot)$ actually satisfies
$P(0)=0$.

\end{proof}
\begin{theorem}\label{boundedness2}
Assume exactly the same condition as Theorem (\ref{boundedness1}),
then actually we have the estimate
\begin{equation}
\k(u^m,p^m)+\k(\e^m)\ls P(\k_0)
\end{equation}
for a polynomial satisfying $P(0)=0$.
\end{theorem}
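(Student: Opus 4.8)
The idea is that Theorem \ref{boundedness1} has already done all the hard work: it furnishes, on a common existence interval $[0,T]$ with $T$ small depending only on $\k_0$, the uniform bound $\q(u^m)+\k(\e^m)\leq\z$, where $\z=CP_0(\k_0)$ is a polynomial in $\k_0$ with $P_0(0)=0$. What is missing from $\q(u^m)$ in order to control the full $\k(u^m,p^m)$ is the pressure (the norms $\inm{\dt^jp^m}{2N-2j-1}$ and $\tnm{\dt^jp^m}{2N-2j}$) together with the two top-order velocity pieces $\inm{\dt^Nu^m}{0}$ and $\nm{\dt^{N+1}u^m}_{\xxst}$. The plan is to recover exactly these from the equation by plugging the iteration system (\ref{iteration}) into the linear higher-regularity estimate of Theorem \ref{linear higher regularity}; this is precisely the corollary already announced in Remark \ref{boundedness remark}.

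Concretely, for $m\geq1$ the pair $(u^m,p^m)$ solves the linear Navier--Stokes problem (\ref{linear}) with coefficients built from $\e^{m-1}$ and forcing $(F,H)=(F^{m-1},H^{m-1})$, the terms on the right-hand side of (\ref{iteration}). Theorem \ref{linear higher regularity} then gives
\begin{equation*}
\k(u^m,p^m)\ls\l(\e^{m-1})\Big(\hm{u_0}{2N}^2+\h_0(F^{m-1},H^{m-1})+\k(F^{m-1},H^{m-1})\Big).
\end{equation*}
I bound the three factors using the inductive bounds $\q(u^{m-1})+\k(\e^{m-1})\leq\z$ from Theorem \ref{boundedness1}. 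Since $\l(\e)=P(\k(\e))\exp(TP(\k(\e)))$, choosing $T$ small (and never below the threshold already fixed in Theorem \ref{boundedness1}) gives $\l(\e^{m-1})\ls P(\z)$. The forcing estimates (\ref{forcing estimate 4}) and (\ref{forcing estimate 1}) of Theorem \ref{forcing estimate} give $\h_0(F^{m-1},H^{m-1})\ls P(\k_0)$ and $\k(F^{m-1},H^{m-1})\ls P(\k(\e^{m-1}))+P(\q(u^{m-1}))\ls P(\z)$. Combining, $\k(u^m,p^m)\ls P(\z)\big(\k_0+P(\k_0)+P(\z)\big)$.

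Finally, since $\z=CP_0(\k_0)$ with $P_0(0)=0$, each term on the right is a polynomial in $\k_0$ vanishing at the origin, and sums, products and compositions of such polynomials preserve this property; hence $\k(u^m,p^m)\ls P(\k_0)$ with $P(0)=0$. Together with $\k(\e^m)\leq\z\ls P(\k_0)$ (from Theorem \ref{boundedness1}, or re-derived via Theorem \ref{transport estimate} from $\q(u^m)\ls P(\k_0)$) this proves the estimate for $m\geq1$; the case $m=0$ is immediate since $\q(u^0)\ls\k_0$ and $\k(\e^0)\ls P(\k_0)$ were established in the proof of Theorem \ref{boundedness1}. I do not expect a genuine analytic obstacle here, the statement being a bookkeeping corollary of estimates already assembled; the only points requiring care are that the forcing in (\ref{iteration}) depends on $(u^{m-1},\e^{m-1})$ with the nonlinear contributions carrying one fewer spatial derivative, so that the forcing estimates close using exactly the bounds provided by Theorem \ref{boundedness1}, and that the smallness threshold for $T$ is kept consistent with the one fixed there so that no circularity is introduced.
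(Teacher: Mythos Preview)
Your proof is correct and follows exactly the same route as the paper: the paper's proof consists of a single sentence invoking Remark \ref{boundedness remark} (which packages the linear estimate of Theorem \ref{linear higher regularity} together with the forcing estimates of Theorem \ref{forcing estimate}), and you have simply unpacked that remark explicitly. The only cosmetic difference is that the paper does not separately discuss the case $m=0$, since $p^0$ is never defined in the iteration and the bound on $\k(\e^0)$ is already contained in Step 1 of the proof of Theorem \ref{boundedness1}.
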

\begin{proof}
This is a natural corollary of above theorem. Since we know
$\q(u^{m-1})\ls \z$, by remark \ref{boundedness remark}, it implies
$\k(u^m,p^m)$ is bounded for any $m\in\mathbb{N}$. For convenience,
in the following we also call this bound $\z$.
\end{proof}

\subsubsection{Contraction Theorem}

Define
\begin{equation}
\begin{array}{l}
\nn(v,q;T)=\inm{v}{2}^2+\tnm{v}{3}^2+\inm{\dt v}{0}^2+\tnm{\dt v}{1}^2+\inm{q}{1}^2+\tnm{q}{2}^2\\
\mm(\zeta;T)=\inms{\zz}{5/2}{\Sigma}^2+\inms{\dt\zz}{3/2}{\Sigma}^2+\tnms{\dt^2\zz}{1/2}{\Sigma}^2
\end{array}
\end{equation}
\begin{theorem}\label{contraction}
For $j=1,2$, Suppose that $v^j$, $q^j$, $w^j$ and $\zz^j$ achieve
the same initial condition for different $j$ and satisfy
\begin{equation}
\left\{
\begin{array}{ll}
\dt v^j-\Delta_{\mathcal{A}^j} v^j+\nabla_{\mathcal{A}^j} q^j=\dt\bar\zz^j\tilde{b}K^j\p_3w^j-w^j\cdot\nabla_{\mathcal{A}^j} w^j&in\quad\Omega\\
\nabla_{\mathcal{A}^j}\cdot v^j=0&in\quad\Omega\\
\mathcal{S}_{\mathcal{A}^j}(q^j,v^j)\n^j=\zz^j\n^j&on\quad\Sigma\\
v^j=0&on\quad\Sigma_b\\
 \\
\dt\zz^j=w^j\cdot\n^j&in\quad\Omega
\end{array}
\right.
\end{equation}
where $\a^j$, $\n^j$ and $K^j$ are in terms of $\zz^j$. Suppose
$\k(w^j,0)$, $\k(v^j,q^j)$ and $\k(\zz^j)$ is bounded by $\z$. Then
there exists $0<\tilde T<1$ such that for any $0<T<\tilde T$, we
have the following contraction relation
\begin{eqnarray}
\nn(v^1-v^2,q^1-q^2; T)\leq \half\nn(w^1-w^2,0; T)\\
\mm(\zz^1-\zz^2;T)\ls\nn(w^1-w^2,0; T)
\end{eqnarray}
\end{theorem}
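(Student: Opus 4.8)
The plan is to run the usual contraction scheme of Tice--Guo on the differences, the only novelty being that the coefficients are now merely bounded (by $\z$) rather than small, so that all ``constants'' become polynomials $P(\z)$ and the contraction must be bought with $T$-smallness instead. Write $v=v^1-v^2$, $q=q^1-q^2$, $w=w^1-w^2$ and $\zz=\zz^1-\zz^2$; since the two families share the same initial data, all four vanish at $t=0$, and hence so do the coefficient differences $\a^1-\a^2$, $\n^1-\n^2$, $K^1-K^2$, $\dt\eb^1-\dt\eb^2$, $\zz^1-\zz^2$ at $t=0$.

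First I would treat the transport part. Subtracting $\dt\zz^j=w^j\cdot\n^j$ for $j=1,2$ gives
\[
\dt\zz+w^1_1\,\p_1\zz+w^1_2\,\p_2\zz=w_3-w_1\,\p_1\zz^2-w_2\,\p_2\zz^2,\qquad \zz(0)=0,
\]
whose right-hand side is, term by term, a product of a factor bounded by $\sqrt{\z}$ and a factor equal to $w$ or $\zz^2$. Applying the transport estimate of Theorem~\ref{transport estimate} (in the vanishing-data form of Lemma~\ref{transport diff}) together with the product estimates of Lemma~\ref{Appendix product}, and using that $\zz(0)=0$ turns a $\sup_t$ into an $\int_0^T$ and thereby produces a factor $\sqrt T$, I obtain
\[
\mm(\zz;T)\ls \sqrt T\,P(\z)\,\nn(w,0;T).
\]
In particular $\mm(\zz^1-\zz^2;T)\ls\nn(w^1-w^2,0;T)$, the second asserted inequality.

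Next the velocity part. Subtracting the two Navier--Stokes systems and rewriting every $\a$-operator in the coefficients $\a^1$ of the first solution turns the problem for $(v,q)$ into a linear Navier--Stokes system with coefficients $\a^1,\n^1$, vanishing initial data $v(0)=0$, divergence $\g=(\nabla_{\a^1}-\nabla_{\a^2})\cdot v^2$, and forcing $(\f,\hl)$ of two types: (i) ``good'' terms linear in $w$ or $\dt w$ with coefficients bounded by $P(\z)$, such as $\dt\eb^1\tilde bK^1\p_3w$, $w^2\cdot\nabla_{\a^1}w$, $w\cdot\nabla_{\a^1}w^1$; and (ii) ``small'' terms each carrying one of the coefficient differences $\a^1-\a^2$, $\n^1-\n^2$, $K^1-K^2$, $\dt\eb^1-\dt\eb^2$, $\zz^1-\zz^2$ — for instance $(\Delta_{\a^1}-\Delta_{\a^2})v^2$, $(\nabla_{\a^1}-\nabla_{\a^2})q^2$, $S_{\a^2}(q^2,v^2)(\n^1-\n^2)$, $\zz^1\n^1-\zz^2\n^2$ — which by the product estimates are bounded by $P(\z)$ times norms of $\zz$, hence by $P(\z)\,\mm(\zz;T)$. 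Since the coefficient differences vanish at $t=0$ we get $\hl(0)=0$ and $v(0)=0$, so the compatibility condition for this difference problem is trivially met; the divergence $\g$ is itself ``small'' and is removed by subtracting a divergence lifting supplied by Lemma~\ref{lagrange 1}, exactly as in the construction of strong solutions. Applying Theorem~\ref{linear lower regularity}, whose constant $L(\zz^1)$ is $\le P(\z)$ for $T<1$, gives $\nn(v,q;T)\ls P(\z)$ times the norms of $\f,\dt\f,\g,\hl,\dt\hl$ appearing on the right of~(\ref{linear lower regularity estimate}), all data terms being zero.

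Finally I would estimate that right-hand side. Each ``good'' term is $P(\z)$ times a norm of $w$ or $\dt w$ whose derivative count sits strictly below the top of $\nn(w,0;T)$, so a Cauchy--Schwarz in time extracts a factor $\sqrt T$ — the same slack exploited in Theorem~\ref{forcing estimate}, coming from the fact that $w$ enters the nonlinearity with one fewer derivative than the linear unknown; each ``small'' term is $\le P(\z)\,\mm(\zz;T)\le\sqrt T\,P(\z)\,\nn(w,0;T)$ by the transport step. Hence $\nn(v,q;T)\ls\sqrt T\,P(\z)\,\nn(w,0;T)$, and choosing $0<\tilde T<1$ so small that $\sqrt{\tilde T}\,P(\z)\le\half$ yields $\nn(v^1-v^2,q^1-q^2;T)\le\half\,\nn(w^1-w^2,0;T)$ for $0<T<\tilde T$. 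The main obstacle is the bookkeeping in this last step: for every difference of a nonlinear or coefficient term one must check that the numbers of spatial \emph{and} temporal derivatives landing on $w$ (resp.\ on $\zz$) stay within the budget of $\nn(w,0;T)$ (resp.\ $\mm(\zz;T)$), since only then is the extra $\sqrt T$ genuinely available; the time-derivative pieces $\dt\f$ and $\dt\hl$, which bring in $\dt^2\eb^1\p_3w$, $\dt w\cdot\nabla w$, $\dt\zz$, $\dt v^2$, $\dt\n^1$ and the like, are the delicate ones.
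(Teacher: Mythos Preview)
Your route differs from the paper's, and two of your shortcuts do not work as stated.

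\textbf{The transport bound.} You claim $\mm(\zz;T)\ls\sqrt T\,P(\z)\,\nn(w,0;T)$, arguing that $\zz(0)=0$ converts $\sup_t$ into $\int_0^T$. This is correct for $\|\zz\|_{L^\infty H^{5/2}}$, but fails for $\|\dt\zz\|_{L^\infty H^{3/2}}$: from the equation $\dt\zz=w_3-w_1\p_1\zz^2-w_2\p_2\zz^2-w^1_1\p_1\zz-w^1_2\p_2\zz$ you only get $\|\dt\zz\|_{L^\infty H^{3/2}}\ls P(\z)\|w\|_{L^\infty H^{3/2}}$, no $\sqrt T$. The paper accordingly proves only $\mm\ls\nn^w$ in its Step~4. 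Since you route the ``small'' terms through $\mm$ to obtain their $\sqrt T$, this gap propagates.

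\textbf{The divergence lifting.} Invoking Lemma~\ref{lagrange 1} to remove $\nabla_{\a^1}\cdot v=H^2$ and then apply Theorem~\ref{linear lower regularity} as a black box is the crux of your plan, but that lemma only delivers $\bar v\in H^1$ with $\|\bar v\|_{H^1}\ls\|H^2\|_{H^0}$. To feed $\tilde v=v-\bar v$ into Theorem~\ref{linear lower regularity} you need $\bar v\in L^2H^3\cap C^0H^2$, $\dt\bar v\in L^2H^1\cap C^0H^0$, and you must estimate the induced forcing $\Delta_{\a^1}\bar v$, $\dt\bar v$, $\dm_{\a^1}\bar v\,\n^1$ in the norms $\|F\|_{L^2H^1}$, $\|\dt F\|_{\xxst}$, $\|H\|_{L^2H^{3/2}}$, $\|\dt H\|_{L^2H^{-1/2}}$ appearing on the right of~(\ref{linear lower regularity estimate}). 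None of this is supplied; the phrase ``exactly as in the construction of strong solutions'' does not apply, since that construction assumes $\da u=0$ throughout.

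The paper sidesteps both issues by \emph{not} reducing to the divergence-free linear theorem. It keeps $\nabla_{\a^1}\cdot v=H^2$, differentiates the system once in time, and runs the energy identity for $\dt v$ directly: the troublesome pressure--divergence pairing $\int J^1(\dt q)\tilde H^2$ is integrated by parts in $t$ to $\int J^1 q\,\tilde H^2-\int_0^t\int(\dt J^1\,q\,\tilde H^2+J^1 q\,\dt\tilde H^2)$, so that only $\|\tilde H^2\|_{L^2H^0}$, $\|\dt\tilde H^2\|_{L^2H^0}$ and the endpoint term are needed. This yields $\|\dt v\|_{L^\infty H^0}^2+\|\dt v\|_{L^2H^1}^2\ls\sqrt T\,\z(\nn^v+\nn^w+\mm)$, with $\nn^v$ on the right then absorbed; the remaining pieces of $\nn^v$ come from the elliptic estimate for the \emph{undifferentiated} $(v,q)$ system (Step~3), where a nonzero divergence is allowed. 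The $\sqrt T$ arises in this argument from $L^2$-in-time forcing norms and from the endpoint term $\int J^1 q\,\tilde H^2$ via $\|\zz\|_{L^\infty H^{5/2}}\ls\sqrt T$, not from a $\sqrt T$ in all of $\mm$.
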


\begin{proof}
We divide this proof into several steps.
\ \\
\item
Step 1: Lower order equations\\
define $v=v^1-v^2$, $q=q^1-q^2$, $w=w^1-w^2$ and $\zz=\zz^1-\zz^2$,
which has trivial initial condition. Then they satisfy the equation
as follows.
\begin{equation}
\left\{
\begin{array}{ll}
\dt v+\nabla_{\mathcal{A}^1}\cdot\mathcal{S}_{\mathcal{A}^1}(q,v)=H^1+\nabla_{\a^1}\cdot(\dm_{(\a^1-\a^2)}v^2)&in\quad\Omega\\
\nabla_{\mathcal{A}_1}\cdot v=H^2&in\quad\Omega\\
\mathcal{S}_{\mathcal{A}_1}(q,v)\n^1=H^3+\dm_{(\a^1-\a^2)}v^2\n^1&on\quad\Sigma\\
v=0&on\quad\Sigma_b
\end{array}
\right.
\end{equation}
where
\begin{equation}
\begin{array}{ll}
H^1=&\nabla_{(\a^1-\a^2)}(\dm_{\a^2}v^2)-(\a^1-\a^2)\nabla p^2
+\dt\zz^1\tilde bK^1\p_3w+\dt\zz\tilde bK^1\p_3w^2\\
&+\dt\zz^1\tilde b(K^1-K^2)\p_3w^2
-w\cdot\nabla_{\a^1}w^1-w^2\cdot\nabla_{\a^1}w-w^2\cdot\nabla_{(\a^1-\a^2)}w^2\\
H^2=&-\nabla_{(\a^1-\a^2)}\cdot v^2\\
H^3=&-q^2(\n^1-\n^2)+\dm_{\a^1}v^2(\n^1-\n^2)-\dm_{(\a^1-\a^2)}v^2(\n^1-\n^2)+\zz\n^1+\zz^2(\n^1-\n^2)
\end{array}
\end{equation}
The solutions are sufficiently regular for us to differentiate in
time, which is the following equation.
\begin{equation}
\left\{
\begin{array}{ll}
\dt(\dt v)+\nabla_{\mathcal{A}^1}\cdot\mathcal{S}_{\mathcal{A}^1}(\dt q,\dt v)=\tilde H^1+\nabla_{\a^1}\cdot(\dm_{(\dt\a^1-\dt\a^2)}v^2)&in\quad\Omega\\
\nabla_{\mathcal{A}_1}\cdot \dt v=\tilde H^2&in\quad\Omega\\
\mathcal{S}_{\mathcal{A}_1}(\dt q,\dt v)\n^1=\tilde H^3+\dm_{(\dt\a^1-\dt\a^2)}v^2\n^1&on\quad\Sigma\\
\dt v=0&on\quad\Sigma_b
\end{array}
\right.
\end{equation}
where
\begin{equation}
\begin{array}{ll}
\tilde H^1=&\dt
H^1+\nabla_{\dt\a^1}\cdot(\dm_{(\a^1-\a^2)}v^2)+\nabla_{\a^1}\cdot(\dm_{(\a^1-\a^2)}\dt
v^2)+\nabla_{\dt\a^1}\cdot(\dm_{\a^1}v)
+\nabla_{\a^1}(\dm_{\dt\a^1}v)-\nabla_{\dt\a^1}q\\
\tilde H^2=&\dt H^2-\nabla_{\dt\a^1}\cdot v\\
\tilde H^3=&\dt H^3+\dm_{(\a^1-\a^2)}\dt
v^2\n^1+\dm_{(\a^1-\a^2)}v^2\dt\n^1-\ss_{\a^1}(q,v)\dt\n^1+\dm_{\dt\a^1}v\n^1
\end{array}
\end{equation}
\ \\
\item
Step 2: Energy evolution for $\dt v$\\
Multiply $J^1\dt v$ on both sides to get the natural energy
structure:
\begin{eqnarray}
\half\int_{\Omega}\abs{\dt
v}^2J^1+\half\int_0^t\int_{\Omega}\abs{\dm_{\a^1}\dt v}^2J^1&=&
\int_0^t\int_{\Omega}J^1\tilde H^1\cdot\dt v
+\int_0^t\int_{\Omega}J^1\tilde H^2\dt q-\int_0^t\int_{\Sigma}\tilde H^3\cdot\dt v\nonumber\\
&+&\half\int_0^t\int_{\Omega}\abs{\dt v}^2\dt
J^1-\half\int_0^t\int_{\Omega}J^1\dm_{(\dt\a^1-\dt\a^2)}v^2:\dm_{\a^1}\dt
v\nonumber
\end{eqnarray}
LHS is simply the energy and dissipation term, so we focus on
estimate of RHS.
\begin{eqnarray*}
\int_0^t\int_{\Omega}J^1\tilde H^1\cdot\dt v&\ls&\z\int_0^t\hm{\tilde H^1}{0}\hm{\dt v}{0}\ls\z\inm{\dt v}{0}\sqrt{T}\tnm{\tilde H^1}{0}\nonumber\\
&\ls&\sqrt{T}\z\sqrt{\nn^v}\tnm{\tilde H^1}{0}
\end{eqnarray*}
\begin{eqnarray*}
\int_0^t\int_{\Omega}J^1\tilde H^2\dt q&\ls&\int_{\Omega}J^1q\tilde H^2-\int_0^t\int_{\Omega}(\dt J^1q\tilde H^2+J^1q\dt\tilde H^2)\nonumber\\
&\ls&\int_{\Omega}J^1q\tilde H^2+\inm{q}{0}\sqrt{T}\z\tnm{\tilde H^2}{0}+\inm{q}{0}\sqrt{T}\z\tnm{\dt\tilde H^2}{0}\nonumber\\
&\ls&\int_{\Omega}J^1q\tilde
H^2+\sqrt{T}\z\sqrt{\nn^v}\bigg(\tnm{\tilde H^2}{0}+\tnm{\dt\tilde
H^2}{0}\bigg)\nonumber
\end{eqnarray*}
\begin{eqnarray*}
\int_0^t\int_{\Sigma}\tilde H^3\cdot\dt v&\ls&\int_0^t\hms{\tilde
H^3}{-1/2}{\Sigma}\hms{\dt v}{1/2}{\Sigma}\ls\sqrt{T}\inms{\tilde
H^3}{-1/2}{\Sigma}
\tnms{\dt v}{1/2}{\Sigma}\nonumber\\
&\ls&\sqrt{T}\sqrt{\nn^v}\inms{\tilde H^3}{-1/2}{\Sigma}
\end{eqnarray*}
\begin{eqnarray*}
\int_0^t\int_{\Omega}\abs{\dt v}^2\dt J^1&\ls&\inm{\dt J^1}{2}T\inm{\dt v}{0}^2\nonumber\\
&\ls&T\z\nn^v
\end{eqnarray*}
So we need several estimate on $\tilde H^i$. For the following
terms, it suffices to show they are bounded. We use the usual way to
estimate these quadratic terms. Since we have repeatedly used this
method, we will not give the details here.
\begin{eqnarray*}
\tnm{\tilde H^1}{0}&\ls&\z\bigg(\tnm{\zz}{3/2}+\tnm{\dt\zz}{1/2}+\tnm{\dt^2\zz}{-1/2}+\tnm{w}{1}+\tnm{\dt w}{1}\\
&&+\tnm{v}{2}+\tnm{q}{1}\bigg)\nonumber\\
&\ls&\z\bigg(\sqrt{\nn^v}+\sqrt{\nn^w}+\sqrt{\mm}\bigg)
\end{eqnarray*}
\begin{eqnarray*}
\tnm{\tilde
H^2}{0}&\ls&\z\bigg(\tnm{\dt\zz}{1/2}+\tnm{\zz}{1/2}+\tnm{v}{1}\bigg)\ls\z\bigg(\sqrt{\nn^v}+\sqrt{\mm}\bigg)
\end{eqnarray*}
\begin{eqnarray*}
\tnm{\dt\tilde H^2}{0}&\ls&\z\bigg(\tnm{\dt^2\zz}{1/2}+\tnm{\dt\zz}{1/2}+\tnm{\zz}{1/2}+\tnm{\dt v}{1}+\tnm{v}{1}\bigg)\nonumber\\
&\ls&\z\bigg(\sqrt{\nn^v}+\sqrt{\mm}\bigg)
\end{eqnarray*}
\begin{eqnarray*}
\inms{\tilde
H^3}{-1/2}{\Sigma}&\ls&\z\bigg(\inm{\zz}{1/2}+\inm{\dt\zz}{1/2}+\inm{q}{1}+\inm{v}{2}\bigg)\nonumber\\
&\ls&\z\bigg(\sqrt{\nn^v}+\sqrt{\mm}\bigg)
\end{eqnarray*}
Also we have the following estimate.
\begin{eqnarray*}
\int_{\Omega}J^1q\tilde H^2&\ls&\z\inm{q}{0}\inm{\tilde H^2}{0}\\
&\ls&\z\inm{q}{0}\bigg(\inm{\zz}{1/2}+\inm{\dt\zz}{1/2}+\inm{v}{1}\bigg)\\
&\ls&\z\sqrt{\nn^v}\bigg(\inm{\zz}{1/2}+\inm{\dt\zz}{1/2}+\inm{v}{1}\bigg)
\end{eqnarray*}
$\zz$ satisfies the equation
\begin{displaymath}
\left\{
\begin{array}{l}
\dt\zz+w_1^1\p_1\zz+w_2^1\p_2\zz=-\n^2\cdot w\\
\zz(0)=0
\end{array}
\right.
\end{displaymath}
Employing transport estimate and the boundedness of higher order
norms, we have
\begin{eqnarray*}
\inm{\zz}{5/2}\ls\sqrt{T}\z\tnm{w}{5/2}\ls\sqrt{T}\z\sqrt{\nn^w}
\end{eqnarray*}
$\dt\zz$ satisfies the equation
\begin{displaymath}
\left\{
\begin{array}{l}
\dt(\dt\zz)+w_1^1\p_1(\dt\zz)+w_2^1\p_2(\dt\zz)=-\n^2\cdot\dt w-\dt\n^2\cdot w-\dt w_1^1\p_1\zz-\dt w_2^1\p_2\zz\\
\dt\zz(0)=0
\end{array}
\right.
\end{displaymath}
Similar argument as above shows that
\begin{eqnarray*}
\inm{\dt\zz}{1/2}\ls\sqrt{T}\z\bigg(\tnm{\dt
w}{1/2}+\tnm{w}{1/2}\bigg)\ls\sqrt{T}\z\sqrt{\nn^w}
\end{eqnarray*}
Combining above transport estimate and lemma \ref{Appendix
connection 1}, we have the final version
\begin{eqnarray*}
\int_{\Omega}J^1q\tilde H^2&\ls&\sqrt{T}\z\sqrt{\nn^v}\sqrt{\nn^w}+\z\sqrt{\nn^v}\inm{v}{1}\\
&\ls&\sqrt{T}\z\sqrt{\nn^v}\sqrt{\nn^w}+\z\sqrt{\nn^v}\bigg(\tnm{v}{2}+\tnm{\dt v}{0}\bigg)\\
&\ls&\sqrt{T}\z\sqrt{\nn^v}\sqrt{\nn^w}+\sqrt{T}\z\sqrt{\nn^v}\bigg(\inm{v}{2}+\inm{\dt v}{0}\bigg)\\
&\ls&\sqrt{T}\z\bigg(\sqrt{\nn^v}\sqrt{\nn^w}+\nn^v\bigg)
\end{eqnarray*}
Then we consider simplify the last term in RHS of energy structure.
\begin{eqnarray*}
\int_0^t\int_{\Omega}J^1\dm_{(\dt\a^1-\dt\a^2)}v^2:\dm_{\a^1}\dt
v&\ls&\frac{1}{4\epsilon}\tnm{J^1\dm_{(\dt\a^1-\dt\a^2)}v^2}{0}^2
+\epsilon\tnm{\dm_{\a^1}\dt v}{0}^2\quad
\end{eqnarray*}
where $\epsilon$ should be determined in order for the second term
in RHS to be absorbed in LHS.
\begin{eqnarray*}
\tnm{J^1\dm_{(\dt\a^1-\dt\a^2)}v^2}{0}^2\ls\sqrt{T}\inm{J^1\dm_{(\dt\a^1-\dt\a^2)}v^2}{0}^2\ls\sqrt{T}\z
\mm
\end{eqnarray*}
To summarize, using Cauchy inequality
\begin{eqnarray}\label{contraction temp 1}
\inm{\dt v}{0}^2+\tnm{\dt v}{1}^2&\ls&\sqrt{T}\z(\nn^w+\nn^v+\mm)
\end{eqnarray}
\ \\
\item
Step 3: Elliptic estimate for $v$\\
based on standard elliptic regularity theory, we have the estimate
\begin{eqnarray*}
\hm{v}{r+2}^2+\hm{q}{r+1}^2&\ls&\hm{\dt
v}{r}^2+\hm{H^1}{r}^2+\hm{H^2}{r+1}^2+\hms{H^3}{r+\half}{\Sigma}^2\nonumber\\
&&+\hm{\nabla_{\a^1}\cdot(\dm_{(\a^1-\a^2)}v^2)}{r}^2
+\hms{\dm_{(\a^1-\a^2)}v^2\n^1}{r+\half}{\Sigma}^2
\end{eqnarray*}
set $r=0$ and take $L^{\infty}$ on both sides
\begin{eqnarray*}
\inm{v}{2}^2+\inm{q}{1}^2&\ls&\inm{\dt
v}{0}^2+\inm{H^1}{0}^2+\inm{H^2}{1}^2+\inms{H^3}{\half}{\Sigma}^2\nonumber\\
&&+\inm{\nabla_{\a^1}\cdot(\dm_{(\a^1-\a^2)}v^2)}{0}^2
+\inms{\dm_{(\a^1-\a^2)}v^2\n^1}{\half}{\Sigma}^2
\end{eqnarray*}
set $r=1$ and take $L^2$ on both sides
\begin{eqnarray*}
\tnm{v}{3}^2+\tnm{q}{2}^2&\ls&\tnm{\dt
v}{1}^2+\tnm{H^1}{1}^2+\tnm{H^2}{2}^2+\tnms{H^3}{\frac{3}{2}}{\Sigma}^2\nonumber\\
&&+\tnm{\nabla_{\a^1}\cdot(\dm_{(\a^1-\a^2)}v^2)}{1}^2
+\tnms{\dm_{(\a^1-\a^2)}v^2\n^1}{\frac{3}{2}}{\Sigma}^2
\end{eqnarray*}
We need to estimate all the RHS terms. We can employ the usual way
to estimate quadratic terms in $L^2H^k$ norm, however, for
$L^{\infty}H^k$ norm, we use lemma \ref{Appendix connection 1}. Then
we have the estimate
\begin{equation}
RHS\ls\tnm{\dt v}{1}^2+\inm{\dt
v}{0}^2+T\z\bigg(\nn^w+\mm\bigg)\nonumber
\end{equation}
To summarize, we have
\begin{eqnarray}\label{contraction temp 2}
\\
\inm{v}{2}^2+\inm{q}{1}^2+\tnm{v}{3}^2+\tnm{q}{2}^2&\ls&\tnm{\dt
v}{1}^2+\inm{\dt v}{0}^2+T\z\bigg(\nn^w+\mm\bigg)\nonumber
\end{eqnarray}
In total of (\ref{contraction temp 1}) and (\ref{contraction temp
2}), we get the succinct form of estimate
\begin{equation}\label{contraction temp 3}
\nn^v\ls \sqrt{T}\z(\nn^w+\nn^v+\mm)
\end{equation}
\ \\
\item
Step 4: Transport estimate for $\zz$\\
$\zz$ satisfies the equation
\begin{displaymath}
\left\{
\begin{array}{l}
\dt\zz+w_1^1\p_1\zz+w_2^1\p_2\zz=-\n^2\cdot w\\
\zz(0)=0
\end{array}
\right.
\end{displaymath}
A straightforward application of transport estimate can show
\begin{eqnarray*}
\inms{\zz}{5/2}{\Sigma}^2&\ls&\exp\bigg(C\int_0^t\hm{w_1}{3}\bigg)\int_0^t\hms{\n^2\cdot w}{5/2}{\Sigma}\\
&\ls&\sqrt{T}\z\tnms{w}{5/2}{\Sigma}^2\ls \sqrt{T}\z\nn^w
\end{eqnarray*}
Similar to the proof of transport estimate in theorem \ref{transport
estimate}, we can use the transport equation to estimate the higher
order derivatives.
\begin{eqnarray*}
\inms{\dt\zz}{3/2}{\Sigma}^2&\ls&\inms{\zz^2}{5/2}{\Sigma}^2\inms{w}{3/2}{\Sigma}^2+\inms{\zz}{5/2}{\Sigma}^2\inms{w_1}{3/2}{\Sigma}^2\ls\z\nn^w
\end{eqnarray*}
In the same fashion, we can easily show
\begin{eqnarray*}
\tnm{\dt^2\zz}{1/2}^2&\ls&\z\nn^w
\end{eqnarray*}
To sum up, we have the estimate
\begin{equation}\label{contraction temp 4}
\mm\ls\nn^w
\end{equation}
\ \\
\item
Synthesis:\\
In (\ref{contraction temp 3}), for $T$ sufficiently small, we can
easily absorbed all $\nn^v$ term from RHS to LHS and replace all
$\mm$ with $\nn^w$ to achieve
\begin{equation}
\nn^v\ls \sqrt{T}\z\nn^w
\end{equation}
Certainly, the smallness of $T$ can guarantee the contraction.

\end{proof}

\subsubsection{Local Wellposedness Theorem}

Now we can combine theorem \ref{boundedness2} and \ref{contraction}
to produce a unique strong solution to the equation system
(\ref{transform}).
\begin{theorem}\label{wellposedness}
Assume that $(u^0,\e^0)$ satisfies $\k_0<\infty$ and that the
initial data are constructed to satisfy the $N^{th}$ compatible
condition. Then there exist $0<T_0<1$ such that if $0<T<T_0$, then
there exists a solution triple $(u,p,\e)$ to the Navier-Stokes
equation on the time interval $[0,T]$ that achieves the initial data
and satisfies
\begin{equation}
\k(u,p)+\k(\e)\leq CP(\k_0)
\end{equation}
for a universal constant $C>0$ and a polynomial $P(\cdot)$
satisfying $P(0)=0$. The solution is unique among functions that
achieve the initial data and satisfy $\k(u,p)+\k(\e)<\infty$.
Moreover, $\e$ is such that the mapping $\Phi$ is a $C^{2N-2}$
diffeomorphism for each $t\in[0,T]$.
\end{theorem}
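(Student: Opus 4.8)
The plan is to realize $(u,p,\e)$ as the limit of the iteration sequence $(u^m,p^m,\e^m)$ of Theorem \ref{boundedness1}, combining the uniform strong bound of Theorem \ref{boundedness2} with the contraction of Theorem \ref{contraction} in a weaker norm. Fix $\z=CP(\k_0)$ and let $T_0<1$ be the smaller of the thresholds $\bar T$ of Theorem \ref{boundedness1} and $\tilde T$ of Theorem \ref{contraction}; take $0<T<T_0$. Then Theorem \ref{boundedness1} gives the whole sequence on $[0,T]$ with $J^m(t)\ge\delta/2$, Theorem \ref{boundedness2} bounds $\k(u^m,p^m)+\k(\e^m)$ by $\z$ for every $m$, and the closeness of $\e^{m-1}$ to $\ee$ required by the linear theory was verified inside those proofs. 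The iteration (\ref{iteration}) exhibits $(u^{m+1},p^{m+1},\e^{m+1})$ and $(u^m,p^m,\e^m)$ as two instances of the system of Theorem \ref{contraction}, with $(w^1,\zz^1,v^1,q^1)=(u^m,\e^m,u^{m+1},p^{m+1})$ compared against $(w^2,\zz^2,v^2,q^2)=(u^{m-1},\e^{m-1},u^m,p^m)$, all attaining the common initial data and bounded by $\z$; hence
\begin{equation}
\nn(u^{m+1}-u^m,\,p^{m+1}-p^m;\,T)\ \le\ \tfrac12\,\nn(u^m-u^{m-1},\,0;\,T)\ \le\ \tfrac12\,\nn(u^m-u^{m-1},\,p^m-p^{m-1};\,T)
\end{equation}
and $\mm(\e^m-\e^{m-1};\,T)\ls\nn(u^m-u^{m-1},\,0;\,T)$. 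Iterating gives geometric decay of the left side, so $(u^m)$ and $(p^m)$ are Cauchy in the topology of $\nn$ and $(\e^m)$ is Cauchy in $\mm$; write $(u,p,\e)$ for the limits.

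I would then check that $(u,p,\e)$ is a strong solution of (\ref{transform}). By the uniform bound of Theorem \ref{boundedness2} and weak-$\ast$ compactness, along a subsequence $\dt^j u^m$, $\dt^j p^m$ and $\dt^j\e^m$ converge weak-$\ast$ to $\dt^j u$, $\dt^j p$ and $\dt^j\e$ in every space appearing in the definitions of $\k(u,p)$ and $\k(\e)$; uniqueness of limits promotes this to the full sequence, and weak lower semicontinuity yields $\k(u,p)+\k(\e)\le\z=CP(\k_0)$. The strong convergence of $\e^m$ supplied by $\mm$ (which controls $\inms{\cdot}{5/2}{\Sigma}$) forces $\a^m\to\a$, $\n^m\to\n$, $K^m\to K$ strongly in the $L^\infty$ and Sobolev norms in which they occur as coefficients, so every term of (\ref{iteration}) and of the transport equation passes to the limit: the linear terms by the weak-$\ast$ convergence, and the quadratic terms $u^m\cdot\nabla_{\a^m}u^m$ and $\dt\eb^m\,\tilde bK^m\,\p_3u^m$ by pairing the strong convergence of the low-order factors (furnished by $\nn$ and $\mm$) with the weak-$\ast$ convergence of the remaining factor. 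Thus $(u,p,\e)$ is a weak solution, and Proposition \ref{elliptic estimate} together with Theorems \ref{linear higher regularity} and \ref{transport estimate} upgrades it to a strong solution of the claimed regularity; the $L^\infty$-in-time convergence contained in $\nn$ and $\mm$ shows that $(u,p,\e)$ attains $(u_0,\ee)$, and, being a strong solution, its higher-order time-traces at $t=0$ coincide with the data of Theorem \ref{initial data theorem} by the same defining recursion.

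For uniqueness, if $(u,p,\e)$ and $(\tilde u,\tilde p,\tilde\e)$ both attain the data with finite $\k(u,p)+\k(\e)$, each solves the system of Theorem \ref{contraction} with $w$ taken equal to its own velocity; since these norms are continuous in $t$ and are $P(\k_0)$-controlled at $t=0$, they stay below $\z$ on a possibly shorter interval $[0,T']$, where Theorem \ref{contraction} gives $\nn(u-\tilde u,\,p-\tilde p;\,T')\le\tfrac12\,\nn(u-\tilde u,\,p-\tilde p;\,T')$, forcing $u=\tilde u$, $p=\tilde p$ there and then $\e=\tilde\e$ by uniqueness for the transport equation; covering $[0,T]$ by finitely many such intervals gives uniqueness throughout. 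Finally, since $\e\in\chs{2N+1/2}$ and $2N+\tfrac12>2N-1$, Sobolev embedding on the two-dimensional $\Sigma$ gives $\e(t)\in C^{2N-2}(\Sigma)$ uniformly in $t$; the $\epsilon$-Poisson extension $\eb(t)=\pp^\epsilon\e(t)$, being as regular as its trace $\e(t)$ near $\Sigma$ and smooth in the interior (Lemmas \ref{appendix poisson 2} and \ref{appendix poisson 5}), is then of class $C^{2N-2}$ on $\overline{\Omega}$, while $J(t)=\lim_m J^m(t)\ge\delta/2>0$ everywhere makes $\nabla\Phi(t)$ invertible at each point with uniformly bounded inverse; together with the monotonicity of $y_3$ in $x_3$, the argument of Theorem \ref{geometric transform} and the inverse function theorem show $\Phi(t)$ is a $C^{2N-2}$ diffeomorphism of $\Omega$ onto $\Omega(t)$ for every $t\in[0,T]$.

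The main obstacle is the limit passage above: the divergence constraint $\nabla_{\a^{m-1}}\cdot u^m=0$ and the coefficient matrices depend on $m$, so the natural test-function spaces $\x^{m-1}$ vary; one must either conjugate to the fixed divergence-free space through the operator $\m$ built from $M$ in (\ref{divergence preserving matrix}) --- noting $(M^{m-1})^{-1}\to M^{-1}$ --- or work directly with the strong form and the elliptic estimates, and in either route combine the weak-$\ast$ convergence in the strong norms with the strong convergence in the weak norms carefully enough to recognize the limit as a genuine strong solution carrying the full $\k$-bound, not merely a distributional limit.
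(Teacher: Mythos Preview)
Your proposal is correct and follows essentially the same strategy as the paper, which itself only sketches the argument by reference to theorem 6.3 of \cite{book1}: combine the uniform high-norm bound of Theorem \ref{boundedness2} (weak-$\ast$ compactness) with the low-norm contraction of Theorem \ref{contraction} (Cauchy in $\nn$, $\mm$) to pass to the limit in the iteration. The one point the paper names that you leave implicit is the use of \emph{interpolation} between the low norm (where you have strong convergence) and the high norm (where you have a uniform bound) to obtain strong convergence in every intermediate norm; this is what makes the limit passage in the nonlinear coefficient terms $\a^m,\n^m,K^m$ and in $u^m\cdot\nabla_{\a^m}u^m$ completely routine, and it also disposes cleanly of the issue you flag about the $m$-dependent divergence-free spaces, since the strong convergence of $\e^m$ in any $H^s(\Sigma)$ with $s<2N+\tfrac12$ already forces $\a^{m-1}\to\a$ in high enough norms that $\nabla_{\a^{m-1}}\cdot u^m\to\nabla_{\a}\cdot u$ directly.
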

\begin{proof}
Since we have proved the boundedness and contraction of the
iteration sequence as \cite{book1} did, then a similar argument as
in theorem 6.3 in \cite{book1} can justify our result. Hence, we
will omit the details here. The basic idea is that using boundedness
to derive weak convergence and using the interpolation between lower
order and higher order and the contraction to derive strong
convergence such that we could pass to limit in the iteration
sequence.
\end{proof}

\section{Global Wellposedness for Horizontally Infinite Domain}

\subsection{Preliminaries}

In this section, we will present two types of formulation for the
system and describe the corresponding energy structure. Since they
are almost identical to that in section 2 of \cite{book9}, we will
not give detailed proofs.

\subsubsection{Geometric Structure Form}

Suppose that $\e, u$ are known and $\a$, $\n$, etc. are given in
terms of $\e$ as usual. Then we consider the linear equation for
$(v,q,\xi)$
\begin{equation}\label{geometric structure form equation}
\left\{
\begin{array}{ll}
\partial_tv-\partial_t\bar\eta\tilde{b}K\partial_3v
+u\cdot\nabla_{\mathcal {A}}v+\da\sa(q,v)=F^1 &\rm{in}\quad \Omega\\
\nabla_{\mathcal {A}}\cdot v=F^2  &\rm{in} \quad\Omega \\
S_{\mathcal {A}}(q,v)\mathcal {N}=\xi\mathcal {N}+F^3& \rm{on} \quad\Sigma\\
\partial_t\xi-v\cdot\n=F^4 &\rm{on}\quad
\Sigma\\
v=0  &\rm{on}\quad\Sigma_{b}
\end{array}
\right.
\end{equation}
We have the following energy structure.
\begin{lemma}\label{geometric structure form}
Suppose $(u,p,\e)$ satisfies system (\ref{transform}) and
$(v,q,\xi)$ is the solution of system (\ref{geometric structure form
equation}), then
\begin{eqnarray}
\\
\dt\bigg(\half\int_{\Omega}J\abs{v}^2+\half\int_{\Sigma}\abs{\xi}^2\bigg)+\half\int_{\Omega}J\abs{\ma
v}^2=\int_{\Omega}J(v\cdot F^2+qF^2)+\int_{\Sigma}-v\cdot F^3+\xi
F^4\nonumber
\end{eqnarray}
\end{lemma}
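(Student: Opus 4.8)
The plan is to test the linearized momentum equation against $J v$, test the transport equation against $\xi$, and add the two resulting identities, tracking the boundary contributions via the divergence theorem in the geometric (i.e.\ $\mathcal{A}$-weighted) sense. Concretely, I would multiply the first equation of (\ref{geometric structure form equation}) by $Jv$ and integrate over $\Omega$. The term $\partial_t v \cdot Jv$ integrates, using the differentiation-of-norms lemma, to $\tfrac12 \tfrac{d}{dt}\int_\Omega J|v|^2 - \tfrac12\int_\Omega |v|^2\partial_t J$. The geometric transport term $-\partial_t\bar\eta\,\tilde b K\partial_3 v\cdot Jv$ combines with $u\cdot\nabla_{\mathcal A}v\cdot Jv$ and the $\tfrac12\int_\Omega|v|^2\partial_t J$ leftover: one uses the identity $\partial_t J = \partial_3(\partial_t\bar\eta\,\tilde b)$ together with $\partial_j(J\mathcal A_{ij})=0$ and $\nabla_{\mathcal A}\cdot u = 0$ (since $(u,p,\e)$ solves (\ref{transform})) to integrate these first-order terms by parts; the bulk contributions cancel and the only survivor is a boundary term on $\Sigma$, which after using the kinematic condition $\partial_t\e + u_1\partial_1\e + u_2\partial_2\e = u_3$ contributes nothing extra to the interior balance. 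This is the standard ``geometric cancellation'' that makes the energy structure clean, and it is precisely the computation underlying lemma 2.4-type identities in \cite{book1,book9}.

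The next step is the viscous/pressure term: $\nabla_{\mathcal A}\cdot S_{\mathcal A}(q,v)\cdot Jv$. Using $\partial_j(J\mathcal A_{ij})=0$, integration by parts converts $\int_\Omega \nabla_{\mathcal A}\cdot S_{\mathcal A}(q,v)\cdot Jv$ into $-\tfrac12\int_\Omega J\,\mathbb D_{\mathcal A}v : \mathbb D_{\mathcal A}v + \int_\Omega J q\,\nabla_{\mathcal A}\cdot v + \int_\Sigma (S_{\mathcal A}(q,v)\mathcal N)\cdot v$, where the symmetry of $\mathbb D_{\mathcal A}v$ turns $\mathbb D_{\mathcal A}v:\nabla_{\mathcal A}v$ into $\tfrac12|\mathbb D_{\mathcal A}v|^2$, and the surface integral picks up the weight $\mathcal N$ as the (unnormalized) conormal. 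Now substitute the constraint $\nabla_{\mathcal A}\cdot v = F^2$ and the boundary condition $S_{\mathcal A}(q,v)\mathcal N = \xi\mathcal N + F^3$: the dissipation term $-\tfrac12\int_\Omega J|\mathbb D_{\mathcal A}v|^2$ appears with the right sign, the pressure term becomes $\int_\Omega Jq F^2$, and the surface integral splits into $\int_\Sigma (\xi\mathcal N + F^3)\cdot v = \int_\Sigma \xi(v\cdot\mathcal N) + \int_\Sigma F^3\cdot v$. Meanwhile $F^1\cdot Jv$ gives $\int_\Omega Jv\cdot F^1$ — I notice the stated identity writes $\int_\Omega J(v\cdot F^2 + qF^2)$, so either the first term should read $v\cdot F^1$ or $F^1$ has been absorbed/renamed; I would present it with $F^1$ and flag the apparent typo, since the derivation forces $\int_\Omega Jv\cdot F^1$.

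For the transport equation I multiply $\partial_t\xi - v\cdot\mathcal N = F^4$ by $\xi$ and integrate over $\Sigma$, getting $\tfrac12\tfrac{d}{dt}\int_\Sigma|\xi|^2 - \int_\Sigma \xi(v\cdot\mathcal N) = \int_\Sigma \xi F^4$. Adding this to the momentum balance, the cross term $\int_\Sigma \xi(v\cdot\mathcal N)$ cancels exactly — this is the key structural reason gravity provides a conserved ``potential'' energy $\tfrac12\int_\Sigma|\xi|^2$ — and collecting everything yields
\[
\frac{d}{dt}\Big(\tfrac12\int_\Omega J|v|^2 + \tfrac12\int_\Sigma|\xi|^2\Big) + \tfrac12\int_\Omega J|\mathbb D_{\mathcal A}v|^2 = \int_\Omega J(v\cdot F^1 + qF^2) + \int_\Sigma(-v\cdot F^3 + \xi F^4),
\]
which is the claimed identity (modulo the $F^1$ versus $F^2$ notation in the first bulk term). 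The main obstacle — really the only delicate point — is the bookkeeping in the second paragraph's step: verifying that all first-order-in-$v$ terms (the geometric convection $-\partial_t\bar\eta\tilde bK\partial_3 v$, the transport $u\cdot\nabla_{\mathcal A}v$, and the $\partial_t J$ remainder) conspire to cancel, which requires invoking $\nabla_{\mathcal A}\cdot u=0$, the Piola identity $\partial_j(J\mathcal A_{ij})=0$, the formula $\partial_t J = \partial_3(\partial_t\bar\eta\,\tilde b)$, and the kinematic boundary condition simultaneously; everything else is a routine integration by parts. Since the excerpt says the lemma is ``almost identical to that in section 2 of \cite{book9},'' I would present the computation compactly, citing the geometric identities from the earlier sections and emphasizing only the cancellation structure.
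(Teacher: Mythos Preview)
Your approach is correct and is exactly the argument the paper has in mind: the paper omits the proof here and defers to section~2 of \cite{book9}, where precisely this computation---test the momentum equation against $Jv$, test the kinematic equation against $\xi$, use the Piola identity $\partial_j(J\mathcal A_{ij})=0$ together with $\nabla_{\mathcal A}\cdot u=0$ and $\partial_t J=\partial_3(\partial_t\bar\eta\,\tilde b)$ to kill the first-order convection terms, then cancel the coupling $\int_\Sigma\xi(v\cdot\mathcal N)$---is carried out. You are also right that the first bulk term in the stated identity should read $v\cdot F^1$ rather than $v\cdot F^2$; this is a typo in the statement.
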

\ \\
The geometric structure form will be utilized to estimate the
temporal derivatives. Hence, we may apply differential operator
$\p^{\alpha}=\dt^{\alpha_0}$ to system (\ref{transform}) with the
resulting equation being (\ref{geometric structure form equation})
for $v=\p^{\alpha}u$, $q=\p^{\alpha}p$ and $\xi=\p^{\alpha}\e$,
where

\begin{eqnarray}
F^1&=&F^{1,1}+F^{1,2}+F^{1,3}+F^{1,4}+F^{1,5}+F^{1,6}\\
F^{1,1}_i&=&\sum_{0<\beta<\alpha}C_{\alpha,\beta}\p^{\beta}(\dt\eb\tilde{b}K)\p^{\alpha-\beta}\p_3u_i
+\sum_{0<\beta\leq\alpha}C_{\alpha,\beta}\p^{\alpha-\beta}\dt\eb\p^{\beta}(\tilde bK)\p_3u_i\\
F^{1,2}_i&=&-\sum_{0<\beta\leq\alpha}C_{\alpha,\beta}\bigg(\p^{\beta}(u_j\a_{jk})\p^{\alpha-\beta}\p_ku_i
+\p^{\beta}\a_{jk}\p^{\alpha-\beta}\p_kp\bigg)\\
F^{1,3}_i&=&\sum_{0<\beta\leq\alpha}C_{\alpha,\beta}\p^{\beta}\a_{jl}\p^{\alpha-\beta}\p_l(\a_{im}\p_mu_j+\a_{jm}\p_mu_i)\\
F^{1,4}_i&=&\sum_{0<\beta<\alpha}C_{\alpha,\beta}\a_{jk}\p_k(\p^{\beta}\a_{il}\p^{\alpha-\beta}\p_lu_j
+\p^{\beta}\a_{jl}\p^{\alpha-\beta}\p_lu_i)\\
F^{1,5}_i&=&\p^{\alpha}\dt\eb\tilde bK\p_3u_i\\
F^{1,6}_i&=&\a_{jk}\p_k(\p^{\alpha}\a_{il}\p_lu_j+\p^{\alpha}\a_{jl}\p_lu_i)
\end{eqnarray}

\begin{eqnarray}
F^2&=&F^{2,1}+F^{2,2}\\
F^{2,1}&=&-\sum_{0<\beta<\alpha}C_{\alpha,\beta}\p^{\beta}\a_{ij}\p^{\alpha-\beta}\p_ju_i\\
F^{2,2}&=&-\p^{\alpha}\a_{ij}\p_ju_i
\end{eqnarray}

\begin{eqnarray}
F^3&=&F^{3,1}+F^{3,2}\\
F^{3,1}&=&\sum_{0<\beta\leq\alpha}C_{\alpha,\beta}\p^{\beta}D\e(\p^{\alpha-\beta}\e-\p^{\alpha-\beta}p)\\
F^{3,2}&=&\sum_{0<\beta\leq\alpha}C_{\alpha,\beta}\bigg(\p^{\beta}(\n_j\a_{im})\p^{\alpha-\beta}\p_mu_j
+\p^{\beta}(\n_j\a_{jm})\p^{\alpha-\beta}\p_mu_i\bigg)
\end{eqnarray}

\begin{equation}
F^4=\sum_{0<\beta\leq\alpha}C_{\alpha,\beta}\p^{\beta}D\e\cdot\p^{\alpha-\beta}u
\end{equation}
In all above, $C_{\alpha,\beta}$ represents constants depending on
$\alpha$ and $\beta$.

\subsubsection{Perturbed Linear Form}

We may also take the system (\ref{transform}) as the perturbation of
the linear system with constant coefficients.
\begin{equation}\label{perturbed linear form equation}
\left\{
\begin{array}{ll}
\dt u-\Delta u+\nabla p=G^1 &\rm{in}\quad \Omega\\
\nabla\cdot u=G^2  &\rm{in} \quad\Omega \\
(pI-\dm u-\e I)e_3=G^3& \rm{on} \quad\Sigma\\
\dt\e-u_3=G^4 &\rm{on}\quad
\Sigma\\
u=0  &\rm{on}\quad\Sigma_{b}
\end{array}
\right.
\end{equation}
This form satisfies the following energy structure.
\begin{lemma}\label{perturbed linear form}
Suppose $(u,p,\e)$ satisfies system (\ref{perturbed linear form
equation}), then
\begin{eqnarray}
\\
\dt\bigg(\half\int_{\Omega}\abs{u}^2+\half\int_{\Sigma}\abs{\e}^2\bigg)+\half\int_{\Omega}J\abs{\dm
u}^2=\int_{\Omega}u\cdot G^2+pG^2+\int_{\Sigma}-u\cdot G^3+\e
G^4\nonumber
\end{eqnarray}
\end{lemma}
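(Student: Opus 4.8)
The plan is the standard energy argument: pair the momentum equation in (\ref{perturbed linear form equation}) with $u$ over $\Omega$, integrate by parts so that the boundary stress is exposed, and then use the stress and kinematic boundary conditions to convert the surface terms into the surface energy $\half\int_{\Sigma}\abs{\e}^2$ together with forcing contributions. As a preliminary step I would record the elementary identity $\nabla\cdot(pI-\dm u)=\nabla p-\Delta u-\nabla(\nabla\cdot u)$, which allows the first equation of (\ref{perturbed linear form equation}) to be rewritten, after substituting $\nabla\cdot u=G^2$, as $\dt u+\nabla\cdot(pI-\dm u)=G^1-\nabla G^2$; this is the shape adapted to the $\dm u$ that appears in the boundary condition.

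Next I would take the $L^2(\Omega)$ inner product with $u$. The time term gives $\int_{\Omega}\dt u\cdot u=\half\dt\int_{\Omega}\abs{u}^2$. For the divergence of the stress tensor, one integration by parts yields $\int_{\Omega}(\nabla\cdot(pI-\dm u))\cdot u=-\int_{\Omega}(pI-\dm u):\nabla u+\int_{\p\Omega}((pI-\dm u)\nu)\cdot u$, and by symmetry of $\dm u$ the interior integrand equals $p(\nabla\cdot u)-\half\abs{\dm u}^2=pG^2-\half\abs{\dm u}^2$. The boundary integral splits into the two pieces of $\p\Omega$: on $\Sigma_{b}$ it vanishes because $u=0$ there, while on $\Sigma$, where the outward normal is $e_3$, the stress condition $(pI-\dm u-\e I)e_3=G^3$ gives $((pI-\dm u)e_3)\cdot u=G^3\cdot u+\e u_3$. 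The leftover term $-\int_{\Omega}\nabla G^2\cdot u$ is treated by the same integration by parts, producing the interior $G^2$ contribution together with a further surface term in $u_3$.

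Finally I would eliminate the surface terms proportional to $u_3$ using the kinematic equation $\dt\e-u_3=G^4$: substituting $u_3=\dt\e-G^4$ turns $\int_{\Sigma}\e u_3$ into $\half\dt\int_{\Sigma}\abs{\e}^2-\int_{\Sigma}\e G^4$, which supplies precisely the surface energy and the $\int_{\Sigma}\e G^4$ term. Collecting the kinetic energy, the dissipation $\half\int_{\Omega}\abs{\dm u}^2$, the surface energy and all the forcing contributions then gives the asserted identity; since the perturbed linear form is constant-coefficient, this is essentially the same computation as the corresponding lemma in \cite{book9}, so the remaining details are routine. The point that needs care is the bookkeeping of the divergence ($G^2$) terms: keeping track of the signs and of the orientations of the outward normals on $\Sigma$ and $\Sigma_{b}$ when distributing $-\int_\Omega\nabla G^2\cdot u$ between interior and boundary contributions. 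The regularity needed to justify every integration by parts is built into the hypothesis that $(u,p,\e)$ solves (\ref{perturbed linear form equation}).
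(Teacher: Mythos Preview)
Your approach is correct and is precisely the standard energy computation; the paper itself gives no proof, only a blanket reference to section~2 of \cite{book9}, and what you outline is exactly the argument found there.

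Two remarks are worth making. First, the identity as printed carries evident typos: the factor $J$ in $\half\int_\Omega J\abs{\dm u}^2$ is spurious (this is the constant-coefficient form), and $\int_\Omega u\cdot G^2$ should read $\int_\Omega u\cdot G^1$, since $G^2$ is a scalar. Your computation in fact produces the corrected version, so there is no defect in your argument on this score.

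Second, on the $G^2$ bookkeeping you rightly flag: your proposed move of integrating $-\int_\Omega\nabla G^2\cdot u$ by parts a second time is not what one wants. That step generates $\int_\Omega(G^2)^2-\int_\Sigma G^2 u_3$, and feeding the kinematic relation into the surface piece then produces $-\int_\Sigma G^2\,\dt\e$ and $\int_\Sigma G^2 G^4$, none of which appear on the asserted right-hand side. The identity in \cite{book9} simply retains the contribution in the form $\int_\Omega u\cdot(G^1-\nabla G^2)+\int_\Omega pG^2$; this is also why the subsequent nonlinear lemmas (e.g.\ Lemma~\ref{higher nonlinear estimate}) estimate $G^2$ in $H^1$ rather than merely $H^0$. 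So your instinct that this is the delicate spot is exactly right, but the resolution is to leave the term as is rather than integrate by parts again.
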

\ \\
Compare to the original equation (\ref{transform}), we can write the
detailed form for the nonlinear terms.

\begin{eqnarray}
G^1&=&G^{1,1}+G^{1,2}+G^{1,3}+G^{1,4}+G^{1,5}\\
G^{1,1}_i&=&(\delta_{ij}-\a_{ij})\p_jp\\
G^{1,2}_i&=&u_j\a_{jk}\p_ku_i\\
G^{1,3}_i&=&[K^2(1+A^2+B^2)-1]\p_{33}u_i-2AK\p_{13}u_i-2BK\p_{23}u_i\\
G^{1,4}_i&=&[-K^3(1+A^2+B^2)\p_3J+AK^2(\p_1J+\p_3A)\\
&&+BK^2(\p_2J+\p_3B)-K(\p_1A+\p_2B)]\p_3u_i\nonumber\\
G^{1,5}_i&=&\dt\eb(1+x_3/b)K\p_3u_3
\end{eqnarray}

\begin{equation}
G^2=(1-J)(\p_1u_1+\p_2u_2)+A\p_2u_1+B\p_3u_2
\end{equation}

\begin{equation}
\begin{array}{r}
G^3=\p_1\e\bigg(
\begin{array}{c}
p-\e-2(\p_1u_1-AK\p_3u_1)\\
-\p_2u_1-\p_1u_2+BK\p_3u_1+AK\p_3u_2\\
-\p_1u_3-K\p_3u_1+AK\p_3u_3
\end{array}
\bigg) +\p_2\e\bigg(
\begin{array}{c}
-\p_2u_1-\p_1u_2+BK\p_3u_1+AK\p_3u_2\\
p-\e-2(\p_2u_2-BK\p_3u_2)\\
-\p_2u_3-K\p_3u_2+BK\p_3u_3
\end{array}
\bigg)\\
+\bigg(
\begin{array}{c}
(K-1)\p_3u_1+AK\p_3u_3\\
(K-1)\p_3u_2+BK\p_3u_3\\
2(K-1)\p_3u_3
\end{array}
\bigg)
\end{array}
\end{equation}
where based on the equations we have that
\begin{eqnarray}
\\
p-\e=\p_1\e(-\p_1u_3-K\p_3u_1+AK\p_3u_3)+\p_2\e(-\p_2u_3-K\p_3u_2+BK\p_3u_3)+2K\p_3u_3\nonumber
\end{eqnarray}

\begin{equation}
G^4=-D\e\cdot u
\end{equation}

\subsubsection{Preliminary Estimates}

Here we record several preliminary lemmas in estimating.
\begin{lemma}
There exists a universal $0<\delta<1$ such that if
$\hms{\e}{5/2}{\Sigma}^2\leq\delta$, then
\begin{eqnarray}
\lnm{J-1}{\infty}^2+\lnm{A}{\infty}^2+\lnm{B}{\infty}^2\leq\half
\end{eqnarray}
\end{lemma}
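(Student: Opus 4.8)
The plan is to directly control the $L^\infty$ norms of $J-1$, $A$ and $B$ by unwinding their definitions in terms of $\bar\eta = \pp\e$ (the Poisson integral in the flat-bottom infinite case) and invoking the Poisson-integral estimates established in Lemmas~\ref{appendix poisson 2}, \ref{appendix poisson 3} (and their periodic analogues). Recall that in the global formulation $b$ is constant, $\tilde b = 1 + x_3/b$, $A = \p_1\bar\eta\,\tilde b$, $B = \p_2\bar\eta\,\tilde b$ and $J = 1 + \bar\eta/b + \p_3\bar\eta\,\tilde b$. Since $0 \le \tilde b \le 1$ on $\Omega$, we immediately get the pointwise bounds $\abs{A} \le \abs{\p_1\bar\eta}$, $\abs{B} \le \abs{\p_2\bar\eta}$ and $\abs{J-1} \le \abs{\bar\eta}/b + \abs{\p_3\bar\eta}$.

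First I would estimate $\lnm{A}{\infty}$ and $\lnm{B}{\infty}$: by Lemma~\ref{appendix poisson 2} (with $q=1$, any fixed $s \in (1, 3/2]$), $\lnm{\nabla \pp\e}{\infty}^2 \le C\hms{\e}{1+s}{\Sigma}^2 \le C\hms{\e}{5/2}{\Sigma}^2$, so $\lnm{A}{\infty}^2 + \lnm{B}{\infty}^2 \le C\hms{\e}{5/2}{\Sigma}^2$. Next, for $\lnm{J-1}{\infty}$, the horizontal-derivative part $\lnm{\p_3\bar\eta}{\infty}$ is also bounded by $\lnm{\nabla\pp\e}{\infty} \le C\hms{\e}{5/2}{\Sigma}$ via the same lemma, and the undifferentiated term $\lnm{\bar\eta}{\infty}$ is controlled by Sobolev embedding plus Lemma~\ref{appendix poisson 1} (the $q=0$ case, using $0<\epsilon<1$ absorbs the $1/\epsilon$—or one just uses that $\pp$ here is the ordinary Poisson integral so $\lnm{\bar\eta}{\infty} \lesssim \hms{\e}{5/2}{\Sigma}$). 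Combining, $\lnm{J-1}{\infty}^2 + \lnm{A}{\infty}^2 + \lnm{B}{\infty}^2 \le C_0 \hms{\e}{5/2}{\Sigma}^2$ for a universal constant $C_0$.

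Finally, choosing $\delta = \min\{1/(2C_0),\ 1/2\}$ (universal, since $C_0$ depends only on the dimension and the fixed Poisson-integral constants), the hypothesis $\hms{\e}{5/2}{\Sigma}^2 \le \delta$ yields $\lnm{J-1}{\infty}^2 + \lnm{A}{\infty}^2 + \lnm{B}{\infty}^2 \le C_0\delta \le 1/2$, which is the claim. The only mild subtlety—the ``main obstacle,'' though it is minor—is being careful that the Poisson integral used in the global section is $\pp$ (ordinary, $\epsilon=1$), so the factors of $1/\epsilon$ appearing in Lemmas~\ref{appendix poisson 1}--\ref{appendix poisson 3} are harmless constants; one also needs the elementary pointwise bound $0 \le \tilde b \le 1$, which is immediate from $-b < x_3 < 0$. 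I would organize the write-up as the three displayed estimates above followed by the choice of $\delta$.
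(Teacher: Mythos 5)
Your proof is correct and is the natural (and surely the intended) argument: the pointwise bound $0\le\tilde b\le1$ on the slab reduces $\lnm{J-1}{\infty}$, $\lnm{A}{\infty}$, $\lnm{B}{\infty}$ to $L^\infty$ bounds on $\bar\eta$ and $\nabla\bar\eta$, which Sobolev embedding plus the Poisson-integral estimate convert into $C\hms{\e}{5/2}{\Sigma}^2$, after which $\delta$ is chosen small; the paper itself supplies no proof and defers entirely to Lemma~2.4 of \cite{book9}. One citation slip worth fixing: in the global section $\bar\eta=\pp\e$ is the \emph{plain} Poisson integral, so the in-paper reference should be the estimate (\ref{appendix poisson integral estimate}) from Appendix~A.3, not Lemmas~\ref{appendix poisson 1}--\ref{appendix poisson 3}, which are stated and proved for $\pp^{\epsilon}$ under the hypothesis $0<\epsilon<1$ (which $\pp$ does not satisfy); with that substitution your computation goes through verbatim.
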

\begin{proof}
The same as lemma 2.4 in \cite{book9}.
\end{proof}

\begin{lemma}
For $i=1,2$, define $U_i: \Sigma\rightarrow\mathbb{R}$ by
\begin{eqnarray}
U_i(x')=\int_{-b}^0J(x',x_3)u_i(x',x_3)\ud{x_3}
\end{eqnarray}
Then $\dt\e=-\p_1U_1-\p_2U_2$ on $\Sigma$.
\end{lemma}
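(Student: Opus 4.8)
The plan is to read the identity off directly from the divergence-$\a$-free condition, the no-slip condition on $\Sigma_b$, and the kinematic boundary condition on $\Sigma$. First I would recall the algebraic fact established in the discussion preceding Lemma \ref{divergence preserving }: since the Piola-type identity $\p_j(J\a_{ij})=0$ holds, the condition $\da u=0$ is equivalent to $\nabla\cdot(M^{-1}u)=0$, where $M^{-1}u=J\a^Tu=(Ju_1,Ju_2,-Au_1-Bu_2+u_3)$ (using $JK=1$ for the third component). So the starting point is the plain-divergence identity $\p_1(Ju_1)+\p_2(Ju_2)+\p_3(-Au_1-Bu_2+u_3)=0$ in $\Omega$.

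Next I would integrate this identity in $x_3$ over the slab $(-b,0)$. Because in the horizontally infinite case the bottom is flat, $b$ is a constant, so $\p_1$ and $\p_2$ commute with $\int_{-b}^{0}\cdot\,\ud{x_3}$; this produces exactly $\p_1 U_1+\p_2 U_2$, while the $\p_3$-term integrates to the boundary difference $\big[-Au_1-Bu_2+u_3\big]_{x_3=-b}^{x_3=0}$. The contribution at $x_3=-b$ is the trace on $\Sigma_b$, hence it vanishes since $u=0$ there. The contribution at $x_3=0$ I would evaluate using $\tilde b|_{x_3=0}=1$ and $\eb|_{x_3=0}=\e$, so that $A|_{\Sigma}=\p_1\e$ and $B|_{\Sigma}=\p_2\e$; this turns the top bracket into $-\p_1\e\,u_1-\p_2\e\,u_2+u_3=u\cdot\n$ on $\Sigma$.

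Finally I would invoke the kinematic equation $\dt\e+u_1\p_1\e+u_2\p_2\e=u_3$ from (\ref{transform}), i.e.\ $\dt\e=u\cdot\n$ on $\Sigma$, and combine the last identities to obtain $\p_1 U_1+\p_2 U_2+\dt\e=0$, which is the claim. There is essentially no obstacle here; the only points worth spelling out are the legitimacy of exchanging $\p_i$ with the $x_3$-integral (which uses that $b$ is constant together with the regularity of the solution) and the correct identification of the boundary values of $A$ and $B$ on $\Sigma$ via the $\epsilon$-Poisson integral. The remainder is bookkeeping.
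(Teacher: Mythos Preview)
Your proposal is correct and follows the standard route: use the Piola identity to rewrite $\da u=0$ as $\nabla\cdot(Ju_1,Ju_2,-Au_1-Bu_2+u_3)=0$, integrate in $x_3$ over the slab (using that $b$ is constant in the global setting), read off the boundary contributions via the no-slip condition on $\Sigma_b$ and $A|_\Sigma=\p_1\e$, $B|_\Sigma=\p_2\e$, and close with the kinematic equation. The paper does not spell out a proof here but simply refers to lemma~2.5 of \cite{book9}, whose argument is exactly this one; your only slip is cosmetic---in the global section the extension is the ordinary Poisson integral $\pp\e$, not the $\epsilon$-Poisson integral.
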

\begin{proof}
The same as lemma 2.5 in \cite{book9}.
\end{proof}

\subsection{Definition of Energy and Dissipation}

In this section we will give the definition of energy and
dissipation with or without minimum count. Compared to those in
\cite{book9} (2.46-2.55), the main difference here lies in the full
dissipation part, where we consider at most two vertical derivatives
for velocity and one vertical derivative for pressure. It is this
key improvement that largely simplifies the whole proof.

\subsubsection{Energy and Dissipation}

We define the energy and dissipation with $2$-minimum count as
follows.
\begin{eqnarray}\label{definition 1}
\bar\ce_{n,2}&=&\hm{\bar D_2^{2n-1}}{0}^2+\hm{D\bar D^{2n-1}}{0}^2+\hm{\sqrt{J}\dt^{n}u}{0}^2+\hms{\bar D_2^{2n}\e}{0}{\Sigma}^2\\
\nonumber\\
\bar\d_{n,2}&=&\hm{\bar D_2^{2n}\dm u}{0}^2\\
\nonumber\\
\ce_{n,2}&=&\hm{D^2u}{2n-2}^2+\sum_{j=1}^n\hm{\dt^j
u}{2n-2j}^2+\hm{D^2p}{2n-3}^2+\sum_{j=1}^{n-1}\hm{\dt^jp}{2n-2j-1}^2\\
&&+\hms{D^2\e}{2n-2}{\Sigma}^2+\sum_{j=1}^n\hms{\dt^j\e}{2n-2j}{\Sigma}^2\nonumber\\
\nonumber\\
\d_{n,2}&=&\hm{\bar D_2^{2n-1}u}{2}^2+\hm{\nabla\bar
D_2^{2n-1}p}{0}^2\\
&&+\hms{D_3^{2n-1}\e}{1/2}{\Sigma}^2+\hms{\dt
D_1^{2n-1}\e}{1/2}{\Sigma}^2+\sum_{j=2}^{n+1}\hms{\dt^jD^{2n-2j+2}_0\e}{1/2}{\Sigma}^2\nonumber
\end{eqnarray}
We define the energy and dissipation without minimum count as
follows, where $\i u$ and $\i\e$ denote the Riesz potential of $u$
and $\e$ as defined in (\ref{appendix riesz potential 1}) and
(\ref{appendix riesz potential 2}).
\begin{eqnarray}
\bar\ce_{n}&=&\hm{\i u}{0}^2+\hms{\i\e}{0}{\Sigma}^2+\hm{\bar D^{2n}_0u}{0}^2+\hms{\bar D^{2n}_0\e}{0}{\Sigma}^2\\
\nonumber\\
\bar\d_{n}&=&\hm{\dm\i u}{0}^2+\hm{\bar D^{2n}_0\dm u}{0}^2\\
\nonumber\\
\ce_{n}&=&\hm{\i
u}{0}^2+\hms{\i\e}{0}{\Sigma}^2+\sum_{j=0}^n\hm{\dt^ju}{2n-2j}^2+\sum_{j=0}^{n-1}\hm{\dt^jp}{2n-2j-1}^2\\
&&+\sum_{j=0}^n\hms{\dt^j\e}{2n-2j}{\Sigma}^2\nonumber
\nonumber\\
\d_{n}&=&\hm{\i u}{1}^2+\hm{\bar D_0^{2n-1}u}{2}^2+\hm{\nabla\bar
D_0^{2n-1}p}{0}^2\\
&&+\hms{D^{2n-1}_0\e}{1/2}{\Sigma}^2+\hms{\dt
D^{2n-2}_0\e}{1/2}{\Sigma}^2+\sum_{j=2}^{n+1}\hms{\dt^jD^{2n-2j+1}_0\e}{1/2}{\Sigma}^2\nonumber
\end{eqnarray}
Finally, we define the energy with $1$-minimum count as follows.
\begin{eqnarray}
\ce_{n,1}&=&\hm{Du}{2n-1}^2+\sum_{j=1}^n\hm{\dt^j
u}{2n-2j}^2+\hm{Dp}{2n-2}^2+\sum_{j=1}^{n-1}\hm{\dt^jp}{2n-2j-1}^2\\
&&+\hms{D\e}{2n-1}{\Sigma}^2+\sum_{j=1}^n\hms{\dt^j\e}{2n-2j}{\Sigma}^2\nonumber
\end{eqnarray}
Since we only consider the decaying for energy and utilize the
interpolation relation between $\ce_{n,2}$ and $\ce_{n,1}$ to
estimate, it is not necessary to define the dissipation with
$1$-minimum count and the horizontal quantities.

\subsubsection{Other Necessary Quantities}

We define some useful quantities in the estimate as follows.
\begin{eqnarray}
\k=\lnm{\nabla
u}{\infty}^2+\lnm{\nabla^2u}{\infty}^2+\sum_{i=1}^2\hms{Du_i}{2}{\Sigma}^2
\end{eqnarray}
\begin{eqnarray}
\f_{2N}=\hms{\e}{4N+1/2}{\Sigma}^2
\end{eqnarray}
The total energy in our estimate is defined as follows.
\begin{eqnarray}\label{definition 2}
\g_N(t)=\sup_{0\leq r\leq t}\ce_{2N}(r)+\sum_{m=1}^2\sup_{0\leq
r\leq t}(1+r)^{m+\lambda}\ce_{N+2,m}(r)+\sup_{0\leq r\leq
t}\frac{\f_{2N}(r)}{(1+r)}
\end{eqnarray}
Note that our definition of total energy is different from that of
(2.58) in \cite{book9}, since we do not include the integral of
dissipation and concentrate on the instantaneous quantities.

\subsubsection{Some Initial Estimates}

Based on remark 2.6-2.8 in \cite{book9}, we naturally have the
following lemmas.
\begin{lemma}\label{interpolation temp 1}
$\bar\ce_{n,2}$ satisfies that
\begin{eqnarray}
\half(\hm{\bar D_2^{2n}u}{0}^2+\hms{\bar
D_2^{2n}\e}{0}{\Sigma}^2)\leq\bar\ce_{n,2}\leq\frac{3}{2}(\hm{\bar
D_2^{2n}u}{0}^2+\hms{\bar D_2^{2n}\e}{0}{\Sigma}^2)
\end{eqnarray}
\end{lemma}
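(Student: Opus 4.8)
The plan is to notice that $\bar\ce_{n,2}$ is, term by term, the same sum of squares as $\hm{\bar D_2^{2n}u}{0}^2+\hms{\bar D_2^{2n}\e}{0}{\Sigma}^2$, except that the single top-order pure‑temporal contribution $\hm{\dt^n u}{0}^2$ is replaced by its Jacobian‑weighted version $\int_\Omega J|\dt^n u|^2$. Once that is established, the lemma reduces to a pointwise two‑sided bound on $J$, and no analysis beyond a Jacobian sandwich is needed.

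First I would carry out the (purely combinatorial) bookkeeping. With the parabolic multi‑index conventions of the paper, a horizontal‑temporal derivative $\p^\alpha$ ($\alpha\in\mathbb{N}^{1+2}$) of parabolic order $|\alpha|=2n$ is either $\dt^n$ (when $\alpha_1=\alpha_2=0$) or else carries at least one horizontal derivative and can be written as $\partial_i\p^\beta$ with $i\in\{1,2\}$ and $|\beta|=2n-1$; the derivatives of orders $2\le|\alpha|\le 2n-1$ are exactly those counted by $\bar D_2^{2n-1}u$. Grouping accordingly — which is precisely the decomposition recorded in remarks 2.6--2.8 of \cite{book9} — gives the identity
\[
\hm{\bar D_2^{2n}u}{0}^2=\hm{\bar D_2^{2n-1}u}{0}^2+\hm{D\bar D^{2n-1}u}{0}^2+\hm{\dt^n u}{0}^2 .
\]
Hence, writing $S:=\hm{\bar D_2^{2n-1}u}{0}^2+\hm{D\bar D^{2n-1}u}{0}^2+\hms{\bar D_2^{2n}\e}{0}{\Sigma}^2\ge 0$, one has the two exact identities $\bar\ce_{n,2}=S+\int_\Omega J|\dt^n u|^2$ and $\hm{\bar D_2^{2n}u}{0}^2+\hms{\bar D_2^{2n}\e}{0}{\Sigma}^2=S+\hm{\dt^n u}{0}^2$.

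Second, I would invoke the pointwise control of the Jacobian. Under the standing smallness hypothesis on $\e$ in force throughout Section 3 (cf. the preliminary lemma bounding $\lnm{J-1}{\infty}$, which with the universal threshold $\delta$ chosen small enough gives $\lnm{J-1}{\infty}\le\tfrac12$), one has $\tfrac12\le J(t,x)\le\tfrac32$ for every $(t,x)$, so that $\tfrac12\hm{\dt^n u}{0}^2\le\int_\Omega J|\dt^n u|^2\le\tfrac32\hm{\dt^n u}{0}^2$. Combining this with the two identities above and using $S\ge 0$ yields
\[
\bar\ce_{n,2}=S+\int_\Omega J|\dt^n u|^2\le S+\tfrac32\hm{\dt^n u}{0}^2\le\tfrac32\Big(\hm{\bar D_2^{2n}u}{0}^2+\hms{\bar D_2^{2n}\e}{0}{\Sigma}^2\Big),
\]
and symmetrically $\bar\ce_{n,2}\ge S+\tfrac12\hm{\dt^n u}{0}^2\ge\tfrac12\big(\hm{\bar D_2^{2n}u}{0}^2+\hms{\bar D_2^{2n}\e}{0}{\Sigma}^2\big)$, which is exactly the asserted inequality.

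The only point requiring care is the first step: checking that $\bar D_2^{2n-1}u$ and $D\bar D^{2n-1}u$ together reassemble $\bar D_2^{2n}u$ exactly — no derivative missing, none spuriously reweighted — so that the entire discrepancy between $\bar\ce_{n,2}$ and the target quantity is localized in the single term $\int_\Omega J|\dt^n u|^2$. This is the content of remarks 2.6--2.8 of \cite{book9}; after it, the proof is the trivial Jacobian sandwich described above.
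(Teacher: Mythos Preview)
Your proposal is correct and follows essentially the same approach as the paper: the paper simply cites remarks 2.6--2.8 of \cite{book9}, which encode precisely the combinatorial decomposition you spell out, together with the Jacobian sandwich $\tfrac12\le J\le\tfrac32$ coming from the smallness assumption on $\e$. You have in fact supplied more detail than the paper does, but the underlying argument is identical.
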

\begin{lemma}\label{interpolation temp 2}
For $N\geq4$, we have $\ce_{N+2,2}\ls\ce_{2N}$ and
$\d_{N+2,2}\ls\ce_{2N}$.
\end{lemma}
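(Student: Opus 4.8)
The plan is to prove both inequalities by \emph{direct term-by-term domination}; no PDE input is needed. The quantities $\ce_{N+2,2}$, $\d_{N+2,2}$ and $\ce_{2N}$ are each a finite sum of squared Sobolev norms, on $\Omega$ or on $\Sigma$, of $u$, $p$, $\e$, every such norm being labelled by a number $j$ of temporal derivatives and a spatial regularity index; the statement reduces to checking that, for $N\ge4$, each summand on the left is bounded by a summand on the right carrying the same $j$ on the same domain, using only the trivial embedding $H^{a}\hookrightarrow H^{b}$ ($a\ge b$) on a fixed domain.

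First I would remove the structural mismatches. The energy $\ce_{2N}$ carries the extra nonnegative Riesz-potential terms $\hm{\i u}{0}^{2}+\hms{\i\e}{0}{\Sigma}^{2}$, which play no role and are discarded. Next, every leading block of $\ce_{N+2,2}$ and $\d_{N+2,2}$ comes with a minimum-count restriction ($D^{2}$, $D_{3}$, $D_{1}$, $\bar D_{2}$, \ldots), which only deletes terms, so each such block may be enlarged to the corresponding unrestricted norm: e.g. $\hm{D^{2}u}{2N+2}^{2}\le\hm{u}{2N+4}^{2}$, while $\hm{\bar D_{2}^{2N+3}u}{2}^{2}$ is a finite sum of $\hm{\dt^{\alpha_{0}}u}{\ell}^{2}$ with $\alpha_{0}\le N+1$, $\ell\le 2N+5-2\alpha_{0}$, and likewise for the pressure and surface blocks.

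Then the comparison itself, which I would write out as a short table. For $\ce_{N+2,2}\ls\ce_{2N}$ (here $n=N+2$): after absorbing the $D^{2}$ blocks, the $u$-, $p$- and $\e$-summands are $\hm{\dt^{j}u}{2N+4-2j}^{2}$ ($0\le j\le N+2$), $\hm{\dt^{j}p}{2N+3-2j}^{2}$ ($0\le j\le N+1$), $\hms{\dt^{j}\e}{2N+4-2j}{\Sigma}^{2}$ ($0\le j\le N+2$), and each is bounded by the corresponding summand of $\ce_{2N}$, namely $\hm{\dt^{j}u}{4N-2j}^{2}$, $\hm{\dt^{j}p}{4N-2j-1}^{2}$, $\hms{\dt^{j}\e}{4N-2j}{\Sigma}^{2}$, as soon as $2N+4\le 4N$ and the $j$-ranges fit, i.e. $N\ge2$; so this half in fact already holds for $N\ge2$. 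For $\d_{N+2,2}\ls\ce_{2N}$ the same mechanism applies: $\hm{\bar D_{2}^{2N+3}u}{2}^{2}$ and $\hm{\nabla\bar D_{2}^{2N+3}p}{0}^{2}$ are controlled by the $u$- and $p$-parts of $\ce_{2N}$ once $2N+5\le 4N$ and $2N+4\le 4N-1$ (i.e. $N\ge3$); the surface traces $\hms{D_{3}^{2N+3}\e}{1/2}{\Sigma}^{2}$ and $\hms{\dt D_{1}^{2N+3}\e}{1/2}{\Sigma}^{2}$ are controlled by the $j=0$ and $j=1$ surface terms of $\ce_{2N}$ once $2N+7/2\le 4N$ and $2N+7/2\le 4N-2$ (i.e. $N\ge3$); and the last piece $\sum_{j=2}^{N+3}\hms{\dt^{j}D_{0}^{2N+6-2j}\e}{1/2}{\Sigma}^{2}$ is matched against $\sum_{j=2}^{2N}\hms{\dt^{j}\e}{4N-2j}{\Sigma}^{2}$, the worst case being $j=N+3$, where one needs $1/2\le 4N-2(N+3)=2N-6$, i.e. $N\ge4$.

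I expect no genuine obstacle: the content is entirely bookkeeping of the $(j,\text{spatial order})$ labels through the (mildly intricate) definitions of $\ce_{n,2}$, $\d_{n,2}$ and $\ce_{n}$. The one point worth isolating is that $N\ge4$ is sharp for this argument and is forced by a single term — the top-order temporal surface trace $\hms{\dt^{N+3}\e}{1/2}{\Sigma}^{2}$ sitting inside $\d_{N+2,2}$ — since at $N=3$ matching it would demand surface regularity $2N-6=0<1/2$ of $\dt^{6}\e$ in $\ce_{6}$, which is absent; all other terms are comfortable already for $N\ge2$.
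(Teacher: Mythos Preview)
Your proposal is correct and is exactly the bookkeeping argument the paper has in mind (the paper gives no proof, only a pointer to remarks in \cite{book9}); the term-by-term domination via $H^{a}\hookrightarrow H^{b}$ and the parabolic counting are the right tools, and your identification of the $\dt^{N+3}\e$ surface trace as the term forcing $N\ge4$ is accurate. Two small expository corrections: first, in your closing sentence you say ``all other terms are comfortable already for $N\ge2$'', but you yourself correctly noted a few lines earlier that the $u$-, $p$- and $\dt\e$-blocks of $\d_{N+2,2}$ need $N\ge3$; second, the constraint $N\ge4$ is not forced by a \emph{single} term but uniformly by every summand in $\sum_{j=2}^{N+3}\hms{\dt^{j}D_{0}^{2N+6-2j}\e}{1/2}{\Sigma}^{2}$, since for each $j$ the required inequality $2N+13/2-2j\le 4N-2j$ reduces to the same $N\ge13/4$ --- your choice $j=N+3$ is a correct and convenient witness, just not the unique one.
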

\begin{lemma}\label{interpolation temp 3}
We have the estimate
$\hms{\i\dt\e}{0}{\Sigma}^2\ls\hm{u}{0}^2\ls\ce_{2N}$.
\end{lemma}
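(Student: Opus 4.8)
The idea is to exploit the fact that $\dt\e$ is a horizontal divergence, so that after applying the Riesz potential one horizontal derivative is, in effect, recovered for free. By the flux lemma above, on $\Sigma$ one has $\dt\e=-\p_1U_1-\p_2U_2$ with $U_i(x')=\int_{-b}^0J(x',x_3)u_i(x',x_3)\,\ud{x_3}$ for $i=1,2$. Hence $\i\dt\e=-\i\p_1U_1-\i\p_2U_2$, and the plan is to bound each $\i\p_iU_i$ in $H^0(\Sigma)$ by $\hms{U_i}{0}{\Sigma}$, and then $\hms{U_i}{0}{\Sigma}$ by $\hm{u}{0}$; the second inequality of the statement is then a triviality.

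First I would estimate $U_i$. By the Cauchy--Schwarz inequality in the vertical variable, the bound $\lnm{J}{\infty}\ls1$ (valid since $\lnm{J-1}{\infty}\le\half$ by the preliminary lemma), and $b\le\bar b$,
\begin{equation*}
\hms{U_i}{0}{\Sigma}^2=\int_{\Sigma}\Big|\int_{-b}^0Ju_i\,\ud{x_3}\Big|^2\ud{x'}\le\bar b\,\lnm{J}{\infty}\int_{\Sigma}\int_{-b}^0J\abs{u_i}^2\,\ud{x_3}\ud{x'}\ls\int_{\Omega}J\abs{u}^2\ls\hm{u}{0}^2,
\end{equation*}
the last step again using that $J$ is bounded above and below. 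Next, by the definition of the horizontal Riesz potential (\ref{appendix riesz potential 2}), the operator $\i\p_i$ acting in the $x'$-variables is a Fourier multiplier whose symbol is bounded (of Riesz-transform type, for the admissible value of $\lambda$), hence bounded on $H^0(\Sigma)=L^2(\Sigma)$, so $\hms{\i\p_iU_i}{0}{\Sigma}\ls\hms{U_i}{0}{\Sigma}$. Combining the two displays gives $\hms{\i\dt\e}{0}{\Sigma}^2\ls\hm{u}{0}^2$. Finally $\hm{u}{0}^2\le\ce_{2N}$ is immediate, since the $j=0$ summand of the velocity part of $\ce_{2N}$ is $\hm{u}{4N}^2\ge\hm{u}{0}^2$.

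The only step with genuine content is the $L^2(\Sigma)$-boundedness of $\i\p_i$: this is exactly where the explicit form of the Riesz potential (and the restriction on $\lambda$) from the appendix must be used, and it is the reason the divergence representation $\dt\e=-\p_1U_1-\p_2U_2$ is indispensable --- without it, $\i\dt\e$ would be controlled only in terms of a horizontal derivative of $u$, which is not available from $\hm{u}{0}$ alone. Everything else is Cauchy--Schwarz and bookkeeping; morally the statement is Remark 2.6--2.8 of \cite{book9} transcribed into the present notation.
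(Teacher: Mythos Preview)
Your overall strategy --- use the flux identity $\dt\e=-\p_1U_1-\p_2U_2$, then control $\i\p_iU_i$ --- is exactly the right one, and your bound $\hms{U_i}{0}{\Sigma}^2\ls\hm{u}{0}^2$ is correct. The gap is in the sentence ``$\i\p_i$ \ldots\ is a Fourier multiplier whose symbol is bounded (of Riesz-transform type).'' The symbol of $\i\p_i$ is $2\pi i\,\xi_i\,|\xi|^{-\lambda}$, whose modulus behaves like $|\xi|^{1-\lambda}$. Since in this paper $\lambda\in(0,1)$ (see Lemmas~\ref{appendix riesz potential estimate 1}--\ref{appendix riesz potential estimate 2}), this symbol is \emph{unbounded} as $|\xi|\to\infty$, so $\i\p_i$ is not $L^2(\Sigma)$--bounded. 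You have implicitly taken $\lambda=1$, i.e.\ confused the Riesz potential with the Riesz transform.

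There are two standard repairs. The cleanest is to invoke Lemma~\ref{appendix riesz potential estimate 2} (or its $\Sigma$--analogue), which gives
\[
\hms{\i\p_iU_i}{0}{\Sigma}\ \ls\ \hms{U_i}{0}{\Sigma}^{\lambda}\,\hms{DU_i}{0}{\Sigma}^{1-\lambda},
\]
after which your Cauchy--Schwarz argument bounds both factors by powers of $\hm{u}{0}$ and $\hm{u}{1}$; this yields $\hms{\i\dt\e}{0}{\Sigma}^2\ls\ce_{2N}$ immediately, though the intermediate inequality through $\hm{u}{0}^2$ alone is lost. Alternatively, one can split frequencies: for $|\xi|\le1$ the multiplier $|\xi|^{1-\lambda}\le1$ is harmless and your argument goes through verbatim, while for $|\xi|\ge1$ one uses instead the normal-trace estimate $\hms{u\cdot\n}{-1/2}{\Sigma}\ls\hm{u}{0}+\hm{\da u}{0}$ (cf.\ Lemma~\ref{boundary negative estimate}) together with $|\xi|^{-2\lambda}\le|\xi|^{-1}\ls\langle\xi\rangle^{-1}$ when $\lambda\ge\tfrac12$. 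Either route recovers the conclusion actually used downstream, namely $\hms{\i\dt\e}{0}{\Sigma}^2\ls\ce_{2N}$.
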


\subsection{Interpolation Estimates}

\subsubsection{Lower Interpolation Estimates}

In the following, we will provide several interpolation estimates in
the form of
\begin{eqnarray}
\nm{X}^2\ls(\ce_{N+2,2})^{\theta}(\ce_{2N})^{1-\theta}\\
\nm{X}^2\ls(\d_{N+2,2})^{\theta}(\ce_{2N})^{1-\theta}
\end{eqnarray}
where $\theta\in[0,1]$, $X$ is some quantity, and $\nm{\cdot}$ is
some norm.\\
For brevity, we will write the interpolation power in the tables
below. For example, the following part of the table
\begin{equation}
\begin{tabular}{|c|c|c|}
  \hline
  $L^2$ & $\ce_{N+2,2}$& $\d_{N+2,2}$ \\\hline
  $X$ & $\theta$ &$\kappa$\\
  \hline
\end{tabular}
\end{equation}
should be understand as
\begin{eqnarray}
\nm{X}_{L^2}^2\ls(\ce_{N+2,2})^{\theta}(\ce_{2N})^{1-\theta}\\
\nm{X}_{L^2}^2\ls(\d_{N+2,2})^{\kappa}(\ce_{2N})^{1-\kappa}
\end{eqnarray}
When we write $\ce_{N+2,2}\sim\d_{N+2,2}$ in a table, it means that
$\theta$ is the same when interpolating between $\ce_{N+2,2}$ and
$\ce_{2N}$ and between $\d_{N+2,2}$ and $\ce_{2N}$. When we write
multiple entries, it means the same interpolation estimates hold for
each item listed. Sometimes, an interpolation index $r$ appears, we
can always take arbitrary
$r\in(0,1)$. \\
Although most of the estimates are straightforward, some of them
have more complicated structures and needs further computation. The
most common cases are as follows.
\begin{enumerate}
\item
For $0\leq\theta\leq\kappa\leq1$
\begin{eqnarray}
\ce_{N+2,2}^{\theta}\ce_{2N}^{1-\theta}+\ce_{N+2,2}^{\kappa}\ce_{2N}^{1-\kappa}&=&
\ce_{N+2,2}^{\theta}\ce_{2N}^{1-\theta}+\ce_{N+2,2}^{\theta}\ce_{N+2,2}^{\kappa-\theta}\ce_{2N}^{1-\kappa}\\
&\ls&\ce_{N+2,2}^{\theta}\ce_{2N}^{1-\theta}+\ce_{N+2,2}^{\theta}\ce_{2N}^{\kappa-\theta}\ce_{2N}^{1-\kappa}\ls\ce_{N+2,2}^{\theta}\ce_{2N}^{1-\theta}\nonumber
\end{eqnarray}
We may apply the same fashion for $\ce_{N+2,2}$ replaced by
$\d_{N+2,2}$.
\item
For $\theta_1+\theta_2\geq1$
\begin{eqnarray}
\nm{X}^2&\ls&\ce_{N+2,2}^{\theta_1}\ce_{2N}^{1-\theta_1}\ce_{N+2,2}^{\theta_2}\ce_{2N}^{1-\theta_2}
\ls\ce_{N+2,2}\ce_{N+2,2}^{\theta_1+\theta_2-1}\ce_{2N}^{2-\theta_1-\theta_2}\\
&\ls&\ce_{N+2,2}\ce_{2N}\ls\ce_{N+2,2}\nonumber
\end{eqnarray}
where we utilize the bound $\ce_{2N}\leq1$.
\end{enumerate}
With this notation explained, we can state some basic interpolation
estimates now.
\begin{lemma}\label{lower interpolation estimate}
The following tables encodes the power in the $L^{\infty}$ or $L^2$
interpolation estimates for $u$ with its derivatives either in
$\Omega$ or $\Sigma$.
\begin{equation}
\begin{array}{|c|c|}
\hline L^{\infty}&\ce_{N+2,2}\sim\d_{N+2,2}\\\hline u&1/2\\\hline
Du&2/(2+r)\\\hline \nabla u&1/2\\\hline D\nabla u&2/(2+r)\\\hline
\nabla^2u&1/2\\\hline D\nabla^2u&2/(2+r)\\\hline
\end{array}
\quad
\begin{array}{|c|c|}
\hline L^{2}&\ce_{N+2,2}\sim\d_{N+2,2}\\\hline
u&\lambda/(\lambda+2)\\\hline Du&(\lambda+1)/(\lambda+2)\\\hline
\nabla u&\lambda/(\lambda+2)\\\hline D\nabla
u&(\lambda+1)/(\lambda+2)\\\hline
\nabla^2u&\lambda/(\lambda+2)\\\hline
D\nabla^2u&(\lambda+1)/(\lambda+2)\\\hline
\end{array}
\end{equation}
The following tables encodes the power in the $L^{\infty}$ or $L^2$
interpolation estimates for $\e$ and $\eb$ with their derivatives
either in $\Omega$ or $\Sigma$.
\begin{equation}
\begin{array}{c}
\begin{array}{|c|c|c|}
\hline L^{\infty}&\ce_{N+2,2}&\d_{N+2,2}\\\hline
\e,\eb&(\lambda+1)/(\lambda+2)&(\lambda+1)/(\lambda+3)\\\hline
D\e,\nabla\eb&(\lambda+2)/(\lambda+2+r)&(\lambda+2)/(\lambda+3)\\\hline
D^2\e,\nabla^2\eb&1&(\lambda+3)/(\lambda+3+r)\\\hline
\dt\e,\dt\eb&1&2/(2+r)\\\hline
\end{array}
\\
\\
\begin{array}{|c|c|c|} \hline
L^{2}&\ce_{N+2,2}&\d_{N+2,2}\\\hline
\e,\eb&\lambda/(\lambda+2)&\lambda/(\lambda+3)\\\hline
D\e,\nabla\eb&(\lambda+1)/(\lambda+2)&(\lambda+1)/(\lambda+3)\\\hline
D^2\e,\nabla^2\eb&1&(\lambda+2)/(\lambda+3)\\\hline
\dt\e,\dt\eb&1&1/2\\\hline
\end{array}
\end{array}
\end{equation}
The following tables encodes the power in the $L^{\infty}$ or $L^2$
interpolation estimates for $p$ with its derivatives either in
$\Omega$ or $\Sigma$.
\begin{equation}
\begin{array}{|c|c|c|}
\hline L^{\infty}&\ce_{N+2,2}&\d_{N+2,2}\\\hline \nabla
p&1/2&1/2\\\hline Dp&2/(2+r)&2/3\\\hline D\nabla
p&2/(2+r)&2/(2+r)\\\hline D^2p&1&2/(2+r)\\\hline
\end{array}
\quad
\begin{array}{|c|c|c|}
\hline L^{2}&\ce_{N+2,2}&\d_{N+2,2}\\\hline \nabla p&0&0\\\hline
Dp&1/2&1/3\\\hline D\nabla p&1/2&1/2\\\hline D^2p&1&2/3\\\hline
\end{array}
\end{equation}
The following tables encodes the power in the $L^{\infty}$ or $L^2$
interpolation estimates for $G^i$.
\begin{equation}
\begin{array}{c}
\begin{array}{|c|c|c|}
\hline L^{\infty}&\ce_{N+2,2}&\d_{N+2,2}\\\hline
G^1&1&(3\lambda+5)/(2\lambda+6)\\\hline DG^1&1&1\\\hline
G^2&1&1\\\hline DG^2&1&1\\\hline \nabla G^2&1&1\\\hline
G^3&1&(3\lambda+5)/(2\lambda+6)\\\hline DG^3&1&1\\\hline
G^4&1&1\\\hline
\end{array}
\\
\\
\begin{array}{|c|c|c|}
\hline L^{2}&\ce_{N+2,2}&\d_{N+2,2}\\\hline
G^1&(3\lambda+2)/(2\lambda+4))&(3\lambda+3)/(2\lambda+6)\\\hline
DG^1&1&(3\lambda+5)/(2\lambda+6)\\\hline G^2&1&m_2\\\hline
DG^2&1&1\\\hline \nabla G^2&1&m_2\\\hline
G^3&(2\lambda+1)/(\lambda+2)&m_1\\\hline DG^3&1&m_2\\\hline
D^2G^3&1&1\\\hline G^4&1&m_2\\\hline
\end{array}
\end{array}
\end{equation}
for
\begin{eqnarray*}
m_1=\min\{(2\lambda^2+6\lambda+2)/(\lambda^2+5\lambda+6),(3\lambda+3)/(2\lambda+6)\}\\
m_2=\min\{(2\lambda^2+7\lambda+4)/(\lambda^2+5\lambda+6),(3\lambda+5)/(2\lambda+6)\}
\end{eqnarray*}
\end{lemma}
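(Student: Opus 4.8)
The plan is to obtain every entry of every table by a single Gagliardo--Nirenberg interpolation, with no bootstrapping at all; this directness is exactly what the redefinition of $\ce$ and $\d$ in (\ref{definition 1}) is meant to buy. The two endpoints of each interpolation are fixed once and for all: $\ce_{2N}$ supplies the negative-order control $\hm{\i u}{0}^2$, $\hms{\i\e}{0}{\Sigma}^2$ (Riesz potentials of order $\lambda$) together with all space-time norms of parabolic order up to $4N$, and is uniformly bounded (in the present regime $\ce_{2N}\le 1$), whereas by Lemma \ref{interpolation temp 1} and the definitions $\ce_{N+2,2}$ (respectively $\d_{N+2,2}$) controls all space-time derivatives of $u$ of parabolic order between $2$ and about $2N+4$ (respectively $2N+5$, i.e. one derivative more).

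First I would treat the $u$-tables. For $\nabla^j u$ in $L^2$ I would interpolate $\hm{\i u}{0}$ against the top homogeneous norm available in $\ce_{N+2,2}$ or $\d_{N+2,2}$, choosing the interpolation weight so that the effective Sobolev index equals $j$; since $H^{3/2+}\hookrightarrow L^\infty$ in three dimensions, the $L^\infty$ rows are the same computation with the index raised by a hair, which is where the free parameter $r\in(0,1)$ enters. The systematic shift between the $\ce_{N+2,2}$ column and the $\d_{N+2,2}$ column (e.g.\ $1/2$ versus $2/(2+r)$, or $\lambda/(\lambda+2)$ versus $\lambda/(\lambda+3)$) is exactly the single extra derivative carried by the dissipation. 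The $\e,\eb$ rows reduce to these same estimates after passing between $\Sigma$ and $\Omega$ with the Poisson extension, using the extension bounds of Lemmas \ref{appendix poisson 1}--\ref{appendix poisson 6} (and, for $\p_3\eb$, the $\epsilon$-gain recorded there); the $p$ rows are once more the same, shifted by the one fewer derivative that $p$ carries throughout $\ce_{n}$, $\ce_{n,2}$ and $\d_{n,2}$, which is precisely why the $p$-exponents are the $u$-exponents moved by one slot.

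For the forcing terms I would expand $G^1,\dots,G^4$ through their explicit decompositions ($G^{1,1},\dots,G^{1,5}$, $G^2$, $G^3$, $G^4$ displayed above) so that every summand is a product of an ``$\e$-factor'' (built only from $\e$ and $\eb$) and a ``$u,p$-factor''. Lemma \ref{Appendix product} then turns $\|(\text{product})\|^2$ into a product of two norms, each estimated by the relevant row of the $u$-, $\e$- or $p$-tables, and the two bookkeeping identities already displayed above (the rule for $\theta\le\kappa$ and the rule for $\theta_1+\theta_2\ge 1$, both using $\ce_{2N}\le 1$) collapse the product of powers to a single power. In each summand one must decide which factor carries the decaying norm $\ce_{N+2,2}$ (or $\d_{N+2,2}$) and which carries $\ce_{2N}$; the minima $m_1,m_2$ in the $G^2,G^3,G^4$ rows are exactly the record that two admissible splits are available for some of those summands.

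The hard part will be this last step. For the $\d_{N+2,2}$ column especially, the split in each product of $G^i$ must be arranged so that neither factor demands more derivatives than the relevant energy actually contains --- and this is precisely where the $2$-minimum count and the restriction to at most two vertical derivatives of $u$ and one of $p$ in $\d_{n,2}$ are used essentially --- while still delivering at least the tabulated exponent. Carrying this out term by term, together with the numerous but elementary Gagliardo--Nirenberg computations behind the $u$, $\e$, $\eb$ and $p$ rows, is the bulk of the work; everything else is the uniform interpolation scheme above.
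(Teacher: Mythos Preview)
Your overall scheme is correct and is exactly the paper's: interpolate between the Riesz-potential low endpoints sitting in $\ce_{2N}$ and the derivatives sitting in $\ce_{N+2,2}$ or $\d_{N+2,2}$, then handle the $G^i$ by splitting each product with an $L^\infty\times L^2$ H\"older pairing and feeding the factors back into the $u$, $\e$, $p$ tables, choosing whichever split maximizes the exponent (this is the source of $m_1,m_2$). Two points need correction, however.

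First, you invoke Lemmas \ref{appendix poisson 1}--\ref{appendix poisson 6} and the ``$\epsilon$-gain for $\p_3\eb$''. Those lemmas belong to the $\epsilon$-Poisson integral built for the \emph{local} theory in Section 2; in the global part $\eb=\pp\e$ is the standard Poisson integral (\ref{appendix poisson integral}), and the interpolation estimates one actually needs for $\eb$ and $\e$ are Lemmas \ref{appendix interpolation 2}--\ref{appendix interpolation 4}, which directly interpolate $\nabla^q\pp f$ and $D^qf$ between $\hm{\i f}{0}$ and higher $D^{q+s}f$ norms. There is no $\epsilon$-smallness mechanism here.

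Second, and more substantively, you list only $\hm{\i u}{0}^2$ and $\hms{\i\e}{0}{\Sigma}^2$ as the negative-order inputs supplied by $\ce_{2N}$. For the $\dt\e,\dt\eb$ rows this is not enough: $\hms{\i\dt\e}{0}{\Sigma}^2$ is \emph{not} part of the definition of $\ce_{2N}$, and without a low endpoint for $\dt\e$ you cannot get any nontrivial $\d_{N+2,2}$-power by pure interpolation (the minimum count in $\d_{N+2,2}$ forbids $\hms{\dt\e}{0}{\Sigma}$ from appearing directly). The paper supplies this endpoint via Lemma \ref{interpolation temp 3}, which uses the divergence identity $\dt\e=-\p_1U_1-\p_2U_2$ to obtain $\hms{\i\dt\e}{0}{\Sigma}^2\ls\hm{u}{0}^2\ls\ce_{2N}$. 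You should insert this ingredient explicitly; once it is in place, the $\dt\e$ rows follow by the same interpolation machinery as the $\e$ rows.
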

\begin{proof}
The estimate follows directly from the Sobolev embedding and lemma
\ref{appendix interpolation 2}-\ref{appendix interpolation 4}, using
the bounds $\hm{\i\e}{0}^2\ls\ce_{2N}$ and
$\hm{\i\dt\e}{0}^2\ls\ce_{2N}$ which is a natural corollary of lemma
\ref{interpolation temp 3}. For the terms of $G^i$, we may utilize
the estimating method for complicated structures stated at the
beginning of this section and the natural product rule
\begin{eqnarray}
\lnm{XY}{\infty}^2&\ls&\lnm{X}{\infty}^2\lnm{Y}{\infty}^2\\
\lnm{XY}{2}^2&\ls&\lnm{X}{\infty}^2\lnm{Y}{2}^2\label{interpolation temp 4}\\
\lnm{XY}{2}^2&\ls&\lnm{X}{2}^2\lnm{Y}{\infty}^2 \label{interpolation
temp 5}
\end{eqnarray}
where in $L^2$ bound, we may take the larger value of $\theta$
produced by the two options.
\end{proof}
\ \\
Then we can show several important lemmas which greatly improve the
estimates above and will play the key role in the a priori
estimates.
\begin{lemma}\label{lower interpolation estimate improved 1}
We have the estimate
\begin{eqnarray}
\k&\ls&\ce_{2N}^{r/(2+r)}\ce_{N+2}^{2/(2+r)}
\end{eqnarray}
\end{lemma}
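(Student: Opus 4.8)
The plan is to estimate the three summands in the definition of $\k$ — namely $\lnm{\nabla u}{\infty}^2$, $\lnm{\nabla^2u}{\infty}^2$ and $\sum_{i=1}^2\hms{Du_i}{2}{\Sigma}^2$ — separately, in each case reducing to a low-order Sobolev norm of $u$ on $\Omega$. For the two $L^\infty$ terms I would apply the three-dimensional Sobolev embedding $H^2(\Omega)\hookrightarrow L^\infty(\Omega)$, giving $\lnm{\nabla u}{\infty}^2\ls\hm{u}{3}^2$ and $\lnm{\nabla^2u}{\infty}^2\ls\hm{u}{4}^2$; for the boundary term I would use the trace inequality $\hms{Du_i}{2}{\Sigma}\ls\hm{Du_i}{5/2}$, so that $\sum_i\hms{Du_i}{2}{\Sigma}^2\ls\hm{u}{7/2}^2$. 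All three contributions are thus dominated by $\hm{u}{4}^2+\hm{u}{7/2}^2\ls\hm{u}{4}^2$.

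Since the global theorem assumes $N\geq4$, we have $2(N+2)=2N+4\geq12>4$, so the $j=0$ term of $\ce_{N+2}$, which is exactly $\hm{u}{2N+4}^2$, already controls $\hm{u}{4}^2$; hence $\k\ls\ce_{N+2}$. (If instead one wishes to phrase the estimate through the minimum-count energy $\ce_{N+2,2}$, whose spatial part of $u$ carries two horizontal derivatives, one first uses the momentum equation in (\ref{transform}) to express $\p_3^2u$, and inductively the higher pure-vertical derivatives, in terms of horizontal derivatives of $u$, of $\nabla p$, of $\p_tu$ and of the forcing — the coefficient $K^2(1+A^2+B^2)$ being bounded below — which is precisely the trade that the redefined dissipation, retaining only two vertical derivatives of $u$, is designed to make harmless; the resulting horizontal-structured norms then carry the interpolation power $2/(2+r)$ supplied by the $D$-type entries of lemma \ref{lower interpolation estimate}.) It then remains only to rewrite $\k\ls\ce_{N+2}$ in the stated interpolated form. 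In the bootstrap regime we have $0\leq\ce_{N+2}\leq\ce_{2N}\leq1$, so $\ce_{N+2}^{r/(2+r)}\leq\ce_{2N}^{r/(2+r)}$, and therefore
\begin{equation*}
\k\ls\ce_{N+2}=\ce_{N+2}^{2/(2+r)}\,\ce_{N+2}^{r/(2+r)}\leq\ce_{N+2}^{2/(2+r)}\,\ce_{2N}^{r/(2+r)},
\end{equation*}
which is the claim; equivalently, one may invoke the appendix interpolation lemmas \ref{appendix interpolation 2}--\ref{appendix interpolation 4} between the low norm $\ce_{N+2}$ and the high norm $\ce_{2N}$ (the degenerate choice of interpolation indices being admissible for every $r\in(0,1)$).

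I expect the only genuine point requiring care is the derivative bookkeeping: verifying that after the Sobolev and trace reductions each summand of $\k$ costs at most $7/2$ derivatives on $u$, which fits comfortably inside the regularity controlled by $\ce_{N+2}$ once $N\geq4$; and, should one insist on working with the minimum-count energy $\ce_{N+2,2}$ rather than with $\ce_{N+2}$, checking that the vertical-to-horizontal reduction via the momentum equation produces only terms already estimated with power $2/(2+r)$, the pressure and quadratic forcing contributions being absorbed exactly as in the preceding estimates. Beyond this, the argument is elementary and, in contrast with the bootstrapping of \cite{book9}, closes at once — this being the simplification afforded by the redesigned energy and dissipation.
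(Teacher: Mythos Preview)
Your main argument --- bounding $\k$ by $\hm{u}{4}^2$ via Sobolev embedding and then by the $j=0$ term of $\ce_{N+2}$ --- works only for the no-minimum-count energy. The printed statement has a typo: both the paper's own proof and every subsequent use of this lemma (for instance the bound $\lnm{\p_3 u_1}{\infty}^2\ls\ce_{2N}^{r/(2+r)}\ce_{N+2,2}^{2/(2+r)}$ in the temporal-derivative estimates, and the bound on $\k$ in the a priori section) concern $\ce_{N+2,2}$, whose spatial part of $u$ is $\hm{D^2u}{2N+2}^2$ and carries two mandatory horizontal derivatives. With this correction your direct Sobolev route fails at once, since $\hm{u}{4}$ contains pure vertical derivatives that $\ce_{N+2,2}$ does not control.

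You recognize this in your parenthetical, and the sketch there --- use the equations to trade vertical for horizontal derivatives --- is exactly the paper's strategy. But this is the whole content of the lemma, not a side check. The paper carries it out component by component: the divergence equation handles $\p_3u_3$; for $i=1,2$ one writes $\lnm{\p_3u_i}{\infty}\ls\lnm{\p_3^2u_i}{\infty}+\lnms{\p_3u_i}{\infty}{\Sigma}$ via Poincar\'e, uses the stress boundary condition on $\Sigma$ for the trace, and the $i$-th momentum equation for $\p_3^2u_i$. The latter produces $Dp$, not $\nabla p$ as you wrote --- a material distinction, since in the $L^\infty$ table of lemma \ref{lower interpolation estimate} $\nabla p$ carries only power $1/2$ while $Dp$ carries $2/(2+r)$. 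The boundary term $\hms{Du_i}{2}{\Sigma}$ requires its own decomposition (horizontal trace, then Poincar\'e and the boundary condition again), eventually yielding $\hm{D^2p}{0}$ among the remainders. Only after this reduction does every surviving term land on an entry with power $\geq 2/(2+r)$. Your parenthetical is on the right track but too coarse to close: it does not separate the components, and the slip ``$\nabla p$'' in place of ``$Dp$'' would in fact break the interpolation count.
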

\begin{proof}
\begin{eqnarray}
\k=\lnm{\nabla
u}{\infty}^2+\lnm{\nabla^2u}{\infty}^2+\sum_{i=1}^2\hms{Du_i}{2}{\Sigma}^2
\end{eqnarray}
We need to estimate each term above.\\
1. $\lnm{\nabla
u}{\infty}^2$:\\
By the definition, we have
\begin{eqnarray}
\lnm{\nabla u}{\infty}^2\ls\lnm{Du}{\infty}^2+\lnm{\p_3u}{\infty}^2
\end{eqnarray}
By the divergence equation, we have
\begin{eqnarray}
\lnm{\p_3u_3}{\infty}^2\ls \lnm{Du}{\infty}^2+\lnm{G^2}{\infty}^2
\end{eqnarray}
For $i=1,2$, by Poincare we can naturally estimate
\begin{eqnarray}
\lnm{\p_3u_i}{\infty}^2\ls\lnm{\p_3^2u_i}{\infty}^2+\lnms{\p_3u_i}{\infty}{\Sigma}^2
\end{eqnarray}
The upper boundary condition implies that
\begin{eqnarray}
\lnms{\p_3u_i}{\infty}{\Sigma}^2\ls\lnms{Du_3}{\infty}{\Sigma}^2+\lnms{G^3}{\infty}{\Sigma}^2
\ls\lnm{D\nabla u_3}{\infty}^2+\lnms{G^3}{\infty}{\Sigma}^2
\end{eqnarray}
Considering the Navier-Stokes equation, we have
\begin{eqnarray}
\lnm{\p_3^2u_i}{\infty}^2\ls\lnm{\dt
u_i}{\infty}^2+\lnm{D^2u_i}{\infty}^2+\lnm{Dp}{\infty}^2+\lnm{G^1}{\infty}^2
\end{eqnarray}
Therefore, to summarize all above
\begin{eqnarray}
\\
\lnm{\nabla
u}{\infty}^2&\ls&\lnm{Du}{\infty}^2+\lnm{G^2}{\infty}^2+\lnm{D\nabla
u_3}{\infty}^2+\lnms{G^3}{\infty}{\Sigma}^2+\lnm{\dt
u_i}{\infty}^2+\lnm{D^2u_i}{\infty}^2\nonumber\\
&&+\lnm{G^1}{\infty}^2+\lnm{Dp}{\infty}^2\nonumber
\end{eqnarray}
The estimates of each term above with lemma \ref{lower interpolation estimate} satisfy our requirement\\
\ \\
2. $\lnm{\nabla^2u}{\infty}^2$:\\
By the definition, we have
\begin{eqnarray}
\lnm{\nabla^2u}{\infty}^2\ls\lnm{D\nabla
u}{\infty}^2+\lnm{\p_3^2u_i}{\infty}^2+\lnm{\p_3^2u_3}{\infty}^2
\end{eqnarray}
The first term naturally satisfies our requirement while the second
term has been successfully estimated in the first step above. Hence,
we only need to estimate the third one. Taking $\p_3$ in the
divergence equation reveals
\begin{eqnarray}
\lnm{\p_3^2u_3}{\infty}^2\ls\lnm{D\nabla u_i}{\infty}^2+\lnm{\nabla
G^2}{\infty}^2
\end{eqnarray}
where all the terms can be easily estimated via lemma \ref{lower interpolation estimate}.\\
\ \\
3. $\sum_{i=1}^2\hms{Du_i}{2}{\Sigma}^2$:\\
By trace theorem and Poincare lemma, we have
\begin{eqnarray}
\sum_{i=1}^2\hms{Du_i}{2}{\Sigma}^2\ls\hm{D^3\nabla
u_i}{0}^2+\hm{D^2\nabla u_i}{0}^2+\hm{Du_i}{1}^2
\end{eqnarray}
The first and second term are naturally estimated, so we only need
to focus on the last one. By Poincare, we have
\begin{eqnarray}
\hm{Du_i}{1}^2\ls\hm{D\p_3u_i}{0}^2+\hm{D^2u_i}{0}^2
\end{eqnarray}
and also
\begin{eqnarray}
\hm{D\p_3u_i}{0}^2\ls\hm{D\p_3^2u_i}{0}^2+\hms{D\p_3u_i}{0}{\Sigma}^2
\end{eqnarray}
The boundary condition shows that
\begin{eqnarray}
\hms{D\p_3u_i}{0}{\Sigma}^2\ls\hms{D^2u_3}{0}{\Sigma}^2+\hms{DG^3}{0}{\Sigma}^2
\ls\hm{D^2\p_3u_3}{0}^2+\hms{DG^3}{0}{\Sigma}^2
\end{eqnarray}
By Naiver-Stokes equation, we have
\begin{eqnarray}
\hm{D\p_3^2u_i}{0}^2\ls\hm{D\dt
u_i}{0}^2+\hm{D^3u_i}{0}^2+\hm{D^2p}{0}^2+\hm{DG^1}{0}^2
\end{eqnarray}
Hence, in total, we have
\begin{eqnarray}
\\
\sum_{i=1}^2\hm{Du_i}{2}^2&\ls&\hm{D^2\nabla
u_i}{0}^2+\hm{D^2u}{0}^2+\hm{D^2\p_3u_3}{0}^2+\hms{DG^3}{0}{\Sigma}^2+\hm{D\dt
u_i}{0}^2\nonumber\\
&&+\hm{D^2p}{0}^2+\hm{DG^1}{0}^2\nonumber
\end{eqnarray}
The estimate of each term above  with lemma \ref{lower interpolation
estimate} satisfies our requirement, so we finish this proof.
\end{proof}
\begin{lemma}\label{lower interpolation estimate improved 2}
We have the estimate
\begin{eqnarray}
\bar\ce_{N+2,2}&\ls&\ce_{2N}^{1/(\lambda+3)}\d_{N+2,2}^{(\lambda+2)/(\lambda+3)}
\end{eqnarray}
\end{lemma}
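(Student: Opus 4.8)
The plan is to reduce $\bar\ce_{N+2,2}$ to a single pair of terms via Lemma~\ref{interpolation temp 1}: up to universal constants,
\[
\bar\ce_{N+2,2}\ls\hm{\bar D_2^{2N+4}u}{0}^2+\hms{\bar D_2^{2N+4}\e}{0}{\Sigma}^2 ,
\]
so it suffices to bound each summand $\hm{\p^\alpha u}{0}^2$ with $2\le\abs{\alpha}\le 2N+4$ (and the analogous surface derivatives of $\e$) by $\ce_{2N}^{1/(\lambda+3)}\d_{N+2,2}^{(\lambda+2)/(\lambda+3)}$. The point is that the minimum count $2$ is exactly what pins down the exponent: a quantity at parabolic level $k$ sits between the Riesz potential (which plays the role of level $-\lambda$) and a quantity at level $k+1$, so the natural interpolation weight on the latter is $\theta_k=(k+\lambda)/(k+1+\lambda)$, equal to $(\lambda+2)/(\lambda+3)$ when $k=2$ and strictly larger when $k>2$.

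For the velocity terms I would invoke the Riesz-potential-with-minimum-count interpolation lemmas \ref{appendix interpolation 2}--\ref{appendix interpolation 4}, using $\hm{\i u}{0}^2\le\ce_{2N}$ at the low end and the one-derivative-higher norm $\hm{\p^\alpha\nabla u}{0}^2$ at the high end, which yields $\hm{\p^\alpha u}{0}^2\ls\ce_{2N}^{1-\theta_{\abs{\alpha}}}\d_{N+2,2}^{\theta_{\abs{\alpha}}}$. The crucial structural input is that the redefined dissipation $\d_{N+2,2}=\hm{\bar D_2^{2N+3}u}{2}^2+\hm{\nabla\bar D_2^{2N+3}p}{0}^2+(\text{surface terms})$ does control all these $\hm{\p^\alpha\nabla u}{0}^2$ with $\abs{\alpha}\le 2N+4$: when $\p^\alpha$ contains a horizontal spatial derivative this is immediate from $\hm{\bar D_2^{2N+3}u}{2}^2$, and for the purely temporal/vertical excesses one trades a time derivative for two spatial ones via the Navier–Stokes equation of (\ref{transform}) (resp. uses $\da u=0$), the pressure contribution being absorbed by $\hm{\nabla\bar D_2^{2N+3}p}{0}^2$, the commutator/forcing contributions by lower-order pieces of $\ce_{2N}$, and a Korn inequality converting $\dm u$ back to $\nabla u$. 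Since $\d_{N+2,2}\le\ce_{2N}\le 1$ (Lemma~\ref{interpolation temp 2} and the a~priori smallness of $\g_N$) and $\theta_{\abs{\alpha}}\ge(\lambda+2)/(\lambda+3)$, writing $\ce_{2N}^{1-\theta}\d_{N+2,2}^{\theta}=\ce_{2N}^{1/(\lambda+3)}\d_{N+2,2}^{(\lambda+2)/(\lambda+3)}\,(\d_{N+2,2}/\ce_{2N})^{\theta-(\lambda+2)/(\lambda+3)}$ and bounding the last factor by $1$ upgrades every exponent to the claimed one.

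The surface terms $\hms{\p^\alpha\e}{0}{\Sigma}^2$ are treated in the same fashion, with $\hms{\i\e}{0}{\Sigma}^2\le\ce_{2N}$ at the low end and the $H^{1/2}$-controlled horizontal and mixed derivatives of $\e$ inside $\d_{N+2,2}$ at the high end; for the multi-indices carrying the most time derivatives, where neither endpoint is directly available, I would first convert $\dt^j\e$ into $\dt^{j-1}u|_{\Sigma}$ plus strictly lower-order terms using the differentiated transport equation (as in Step~2 of the proof of Theorem~\ref{transport estimate}) and the identity $\dt\e=-\p_1U_1-\p_2U_2$, together with $\hms{\i\dt\e}{0}{\Sigma}^2\ls\ce_{2N}$ from Lemma~\ref{interpolation temp 3}, thereby routing them through the velocity interpolation already established. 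I expect the main obstacle to be exactly this bookkeeping: verifying, component by component, that $\d_{N+2,2}$ genuinely furnishes the ``one level higher'' quantity needed for each multi-index (this is precisely what the restriction to at most two vertical derivatives of $u$ and one of $p$ in the new definition of $\d_{N+2,2}$ is engineered to guarantee), and checking that after the transport-equation substitutions the surface exponents still stay at or above $(\lambda+2)/(\lambda+3)$ so that the final upgrading step applies.
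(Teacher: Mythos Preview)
Your overall architecture is right (reduce via Lemma~\ref{interpolation temp 1}; check the worst exponent is $(\lambda+2)/(\lambda+3)$), but the paper's proof shows that the content lies almost entirely in three exceptional surface terms, and two points in your sketch would not close.

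First, the velocity part is much simpler than you suggest: every term $\hm{\p^\alpha u}{0}^2$ with $2\le|\alpha|\le 2N+4$ appearing in $\bar\ce_{N+2,2}$ is \emph{directly} bounded by $\d_{N+2,2}$ (for the top temporal order $\dt^{N+2}u$ one uses the equation once), and since $\d_{N+2,2}\le\ce_{2N}$ this already gives the claimed product. No Riesz interpolation is needed, and in fact the one you propose is not available: Lemmas~\ref{appendix interpolation 2}--\ref{appendix interpolation 4} interpolate only in the horizontal spatial direction, so there is no estimate of the form $\hm{\dt^j u}{0}^2\ls(\hm{\i u}{0}^2)^{1-\theta}(\hm{\dt^j\nabla u}{0}^2)^{\theta}$.

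Second, and more seriously, your treatment of the surface terms misses the two genuine obstructions. For the top spatial term $\hms{D^{2N+4}\e}{0}{\Sigma}^2$ there is no ``one level higher'' quantity in $\d_{N+2,2}$, which only reaches $\hms{D^{2N+3}\e}{1/2}{\Sigma}^2$; the paper instead interpolates between $\hms{\e}{2N+7/2}{\Sigma}^2\le\d_{N+2,2}$ and $\hms{\e}{4N}{\Sigma}^2\le\ce_{2N}$ via Lemma~\ref{appendix interpolation 1}, obtaining the exponent $(4N-8)/(4N-7)\ge(\lambda+2)/(\lambda+3)$ for $N\ge3$. For $\hms{\dt\e}{0}{\Sigma}^2$, passing to $u_3|_{\Sigma}$ via the transport equation (or to $\p_iU_i$) lands you at parabolic level $0$ or $1$, not $2$; interpolation at those levels yields $\lambda/(\lambda+2)$ or $(\lambda+1)/(\lambda+2)$, both strictly below $(\lambda+2)/(\lambda+3)$, so your ``routing through the velocity interpolation'' cannot produce the stated exponent. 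The paper instead climbs back to level $2$ by iterating Poincar\'e together with the divergence equation, the upper boundary condition, and the momentum equation, ultimately reducing $\hms{\dt\e}{0}{\Sigma}^2$ to $\hms{D^2\e}{0}{\Sigma}^2$ (which delivers exactly $(\lambda+2)/(\lambda+3)$ by Lemma~\ref{lower interpolation estimate}) plus terms already dominated by $\d_{N+2,2}$ or quadratic in the nonlinearities. This chain of substitutions is the heart of the lemma and is absent from your sketch.
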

\begin{proof}
The dissipation $\d_{N+2,2}$ can control all the terms in
$\bar\ce_{N+2,2}$ except that $\dt\e$, $D^2\e$ and $D^{2(N+2)}\e$. A
direct application of lemma \ref{appendix interpolation 1} reveals
that
\begin{eqnarray}
\hm{D^{2(N+2)}\e}{0}^2\ls\ce_{2N}^{1/(4N-7)}\d_{N+2,2}^{(4N-8)/(4N-7)}
\end{eqnarray}
Since $N\geq3$, above estimate satisfies our requirement. Also lemma
\ref{lower interpolation estimate} reveals that the estimate of
$D^2\e$ will not be an obstacle. Hence, the dominating term is
$\dt\e$. By transport equation, we have
\begin{eqnarray}
\hms{\dt\e}{0}{\Sigma}^2\ls\hms{u_3}{0}{\Sigma}^2+\hms{G^4}{0}{\Sigma}^2
\end{eqnarray}
By Poincare, we can estimate
\begin{eqnarray}
\hms{u_3}{0}{\Sigma}^2\ls\hm{\p_3u_3}{0}^2
\end{eqnarray}
while the divergence equation and Poincare imply that
\begin{eqnarray}
\hm{\p_3u_3}{0}^2\ls\hm{Du_i}{0}^2+\hm{G^2}{0}^2\ls\hm{D\p_3u_i}{0}^2+\hm{G^2}{0}^2
\end{eqnarray}
Similar to the third step in the proof of lemma \ref{lower
interpolation estimate improved 1}, by Poincare, we have
\begin{eqnarray}
\hm{D\p_3u_i}{0}^2\ls\hm{D\p_3^2u_i}{0}^2+\hms{D\p_3u_i}{0}{\Sigma}^2
\end{eqnarray}
The boundary condition shows that
\begin{eqnarray}
\hms{D\p_3u_i}{0}{\Sigma}^2\ls\hms{D^2u_3}{0}{\Sigma}^2+\hms{DG^3}{0}{\Sigma}^2
\ls\hm{D^2\p_3u_3}{0}^2+\hms{DG^3}{0}{\Sigma}^2
\end{eqnarray}
By Naiver-Stokes equation, we have
\begin{eqnarray}
\hm{D\p_3^2u_i}{0}^2\ls\hm{D\dt
u_i}{0}^2+\hm{D^3u_i}{0}^2+\hm{D^2p}{0}^2+\hm{DG^1}{0}^2
\end{eqnarray}
where $D^2p$ dominates. Utilizing Poincare, we have
\begin{eqnarray}
\hm{D^2p}{0}^2\ls\hm{D^2\p_3p}{0}^2+\hms{D^2p}{0}{\Sigma}^2
\end{eqnarray}
in which we may again use the boundary condition to estimate
\begin{eqnarray}
\hms{D^2p}{0}{\Sigma}^2\ls\hms{D^2\e}{0}{\Sigma}^2+\hms{D^2\p_3u_3}{0}{\Sigma}^2+\hms{D^2G^3}{0}{\Sigma}^2
\end{eqnarray}
Hence, in total, we can estimate that
\begin{eqnarray}
\\
\hm{\dt\e}{0}^2&\ls&\hm{D^2\p_3u_3}{0}^2+\hms{DG^3}{0}{\Sigma}^2+\hm{D\dt
u_i}{0}^2+\hm{D^3u_i}{0}^2+\hm{DG^1}{0}^2\nonumber\\
&&+\hm{D^2\p_3p}{0}^2+\hms{D^2\e}{0}{\Sigma}^2+\hms{D^2\p_3u_3}{0}{\Sigma}^2+\hm{G^2}{0}^2+\hms{G^4}{0}{\Sigma}^2\nonumber\\
&&+\hms{D^2G^3}{0}{\Sigma}^2\nonumber
\end{eqnarray}
We may easily check via lemma \ref{lower interpolation estimate}
that each term above satisfies our requirement, so we finish this
proof.
\end{proof}
\begin{lemma}\label{lower interpolation estimate improved 3}
We have the estimate
\begin{eqnarray}
\ce_{N+2,1}\ls\ce_{N+2,2}^{(\lambda+1)/(\lambda+2)}\ce_{2N}^{1/(\lambda+2)}
\end{eqnarray}
\end{lemma}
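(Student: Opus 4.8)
The plan is to compare $\ce_{N+2,1}$ with $\ce_{N+2,2}$ and $\ce_{2N}$ block by block. Set $\theta=(\lambda+1)/(\lambda+2)$, so that the target is $X\ls\ce_{N+2,2}^{\theta}\ce_{2N}^{1-\theta}$ for each summand $X$ of $\ce_{N+2,1}$. Reading off the definitions in (\ref{definition 1}), the temporal blocks $\sum_{j=1}^{N+2}\hm{\dt^ju}{2N-2j+4}^2$, $\sum_{j=1}^{N+1}\hm{\dt^jp}{2N-2j+3}^2$ and $\sum_{j=1}^{N+2}\hms{\dt^j\e}{2N-2j+4}{\Sigma}^2$ are literally identical in $\ce_{N+2,1}$ and $\ce_{N+2,2}$, and $\hm{D^2u}{2N+2}^2+\hm{D^2p}{2N+1}^2+\hms{D^2\e}{2N+2}{\Sigma}^2\le\ce_{N+2,2}$. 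For any block $X$ with $X\le\ce_{N+2,2}$ we have $X=X^{\theta}X^{1-\theta}\le\ce_{N+2,2}^{\theta}\ce_{2N}^{1-\theta}$, since $\ce_{N+2,2}\le\ce_{2N}\le1$ by Lemma \ref{interpolation temp 2} and the a priori smallness. Hence the only quantities that need real work are the genuinely new pieces of $\ce_{N+2,1}$, namely the parts of $\hm{Du}{2N+3}^2$, $\hm{Dp}{2N+2}^2$ and $\hms{D\e}{2N+3}{\Sigma}^2$ that carry exactly one horizontal derivative and are not already inside $\ce_{N+2,2}$.

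For the surface term the extra content of $\hms{D\e}{2N+3}{\Sigma}^2$ beyond $\hms{D^2\e}{2N+2}{\Sigma}^2$ is just $\hms{D\e}{0}{\Sigma}^2$, because on $\Sigma$ every derivative is horizontal. Homogeneous interpolation on $\Sigma$ then gives $\hms{D\e}{0}{\Sigma}^2\ls\hms{D^2\e}{0}{\Sigma}^{2\theta}\hms{\i\e}{0}{\Sigma}^{2(1-\theta)}$, and the exponent is forced to be exactly $\theta$ precisely because $\i=I_\lambda$ is the Riesz potential of order $\lambda$: interpolating the weight $|D'|^{1}$ between $|D'|^{2}$ and $|D'|^{-\lambda}$ requires $1=2\theta-\lambda(1-\theta)$, i.e. $\theta=(\lambda+1)/(\lambda+2)$. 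Since $\hms{D^2\e}{0}{\Sigma}^2\le\ce_{N+2,2}$ and $\hms{\i\e}{0}{\Sigma}^2\le\ce_{2N}$, this term is done. The analogous one–parameter argument, carried out slice by slice in $x_3$ in the horizontal Fourier variable, handles the interior terms $\hm{\p_iu}{k}^2$ whose top derivative is purely vertical $\p_3^k$, $i=1,2$, as long as $k\le2N+2$: interpolate $\p_i\p_3^k u$ between $\hm{D^2\p_3^ku}{0}^2\le\hm{D^2u}{2N+2}^2\le\ce_{N+2,2}$ and a Riesz–weighted piece, the latter controlled through the appendix interpolation estimates (Lemma \ref{appendix interpolation 1} and the bounds $\hms{\i\dt\e}{0}{\Sigma}^2\ls\ce_{2N}$ of Lemma \ref{interpolation temp 3}, together with the high–regularity block $\hm{u}{4N}^2\le\ce_{2N}$, where the hypothesis $N\ge4$ leaves the necessary slack in the order count).

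The one case that cannot be reached by pure interpolation is the very top interior monomial $\hm{\p_1\p_3^{2N+3}u}{0}^2$ (and its $\p_2$-analogue, and correspondingly $\hm{\p_i\p_3^{2N+2}p}{0}^2$), because $\ce_{N+2,2}$ does not control $D^2$ applied to a $(2N{+}3)$-fold vertical derivative. Here I would invoke the equations of (\ref{perturbed linear form equation}): from the momentum equation $\p_3^2u_i=\dt u_i-D^2u_i+\p_ip-G^1_i$ for $i=1,2$ and from $\nabla\cdot u=G^2$, $\p_3u_3=-\p_1u_1-\p_2u_2+G^2$. Substituting once, $\p_1\p_3^{2N+3}u_i=\p_1\p_3^{2N+1}(\dt u_i-D^2u_i+\p_ip-G^1_i)$ for $i=1,2$, and $\p_1\p_3^{2N+3}u_3=-D^2\p_3^{2N+2}u_1-\p_1\p_2\p_3^{2N+2}u_2+\p_1\p_3^{2N+2}G^2$; the leading linear pieces $\p_1\p_3^{2N+1}\dt u$, $D^3\p_3^{2N+1}u$, $D^2\p_3^{2N+1}p$, $D^2\p_3^{2N+2}u$ each sit in a definite block of $\ce_{N+2,2}$ after matching the parabolic order and the horizontal/temporal counts, while the $G^1,G^2$ remainders are absorbed by Lemma \ref{lower interpolation estimate}; the pressure term is reduced the same way through the third momentum equation $\p_3p=G^1_3-\dt u_3+D^2u_3+\p_3^2u_3$, which bounces it back onto the (already handled) top $u_3$-term plus lower order and nonlinear contributions. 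Collecting everything, and using once more $\ce_{N+2,2}\le\ce_{2N}\le1$ to fit the power–one nonlinear estimates of Lemma \ref{lower interpolation estimate} inside the $\ce_{N+2,2}^{\theta}\ce_{2N}^{1-\theta}$ envelope, yields the claimed bound. The main obstacle is the bookkeeping in this last step: one must keep precise track of which summand of $\ce_{N+2,2}$ in (\ref{definition 1}) absorbs each reduced monomial (the number of horizontal, temporal and vertical derivatives and the total parabolic order all have to fit the block indices), and one must check that every Riesz–weighted intermediate-order factor produced along the way is genuinely dominated by $\ce_{2N}$ and not by a strictly more singular quantity.
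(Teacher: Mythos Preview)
Your overall plan is sound and close to the paper's: identify the summands of $\ce_{N+2,1}$ not already in $\ce_{N+2,2}$, and show each is bounded by $\ce_{N+2,2}^{\theta}\ce_{2N}^{1-\theta}$ with $\theta=(\lambda+1)/(\lambda+2)$. Your treatment of $\hms{D\e}{0}{\Sigma}^2$ and of the \emph{top} interior term via the equations (your ``step~5'') is correct.

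The gap is in your ``step~4''. For the intermediate interior terms $\hm{\p_i\p_3^{k}u}{0}^2$ ($i=1,2$, $1\le k\le 2N+2$) you propose the slice-by-slice horizontal interpolation
\[
\hm{D\p_3^{k}u}{0}^2 \;\ls\; \bigl(\hm{D^2\p_3^{k}u}{0}^2\bigr)^{\theta}\bigl(\hm{\i\p_3^{k}u}{0}^2\bigr)^{1-\theta}.
\]
For this to land you in $\ce_{N+2,2}^{\theta}\ce_{2N}^{1-\theta}$ you need $\hm{\i\p_3^{k}u}{0}^2\ls\ce_{2N}$. But $\ce_{2N}$ contains only $\hm{\i u}{0}^2$, not $\hm{\i\p_3^{k}u}{0}^2=\hm{\p_3^{k}\i u}{0}^2$; the Riesz potential acts horizontally and carries no information about vertical regularity of $\i u$. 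None of the cited appendix lemmas or the bound $\hms{\i\dt\e}{0}{\Sigma}^2\ls\ce_{2N}$ supplies this, and ``slack from $N\ge4$'' does not help, since the obstruction is at low horizontal frequency, not at high order.

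The fix is to do for \emph{all} $k\ge 1$ exactly what you already do for the top term: use the equations instead of interpolation. For $k\ge 2$ and $i=1,2$, one application of the $i$-th momentum equation gives
\[
D\p_3^{k}u_i \;=\; D\p_3^{k-2}\dt u_i \;-\; D^3\p_3^{k-2}u_i \;+\; D^2\p_3^{k-2}p \;-\; D\p_3^{k-2}G^1_i,
\]
and each linear piece lies in one of the blocks $\hm{\dt u}{2N+2}^2$, $\hm{D^2u}{2N+2}^2$, $\hm{D^2p}{2N+1}^2$ of $\ce_{N+2,2}$ (no minimum horizontal count is required once a time derivative is present). For $u_3$, one use of the divergence equation does the same. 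For $k=1$, Poincar\'e from $\Sigma$ and the boundary condition $\p_3u_i=-\p_iu_3-G^3_i$ reduce $D\p_3u_i$ to $D^2$-terms. The analogous reduction via the third momentum equation handles $D\p_3^{k}p$ for $k\ge1$. Hence every ``one-horizontal'' interior term with $k\ge1$ is actually $\ls\ce_{N+2,2}$ (modulo nonlinear remainders absorbed by Lemma~\ref{lower interpolation estimate}), and since $\ce_{N+2,2}\le\ce_{N+2,2}^{\theta}\ce_{2N}^{1-\theta}$, these terms are harmless. This is precisely the paper's point of view: only $\hm{Du}{0}^2$, $\hm{Dp}{0}^2$, $\hms{D\e}{0}{\Sigma}^2$ genuinely require the Riesz interpolation, and the paper then treats $\hm{Dp}{0}^2$ by a Poincar\'e--boundary--equation argument of the same flavour as your step~5.
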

\begin{proof}
We only need to estimate the lower order terms which merely appears
in $\ce_{N+2,1}$, which are $Du$, $Dp$ and $D\e$. Based on the lemma
\ref{lower interpolation estimate}, the estimates of $Du$ and $D\e$
have satisfied our requirement, so we focus on $Dp$. By a similar
argument as lemma \ref{lower interpolation estimate improved 2}, we
have that
\begin{eqnarray}
\hm{Dp}{0}^2&\ls&\hms{Dp}{0}{\Sigma}^2+\hm{D\p_3p}{0}^2\\
&\ls&\hms{D\e}{0}{\Sigma}^2+\hm{D\p_3^2u}{0}^2+\hms{G^3}{0}{\Sigma}^2+\hm{D\Delta
u}{0}^2+\hm{DG^1}{0}^2\nonumber
\end{eqnarray}
where each term can be estimated directly through lemma \ref{lower
interpolation estimate}. Then our result easily follows.
\end{proof}

\subsubsection{Higher Interpolation Estimates}

Then we need to give several estimates for the higher order terms.
\begin{lemma}
We have the estimate
\begin{eqnarray}
\hms{D^{2N+4}\e}{1/2}{\Sigma}^2+\hm{\nabla^{2N+5}\eb}{0}^2\ls\ce_{2N}^{2/(4N-7)}\d_{N+2,2}^{(4N-9)/(4N-7)}
\end{eqnarray}
\end{lemma}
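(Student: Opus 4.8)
The plan is to reduce both quantities on the left–hand side to a single homogeneous Sobolev norm of $\e$ on $\Sigma$ and then apply one interpolation inequality. Since in the flat–bottom global setting $\eb=\pp\e$ is the Poisson extension of $\e$, the Poisson–integral estimate in the appendix (the analogue, with $\epsilon=1$, of Lemma \ref{appendix poisson 1}, giving $\hm{\nabla^q\pp\e}{0}^2\ls\nm{\e}^2_{\dot{H}^{q-1/2}(\Sigma)}$) yields, with $q=2N+5$,
\[
\hm{\nabla^{2N+5}\eb}{0}^2\ls\nm{\e}^2_{\dot{H}^{2N+9/2}(\Sigma)}.
\]
On the boundary every derivative is horizontal, so $\hms{D^{2N+4}\e}{1/2}{\Sigma}^2$ is controlled (up to a lower homogeneous piece dealt with at the last step) by the same quantity $\nm{\e}^2_{\dot{H}^{2N+9/2}(\Sigma)}$. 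Hence it suffices to prove $\nm{\e}^2_{\dot{H}^{2N+9/2}(\Sigma)}\ls\ce_{2N}^{2/(4N-7)}\d_{N+2,2}^{(4N-9)/(4N-7)}$.

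Next I would identify the two interpolation endpoints. The dissipation $\d_{N+2,2}$ contains the term $\hms{D_3^{2N+3}\e}{1/2}{\Sigma}^2$, whose top–order part controls $\nm{\e}^2_{\dot{H}^{2N+7/2}(\Sigma)}$, while the energy $\ce_{2N}$ contains $\hms{\e}{4N}{\Sigma}^2$, which controls $\nm{\e}^2_{\dot{H}^{4N}(\Sigma)}$. For $N\ge4$ we have $2N+7/2\le 2N+9/2\le 4N$, so the homogeneous interpolation inequality (exactly the content of Lemma \ref{appendix interpolation 1}) gives
\[
\nm{\e}^2_{\dot{H}^{2N+9/2}(\Sigma)}\ls\big(\nm{\e}^2_{\dot{H}^{2N+7/2}(\Sigma)}\big)^{1-\theta}\big(\nm{\e}^2_{\dot{H}^{4N}(\Sigma)}\big)^{\theta},
\]
where $\theta$ solves $2N+9/2=(1-\theta)(2N+7/2)+4N\theta$, i.e. $\theta=\tfrac{2}{4N-7}$ and $1-\theta=\tfrac{4N-9}{4N-7}$. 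Substituting the endpoint bounds $\nm{\e}^2_{\dot{H}^{2N+7/2}(\Sigma)}\ls\d_{N+2,2}$ and $\nm{\e}^2_{\dot{H}^{4N}(\Sigma)}\ls\ce_{2N}$ produces the claimed estimate.

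Finally I would clean up the lower–order bookkeeping. The equivalence $\hms{D^{2N+4}\e}{1/2}{\Sigma}^2\sim\nm{\e}^2_{\dot{H}^{2N+4}(\Sigma)}+\nm{\e}^2_{\dot{H}^{2N+9/2}(\Sigma)}$ forces control of the lower homogeneous piece $\nm{\e}^2_{\dot{H}^{2N+4}(\Sigma)}$; this is obtained by interpolating it between $\dot{H}^{2N+7/2}$ and $\dot{H}^{2N+9/2}$ (hence between $\d_{N+2,2}$ and the quantity just bounded), and then using $\d_{N+2,2}\ls\ce_{2N}$ from Lemma \ref{interpolation temp 2} together with the a priori bound $\ce_{2N}\le1$ to collapse any stray factor back into $\ce_{2N}^{2/(4N-7)}\d_{N+2,2}^{(4N-9)/(4N-7)}$. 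The main point to get right is precisely this bookkeeping — checking that all interpolation exponents lie in $[0,1]$ when $N\ge4$ and that the low–order and fractional tails are genuinely subordinate to the leading product — rather than any conceptual difficulty; the conceptual content is just the single homogeneous interpolation supplied by Lemma \ref{appendix interpolation 1}.
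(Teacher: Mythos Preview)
Your proposal is correct and follows precisely the approach the paper intends: the paper's own proof is nothing more than a pointer to Lemma 3.18 in \cite{book9}, whose content is exactly the homogeneous interpolation you carry out --- reduce $\hm{\nabla^{2N+5}\eb}{0}^2$ to $\nm{\e}_{\dot H^{2N+9/2}(\Sigma)}^2$ via the Poisson--integral bound, then interpolate between $\dot H^{2N+7/2}$ (controlled by $\d_{N+2,2}$ through $\hms{D_3^{2N+3}\e}{1/2}{\Sigma}^2$) and $\dot H^{4N}$ (controlled by $\ce_{2N}$), yielding the exponents $2/(4N-7)$ and $(4N-9)/(4N-7)$. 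Your handling of the lower homogeneous piece $\dot H^{2N+4}$ via a second interpolation and the reduction $\d_{N+2,2}\ls\ce_{2N}$ is also the right way to finish.
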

\begin{proof}
See lemma 3.18 in \cite{book9}.
\end{proof}
\begin{lemma}
We have the estimate
\begin{eqnarray}
\hms{u}{2}{\Sigma}^2+\hms{\nabla u}{2}{\Sigma}^2\ls \d_{N+2,2}^{1/3}
\end{eqnarray}
\end{lemma}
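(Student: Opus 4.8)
The plan is to move the boundary norms of $u$ and of $\nabla u$ into the slab, reduce to an interior estimate for $\nabla u$ in $H^{5/2}$, recognise that this norm is essentially part of $\d_{N+2,2}$, and then absorb the nonlinear remainder. First I would use the trace theorem together with the Poincar\'e inequality on the slab (recall $u|_{\Sigma_b}=0$): since $\hms{u}{2}{\Sigma}^2\ls\hm{u}{5/2}^2$ and $\hms{\nabla u}{2}{\Sigma}^2\ls\hm{\nabla u}{5/2}^2\ls\hm{u}{7/2}^2$,
\begin{eqnarray*}
\hms{u}{2}{\Sigma}^2+\hms{\nabla u}{2}{\Sigma}^2\ls\hm{u}{7/2}^2\ls\hm{u}{0}^2+\hm{\nabla u}{5/2}^2\ls\hm{\nabla u}{5/2}^2 ,
\end{eqnarray*}
so it suffices to bound $\hm{\nabla u}{5/2}^2$. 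Keeping $\nabla u$ rather than $u$ is the point: no zeroth–order term of $u$ survives, and $\d_{N+2,2}$ — which carries a minimum parabolic count of two — controls $\hm{\nabla^m u}{0}^2$ for all $2\le m\le 2N+5$ directly (inside $\hm{\bar D_2^{2N+3}u}{2}^2$ take purely spatial multi-indices $\alpha$ with $2\le|\alpha|\le 2N+3$ and spend the two extra derivatives supplied by the $H^2$ norm). Since $N\ge 4$ this covers all orders $2\le m\le 7/2$, so the only genuinely new contribution to $\hm{\nabla u}{5/2}^2$ is $\hm{\nabla u}{0}^2$.

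To recover $\hm{\nabla u}{0}^2$ I would split $\nabla u=(Du,\p_3u)$ and use the perturbed linear system (\ref{perturbed linear form equation}). For the horizontal derivatives, $Du|_{\Sigma_b}=0$, so Poincar\'e gives $\hm{Du}{0}^2\ls\hm{D\p_3u}{0}^2\le\hm{\nabla^2u}{0}^2\le\d_{N+2,2}$. For $\p_3u_3$, the divergence relation $\nabla\cdot u=G^2$ gives $\hm{\p_3u_3}{0}^2\ls\hm{Du}{0}^2+\hm{G^2}{0}^2$. For $\p_3u_i$ with $i=1,2$, the one–dimensional bound $\hm{f}{0}^2\ls\hm{\p_3f}{0}^2+\hms{f}{0}{\Sigma}^2$ applied to $f=\p_3u_i$, together with $\hm{\p_3^2u_i}{0}^2\le\hm{\nabla^2u}{0}^2\le\d_{N+2,2}$ and the upper boundary condition $(pI-\dm u-\e I)e_3=G^3$, which reads $\p_3u_i=-\p_iu_3-G^3_i$ on $\Sigma$, yields $\hm{\p_3u_i}{0}^2\ls\d_{N+2,2}+\hms{Du_3}{0}{\Sigma}^2+\hms{G^3}{0}{\Sigma}^2\ls\d_{N+2,2}+\hms{G^3}{0}{\Sigma}^2$, using the trace bound $\hms{Du_3}{0}{\Sigma}^2\ls\hm{\nabla^2u}{0}^2$. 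Wherever an occurrence of $\p_3u_i|_\Sigma$ reappears hidden inside $G^2$ or $G^3$ it is multiplied by a factor $O(\hms{\e}{5/2}{\Sigma})$, which the smallness lemma makes $\le\tfrac12$, so it is absorbed on the left. Collecting, $\hm{\nabla u}{5/2}^2\ls\d_{N+2,2}+\hm{G^2}{0}^2+\hms{G^3}{0}{\Sigma}^2$.

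It then remains to absorb the forcing. By the interpolation tables of Lemma \ref{lower interpolation estimate}, $\hm{G^2}{0}^2\ls\d_{N+2,2}^{m_2}\ce_{2N}^{1-m_2}$ and $\hms{G^3}{0}{\Sigma}^2\ls\d_{N+2,2}^{m_1}\ce_{2N}^{1-m_1}$, and one checks directly that the exponents $m_1,m_2$ defined there satisfy $m_1,m_2\ge 1/3$ for the admissible range of $\lambda$. In the a priori regime one has $\ce_{2N}\le 1$ and, by Lemma \ref{interpolation temp 2}, $\d_{N+2,2}\ls\ce_{2N}\le 1$; hence $\d_{N+2,2}^{m_i}\ce_{2N}^{1-m_i}\le\d_{N+2,2}^{1/3}$, and likewise the leading term $\d_{N+2,2}=\d_{N+2,2}^{1}\le\d_{N+2,2}^{1/3}$. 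Combining the three steps gives $\hms{u}{2}{\Sigma}^2+\hms{\nabla u}{2}{\Sigma}^2\ls\d_{N+2,2}^{1/3}$.

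The delicate part here is pure bookkeeping rather than a conceptual difficulty: one must verify that the table powers for $G^2$ in $L^2(\Omega)$ and for $G^3$ in $L^2(\Sigma)$ genuinely dominate $1/3$, and keep track of the small–coefficient absorptions for the self–referential occurrences of $\p_3u_i|_\Sigma$ appearing inside $G^2$ and $G^3$. If one prefers to avoid the tables altogether, an alternative is to re–derive $\hm{G^2}{0}^2,\hms{G^3}{0}{\Sigma}^2\ls\d_{N+2,2}$ directly from the explicit expressions for $G^2,G^3$ in (\ref{perturbed linear form equation}), using the smallness of the $\eb$–coefficients and the first–order control of $u$ obtained in the second step; this even gives the stronger bound with exponent $1$, of which the stated $\d_{N+2,2}^{1/3}$ is a (more convenient) weakening.
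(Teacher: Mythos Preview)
Your argument has a genuine gap at the step where you claim that $\d_{N+2,2}$ controls $\hm{\nabla^m u}{0}^2$ for all $2\le m$, justified by ``inside $\hm{\bar D_2^{2N+3}u}{2}^2$ take purely spatial multi-indices $\alpha$ with $2\le|\alpha|\le 2N+3$ and spend the two extra derivatives supplied by the $H^2$ norm.'' This misreads the notation: in this paper $D$ denotes \emph{horizontal} derivatives only, and $\bar D$ denotes horizontal-plus-temporal derivatives (multi-indices in $\mathbb{N}^{1+2}$), not full spatial derivatives. Consequently the only vertical derivatives available in $\hm{\bar D_2^{2N+3}u}{2}^2$ are the (at most two) coming from the outer $H^2$ norm, and every such term carries at least two horizontal/temporal derivatives from the minimum count. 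In particular $\hm{\p_3^2 u}{0}^2$ and $\hm{\p_3^3 u}{0}^2$ are \emph{not} contained in $\d_{N+2,2}$, so your reduction to $\hm{\nabla u}{5/2}^2$ fails: the purely vertical contributions above order two are simply not there.

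This is exactly why the paper does not pass through $\hm{u}{7/2}^2$. Instead it first expands the $H^2(\Sigma)$ norm horizontally and only then applies trace, obtaining
\[
\hms{u}{2}{\Sigma}^2+\hms{\nabla u}{2}{\Sigma}^2\ls\hm{\nabla u}{0}^2+\hm{\nabla^2 u}{0}^2+\hm{D^2\nabla u}{0}^2+\hm{D^2\nabla^2 u}{0}^2,
\]
so that no term carries more than two vertical derivatives. The last two summands sit directly in $\d_{N+2,2}$ because of the $D^2$ prefix; the first two are then reduced via the equations, as in the proof of Lemma~\ref{lower interpolation estimate improved 1}, to quantities including $\hm{Dp}{0}^2$. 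It is this $Dp$ term, with interpolation index $1/3$ in the $\d_{N+2,2}$ column of Lemma~\ref{lower interpolation estimate}, that produces the stated exponent. Your route through $\hm{\nabla u}{0}^2$ alone would never see $Dp$, and your treatment of the $G^i$ is fine, but the structural point --- keeping the vertical count at two --- is what you are missing. The fix is to use the paper's horizontal decomposition of the boundary norm before trace, and then additionally estimate $\hm{\nabla^2 u}{0}^2$ (not only $\hm{\nabla u}{0}^2$) via the equations.
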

\begin{proof}
Trace theorem implies that
\begin{eqnarray}
\hms{u}{2}{\Sigma}^2+\hms{\nabla u}{2}{\Sigma}^2\ls\hm{\nabla
u}{0}^2+\hm{\nabla^2u}{0}^2+\hm{D^2\nabla
u}{0}^2+\hm{D^2\nabla^2u}{0}^2
\end{eqnarray}
The last two terms naturally satisfy our requirement, so we
concentrate on the first two. Similar to the argument in lemma
\ref{lower interpolation estimate improved 1}, we have
\begin{eqnarray}
\hm{\nabla
u}{0}^2+\hm{\nabla^2u}{0}^2&\ls&\hm{Du}{0}^2+\hm{G^2}{0}^2+\hm{D\nabla
u}{0}^2+\hms{G^3}{0}{\Sigma}^2\\
&&+\hm{\dt
u}{0}^2+\hm{D^2u}{0}^2+\hm{Dp}{0}^2+\hm{G^1}{0}^2+\hm{\nabla
G^2}{0}^2\nonumber
\end{eqnarray}
The estimate of each term on the right hand side easily lead to our
result.
\end{proof}
\begin{lemma}\label{higher interpolation estimate}
Let $P=P(K,D\e)$ be a polynomial of $K$ and $D\e$. Then there exists
a $\theta>0$ such that
\begin{eqnarray}
\hms{(D^{2N+4}\e)u}{1/2}{\Sigma}^2+\hms{(D^{2N+4}\e)P\nabla
u}{1/2}{\Sigma}^2\ls\ce^{\theta}_{2N}\d_{N+2,2}
\end{eqnarray}
Let $Q=Q(K,\tilde b, \nabla\eb)$ be a polynomial. Then there exists
a $\theta>0$, such that
\begin{eqnarray}
\hm{(\nabla^{2N+5}\eb)Q\nabla u}{0}^2\ls\ce^{\theta}_{2N}\d_{N+2,2}
\end{eqnarray}
\end{lemma}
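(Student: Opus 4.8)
The plan is to reduce both inequalities to the two lemmas that immediately precede this one --- the one bounding $\hms{D^{2N+4}\e}{1/2}{\Sigma}^2+\hm{\nabla^{2N+5}\eb}{0}^2$ by $\ce_{2N}^{2/(4N-7)}\d_{N+2,2}^{(4N-9)/(4N-7)}$, and the one bounding $\hms{u}{2}{\Sigma}^2+\hms{\nabla u}{2}{\Sigma}^2$ by $\d_{N+2,2}^{1/3}$ --- together with the $L^{\infty}$ interpolation power $1/2$ for $\nabla u$ from Lemma \ref{lower interpolation estimate}. The only analytic ingredients beyond these are the multiplication estimate $\hms{fg}{1/2}{\Sigma}\ls\hms{f}{1/2}{\Sigma}\hms{g}{2}{\Sigma}$ (valid on the two-dimensional $\Sigma$ since $H^2(\Sigma)\hookrightarrow C^0(\Sigma)$ and $H^2$ multiplies $H^{1/2}$ boundedly), the algebra property $\hms{fg}{2}{\Sigma}\ls\hms{f}{2}{\Sigma}\hms{g}{2}{\Sigma}$, and Hölder's inequality in $L^2(\Omega)$. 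Throughout I would use the a priori bounds $\d_{N+2,2}\ls\ce_{2N}\le1$ (the first from Lemma \ref{interpolation temp 2}, the second from the smallness hypothesis), so that any surplus power of $\d_{N+2,2}$ beyond the first may be traded for a power of $\ce_{2N}$.

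For the boundary terms, first write $\hms{(D^{2N+4}\e)u}{1/2}{\Sigma}^2\ls\hms{D^{2N+4}\e}{1/2}{\Sigma}^2\hms{u}{2}{\Sigma}^2$ and bound the two factors by the two preceding lemmas. The accumulated power of $\d_{N+2,2}$ is $(4N-9)/(4N-7)+1/3$, which exceeds $1$ whenever $N\ge4$ (the hypothesis of the lemma); writing the excess as a power of $\ce_{2N}$ via $\d_{N+2,2}\ls\ce_{2N}$ then leaves an estimate of the form $\ce_{2N}^{\theta}\d_{N+2,2}$ with $\theta>0$ (a short computation gives $\theta=1/3$). For the second summand I would peel off $P$ using the $H^2(\Sigma)$ algebra: $\hms{(D^{2N+4}\e)P\nabla u}{1/2}{\Sigma}^2\ls\hms{D^{2N+4}\e}{1/2}{\Sigma}^2\hms{P}{2}{\Sigma}^2\hms{\nabla u}{2}{\Sigma}^2$. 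Since $P$ is a polynomial in $K$ and $D\e$, both of which are bounded in $H^2(\Sigma)$ under the smallness hypothesis (Sobolev embedding on $\Sigma$ together with the bounds on $\e$), one has $\hms{P}{2}{\Sigma}^2\ls1$, and the argument then proceeds exactly as before with $\hms{\nabla u}{2}{\Sigma}^2\ls\d_{N+2,2}^{1/3}$.

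For the interior term I would estimate by Hölder $\hm{(\nabla^{2N+5}\eb)Q\nabla u}{0}^2\ls\hm{\nabla^{2N+5}\eb}{0}^2\lnm{Q}{\infty}^2\lnm{\nabla u}{\infty}^2$. Here $\hm{\nabla^{2N+5}\eb}{0}^2$ is controlled by the same preceding lemma, $\lnm{Q}{\infty}^2\ls1$ because $Q$ is a polynomial in the pointwise-bounded quantities $K$, $\tilde b$, $\nabla\eb$, and $\lnm{\nabla u}{\infty}^2\ls\d_{N+2,2}^{1/2}\ce_{2N}^{1/2}$ by Lemma \ref{lower interpolation estimate}. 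Again the total power of $\d_{N+2,2}$ is $(4N-9)/(4N-7)+1/2$, which is $>1$ for $N\ge4$, so trading the surplus for $\ce_{2N}$ yields the claimed form with some $\theta>0$ (here in fact $\theta=1$).

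The only genuinely delicate point is the exponent bookkeeping: the estimate would be false without a dissipation exponent at least $1$, and obtaining that requires both $N\ge4$ and the improved definition of $\d_{N+2,2}$ used in this section (at most two vertical derivatives on $u$, one on $p$), which is precisely what makes the preceding lemma on $\hms{D^{2N+4}\e}{1/2}{\Sigma}^2$ and $\hm{\nabla^{2N+5}\eb}{0}^2$ come out with a $\d_{N+2,2}$-power so close to $1$. Once those inputs are granted, I expect the present lemma to be routine.
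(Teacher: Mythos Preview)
Your proposal is correct and follows essentially the same route as the paper: the paper's proof simply cites lemma 3.19 of \cite{book9} and notes that the two immediately preceding lemmas recover all the ingredients needed there, and your argument is precisely the reconstruction of that proof from those two inputs (product estimates in $H^{1/2}(\Sigma)$ and $H^2(\Sigma)$, Hölder in $\Omega$, and the $L^\infty$ interpolation power $1/2$ for $\nabla u$). Your exponent bookkeeping is also right --- the boundary terms yield $\theta=1/3$ for every $N\ge4$, and the interior term gives $\theta=1$ --- so nothing needs to be changed.
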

\begin{proof}
This lemma is identical to lemma 3.19 in \cite{book9}. The above two
lemmas implies that all the results used in proving lemma 3.19 have
been recovered now, so this lemma is still valid.
\end{proof}

\subsection{Nonlinear Estimates}

\subsubsection{Estimates of Perturbation Terms}

Now we will give several estimates for the nonlinear terms in
perturbed linear form. Similar to theorem 4.1 and 4.2 in
\cite{book9}, the estimating methods for these terms are quite
standard, so we will merely give the basic rules in estimating:
\begin{enumerate}
\item
Since all the terms are quadratic or of higher order, then we may
expand the differential operators with Leibniz rule and estimate
resulting quadratic terms by the basic interpolation estimate in
lemma \ref{lower interpolation estimate} either in $L^{\infty}$ or
$L^2$ norm, combining with the definition of energy and dissipation,
trace theorem and Sobolev embedding. For the choices of form
(\ref{interpolation temp 4}) or (\ref{interpolation temp 5}), we
should always take the higher interpolation index for $\ce_{N+2,2}$
and $\d_{N+2,2}$.
\item
For the $2N$ level, there is no minimum count for the derivatives,
hence we only need to refer to the definition of energy and
dissipation; however, in $N+2$ level, we have to resort to lemma
\ref{lower interpolation estimate} for terms that cannot be
estimated directly. Also, lemma \ref{higher interpolation estimate}
can be utilized for more complicated terms.
\item
Note that the most important difference between our proof and that
of theorem 4.1 and 4.2 in \cite{book9} is that we concentrate on the
horizontal derivatives instead of the full derivatives. Thereafter,
in the estimates related to $\d_{N+2,2}$, we should try to avoid the
introduction of vertical derivatives via Sobolev embedding,
especially for pressure $p$.
\item
For the term in form of $\hms{X}{2}{\Sigma}$, instead of using trace
theorem directly, we should first write it into
$\hms{X}{0}{\Sigma}+\hms{DX}{0}{\Sigma}+\hms{D^2X}{0}{\Sigma}$ and
then apply trace theorem, which will only produce one vertical
derivatives rather than three.
\end{enumerate}
Considering that the proofs for the following estimates are lengthy
but not difficult based on the above rules, and also most of them
have been achieved in that of theorem 4.1 and 4.2 of \cite{book9},
we will omit the details here and only present the theorems.
\begin{lemma}\label{lower nonlinear estimate}
There exists a $\theta>0$ such that
\begin{eqnarray}
\\
\hm{\bar D^2\bar\nabla_0^{2(N+2)-4}G^1}{0}^2+\hm{\bar
D^2\bar\nabla_0^{2(N+2)-3}G^2}{0}^2+\hms{\bar
D^2\bar\nabla_0^{2(N+2)-4}G^3}{1/2}{\Sigma}^2
\ls\ce_{2N}^{\theta}\ce_{N+2,2}\nonumber
\end{eqnarray}
and
\begin{eqnarray}
\\
\hm{\bar D_2^{2(N+2)-1}G^1}{0}^2+\hm{\bar
D_1^{2(N+2)-1}G^2}{1}^2+\hms{\bar
D_2^{2(N+2)-1}G^3}{1/2}{\Sigma}^2+\hms{\bar
D_1^{2(N+2)-1}G^4}{1/2}{\Sigma}^2\nonumber\\
\ls\ce_{2N}^{\theta}\d_{N+2,2}\nonumber
\end{eqnarray}
\end{lemma}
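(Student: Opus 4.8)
The plan is to estimate the nonlinear forcing terms $G^1,\dots,G^4$ by writing out the explicit formulas recorded after Lemma \ref{perturbed linear form}, applying the Leibniz rule to the differential operators $\bar D^2\bar\nabla_0^{\,k}$ (respectively $\bar D_j^{\,k}$), and bounding each resulting product of two factors by the interpolation estimates in Lemma \ref{lower interpolation estimate}, Lemma \ref{higher interpolation estimate}, trace theorem and Sobolev embedding. The key structural observation is that every term in every $G^i$ is at least quadratic: one factor $X$ is built purely from $\eb$ (entries such as $A$, $B$, $K$, $J-1$, $\tilde b$, $\nabla\eb$, $\dt\eb$ and polynomials thereof), and the other factor $Y$ involves $u$ or $p$. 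Thus, after distributing derivatives, each summand has the form $(\p^{\beta}X)(\p^{\alpha-\beta}Y)$, and I will bound it by $\lnm{\p^{\beta}X}{\infty}^2\lnm{\p^{\alpha-\beta}Y}{2}^2$ or $\lnm{\p^{\beta}X}{2}^2\lnm{\p^{\alpha-\beta}Y}{\infty}^2$, choosing whichever split keeps the interpolation power on $\ce_{N+2,2}$ or $\d_{N+2,2}$ as large as possible (as noted in the estimating rules, one always takes the higher interpolation index).

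For the first estimate (the $\ce_{N+2,2}$ bound), the target dissipation-free quantity on the right is $\ce_{N+2,2}$ to the first power times a positive power of $\ce_{2N}$; since each product is quadratic, the two interpolation powers $\theta_1,\theta_2$ coming from the two factors automatically satisfy $\theta_1+\theta_2\ge 1$ in the relevant ranges (using $\lambda$ small but fixed and $N\ge4$), so the computation rule $\nm{X}^2\ls\ce_{N+2,2}^{\theta_1+\theta_2-1}\ce_{2N}^{2-\theta_1-\theta_2}\ce_{N+2,2}\ls\ce_{2N}^{\theta}\ce_{N+2,2}$ stated at the start of the interpolation section applies, with $\theta=2-\theta_1-\theta_2>0$. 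The $2N$-level count in $\bar D^2\bar\nabla_0^{2(N+2)-4}$ is low enough (since $2(N+2)\le 2N$ requires checking, but the point is $N+2<2N$ for $N\ge4$, so all derivatives land within the regularity encoded in $\ce_{2N}$ and $\ce_{N+2,2}$) that no term needs Lemma \ref{higher interpolation estimate}: the factor $Y$ can always be placed in $L^2$ with a derivative count controlled by $\ce_{2N}$, and the factor $X$ in $L^\infty$ with a count controlled by interpolation against $\ce_{N+2,2}$. Here I must be careful with the boundary term $G^3$: following rule (4), I first decompose $\hms{\bar D^2\bar\nabla_0^{2(N+2)-4}G^3}{1/2}{\Sigma}$ into $\hms{\cdot}{0}{\Sigma}+\hms{D\cdot}{0}{\Sigma}+\hms{D^2\cdot}{0}{\Sigma}$ and only then apply trace, introducing a single vertical derivative rather than $3/2$ worth.

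For the second estimate (the $\d_{N+2,2}$ bound at the top derivative level $\bar D_j^{2(N+2)-1}$), the same scheme applies but now the top-order factor — whichever of $X$ or $Y$ carries the most derivatives — must be assigned to the norm that $\d_{N+2,2}$ controls, i.e. $\hm{\bar D_2^{2n}\dm u}{0}$, $\hm{\nabla\bar D_0^{2n-1}p}{0}$, and the surface dissipation norms of $\e$. The genuinely delicate summands are those where the $\eb$-factor itself carries essentially the full $2N+4$ derivatives, e.g. $(D^{2N+4}\e)\,u$, $(D^{2N+4}\e)\,P\nabla u$ in $G^3$ and $(\nabla^{2N+5}\eb)\,Q\nabla u$ in $G^1,G^2$; these are precisely the terms handled by Lemma \ref{higher interpolation estimate}, which gives $\ce_{2N}^{\theta}\d_{N+2,2}$ directly. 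All remaining terms of $G^2$ and $G^4$ are controlled using the $\d_{N+2,2}$-powers $m_1,m_2$ recorded in the $L^2$ table of Lemma \ref{lower interpolation estimate}, combined with the quadratic-power bookkeeping above; since these powers were defined exactly so that the product of the two interpolation exponents exceeds one, each term closes. Taking $\theta$ to be the minimum of the finitely many positive exponents produced yields the stated uniform $\theta>0$.

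The main obstacle will be the $p$-terms. Pressure enters $G^1$ (through $(\delta_{ij}-\a_{ij})\p_jp$) and the pressure dissipation in $\d_{N+2,2}$ only controls $\hm{\nabla\bar D_0^{2n-1}p}{0}$, i.e. horizontal derivatives up to order $2n-1$ plus one full gradient — there is essentially no room for vertical derivatives of $p$. So whenever the Leibniz expansion forces a $\p_3$ onto a high-order $p$-factor I cannot simply Sobolev-embed; instead, as in the proof of Lemma \ref{lower interpolation estimate improved 2}, I would trade the vertical pressure derivative using the Navier–Stokes equation itself ($\p_3^2 u$, hence $\p_3 p$ via the third component, and on $\Sigma$ via the boundary condition $p-\e = \dots$), converting it into horizontal derivatives of $u$, $p$, $\e$ plus further $G^i$ contributions, and only then interpolate. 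Verifying that this substitution never costs more than $\d_{N+2,2}$ in power — and in particular that the derivative counts stay within the prescribed minimum counts so that lemma \ref{lower interpolation estimate} and lemma \ref{higher interpolation estimate} are applicable — is the technical heart, and it is exactly where the redefinition of the dissipation (at most two vertical derivatives for $u$, one for $p$) pays off relative to \cite{book9}.
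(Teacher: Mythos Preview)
Your proposal matches the paper's approach in all essential points: the paper's own proof is little more than a list of rules (Leibniz expansion, $L^\infty/L^2$ split with the higher interpolation index, the boundary trick for $H^{1/2}(\Sigma)$ in rule (4), and invoking Lemma~\ref{higher interpolation estimate} for the top-order $\eb$ factors), followed by a reference to Theorems~4.1--4.2 of \cite{book9}. Your outline reproduces exactly these rules.

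The one place you depart from the paper is the final paragraph on pressure. You anticipate needing to trade $\p_3 p$ via the Navier--Stokes equation and the boundary identity, as in Lemma~\ref{lower interpolation estimate improved 2}. The paper does \emph{not} do this here. The point of rule~(3) is simpler: since the operators $\bar D_j^{2(N+2)-1}$ in the second inequality are purely horizontal/temporal, the only vertical derivative on $p$ in $G^{1}$ is the single $\p_3$ already present in $(\delta_{ij}-\a_{ij})\p_jp$, and this one $\nabla$ is exactly what $\hm{\nabla\bar D_2^{2n-1}p}{0}^2\subset\d_{N+2,2}$ controls. When Leibniz leaves too few horizontal derivatives on $\p_3 p$ to sit directly in $\d_{N+2,2}$, you put $\nabla p$ in $L^\infty$ (interpolation power $1/2$ from the table in Lemma~\ref{lower interpolation estimate}) and the high-order $\eb$ factor in $L^2$; the two powers then sum to at least $1$. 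The substitution machinery is reserved in the paper for the \emph{improved} interpolation lemmas, not for this estimate. Your proposed route would still work, but it is more labor than the paper expends.
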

\begin{lemma}\label{higher nonlinear estimate}
There exists a $\theta>0$ such that
\begin{eqnarray}
\\
\hm{\bar \nabla_0^{4N-2}G^1}{0}^2+\hm{\bar
\nabla_0^{4N-1}G^2}{0}^2+\hms{\bar
\nabla_0^{4N-2}G^3}{1/2}{\Sigma}^2\ls\ce_{2N}^{1+\theta}\nonumber
\end{eqnarray}
and
\begin{eqnarray}
\\
\hm{\bar D_0^{4N-1}G^1}{0}^2+\hm{\bar D_0^{4N-1}G^2}{1}^2+\hms{\bar
D_0^{4N-1}G^3}{1/2}{\Sigma}^2+\hms{\bar
D_0^{4N-1}G^4}{1/2}{\Sigma}^2\nonumber\\
\ls\ce_{2N}^{\theta}\d_{2N}+\k\f_{2N}\nonumber
\end{eqnarray}
\end{lemma}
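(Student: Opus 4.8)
The plan is to prove both bounds by brute-force differentiation of the explicit formulas for $G^1,\dots,G^4$ from the perturbed linear form, exactly in the spirit of Theorems~4.1 and 4.2 of \cite{book9}. Every $G^i$ is a finite sum of terms of the schematic shape $c\cdot\p^{\gamma}v$, where $v\in\{u,p\}$, $|\gamma|\le 2$, and $c$ is a (possibly nonlinear, through $K=1/J$) function of $\{A,B,J,K,\tilde b,\dt\eb\}$, hence a smooth function of $\eb,\nabla\eb$ in $\Omega$ (of $\e,D\e$ on $\Sigma$ for $G^3,G^4$); moreover each such $c$ is quadratically small, i.e.\ $c$ or $c-c(0)$ is a multiple of a derivative of $\eb$ (resp.\ $\e$). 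Applying $\bar\nabla_0^{4N-2}$ (resp.\ $\bar\nabla_0^{4N-1}$ for $G^2$ in the first group, and $\bar D_0^{4N-1}$ in the second group) and expanding by the Leibniz rule, every resulting term is of the form $(\p^{\beta}c)(\p^{\alpha-\beta}\p^{\gamma}v)$. For the interior $c$'s I convert norms of $\p^{\beta}\eb$ to boundary norms of $\e$ via the Poisson-extension estimates for $\eb=\pp\e$ (of the type $\hm{\nabla^q\eb}{0}^2\ls\hms{\e}{q-1/2}{\Sigma}^2$ and $\lnm{\nabla^q\eb}{\infty}^2\ls\hms{\e}{q+s}{\Sigma}^2$, $s>1$), together with the chain rule for the $K=1/J$ dependence, and then split each product with the algebra lemma \ref{Appendix product}: $L^{\infty}\times L^2$ when few derivatives land on $c$, $L^2\times L^{\infty}$ when few land on $v$.

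For the first estimate I distribute the derivatives so that \emph{every} factor is controlled by $\ce_{2N}$. If at most two derivatives hit $v$, then $\p^{\alpha-\beta}\p^{\gamma}v$ sits in $L^{\infty}$ by Sobolev embedding (legitimate since $N$ is large) and is $\ls\sqrt{\ce_{2N}}$, while $\p^{\beta}c$ carries at most $4N-2$ interior or $4N-1$ horizontal derivatives, translating into $\hms{\e}{4N}{\Sigma}\ls\sqrt{\ce_{2N}}$; in the opposite regime $\p^{\beta}c\in L^{\infty}$ is bounded by $\lnm{A}{\infty}+\lnm{B}{\infty}+\lnm{K-1}{\infty}\ls\hms{\e}{5/2}{\Sigma}\ls\sqrt{\ce_{2N}}$ (using the smallness hypothesis, which also keeps $\Phi$ a diffeomorphism), and $\p^{\alpha-\beta}\p^{\gamma}v$ lies in $H^0$ at a level still inside $\ce_{2N}$ — note that $G^1$ carries at most one pressure derivative and $G^3$ at most one, so $p$ never appears with more than $4N-1$ derivatives, which is exactly the reach of $\ce_{2N}$ for $p$. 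Since each $G^i$ is at least quadratic, every term gains at least one power of $\ce_{2N}$ beyond a bounded coefficient; the few borderline terms that only just fit are handled by one Sobolev interpolation, which produces $\ce_{2N}^{1+\theta}$ for a suitable $\theta\in(0,1]$ after using $\ce_{2N}\le 1$.

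The second estimate is the delicate one. I split the expansion of $\bar D_0^{4N-1}G^i$ into two families. In the family where the top horizontal derivative lands on the $\eb$- (resp.\ $\e$-) coefficient $c$, the surviving $u,p$-factor carries at most one or two derivatives, is placed in $L^{\infty}$ (or, for the $H^{1/2}(\Sigma)$ boundary pieces, in a norm $\ls\hms{\cdot}{2}{\Sigma}$), and is therefore $\ls\sqrt{\k}$; the coefficient factor then demands $\hm{\nabla^{4N}\eb}{0}$, $\hm{\nabla^{4N+1}\eb}{0}$ or $\hms{D^{4N}\e}{1/2}{\Sigma}$, none of which is controlled by $\ce_{2N}$ or by the redefined $\d_{2N}$, but all of which are $\ls\hms{\e}{4N+1/2}{\Sigma}^2=\f_{2N}$; this is precisely the origin of the $\k\f_{2N}$ contribution. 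In the complementary family the coefficient keeps only $L^{\infty}$-many derivatives — again $\ls\sqrt{\ce_{2N}}$, giving the $\ce_{2N}^{\theta}$ gain after a possible interpolation — and the $u,p$-factor carries the bulk of the derivatives and must be absorbed into $\d_{2N}$. Here lies the only real difficulty: $\d_{2N}$ controls $p$ only through $\hm{\nabla\bar D_0^{4N-1}p}{0}$, i.e.\ with a single vertical derivative, and controls boundary quantities only through $\hms{D_0^{4N-1}\e}{1/2}{\Sigma}$ and its temporal analogues, so — following rules (3) and (4) above — I never apply the trace theorem to a full $H^2(\Sigma)$-norm but instead write $\hms{X}{2}{\Sigma}\ls\hms{X}{0}{\Sigma}+\hms{DX}{0}{\Sigma}+\hms{D^2X}{0}{\Sigma}$ before tracing, so that pressure and velocity boundary terms cost at most one vertical derivative and remain inside $\d_{2N}$ (the $\i u$-term of $\d_{2N}$ is not needed). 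Summing over $i$ and over all Leibniz terms yields $\hm{\bar D_0^{4N-1}G^1}{0}^2+\hm{\bar D_0^{4N-1}G^2}{1}^2+\hms{\bar D_0^{4N-1}G^3}{1/2}{\Sigma}^2+\hms{\bar D_0^{4N-1}G^4}{1/2}{\Sigma}^2\ls\ce_{2N}^{\theta}\d_{2N}+\k\f_{2N}$. The main obstacle is thus not any individual inequality but the combinatorial bookkeeping that cleanly separates the $\k\f_{2N}$ terms from the $\ce_{2N}^{\theta}\d_{2N}$ terms while respecting the reduced (one–vertical–derivative) reach of the redefined dissipation; since every auxiliary bound needed for this has been recovered in the preceding subsections (lemmas \ref{lower nonlinear estimate}, \ref{higher interpolation estimate}, the Poisson and product estimates, and the trace/Poincaré estimates above), the argument then proceeds exactly as in Theorems~4.1--4.2 of \cite{book9}.
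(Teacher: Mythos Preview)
Your proposal is correct and follows essentially the same approach as the paper: the paper explicitly omits the detailed proof of this lemma, referring instead to the four estimating rules stated just above it and to Theorems~4.1--4.2 of \cite{book9}, which is exactly the Leibniz-expansion plus $L^\infty\times L^2$ splitting you outline, with the separation of the $\k\f_{2N}$ top-derivative-on-$\e$ terms from the $\ce_{2N}^{\theta}\d_{2N}$ terms and the careful handling of the single-vertical-derivative reach of the redefined $\d_{2N}$. One minor remark: your invocation of Lemma~\ref{higher interpolation estimate} is unnecessary here, since that lemma is tailored to the $N+2$ level (it treats $D^{2N+4}\e$ and $\nabla^{2N+5}\eb$); at the $2N$ level the troublesome top-order $\e$-terms are absorbed directly by $\f_{2N}$ as you describe.
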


\subsubsection{Estimates of Other Nonlinearities}

Here is a few lemmas for some other nonlinear terms which have been
proved in proposition 4.3-4.5 of \cite{book9}. Although our
definition of energy and dissipation is a little bit different, we
do not even need to revise the proof here.
\begin{lemma}\label{riesz potential nonlinear estimate 1}
We have that
\begin{eqnarray}
\hm{\i G^1}{1}^2+\hm{\i G^2}{2}^2+\hm{\i\dt G^2}{0}^2\ls
\ce_{2N}\min\{\ce_{2N},\d_{2N}\}
\end{eqnarray}
and
\begin{eqnarray}
\hms{\i G^3}{1}{\Sigma}^2+\hms{\i
G^4}{1}{\Sigma}^2\ls\ce_{2N}\min\{\ce_{2N},\d_{2N}\}
\end{eqnarray}
Also
\begin{eqnarray}
\hms{\i G^4}{0}{\Sigma}^2\ls \d_{2N}^2
\end{eqnarray}
\end{lemma}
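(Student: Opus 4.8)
The plan is to exploit the product structure of the nonlinearities $G^1,\dots,G^4$ together with the mapping properties of the Riesz potential $\i=I_{\lambda}$ under products, following Propositions 4.3--4.5 of \cite{book9}. First I would observe that, reading off the explicit formulas for the perturbed linear form, each $G^i$ is a finite sum of terms of the schematic shape $X\,Y$ (or a time derivative of such), where $X$ is a polynomial in $\eb,\nabla\eb,\dt\eb,J,K,A,B$ and their derivatives — so that $X$, together with its horizontal derivatives, is controlled by the $\Sigma$-norms of $\e$ occurring in $\ce_{2N}$ and $\d_{2N}$ — and $Y$ is built from $\nabla u,\dt u,\p_3^2u,p,\nabla p$. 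Since every $G^i$ is at least quadratic, applying $\i$ and expanding with the Leibniz rule reduces the left-hand sides to sums of quantities of the form $\hm{\i(\p^{\gamma}X\,\p^{\delta}Y)}{k}$ and their boundary analogues.

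Second, I would apply the $\i$-versions of the product/interpolation estimates from the appendix, which bound $\hm{\i(fg)}{k}$ by $\hm{\i f}{k_1}\hm{g}{k_2}$ or by $\hm{f}{k_1}\hm{\i g}{k_2}$ for a suitable balance of indices. The structural point is that the low-order Riesz-weighted quantities $\hm{\i u}{0}^2$, $\hm{\i u}{1}^2$, $\hms{\i\e}{0}{\Sigma}^2$ and $\hms{\i\dt\e}{0}{\Sigma}^2$ are already dominated by $\ce_{2N}$ (and $\hm{\i u}{1}^2$ also by $\d_{2N}$), by the definitions in (\ref{definition 1}) and Lemma \ref{interpolation temp 3}. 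Thus in each product one factor receives the Riesz potential and is absorbed into a power of $\ce_{2N}$, while the other factor, carrying only ordinary derivatives, is estimated in $H^k(\Omega)$ or $H^k(\Sigma)$ directly by $\ce_{2N}$ or, alternatively, by $\d_{2N}$; taking the weaker of the two choices for that second factor produces the $\min\{\ce_{2N},\d_{2N}\}$ on the right-hand side. For the boundary terms $\i G^3$ and $\i G^4$ one first writes $\hms{X}{1}{\Sigma}$ as $\hms{X}{0}{\Sigma}+\hms{DX}{0}{\Sigma}$ and uses the trace theorem so as to spend at most one vertical derivative, and for $G^3$ one substitutes the identity for $p-\e$ before differentiating, so that only controlled combinations of $p$ ever appear.

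Third, for $\hms{\i G^4}{0}{\Sigma}^2\ls\d_{2N}^2$ I would use $G^4=-D\e\cdot u$ and place $\i$ on the factor $u$: since $\hm{\i u}{1}^2\ls\d_{2N}$ and, after a trace estimate, the horizontal factor $D\e$ on $\Sigma$ is controlled by $\hms{D_0^{2N-1}\e}{1/2}{\Sigma}^2\ls\d_{2N}$, the product estimate yields $\d_{2N}\cdot\d_{2N}$. Because none of these manipulations depends on how many vertical derivatives are retained in $\d_{2N}$ — our redefinition altered only the vertical content of the dissipation, not the horizontal content nor the $2N$-level energy — the proofs of Propositions 4.3--4.5 in \cite{book9} transcribe verbatim, and I would cite them for the routine index chasing.

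I expect the main obstacle to be the minimum-count bookkeeping: at the $2N$ level there is no minimum-count restriction, so the non-Riesz factor is read off the definitions, but one must still check term by term that this factor never requires more derivatives than $\ce_{2N}$ or $\d_{2N}$ supplies, and that the Riesz-weighted factor always lands on one of the four $\i$-controlled quantities above. The delicate terms are those in $G^1$ and $G^3$ involving $\p_3^2u$, $\nabla p$ and the substituted $p-\e$, where one must use the Navier--Stokes and divergence equations to trade a vertical derivative of $u$ or $p$ for horizontal derivatives plus lower-order $G^i$'s, exactly as in the proof of Lemma \ref{lower interpolation estimate improved 2}; since all of those auxiliary estimates are already available, no genuinely new difficulty arises.
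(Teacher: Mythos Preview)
Your proposal is correct and follows exactly the paper's approach: the paper itself simply cites Propositions 4.3--4.5 of \cite{book9} for this lemma, noting that the slightly different definitions of energy and dissipation do not require any revision of that argument. You have essentially unpacked what that citation entails, and your observation that the redefinition alters only the vertical content of $\d_{2N}$ (so the $2N$-level Riesz estimates transcribe verbatim) is precisely the justification the paper gives in one line.
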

\begin{lemma}\label{riesz potential nonlinear estimate 2}
It holds that
\begin{eqnarray}
\hm{\i[(AK)\p_3u_1+(BK)\p_3u_2]}{0}^2+\sum_{i=1}^2\hm{\i[u\p_iK]}{0}^2\ls\d_{2N}^2
\end{eqnarray}
and
\begin{eqnarray}
\hm{\i[(1-K)u]}{0}^2\ls\ce_{2N}^{1/(1+\lambda)}\d_{2N}^{(1+2\lambda)/(1+\lambda)}
\end{eqnarray}
Also
\begin{eqnarray}
\hm{\i[(1-K)G^2]}{0}^2\ls\ce_{2N}\d_{2N}^2
\end{eqnarray}
\end{lemma}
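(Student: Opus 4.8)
The plan is to reproduce the arguments of Propositions 4.3--4.5 of \cite{book9} essentially verbatim, the only point that really needs checking being that our slightly reduced $\ce_{2N}$ and $\d_{2N}$ still dominate every norm used there. Each of the three quantities is at least quadratic and has the schematic form $(\text{coefficient built from }\eb)\times(\text{component of }\nabla u,\ u,\ \text{or the already-quadratic }G^2)$, where the coefficients $1-K$, $AK$, $BK$, $\p_iK$ are polynomials in $\eb$ and its derivatives that vanish identically when $\e\equiv 0$. The workhorse is the product and interpolation calculus for the Riesz potential $\i$ recorded in lemmas \ref{appendix interpolation 2}--\ref{appendix interpolation 4}: it lets one place $\i$ on either factor of a product at the cost of a full-regularity Sobolev norm of the other factor, which is affordable here since all factors are controlled to very high order by $\ce_{2N}$ and $\d_{2N}$.

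For the first estimate I would expand with the Leibniz rule and, in each resulting product, place $\i$ on the $\eb$-coefficient and a harmless high Sobolev norm on the $\nabla u$ factor (or the reverse). Using the Poisson-integral bounds of Lemmas \ref{appendix poisson 1} and \ref{appendix poisson 4}, the surface pieces $\hms{D^{4N-1}_0\e}{1/2}{\Sigma}^2$ of $\d_{2N}$, the terms $\hm{\i u}{1}^2$ and $\hm{\bar D_0^{4N-1}u}{2}^2$ in $\d_{2N}$, and Poincare on $\Omega$ (legitimate since $u|_{\Sigma_b}=0$) to convert $\p_3u$ and $u$ into gradient norms, each factor of the product carries a full power of $\sqrt{\d_{2N}}$, so the result is $\ls\d_{2N}^2$. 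For $\hm{\i[(1-K)u]}{0}^2$ the coefficient $1-K$ is $\eb$-small; I would dominate $\hm{\i[(1-K)u]}{0}$ by $\hm{1-K}{2}\,\hm{\i u}{0}+\hm{\i(1-K)}{0}\,\hm{u}{2}$, then interpolate $\hm{\i u}{\sigma}$ between $\hm{\i u}{0}^2\ls\ce_{2N}$ and $\hm{\i u}{1}^2\ls\d_{2N}$ and combine with the Poisson-integral control of $1-K$; balancing degrees forces exactly the exponents $1/(1+\lambda)$ and $(1+2\lambda)/(1+\lambda)$, matching the \cite{book9} computation. For $\hm{\i[(1-K)G^2]}{0}^2$ I would feed in Lemma \ref{riesz potential nonlinear estimate 1}, which already supplies $\hm{\i G^2}{2}^2\ls\ce_{2N}\min\{\ce_{2N},\d_{2N}\}$, and multiply by the bounded, $\eb$-small coefficient $1-K$ through the same product rule, landing on $\ce_{2N}\d_{2N}^2$.

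The delicate points -- and hence where the real work sits -- are that $\i$ does not obey a naive product rule, so each distribution of $\i$ onto a factor must be checked against the appendix estimates, and that the second estimate requires the interpolation exponents to come out exactly sharp. Both were already settled in \cite{book9}; the only thing specific to the present paper is verifying that the norms extracted there from the Guo--Tice dissipation are still available from our $\d_{2N}$, which now carries at most two vertical derivatives on $u$ and one on $p$. This is immediate for all three displays, since the Riesz-potential terms never use more than a couple of derivatives on $u$, never touch $p$ beyond what Lemma \ref{riesz potential nonlinear estimate 1} already provides, and involve only low-order horizontal surface norms of $\e$ -- all of which sit inside $\ce_{2N}$ and $\d_{2N}$ as defined in (\ref{definition 1})--(\ref{definition 2}). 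So, as remarked before the statement, the proof of Propositions 4.3--4.5 of \cite{book9} transfers without revision, and the write-up would consist of the three displays above followed by a reference to it.
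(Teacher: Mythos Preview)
Your proposal is correct and takes essentially the same approach as the paper: both simply defer to Propositions 4.3--4.5 of \cite{book9} and observe that, since the Riesz-potential terms only involve low-order derivatives of $u$, $\e$ (and never $p$ beyond what Lemma \ref{riesz potential nonlinear estimate 1} handles), the redefined $\ce_{2N}$, $\d_{2N}$ still control every norm that appears, so the proof transfers verbatim. Your sketch of the mechanism is more detailed than the paper's bare citation, but note that the relevant Poisson-integral and Riesz-product tools in this global section are the appendix lemmas (around \eqref{appendix poisson integral estimate} and Lemma \ref{appendix riesz potential estimate 1}), not the $\epsilon$-Poisson lemmas \ref{appendix poisson 1}, \ref{appendix poisson 4} from the local part.
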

\begin{lemma}\label{riesz potential nonlinear estimate 3}
We have that
\begin{eqnarray}
\hm{\dt^{2N+1}\a}{0}^2\ls\d_{2N}
\end{eqnarray}
and
\begin{eqnarray}
\hm{\dt^{N+3}\a}{0}^2\ls\d_{N+2,2}
\end{eqnarray}
\end{lemma}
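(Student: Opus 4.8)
The plan is to peel off both estimates in parallel, reducing each to a Leibniz/Fa\`a di Bruno expansion together with the Poisson bounds of Lemmas \ref{appendix poisson 1} and \ref{appendix poisson 2}. Apart from constants, the only entries of $\a$ are $\pm AK$, $\pm BK$ and $K$; since $A=\p_1\eb\tilde b$, $B=\p_2\eb\tilde b$ and $J-1=\eb/b+\p_3\eb\tilde b$ are affine in $(\eb,\nabla\eb)$ with $x_3$-coefficients that are smooth and bounded, and $K=1/J$ with $J\ge\delta>0$, the functions $AK$, $BK$, $K$ are smooth functions of $(\eb,\nabla\eb)$ whose derivatives are uniformly bounded on $\{J\ge\delta\}$. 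Applying the generalized Leibniz rule and Fa\`a di Bruno's formula, for $k=2N+1$ (resp.\ $k=N+3$) each of $\dt^k(AK)$, $\dt^k(BK)$, $\dt^kK$ is a finite sum of terms of the form
\begin{equation}
C(\eb,\nabla\eb)\prod_{i=1}^{\ell}\dt^{a_i}\p^{\beta_i}\eb,\qquad |\beta_i|\le 1,\quad \sum_{i=1}^{\ell}a_i=k ,
\end{equation}
with $C(\eb,\nabla\eb)$ an $L^{\infty}$-bounded coefficient, so it suffices to bound each such product in $H^0(\Omega)$.

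First I would treat the top-order terms, i.e.\ those with $\ell=1$: $C(\eb,\nabla\eb)\,\dt^k\p^{\beta}\eb$ with $|\beta|\le1$. Since $\dt$ commutes with the Poisson integral, $\dt^k\p^{\beta}\eb=\p^{\beta}\pp(\dt^k\e)$, so Lemma \ref{appendix poisson 1} gives $\hm{\dt^k\p^{\beta}\eb}{0}^2\ls\hms{\dt^k\e}{|\beta|-1/2}{\Sigma}^2\le\hms{\dt^k\e}{1/2}{\Sigma}^2$. For the second estimate $k=N+3$, and this is precisely the $j=N+3$ surface term appearing in $\d_{N+2,2}$ (in (\ref{definition 1}) one has $2(N+2)-2(N+3)+2=0$), so this contribution is $\ls\d_{N+2,2}$; for the first estimate $k=2N+1$, and $\hms{\dt^{2N+1}\e}{1/2}{\Sigma}^2$ is the highest-order surface term carried by $\d_{2N}$, so this contribution is $\ls\d_{2N}$.

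For the remaining terms ($\ell\ge2$) every factor carries at most $k-1$ temporal derivatives, and the non-maximal ones carry at most $k/2$. I would place the factor of maximal temporal order in $L^2(\Omega)$ — bounded by $\hms{\dt^{a}\e}{|\beta|-1/2}{\Sigma}^2$ with $a\le k-1$, $|\beta|\le1$ via Lemma \ref{appendix poisson 1}, which lies in $\d_{2N}$ (resp.\ $\d_{N+2,2}$) — and all the other factors, together with $C(\eb,\nabla\eb)$, in $L^{\infty}(\Omega)$. Each low-order factor $\dt^{a_i}\p^{\beta_i}\eb$ with $a_i\le k/2$ is controlled by Sobolev embedding and the Poisson $L^{\infty}$ bound of Lemma \ref{appendix poisson 2} by $\hms{\dt^{a_i}\e}{|\beta_i|+2}{\Sigma}^2$, which, since $a_i\le k/2$ and $N\ge4$, sits inside $\ce_{2N}$ and hence is $\le1$ by the smallness hypothesis (recall also $\d_{N+2,2}\ls\ce_{2N}$ by Lemma \ref{interpolation temp 2}). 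Multiplying, every such product is again $\ls\d_{2N}$ (resp.\ $\ls\d_{N+2,2}$), and summing over the finitely many terms of the expansion yields $\hm{\dt^{2N+1}\a}{0}^2\ls\d_{2N}$ and $\hm{\dt^{N+3}\a}{0}^2\ls\d_{N+2,2}$.

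The routine but bookkeeping-heavy part is the order count in the last two paragraphs: one must check, term by term, that the pair (temporal order $a\le k$, at most one spatial derivative) never exceeds the regularity budget that $\d_{2N}$, and especially the more restrictive $\d_{N+2,2}$, assigns to surface derivatives of $\e$. Because $\a$ depends on $\e$ only through $\eb$ and $\nabla\eb$, and the Poisson extension converts one $\Omega$-derivative into exactly a half $\Sigma$-derivative, this reduces to the single inequality $\hms{\dt^k\e}{1/2}{\Sigma}^2\ls\d_{2N}$ (resp.\ $\d_{N+2,2}$) for the top-order term $\dt^k\p_1\eb$, all cross terms being automatically subordinate; verifying this one index count against the definition (\ref{definition 1}) — and it is precisely why these quantities are controlled by the dissipation and not merely by the energy — is the only genuinely delicate point, and it is where the design of the dissipations in \cite{book9} is used.
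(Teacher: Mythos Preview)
Your strategy---Leibniz/Fa\`a di Bruno expansion of $\dt^k\a$ followed by an $L^2\times L^\infty$ product split via the Poisson-integral bounds---is exactly the standard argument, and it is what the paper defers to (the paper gives no self-contained proof here, citing \cite[Prop.~4.5]{book9}). For the first inequality your bookkeeping is complete: $\hms{\dt^a\e}{1/2}{\Sigma}^2$ is controlled by $\d_{2N}$ for every $1\le a\le 2N+1$, so both the top-order term and all cross terms go through as you describe.

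There is, however, a real gap in the second inequality. Your claim that the $L^2$ factor with maximal temporal order $a\le N+2$ ``lies in $\d_{N+2,2}$'' fails precisely when $a_{\max}=1$, i.e.\ for the partition $(1,\ldots,1)$ of $N+3$. In the definition of $\d_{N+2,2}$ the single-time-derivative surface term is $\hms{\dt D_1^{2N+3}\e}{1/2}{\Sigma}^2$, which carries a minimum spatial count of~$1$; consequently $\hms{\dt\e}{1/2}{\Sigma}^2$ is \emph{not} directly controlled by $\d_{N+2,2}$, and the term $C(\eb,\nabla\eb)\prod_{i=1}^{N+3}\dt\p^{\beta_i}\eb$ is not ``automatically subordinate'' in your sense. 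The fix is short: by the interpolation table in Lemma~\ref{lower interpolation estimate} one has $\lnm{\dt\eb}{\infty}^2\ls\d_{N+2,2}^{2/(2+r)}\ce_{2N}^{r/(2+r)}$, so placing all but one of the $N+3$ factors in $L^\infty$ already yields a power of $\d_{N+2,2}$ well above~$1$, after which $\d_{N+2,2}\ls\ce_{2N}\le1$ (Lemma~\ref{interpolation temp 2}) reduces the product to $\d_{N+2,2}$. (A minor labelling point: the Poisson bounds you want here are those for the ordinary Poisson integral $\pp$ used in the global section, namely (\ref{appendix poisson integral estimate}), not the $\epsilon$-Poisson Lemmas~\ref{appendix poisson 1}--\ref{appendix poisson 2} from the local theory.)
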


\subsection{Energy Estimates}

In this section, we will present the energy estimate for horizontal
derivatives. Since this part has been carefully completed in section
5 of \cite{book9} and is not the main improvement of our new proof,
it is not necessary for us to show all the details. Hence, we will
not give all the details of the proofs and only comment when we need
different technique to achieve the estimates due to the changing of
definition of energy and dissipation.

\subsubsection{Estimates of Highest and Lowest Temporal Derivatives}

\begin{lemma}
Let $\p^{\alpha}=\dt^{2N}$ and let $F^i$ be defined as in geometric
structure form. Then
\begin{eqnarray}
\hm{F^1}{0}^2+\hm{\dt(JF^2)}{0}^2+\hms{F^3}{0}{\Sigma}^2+\hms{F^4}{0}{\Sigma}^2\ls\ce_{2N}\d_{2N}
\end{eqnarray}
\end{lemma}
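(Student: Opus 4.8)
The plan is to prove the four bounds together by expanding each forcing term with the Leibniz rule and estimating the resulting products. Since $\p^\alpha=\dt^{2N}$, every multi-index $\beta$ occurring in the sums that define $F^1,F^2,F^3,F^4$ (see the formulas for the $F^{i,k}$ in the geometric structure form) is purely temporal, so each summand is a product of a temporal derivative $\dt^a\mathcal C$ of a geometric coefficient $\mathcal C$ — a smooth function of $\eb,\nabla\eb,\tilde b$, hence of $\dt^a\e$ through the Poisson extension $\eb=\pp\e$ — with a factor carrying $\dt^b$ and at most one spatial derivative of $u$ or $p$, where $a+b$ is essentially $2N$. For the \emph{generic} summands, those in which the geometric factor still has two spatial derivatives of room, I would put that factor in $L^\infty$ via $H^2(\Omega)\hookrightarrow L^\infty$ and the Poisson estimates (cf.\ Lemma \ref{appendix poisson 1}), so that it is $\ls\sqrt{\ce_{2N}}$ by $\sum_{j=0}^{2N}\hms{\dt^j\e}{4N-2j}{\Sigma}^2\le\ce_{2N}$, and put the $u$- or $p$-factor in $L^2$, so that it is $\ls\sqrt{\d_{2N}}$ by $\hm{\bar D_0^{4N-1}u}{2}^2+\hm{\nabla\bar D_0^{4N-1}p}{0}^2\le\d_{2N}$; for the cubic convective pieces in $F^{1,2}$ the leftover factor $\nabla u$ goes into $L^\infty$ and contributes an extra $\sqrt{\ce_{2N}}$. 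These terms are handled exactly as in Section~5 of \cite{book9} and need only be mentioned.

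The substance is the finitely many \emph{borderline} summands in which essentially all $2N$ temporal derivatives fall on one factor: $\dt^{2N}\a$ and $\dt^{2N+1}\eb$ in $F^{1,1},F^{1,5},F^{1,6}$ and in $\dt(JF^2)$; $\dt^{2N}u$ with no spatial derivative, arising from $\dt^{2N}(u_j\a_{jk})$ in $F^{1,2}$; and the analogous $\dt^{2N}D\e$ and $\dt^{2N}u|_\Sigma$ in $F^3$ and $F^4$. These are not controlled by $\ce_{2N}$ and $\d_{2N}$ directly, because the leaner, horizontally-oriented $\d_{2N}$ — at most two vertical derivatives on $u$ and one on $p$ — only just reaches them. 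The device is to trade temporal regularity for spatial regularity through the equations: the transport equation in (\ref{transform}), in the form $\dt\e=u\cdot\n$ on $\Sigma$, gives $\dt^{2N+1}\eb=\pp\,\dt^{2N}(u\cdot\n)$ and $\dt^{2N}\a$ schematically equal to $\pp\,\dt^{2N-1}\nabla(u\cdot\n)$ plus strictly higher-order terms, which after a Poisson estimate and the trace inequality reduce to $\hm{\dt^{2N}u}{0}$- and $\hm{\dt^{2N-1}u}{1}$-type quantities on $\Omega$; and the Navier-Stokes equation in (\ref{transform}), in the form $\dt u=\la u-\na p+(\text{nonlinear})$, gives $\dt^{2N}u=\dt^{2N-1}(\la u-\na p)+\dt^{2N-1}(\text{nonlinear})$ whose linear part is $\ls\hm{\dt^{2N-1}u}{2}+\hm{\nabla\dt^{2N-1}p}{0}\ls\sqrt{\d_{2N}}$. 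After these substitutions every borderline factor is either $\ls\sqrt{\d_{2N}}$ or, when it carries only a bounded number of temporal derivatives, $\ls\sqrt{\ce_{2N}}$, so each product is $\ls\sqrt{\ce_{2N}}\sqrt{\d_{2N}}$; the nonlinear remainders so produced are of strictly higher homogeneity in $(u,p,\e)$ and are absorbed using $\ce_{2N}\le1$. The term $\dt(JF^2)$ is treated in the same spirit, distributing the extra $\dt$ by Leibniz and keeping the worst contribution in divergence form via $\p_j(J\a_{ij})=0$, as in \cite{book9}.

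The boundary terms $F^3,F^4$ run through the same scheme after writing $\hms{\cdot}{0}{\Sigma}\ls\hm{\cdot}{1/2}$ and, for the pieces of $F^3$ carrying $D\e$ factors on $\Sigma$, using that $\d_{2N}$ contains $\hms{D_0^{4N-1}\e}{1/2}{\Sigma}^2$, $\hms{\dt D_0^{4N-2}\e}{1/2}{\Sigma}^2$, and $\hms{\dt^jD_0^{4N-2j+1}\e}{1/2}{\Sigma}^2$ for $2\le j\le 2N$. I expect the main obstacle to be organizational rather than analytical: for each of the handful of top-order contributions in $F^1$, $\dt(JF^2)$, $F^3$, $F^4$ one must check that the particular norm reached after the transport/Navier-Stokes substitution is genuinely present in $\d_{2N}$ — one of $\hm{\bar D_0^{4N-1}u}{2}^2$, $\hm{\nabla\bar D_0^{4N-1}p}{0}^2$, $\hms{\dt^{2N}D_0^1\e}{1/2}{\Sigma}^2$, or $\hms{\dt^jD_0^{4N-2j+1}\e}{1/2}{\Sigma}^2$ — since it is exactly this that makes the leaner dissipation admissible. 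Summing the finitely many bounds then gives $\hm{F^1}{0}^2+\hm{\dt(JF^2)}{0}^2+\hms{F^3}{0}{\Sigma}^2+\hms{F^4}{0}{\Sigma}^2\ls\ce_{2N}\d_{2N}$.
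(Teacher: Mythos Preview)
The paper gives no proof here; it simply asserts identity with theorem~5.1 of \cite{book9} and omits the argument. Your outline is the standard Guo--Tice scheme: Leibniz expansion, the lower-derivative factor in $L^\infty$ via $\ce_{2N}$ and the higher-derivative factor in $L^2$ via $\d_{2N}$, with the top-order geometric pieces $\dt^{2N+1}\eb$ and $\dt^{2N+1}\a$ reduced through the transport equation --- the latter substitution is isolated in this paper as lemma~\ref{riesz potential nonlinear estimate 3}, which records $\hm{\dt^{2N+1}\a}{0}^2\ls\d_{2N}$. One remark: the Navier--Stokes substitution you propose for $\dt^{2N}u$ is probably not how the argument is run, since $\hm{\dt^{2N}u}{0}^2\le\ce_{2N}$ directly and it is the partner factor (of type $\a\nabla u$) that one puts in $L^\infty$ and controls by $\d_{2N}$ after splitting horizontal from vertical derivatives via the divergence and boundary equations, as in the proof of lemma~\ref{lower interpolation estimate improved 1}. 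Either route is the same equation-based trade-off applied to a different side of the product; your plan is sound.
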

\begin{proof}
This is identical to theorem 5.1 in \cite{book9}, so we omit the
proof here.
\end{proof}
\begin{lemma}
Let $\p^{\alpha}=\dt^{N+2}$ and let $F^i$ be defined as in geometric
structure form. Then
\begin{eqnarray}
\hm{F^1}{0}^2+\hm{\dt(JF^2)}{0}^2+\hms{F^3}{0}{\Sigma}^2+\hms{F^4}{0}{\Sigma}^2\ls\ce_{2N}\d_{N+2,2}
\end{eqnarray}
Also, if $N\geq3$, then there exists a $\theta>0$ such that
\begin{eqnarray}
\hm{F^2}{0}^2\ls\ce_{2N}^{\theta}\ce_{N+2,2}
\end{eqnarray}
\end{lemma}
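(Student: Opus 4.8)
The plan is to follow the scheme of Theorem 5.1 in \cite{book9} with $\p^{\alpha}=\dt^{N+2}$, which is purely an exercise in applying the Leibniz rule to the explicit formulas for $F^1,\dots,F^4$ given in the geometric structure form, and then estimating each resulting product of the shape $\p^{\beta}(\text{coefficient in }\eb)\cdot\p^{\alpha-\beta}(\text{factor in }u\text{ or }p)$ using the interpolation tables of Lemma \ref{lower interpolation estimate} together with Lemma \ref{riesz potential nonlinear estimate 3}. First I would treat the four terms $\hm{F^1}{0}^2$, $\hm{\dt(JF^2)}{0}^2$, $\hms{F^3}{0}{\Sigma}^2$, $\hms{F^4}{0}{\Sigma}^2$: in each summand one factor carries at most $N+2$ temporal derivatives on $\eb$ (or $\tilde bK$, $\n$, $\a$), which at the $N+2$ level is controlled either directly by $\ce_{N+2,2}$/$\d_{N+2,2}$ or, after invoking $\hm{\dt^{N+3}\a}{0}^2\ls\d_{N+2,2}$ from Lemma \ref{riesz potential nonlinear estimate 3}, by $\d_{N+2,2}$; the complementary factor, carrying at most $2N$ parabolic derivatives on $u$ or $p$, is bounded by $\ce_{2N}$. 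One then puts $L^\infty$ on the lower-order factor (via Sobolev embedding and the interpolation tables) and $L^2$ on the higher one, so each product is bounded by $\ce_{2N}^{\theta}\d_{N+2,2}$ for some $\theta>0$; collecting all summands and absorbing powers of $\ce_{2N}\le 1$ as in the two ``common cases'' explained before Lemma \ref{lower interpolation estimate} gives the claimed bound $\hm{F^1}{0}^2+\hm{\dt(JF^2)}{0}^2+\hms{F^3}{0}{\Sigma}^2+\hms{F^4}{0}{\Sigma}^2\ls\ce_{2N}\d_{N+2,2}$ (here one should allow the harmless $\ce_{2N}^{\theta}$ to be absorbed into $\ce_{2N}$ as is standard, or keep $\theta$ explicit; I would follow whichever convention \cite{book9} uses).

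For the boundary terms $F^3$ and $F^4$ I would apply the fourth estimating rule stated before Lemma \ref{lower nonlinear estimate}: rather than using the trace theorem on $\hms{\cdot}{0}{\Sigma}$ directly onto an $H^1(\Omega)$ quantity, write the surface factors in terms of their full-derivative extensions and use the trace inequality carefully, keeping in mind that $F^3$, $F^4$ involve only $D\e$ and tangential data, so that no vertical pressure derivatives are introduced. The factor $\dt(JF^2)$ needs the extra time derivative distributed via Leibniz; the term where $\dt$ hits $J$ produces $\dt J\,F^2$, and $\dt J$ is a first-order quantity controlled in $L^\infty$ by the energy, while $F^2$ itself has the structure $\sum \p^\beta\a\cdot\p^{\alpha-\beta}\p u$ and is handled as above.

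For the last inequality, $\hm{F^2}{0}^2\ls\ce_{2N}^{\theta}\ce_{N+2,2}$ with $N\ge3$: here $F^2=F^{2,1}+F^{2,2}$ with $F^{2,1}=-\sum_{0<\beta<\alpha}C_{\alpha,\beta}\p^{\beta}\a_{ij}\p^{\alpha-\beta}\p_ju_i$ and $F^{2,2}=-\p^{\alpha}\a_{ij}\p_ju_i$. In the $F^{2,2}$ term all $N+2$ temporal derivatives land on $\a$, so I would bound $\hm{\dt^{N+2}\a}{0}$ by an interpolation between $\ce_{2N}$ and $\ce_{N+2,2}$ (using that $N\ge3$ keeps the derivative count within range, exactly as in the analogous step of Theorem 5.2 of \cite{book9}) and put $\p_j u_i$ in $L^\infty$ via Sobolev embedding and the $u$-table of Lemma \ref{lower interpolation estimate}. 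In the $F^{2,1}$ terms both factors carry strictly fewer than $N+2$ temporal derivatives, so the split is easier and one again lands on $\ce_{2N}^{\theta}\ce_{N+2,2}$. The main obstacle — though a bookkeeping one rather than a conceptual one — is making sure that in every summand the total derivative count respects the two ceilings ($2N$ parabolic derivatives when hitting $\ce_{2N}$, and the $N+2$-level counts with their minimum-count structure when hitting $\ce_{N+2,2}$ or $\d_{N+2,2}$), and that one always assigns the $L^\infty$ norm to the factor whose interpolation index is largest, so that the powers of $\ce_{2N}$ combine to a genuine positive power; this is precisely the role played by Lemma \ref{riesz potential nonlinear estimate 3} for the hardest term and by the arithmetic of interpolation indices worked out in Section 4. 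Since all the needed ingredients (Lemma \ref{lower interpolation estimate}, Lemma \ref{riesz potential nonlinear estimate 3}, and the two reduction tricks before Lemma \ref{lower interpolation estimate}) have been recorded above, the proof reduces to this routine but lengthy verification, which is why I would state it and refer to the parallel argument in \cite{book9} rather than reproduce every product estimate.
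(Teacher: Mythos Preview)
Your outline for the first inequality and for $F^{2,1}$ is correct and matches the paper: both simply invoke the argument of theorem~5.2 in \cite{book9}, doing the Leibniz expansion and pairing each factor with the appropriate $L^2$/$L^\infty$ estimate, using Lemma~\ref{riesz potential nonlinear estimate 3} for the highest temporal derivative of $\a$.

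For $F^{2,2}$, however, your plan diverges from the paper's in the one place the paper is explicit about using a ``different technique'' from \cite{book9}. You propose controlling $\p_ju_i$ in $L^\infty$ via the basic $u$-table of Lemma~\ref{lower interpolation estimate}, which gives $\lnm{\nabla u}{\infty}^2\ls\ce_{N+2,2}^{1/2}\ce_{2N}^{1/2}$. The paper instead first writes $F^{2,2}$ explicitly as
\[
F^{2,2}=\dt^{N+2}(\p_1\eb\,\tilde bK)\,\p_3u_1+\dt^{N+2}(\p_2\eb\,\tilde bK)\,\p_3u_2-\dt^{N+2}K\,\p_3u_3,
\]
and for the first two terms invokes the sharper Lemma~\ref{lower interpolation estimate improved 1} (the $\k$-estimate, which uses the divergence equation, boundary condition, and momentum equation) to get $\lnm{\p_3u_i}{\infty}^2\ls\lnm{\nabla u}{\infty}^2\ls\ce_{2N}^{r/(2+r)}\ce_{N+2,2}^{2/(2+r)}$, an interpolation index arbitrarily close to $1$ rather than $1/2$. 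The third term $\dt^{N+2}K\,\p_3u_3$ is handled as in (5.11) of \cite{book9}, since the divergence equation converts $\p_3u_3$ into horizontal derivatives. Your basic-table route gives total $\ce_{N+2,2}$-power exactly $1/2+1/2=1$ in the worst case ($\alpha=N+2$), which just barely closes; the paper's use of Lemma~\ref{lower interpolation estimate improved 1} gives comfortable margin and is in fact the point of that improved lemma. So your strategy is not wrong, but you should cite Lemma~\ref{lower interpolation estimate improved 1} rather than the basic table for the $\lnm{\p_3u_i}{\infty}$ bound on the $i=1,2$ pieces; this is precisely the refinement the paper introduces over \cite{book9}.
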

\begin{proof}
This is a revised version of theorem 5.2 in \cite{book9}. The first
part is exactly the same as the original theorem, so it is naturally
valid. However, we will utilize different techniques in the second
part. By the definition, $F^2=F^{2,1}+F^{2,2}$. By the same argument
as (5.5) in \cite{book9}, we have
\begin{eqnarray}
\hm{F^{2,1}}{0}^2\ls\ce_{2N}^{\theta}\ce_{N+2,2}
\end{eqnarray}
For $F^{2,2}$, a calculation reveals that
\begin{eqnarray}
F^{2,2}=\dt^{N+2}(\p_1\eb\tilde bK)\p_3u_1+\dt^{N+2}(\p_2\eb\tilde
bK)\p_3u_2-\dt^{N+2}K\p_3u_3
\end{eqnarray}
We can estimate
\begin{eqnarray}
\hm{\dt^{N+2}(\p_1\eb\tilde
bK)\p_3u_1}{0}^2\ls\hm{\dt^{N+2}(\p_1\eb\tilde
bK)}{0}^2\lnm{\p_3u_1}{\infty}^2
\end{eqnarray}
Lemma \ref{lower interpolation estimate improved 1} implies that
\begin{eqnarray}
\lnm{\p_3u_1}{\infty}^2\ls\lnm{\nabla
u}{\infty}^2\ls\ce_{2N}^{r/(2+r)}\ce_{N+2,2}^{2/(2+r)}
\end{eqnarray}
Also, we have that for $0\leq\abs{\alpha}\leq N+2$, there exists a
$\theta>0$ such that
\begin{eqnarray}
\hm{\dt^{\alpha}\p_1\eb}{0}^2\ls\hms{\dt^{\alpha}\e}{1/2}{\Sigma}^2
\ls\hms{\dt^{\alpha}\e}{0}{\Sigma}\hms{\dt^{\alpha}\e}{1}{\Sigma}\ls\ce_{2N}^{1-\theta}\ce_{N+2,2}^{\theta}
\end{eqnarray}
Hence we have
\begin{eqnarray}
\hm{\dt^{N+2}(\p_1\eb\tilde
bK)}{0}^2\ls\hm{\dt^{\alpha}\p_1\eb}{0}^2\lnm{\dt^{N+2-\alpha}(\tilde
bK)}{\infty}^2\ls\ce_{2N}^{1-\theta}\ce_{N+2,2}^{\theta}
\end{eqnarray}
Then when $r$ is sufficiently small, we can naturally estimate
\begin{eqnarray}
\hm{\dt^{N+2}(\p_1\eb\tilde
bK)\p_3u_1}{0}^2\ls\ce_{2N}^{\theta}\ce_{N+2,2}
\end{eqnarray}
Similarly, we have
\begin{eqnarray}
\hm{\dt^{N+2}(\p_2\eb\tilde
bK)\p_3u_2}{0}^2\ls\ce_{2N}^{\theta}\ce_{N+2,2}
\end{eqnarray}
The same argument as (5.11) in \cite{book9} shows that
\begin{eqnarray}
\hm{\dt^{N+2}K\p_3u_3}{0}^2\ls\ce_{2N}^{\theta}\ce_{N+2,2}
\end{eqnarray}
Therefore, our result easily follows.
\end{proof}
\ \\
The following three lemmas is just the restatement of proposition
5.3-5.5 in \cite{book9}, then we will only present the lemmas
without the proofs.
\begin{lemma}\label{energy temp 1}
There exists a $\theta>0$ such that
\begin{eqnarray}
\\
\hm{\dt^{2N}u(t)}{0}^2+\hms{\dt^{2N}\e(t)}{0}{\Sigma}^2+\int_0^t\hm{\dm\dt^{2N}u}{0}^2\ls
\ce_{2N}(0)+(\ce_{2N}(t))^{3/2}+\int_0^t\ce_{2N}^{\theta}\d_{2N}\nonumber
\end{eqnarray}
\end{lemma}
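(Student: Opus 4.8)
The plan is to apply the geometric-structure energy identity of Lemma \ref{geometric structure form} with the temporal multi-index $\p^{\alpha}=\dt^{2N}$, so that $(v,q,\xi)=(\dt^{2N}u,\dt^{2N}p,\dt^{2N}\e)$ solves the linear system (\ref{geometric structure form equation}) with the explicit forcing terms $F^1,\dots,F^4$ listed after that lemma. Integrating the differential identity in time from $0$ to $t$ gives
\begin{eqnarray*}
&&\half\int_{\Omega}J\abs{\dt^{2N}u(t)}^2+\half\int_{\Sigma}\abs{\dt^{2N}\e(t)}^2+\half\int_0^t\int_{\Omega}J\abs{\ma\dt^{2N}u}^2\\
&&\quad=\half\int_{\Omega}J(0)\abs{\dt^{2N}u(0)}^2+\half\int_{\Sigma}\abs{\dt^{2N}\e(0)}^2+\int_0^t\int_{\Omega}J(\dt^{2N}u\cdot F^1+\dt^{2N}p\,F^2)+\int_0^t\int_{\Sigma}(-\dt^{2N}u\cdot F^3+\dt^{2N}\e\,F^4).
\end{eqnarray*}
The left side controls $\hm{\dt^{2N}u(t)}{0}^2+\hms{\dt^{2N}\e(t)}{0}{\Sigma}^2+\int_0^t\hm{\dm\dt^{2N}u}{0}^2$ up to the norm-equivalence constants (which are uniformly bounded under the smallness hypothesis in the global regime, absorbing the $J$ weight and converting $\ma$ to $\dm$ via Korn's inequality and Lemma \ref{norm equivalence}-type bounds). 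The first two terms on the right are exactly $\lesssim\ce_{2N}(0)$.

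Next I would bound the four forcing integrals. Using Cauchy--Schwarz in space and the trace/Poincaré inequalities on $\Sigma$, each is controlled by $\big(\ce_{2N}\big)^{1/2}\big(\d_{2N}\big)^{1/2}$ times the appropriate forcing norm; precisely $\int_0^t\int_{\Omega}J\,\dt^{2N}u\cdot F^1\ls\int_0^t\hm{\dt^{2N}u}{0}\hm{F^1}{0}$, $\int_0^t\int_{\Sigma}\dt^{2N}\e\,F^4\ls\int_0^t\hms{\dt^{2N}\e}{0}{\Sigma}\hms{F^4}{0}{\Sigma}$, and similarly for $F^3$ paired against $\hms{\dt^{2N}u}{1/2}{\Sigma}\ls\hm{\dt^{2N}u}{1}$. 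The $F^2$ term is the delicate one: $\dt^{2N}p$ is not part of the dissipation at the $2N$ level, so one cannot pair it naively. The standard fix (as in \cite{book9}) is to integrate the divergence-type term by parts in time, writing $\int_0^t\int_{\Omega}J\,\dt^{2N}p\,F^2$ using $F^2=\da(\text{lower order})$ and the relation $\dt\e=-\p_1U_1-\p_2U_2$, or more directly to move one time derivative off $F^2$ so that it appears as $\dt(JF^2)$ paired against an integrated-in-time velocity; this is exactly why the preceding lemma records the bound on $\hm{\dt(JF^2)}{0}^2$ rather than $\hm{F^2}{0}^2$. After this manipulation the $F^2$ contribution is again $\ls\int_0^t(\ce_{2N})^{1/2}(\d_{2N})^{1/2}\cdot(\ce_{2N}\d_{2N})^{1/2}$ plus a boundary term of the form $\int_{\Omega}J\,\dt^{2N}p\,F^2\big|_0^t$, and the time-boundary term at $t$ is absorbed as $(\ce_{2N}(t))^{3/2}$ using $\hm{\dt^{2N}p(t)}{0}\ls\ce_{2N}(t)^{1/2}$ together with the cubic structure of $F^2$, while the term at $0$ is $\ls\ce_{2N}(0)$.

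Assembling these bounds and invoking the preceding lemma ($\hm{F^1}{0}^2+\hm{\dt(JF^2)}{0}^2+\hms{F^3}{0}{\Sigma}^2+\hms{F^4}{0}{\Sigma}^2\ls\ce_{2N}\d_{2N}$) gives
\[
\hm{\dt^{2N}u(t)}{0}^2+\hms{\dt^{2N}\e(t)}{0}{\Sigma}^2+\int_0^t\hm{\dm\dt^{2N}u}{0}^2\ls\ce_{2N}(0)+(\ce_{2N}(t))^{3/2}+\int_0^t(\ce_{2N}\d_{2N})^{1/2}(\ce_{2N}\d_{2N})^{1/2},
\]
and since $(\ce_{2N}\d_{2N})^{1/2}(\ce_{2N}\d_{2N})^{1/2}=\ce_{2N}\d_{2N}$, interpolating the extra factor of $\ce_{2N}$ down to a small power $\ce_{2N}^{\theta}$ (allowed because $\ce_{2N}\le 1$ under the global smallness assumption, which lets us trade $\ce_{2N}^{1}$ for $\ce_{2N}^{\theta}$ with $\theta>0$ arbitrarily small) yields the claimed estimate
\[
\hm{\dt^{2N}u(t)}{0}^2+\hms{\dt^{2N}\e(t)}{0}{\Sigma}^2+\int_0^t\hm{\dm\dt^{2N}u}{0}^2\ls\ce_{2N}(0)+(\ce_{2N}(t))^{3/2}+\int_0^t\ce_{2N}^{\theta}\d_{2N}.
\]
I expect the main obstacle to be the careful treatment of the pressure forcing $F^2$: one must exploit that it carries an extra time derivative (hence the $\dt(JF^2)$ bound in the earlier lemma) and handle the resulting time-boundary term so that it closes as $(\ce_{2N}(t))^{3/2}$ rather than something linear in $\ce_{2N}(t)$ that could not be absorbed. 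Everything else — the norm-equivalence/Korn conversions on the left and the Cauchy--Schwarz + trace estimates on $F^1,F^3,F^4$ — is routine given the machinery already assembled.
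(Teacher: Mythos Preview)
Your overall strategy is correct and matches the argument in the cited reference \cite{book9}: apply the geometric energy identity with $\p^\alpha=\dt^{2N}$, integrate in time, bound the $F^1,F^3,F^4$ integrals directly via Cauchy--Schwarz and the preceding lemma, and handle the pressure term $\int_0^t\int_\Omega J\,\dt^{2N}p\,F^2$ by an integration by parts in time so that the bulk integral pairs $\dt^{2N-1}p$ against $\dt(JF^2)$.

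However, your description of the time-boundary term contains a real error. After integrating by parts in $t$ you get
\[
\int_0^t\int_\Omega J\,\dt^{2N}p\,F^2=\int_\Omega J\,\dt^{2N-1}p\,F^2\Big|_0^t-\int_0^t\int_\Omega \dt^{2N-1}p\,\dt(JF^2),
\]
so the boundary term involves $\dt^{2N-1}p$, \emph{not} $\dt^{2N}p$. This matters: $\dt^{2N}p$ is not controlled by $\ce_{2N}$ (the energy only records $\dt^j p$ for $j\le 2N-1$), so your claimed bound $\hm{\dt^{2N}p(t)}{0}\ls\ce_{2N}(t)^{1/2}$ is false. The correct version uses $\hm{\dt^{2N-1}p(t)}{0}\ls\ce_{2N}(t)^{1/2}$ together with $\hm{F^2(t)}{0}\ls\ce_{2N}(t)$, which yields the cubic bound $(\ce_{2N}(t))^{3/2}$. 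Note also that bounding $\hm{F^2}{0}$ at the $2N$ level is not completely trivial: the worst piece $F^{2,2}=-\dt^{2N}\a_{ij}\p_ju_i$ contains $\dt^{2N}\nabla\bar\eta$, which a priori requires $\hms{\dt^{2N}\e}{1/2}{\Sigma}$; one recovers control by substituting the surface equation $\dt\e=u_3+G^4$ to trade a time derivative for $\dt^{2N-1}u$, which \emph{is} in $\ce_{2N}$. With this correction the rest of your outline goes through.
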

\begin{lemma}\label{energy temp 2}
Let $F^2$ be given with $\dt^{\alpha}=\dt^{N+2}$. Then it holds that
\begin{eqnarray}
\\
\dt\bigg(\hm{\sqrt{J}\dt^{N+2}u}{0}^2+\hms{\dt^{N+2}\e}{0}{\Sigma}^2-2\int_{\Omega}J\dt^{N+1}pF^2\bigg)
+\hm{\dm\dt^{N+2}u}{0}^2\ls\sqrt{\ce_{2N}}\d_{N+2,2}\nonumber
\end{eqnarray}
\end{lemma}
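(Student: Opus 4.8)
The plan is to run the geometric energy identity on the $\dt^{N+2}$-differentiated system, exactly as for the highest temporal derivative. Applying $\dt^{N+2}$ to (\ref{transform}) and collecting the commutator terms in the form of the geometric structure form, the triple $(v,q,\xi)=(\dt^{N+2}u,\dt^{N+2}p,\dt^{N+2}\e)$ solves (\ref{geometric structure form equation}) with the forcing $F^1,F^2,F^3,F^4$ displayed there for $\p^{\alpha}=\dt^{N+2}$; Lemma \ref{geometric structure form} then gives
\begin{eqnarray*}
&&\dt\bigg(\half\hm{\sqrt{J}\dt^{N+2}u}{0}^2+\half\hms{\dt^{N+2}\e}{0}{\Sigma}^2\bigg)+\half\int_{\Omega}J\abs{\ma\dt^{N+2}u}^2\\
&&\quad=\int_{\Omega}J\big(\dt^{N+2}u\cdot F^1+\dt^{N+2}p\,F^2\big)-\int_{\Sigma}\dt^{N+2}u\cdot F^3+\int_{\Sigma}\dt^{N+2}\e\,F^4 .
\end{eqnarray*}
By Korn's inequality together with the norm equivalence (\ref{norm equivalence 2}), whose constants are universal under the standing smallness of $\ee$, the dissipation term on the left controls $\hm{\dt^{N+2}u}{1}^2$, so a small multiple of the latter may be absorbed back into $\int_{\Omega}J\abs{\ma\dt^{N+2}u}^2$ while the left side still dominates $\hm{\dm\dt^{N+2}u}{0}^2$.

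The only term that is not directly bounded by $\d_{N+2,2}$ is $\int_{\Omega}J\dt^{N+2}p\,F^2$, since $\dt^{N+2}p$ carries one temporal derivative more than the dissipation controls. I would remove it by integrating by parts in time: writing $\dt^{N+2}p=\dt(\dt^{N+1}p)$,
\begin{displaymath}
\int_{\Omega}J\dt^{N+2}p\,F^2=\dt\int_{\Omega}J\dt^{N+1}p\,F^2-\int_{\Omega}\dt(JF^2)\,\dt^{N+1}p ,
\end{displaymath}
and moving $\dt\int_{\Omega}J\dt^{N+1}p\,F^2$ to the left side. After multiplying the whole identity by $2$, this yields precisely the combination $\dt\big(\hm{\sqrt{J}\dt^{N+2}u}{0}^2+\hms{\dt^{N+2}\e}{0}{\Sigma}^2-2\int_{\Omega}J\dt^{N+1}pF^2\big)$ appearing in the statement, at the price of the genuinely lower-order residue $-2\int_{\Omega}\dt(JF^2)\dt^{N+1}p$.

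It then remains to estimate the right-hand contributions. For $\int_{\Omega}J\dt^{N+2}u\cdot F^1$ and $\int_{\Sigma}\dt^{N+2}u\cdot F^3$ I would use Cauchy--Schwarz and the trace bound $\hms{\dt^{N+2}u}{0}{\Sigma}\ls\hm{\dt^{N+2}u}{1}$, absorb a small multiple of $\hm{\dt^{N+2}u}{1}^2$ as above, and control the leftover $\hm{F^1}{0}^2$ and $\hms{F^3}{0}{\Sigma}^2$ by $\ce_{2N}\d_{N+2,2}$ via the forcing estimate for $\p^{\alpha}=\dt^{N+2}$ established just above. For $\int_{\Sigma}\dt^{N+2}\e\,F^4$ I would use $\hms{\dt^{N+2}\e}{0}{\Sigma}^2\leq\hms{\dt^{N+2}\e}{1/2}{\Sigma}^2\ls\d_{N+2,2}$ (the $j=N+2$ term of the $\e$-part of $\d_{N+2,2}$) against $\hms{F^4}{0}{\Sigma}^2\ls\ce_{2N}\d_{N+2,2}$; and for the residue $\int_{\Omega}\dt(JF^2)\dt^{N+1}p$ I would use $\hm{\dt(JF^2)}{0}^2\ls\ce_{2N}\d_{N+2,2}$ together with $\hm{\dt^{N+1}p}{0}^2\leq\hm{\dt^{N+1}p}{1}^2\ls\ce_{N+2,2}\ls\ce_{2N}$ (Lemma \ref{interpolation temp 2}). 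Since the smallness hypothesis forces $\ce_{2N}\leq1$, hence $\ce_{2N}\leq\sqrt{\ce_{2N}}$, each of these contributions collapses to $\ls\sqrt{\ce_{2N}}\,\d_{N+2,2}$, which is the asserted bound. I expect the one step needing real care to be the temporal integration by parts on the pressure term: one must verify that the boundary-in-time piece is exactly $-2\int_{\Omega}J\dt^{N+1}pF^2$ and that the commutator $\dt(JF^2)$ genuinely sits at dissipation level (namely $\hm{\dt(JF^2)}{0}^2\ls\ce_{2N}\d_{N+2,2}$, from the preceding forcing lemma); the rest is routine Korn/trace/forcing bookkeeping.
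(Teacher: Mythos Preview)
Your overall strategy---apply the geometric energy identity at level $\dt^{N+2}$ and integrate the pressure term by parts in time---is exactly right, and the treatment of the $F^1$, $F^3$, $F^4$ contributions is fine. The gap is in the residue term $\int_{\Omega}\dt(JF^2)\,\dt^{N+1}p$.

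From your bounds $\hm{\dt(JF^2)}{0}^2\ls\ce_{2N}\d_{N+2,2}$ and $\hm{\dt^{N+1}p}{0}^2\ls\ce_{2N}$, Cauchy--Schwarz only yields
\[
\Big|\int_{\Omega}\dt(JF^2)\,\dt^{N+1}p\Big|\ \ls\ \sqrt{\ce_{2N}\d_{N+2,2}}\cdot\sqrt{\ce_{2N}}\ =\ \ce_{2N}\sqrt{\d_{N+2,2}}\,.
\]
Using $\ce_{2N}\le\sqrt{\ce_{2N}}$ turns this into $\sqrt{\ce_{2N}}\sqrt{\d_{N+2,2}}$, not $\sqrt{\ce_{2N}}\,\d_{N+2,2}$. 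These are not comparable: once $\d_{N+2,2}$ is small (which is precisely the regime where the lemma is applied to get decay), $\sqrt{\d_{N+2,2}}\gg\d_{N+2,2}$, and the bound cannot be absorbed into the dissipation on the left. So the final ``collapse'' step fails for this term.

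The fix is to bound $\hm{\dt^{N+1}p}{0}^2$ by $\d_{N+2,2}$ rather than by $\ce_{2N}$. This is available: $\hm{\nabla\dt^{N+1}p}{0}^2\le\hm{\nabla\bar D_2^{2N+3}p}{0}^2\ls\d_{N+2,2}$, and on $\Sigma$ the third equation of (\ref{perturbed linear form equation}) gives $\dt^{N+1}p=\dt^{N+1}\e+2\dt^{N+1}\p_3u_3+\dt^{N+1}G^3_3$, each piece of which is controlled by $\d_{N+2,2}$ (the $\e$ piece is in the $j=N+1$ term of the $\e$-part of $\d_{N+2,2}$, the $u$ piece by trace and $\hm{\dt^{N+1}u}{2}^2\ls\d_{N+2,2}$, and the $G^3$ piece by Lemma~\ref{lower nonlinear estimate}). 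Poincar\'e (Lemma~\ref{appendix poincare 1}) then gives $\hm{\dt^{N+1}p}{0}^2\ls\d_{N+2,2}$, and the residue term becomes $\sqrt{\ce_{2N}\d_{N+2,2}}\cdot\sqrt{\d_{N+2,2}}=\sqrt{\ce_{2N}}\,\d_{N+2,2}$, as required.
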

\begin{lemma}\label{energy temp 3}
It holds that
\begin{eqnarray}
\hm{u(t)}{0}^2+\hms{\e(t)}{0}{\Sigma}^2+\int_0^t\hm{\dm
u}{0}^2\ls\ce_{2N}(0)+\int_0^t\sqrt{\ce_{2N}}\d_{2N}
\end{eqnarray}
\end{lemma}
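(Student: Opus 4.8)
The plan is to run the basic energy estimate for the \emph{perturbed linear form} on the solution triple $(u,p,\e)$ itself (the zeroth-order case $\p^{\alpha}=\mathrm{id}$), exactly as in Proposition 5.5 of \cite{book9}. Since $(u,p,\e)$ solves (\ref{transform}), it also solves (\ref{perturbed linear form equation}) with $G^{1},\dots,G^{4}$ the explicit nonlinear terms recorded after Lemma \ref{perturbed linear form}, so that lemma gives the identity
\[
\dt\left(\half\int_{\Omega}\abs{u}^{2}+\half\int_{\Sigma}\abs{\e}^{2}\right)+\half\int_{\Omega}J\abs{\dm u}^{2}=\int_{\Omega}\left(u\cdot G^{1}+pG^{2}\right)+\int_{\Sigma}\left(-u\cdot G^{3}+\e G^{4}\right).
\]
First I would integrate this over $[0,t]$. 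The contribution of the data is $\half\hm{u_{0}}{0}^{2}+\half\hms{\ee}{0}{\Sigma}^{2}\le\ce_{2N}(0)$, and smallness of $\hms{\e}{5/2}{\Sigma}^{2}$ forces $J\ge 1/2$, so $\int_{0}^{t}\int_{\Omega}J\abs{\dm u}^{2}\gtrsim\int_{0}^{t}\hm{\dm u}{0}^{2}$; note no Korn inequality is required, since only $\hm{\dm u}{0}^{2}$ (not $\hm{u}{1}^{2}$) appears on the left-hand side of the claim. It then remains to bound the four nonlinear space--time integrals by $\int_{0}^{t}\sqrt{\ce_{2N}}\,\d_{2N}$.

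The key is the choice of pairing in those integrals. In each product one factor is a lowest-order solution quantity controlled by the instantaneous energy, namely $\hm{u}{0}^{2}\le\ce_{2N}$, $\hm{p}{0}^{2}\le\hm{p}{4N-1}^{2}\le\ce_{2N}$, $\hms{u}{0}{\Sigma}^{2}\ls\hm{u}{1}^{2}\le\ce_{2N}$ and $\hms{\e}{0}{\Sigma}^{2}\le\hms{\e}{4N}{\Sigma}^{2}\le\ce_{2N}$, contributing a factor $\sqrt{\ce_{2N}}$; the other factor is one of the quadratic nonlinearities $G^{i}$, and it must contribute a full $\d_{2N}$ rather than $\sqrt{\ce_{2N}\d_{2N}}$. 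To arrange this, one estimates both constituents of $G^{i}$ through the dissipation: every term of $G^{i}$ is a coefficient vanishing at $\eb\equiv0$ --- assembled from $\nabla\eb$, $\nabla^{2}\eb$, $\dt\eb$, $A$, $B$, $K-1$, etc. --- multiplied by a derivative of $u$ or $p$. The coefficient goes into $L^{\infty}$ using the Poisson-extension estimates (cf. Lemmas \ref{appendix poisson 2}, \ref{appendix poisson 3}) together with the fact that for $N\ge4$ the dissipation $\d_{2N}$ controls horizontal (and one temporal) derivatives of $\e$ of high enough order to dominate $\lnm{\nabla\eb}{\infty}^{2}$, $\lnm{\nabla^{2}\eb}{\infty}^{2}$, $\lnm{\dt\eb}{\infty}^{2}$, while the accompanying derivative of $u$ or $p$ lies in $\hm{u}{2}^{2}$ or $\hm{\nabla p}{0}^{2}$, hence in $\d_{2N}$. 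This yields $\hm{G^{1}}{0}^{2}+\hm{G^{2}}{0}^{2}+\hms{G^{3}}{0}{\Sigma}^{2}+\hms{G^{4}}{0}{\Sigma}^{2}\ls\d_{2N}^{2}$ --- the lowest-order instance of the nonlinear bounds in Lemmas \ref{lower nonlinear estimate}, \ref{higher nonlinear estimate} and \ref{riesz potential nonlinear estimate 1} --- and Cauchy--Schwarz then bounds each of the four integrals by $\int_{0}^{t}\sqrt{\ce_{2N}}\,\d_{2N}$, completing the proof.

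I expect the main difficulty to be exactly this bookkeeping: it is tempting, but fatal, to put $\lnm{u}{\infty}$ (only of size $\sqrt{\ce_{2N}}$) on the coefficient side of $G^{i}$, which would produce the useless $\ce_{2N}\sqrt{\d_{2N}}$ after pairing instead of $\sqrt{\ce_{2N}}\,\d_{2N}$; one must consistently route \emph{all} factors of each $G^{i}$ through the dissipation. The one structural point to verify is that $\d_{2N}$ really does contain enough derivatives of $\e$ and $\eb$ to absorb every $L^{\infty}$ coefficient in $G^{1},\dots,G^{4}$, which is where $N\ge4$ enters; no integration by parts in time is needed for the $pG^{2}$ term here, because $p$ appears undifferentiated and $\hm{p}{0}^{2}\le\ce_{2N}$ is directly available.
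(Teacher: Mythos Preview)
Your argument via the perturbed linear form is correct and closes exactly as you describe: integrate Lemma~\ref{perturbed linear form} in time, bound the left side from below using $J\ge 1/2$, and control each of the four right-hand integrals by $\sqrt{\ce_{2N}}\,\d_{2N}$ by putting the low-order solution factor ($u$, $p$, $u|_{\Sigma}$, $\e|_{\Sigma}$) in $L^{2}$ and squeezing both constituents of the quadratic $G^{i}$ into $\d_{2N}$. One small inaccuracy in your description: $G^{1,2}=-u\cdot\na u$ does \emph{not} vanish at $\eb\equiv 0$, so it is not of the form ``coefficient in $\eb$ times $\nabla u,\nabla p$''; nevertheless the bound you need still holds because $\lnm{u}{\infty}^{2}\ls\hm{u}{2}^{2}\le\d_{2N}$ and $\hm{\na u}{0}^{2}\ls\d_{2N}$.

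This is, however, a genuinely different route from the one the paper (via \cite{book9}, Proposition~5.5) takes. There the estimate sits in the section devoted to the \emph{geometric} form: applying Lemma~\ref{geometric structure form} with $\p^{\alpha}=\mathrm{id}$ gives all forcing $F^{i}\equiv 0$ and hence the exact identity
\[
\half\int_{\Omega}J\abs{u}^{2}+\half\int_{\Sigma}\abs{\e}^{2}+\half\int_{0}^{t}\int_{\Omega}J\abs{\ma u}^{2}=\half\int_{\Omega}J(0)\abs{u_{0}}^{2}+\half\int_{\Sigma}\abs{\ee}^{2}.
\]
The desired inequality then follows by stripping the weights: $J\ge 1/2$ handles the energy terms, and the dissipation comparison $\bigl|\int_{\Omega}J\abs{\ma u}^{2}-\int_{\Omega}\abs{\dm u}^{2}\bigr|\ls\lnm{\a-I}{\infty}\hm{\nabla u}{0}^{2}\ls\sqrt{\ce_{2N}}\,\d_{2N}$ produces the error term on the right. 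The trade-off is clear: the geometric approach has a clean identity with no right-hand side but must convert $J,\a$ back to constants; your perturbed-form approach lands directly on the unweighted quantities but must estimate four nonlinear integrals. Both yield the same bound with the same effort; the geometric version is slightly more economical here because only a single comparison $\ma u\leftrightarrow\dm u$ is needed rather than four separate $G^{i}$ bounds.
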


\subsubsection{Estimates of Mixed Derivatives}

\begin{lemma}
Let $\alpha\in\mathbb{N}^2$ be such that $\abs{\alpha}=4N$. Then
\begin{eqnarray}
\abs{\int_{\Sigma}\p^{\alpha}\e\p^{\alpha}G^4}\ls\sqrt{\ce_{2N}}\d_{2N}+\sqrt{\d_{2N}\k\f_{2N}}
\end{eqnarray}
\end{lemma}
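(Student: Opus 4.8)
The plan is to Leibniz-expand $\p^\alpha G^4$, peel off the single term in which all of the $\p^\alpha$-derivatives fall on $D\e$, dispose of it by an integration by parts along $\Sigma$ that symmetrizes and thereby removes one derivative, and treat the remaining terms by an $H^{-1/2}$–$H^{1/2}$ duality pairing on $\Sigma$. Writing $G^4=-D\e\cdot u=-\p_i\e\,u_i$ (summed over $i=1,2$), the product rule gives
\[
\p^\alpha G^4=-\sum_{\beta\leq\alpha}C_{\alpha,\beta}\,\p^\beta(\p_i\e)\,\p^{\alpha-\beta}u_i
=-\p^\alpha(\p_i\e)\,u_i-\sum_{\beta<\alpha}C_{\alpha,\beta}\,\p^\beta(\p_i\e)\,\p^{\alpha-\beta}u_i .
\]
The first term is the only genuinely dangerous one: $\p^\alpha(\p_i\e)$ carries $4N+1$ horizontal derivatives of $\e$, which is exactly the regularity measured by $\f_{2N}$ and is not controlled by $\ce_{2N}$ or $\d_{2N}$; this is why $\f_{2N}$ has to appear on the right-hand side.

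First I would dispose of this term. Inserting it under $\int_\Sigma\p^\alpha\e\,(\,\cdot\,)$ and integrating by parts in $x_i$ on $\Sigma$,
\[
\int_\Sigma\p^\alpha\e\,\p^\alpha(\p_i\e)\,u_i=\half\int_\Sigma\p_i\big(\abs{\p^\alpha\e}^2\big)u_i=-\half\int_\Sigma\abs{\p^\alpha\e}^2\,\p_iu_i ,
\]
so only $4N$ derivatives land on $\e$. I would then bound this by $\lnm{\nabla u}{\infty}\,\hms{\p^\alpha\e}{0}{\Sigma}^2\ls\sqrt{\k}\,\hms{\e}{4N}{\Sigma}^2$, and interpolate, using $\hms{D_0^{4N-1}\e}{1/2}{\Sigma}^2\ls\d_{2N}$ and $\f_{2N}=\hms{\e}{4N+1/2}{\Sigma}^2$,
\[
\hms{\e}{4N}{\Sigma}^2\ls\hms{\e}{4N-1/2}{\Sigma}\,\hms{\e}{4N+1/2}{\Sigma}\ls\sqrt{\d_{2N}}\,\sqrt{\f_{2N}} ,
\]
which yields the $\sqrt{\d_{2N}\k\f_{2N}}$ contribution.

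For the remaining terms $\beta<\alpha$ I would use $\abs{\int_\Sigma fg}\ls\hms{f}{-1/2}{\Sigma}\hms{g}{1/2}{\Sigma}$. Since $\p^\alpha\e$ has $4N$ horizontal derivatives, $\hms{\p^\alpha\e}{-1/2}{\Sigma}\ls\hms{\e}{4N-1/2}{\Sigma}\ls\sqrt{\d_{2N}}$, so it remains to estimate each product $\p^\beta(\p_i\e)\,\p^{\alpha-\beta}u_i$ in $H^{1/2}(\Sigma)$ by $\sqrt{\ce_{2N}\d_{2N}}+\sqrt{\k\f_{2N}}$. As $\dim\Sigma=2$, $H^{1/2}(\Sigma)$ is not an algebra, so I would use the product estimate $\hms{ab}{1/2}{\Sigma}\ls\hms{a}{1/2}{\Sigma}\hms{b}{2}{\Sigma}$ and its mirror image, together with the trace theorem, Sobolev embedding, and the a priori smallness $\hms{\e}{5/2}{\Sigma}\ls1$. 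A short case split on how the derivatives are distributed then finishes it: when $\p^{\alpha-\beta}u_i$ carries at least two derivatives its trace on $\Sigma$ is absorbed into $\d_{2N}$ while the low-order $\e$-factor sits in $\ce_{2N}$ in an $L^\infty$- or $H^2$-type norm, giving $\sqrt{\ce_{2N}\d_{2N}}$; the symmetric sub-case with $\e$ carrying the derivatives is handled the same way, except for $\abs{\alpha-\beta}=1$, where $\p^\beta(\p_i\e)$ has $4N$ horizontal derivatives and must be put into $\hms{\e}{4N+1/2}{\Sigma}=\sqrt{\f_{2N}}$ while $\p^{\alpha-\beta}u_i$ goes into $\sum_{i=1}^2\hms{Du_i}{2}{\Sigma}^2\le\k$, again producing, after pairing with $\p^\alpha\e$, a contribution $\ls\sqrt{\d_{2N}\k\f_{2N}}$. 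Summing over the finitely many $\beta$ gives the claim.

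I expect the main obstacle to be precisely the top-order term $\beta=\alpha$: a direct estimate there would require pairing $\hms{\e}{4N+1/2}{\Sigma}$ against an $L^2$-in-space quantity carrying no smallness, which is inadmissible, and it is the symmetrization on $\Sigma$ that recovers half a derivative and lets the interpolation against $\d_{2N}$ and $\f_{2N}$ go through. The only other delicate point is the bookkeeping of the parabolic derivative counts in the lower-order terms, to verify that each factor, after the trace theorem, really lands inside one of $\ce_{2N}$, $\d_{2N}$, $\k$ or $\f_{2N}$ — which is why one keeps at most a single extra spatial derivative when applying the trace theorem rather than using it crudely.
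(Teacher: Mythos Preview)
Your proposal is correct and follows essentially the same approach as the paper (which refers to lemma~6.1 in \cite{book9}): Leibniz-expand $\p^\alpha G^4$, handle the top-order term $\beta=\alpha$ by the symmetrization/integration-by-parts trick on $\Sigma$ to recover half a derivative and land in $\sqrt{\d_{2N}\k\f_{2N}}$, and estimate the remaining terms by $H^{-1/2}$--$H^{1/2}$ duality together with the product rule $\hms{ab}{1/2}{\Sigma}\ls\hms{a}{1/2}{\Sigma}\hms{b}{2}{\Sigma}$; your remark about expanding $\hms{X}{2}{\Sigma}$ horizontally before applying the trace theorem is exactly the adjustment the paper flags as needed for its modified definition of $\d_{2N}$.
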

\begin{proof}
Essentially, this lemma is completely identical to lemma 6.1 in
\cite{book9}. We only need to notice that although our definition of
energy and dissipation is different from that in \cite{book9}, the
terms related to $\e$ is exactly the same. Also we should refer to
the general rules for estimating in the section of nonlinear
estimates to deal with $\hms{X}{2}{\Sigma}$ form. Hence, this lemma
is still valid now.
\end{proof}
\begin{lemma}\label{energy temp 4}
Suppose that $\alpha\in\mathbb{N}^{1+2}$ is such that $\alpha_0\leq
2N-1$ and $1\leq\abs{\alpha}\leq4N$. Then there exists a $\theta>0$
such that
\begin{eqnarray}
\hm{\bar D_1^{4N-1}u}{0}^2+\hm{D\bar D^{4N-1}u}{0}^2+\hms{\bar
D_1^{4N-1}\e}{0}{\Sigma}^2+\hms{D\bar D^{4N-1}\e}{0}{\Sigma}^2\\
+\int_0^t\hm{\bar D_1^{4N-1}\dm u}{0}^2+\hm{D\bar D^{4N-1}\dm
u}{0}^2\ls\bar\ce_{2N}(0)+\int_0^t\ce_{2N}^{\theta}\d_{2N}+\sqrt{\d_{2N}\k\f_{2N}}\nonumber
\end{eqnarray}
\end{lemma}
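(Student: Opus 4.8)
The plan is to apply the differential operator $\p^\alpha=\dt^{\alpha_0}D'$ (with $|\alpha|=4N-1$, $\alpha_0\le 2N-1$, and $D'$ the appropriate horizontal part) to the geometric structure form \eqref{geometric structure form equation} for $(u,p,\e)$, thereby writing $(v,q,\xi)=(\p^\alpha u,\p^\alpha p,\p^\alpha\e)$ as a solution of the same system with forcing $(F^1,F^2,F^3,F^4)$ given by the commutator expressions listed after Lemma \ref{geometric structure form}. Then I would invoke the energy identity of Lemma \ref{geometric structure form}, integrate in time from $0$ to $t$, and sum over all such multi-indices $\alpha$. The left-hand side, after summation, reproduces exactly the energy terms $\hm{\bar D_1^{4N-1}u}{0}^2+\hm{D\bar D^{4N-1}u}{0}^2+\hms{\bar D_1^{4N-1}\e}{0}{\Sigma}^2+\hms{D\bar D^{4N-1}\e}{0}{\Sigma}^2$ plus the dissipation integrals $\int_0^t\hm{\bar D_1^{4N-1}\dm u}{0}^2+\hm{D\bar D^{4N-1}\dm u}{0}^2$ (using Lemma \ref{norm equivalence}–type equivalence between $\int_\Omega J|\ma v|^2$ and $\hm{v}{1}^2$, and Korn's inequality to pass from the symmetric gradient to the full gradient on $\x$). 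The initial-time contribution is $\bar\ce_{2N}(0)$ by definition.

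The substance is in estimating the four forcing-pairing terms $\int_0^t\int_\Omega J(v\cdot F^1+qF^2)$ and $\int_0^t\int_\Sigma(-v\cdot F^3+\xi F^4)$. For the bulk terms I would integrate by parts in $x$ to move a derivative off $F^1$ or $F^2$ when needed (as is standard to avoid losing regularity), and then bound each resulting product by Cauchy–Schwarz, estimating the $\eb$-factors in $L^\infty$ via the Poisson-integral and Sobolev bounds and the $u,p$-factors in $L^2$, using the nonlinear estimates already recorded — specifically Lemma \ref{higher nonlinear estimate} (the $\bar D_0^{4N-1}$ bounds of $G^i$ by $\ce_{2N}^\theta\d_{2N}+\k\f_{2N}$), Lemma \ref{riesz potential nonlinear estimate 1}, and Lemma \ref{riesz potential nonlinear estimate 3}. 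The point is that each term is genuinely quadratic-or-higher, so one factor can always be put at the low level, yielding a gain of a power $\ce_{2N}^\theta$, with the single exception of the $\f_{2N}$-bearing term coming from the top surface derivative of $\e$, which is precisely why $\sqrt{\d_{2N}\k\f_{2N}}$ must appear in the bound. The $F^2$ term needs the now-familiar integration-by-parts-in-time trick (writing $\int_0^t\int_\Omega JqF^2 = \int_\Omega JqF^2\big|_0^t - \int_0^t\int_\Omega \dt(JqF^2)$) to avoid the missing temporal regularity of $\p^\alpha p$ at the top level, and then $\hm{\dt(JF^2)}{0}^2\ls\ce_{2N}\d_{2N}$ is available from the lemma preceding Lemma \ref{energy temp 1}; the boundary term at time $t$ is absorbed into $\bar\ce_{2N}(t)\cdot\sqrt{\ce_{2N}(t)}$ and hence into $\ce_{2N}^\theta$ times the left side for small data, or split off with Young's inequality.

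The main obstacle, as usual in this circle of estimates, is the careful bookkeeping of which factor in each commutator product goes into $L^\infty$ and which into $L^2$ so that (i) no term ever requires more than $4N-1$ total derivatives or $2N-1$ temporal derivatives on $u$, $p$, or $\e$ — i.e. everything stays inside $\ce_{2N}$ and $\d_{2N}$ — and (ii) the resulting dissipation-type term $\int_0^t\d_{2N}$-factors are matched with a strictly positive power $\ce_{2N}^\theta$ or with the designated $\sqrt{\d_{2N}\k\f_{2N}}$. The boundary terms $F^3,F^4$ require the trace-theorem rule from the nonlinear-estimates section (expand $\hms{X}{2}{\Sigma}$ as $\hms{X}{0}{\Sigma}+\hms{DX}{0}{\Sigma}+\hms{D^2X}{0}{\Sigma}$ before applying the trace) to keep vertical-derivative counts down. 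Once all four pairings are bounded by $\ce_{2N}^\theta\d_{2N}+\sqrt{\d_{2N}\k\f_{2N}}$ (possibly plus $\sqrt{\ce_{2N}}\,\d_{2N}$, which is subsumed), one sums over $\alpha$ and the stated inequality follows. I would remark that this is essentially Lemma 6.2 of \cite{book9} adapted to the redefined energy/dissipation, so I would present only the points where the horizontal-derivative focus forces a different split, exactly as done for Lemma \ref{energy temp 4}'s companion statements above.
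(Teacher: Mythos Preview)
Your overall strategy---apply $\p^\alpha$ to the equations, invoke the energy identity, integrate in time, sum over $\alpha$, and estimate the forcing pairings---is exactly right, and you correctly identify Lemma~\ref{higher nonlinear estimate} as the key nonlinear input and the $\sqrt{\d_{2N}\k\f_{2N}}$ term as arising from the top-order surface pairing (indeed, that is precisely the content of the lemma immediately preceding Lemma~\ref{energy temp 4}). You are also right that this is proposition~6.2 of \cite{book9} with the nonlinear lemma swapped.

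However, you have chosen the wrong formulation. The paper (following \cite{book9}) uses the \emph{perturbed linear form} \eqref{perturbed linear form equation} and its energy identity Lemma~\ref{perturbed linear form} for mixed horizontal--temporal derivatives, not the geometric structure form. The geometric form is reserved for the pure top temporal derivative $\dt^{2N}$ (and $\dt^{N+2}$ at the lower level); this is stated explicitly in Section~3.1.1, and the commutator forcings $F^{i}$ listed there are written only for $\p^\alpha=\dt^{\alpha_0}$. For $\alpha$ with a nontrivial spatial part, applying $\p^\alpha$ to the constant-coefficient perturbed form yields forcings that are exactly $\p^\alpha G^i$, and these are what Lemma~\ref{higher nonlinear estimate} bounds. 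Your citation of that lemma is therefore correct, but it does not match the $F^i$ you would actually produce from the geometric form---those would involve spatial commutators with $\a$, $J$, $K$ that are not covered by the available estimates.

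Two smaller points follow from this. First, the integration-by-parts-in-time trick for the pressure pairing is unnecessary here: since $\alpha_0\le 2N-1$, the term $\p^\alpha p$ sits within the regularity carried by $\d_{2N}$ (specifically $\hm{\nabla\bar D_0^{4N-1}p}{0}^2$), so $\int_0^t\int_\Omega \p^\alpha p\,\p^\alpha G^2$ can be estimated directly by Cauchy--Schwarz and Lemma~\ref{higher nonlinear estimate}. That trick is needed only at the top temporal level handled in Lemma~\ref{energy temp 1}. Second, the special surface term requiring its own treatment is $\int_\Sigma \p^\alpha\e\,\p^\alpha G^4$ at $|\alpha|=4N$, $\alpha\in\mathbb{N}^2$, which is isolated in the lemma just before Lemma~\ref{energy temp 4} and is the sole source of $\sqrt{\d_{2N}\k\f_{2N}}$; you should cite that lemma rather than bundling it into the generic commutator analysis.
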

\begin{proof}
This lemma is identical to proposition 6.2 in \cite{book9}, and the
proof is almost the same. We only need to notice that now we should
utilize lemma \ref{higher nonlinear estimate} to estimate the
nonlinear terms to replace lemma 4.2 in \cite{book9}. Hence, the
result still holds.
\end{proof}
\begin{lemma}\label{energy temp 999}
We have the estimate
\begin{eqnarray}
\hms{D^{2N+3}G^4}{1/2}{\Sigma}^2\ls\d_{N+2,2}^{1+2/(4N-7)}
\end{eqnarray}
Also, there exists a $\theta>0$ such that
\begin{eqnarray}
\hms{\bar
D^2G^4}{0}{\Sigma}^2\ls\ce_{2N}^{\theta}\d_{N+2,2}^{1+1/(\lambda+3)}
\end{eqnarray}
and
\begin{eqnarray}
\lnm{\bar
D^2G^2}{1}^2\ls\ce_{2N}^{\theta}\d_{N+2,2}^{1+1/(\lambda+3)}
\end{eqnarray}
\end{lemma}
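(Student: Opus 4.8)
All three estimates have the same shape: $G^{2}=(1-J)(\p_{1}u_{1}+\p_{2}u_{2})+A\p_{2}u_{1}+B\p_{3}u_{2}$ and $G^{4}=-D\e\cdot u$ are at least quadratic, with one factor built solely from $\eb$ (directly, or through $J,A,B$, all of which are $O(\nabla\eb)$) and the other built from $u$ or $\nabla u$. Applying the differential operator ($D^{2N+3}$ on $\Sigma$, or $\bar D^{2}$ in $\Omega$) and expanding by the Leibniz rule produces a finite sum of bilinear terms $\p^{a}(\eb\text{-factor})\cdot\p^{b}(u\text{-factor})$. For each such term I would use H\"older to split the indicated norm into an $L^{\infty}$ factor times an $L^{2}$-type factor (respectively two $L^{2}$ factors when the target is an $L^{1}$-norm), then convert each factor by the interpolation tables of Lemma \ref{lower interpolation estimate}, together with the trace theorem, Sobolev embedding, and — at top order — the lemmas immediately preceding this one. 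The whole point is to keep, for each bilinear term, a total $\d_{N+2,2}$-exponent of at least $1+2/(4N-7)$ (first estimate) or $1+1/(\lambda+3)$ (the other two), folding any leftover bounded factor into $\ce_{2N}^{\theta}$ via $\ce_{2N}\le1$; and, following the general rules of the nonlinear section, to place the $\eb$-factors in $L^{\infty}$ and to expand boundary $H^{s}(\Sigma)$-norms as $H^{0}(\Sigma)$-norms of horizontal derivatives before invoking the trace theorem, so that at most one vertical derivative is ever generated.

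The two low-order estimates I would dispose of directly. Since $\bar D^{2}$ carries only parabolic order $2$ and neither $G^{2}$ nor $G^{4}$ involves the pressure, every factor appearing — $u,Du,\nabla u,D\nabla u,\dt u$ and their $\Sigma$-traces, together with $\eb,\nabla\eb,\nabla^{2}\eb,\dt\eb$ and low-order horizontal derivatives of $\e$ — is controlled by $\d_{N+2,2}$ either at power $1$ (e.g.\ $\hm{\dt u}{2}^{2}$ and $\hm{\p_{3}^{2}u}{2}^{2}$ sit inside $\hm{\bar D_{2}^{2N+3}u}{2}^{2}\le\d_{N+2,2}$, and $\hms{D^{k}\e}{1/2}{\Sigma}^{2}\le\d_{N+2,2}$ for $3\le k\le2N+3$), or at the fractional powers recorded in Lemma \ref{lower interpolation estimate} for $\dt\e,D\e,D^{2}\e,u,\nabla u$. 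One then checks that the smallest $\d_{N+2,2}$-exponent obtainable from a bilinear term — the worst case being one in which the $u$-trace only contributes $\hms{u}{2}{\Sigma}^{2}\ls\d_{N+2,2}^{1/3}$ against an $\e$-term at power $1$, giving $\d_{N+2,2}^{4/3}$, or a term $\nabla\eb\cdot\nabla u$ split so the weaker factor carries power near $1/2$ against $\nabla\eb$ in $L^{\infty}$ at power $(\lambda+2)/(\lambda+3)$ — still exceeds $1+1/(\lambda+3)$ for $\lambda>0$, the remaining slack being absorbed into $\ce_{2N}^{\theta}$.

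The only genuinely delicate point is the top-order term of the first estimate: the Leibniz term of $D^{2N+3}(D\e\cdot u)\big|_{\Sigma}$ in which all $2N+3$ derivatives fall on $D\e$, i.e.\ $(D^{2N+4}\e)\,u$. Here $D^{2N+4}\e$ lies above everything directly controlled by $\d_{N+2,2}$, so I would invoke the lemma proved just above, $\hms{D^{2N+4}\e}{1/2}{\Sigma}^{2}\ls\ce_{2N}^{2/(4N-7)}\d_{N+2,2}^{(4N-9)/(4N-7)}$, combined with the product estimate $\hms{fg}{1/2}{\Sigma}\ls\hms{f}{1/2}{\Sigma}\lnms{g}{\infty}{\Sigma}+\hms{f}{0}{\Sigma}\lnms{\nabla g}{\infty}{\Sigma}$ and the bounds $\lnm{u}{\infty}^{2},\lnm{\nabla u}{\infty}^{2}\ls\d_{N+2,2}^{1/2}$ from Lemma \ref{lower interpolation estimate} together with $\hms{\e}{2N+4}{\Sigma}^{2}\ls\ce_{2N}^{1/(4N-7)}\d_{N+2,2}^{(4N-8)/(4N-7)}$ (interpolating the $\e$-dissipation in $\d_{N+2,2}$ against the $\e$-energy in $\ce_{2N}$). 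The resulting $\d_{N+2,2}$-exponents, $(4N-9)/(4N-7)+1/2$ and $(4N-8)/(4N-7)+1/2$, are at least $(4N-5)/(4N-7)=1+2/(4N-7)$ precisely when $N\ge4$; for the remaining Leibniz terms one redistributes a derivative onto $u$ and uses $\hms{D^{2N+3}\e}{1/2}{\Sigma}^{2}\ls\d_{N+2,2}$ with $\hms{\nabla u}{2}{\Sigma}^{2}\ls\d_{N+2,2}^{1/3}$, which already yields $\d_{N+2,2}^{4/3}\le\d_{N+2,2}^{1+2/(4N-7)}$. I expect this exponent bookkeeping at top order — splitting the Sobolev product in the single way that closes for the smallest admissible $N$ — to be the main obstacle; everything else is a routine, if lengthy, application of Lemmas \ref{lower interpolation estimate}, \ref{lower interpolation estimate improved 2}, \ref{higher interpolation estimate} and the Appendix product and interpolation lemmas.
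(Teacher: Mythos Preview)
Your overall strategy --- Leibniz expansion, H\"older splitting, then interpolation bookkeeping --- is right, and your treatment of the top-order term $(D^{2N+4}\e)u$ in the first estimate via Lemma~\ref{higher interpolation estimate} matches the paper. But there is a genuine gap in your handling of the third estimate.

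The quantity $G^{2}$ contains factors of the form $(\eb\text{-factor})\cdot\p_{3}u$; after applying $\bar D^{2}$ and letting both derivatives fall on the $\eb$-factor you are left with a term whose $u$-part is the bare $\p_{3}u$. Since the target is an $L^{1}$ norm, the forced H\"older split is $L^{2}\times L^{2}$, and you need $\hm{\p_{3}u}{0}^{2}\le\hm{\nabla u}{0}^{2}$ with a nontrivial $\d_{N+2,2}$-power. The interpolation table of Lemma~\ref{lower interpolation estimate} gives only $\hm{\nabla u}{0}^{2}\ls\d_{N+2,2}^{\lambda/(\lambda+2)}$; paired with the best available bound on the companion $\eb$-factor ($\d_{N+2,2}^{1}$), the total exponent $1+\lambda/(\lambda+2)$ falls short of $1+1/(\lambda+3)$ precisely when $\lambda<\sqrt{3}-1$, so your approach does not close for the full range $\lambda\in(0,1)$. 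The point is that with the paper's modified definition of $\d_{N+2,2}$ (only two vertical derivatives on $u$, always prefixed by $\bar D_{2}$), the naked $\hm{\p_{3}u}{0}^{2}$ is no longer directly comparable to the dissipation.

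The paper's proof singles out exactly this issue as the \emph{only} modification required relative to the reference: one upgrades the table estimate by rewriting $\p_{3}u$ through the equations --- the divergence equation for $\p_{3}u_{3}$, Poincar\'e plus the boundary condition and then the momentum equation for $\p_{3}u_{i}$, $i=1,2$ --- exactly as in the proof of Lemma~\ref{lower interpolation estimate improved 1}. This yields
\[
\hm{\nabla u}{0}^{2}\ls\hm{Du}{0}^{2}+\hm{G^{2}}{0}^{2}+\hm{D\nabla u_{3}}{0}^{2}+\hms{G^{3}}{0}{\Sigma}^{2}+\hm{\dt u}{0}^{2}+\hm{D^{2}u}{0}^{2}+\hm{G^{1}}{0}^{2}+\hm{Dp}{0}^{2}\ls\d_{N+2,2}^{(\lambda+1)/(\lambda+3)},
\]
and with this improved exponent every bilinear term in $\bar D^{2}G^{2}$ closes at $1+1/(\lambda+3)$. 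You should add this equation-based step; the rest of your bookkeeping is then sufficient.
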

\begin{proof}
This is basically lemma 6.3 in \cite{book9}, the only difference is
that we need to estimate $\hm{\nabla u}{0}$ directly now. By a
similar argument as in lemma \ref{lower interpolation estimate
improved 1}, we have
\begin{eqnarray}
\\
\hm{\nabla u}{0}^2&\ls&\hm{Du}{0}^2+\hm{G^2}{0}^2+\hm{D\nabla
u_3}{0}^2+\hms{G^3}{0}{\Sigma}^2+\hm{\dt
u_i}{0}^2+\hm{D^2u_i}{0}^2\nonumber\\
&&+\hm{G^1}{0}^2+\hm{Dp}{0}^2\nonumber
\end{eqnarray}
Hence, an application of lemma \ref{lower interpolation estimate}
implies that $\hm{\nabla
u}{0}^2\ls\d_{N+2,2}^{(\lambda+1)/(\lambda+3)}$. Using exactly the
same argument for the other part of the proof, we can see the
validity of this lemma.
\end{proof}
\begin{lemma}\label{energy temp 5}
Suppose that $\alpha\in\mathbb{N}^{1+2}$ is such that $\alpha_0\leq
N+1$ and $2\leq\abs{\alpha}\leq 2(N+2)$. Then there exists a
$\theta>0$ such that
\begin{eqnarray}
\dt\bigg(\hm{\bar D_2^{2N+3}u}{0}^2+\hm{D\bar
D^{2N+3}u}{0}^2+\hms{\bar D_2^{2N+3}\e}{0}{\Sigma}^2+\hms{D\bar
D^{2N+3}\e}{0}{\Sigma}^2\bigg)\\
+\hm{\bar D_2^{2N+3}\dm u}{0}^2+\hm{D\bar D^{2N+3}\dm
u}{0}^2\ls\ce_{2N}^{\theta}\d_{N+2,2}\nonumber
\end{eqnarray}
\end{lemma}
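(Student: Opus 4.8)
The plan is to follow the scheme of Lemma~\ref{energy temp 4} (which is proposition 6.2 of \cite{book9}), but now at the $N+2$ level, with the two-minimum count, and --- this being the point of the whole reorganization --- using only the reduced vertical content of the redefined dissipation $\d_{N+2,2}$. Because the perturbed linear form (\ref{perturbed linear form equation}) has a constant-coefficient principal part, for every multi-index $\alpha\in\mathbb{N}^{1+2}$ with $\alpha_0\le N+1$ and $2\le\abs{\alpha}\le 2(N+2)$ the triple $(\p^\alpha u,\p^\alpha p,\p^\alpha\e)$ again solves (\ref{perturbed linear form equation}) with each $G^i$ replaced by $\p^\alpha G^i$, and $\p^\alpha u$ still vanishes on $\Sigma_b$ since $\p^\alpha$ involves only horizontal and temporal derivatives. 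So I would apply the energy identity of Lemma~\ref{perturbed linear form} to the $\p^\alpha$-differentiated system, test with $\p^\alpha u$, and sum over all such $\alpha$.

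Summed over these $\alpha$, the left side of the identity is exactly the quantity to be bounded: the energy terms assemble into $\dt\big(\hm{\bar D_2^{2N+3}u}{0}^2+\hm{D\bar D^{2N+3}u}{0}^2+\hms{\bar D_2^{2N+3}\e}{0}{\Sigma}^2+\hms{D\bar D^{2N+3}\e}{0}{\Sigma}^2\big)$ and the (weighted) dissipation into $\hm{\bar D_2^{2N+3}\dm u}{0}^2+\hm{D\bar D^{2N+3}\dm u}{0}^2$ via the norm equivalence of Lemma~\ref{norm equivalence}. The index constraints $\alpha_0\le N+1$, $2\le\abs{\alpha}\le 2N+4$ are precisely what these symbols encode; in particular at the top parabolic order $\abs{\alpha}=2N+4$ the bound $\alpha_0\le N+1$ forces $\alpha_1+\alpha_2\ge2$, so no purely temporal term of order $2N+4$ is produced here (those belong to Lemma~\ref{energy temp 2}).

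The work lies in bounding the right side $\sum_\alpha\big(\int_\Omega\p^\alpha G^1\cdot\p^\alpha u+\int_\Omega\p^\alpha p\,\p^\alpha G^2-\int_\Sigma\p^\alpha u\cdot\p^\alpha G^3+\int_\Sigma\p^\alpha\e\,\p^\alpha G^4\big)$ by $\ce_{2N}^{\theta}\d_{N+2,2}$. For $G^1$ and the boundary terms $G^3,G^4$ at orders $\abs{\alpha}\le 2N+3$ I would use Cauchy--Schwarz, with the $H^{1/2}$--$H^{-1/2}$ duality on $\Sigma$ (or the $L^2$ pairing after moving a tangential derivative), controlling the $u$- and $\e$-factors directly from $\d_{N+2,2}$: it contains $\hm{\bar D_2^{2N+3}u}{2}^2$, so $\hm{\p^\alpha u}{0}\lesssim\sqrt{\d_{N+2,2}}$ and its trace is controlled in $H^{1/2}(\Sigma)$ through Korn's inequality and the trace theorem, while the $\e$-factors also lie in $\d_{N+2,2}$ except for $\dt\e$-type contributions, which I would take from the interpolation table of Lemma~\ref{lower interpolation estimate} at the cost of an extra $\ce_{2N}^{\theta}$; the $G^i$-factors come from the second display of Lemma~\ref{lower nonlinear estimate}. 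At the top order $\abs{\alpha}=2N+4$ I would write $\p^\alpha=\p_i\p^\beta$ with $i\in\{1,2\}$, $\abs{\beta}=2N+3$, and integrate by parts in $x_i$; since the slab has no horizontal boundary and $\Sigma$ is a plane or torus, this produces no boundary term and converts, e.g., $\int_\Omega\p^\alpha G^1\cdot\p^\alpha u$ into $-\int_\Omega\p^\beta G^1\cdot\p_i^2\p^\beta u$, whose two factors are controlled respectively by Lemma~\ref{lower nonlinear estimate} and by $\hm{\bar D_2^{2N+3}u}{2}$.

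The delicate term, and the one the redefinition of $\d_{N+2,2}$ is designed for, is the pressure term $\int_\Omega\p^\alpha p\,\p^\alpha G^2$; here I would again transfer one horizontal derivative, $\int_\Omega\p_i\p^\beta p\,\p_i\p^\beta G^2=-\int_\Omega\p^\beta p\,\p_i^2\p^\beta G^2$, with no boundary contribution. The factor $\p^\beta G^2$ then carries two extra derivatives --- which is exactly why Lemma~\ref{lower nonlinear estimate} states its $G^2$ bound in the $H^1$ norm --- and $\p^\beta p$ occurs at parabolic order $\le 2N+3$; after a vertical Poincar\'e reduction together with the boundary relation $p=\e+(\text{velocity derivative})+(G^3)_3$ on $\Sigma$ it is controlled by $\hm{\nabla\bar D_2^{2N+3}p}{0}^2$, a part of $\d_{N+2,2}$, the crucial point being that the new $\d_{N+2,2}$ only ever demands one vertical derivative of $p$, so no uncontrollable $\p_3^2p$ quantity ever appears. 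Collecting the pieces and using $2ab\le a^2+b^2$ together with $\ce_{2N}\le1$ gives the right side $\lesssim\ce_{2N}^{\theta}\d_{N+2,2}$; the dissipation on the left is nonnegative, so nothing needs to be absorbed. The main obstacle throughout is the bookkeeping --- keeping track of the order and of the vertical-derivative count of every factor that the Leibniz expansions of the $G^i$ produce, so that each one lands inside $\d_{N+2,2}$ or is swept into $\ce_{2N}^{\theta}$ --- and it is precisely at this step that the argument departs from \cite{book9} and leans on Lemma~\ref{lower nonlinear estimate} together with Lemmas~\ref{lower interpolation estimate}--\ref{higher interpolation estimate}.
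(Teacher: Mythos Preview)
Your overall architecture is correct and matches the paper: apply Lemma~\ref{perturbed linear form} to $\p^\alpha$ of (\ref{perturbed linear form equation}), sum over the indicated $\alpha$, and bound the forcing integrals by $\ce_{2N}^\theta\d_{N+2,2}$, replacing the nonlinear estimates of \cite{book9} by Lemma~\ref{lower nonlinear estimate}. Where your proposal falls short is precisely at the endpoints of the range $2\le|\alpha|\le 2(N+2)$, and the gap is quantitative rather than structural.

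At the bottom, take $\p^\alpha=\dt$. In $\int_\Sigma\dt\e\,\dt G^4$ you propose to bound $\hms{\dt\e}{0}{\Sigma}$ from the table of Lemma~\ref{lower interpolation estimate}, which yields $\hms{\dt\e}{0}{\Sigma}^2\ls\ce_{2N}^{1/2}\d_{N+2,2}^{1/2}$, and $\hms{\dt G^4}{0}{\Sigma}$ from Lemma~\ref{lower nonlinear estimate}, which yields $\hms{\dt G^4}{0}{\Sigma}^2\ls\ce_{2N}^\theta\d_{N+2,2}$. The product then carries only $\d_{N+2,2}^{3/4}$, not a full power, and since $\d_{N+2,2}\ls\ce_{2N}$ goes the wrong way you cannot upgrade. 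The same shortfall occurs for $\p^\alpha=D^2$ (the table gives $(\lambda+2)/(\lambda+3)$ for $D^2\e$, so the product carries $\d_{N+2,2}^{(2\lambda+5)/(2\lambda+6)}$) and for the pressure pairing $\int_\Omega\dt p\,\dt G^2$, where after your Poincar\'e-plus-boundary reduction the $\dt\e$ contribution again leaves a deficit. The paper closes this by invoking the \emph{matched} estimates of Lemma~\ref{lower interpolation estimate improved 2} and Lemma~\ref{energy temp 999}: the former sharpens the linear factor to $\bar\ce_{N+2,2}\ls\ce_{2N}^{1/(\lambda+3)}\d_{N+2,2}^{(\lambda+2)/(\lambda+3)}$, while the latter gives the super-linear bounds $\hms{\bar D^2G^4}{0}{\Sigma}^2,\ \hm{\bar D^2G^2}{1}^2\ls\ce_{2N}^\theta\d_{N+2,2}^{(\lambda+4)/(\lambda+3)}$, engineered so the exponents sum to exactly $1$.

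At the top, for $|\alpha|=2(N+2)$ purely spatial, your integration-by-parts for $\int_\Sigma\p^\alpha\e\,\p^\alpha G^4$ produces either $D^{2N+5}G^4$ or $D^{2N+5}\e$, neither of which is covered by Lemma~\ref{lower nonlinear estimate} or by $\d_{N+2,2}$. Here again Lemma~\ref{energy temp 999} supplies the needed $\hms{D^{2N+3}G^4}{1/2}{\Sigma}^2\ls\d_{N+2,2}^{1+2/(4N-7)}$, which pairs via $H^{1/2}$--$H^{-1/2}$ duality with the interpolation $\hms{D^{2N+4}\e}{1/2}{\Sigma}^2\ls\ce_{2N}^{2/(4N-7)}\d_{N+2,2}^{(4N-9)/(4N-7)}$ to give a full power of $\d_{N+2,2}$. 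So the missing ingredient throughout is Lemma~\ref{energy temp 999} (together with Lemma~\ref{lower interpolation estimate improved 2}); without it the bookkeeping you describe cannot close.
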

\begin{proof}
This is almost the same as proposition 6.4 in \cite{book9}, so we
omit the detailed proof here. We only need to note that for higher
derivatives, we should use lemma \ref{lower nonlinear estimate} to
replace lemma 4.1 in \cite{book9} to estimate nonlinear terms. Then
for lower derivatives, we should utilize lemma \ref{lower
interpolation estimate}, \ref{lower interpolation estimate improved
2} and \ref{energy temp 999} to replace lemma 3.1, 3.16 and 6.3 in
\cite{book9}, which share exactly the same results. Therefore, the
conclusion naturally holds.
\end{proof}

\subsubsection{Estimates of Riesz Potentials}

In the following, we will record several lemmas related to Riesz
potentials, which have been proved in lemma 6.5-6.7 in \cite{book9}.
Note that all the preliminary lemmas have been recovered in lemma
\ref{riesz potential nonlinear estimate 1} and \ref{riesz potential
nonlinear estimate 2}, so we do not even need to revise the proofs.
\begin{lemma}
It holds that
\begin{eqnarray}
\hm{\i p}{0}^2&\ls&\ce_{2N}\\
\hm{\i
Dp}{0}^2&\ls&\ce_{2N}^{\lambda/(1+\lambda)}\d_{2N}^{1/(1+\lambda)}
\end{eqnarray}
\end{lemma}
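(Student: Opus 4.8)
The plan is to read the pressure off the Stokes structure of the perturbed linear form (\ref{perturbed linear form equation}) and then commute the horizontal Riesz potential $\i$ through, trading every purely vertical derivative for a horizontal one by means of the divergence equation and the boundary conditions. Concretely, the momentum equation gives $\nabla p=\Delta u-\dt u+G^1$ in $\Omega$, while the third scalar component of the stress condition on $\Sigma$ reads $p=\e+2\p_3u_3+G^3_3$ (writing $G^3_3$ for the third component of $G^3$). Integrating the relation for $\p_3p$ downward from $\Sigma$ gives, for $x_3\in(-b,0)$,
\begin{equation}
p(x',x_3)=\e(x')+2\p_3u_3(x',0)+G^3_3(x',0)-\int_{x_3}^0\big(\Delta u_3-\dt u_3+G^1_3\big)(x',s)\,\ud{s}.
\end{equation}
Since $\i$ acts only in the horizontal variables it commutes with $\p_3$, $\dt$, $D$ and with the vertical integral; applying $\i$, taking $\hm{\cdot}{0}$, using Minkowski's inequality for the $x_3$-integral and the trace theorem (together with $\hms{\i\e}{0}{\Sigma}^2\subset\ce_{2N}$) for the boundary terms reduces everything to controlling $\i$ applied to $\p_3u_3$, $\dt u_3$, $\Delta u_3$, and to the perturbation terms $G^1,G^2,G^3$.

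The only genuine obstacle is that $\i\simeq\abs{D}^{-\lambda}$ is unbounded at low horizontal frequency, so a term like $\hm{\i\p_3^2u_3}{0}$ is \emph{not} directly controlled. This is removed by the perturbed divergence relation $\p_3u_3=-\p_1u_1-\p_2u_2+G^2$, which turns the offending vertical derivatives into horizontal ones, e.g. $\i\p_3u_3=-D\cdot\i u_h+\i G^2$ and $\i\p_3^2u_3=-D\cdot\i\p_3u_h+\i\p_3G^2$, while the horizontal part $D^2u_3$ of $\Delta u_3$ is harmless because $\i D^k$ is bounded on $H^s$ for the ranges of $s$ that occur (recall $0<\lambda<1$). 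What remains is $\i$ of low-order $u$, bounded by $\hm{\i u}{0}^2\subset\ce_{2N}$, $\hm{\i u}{1}^2\subset\d_{2N}$ and the high Sobolev norm $\hm{u}{4N}^2\subset\ce_{2N}$, and $\i$ of the nonlinearities, bounded by the already established Riesz-potential estimates of lemma \ref{riesz potential nonlinear estimate 1}, namely $\hm{\i G^1}{1}^2,\hm{\i G^2}{2}^2,\hms{\i G^3}{1}{\Sigma}^2\ls\ce_{2N}\min\{\ce_{2N},\d_{2N}\}$. Adding up yields $\hm{\i p}{0}^2\ls\ce_{2N}$.

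For the second estimate I would apply one horizontal derivative $D$ (it commutes with $\i$) and split on the Fourier side. On $\abs{\xi}\lesssim1$ I interpolate the symbol as $\abs{\xi}^{2-2\lambda}=(\abs{\xi}^{-2\lambda})^{\lambda/(1+\lambda)}(\abs{\xi}^{2})^{1/(1+\lambda)}$, so Hölder yields that contribution $\ls(\hm{\i p}{0}^2)^{\lambda/(1+\lambda)}(\hm{Dp}{0}^2)^{1/(1+\lambda)}$; on $\abs{\xi}\gtrsim1$ the analogous split (now interpolating $\hm{p}{0}^2$ against $\hm{Dp}{0}^2$, using $\abs{\xi}^{2-2\lambda}\lesssim\abs{\xi}^{2}$ there) gives the same form. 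Since $\hm{\i p}{0}^2\ls\ce_{2N}$ by the first part, $\hm{p}{0}^2\ls\ce_{2N}$ (as $\hm{p}{4N-1}^2\subset\ce_{2N}$), and $\hm{Dp}{0}^2\ls\d_{2N}$ (the norm $\hm{\nabla\bar D_0^{4N-1}p}{0}^2$ in $\d_{2N}$ already contains $\hm{\nabla p}{0}^2$), both contributions are $\ls\ce_{2N}^{\lambda/(1+\lambda)}\d_{2N}^{1/(1+\lambda)}$; the nonlinear terms are of the strictly higher order $\ce_{2N}\min\{\ce_{2N},\d_{2N}\}$ and are absorbed. This produces $\hm{\i Dp}{0}^2\ls\ce_{2N}^{\lambda/(1+\lambda)}\d_{2N}^{1/(1+\lambda)}$. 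The two steps to watch are the vertical-to-horizontal reduction (which neutralizes the low-frequency unboundedness of $\i$) and this Fourier interpolation for the sharp exponents; both are exactly as in lemma 6.7 of \cite{book9}, so, as the excerpt notes, once the preliminary Riesz-potential lemmas are re-established nothing further is needed.
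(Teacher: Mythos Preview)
The paper gives no independent argument here; it simply records the lemma and cites lemma~6.5 of \cite{book9}, remarking that the prerequisite Riesz-potential bounds on the $G^i$ (lemmas \ref{riesz potential nonlinear estimate 1}--\ref{riesz potential nonlinear estimate 2}) carry over unchanged. Your sketch is the right strategy and matches that reference, but one step as written does not go through, and one term is left implicit.

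For the second estimate, the frequency split is unnecessary and your high-frequency branch does not ``give the same form'': interpolating $\hm{p}{0}^2$ against $\hm{Dp}{0}^2$ on $\abs{\xi}\gtrsim1$ produces $\ce_{2N}^{\lambda}\d_{2N}^{1-\lambda}$, which is \emph{not} dominated by $\ce_{2N}^{\lambda/(1+\lambda)}\d_{2N}^{1/(1+\lambda)}$ when $\d_{2N}>\ce_{2N}$. The repair is that your own symbol identity $\abs{\xi}^{2-2\lambda}=(\abs{\xi}^{-2\lambda})^{\lambda/(1+\lambda)}(\abs{\xi}^{2})^{1/(1+\lambda)}$ holds for \emph{every} $\xi\neq0$, so a single H\"older inequality over all horizontal frequencies already yields
\[
\hm{\i Dp}{0}^2\le\bigl(\hm{\i p}{0}^2\bigr)^{\lambda/(1+\lambda)}\bigl(\hm{Dp}{0}^2\bigr)^{1/(1+\lambda)},
\]
after which part one and $\hm{Dp}{0}^2\le\hm{\nabla p}{0}^2\subset\d_{2N}$ finish. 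In the first estimate the one contribution you do not explicitly reduce is $\i\dt u_3$; neither $\hm{\i u}{0}^2$ nor $\hm{u}{4N}^2$ controls it directly, since $\dt u_3$ carries no spare horizontal derivative. It succumbs to the same mechanism you already invoke: Poincar\'e from $\Sigma_b$ (where $u_3=0$) writes $\dt u_3=\int_{-b}^{x_3}\dt\p_3u_3\,\ud s$, the divergence relation turns $\dt\p_3u_3$ into $-D\cdot\dt u_h+\dt G^2$, and then lemma \ref{appendix riesz potential estimate 2} bounds $\hm{\i D\dt u_h}{0}\ls\hm{\dt u_h}{0}^{\lambda}\hm{D\dt u_h}{0}^{1-\lambda}\ls\sqrt{\ce_{2N}}$ while lemma \ref{riesz potential nonlinear estimate 1} handles $\i\dt G^2$.
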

\begin{lemma}
It holds that
\begin{eqnarray}
\abs{\int_{\Omega}\i p\i G^2}\ls\sqrt{\ce_{2N}}\d_{2N}
\end{eqnarray}
\end{lemma}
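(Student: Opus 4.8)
The plan is to follow the proof of the corresponding result (lemma 6.7) in \cite{book9}, which goes through essentially verbatim: the inputs it needs — $\hm{\i p}{0}^2\ls\ce_{2N}$ and $\hm{\i Dp}{0}^2\ls\ce_{2N}^{\lambda/(1+\lambda)}\d_{2N}^{1/(1+\lambda)}$ from the preceding lemma, and lemmas \ref{riesz potential nonlinear estimate 1}--\ref{riesz potential nonlinear estimate 2} — have all been re-established above. The mechanism is the following. Since $u$ solves $\da u=0$ while at the same time $\nabla\cdot(M^{-1}u)=J\da u=0$ (using $\p_j(J\a_{ij})=0$), subtracting exhibits a divergence structure for $G^2$:
\begin{eqnarray*}
G^2=\nabla\cdot u=\nabla\cdot\big((I-M^{-1})u\big)=\nabla\cdot\Theta,\qquad \Theta:=\big((1-J)u_1,\ (1-J)u_2,\ Au_1+Bu_2\big).
\end{eqnarray*}
As $\i=I_{\lambda}$ is a Fourier multiplier in the horizontal variables only, it commutes with $\p_1,\p_2,\p_3$, so $\i G^2=\nabla\cdot(\i\Theta)$ and integration by parts gives
\begin{eqnarray*}
\int_{\Omega}\i p\,\i G^2=-\int_{\Omega}\nabla(\i p)\cdot\i\Theta+\int_{\Sigma}\i p\,(\i\Theta)\cdot e_3-\int_{\Sigma_b}\i p\,(\i\Theta)\cdot e_3 .
\end{eqnarray*}
The $\Sigma_b$ term vanishes because $u|_{\Sigma_b}=0$ forces $\Theta|_{\Sigma_b}=0$, and on $\Sigma$ we have $\tilde b=1$, hence $A|_{\Sigma}=\p_1\e$, $B|_{\Sigma}=\p_2\e$, so $(\i\Theta)\cdot e_3|_{\Sigma}=\i(\p_1\e\,u_1+\p_2\e\,u_2)=-\i G^4$ by $G^4=-D\e\cdot u$.

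It then remains to bound the interior term and the $\Sigma$-boundary term, which I would do by Cauchy--Schwarz, being careful about how the powers of $\ce_{2N}$ and $\d_{2N}$ combine. For the interior term the horizontal components $\Theta_1,\Theta_2$ pair with $\p_1\i p,\p_2\i p=\i Dp$, so that factor already carries a dissipation power, while $\hm{\i[(1-J)u_i]}{0}$ is controlled through $\hm{\i[(1-K)u]}{0}$ of lemma \ref{riesz potential nonlinear estimate 2} (writing $1-J=-J(1-K)$ and absorbing the harmless factor $J=1+(J-1)$); the vertical component $\Theta_3$ pairs with $\p_3\i p$, which is eliminated via the momentum equation in favor of $\i\dt u,\i D^2u,\i G^1$ — all controlled by lemma \ref{riesz potential nonlinear estimate 1} — while $\hm{\i[Au_1+Bu_2]}{0}$ is estimated by the Riesz product rule together with $\hm{\i u}{1}^2\ls\d_{2N}$ from the definition of $\d_{2N}$. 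For the $\Sigma$-boundary term, trace interpolation $\hms{\i p}{0}{\Sigma}^2\ls\hm{\i p}{0}\hm{\i p}{1}$ combined with the two pressure estimates above, together with $\hms{\i G^4}{0}{\Sigma}^2\ls\d_{2N}^2$ from lemma \ref{riesz potential nonlinear estimate 1}, gives $\abs{\int_{\Sigma}\i p\,\i G^4}\ls\sqrt{\ce_{2N}}\,\d_{2N}$. Adding up, $\abs{\int_{\Omega}\i p\,\i G^2}\ls\sqrt{\ce_{2N}}\,\d_{2N}$.

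The only genuinely delicate point is this power bookkeeping: a naive Cauchy--Schwarz $\abs{\int_{\Omega}\i p\,\i G^2}\le\hm{\i p}{0}\hm{\i G^2}{0}$ would only yield the unbalanced bound $\ce_{2N}\sqrt{\d_{2N}}$, so one really must use the divergence structure of $G^2$, the integration by parts, and the sharp Riesz-potential product estimates of lemma \ref{riesz potential nonlinear estimate 2} (and lemma \ref{riesz potential nonlinear estimate 1} for the surface contribution) to recover $\sqrt{\ce_{2N}}\,\d_{2N}$; a secondary technical wrinkle is that the purely vertical derivative $\i\p_3p$ is not present in $\d_{2N}$ and so has to be traded away using the equation.
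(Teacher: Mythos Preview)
Your proposal is correct and follows precisely the approach the paper intends: the paper does not give an independent proof but simply defers to the corresponding lemma in \cite{book9}, noting that the requisite inputs (lemmas \ref{riesz potential nonlinear estimate 1} and \ref{riesz potential nonlinear estimate 2}) have already been recovered here. Your sketch of the Guo--Tice argument --- writing $G^2=\nabla\cdot\Theta$ via $\nabla\cdot(M^{-1}u)=0$, integrating by parts, identifying the $\Sigma$-boundary contribution with $-\i G^4$, and then balancing the $\ce_{2N}$/$\d_{2N}$ powers using the preceding pressure lemma together with lemmas \ref{riesz potential nonlinear estimate 1}--\ref{riesz potential nonlinear estimate 2} --- is exactly that argument. (A minor remark: by the ordering in the paper this statement corresponds to lemma 6.6 rather than 6.7 in \cite{book9}; 6.7 is the energy-evolution lemma that follows.)
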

\begin{lemma}\label{energy temp 6}
It holds that
\begin{eqnarray}
\hm{\i u}{0}^2+\hms{\i\e}{0}{\Sigma}^2+\int_0^t\hm{\dm\i
u}{0}^2\ls\ce_{2N}(0)+\int_0^t\sqrt{\ce_{2N}}\d_{2N}
\end{eqnarray}
\end{lemma}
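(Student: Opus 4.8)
The plan is to apply the horizontal Riesz potential $\i$ to the perturbed linear form (\ref{perturbed linear form equation}) and then run the basic energy identity of Lemma \ref{perturbed linear form} for the resulting triple. In the horizontally infinite, flat-bottom setting $\i$ is a Fourier multiplier in $x'$, so it commutes with $\dt$, $\Delta$, $\nabla$, $\na$, $\dm$, and with the traces onto the (horizontal) boundaries $\Sigma$ and $\Sigma_b$; hence $(\i u,\i p,\i\e)$ solves the same system (\ref{perturbed linear form equation}) with forcing $(\i G^1,\i G^2,\i G^3,\i G^4)$. Applying Lemma \ref{perturbed linear form} to this triple gives
\[\dt\bigg(\half\int_{\Omega}\abs{\i u}^2+\half\int_{\Sigma}\abs{\i\e}^2\bigg)+\half\int_{\Omega}J\abs{\dm\i u}^2=\int_{\Omega}\i u\cdot\i G^1+\i p\,\i G^2+\int_{\Sigma}-\i u\cdot\i G^3+\i\e\,\i G^4.\]
Integrating in time over $[0,t]$, using $J\gtrsim\delta$ to bound $\int_{\Omega}J\abs{\dm\i u}^2$ below by $\hm{\dm\i u}{0}^2$, and using the definition of $\ce_{2N}$ to bound the initial term $\hm{\i u(0)}{0}^2+\hms{\i\e(0)}{0}{\Sigma}^2\ls\ce_{2N}(0)$, the lemma reduces to showing that each of the four forcing integrals is $\ls\sqrt{\ce_{2N}}\,\d_{2N}$ pointwise in time.

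For $\int_{\Omega}\i u\cdot\i G^1$ I would use that $\hm{\i u}{0}\le\hm{\i u}{1}\ls\sqrt{\d_{2N}}$, since $\hm{\i u}{1}^2$ is one of the terms defining $\d_{2N}$, together with $\hm{\i G^1}{0}\le\hm{\i G^1}{1}\ls\sqrt{\ce_{2N}\d_{2N}}$ from Lemma \ref{riesz potential nonlinear estimate 1}; Cauchy--Schwarz then gives $\sqrt{\ce_{2N}}\,\d_{2N}$. The term $\int_{\Omega}\i p\,\i G^2$ is handled directly by the immediately preceding lemma, which gives $\abs{\int_{\Omega}\i p\,\i G^2}\ls\sqrt{\ce_{2N}}\,\d_{2N}$. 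For $\int_{\Sigma}\i u\cdot\i G^3$ the trace inequality gives $\hms{\i u}{0}{\Sigma}\ls\hm{\i u}{1}\ls\sqrt{\d_{2N}}$, and $\hms{\i G^3}{0}{\Sigma}\le\hms{\i G^3}{1}{\Sigma}\ls\sqrt{\ce_{2N}\d_{2N}}$ by Lemma \ref{riesz potential nonlinear estimate 1}, again yielding $\sqrt{\ce_{2N}}\,\d_{2N}$. Finally $\int_{\Sigma}\i\e\,\i G^4$ is the single place where the dissipation cannot absorb the $\e$-factor: here I would bound $\hms{\i\e}{0}{\Sigma}^2\ls\ce_{2N}$ (definition of $\ce_{2N}$, ultimately Lemma \ref{interpolation temp 3}) and $\hms{\i G^4}{0}{\Sigma}^2\ls\d_{2N}^2$ from Lemma \ref{riesz potential nonlinear estimate 1}, which gives $\sqrt{\ce_{2N}}\,\d_{2N}$. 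Summing the four contributions and integrating in time completes the proof.

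The step I expect to be the main obstacle is the correct allocation of the dissipation among the factors rather than any individual computation: because $\i$ is a negative-order operator, $\i\e$ and $\i p$ are only controlled by the energy $\ce_{2N}$ and never by $\d_{2N}$, so in each forcing integral the dissipation must be carried entirely by $\i u$ (estimated in $H^1$, i.e.\ by $\sqrt{\d_{2N}}$, using that $\i u$ vanishes on $\Sigma_b$) together with the extra power of $\d_{2N}$ that Lemma \ref{riesz potential nonlinear estimate 1} supplies to $\i G^i$; bounding $\i u$ merely in $L^2$ would only give the insufficient $\ce_{2N}\sqrt{\d_{2N}}$. Beyond that, one should check that the Riesz nonlinear estimates of Lemma \ref{riesz potential nonlinear estimate 1} still apply verbatim under the present, slightly modified, definitions of $\ce_{2N}$ and $\d_{2N}$, which is the case since the $\i$-weighted quantities involved are unchanged; the remainder of the argument is then exactly as in \cite{book9}.
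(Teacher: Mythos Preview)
Your proposal is correct and follows essentially the same approach as the paper, which defers to \cite{book9}: apply $\i$ to the perturbed linear form, use the energy identity of Lemma \ref{perturbed linear form}, and bound the four forcing integrals exactly as you do, with the $\i p\,\i G^2$ term handled by the immediately preceding lemma and the remaining three by the Riesz--potential nonlinear bounds of Lemma \ref{riesz potential nonlinear estimate 1}. The allocation of the dissipation you describe (carrying it on $\i u$ via $\hm{\i u}{1}^2\ls\d_{2N}$ and Poincar\'e on $\Sigma_b$, and on the $G^i$ via the product estimates) is precisely the intended one.
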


\subsubsection{Synthesis of Energy Estimates}

Assembling lemma \ref{energy temp 1}, \ref{energy temp 2},
\ref{energy temp 3}, \ref{energy temp 4}, \ref{energy temp 5} and
\ref{energy temp 6}, we have the following energy estimate.
\begin{proposition}\label{lower energy estimate}
Let $F^2$ be given with $\dt^{N+2}$. Then there exists a $\theta>0$
such that
\begin{eqnarray}
\dt\bigg(\bar\ce_{N+2,2}-2\int_{\Omega}K\dt^{N+1}pF^2\bigg)+\bar\d_{N+2,2}\ls\ce_{2N}^{\theta}\d_{N+2,2}
\end{eqnarray}
\end{proposition}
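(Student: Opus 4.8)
The proposition is a bookkeeping synthesis of the two differential energy--dissipation inequalities already established at the $N+2$ level with $2$-minimum count: Lemma \ref{energy temp 2}, which handles the top purely temporal block $\dt^{N+2}$, and Lemma \ref{energy temp 5}, which handles all mixed blocks $\p^{\alpha}$ with $\alpha_0\le N+1$ and $2\le\abs{\alpha}\le 2(N+2)$. The plan is to add these two inequalities and verify that the left-hand sides reassemble into $\dt\big(\bar\ce_{N+2,2}-2\int_{\Omega}K\dt^{N+1}pF^2\big)+\bar\d_{N+2,2}$, while the two right-hand sides collapse to a single term $\ce_{2N}^{\theta}\d_{N+2,2}$.

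For the energy side, Lemma \ref{energy temp 2} contributes precisely $\hm{\sqrt{J}\dt^{N+2}u}{0}^2$, $\hms{\dt^{N+2}\e}{0}{\Sigma}^2$ and the pressure correction $-2\int_{\Omega}K\dt^{N+1}pF^2$, while Lemma \ref{energy temp 5} contributes $\hm{\bar D_2^{2N+3}u}{0}^2$, $\hm{D\bar D^{2N+3}u}{0}^2$, $\hms{\bar D_2^{2N+3}\e}{0}{\Sigma}^2$ and $\hms{D\bar D^{2N+3}\e}{0}{\Sigma}^2$. Comparing with the definition (\ref{definition 1}) of $\bar\ce_{n,2}$ at $n=N+2$: the $u$-part matches term-for-term and literally (the $\sqrt{J}$-weighted top temporal term comes from Lemma \ref{energy temp 2}, the remaining two from Lemma \ref{energy temp 5}), so no comparability estimate is needed there; for the $\e$-part one invokes the elementary partition of parabolic multi-indices, namely that any $\alpha\in\mathbb{N}^{1+2}$ with $2\le\abs{\alpha}\le 2N+4$ either has $\abs{\alpha}\le 2N+3$, or equals $(N+2,0,0)$, or has $\abs{\alpha}=2N+4$ and $\alpha_0\le N+1$ — in which case $\alpha_1+\alpha_2=2N+4-2\alpha_0\ge 2$ so $\p^{\alpha}=\p_i\p^{\beta}$ with $i\in\{1,2\}$ and $\abs{\beta}=2N+3$. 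Consequently $\hms{\bar D_2^{2N+3}\e}{0}{\Sigma}^2+\hms{D\bar D^{2N+3}\e}{0}{\Sigma}^2+\hms{\dt^{N+2}\e}{0}{\Sigma}^2=\hms{\bar D_2^{2N+4}\e}{0}{\Sigma}^2$ exactly (disjoint index sets, so the squared norms add), and the sum of the two $\dt(\cdot)$ terms is exactly $\dt\big(\bar\ce_{N+2,2}-2\int_{\Omega}K\dt^{N+1}pF^2\big)$.

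The dissipation side is handled by the same index partition applied to $\dm u$: $\hm{\dm\dt^{N+2}u}{0}^2$ from Lemma \ref{energy temp 2} together with $\hm{\bar D_2^{2N+3}\dm u}{0}^2+\hm{D\bar D^{2N+3}\dm u}{0}^2$ from Lemma \ref{energy temp 5} recombine into $\hm{\bar D_2^{2N+4}\dm u}{0}^2=\bar\d_{N+2,2}$ (with the $\alpha_0\le N+1$ constraint in Lemma \ref{energy temp 5} complemented by the $\dt^{N+2}$ block, no gap, no double counting). Finally the right-hand sides $\sqrt{\ce_{2N}}\,\d_{N+2,2}$ and $\ce_{2N}^{\theta}\d_{N+2,2}$ merge into $\ce_{2N}^{\theta'}\d_{N+2,2}$ with $\theta'=\min\{\theta,1/2\}>0$, using the a priori bound $\ce_{2N}\le 1$ (and $N\ge 4$) so that the smaller exponent dominates. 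There is no genuine analytic obstacle here — the only point requiring care is precisely this multi-index matching, i.e. confirming that the derivative blocks occurring in $\bar\ce_{N+2,2}$, $\bar\d_{N+2,2}$ are exactly the disjoint union of those supplied by Lemmas \ref{energy temp 2} and \ref{energy temp 5}.
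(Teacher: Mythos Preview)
Your proposal is correct and follows the paper's own approach: the paper simply says ``Assembling Lemmas \ref{energy temp 2} and \ref{energy temp 5}'' for this $(N+2)$-level proposition (the other listed lemmas feed the $2N$-level Proposition \ref{higher energy estimate}), and you have spelled out the multi-index bookkeeping that makes the assembly work. One small inaccuracy: the index sets underlying $D\bar D^{2N+3}$ are not literally disjoint (a top-order $\p^{\alpha}$ with both $\alpha_1,\alpha_2\ge 1$ is counted twice), so the $\e$-blocks from the two lemmas are comparable to, rather than exactly equal to, $\hms{\bar D_2^{2N+4}\e}{0}{\Sigma}^2$ --- but since the proposition is an $\ls$ statement and is only used later via the comparability Lemma preceding Theorem \ref{lower apriori estimate}, this does not affect the argument.
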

\begin{proposition}\label{higher energy estimate}
There exists a $\theta>0$ such that
\begin{eqnarray}
\bar\ce_{2N}(t)+\int_0^t\bar\d_{2N}(r)\ud{r}\ls\ce_{2N}(0)+(\ce_{2N}(t))^{3/2}+\int_0^t(\ce_{2N}(r))^{\theta}\d_{2N}\ud{r}\\
+\int_0^t\sqrt{\d_{2N}(r)\k(r)\f_{2N}(r)}\ud{r}\nonumber
\end{eqnarray}
\end{proposition}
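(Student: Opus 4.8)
The approach is purely one of assembly: the asserted estimate follows by summing the four component estimates \ref{energy temp 1}, \ref{energy temp 3}, \ref{energy temp 4} and \ref{energy temp 6}, once one checks that together their left-hand sides exhaust every summand of $\bar\ce_{2N}(t)+\int_0^t\bar\d_{2N}(r)\ud{r}$. Recall
\[
\bar\ce_{2N}=\hm{\i u}{0}^2+\hms{\i\e}{0}{\Sigma}^2+\hm{\bar D_0^{4N}u}{0}^2+\hms{\bar D_0^{4N}\e}{0}{\Sigma}^2,\qquad \bar\d_{2N}=\hm{\dm\i u}{0}^2+\hm{\bar D_0^{4N}\dm u}{0}^2 ,
\]
so that, apart from the Riesz-potential pieces, every term is of the form $\hm{\p^{\alpha}u}{0}^2$, $\hm{\p^{\alpha}\dm u}{0}^2$ or $\hms{\p^{\alpha}\e}{0}{\Sigma}^2$ with $\alpha\in\mathbb{N}^{1+2}$ and $0\le\abs{\alpha}\le4N$. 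Such a multi-index lies in exactly one of four classes according to its temporal count $\alpha_0$: (i) $\abs{\alpha}=0$; (ii) $1\le\abs{\alpha}\le4N-1$, which forces $\alpha_0\le2N-1$; (iii) $\abs{\alpha}=4N$ with $\alpha_0\le2N-1$, so that $\alpha_1+\alpha_2=4N-2\alpha_0\ge2$ and $\p^{\alpha}$ carries at least one horizontal derivative; and (iv) $\abs{\alpha}=4N$ with $\alpha_0=2N$, i.e. $\p^{\alpha}=\dt^{2N}$. Class (i), together with the count-zero dissipation $\hm{\dm u}{0}^2$, is covered by lemma \ref{energy temp 3}; classes (ii) and (iii) are precisely the left-hand side of lemma \ref{energy temp 4}, since $\hm{\bar D_1^{4N-1}u}{0}^2$ and $\hms{\bar D_1^{4N-1}\e}{0}{\Sigma}^2$ with their integrated dissipations realise (ii) while $\hm{D\bar D^{4N-1}u}{0}^2$ and $\hms{D\bar D^{4N-1}\e}{0}{\Sigma}^2$ realise (iii); class (iv) is lemma \ref{energy temp 1}; and the Riesz-potential terms $\hm{\i u}{0}^2+\hms{\i\e}{0}{\Sigma}^2+\int_0^t\hm{\dm\i u}{0}^2$ are lemma \ref{energy temp 6}.

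With this partition in hand, I would add the four inequalities. The left-hand side of the sum controls $\bar\ce_{2N}(t)+\int_0^t\bar\d_{2N}(r)\ud{r}$ by the coverage just described, and the right-hand side is the sum of four contributions: the initial data $\ce_{2N}(0)$ and $\bar\ce_{2N}(0)$, where $\bar\ce_{2N}(0)\le\ce_{2N}(0)$ trivially since $\bar\ce_{2N}\le\ce_{2N}$; the term $(\ce_{2N}(t))^{3/2}$, which is produced only by lemma \ref{energy temp 1}; an interior forcing $\int_0^t\big((\ce_{2N})^{\theta_1}+(\ce_{2N})^{\theta_2}+\sqrt{\ce_{2N}}\,\big)\d_{2N}\ud{r}$, where $\theta_1,\theta_2>0$ are the exponents delivered by lemmas \ref{energy temp 1} and \ref{energy temp 4}; and the forcing $\int_0^t\sqrt{\d_{2N}\k\f_{2N}}\ud{r}$ coming from lemma \ref{energy temp 4}.

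To finish, I would consolidate the several powers of $\ce_{2N}$ into one: setting $\theta=\min\{\theta_1,\theta_2,1/2\}>0$ and invoking the standing a priori smallness bound $\ce_{2N}\le1$ used throughout the decay analysis, one has $(\ce_{2N})^{\theta_1}+(\ce_{2N})^{\theta_2}+\sqrt{\ce_{2N}}\ls(\ce_{2N})^{\theta}$, which turns the interior forcing into $\int_0^t(\ce_{2N}(r))^{\theta}\d_{2N}(r)\ud{r}$ and yields exactly the claimed inequality. The only step that demands genuine care — as opposed to routine manipulation — is the combinatorial verification that classes (i)--(iv) together with the Riesz terms partition the left-hand side with nothing omitted, which is settled by the parabolic-counting argument above; no analytic difficulty is left, all of it having been absorbed into lemmas \ref{energy temp 1}, \ref{energy temp 3}, \ref{energy temp 4} and \ref{energy temp 6}.
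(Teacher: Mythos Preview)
Your proof is correct and matches the paper's approach exactly: the paper simply states that the proposition follows by ``assembling'' the component lemmas, and you have carried out that assembly in detail, correctly identifying that for the $2N$ level one needs lemmas \ref{energy temp 1}, \ref{energy temp 3}, \ref{energy temp 4} and \ref{energy temp 6} (while \ref{energy temp 2} and \ref{energy temp 5} serve the $N+2$ level). Your parabolic-counting partition into classes (i)--(iv) plus the Riesz terms, the bound $\bar\ce_{2N}(0)\le\ce_{2N}(0)$, and the consolidation of exponents via $\theta=\min\{\theta_1,\theta_2,1/2\}$ under the standing smallness hypothesis are all correct and make explicit what the paper leaves implicit.
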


\subsection{Comparison Estimates}

In this section, we will prove several comparison results between
horizontal derivatives and full derivatives, both in energy and
dissipation.

\subsubsection{Dissipation Comparison Estimates}

\begin{proposition}\label{lower comparison dissipation estimate}
There exists a $\theta>0$ such that
\begin{eqnarray}
\d_{N+2,2}\ls\bar\d_{N+2,2}+\ce_{2N}^{\theta}\d_{N+2,2}
\end{eqnarray}
\end{proposition}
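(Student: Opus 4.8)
The plan is to bound the full-derivative dissipation $\d_{N+2,2}$ by its horizontal counterpart $\bar\d_{N+2,2}$ plus an absorbable error, using the Navier-Stokes equations to trade vertical derivatives for horizontal ones and for temporal ones, exactly as in the three-step recovery scheme of Lemma~\ref{lower interpolation estimate improved 1} and Lemma~\ref{lower interpolation estimate improved 2}. The quantity $\bar\d_{N+2,2}=\hm{\bar D_2^{2(N+2)}\dm u}{0}^2$ already controls all horizontal-and-temporal derivatives of $\dm u$ up to parabolic order $2(N+2)$ with at least two such derivatives; by Korn's inequality (applied to the difference with $\dm_{\a_0}$, as in Lemma~\ref{norm equivalence}) it also controls $\hm{\bar D_2^{2(N+2)-1}u}{2}^2$ modulo an $\ce_{2N}^\theta\d_{N+2,2}$ error coming from the $\a$ versus $\a_0$ discrepancy. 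So the content is: starting from $\bar\d_{N+2,2}$, recover the missing pieces of $\d_{N+2,2}$, namely (i) the top purely-vertical derivatives of $u$, (ii) the pressure term $\hm{\nabla\bar D_2^{2(N+2)-1}p}{0}^2$, and (iii) the boundary terms $\hms{D_3^{2n-1}\e}{1/2}{\Sigma}^2$, $\hms{\dt D_1^{2n-1}\e}{1/2}{\Sigma}^2$, $\sum_{j=2}^{n+1}\hms{\dt^jD^{2n-2j+2}_0\e}{1/2}{\Sigma}^2$ with $n=N+2$.

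First I would handle the pressure. Differentiating the third equation of (\ref{perturbed linear form equation}) and the divergence/momentum equations, the boundary value $p|_\Sigma$ is expressed through $\e$, $\dm u$ and the nonlinearities $G^i$; then $\nabla p$ is recovered from the momentum equation $\nabla p=\Delta u-\dt u+G^1$ in the interior, and from Poincaré in the vertical direction for the boundary contribution, precisely as in Lemma~\ref{lower interpolation estimate improved 3}. Applying $\bar D_2^{2(N+2)-1}$ and estimating, every term lands in $\bar\d_{N+2,2}$ plus nonlinear contributions bounded by $\ce_{2N}^\theta\d_{N+2,2}$ via Lemma~\ref{lower nonlinear estimate}. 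Next, for the top vertical derivatives of $u$ I would use the interior momentum equation once more: $\p_3^2 u$ is controlled by $\dt u$, $D^2 u$, $Dp$ and $G^1$, so each extra vertical derivative of $u$ beyond two is traded for horizontal-and-temporal derivatives already sitting in $\bar\d_{N+2,2}$ (together with the pressure bound just obtained and the nonlinear error), and for $u_3$ one uses the divergence equation to get $\p_3 u_3$ from $Du$. Iterating this bookkeeping upgrades $\hm{\bar D_2^{2(N+2)-1}u}{2}^2$ to the full $\hm{\bar D_2^{2(N+2)-1}u}{2}^2$ appearing in $\d_{N+2,2}$ — here note $\d_{N+2,2}$ only asks for $H^2$ regularity of $u$, so at most two vertical derivatives ever need to be produced, which is why the improved definition of $\d_{N+2,2}$ makes this work.

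For the boundary $\e$-terms, I would observe that $\d_{N+2,2}$ and $\bar\d_{N+2,2}$ have different $u$-content but that the $\e$-terms in $\d_{N+2,2}$ are recovered by combining the transport equation $\dt\e-u_3=G^4$ on $\Sigma$ (to convert temporal derivatives of $\e$ into traces of $u$, hence into $\d_{N+2,2}$ via the trace theorem and Korn's inequality) with the kinematic/stress boundary condition for the purely horizontal derivatives $D^{2n-1}_0\e$, which are read off from the normal stress balance $p|_\Sigma - \e = \cdots$ and thus controlled by $p|_\Sigma$ and $\dm u|_\Sigma$ — both already handled. Following the rule stated in the nonlinear-estimates section, I would decompose each $\hms{X}{2}{\Sigma}$-type quantity as $\hms{X}{0}{\Sigma}+\hms{DX}{0}{\Sigma}+\hms{D^2X}{0}{\Sigma}$ before invoking the trace theorem, so only one vertical derivative is ever spent. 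Summing all three steps, every term of $\d_{N+2,2}$ is bounded by $\bar\d_{N+2,2}$ plus a finite sum of nonlinear terms each estimated through Lemma~\ref{lower nonlinear estimate}, Lemma~\ref{lower interpolation estimate} and Lemma~\ref{higher interpolation estimate} by $\ce_{2N}^\theta\d_{N+2,2}$ for some $\theta>0$, which is exactly the claimed inequality.

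The main obstacle I anticipate is the bookkeeping of derivative counts in the boundary terms: one must verify that trading vertical for temporal/horizontal derivatives via the equations never pushes the parabolic order of any quantity above the level that $\bar\d_{N+2,2}$ (or, for the nonlinear pieces, $\d_{N+2,2}$ itself) actually controls, and in particular that the pressure is never differentiated more than once in the vertical direction — this is precisely the delicate point that the redefined $\d_{N+2,2}$ (only one vertical derivative on $p$, two on $u$) is designed to accommodate, but it has to be checked term by term. The nonlinear estimates themselves are routine given the lemmas already proved; the only care is to always select the higher interpolation exponent for $\ce_{N+2,2}$ and $\d_{N+2,2}$ when applying the product rule, as flagged in the nonlinear-estimates rules, so that the resulting power of $\d_{N+2,2}$ is exactly $1$ and the power of $\ce_{2N}$ is strictly positive, making the error term absorbable after using $\ce_{2N}\le 1$.
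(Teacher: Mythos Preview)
Your outline correctly identifies the overall strategy --- recover vertical derivatives from horizontal ones via the equations and push all nonlinear remainders into $\ce_{2N}^{\theta}\d_{N+2,2}$ via Lemma~\ref{lower nonlinear estimate} --- and it handles $\p_3^2 u_3$, $\p_3 p$, and the surface terms correctly. However, there is a genuine gap in the treatment of $\p_i p$ and $\p_3^2 u_i$ for $i=1,2$. The $i$-th momentum equation only gives you the \emph{combination} $\p_3^2 u_i - \p_i p = \dt u_i - D^2 u_i - G^1_i$; to separate the two you propose Poincar\'e on $\p_i p$ together with the boundary identity $p|_\Sigma = \e + 2\p_3 u_3 + G^3_3$. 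This is circular in two ways. First, the boundary term brings in $\bar D_3^{2n}\e$ with coefficient~$1$, and you have not yet bounded it --- in your own Step~3 you recover the spatial $D\e$ terms from the normal stress balance $\e = p - 2\p_3 u_3 - G^3_3$, which requires $p$. Second, the interior piece of Poincar\'e, $\p_3(\bar D_2^{2n-1}\p_i p) = \bar D_3^{2n}\p_3 p$, after substituting the third momentum equation produces $\bar D_3^{2n}\dt u_3$, whose top parabolic order is $2n+2$ (e.g.\ $\dt^{n+1}u_3$ when $\bar D^{2n}=\dt^n$), and this is \emph{not} controlled by $\bar\d_{N+2,2}$. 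So the ``precisely as in Lemma~\ref{lower interpolation estimate improved 3}'' step, which worked there at low order, fails here at the top of the scale.

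The paper breaks this circularity by a localization plus vorticity argument that you are missing. Near $\Sigma$, one multiplies by a cutoff $\chi_1$ and takes the curl $w=\nabla\times u$ of the momentum equation; this \emph{eliminates} $\nabla p$ entirely, yielding a scalar Poisson equation $\Delta(\chi_1 w_i)=\ldots$ whose Dirichlet data on $\Sigma$ are read off from the \emph{tangential} components of the stress condition --- hence involving only $Du_3$ and $G^3$, not $\e$. The standard $H^{-1}\!\to\! H^1$ elliptic estimate for this Poisson problem (with one horizontal derivative shifted onto the test function to keep the parabolic count within $2n$) then gives $\p_3 w_i$, hence $\p_3^2 u_i$, in an upper sub-slab without ever touching $p$ or $\e$. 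Near $\Sigma_b$, a second cutoff $\chi_2$ with $\mathrm{supp}(\nabla\chi_2)$ lying inside the already-controlled upper region reduces the system to a Stokes problem with homogeneous Dirichlet data on \emph{both} boundaries, to which the weak Stokes estimate of Lemma~\ref{appendix elliptic estimate 1} applies and furnishes $\p_i p$. Only after $u$ and $p$ are fully controlled on all of $\Omega$ does one return to $\Sigma$ to recover $\e$ from the normal stress and transport equations, exactly as you describe. Your proposal would go through once you insert this curl/localization step in place of the direct Poincar\'e argument for $Dp$.
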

\begin{proof}
We define
\begin{eqnarray}
\y_{N+2,2}=\hm{\bar D_2^{2(N+2)-1}G^1}{0}^2+\hm{\bar
D_2^{2(N+2)-1}G^2}{1}^2+\hms{\bar
D_2^{2(N+2)-1}G^3}{1/2}{\Sigma}^2\\
+\hms{\bar D_1^{2(N+2)-1}G^4}{1/2}{\Sigma}^2\nonumber
\end{eqnarray}
and
\begin{eqnarray}
\z=\bar\d_{N+2,2}+\y_{N+2,2}
\end{eqnarray}
Then the proof is divided into several steps. \\
\ \\
Step 1: Application of Korn's inequality.\\
Since any horizontal derivatives of velocity vanishes on the lower
boundary, we can directly apply Korn's inequality (see lemma
\ref{appendix poincare 3}) to achieve that
\begin{eqnarray}
\hm{\bar D_{2}^{2(N+2)}u}{1}^2\ls\hm{\bar D_{2}^{2(N+2)}\dm
u}{0}^2=\bar\d_{N+2,2}\ls\z
\end{eqnarray}

\ \\
Step 2: Initial estimates of velocity and pressure.\\
Let $0\leq j\leq (N+2)-1$ and $\alpha\in\mathbb{N}^2$ be such that
$2\leq 2j+\abs{\alpha}\leq 2(N+2)-1$. Hence, we naturally have
\begin{eqnarray}
\hm{\p^{\alpha}\dt^{j+1}u}{0}^2\leq \hm{\bar
D_2^{2(N+2)}u}{1}^2\ls\z
\end{eqnarray}
We apply the operator $\p^{\alpha}\dt^j\p_3$ to the divergence
equation in (\ref{perturbed linear form equation}) and rearrange the
terms
\begin{eqnarray}
\p^{\alpha}\dt^j\p_3(\p_3u_3)=\p^{\alpha}\dt^j\p_3(-\p_1u_1-\p_2u_2)+\p^{\alpha}\dt^j\p_3G^2
\end{eqnarray}
Thus we have
\begin{eqnarray}
\hm{\p^{\alpha}\dt^j\p_3^2u_3}{0}^2\ls\hm{\bar
D_2^{2(N+2)}u}{1}^2+\hm{\p^{\alpha}\dt^jG^2}{1}^2\ls\z
\end{eqnarray}
Hence, it is easily implied that
\begin{eqnarray}
\hm{\Delta\p^{\alpha}\dt^ju_3}{0}^2\ls\z
\end{eqnarray}
Apply the operator $\p^{\alpha}\dt^j$ to the third equation in
(\ref{perturbed linear form equation}), then we have
\begin{eqnarray}
\hm{\p^{\alpha}\dt^j\p_3p}{0}^2\ls\hm{\p^{\alpha}\dt^j\Delta
u}{0}^2+\hm{\p^{\alpha}\dt^jG^1}{0}^2\ls\z
\end{eqnarray}
So we only need to estimate the terms related to $\p_3^2u_i$ and
$\p_ip$ for $i=1,2$.

\ \\
Step 3: Estimates of velocity and pressure in the upper domain.\\
In order to achieve above estimate, we need to employ a localization
argument. Define a cutoff function $\chi_1=\chi_1(x_3)$ given by
$\chi_1\in C_c^{\infty}(\mathbb{R})$ with $\chi(x_3)=1$ for
$x_3\in\Omega_1=[-2b/3,0]$ and $\chi_1(x_3)=0$ for
$x_3\notin(-3b/4,1/2)$.\\
Define the curl of the velocity $w=\nabla\times u$, which means
$w_1=\p_2u_3-\p_3u_2$ and $w_2=\p_3u_1-\p_1u_3$. Therefore,
$\chi_1w_i$ for $i=1,2$ satisfy the equation
\begin{eqnarray}
\\
\Delta\p^{\alpha}\dt^j(\chi_1w_i)=\chi_1\p^{\alpha}\dt^{j+1}w_i+
2(\p_3\chi_1)(\p^{\alpha}\dt^j\p_3w_i)+(\p_3^2\chi_1)(\p^{\alpha}\dt^jw_i)
-\chi_1\nabla\times(\p^{\alpha}\dt^jG^1)\nonumber
\end{eqnarray}
in $\Omega$ as well as the boundary condition
\begin{equation}
\left\{
\begin{array}{ll}
\p^{\alpha}\dt^j(\chi_1w_1)=2\p_2\p^{\alpha}\dt^ju_3+\p^{\alpha}\dt^jG^3\cdot
e_2&\rm{on}\ \Sigma\\
\p^{\alpha}\dt^j(\chi_1w_2)=-2\p_1\p^{\alpha}\dt^ju_3-\p^{\alpha}\dt^jG^3\cdot
e_1&\rm{on}\ \Sigma\\
\p^{\alpha}\dt^j(\chi_1w_1)=\p^{\alpha}\dt^j(\chi_1w_2)=0&\rm{on}\
\Sigma_b
\end{array}
\right.
\end{equation}
Based on the standard elliptic estimate for Poisson equation, we
have
\begin{eqnarray}
\hm{\p^{\alpha}\dt^j(\chi_1w_i)}{1}^2\ls\hm{\Delta\p^{\alpha}\dt^j(\chi_1w_i)}{-1}^2
+\hms{\p^{\alpha}\dt^j(\chi_1w_i)}{1/2}{\Sigma}^2
\end{eqnarray}
Hence, we have to estimate each term on the right hand side.\\
Let $\phi\in H_0^1(\Omega)$. For $\alpha\neq0$ we may write
$\alpha=\beta+(\alpha-\beta)$ with $\abs{\beta}=1$. Then an
integration by parts reveals that
\begin{eqnarray}
\abs{\int_{\Omega}\phi\chi_1\p^{\alpha}\dt^{j+1}w_i}=
\abs{\int_{\Omega}\p^{\beta}\phi\chi_1\p^{\alpha-\beta}\dt^{j+1}w_i}
\leq\hm{\phi}{1}\hm{\chi_1\bar D_2^{2(N+2)}w_i}{0}
\end{eqnarray}
Since $2(j+1)+\abs{\alpha-\beta}\in[3,2(N+2)]$, we naturally have
\begin{eqnarray}
\hm{\chi_1\bar D_2^{2(N+2)}w_i}{0}^2\ls\hm{\bar
D_2^{2(N+2)}u}{1}^2\ls\z
\end{eqnarray}
Then if we take the supreme over all $\phi$ such that
$\hm{\phi}{1}\leq1$, then we may get
\begin{eqnarray}
\hm{\chi_1\p^{\alpha}\dt^{j+1}w_i}{-1}^2\ls\z
\end{eqnarray}
A similar argument without integration by parts shows that the above
result is also true for $\alpha=0$ since in this case $j\leq
(N+2)-1$ implies $4\leq 2(j+1)\leq 2(N+2)$. In a similar fashion, we
may apply integration by parts for $\p_3$ for the other terms in
$H^{-1}$ norm and achieve that
\begin{eqnarray}
\\
\hm{2(\p_3\chi_1)(\p^{\alpha}\dt^j\p_3w_i)}{-1}^2\ls(\lnm{\p_3\chi_1}{\infty}^2+\lnm{\p_3^2\chi_1}{\infty}^2)
\hm{\bar D_2^{2(N+2)}w_i}{0}^2\ls
\hm{D_2^{2(N+2)}u}{1}^2\ls\z\nonumber
\end{eqnarray}
\begin{eqnarray}
\hm{(\p_3^2\chi_1)(\p^{\alpha}\dt^jw_i)}{-1}^2\ls\lnm{\p_3^2\chi_1}{\infty}^2
\hm{\bar D_2^{2(N+2)}w_i}{0}^2\ls \hm{D_2^{2(N+2)}u}{1}^2\ls\z
\end{eqnarray}
\begin{eqnarray}
\hm{\chi_1\nabla\times(\p^{\alpha}\dt^jG^1)}{-1}^2\ls(\lnm{\chi_1}{\infty}^2+\lnm{\p_3\chi_1}{\infty}^2)
\hm{\bar D_2^{2(N+2)-1}G^1}{0}^2\ls\z
\end{eqnarray}
Above four estimates together yield
\begin{eqnarray}
\hm{\Delta\p^{\alpha}\dt^j(\chi_1w_i)}{-1}^2\ls\z
\end{eqnarray}
Then for the boundary terms, a direct application of trace theorem
shows that
\begin{eqnarray}
\hms{\p^{\alpha}\dt^j\p_1u_3}{1/2}{\Sigma}^2+\hms{\p^{\alpha}\dt^j\p_2u_3}{1/2}{\Sigma}^2
\ls\hm{\bar D_2^{2(N+2)}u}{1}^2\ls\z
\end{eqnarray}
\begin{eqnarray}
\hms{\p^{\alpha}\dt^jG^3}{1/2}{\Sigma}^2\ls\hms{\bar
D_2^{2(N+2)-1}G^3}{1/2}{\Sigma}^2\ls\z
\end{eqnarray}
Then above implies that
\begin{eqnarray}
\hms{\p^{\alpha}\dt^j(\chi_1w_i)}{1/2}{\Sigma}^2\ls\z
\end{eqnarray}
Hence, the elliptic estimate gives us the final form
\begin{eqnarray}
\hms{\p^{\alpha}\dt^jw_i}{1}{\Omega_1}^2\ls\hm{\p^{\alpha}\dt^j(\chi_1w_i)}{1}^2\ls\z
\end{eqnarray}
Since $\p_3^2\p^{\alpha}\dt^ju_1=\p_3\p^{\alpha}\dt^j(w_2+\p_1u_3)$
and $\p_3^2\p^{\alpha}\dt^ju_2=\p_3\p^{\alpha}\dt^j(\p_2u_3-w_1)$,
we can deduce from above that for $i=1,2$,
\begin{eqnarray}
\hms{\p_3^2\p^{\alpha}\dt^ju_i}{0}{\Omega_1}^2\ls\z
\end{eqnarray}
which, by considering the Navier Stokes equation, further implies
\begin{eqnarray}
\hms{\p_i\p^{\alpha}\dt^jp}{0}{\Omega_1}^2\ls\z
\end{eqnarray}
Combining all above, we have achieved that
\begin{eqnarray}
\hms{\bar D_2^{2(N+2)-1}u}{2}{\Omega_1}^2+\hms{\bar
D_2^{2(N+2)-1}\nabla p}{0}{\Omega_1}^2\ls\z
\end{eqnarray}

\ \\
Step 4: Estimates of velocity and pressure in the lower domain.\\
Now we extend above estimate to the lower domain
$\Omega_2=[-b,-b/3]$. Define a cutoff function $\chi_2\in
C_c^{\infty}(\mathbb{R})$ such that $\chi_2(x_3)=1$ for
$x_3\in\Omega_2$ and $\chi_2(x_3)=0$ for $x_3\notin (-2b,-b/6)$.
Notice that this cutoff function satisfies that
$supp(\nabla\chi_2)\in\Omega_1$, which will play a key role in the
following.\\
Then the localized variables satisfy the equation
\begin{equation}\label{comparison temp 1}
\left\{
\begin{array}{ll}
-\Delta(\chi_2u)+\nabla(\chi_2p)=-\dt(\chi_2u)+\chi_2G^1+H^1&\rm{in}\
\Omega\\
\nabla\cdot(\chi_2u)=\chi_2G^2+H^2&\rm{in}\ \Omega\\
\chi_2u=0&\rm{on}\ \Sigma\\
\chi_2u=0&\rm{on}\ \Sigma_b
\end{array}
\right.
\end{equation}
where
\begin{eqnarray}
H^1=\p_3\chi_2(pe_3-2\p_3u)-(\p_3^2\chi_2)u\qquad H^2=\p_3\chi_2u_3
\end{eqnarray}
Let $0\leq j\leq (N+2)-1$ and $\alpha\in\mathbb{N}^2$ such that
$2\leq 2j+\abs{\alpha}\leq 2(N+2)-1$. Then we may apply the
differential operator $\p^{\alpha}\dt^j$ on (\ref{comparison temp
1}) and consider the elliptic estimate (\ref{appendix elliptic weak
estimate}) for Navier Stokes equation, which implies that
\begin{eqnarray}
\\
\hm{\p^{\alpha}\dt^j\p_i(\chi_2u)}{1}^2+\hm{\p^{\alpha}\dt^j\p_i(\chi_2p)}{0}^2\ls
\hm{\p^{\alpha}\dt^{j+1}\p_i(\chi_2u)}{-1}^2\nonumber\\
+\hm{\p^{\alpha}\dt^j\p_i(\chi_2G^1+H^1)}{-1}^2+\hm{\p^{\alpha}\dt^j\p_i(\chi_2G^2+H^2)}{0}^2\nonumber
\end{eqnarray}
Similar to the argument in the upper domain, via integration by
parts, it is easy to deduce the $H^{-1}$ estimates that
\begin{eqnarray}
\hm{\p^{\alpha}\dt^{j+1}\p_i(\chi_2u)}{-1}^2
+\hm{\p^{\alpha}\dt^j\p_i(\chi_2G^1)}{-1}^2+\hm{\p^{\alpha}\dt^j\p_i(\chi_2G^2)}{0}^2\ls\z
\end{eqnarray}
For the remaining part, we may directly estimate
\begin{eqnarray}
\hm{\p^{\alpha}\dt^j\p_iH^1}{-1}^2+\hm{\p^{\alpha}\dt^j\p_iH^2}{0}^2\ls
\hm{\p^{\alpha}\dt^j\p_iH^1}{0}^2+\hm{\p^{\alpha}\dt^j\p_iH^2}{0}^2\\
\ls\hms{\bar D_2^{2(N+2)-1}u}{2}{\Omega_1}^2+\hms{\bar
D_2^{2(N+2)-1}\nabla p}{0}{\Omega_1}^2\ls\z\nonumber
\end{eqnarray}
where we utilize the property for $\nabla\chi_2$ stated above. Thus
we have
\begin{eqnarray}
\hms{\p_i\p^{\alpha}\dt^jp}{0}{\Omega_2}^2\ls\hm{\p^{\alpha}\dt^j\p_i(\chi_2p)}{0}^2\ls\z
\end{eqnarray}
which, based on the Navier-Stokes equation (\ref{perturbed linear
form equation}), further implies
\begin{eqnarray}
\hms{\p_3^2\p^{\alpha}\dt^ju}{0}{\Omega_2}^2\ls\z
\end{eqnarray}
Combining all above, we have
\begin{eqnarray}
\hms{\bar D_2^{2(N+2)-1}u}{2}{\Omega_2}^2+\hms{\bar
D_2^{2(N+2)-1}\nabla p}{0}{\Omega_2}^2\ls\z
\end{eqnarray}

\ \\
Step 5: Estimates of free surface.\\
With above estimates in hand, we may derive the estimates for $\e$
by employing the boundary condition of (\ref{perturbed linear form
equation}), i.e.
\begin{eqnarray}
\e=p-3\p_3u_3-G^3_3
\end{eqnarray}
We may take horizontal derivative on both sides of above equation,
i.e. for $\alpha\in\mathbb{N}^2$ and $3\leq\abs{\alpha}\leq
2(N+2)-1$, we have
\begin{eqnarray}
\hms{\p^{\alpha}\e}{1/2}{\Sigma}^2&\ls&\hms{\p^{\alpha}p}{1/2}{\Sigma}^2+\hms{\p^{\alpha}\p_3u_3}{1/2}{\Sigma}^2+\hms{\p^{\alpha}G^3_3}{1/2}{\Sigma}^2\\
&\ls&\hm{\nabla\bar D_2^{2(N+2)-1}p}{0}^2+\hm{\bar
D_2^{2(N+2)-1}u}{2}^2+\hms{\p^{\alpha}G^3_3}{1/2}{\Sigma}^2\nonumber\\
&\ls&\z\nonumber
\end{eqnarray}
For the temporal derivative of $\e$, we need to resort to the
transport equation on the upper boundary, i.e.
\begin{eqnarray}
\dt\e=u_3+G^4
\end{eqnarray}
For $\alpha\in\mathbb{N}^2$ and $1\leq\abs{\alpha}\leq 2(N+2)-1$, by
trace theorem, Poincare theorem and the divergence equation
$\nabla\cdot u=G^2$, we have
\begin{eqnarray}
\hms{\p^{\alpha}\dt\e}{1/2}{\Sigma}^2&\ls&\hms{\p^{\alpha}u_3}{1/2}{\Sigma}^2+\hms{\p^{\alpha}G^4}{1/2}{\Sigma}^2\\
&\ls&\hm{\p^{\alpha}u_3}{1}^2+\z\nonumber\\
&\ls&\hm{\p^{\alpha}u_3}{0}^2+\hm{\p^{\alpha}Du_3}{0}^2+\hm{\p^{\alpha}\p_3u_3}{0}^2+\z\nonumber\\
&\ls&\hm{\p^{\alpha}\p_3u_3}{0}^2+\z\nonumber\\
&\ls&\hm{\p^{\alpha}Du}{0}^2+\hm{\p^{\alpha}G^2}{0}^2+\z\nonumber\\
&\ls&\z\nonumber
\end{eqnarray}
Similarly, for $j=2,\ldots,(N+2)+1$, we may apply $\dt^{j-1}$ to
transport equation and for $\alpha\in\mathbb{N}^2$ and
$0\leq\abs{\alpha}\leq 2(N+2)-2j+2$ estimate
\begin{eqnarray}
\hms{\p^{\alpha}\dt^j\e}{1/2}{\Sigma}^2&\ls&\hms{\p^{\alpha}\dt^{j-1}u_3}{1/2}{\Sigma}^2+\hms{\p^{\alpha}\dt^{j-1}G^4}{1/2}{\Sigma}^2\\
&\ls&\hm{\p^{\alpha}\dt^{j-1}u_3}{1}^2+\z\nonumber\\
&\ls&\z\nonumber
\end{eqnarray}

\ \\
Step 6: Synthesis.\\
To summarize all above, since $\Omega=\Omega_1\cup\Omega_2$ we have
the estimate
\begin{eqnarray}
\d_{N+2,2}\ls\bar\d_{N+2,2}+\y_{N+2,2}
\end{eqnarray}
By lemma \ref{lower nonlinear estimate}, it is obvious that there
exists a $\theta>0$ such that
\begin{eqnarray}
\y_{N+2,2}\ls\ce_{2N}^{\theta}\d_{N+2,2}
\end{eqnarray}
Therefore, our result naturally follows.
\end{proof}
\begin{proposition}\label{higher comparison dissipation estimate}
There exists a $\theta>0$ such that
\begin{eqnarray}
\d_{2N}\ls\bar\d_{2N}+\ce_{2N}^{\theta}\d_{2N}+\k\f_{2N}
\end{eqnarray}
\end{proposition}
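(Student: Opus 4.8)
The plan is to mirror the proof of Proposition \ref{lower comparison dissipation estimate} step by step, replacing the $2$-minimum-count quantities by the $2N$-level quantities without minimum count, and to absorb the extra forcing contribution at the very end via the high-regularity nonlinear bound. Concretely, I would introduce the auxiliary quantity
\begin{eqnarray*}
\y_{2N}=\hm{\bar D_0^{4N-1}G^1}{0}^2+\hm{\bar D_0^{4N-1}G^2}{1}^2\\
+\hms{\bar D_0^{4N-1}G^3}{1/2}{\Sigma}^2+\hms{\bar D_0^{4N-1}G^4}{1/2}{\Sigma}^2
\end{eqnarray*}
and set $\z=\bar\d_{2N}+\y_{2N}$, with the goal of first showing $\d_{2N}\ls\z$ and then invoking Lemma \ref{higher nonlinear estimate}, which gives exactly $\y_{2N}\ls\ce_{2N}^{\theta}\d_{2N}+\k\f_{2N}$.

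For the bound $\d_{2N}\ls\z$ I would first apply Korn's inequality (lemma \ref{appendix poincare 3}) to the full horizontal--temporal derivatives of $u$, which vanish on $\Sigma_b$: this yields $\hm{\bar D_0^{4N}u}{1}^2\ls\hm{\bar D_0^{4N}\dm u}{0}^2\le\bar\d_{2N}\le\z$, and the same applied to $\i u$ gives $\hm{\i u}{1}^2\ls\hm{\dm\i u}{0}^2\le\z$, disposing of the Riesz piece of $\d_{2N}$. Next, applying $\p^{\alpha}\dt^j\p_3$ to the divergence equation $\nabla\cdot u=G^2$ (for $\alpha\in\mathbb{N}^2$, $j\le 2N-1$, $2j+\abs{\alpha}\le 4N-1$) gives $\hm{\p^{\alpha}\dt^j\p_3^2u_3}{0}^2\ls\hm{\bar D_0^{4N}u}{1}^2+\hm{\bar D_0^{4N-1}G^2}{1}^2\ls\z$, hence $\hm{\Delta\p^{\alpha}\dt^ju_3}{0}^2\ls\z$; then the third equation of (\ref{perturbed linear form equation}) gives $\hm{\p^{\alpha}\dt^j\p_3p}{0}^2\ls\z$. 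As in the lower-level argument, only $\p_3^2u_i$ and $\p_ip$ for $i=1,2$ remain.

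These remaining terms are handled by the same two-domain localization: a cutoff $\chi_1$ near $\Sigma$ combined with the Poisson equation for $\chi_1w_i$ ($w=\nabla\times u$), whose boundary data come from the upper stress condition, and then a cutoff $\chi_2$ near $\Sigma_b$, chosen so that $\nabla\chi_2$ is supported in the upper region (so the cross terms are already controlled), combined with the Stokes system for $(\chi_2u,\chi_2p)$. All the resulting forcing terms are estimated in $H^{-1}$ by integration by parts in the tangential and temporal directions and by transferring $\p_3$ onto the cutoff, now using the $\bar D_0^{4N-1}G^1$ and $\bar D_0^{4N-1}G^3$ pieces of $\z$; this produces $\hm{\bar D_0^{4N-1}u}{2}^2+\hm{\nabla\bar D_0^{4N-1}p}{0}^2\ls\z$. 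Finally, the free-surface terms follow from the boundary relations: $\e=p-3\p_3u_3-G^3_3$ controls $\hms{D_0^{4N-1}\e}{1/2}{\Sigma}^2$, and iterating the transport equation $\dt\e=u_3+G^4$ together with the trace theorem, Poincar\'e, and the divergence equation controls $\hms{\dt D_0^{4N-2}\e}{1/2}{\Sigma}^2$ and the terms $\sum_{j=2}^{2N+1}\hms{\dt^jD_0^{4N-2j+1}\e}{1/2}{\Sigma}^2$. Summing over $\Omega=\Omega_1\cup\Omega_2$ gives $\d_{2N}\ls\bar\d_{2N}+\y_{2N}$, and Lemma \ref{higher nonlinear estimate} then closes the estimate.

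The main obstacle is bookkeeping rather than a genuinely new idea: removing the minimum count shifts the multi-index ranges down, so one must verify the edge cases (for instance $\alpha=0$ in the curl equation is still admissible because $j\le 2N-1$ forces $2(j+1)\le 4N$) and check that every boundary and forcing quantity that appears is genuinely dominated by $\bar D_0^{4N-1}G^i$ rather than by a $4N$-th order derivative of $G^i$. The single genuinely new feature is the term $\k\f_{2N}$: it is never produced by the elliptic or localization machinery and enters only through Lemma \ref{higher nonlinear estimate}, where the top-order surface norm $\hms{\e}{4N+1/2}{\Sigma}^2=\f_{2N}$ cannot be absorbed into $\d_{2N}$ and so must be carried along with its coefficient $\k$.
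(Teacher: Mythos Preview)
Your proposal is correct and follows exactly the approach the paper takes: it too directs the reader to repeat the steps of Proposition \ref{lower comparison dissipation estimate} without minimum count or interpolation, to invoke Lemma \ref{higher nonlinear estimate} for the forcing quantity (which is the sole source of the $\k\f_{2N}$ term), and to handle the Riesz piece by the Korn-type bound $\hm{\i u}{1}^2\ls\hm{\dm\i u}{0}^2$. Your write-up is in fact more explicit than the paper's own proof, which simply refers back to the lower-level argument; the bookkeeping checks you flag (edge cases in the multi-index ranges, ensuring only $\bar D_0^{4N-1}G^i$ appears) are exactly the routine verifications that the paper leaves implicit.
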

\begin{proof}
It is easy to see that most part of the proof is identical to that
of proposition \ref{lower comparison dissipation estimate}, except
that we do not need the minimum count and interpolation now. Also we
will utilize lemma \ref{higher nonlinear estimate} instead to
estimate the nonlinear terms in perturbed linear form. Finally, a
direct estimate for Riesz potential term
\begin{eqnarray}
\hm{\i u}{1}^2\ls\hm{\dm\i u}{0}^2
\end{eqnarray}
can close the proof.
\end{proof}

\subsubsection{Energy Comparison Estimates}

\begin{proposition}\label{lower comparison energy estimate}
There exists a $\theta>0$ such that
\begin{eqnarray}
\ce_{N+2,2}\ls\bar\ce_{N+2,2}+\ce_{2N}^{\theta}\ce_{N+2,2}
\end{eqnarray}
\end{proposition}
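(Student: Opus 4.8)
The plan is to mirror the localization and elliptic–regularity argument of Proposition~\ref{lower comparison dissipation estimate}, but carried out at the energy level and using the perturbed linear form~(\ref{perturbed linear form equation}). Concretely, set $\z=\bar\ce_{N+2,2}+\y$, where $\y$ collects the $N+2$–level forcing norms $\hm{\bar D^2\bar\nabla_0^{2(N+2)-4}G^1}{0}^2$, $\hm{\bar D^2\bar\nabla_0^{2(N+2)-3}G^2}{0}^2$, $\hms{\bar D^2\bar\nabla_0^{2(N+2)-4}G^3}{1/2}{\Sigma}^2$, so that by Lemma~\ref{lower nonlinear estimate} one has $\y\ls\ce_{2N}^{\theta}\ce_{N+2,2}$ for some $\theta>0$; it then suffices to prove $\ce_{N+2,2}\ls\z$. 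The first observation is that, by Lemma~\ref{interpolation temp 1}, $\bar\ce_{N+2,2}$ is comparable to $\hm{\bar D_2^{2(N+2)}u}{0}^2+\hms{\bar D_2^{2(N+2)}\e}{0}{\Sigma}^2$, so all temporal–horizontal derivatives of $u$ in $L^2(\Omega)$ and of $\e$ in $L^2(\Sigma)$ up to parabolic count $2(N+2)$ are already bounded by $\z$. In particular the entire $\e$–part of $\ce_{N+2,2}$, namely $\hms{D^2\e}{2(N+2)-2}{\Sigma}^2$ together with $\sum_{j=1}^{N+2}\hms{\dt^j\e}{2(N+2)-2j}{\Sigma}^2$, is immediately controlled, and what is left is to recover the vertical spatial derivatives of $u$ and the pressure $p$.

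To produce the missing vertical derivatives of $u_3$ I would differentiate the relation $\nabla\cdot u=G^2$ into $\p_3u_3=-\p_1u_1-\p_2u_2+G^2$ and induct on the vertical count, each step trading a $\p_3$ on $u_3$ for a horizontal derivative on $(u_1,u_2)$ plus a derivative of $G^2$. For $u_1,u_2$ the momentum equation alone couples $\p_3^2u_i$ to $\p_ip$, so instead I would pass to the vorticity $w=\nabla\times u$, which solves the \emph{pressure-free} Poisson system $\Delta w=\dt w-\nabla\times G^1$ with Dirichlet data on $\Sigma$ given by horizontal derivatives of $u_3$ and components of $G^3$ (from the tangential stress condition $\p_3u_i=-G^3_i-\p_iu_3$ on $\Sigma$). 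Exactly as in Steps~3--4 of Proposition~\ref{lower comparison dissipation estimate}, I would localize with the cutoffs $\chi_1$ (upper slab $\Omega_1$) and $\chi_2$ (lower slab $\Omega_2$, with $\operatorname{supp}(\nabla\chi_2)\subset\Omega_1$), apply standard elliptic estimates to the localized Poisson problems, and then use $\p_3u_1=w_2+\p_1u_3$, $\p_3u_2=\p_2u_3-w_1$ to turn control of $w$ into control of the vertical derivatives of $u_1,u_2$ --- now at the energy regularity level rather than the dissipation one. Once all the required second spatial derivatives of $u$ (respecting the $2$–minimum count) are bounded by $\z$, applying $\p^\alpha\dt^j$ to the momentum equation gives $\nabla\dt^jp$, and combining this with the boundary identity $p|_\Sigma=\e+2\p_3u_3+G^3_3$ and a Poincar\'e estimate with boundary data controls $\hm{D^2p}{2(N+2)-3}^2$ and $\sum_{j=1}^{N+1}\hm{\dt^jp}{2(N+2)-2j-1}^2$.

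Reassembling the two slab estimates over $\Omega=\Omega_1\cup\Omega_2$ gives $\ce_{N+2,2}\ls\z$, and Lemma~\ref{lower nonlinear estimate} then absorbs the forcing part of $\z$ as $\ce_{2N}^{\theta}\ce_{N+2,2}$, which is the claim. I expect the main obstacle to be bookkeeping rather than conceptual: one must verify that every derivative string actually appearing in $\ce_{N+2,2}$ --- which carries the $2$–minimum count --- is reached by the divergence/vorticity/momentum recursion without ever requiring a purely vertical high derivative of $u$, and that at each stage the nonlinear remainders land inside the precise norms estimated by Lemma~\ref{lower nonlinear estimate}, in particular keeping the $H^{1/2}(\Sigma)$ boundary norms at the right order and never spending more than the budgeted vertical derivatives on $p$.
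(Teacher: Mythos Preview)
Your approach is correct but considerably more elaborate than what the paper does. The paper bypasses the vorticity/localization machinery entirely and applies the strong Stokes elliptic estimate (Lemma~\ref{appendix elliptic estimate 2}) directly to the perturbed linear system~(\ref{perturbed linear form equation}). For $\abs{\alpha}=2$, $\alpha\in\mathbb{N}^2$, this gives
\[
\hm{\p^{\alpha}u}{2(N+2)-2}^2+\hm{\p^{\alpha}p}{2(N+2)-3}^2\ls
\hm{\p^{\alpha}\dt u}{2(N+2)-4}^2+\hms{\p^{\alpha}\e}{2(N+2)-7/2}{\Sigma}^2+\w_{N+2,2},
\]
and then a finite downward induction on $j$ (applying the same Stokes estimate to $\dt^ju$) closes once one reaches $\hm{\dt^{N+2}u}{0}^2\ls\bar\ce_{N+2,2}$. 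This yields $(u,p)$ at full regularity in one pass, with no cutoffs and no vorticity equation.

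The reason your dissipation-style argument is unnecessary here is structural: the energy $\ce_{N+2,2}$ places $\dt^ju$ in the full space $H^{2(N+2)-2j}$, which is exactly the natural Stokes regularity ladder, whereas $\d_{N+2,2}$ caps $u$ at $H^2$ after horizontal derivatives and does not control $\e$ directly, forcing the Poisson/vorticity/localization route in Proposition~\ref{lower comparison dissipation estimate}. Your plan would work, but you would have to iterate the Poisson estimate (or apply higher-order elliptic theory to the localized problems) to climb from $H^1$ control up to $H^{2(N+2)-2}$, and track commutators with the cutoffs at each step. The paper's direct use of the Stokes estimate is both shorter and avoids this bookkeeping.
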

\begin{proof}
We define
\begin{eqnarray}
\\
\w_{N+2,2}=\hm{\bar D^2\bar\nabla_0^{2(N+2)-4}G^1}{0}^2+\hm{\bar
D^2\bar\nabla_0^{2(N+2)-3}G^2}{0}^2+\hms{\bar
D^2\bar\nabla_0^{2(N+2)-4}G^3}{1/2}{\Sigma}^2\nonumber
\end{eqnarray}
and
\begin{eqnarray}
\z=\bar\ce_{N+2,2}+\w_{N+2,2}
\end{eqnarray}
Then we can directly apply elliptic estimate (\ref{appendix elliptic
strong estimate}) in perturbed linear form. For
$\alpha\in\mathbb{N}^2$ and $\abs{\alpha}=2$, we have
\begin{eqnarray}
\hm{\p^{\alpha}u}{2(N+2)-2}^2+\hm{\p^{\alpha}p}{2(N+2)-3}^2\ls\hm{\p^{\alpha}\dt
u}{2(N+2)-4}^2+\hm{\p^{\alpha}G^1}{2(N+2)-4}^2\\
+\hm{\p^{\alpha}G^2}{2(N+2)-3}^2+\hms{\p^{\alpha}\e}{2(N+2)-7/2}{\Sigma}^2+\hms{\p^{\alpha}G^3}{2(N+2)-7/2}{\Sigma}^2\nonumber
\end{eqnarray}
Naturally, we have
\begin{eqnarray}
\hm{\p^{\alpha}G^1}{2(N+2)-4}^2
+\hm{\p^{\alpha}G^2}{2(N+2)-3}^2+\hms{\p^{\alpha}G^3}{2(N+2)-7/2}{\Sigma}^2\ls\w_{N+2,2}\ls\z
\end{eqnarray}
and
\begin{eqnarray}
\hms{\p^{\alpha}\e}{2(N+2)-7/2}{\Sigma}^2\ls\bar\ce_{N+2,2}\ls\z
\end{eqnarray}
So the only remaining term is
\begin{eqnarray}
\hm{\p^{\alpha}\dt u}{2(N+2)-4}^2\ls\hm{\dt u}{2(N+2)-2}^2
\end{eqnarray}
which should be estimated with a finite induction in the
following.\\
For $j=1,\ldots,(N+2)-1$, we have the elliptic estimate
\begin{eqnarray}
\\
\hm{\dt^ju}{2(N+2)-2j}^2+\hm{\dt^jp}{2(N+2)-2j-1}^2\ls\hm{\dt^{j+1}
u}{2(N+2)-2j-2}^2+\hm{\dt^jG^1}{2(N+2)-2j-2}^2\nonumber\\
+\hm{\dt^jG^2}{2(N+2)-2j-1}^2+\hms{\dt^j\e}{2(N+2)-2j-3/2}{\Sigma}^2+\hms{\dt^jG^3}{2(N+2)-2j-3/2}{\Sigma}^2\nonumber
\end{eqnarray}
By the definition, it is easy to see
\begin{eqnarray}
\\
\hm{\dt^jG^1}{2(N+2)-2j-2}^2
+\hm{\dt^jG^2}{2(N+2)-2j-1}^2+\hms{\dt^jG^3}{2(N+2)-2j-3/2}{\Sigma}^2\ls\w_{N+2,2}\ls\z\nonumber
\end{eqnarray}
and
\begin{eqnarray}
\hms{\dt^j\e}{2(N+2)-2j-3/2}{\Sigma}^2\ls\bar\ce_{N+2,2}\ls\z
\end{eqnarray}
When $j=(N+2)-1$, we have
\begin{eqnarray}
\hm{\dt^{j+1}
u}{2(N+2)-2j-2}^2=\hm{\dt^{N+2}u}{0}^2\ls\bar\ce_{N+2,2}\ls\z
\end{eqnarray}
Hence, the elliptic estimate shows
\begin{eqnarray}
\hm{\dt^{N+1}u}{2}^2+\hm{\dt^{N+1}p}{1}^2\ls\z
\end{eqnarray}
Inductively, we can estimate all the temporal derivatives of
velocity and pressure for $j=1,\ldots,N$, which will finally close
the proof.\\
Therefore, we have shown that
\begin{eqnarray}
\ce_{N+2,2}\ls\bar\ce_{N+2,2}+\w_{N+2,2}
\end{eqnarray}
Based on lemma \ref{lower nonlinear estimate}, there exists a
$\theta>0$ such that
\begin{eqnarray}
\w_{N+2,2}\ls\ce_{2N}^{\theta}\ce_{N+2,2}
\end{eqnarray}
Hence, our result naturally follows.
\end{proof}
\begin{proposition}\label{higher comparison energy estimate}
There exists a $\theta>0$ such that
\begin{eqnarray}
\ce_{2N}\ls\bar\ce_{2N}+\ce_{2N}^{1+\theta}
\end{eqnarray}
\end{proposition}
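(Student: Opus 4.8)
The plan is to mimic the proof of Proposition~\ref{lower comparison energy estimate}, but since there is no minimum count to respect I can drop the preliminary application of a spatial operator $\p^{\alpha}$ with $\abs{\alpha}=2$ and run the elliptic/induction argument directly on the temporal derivatives. First I would dispose of the ``free'' pieces of $\ce_{2N}$. The Riesz–potential terms $\hm{\i u}{0}^2$ and $\hms{\i\e}{0}{\Sigma}^2$ occur verbatim in $\bar\ce_{2N}$, so they pass through. For the surface terms $\sum_{j=0}^{2N}\hms{\dt^j\e}{4N-2j}{\Sigma}^2$: since $\Sigma$ carries no vertical direction, each $\p^{\gamma}\dt^j\e$ with $\abs{\gamma}\le 4N-2j$ has parabolic count $2j+\abs{\gamma}\le 4N$, hence sits inside $\hms{\bar D_0^{4N}\e}{0}{\Sigma}^2\le\bar\ce_{2N}$ — so, unlike the dissipation comparison, the transport equation is not needed here. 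It therefore remains only to bound $\sum_{j=0}^{2N}\hm{\dt^ju}{4N-2j}^2+\sum_{j=0}^{2N-1}\hm{\dt^jp}{4N-2j-1}^2$, i.e. to recover the vertical derivatives of $u$ and the pressure.

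Next I would set $\z=\bar\ce_{2N}+\w_{2N}$, where $\w_{2N}=\hm{\bar\nabla_0^{4N-2}G^1}{0}^2+\hm{\bar\nabla_0^{4N-1}G^2}{0}^2+\hms{\bar\nabla_0^{4N-2}G^3}{1/2}{\Sigma}^2$ collects exactly the forcing norms of the perturbed linear form. Applying $\dt^j$ to (\ref{perturbed linear form equation}) and invoking the elliptic estimate for the constant–coefficient stationary Stokes problem (\ref{appendix elliptic strong estimate}) gives, for $j=0,\ldots,2N-1$,
\begin{eqnarray*}
\hm{\dt^ju}{4N-2j}^2+\hm{\dt^jp}{4N-2j-1}^2
&\ls&\hm{\dt^{j+1}u}{4N-2j-2}^2+\hm{\dt^jG^1}{4N-2j-2}^2\\
&&+\hm{\dt^jG^2}{4N-2j-1}^2+\hms{\dt^j\e}{4N-2j-3/2}{\Sigma}^2+\hms{\dt^jG^3}{4N-2j-3/2}{\Sigma}^2 .
\end{eqnarray*}
Then I would run a finite \emph{downward} induction on $j$: the base case $j=2N$ is $\hm{\dt^{2N}u}{0}^2\le\bar\ce_{2N}\le\z$; at step $j$ the term $\hm{\dt^{j+1}u}{4N-2j-2}^2$ has already been bounded by $\z$ at step $j+1$, the forcing contributions are subsumed in $\w_{2N}\le\z$, and $\hms{\dt^j\e}{4N-2j-3/2}{\Sigma}^2$ lies inside $\hms{\bar D_0^{4N}\e}{0}{\Sigma}^2\le\z$ by the count computed above. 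This yields $\ce_{2N}\ls\z=\bar\ce_{2N}+\w_{2N}$, and the first part of Lemma~\ref{higher nonlinear estimate} converts $\w_{2N}\ls\ce_{2N}^{1+\theta}$, giving the claim.

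The only point requiring genuine care is the index bookkeeping along the induction: one must check that each $\hm{\dt^{j+1}u}{4N-2j-2}^2$ contribution is precisely the quantity controlled at the previous stage, that the $G^i$–norms produced by the Stokes estimate are exactly those appearing in $\w_{2N}$ so that Lemma~\ref{higher nonlinear estimate} applies with no loss, and that every $\e$–trace generated by the free–surface boundary condition has parabolic count strictly below $4N$. No analytic input beyond the stationary Stokes estimate and Lemma~\ref{higher nonlinear estimate} is needed; the argument is the minimum–count–free shadow of Proposition~\ref{lower comparison energy estimate}.
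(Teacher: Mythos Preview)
Your proposal is correct and follows essentially the same approach as the paper. The paper's own proof is a one-line referral: it says the argument is the same as Proposition~\ref{lower comparison energy estimate} except that there is no minimum count and one should invoke Lemma~\ref{higher nonlinear estimate} in place of Lemma~\ref{lower nonlinear estimate}; your write-up is exactly that ``minimum--count--free shadow,'' with the Riesz terms passing through directly, the $\e$--traces absorbed into $\hms{\bar D_0^{4N}\e}{0}{\Sigma}^2$, the downward induction on $j$ via the constant-coefficient Stokes estimate, and the nonlinear remainder $\w_{2N}$ closed by the first part of Lemma~\ref{higher nonlinear estimate}.
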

\begin{proof}
It is easy to see that the proof is almost the same as that of
proposition \ref{lower comparison energy estimate} except that there
is no minimum count now and we should utilize lemma \ref{higher
nonlinear estimate} instead. So we omit the proof here.
\end{proof}

\subsection{A Priori Estimates}

First, we concentrate on the $2N$ level a priori estimate.
\begin{lemma}
There exists $C>0$ such that
\begin{eqnarray}
\sup_{0\leq r\leq
t}\f_{2N}(r)&\ls&\exp\bigg(C\int_0^t\sqrt{\k(r)}\ud{r}\bigg)\\
&&\bigg[\f_{2N}(0)+t\int_0^t(1+\ce_{2N}(r))\d_{2N}(r)\ud{r}+\bigg(\int_0^t\sqrt{\k(r)\f_{2N}(r)}\ud{r}\bigg)^2\bigg]\nonumber
\end{eqnarray}
\end{lemma}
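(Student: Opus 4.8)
The plan is to recover the top-order surface norm $\f_{2N}=\hms{\e}{4N+1/2}{\Sigma}^2$, which the energy--dissipation pair does not directly control, by propagating it with the kinematic boundary condition while paying only the \emph{Lipschitz} norm of the transport velocity --- exactly what $\k$ measures. First I would write the kinematic equation as $\dt\e+u_1\p_1\e+u_2\p_2\e=u_3$ on $\Sigma$, apply a tangential fractional derivative $\Lambda^{4N+1/2}$, and pair against $\Lambda^{4N+1/2}\e$ in $L^2(\Sigma)$; integrating the transport term by parts gives
\begin{eqnarray*}
\half\dt\f_{2N}&=&\brs{\Lambda^{4N+1/2}u_3,\Lambda^{4N+1/2}\e}-\half\brs{(\p_1u_1+\p_2u_2)\Lambda^{4N+1/2}\e,\Lambda^{4N+1/2}\e}\\
&&-\brs{[\Lambda^{4N+1/2},u_1\p_1+u_2\p_2]\e,\Lambda^{4N+1/2}\e},
\end{eqnarray*}
reducing everything to a forcing term, a drift term, and a commutator.

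For the drift term, only the horizontal components of $u$ appear, so $H^2(\Sigma)\hookrightarrow L^\infty(\Sigma)$ inside the definition of $\k$ gives $\lnms{\p_iu_i}{\infty}{\Sigma}^2\ls\hms{Du_i}{2}{\Sigma}^2\ls\k$, and that term is $\ls\sqrt{\k}\,\f_{2N}$. For the forcing term, the trace inequality and the fact that $\d_{2N}$ controls $u$ up to $4N+1$ spatial derivatives in $\Omega$ give $\hms{u_3}{4N+1/2}{\Sigma}^2\ls\hm{u}{4N+1}^2\ls\d_{2N}$, so it is $\ls\sqrt{\d_{2N}\,\f_{2N}}$. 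The commutator is handled by a Kato--Ponce estimate, $\hms{[\Lambda^{4N+1/2},u_i]\p_i\e}{0}{\Sigma}\ls\lnms{\nabla u_i}{\infty}{\Sigma}\hms{\e}{4N+1/2}{\Sigma}+\hms{u_i}{4N+1/2}{\Sigma}\lnms{D\e}{\infty}{\Sigma}$, whose first summand is $\ls\sqrt{\k\f_{2N}}$ and whose second is $\ls\sqrt{(1+\ce_{2N})\d_{2N}}\,\sqrt{\f_{2N}}$ upon using $\hms{u_i}{4N+1/2}{\Sigma}^2\ls\d_{2N}$ and $\lnms{D\e}{\infty}{\Sigma}^2\ls\hms{\e}{4N}{\Sigma}^2\ls\ce_{2N}$ (legitimate since $4N\geq16$); equivalently one may quote the nonlinear $G^4$ commutator bound of lemma \ref{higher nonlinear estimate}, which already carries the $\k\f_{2N}$ term. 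Collecting, $\dt\f_{2N}\ls\sqrt{\k}\,\f_{2N}+\sqrt{(1+\ce_{2N})\d_{2N}}\,\sqrt{\f_{2N}}$.

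To finish I would pass to $F=\sqrt{\f_{2N}}$, so $\dt F\ls\sqrt{\k}\,F+\sqrt{(1+\ce_{2N})\d_{2N}}$; integrating and keeping the drift loss $\sqrt{\k}\,F=\sqrt{\k\f_{2N}}$ as an inhomogeneity gives $\sup_{r\leq t}F(r)\ls\sqrt{\f_{2N}(0)}+\int_0^t\sqrt{\k\f_{2N}}+\int_0^t\sqrt{(1+\ce_{2N})\d_{2N}}$, and squaring, together with $(\int_0^t\sqrt{(1+\ce_{2N})\d_{2N}})^2\leq t\int_0^t(1+\ce_{2N})\d_{2N}$ (Cauchy--Schwarz in time), yields the stated right-hand side; the front factor $\exp(C\int_0^t\sqrt{\k})\geq1$ can be inserted by running the Gronwall with integrating factor $\exp(-C\int_0^r\sqrt{\k})$ instead, and it only strengthens the bound. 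The step I expect to be the real obstacle is the sharp transport/commutator analysis: one must check that the genuinely top-order commutator piece (all derivatives on $u$) is absorbed by $\sqrt{\d_{2N}}$ through trace --- using $\d_{2N}\gtrsim\hm{u}{4N+1}^2$ and $H^{4N+1}(\Omega)\hookrightarrow H^{4N+1/2}(\Sigma)$ --- and that every intermediate commutator term lands in $\ce_{2N}$ or $\d_{2N}$ up to a factor $\sqrt{\k}$, so that the only Gronwall growth is at Lipschitz level, which is precisely what lets $\int_0^t\sqrt{\k}$ appear in the exponent in place of $\int_0^t\hms{u}{4N+1/2}{\Sigma}$.
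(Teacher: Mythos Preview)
Your approach is essentially the paper's (which defers to lemma 9.1 of \cite{book9}): hit the kinematic boundary condition with $\Lambda^{4N+1/2}$, run an $L^2(\Sigma)$ energy estimate, control the commutator by Kato--Ponce, and close with a Gronwall-type integration. The structure of the differential inequality you derive and the way you square after Cauchy--Schwarz in time reproduce exactly the stated right-hand side.

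There is one point where your justification slips, and it is precisely the point the paper's short proof singles out. You write $\hms{u}{4N+1/2}{\Sigma}^2\ls\hm{u}{4N+1}^2\ls\d_{2N}$ and later ``$\d_{2N}\gtrsim\hm{u}{4N+1}^2$''. With this paper's modified dissipation,
\[
\d_{2N}=\hm{\i u}{1}^2+\hm{\bar D_0^{4N-1}u}{2}^2+\hm{\nabla\bar D_0^{4N-1}p}{0}^2+\cdots,
\]
the full norm $\hm{u}{4N+1}^2$ is \emph{not} controlled: at most two vertical derivatives enter. Your conclusion $\hms{u}{4N+1/2}{\Sigma}^2\ls\d_{2N}$ is nevertheless correct, but must be argued horizontally: for $\beta\in\mathbb{N}^2$ with $|\beta|\le4N$,
\[
\hms{\p^{\beta}u}{1/2}{\Sigma}^2\ls\hm{\p^{\beta}u}{1}^2\ls\hm{\bar D_0^{4N-1}u}{2}^2\ls\d_{2N},
\]
using only one normal derivative via the trace. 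This is exactly the adaptation the paper flags as the sole difference from \cite{book9}. Once you replace the bulk $H^{4N+1}$ route by this trace argument, your proof goes through unchanged.
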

\begin{proof}
This lemma is almost identical to lemma 9.1 in \cite{book9}, so we
omit the detailed proof here and only outline some differences here.
Based on trace theorem, we have that for $0\leq\abs{\beta}\leq 4N$
and $\beta\in\mathbb{N}^2$
\begin{eqnarray}
\hms{\p^{\beta}u}{1/2}{\Sigma}^2\ls\hm{\p^{\beta}u}{1}^2\ls\d_{2N}
\end{eqnarray}
Hence, although our new definition of dissipation is slightly
different from the original paper, we can still estimate and achieve
the same form.
\end{proof}
\begin{proposition}
There exists a universal constant $0<\delta<1$ such that if
$\g_N(T)\ls\delta$,  then
\begin{eqnarray}
\sup_{0\leq r\leq
t}\f_{2N}(r)\ls\f_{2N}(0)+t\int_0^t\d_{2N}(r)\ud{r}
\end{eqnarray}
and
\begin{eqnarray}
\int_0^t\k(r)\f_{2N}(r)\ud{r}\ls\delta^{(8+2\lambda)/(8+4\lambda)}\f_{2N}(0)+\delta^{(8+2\lambda)/(8+4\lambda)}\int_0^t\d_{2N}(r)\ud{r}
\end{eqnarray}
\begin{eqnarray}
\int_0^t\sqrt{\d_{2N}(r)\k(r)\f_{2N}(r)}\ud{r}\ls\f_{2N}(0)+\delta^{(8+2\lambda)/(16+6\lambda)}\int_0^t\d_{2N}(r)\ud{r}
\end{eqnarray}
for all $0\leq t\leq T$.
\end{proposition}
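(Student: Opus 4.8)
The plan is to prove the three estimates in that order, deducing the second and third from the first. The engine is the preceding lemma, which already gives
\[
\sup_{0\le r\le t}\f_{2N}(r)\ls\exp\Big(C\int_0^t\sqrt{\k(r)}\,\ud{r}\Big)\Big[\f_{2N}(0)+t\int_0^t(1+\ce_{2N}(r))\d_{2N}(r)\,\ud{r}+\Big(\int_0^t\sqrt{\k(r)\f_{2N}(r)}\,\ud{r}\Big)^2\Big].
\]
Everything reduces to showing that $\int_0^t\sqrt{\k(r)}\,\ud{r}$, and more generally the weighted moments $\int_0^t\k(r)\,\ud{r}$ and $\int_0^t r\,\k(r)\,\ud{r}$, are controlled by a small power of $\delta$ uniformly in $t\le T$; this makes the Gronwall factor harmless and the quadratic term absorbable.

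First I would fix the interpolation index of Lemma \ref{lower interpolation estimate improved 1}, call it $r_0$, in the range $r_0\in(0,\min\{1,\lambda\})$. Since $\g_N$ is nondecreasing and $\g_N(T)\ls\delta$, for every $r\le T$ one has $\ce_{2N}(r)\ls\delta$ and $\ce_{N+2,2}(r)\ls\delta(1+r)^{-(2+\lambda)}$, so Lemma \ref{lower interpolation estimate improved 1} yields
\[
\k(r)\ls\ce_{2N}(r)^{r_0/(2+r_0)}\,\ce_{N+2,2}(r)^{2/(2+r_0)}\ls\delta\,(1+r)^{-2(2+\lambda)/(2+r_0)} .
\]
Because $r_0<\lambda$ we have $2(2+\lambda)/(2+r_0)>2$ and $(2+\lambda)/(2+r_0)>1$, so the integrals $\int_0^\infty(1+r)^{-(2+\lambda)/(2+r_0)}\ud{r}$, $\int_0^\infty(1+r)^{-2(2+\lambda)/(2+r_0)}\ud{r}$ and $\int_0^\infty r\,(1+r)^{-2(2+\lambda)/(2+r_0)}\ud{r}$ all converge. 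Hence, uniformly for $0\le t\le T$,
\[
\int_0^t\sqrt{\k(r)}\,\ud{r}\ls\delta^{1/2},\qquad \int_0^t\k(r)\,\ud{r}\ls\delta,\qquad \int_0^t r\,\k(r)\,\ud{r}\ls\delta .
\]
With these in hand the first estimate follows by absorption: in the displayed inequality $\exp(C\int_0^t\sqrt{\k})\ls1$ and $1+\ce_{2N}(r)\ls1$, while by Cauchy--Schwarz
\[
\Big(\int_0^t\sqrt{\k(r)\f_{2N}(r)}\,\ud{r}\Big)^2\le\Big(\sup_{0\le r\le t}\f_{2N}(r)\Big)\Big(\int_0^t\sqrt{\k(r)}\,\ud{r}\Big)^2\ls\delta\,\sup_{0\le r\le t}\f_{2N}(r).
\]
Since $\sup_{0\le r\le t}\f_{2N}(r)\le\delta(1+t)<\infty$ by the definition of $\g_N$, choosing $\delta$ universally small so that the constant multiplying this last term is below $\half$ lets us absorb it, giving $\sup_{0\le r\le t}\f_{2N}(r)\ls\f_{2N}(0)+t\int_0^t\d_{2N}(r)\,\ud{r}$, the first estimate.

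For the second estimate, apply the first one on $[0,r]$ to get $\f_{2N}(r)\ls\f_{2N}(0)+r\int_0^r\d_{2N}\ls\f_{2N}(0)+r\int_0^t\d_{2N}$ for $r\le t$, whence, using the $\k$-moments above,
\[
\int_0^t\k(r)\f_{2N}(r)\,\ud{r}\ls\f_{2N}(0)\int_0^t\k(r)\,\ud{r}+\Big(\int_0^t\d_{2N}(r)\,\ud{r}\Big)\int_0^t r\,\k(r)\,\ud{r}\ls\delta\Big(\f_{2N}(0)+\int_0^t\d_{2N}(r)\,\ud{r}\Big),
\]
and since $\delta\le\delta^{(8+2\lambda)/(8+4\lambda)}$ this is exactly the claimed form. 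The third estimate is then immediate from Young's inequality $\sqrt{\d_{2N}\k\f_{2N}}\le\frac{\epsilon}{2}\d_{2N}+\frac{1}{2\epsilon}\k\f_{2N}$, which upon integrating and inserting the second estimate gives $\int_0^t\sqrt{\d_{2N}\k\f_{2N}}\ls\epsilon\int_0^t\d_{2N}+\epsilon^{-1}\delta\big(\f_{2N}(0)+\int_0^t\d_{2N}\big)$; taking $\epsilon=\delta^{1/2}$ and using $\delta^{1/2}\f_{2N}(0)\le\f_{2N}(0)$ together with $\delta^{1/2}\le\delta^{(8+2\lambda)/(16+6\lambda)}$ finishes it. The one genuinely nontrivial point is the uniform-in-$t$ smallness of $\int_0^t\sqrt{\k}$ (and of the weighted moments of $\k$): it is precisely the algebraic decay of $\ce_{N+2,2}$ encoded in $\g_N$, combined with Lemma \ref{lower interpolation estimate improved 1} and the freedom to shrink the interpolation index below $\lambda$, that supplies the time-integrability needed both to tame the Gronwall factor and to bound those moments by a power of $\delta$.
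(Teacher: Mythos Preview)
Your proof is correct and follows essentially the same route as the paper's: use Lemma~\ref{lower interpolation estimate improved 1} together with the $(1+r)^{-(2+\lambda)}$ decay of $\ce_{N+2,2}$ built into $\g_N$ to make $\int_0^t\sqrt{\k}$, $\int_0^t\k$ and $\int_0^t r\k$ uniformly small, then feed this into the preceding lemma and absorb. The only difference is cosmetic: the paper fixes the interpolation index at the specific value $r=2\lambda/(4+\lambda)$, which yields $\k\ls\ce_{N+2,2}^{(8+2\lambda)/(8+4\lambda)}$ and hence the exponent $(8+2\lambda)/(8+4\lambda)$ directly, whereas you keep $r_0<\lambda$ generic, obtain the stronger bound $\int\k\ls\delta$, and then weaken via $\delta\le\delta^{(8+2\lambda)/(8+4\lambda)}$. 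Both are valid; your version is in fact slightly sharper before the final weakening.
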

\begin{proof}
By lemma \ref{lower interpolation estimate improved 1}, we have that
$\k\ls\ce_{N+2,2}^{2/(2+r)}\ce_{2N}^{r/(2+r)}$ and $\ce_{2N}\leq 1$,
we may take $r=2\lambda/(4+\lambda)$ to achieve that
\begin{eqnarray}
\k\ls\ce_{N+2,2}^{(8+2\lambda)/(8+4\lambda)}
\end{eqnarray}
Then utilizing the decaying of $\ce_{N+2,2}$, the other part of the
proof is identical to proposition 9.2 and corollary 9.3 in
\cite{book9}, so we will omit it here.
\end{proof}
\begin{theorem}\label{higher apriori estimate}
There exists a universal constant $0<\delta<1$ such that if
$\g_N(T)\ls\delta$,  then
\begin{eqnarray}
\sup_{0\leq r\leq t}\ce_{2N}(r)+\int_0^t\d_{2N}+\sup_{0\leq r\leq
t}\frac{\f_{2N}(r)}{1+r}\ls\ce_{2N}(0)+\f_{2N}(0)
\end{eqnarray}
for all $0\leq t\leq T$.
\end{theorem}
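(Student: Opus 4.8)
The plan is to feed the comparison estimates of Propositions \ref{higher comparison energy estimate} and \ref{higher comparison dissipation estimate} into the horizontal energy estimate of Proposition \ref{higher energy estimate}, and then close the inequality by absorbing every term that carries a positive power of the smallness parameter into the left-hand side. Since $\g_N(T)\ls\delta$ forces $\sup_{0\le r\le t}\ce_{2N}(r)\ls\delta$ with $0<\delta<1$, Proposition \ref{higher comparison energy estimate} gives $\ce_{2N}(t)\ls\bar\ce_{2N}(t)+\ce_{2N}(t)^{\theta}\ce_{2N}(t)\ls\bar\ce_{2N}(t)+\delta^{\theta}\ce_{2N}(t)$, hence $\ce_{2N}(t)\ls\bar\ce_{2N}(t)$ after absorption; likewise Proposition \ref{higher comparison dissipation estimate}, after absorbing $\int_0^t\ce_{2N}^{\theta}\d_{2N}\ls\delta^{\theta}\int_0^t\d_{2N}$, yields $\int_0^t\d_{2N}\ls\int_0^t\bar\d_{2N}+\int_0^t\k\f_{2N}$.

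Combining these two reductions with Proposition \ref{higher energy estimate} I would obtain, for each $t\in[0,T]$,
\begin{equation*}
\ce_{2N}(t)+\int_0^t\d_{2N}\ls\ce_{2N}(0)+(\ce_{2N}(t))^{3/2}+\int_0^t\ce_{2N}^{\theta}\d_{2N}+\int_0^t\sqrt{\d_{2N}\k\f_{2N}}+\int_0^t\k\f_{2N}.
\end{equation*}
Here $(\ce_{2N}(t))^{3/2}\ls\sqrt{\delta}\,\ce_{2N}(t)$ and $\int_0^t\ce_{2N}^{\theta}\d_{2N}\ls\delta^{\theta}\int_0^t\d_{2N}$ are absorbed into the left-hand side, while the two $\f_{2N}$ integrals are controlled by the bounds of the preceding proposition, namely $\int_0^t\k\f_{2N}\ls\delta^{(8+2\lambda)/(8+4\lambda)}\bigl(\f_{2N}(0)+\int_0^t\d_{2N}\bigr)$ and $\int_0^t\sqrt{\d_{2N}\k\f_{2N}}\ls\f_{2N}(0)+\delta^{(8+2\lambda)/(16+6\lambda)}\int_0^t\d_{2N}$; their $\int_0^t\d_{2N}$ contributions again carry small prefactors and are absorbed as well. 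What survives is
\begin{equation*}
\ce_{2N}(t)+\int_0^t\d_{2N}\ls\ce_{2N}(0)+\f_{2N}(0),
\end{equation*}
and since the right-hand side is independent of $t$ one may pass to the supremum over $t\in[0,T]$ to get the asserted bound on $\sup_{0\le r\le t}\ce_{2N}(r)+\int_0^t\d_{2N}$.

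It then remains to control $\f_{2N}$, for which I would insert the estimate $\sup_{0\le r\le t}\f_{2N}(r)\ls\f_{2N}(0)+t\int_0^t\d_{2N}$ from the preceding proposition together with the bound $\int_0^t\d_{2N}\ls\ce_{2N}(0)+\f_{2N}(0)$ just obtained; this produces $\f_{2N}(s)\ls(1+s)(\ce_{2N}(0)+\f_{2N}(0))$ for every $s\le t$, i.e. $\sup_{0\le r\le t}\f_{2N}(r)/(1+r)\ls\ce_{2N}(0)+\f_{2N}(0)$, completing the proof.

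The only genuinely delicate point is the order of operations in the absorption argument, so that no circularity arises: all the universal constants produced by Propositions \ref{higher energy estimate}, \ref{higher comparison dissipation estimate}, \ref{higher comparison energy estimate} and the $\f_{2N}$ proposition must be frozen first, and only afterward is $\delta$ chosen small enough that each prefactor $\sqrt{\delta}$, $\delta^{\theta}$, $\delta^{(8+2\lambda)/(8+4\lambda)}$, $\delta^{(8+2\lambda)/(16+6\lambda)}$ lies below the corresponding absorption threshold; the absorptions are legitimate because $\ce_{2N}$, $\int_0^T\d_{2N}$ and $\f_{2N}$ are finite on $[0,T]$ for the solution under consideration. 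One should also note that the $\k\f_{2N}$ term introduced through the dissipation comparison is not re-expanded but routed directly into the $\f_{2N}$ estimates, since $\k$ is only controlled via $\ce_{N+2,2}$ (Lemma \ref{lower interpolation estimate improved 1}), whose smallness lives at the $N+2$ level encoded in $\g_N$ and is not available at the $2N$ level.
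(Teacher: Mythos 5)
Your proposal is correct and follows the intended argument. The paper's own proof is simply a citation to Theorem 9.4 of Guo--Tice (\cite{book9}), noting that all of the ingredients — the horizontal energy estimate (Proposition \ref{higher energy estimate}), the comparison estimates (Propositions \ref{higher comparison energy estimate} and \ref{higher comparison dissipation estimate}), and the $\f_{2N}$ transport bounds — have been re-established in this paper's setting; your reconstruction via successive absorption under the smallness $\g_N(T)\ls\delta$ is exactly the argument that citation points to.
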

\begin{proof}
This is identical to theorem 9.4 of \cite{book9}. Notice that in the
section of energy estimates and comparison estimates, we have
recovered all the necessary results for $2N$ level a priori estimate
here.
\end{proof}
Then we consider the a priori estimate for $N+2$ level.
\begin{lemma}
Let $F^2$ be defined with $\dt^{N+2}$. There exists a universal
constant $0<\delta<1$ such that if $\g_N(T)\ls\delta$,  then
\begin{eqnarray}
\frac{2}{3}\bar\ce_{N+2,2}(t)\leq\bar\ce_{N+2,2}-2\int_{\Omega}J(t)\dt^{N+1}p(t)F^2(t)\leq\frac{4}{3}\bar\ce_{N+2,2}(t)
\end{eqnarray}
for all $0\leq t\leq T$.
\end{lemma}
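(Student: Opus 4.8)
The plan is to reduce the claim to the single inequality $\bigl|2\int_\Omega J(t)\dt^{N+1}p(t)F^2(t)\bigr|\le\frac13\bar\ce_{N+2,2}(t)$ for $\delta$ small; writing the middle expression as $\bar\ce_{N+2,2}-2\int_\Omega J\dt^{N+1}pF^2$ and bounding the correction term above and below by $\pm\frac13\bar\ce_{N+2,2}$ then yields both displayed inequalities at once.

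First I would estimate the correction term by H\"older's inequality, using that $\lnm{J}{\infty}\ls1$. This bound on $J$ is available because $\hms{\e}{5/2}{\Sigma}^2\ls\ce_{2N}\le\g_N(T)\ls\delta$ is small, so the lemma giving $\lnm{J-1}{\infty}^2\le\half$ applies. Thus
\[
\Bigl|2\int_\Omega J\dt^{N+1}pF^2\Bigr|\ls\hm{\dt^{N+1}p}{1}\,\hm{F^2}{0}.
\]
By the very definition of $\ce_{N+2,2}$, the $j=N+1$ term of the pressure sum at level $n=N+2$ is $\hm{\dt^{N+1}p}{1}^2$, so $\hm{\dt^{N+1}p}{1}^2\le\ce_{N+2,2}$; and by the lemma on the temporal forcing terms with $\dt^{N+2}$ (which applies since $N\ge4\ge3$) we have $\hm{F^2}{0}^2\ls\ce_{2N}^{\theta}\ce_{N+2,2}$ for some $\theta>0$. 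Combining these,
\[
\Bigl|2\int_\Omega J\dt^{N+1}pF^2\Bigr|\ls\ce_{2N}^{\theta/2}\,\ce_{N+2,2}.
\]

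Next I would trade the full energy $\ce_{N+2,2}$ on the right for the horizontal energy $\bar\ce_{N+2,2}$. The energy comparison estimate, Proposition~\ref{lower comparison energy estimate}, gives $\ce_{N+2,2}\ls\bar\ce_{N+2,2}+\ce_{2N}^{\theta}\ce_{N+2,2}$; since $\ce_{2N}\ls\delta$, the last term is absorbed into the left-hand side for $\delta$ small, leaving $\ce_{N+2,2}\ls\bar\ce_{N+2,2}$. Using $\ce_{2N}\ls\delta$ once more to bound $\ce_{2N}^{\theta/2}\ls\delta^{\theta/2}$, I obtain
\[
\Bigl|2\int_\Omega J\dt^{N+1}pF^2\Bigr|\le C\delta^{\theta/2}\,\bar\ce_{N+2,2}
\]
for a universal $C$. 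Choosing $\delta$ so small that $C\delta^{\theta/2}\le\frac13$ finishes the proof.

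This is essentially a small-parameter absorption argument, so I do not expect a real obstacle. The one delicate point — and the place where the smallness hypothesis is genuinely needed beyond taming $\lnm{J}{\infty}$ — is that the correction term involves the pressure $\dt^{N+1}p$ and the full spatial regularity of $F^2$, neither of which is controlled by $\bar\ce_{N+2,2}$ alone; this is why one must pass through the full energy $\ce_{N+2,2}$ via the structural bound $\hm{F^2}{0}^2\ls\ce_{2N}^\theta\ce_{N+2,2}$ and then come back to $\bar\ce_{N+2,2}$ with the comparison estimate.
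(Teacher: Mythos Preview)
Your argument is correct and is essentially the intended one: bound the correction term by H\"older, control $\hm{\dt^{N+1}p}{0}$ via $\ce_{N+2,2}$ and $\hm{F^2}{0}^2\ls\ce_{2N}^\theta\ce_{N+2,2}$, then use the comparison estimate $\ce_{N+2,2}\ls\bar\ce_{N+2,2}$ (valid once $\ce_{2N}\ls\delta$ is small) to close. The paper itself does not spell this out but simply refers to the corresponding lemma in \cite{book9}, whose proof follows the same absorption scheme.
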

\begin{proof}
It is the same as lemma 9.6 of \cite{book9}.
\end{proof}
\begin{theorem}\label{lower apriori estimate}
There exists a universal constant $0<\delta<1$ such that if
$\g_N(T)\ls\delta$,  then
\begin{eqnarray}
\sup_{0\leq r\leq t}(1+r)^{2+\lambda}\ce_{N+2,2}(r)\ls\ce_{2N}(0)
\end{eqnarray}
for all $0\leq t\leq T$.
\end{theorem}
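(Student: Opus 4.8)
The plan is to convert the one--step energy identity of Proposition \ref{lower energy estimate} into a closed nonlinear Gr\"onwall inequality for a quantity equivalent to $\ce_{N+2,2}$, and then integrate the resulting Bernoulli--type ODE to read off the decay rate $2+\lambda$. Throughout we work on the fixed interval $[0,T]$ on which the standing hypothesis $\g_N(T)\ls\delta$ holds, so that $\ce_{2N}(r)\le\g_N(T)\ls\delta$ is uniformly small and we may shrink the universal $\delta$ at finitely many places.

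First I would fix $F^2$ to be taken with $\dt^{N+2}$ and set
\begin{equation}
\tilde\ce_{N+2,2}(t):=\bar\ce_{N+2,2}(t)-2\int_{\Omega}K(t)\,\dt^{N+1}p(t)\,F^2(t).
\end{equation}
By the lemma immediately preceding this theorem, $\tfrac23\bar\ce_{N+2,2}\le\tilde\ce_{N+2,2}\le\tfrac43\bar\ce_{N+2,2}$, so $\tilde\ce_{N+2,2}$ is coercive; combining this with Proposition \ref{lower comparison energy estimate} and absorbing the term $\ce_{2N}^{\theta}\ce_{N+2,2}$ into the left side (legitimate since $\ce_{2N}\ls\delta$) gives the two--sided bound $\tilde\ce_{N+2,2}\sim\ce_{N+2,2}$. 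In the same way Proposition \ref{lower comparison dissipation estimate} yields $\d_{N+2,2}\ls\bar\d_{N+2,2}$. Feeding these comparisons into Proposition \ref{lower energy estimate} and once more using the smallness of $\ce_{2N}^{\theta}$ to absorb its right--hand side into $\bar\d_{N+2,2}$ produces, for some universal $c>0$,
\begin{equation}\label{lowapr eq diff}
\frac{d}{dt}\,\tilde\ce_{N+2,2}+c\,\d_{N+2,2}\le 0\qquad \text{on}\ [0,T].
\end{equation}

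Next I would invoke the interpolation Lemma \ref{lower interpolation estimate improved 2}, namely $\bar\ce_{N+2,2}\ls\ce_{2N}^{1/(\lambda+3)}\,\d_{N+2,2}^{(\lambda+2)/(\lambda+3)}$, together with the a priori boundedness of $\ce_{2N}$ on $[0,T]$ supplied by Theorem \ref{higher apriori estimate}. Rearranging gives $\d_{N+2,2}\gs\bar\ce_{N+2,2}^{(\lambda+3)/(\lambda+2)}\,\ce_{2N}^{-1/(\lambda+2)}$, and then, using $\tilde\ce_{N+2,2}\sim\bar\ce_{N+2,2}$,
\begin{equation}
\frac{d}{dt}\,\tilde\ce_{N+2,2}\le -c'\,\tilde\ce_{N+2,2}^{\,1+\frac{1}{\lambda+2}},
\end{equation}
where $c'$ is controlled from below in terms of the a priori bound on $\ce_{2N}$. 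Differentiating $\tilde\ce_{N+2,2}^{-1/(\lambda+2)}$ and integrating, and using that \eqref{lowapr eq diff} makes $\tilde\ce_{N+2,2}$ nonincreasing, gives $\tilde\ce_{N+2,2}(t)\ls\min\{\tilde\ce_{N+2,2}(0),\,(1+t)^{-(2+\lambda)}\ce_{2N}(0)\}$; since $\tilde\ce_{N+2,2}(0)\ls\ce_{N+2,2}(0)\ls\ce_{2N}(0)$ by Lemma \ref{interpolation temp 2}, multiplying through by $(1+t)^{2+\lambda}$ and taking the supremum over $r\in[0,t]$ yields $\sup_{0\le r\le t}(1+r)^{2+\lambda}\ce_{N+2,2}(r)\ls\ce_{2N}(0)$.

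I expect the main obstacle to be the bookkeeping in the first step rather than any new analytic input: one must verify that each factor $\ce_{2N}^{\theta}$ on the right of Propositions \ref{lower energy estimate}, \ref{lower comparison dissipation estimate} and \ref{lower comparison energy estimate} can indeed be absorbed, which pins down the smallness of $\delta$ in terms of all the universal constants involved, and that the cross term $2\int_{\Omega}K\dt^{N+1}pF^2$ stays coercively dominated so that $\tilde\ce_{N+2,2}$ is a genuine energy functional (this is precisely where $\g_N(T)\ls\delta$ enters through the preceding lemma). Once the closed inequality \eqref{lowapr eq diff} and the interpolation exponent $(\lambda+3)/(\lambda+2)$ are in place, the emergence of the precise rate $2+\lambda$ and the ODE integration are routine.
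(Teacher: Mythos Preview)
Your proposal is correct and follows essentially the same route as the paper, which simply defers to the argument of theorem 9.7 in \cite{book9} together with Lemma \ref{lower interpolation estimate improved 2}. You have identified exactly the right ingredients: the coercivity lemma for the modified energy $\tilde\ce_{N+2,2}$, Propositions \ref{lower energy estimate}, \ref{lower comparison dissipation estimate}, \ref{lower comparison energy estimate} to close the differential inequality, the interpolation $\bar\ce_{N+2,2}\ls\ce_{2N}^{1/(\lambda+3)}\d_{N+2,2}^{(\lambda+2)/(\lambda+3)}$ to convert it into a Bernoulli ODE with exponent $1+\tfrac{1}{\lambda+2}$, and Theorem \ref{higher apriori estimate} to control $\sup\ce_{2N}$; the decay rate $2+\lambda$ then drops out of the ODE integration exactly as you describe.
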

\begin{proof}
utilizing lemma \ref{lower interpolation estimate improved 2}, we
may employ the same argument as theorem 9.7 in \cite{book9} to show
this. Also notice that all the necessary lemmas have been recovered
in the section of energy estimates and comparison estimates.
\end{proof}
Next, we will give an interpolation argument to achieve the decaying
results for energy with different minimum count.
\begin{theorem}\label{lower apriori estimate improved}
There exists a universal constant $0<\delta<1$ such that if
$\g_N(T)\ls\delta$,  then
\begin{eqnarray}
\sup_{0\leq r\leq t}(1+r)^{1+\lambda}\ce_{N+2,1}(r)\ls\ce_{2N}(0)
\end{eqnarray}
for all $0\leq t\leq T$.
\end{theorem}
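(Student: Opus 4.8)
The plan is to deduce the decay of $\ce_{N+2,1}$ from the already-established decay of $\ce_{N+2,2}$ (Theorem \ref{lower apriori estimate}) by interpolating against the uniformly bounded $2N$-level energy $\ce_{2N}$ (controlled by Theorem \ref{higher apriori estimate}). The crucial ingredient is Lemma \ref{lower interpolation estimate improved 3}, which gives
\begin{equation}
\ce_{N+2,1}\ls\ce_{N+2,2}^{(\lambda+1)/(\lambda+2)}\ce_{2N}^{1/(\lambda+2)}.
\end{equation}
So first I would invoke $\g_N(T)\ls\delta$ together with Theorem \ref{higher apriori estimate} to bound $\sup_{0\le r\le t}\ce_{2N}(r)\ls\ce_{2N}(0)+\f_{2N}(0)\ls\delta$, and in particular $\ce_{2N}(r)\le 1$ on $[0,T]$ after shrinking $\delta$. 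This makes the factor $\ce_{2N}^{1/(\lambda+2)}$ harmless, at worst contributing a bounded constant (or, if one is slightly more careful, a factor $\ce_{2N}(0)^{1/(\lambda+2)}$, which is what we want on the right-hand side).

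Next I would feed in the polynomial-in-time decay from Theorem \ref{lower apriori estimate}: for each $r\in[0,T]$,
\begin{equation}
\ce_{N+2,2}(r)\ls(1+r)^{-(2+\lambda)}\ce_{2N}(0).
\end{equation}
Substituting both bounds into Lemma \ref{lower interpolation estimate improved 3} yields, for $0\le r\le t$,
\begin{equation}
\ce_{N+2,1}(r)\ls\bigg((1+r)^{-(2+\lambda)}\ce_{2N}(0)\bigg)^{(\lambda+1)/(\lambda+2)}\ce_{2N}(0)^{1/(\lambda+2)}
= (1+r)^{-(\lambda+1)}\ce_{2N}(0).
\end{equation}
Here the arithmetic is the only thing to check: the exponent on $(1+r)$ is $-(2+\lambda)\cdot\frac{\lambda+1}{\lambda+2}=-(\lambda+1)$, exactly the claimed rate, and the powers of $\ce_{2N}(0)$ add to $\frac{\lambda+1}{\lambda+2}+\frac{1}{\lambda+2}=1$. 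Multiplying through by $(1+r)^{1+\lambda}$ and taking the supremum over $r\in[0,t]$ gives $\sup_{0\le r\le t}(1+r)^{1+\lambda}\ce_{N+2,1}(r)\ls\ce_{2N}(0)$, which is the assertion.

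I do not expect a genuine obstacle here: the statement is essentially a bookkeeping corollary of the three prior results, and the smallness hypothesis $\g_N(T)\ls\delta$ is invoked only to make those results applicable and to force $\ce_{2N}\le 1$. The one place to be mildly attentive is the handling of the $\ce_{2N}^{1/(\lambda+2)}$ factor — one must decide whether to absorb it as a universal constant (using $\ce_{2N}\le 1$) or to keep it as $\ce_{2N}(0)^{1/(\lambda+2)}$ so that the final bound reads $\ce_{2N}(0)$ rather than $\ce_{2N}(0)^{(\lambda+1)/(\lambda+2)}$; the latter is cleaner and is what the exponent arithmetic above produces, since $\ce_{N+2,2}$ in Theorem \ref{lower apriori estimate} is itself bounded by $\ce_{2N}(0)$. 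Beyond that, the proof is a direct substitution, so I would simply present it in three lines as above.
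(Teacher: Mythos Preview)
Your proposal is correct and follows essentially the same approach as the paper: invoke Lemma \ref{lower interpolation estimate improved 3}, substitute the decay from Theorem \ref{lower apriori estimate} and the uniform bound from Theorem \ref{higher apriori estimate}, and check the exponent arithmetic $(2+\lambda)\cdot\frac{\lambda+1}{\lambda+2}=\lambda+1$. The paper presents exactly this three-line computation, so there is nothing to add.
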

\begin{proof}
By lemma \ref{lower interpolation estimate improved 3}, we have that
\begin{eqnarray}
\ce_{N+2,1}\ls\ce_{N+2,2}^{(1+\lambda)/(2+\lambda)}\ce_{2N}^{1/(2+\lambda)}
\end{eqnarray}
Combining with theorem \ref{lower apriori estimate} and theorem
\ref{higher apriori estimate}, we can easily see that
\begin{eqnarray}
(1+r)^{1+\lambda}\ce_{N+2,1}(r)&\ls&\bigg((1+r)^{2+\lambda}\ce_{N+2,2}(r)\bigg)^{(1+\lambda)/(2+\lambda)}\bigg(\ce_{2N}(r)\bigg)^{1/(2+\lambda)}\\
&\ls&\bigg(\ce_{2N}(0)\bigg)^{(1+\lambda)/(2+\lambda)}\bigg(\ce_{2N}(0)\bigg)^{1/(2+\lambda)}=\ce_{2N}(0)\nonumber
\end{eqnarray}
Hence, our result easily follows.
\end{proof}
Finally, we can sum up all above to achieve the a priori estimate.
\begin{theorem}\label{apriori estimate}
There exists a universal constant $0<\delta<1$ such that if
$\g_N(T)\ls\delta$,  then
\begin{eqnarray}
\g_N(t)\ls\ce_{2N}(0)+\f_{2N}(0)
\end{eqnarray}
for all $0\leq t\leq T$.
\end{theorem}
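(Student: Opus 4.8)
The plan is to recognize that Theorem \ref{apriori estimate} is the synthesis step: the quantity $\g_N(t)$ defined in (\ref{definition 2}) is a sum of four suprema, and each of these has already been controlled individually in the preceding subsection. So the proof amounts to invoking Theorems \ref{higher apriori estimate}, \ref{lower apriori estimate} and \ref{lower apriori estimate improved} under a single smallness hypothesis and adding the resulting bounds. First I would set $\delta$ to be the minimum of the (universal) smallness thresholds appearing in those three theorems together with the lemmas feeding them, so that the single assumption $\g_N(T)\ls\delta$ simultaneously activates all of them on $[0,T]$.

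Then I would bound the four pieces one at a time. The term $\sup_{0\le r\le t}\ce_{2N}(r)$, along with $\sup_{0\le r\le t}\f_{2N}(r)/(1+r)$, is controlled by $\ce_{2N}(0)+\f_{2N}(0)$ directly by Theorem \ref{higher apriori estimate} (which also supplies $\int_0^t\d_{2N}$, not needed here). The weighted term with $m=2$, namely $\sup_{0\le r\le t}(1+r)^{2+\lambda}\ce_{N+2,2}(r)$, is bounded by $\ce_{2N}(0)$ via Theorem \ref{lower apriori estimate}. The weighted term with $m=1$, namely $\sup_{0\le r\le t}(1+r)^{1+\lambda}\ce_{N+2,1}(r)$, is bounded by $\ce_{2N}(0)$ via Theorem \ref{lower apriori estimate improved}. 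Summing the four estimates and using $\ce_{2N}(0)\le\ce_{2N}(0)+\f_{2N}(0)$ yields $\g_N(t)\ls\ce_{2N}(0)+\f_{2N}(0)$ for every $t\in[0,T]$, which is the claim.

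The only point requiring care is the consistency of the constants: every invoked theorem is conditional on the same structural smallness $\g_N(T)\ls\delta$, and $\g_N$ never appears on the estimated side of any of them, so no circularity arises. Consequently there is no genuine analytic obstacle at this stage — all the substantive work (the energy identities in geometric-structure and perturbed-linear form, the comparison estimates in Propositions \ref{lower comparison dissipation estimate}--\ref{higher comparison energy estimate} linking horizontal and full derivatives, and above all the interpolation lemmas \ref{lower interpolation estimate improved 1}--\ref{lower interpolation estimate improved 3} that drive the algebraic decay) has already been done. The function of this theorem is simply to package those bounds in the form needed to close the global existence argument by a standard continuity/bootstrap scheme: if $\g_N(T)$ is a priori at most $\delta$, then it is in fact $\ls\ce_{2N}(0)+\f_{2N}(0)$, and taking the initial smallness $\kappa$ small relative to $\delta$ lets one continue the solution and iterate to $T=\infty$.
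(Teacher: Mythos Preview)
Your proposal is correct and follows exactly the same approach as the paper: the proof there simply states that summing the conclusions of Theorems \ref{higher apriori estimate}, \ref{lower apriori estimate} and \ref{lower apriori estimate improved} yields the result. Your write-up is in fact more detailed than the paper's one-line synthesis, correctly identifying which theorem controls each of the four suprema in the definition of $\g_N(t)$ and noting the need for a common smallness threshold $\delta$.
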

\begin{proof}
To sum up the conclusion of theorem \ref{higher apriori estimate} ,
\ref{lower apriori estimate} and \ref{lower apriori estimate
improved} easily implies this.
\end{proof}

\subsection{Global Wellposedness}

Since we introduce several terms related to Riesz potential in the a
priori estimates, we have to revise our local wellposedness result
here to adopt this, which is perfectly completed in theorem 10.7 of
\cite{book9}.
\begin{theorem}\label{local wellposedness}
Suppose that initial data are given satisfying the $2N^{th}$
compatible condition and
$\hm{u_0}{4N}^2+\hms{\ee}{4N+1/2}{\Sigma}^2+\hm{\i
u_0}{0}^2+\hms{\i\e_0}{0}{\Sigma}^2<\infty$. Let $\epsilon>0$. There
exists a $\delta_0=\delta_0(\epsilon)$ and a
\begin{eqnarray}
T_0=C(\epsilon)\min\bigg\{1,\frac{1}{\hms{\ee}{4N+1/2}{\Sigma}^2}\bigg\}>0
\end{eqnarray}
where $C(\epsilon)>0$ is a constant depending on $\epsilon$, such
that if $0<T<T_0$ and
$\hm{u_0}{4N}^2+\hms{\ee}{4N}{\Sigma}^2\leq\delta_0$, then there
exists a unique solution $(u,p,\e)$ to system (\ref{transform}) on
the interval $[0,T]$ that achieves the initial data. The solution
obeys the estimates
\begin{eqnarray}
\sup_{0\leq t\leq T}\ce_{2N}(t)+\sup_{0\leq t\leq T}\hm{\i
p(t)}{0}^2+\int_0^T\d_{2N}(t)\ud{t}\\
+\int_0^T\bigg(\nm{\dt^{2N+1}u(t)}_{(_0H^1)^{\ast}}^2+\hm{\dt^{2N}p(t)}{0}^2\bigg)\ud{t}\ls\epsilon+\hm{\i
u_0}{0}^2+\hms{\i\ee}{0}{\Sigma}^2\nonumber
\end{eqnarray}
and
\begin{eqnarray}
\sup_{0\leq t\leq T}\bigg(\ce_{2N}(t)-\hm{\i
u}{0}^2-\hms{\i\e}{0}{\Sigma}^2\bigg)\leq\epsilon
\end{eqnarray}
\begin{eqnarray}
\sup_{0\leq t\leq T}\f_{2N}(t)\leq C\f_{2N}(0)+\epsilon
\end{eqnarray}
\end{theorem}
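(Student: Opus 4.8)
The plan is to obtain this statement as a quantitative refinement of the large-data local wellposedness of Section~2. First I would invoke Theorem~\ref{wellposedness}, with its free parameter set equal to $2N$ and with $\k_0=\hm{u_0}{4N}^2+\hms{\ee}{4N+1/2}{\Sigma}^2$ (finite by hypothesis, together with the $2N^{th}$ compatible condition), to produce a unique solution triple $(u,p,\e)$ on an interval $[0,T]$ achieving the initial data and obeying $\k(u,p)+\k(\e)\ls P(\k_0)$ and $J(t)\geq\delta/2$, where $P(0)=0$ and the $\k$-quantities are those of Section~2 at level $2N$. The key observation is that the global-formulation quantities $\ce_{2N}$, $\d_{2N}$, $\f_{2N}$ agree with the corresponding $\k$-quantities up to the Riesz-potential terms $\hm{\i u}{0}^2$, $\hms{\i\e}{0}{\Sigma}^2$, $\hm{\i p}{0}^2$ and up to trading full spatial derivatives for horizontal ones, which can only decrease a norm. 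Thus the local estimate already yields
\[
\sup_{0\le t\le T}\Big(\ce_{2N}(t)-\hm{\i u}{0}^2-\hms{\i\e}{0}{\Sigma}^2\Big)
+\int_0^T\d_{2N}(t)\,\ud{t}\ls P(\k_0),
\]
together with the corresponding integrated bounds on the top-order temporal derivatives $\dt^{2N+1}u$ and $\dt^{2N}p$ (recovered from the weak formulation as in Proposition~\ref{lagrange estimate}); choosing $\delta_0=\delta_0(\epsilon)$ so small that $P(\delta_0)\le\epsilon$ --- and noting that the smallness is imposed only on the lower norm $\hm{u_0}{4N}^2+\hms{\ee}{4N}{\Sigma}^2$, while $\f_{2N}(0)$ is allowed to be large --- gives the ``gap'' estimate $\sup_{0\le t\le T}\big(\ce_{2N}(t)-\hm{\i u}{0}^2-\hms{\i\e}{0}{\Sigma}^2\big)\le\epsilon$.

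Next I would upgrade the Riesz-potential information, which is genuinely new since $\i$ is an operator of negative differential order and $\hm{\i u}{0}$ is not controlled by the ordinary energy. Once $(u,p,\e)$ solves (\ref{transform}), Lemma~\ref{energy temp 6} gives
\[
\hm{\i u(t)}{0}^2+\hms{\i\e(t)}{0}{\Sigma}^2+\int_0^t\hm{\dm\i u}{0}^2
\ls\ce_{2N}(0)+\int_0^t\sqrt{\ce_{2N}}\,\d_{2N},
\]
Lemma~\ref{interpolation temp 3} controls $\hms{\i\dt\e}{0}{\Sigma}^2$ by $\hm{u}{0}^2$, and the recorded bound $\hm{\i p}{0}^2\ls\ce_{2N}$ handles the pressure; the fractional nonlinear products $\i G^i$ that appear in the evolution are absorbed by Lemmas~\ref{riesz potential nonlinear estimate 1} and~\ref{riesz potential nonlinear estimate 2}. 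Combining these with $\int_0^T\d_{2N}\ls P(\k_0)$ from the first step, the initial bound $\ce_{2N}(0)\ls\k_0+\hm{\i u_0}{0}^2+\hms{\i\ee}{0}{\Sigma}^2$, and absorbing $\sqrt{\ce_{2N}}\int_0^T\d_{2N}$ using the smallness of $\ce_{2N}(0)$, produces the first displayed estimate of the theorem with right-hand side $\epsilon+\hm{\i u_0}{0}^2+\hms{\i\ee}{0}{\Sigma}^2$ after shrinking $\delta_0$ once more if necessary.

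For the high-regularity surface bound $\f_{2N}(t)=\hms{\e(t)}{4N+1/2}{\Sigma}^2$ I would return to the transport equation $\dt\e+u_1\p_1\e+u_2\p_2\e=u_3$ and apply the transport estimate of Lemma~\ref{Appendix transport estimate}. Since $u$ is already known with $\int_0^T\hms{u}{4N+1/2}{\Sigma}\,\ud{t}\ls\sqrt{T}\,P(\k_0)$ by the elliptic/energy estimates of Section~2 and Cauchy--Schwarz, Gronwall gives
\[
\sup_{0\le t\le T}\f_{2N}(t)\le\exp\Big(C\!\int_0^T\hms{u}{4N+1/2}{\Sigma}\Big)
\Big(\sqrt{\f_{2N}(0)}+\int_0^T\hms{u_3}{4N+1/2}{\Sigma}\Big)^2,
\]
and a careful tracking of constants shows both the exponent and the additive error behave like $\sqrt{T}$ times quantities bounded by $\f_{2N}(0)+P(\k_0)$; hence taking $T_0=C(\epsilon)\min\{1,\hms{\ee}{4N+1/2}{\Sigma}^{-2}\}$ keeps $\exp(\cdot)$ bounded and the error below $\epsilon$, which is exactly what forces the stated $1/\hms{\ee}{4N+1/2}{\Sigma}^{2}$ scaling of $T_0$, and yields $\sup_{0\le t\le T}\f_{2N}(t)\le C\f_{2N}(0)+\epsilon$. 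Uniqueness is inherited verbatim from Theorem~\ref{wellposedness}, the Riesz-potential quantities being determined by $(u,p,\e)$ and playing no role in the contraction argument.

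I expect the main obstacle to be the propagation of the nonlocal Riesz-potential bounds: one must analyse the time-evolution of $\hm{\i u}{0}^2$ and $\hms{\i\e}{0}{\Sigma}^2$ directly and verify that each fractional nonlinear product $\i G^i$ is genuinely quadratic and therefore small, so that the a priori scheme closes; equally delicate is making the dependence of $T_0$ on $\hms{\ee}{4N+1/2}{\Sigma}$ and of $\delta_0$ on $\epsilon$ fully explicit by re-examining where $T$ and the data enter the transport and energy estimates. This explicit bookkeeping is precisely what distinguishes this theorem from a direct application of the large-data local wellposedness of Section~2.
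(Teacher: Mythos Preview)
The paper does not supply its own proof of this theorem; it merely records the statement and defers to theorem~10.7 of \cite{book9}, so there is no in-paper argument to compare against directly. Your outline is in the right spirit and correctly isolates the two ingredients that go beyond a bare local-existence result: propagation of the Riesz-potential norms $\hm{\i u}{0}^2$, $\hms{\i\e}{0}{\Sigma}^2$, $\hm{\i p}{0}^2$, and the explicit time scale $T_0\sim\f_{2N}(0)^{-1}$ coming from the transport estimate.

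There is, however, a real gap at the step ``choosing $\delta_0$ so small that $P(\delta_0)\le\epsilon$''. The bound you inherit from Theorem~\ref{wellposedness} is $P(\k_0)$ with $\k_0=\hm{u_0}{4N}^2+\hms{\ee}{4N+1/2}{\Sigma}^2$, and the second summand is exactly $\f_{2N}(0)$, which is \emph{not} assumed small --- only $\hm{u_0}{4N}^2+\hms{\ee}{4N}{\Sigma}^2\le\delta_0$ is. You acknowledge this in parentheses but do not resolve it: shrinking $\delta_0$ cannot force $P(\k_0)\le\epsilon$ when $\k_0$ contains a large term. The substantive work that \cite{book9} carries out is to revisit the local energy and elliptic machinery and separate the dependencies so that the top norm $\hms{\e}{4N+1/2}{\Sigma}$ enters \emph{only} through the transport bound on $\f_{2N}$ and through $T_0$, while $\ce_{2N}$ and $\int\d_{2N}$ are controlled by the lower norm $\hms{\ee}{4N}{\Sigma}$ plus nonlinear terms that close under smallness of $\delta_0$ alone. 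That decoupling does not fall out of a black-box application of Theorem~\ref{wellposedness}.

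A secondary issue: Section~2 is built on the $\epsilon$-Poisson flattening $\Phi^{\epsilon}$ of (\ref{map}), whereas Section~3 (and hence this theorem) uses the standard Poisson map $\Phi$ of (\ref{map_}); the transformed systems (\ref{transform}) are literally different. If you wish to quote Section~2 you must either redo the local theory for the map (\ref{map_}) under the small-data hypothesis, or pass through the moving domain $\Omega(t)$ to identify the two solutions --- a step you should make explicit.
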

Finally, we come to the conclusion for global wellposedness. Since
we have recovered the a priori estimate and do not improve the
argument for this part, the reader may directly refer to theorem
11.1 and 11.2 in \cite{book9}, which completes the whole proof.
\begin{theorem}\label{global wellposedness}
Suppose that the initial data $(u_0,\ee)$ satisfy the $2N^{th}$
compatible condition. There exists a $\kappa>0$ such that if
$\hm{u_0}{4N}^2+\hms{\ee}{4N+1/2}{\Sigma}^2\leq\kappa$, then there
exists a unique solution $(u,p,\e)$ on the interval $[0,\infty)$
that achieves the initial data. The solution obeys the estimate
\begin{eqnarray}
\g_N(\infty)\leq
C(\hm{u_0}{4N}^2+\hms{\ee}{4N+1/2}{\Sigma}^2)<C\kappa
\end{eqnarray}
\end{theorem}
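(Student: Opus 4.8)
The plan is to prove Theorem~\ref{global wellposedness} by the standard continuation argument that feeds the local wellposedness result (Theorem~\ref{local wellposedness}) into the a priori estimate (Theorem~\ref{apriori estimate}), so that the controlling quantity $\g_N$ never reaches the critical threshold. First I would fix the universal constant $0<\delta<1$ provided by Theorem~\ref{apriori estimate} (and by Theorems~\ref{higher apriori estimate}, \ref{lower apriori estimate} and~\ref{lower apriori estimate improved}). Then I would choose $\kappa>0$ so small that the constructed higher-order data obey $\ce_{2N}(0)+\f_{2N}(0)\ls\hm{u_0}{4N}^2+\hms{\ee}{4N+1/2}{\Sigma}^2\le\kappa$, together with the accompanying Riesz-potential initial quantities $\hm{\i u_0}{0}^2+\hms{\i\ee}{0}{\Sigma}^2$ that also enter $\ce_{2N}(0)$, so that the universal constant $C$ of Theorem~\ref{apriori estimate} gives $C(\ce_{2N}(0)+\f_{2N}(0))\le\delta/2$. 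Shrinking $\kappa$ further to meet the smallness hypothesis of Theorem~\ref{local wellposedness}, that theorem produces a solution on some $[0,T_1]$ with $\g_N(T_1)\le\delta$.

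The heart of the argument is a bootstrap in time. Let $T_\ast$ denote the supremum of all $T>0$ for which the solution exists on $[0,T]$ and satisfies $\g_N(T)\le\delta$; by the previous step $T_\ast>0$. On $[0,T_\ast)$ the hypothesis of Theorem~\ref{apriori estimate} is in force, so $\g_N(t)\le C(\ce_{2N}(0)+\f_{2N}(0))\le\delta/2$ for all $t<T_\ast$, which strictly improves the standing bound. Suppose for contradiction that $T_\ast<\infty$. Choosing $t_0<T_\ast$ close to $T_\ast$, I would restart from the state at time $t_0$: the bound $\ce_{2N}(t_0)\le\delta/2$ controls $\hm{u(t_0)}{4N}^2$, $\hms{\e(t_0)}{4N}{\Sigma}^2$ and the Riesz-potential data, while the $2N^{th}$ compatibility conditions at $t_0$ hold automatically, being inherited by differentiating the equations and boundary conditions of the solution already constructed. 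Theorem~\ref{local wellposedness} therefore continues the solution beyond $t_0$, and by continuity of $\g_N$ in time together with $\g_N\le\delta/2$ up to $T_\ast$ one gets $\g_N\le\delta$ on an interval strictly past $T_\ast$, contradicting maximality. Hence $T_\ast=\infty$, the solution exists on $[0,\infty)$, and passing to the limit $t\to\infty$ in $\g_N(t)\le C(\ce_{2N}(0)+\f_{2N}(0))$ yields $\g_N(\infty)\le C(\hm{u_0}{4N}^2+\hms{\ee}{4N+1/2}{\Sigma}^2)<C\kappa$. Uniqueness on $[0,\infty)$ follows by patching the local uniqueness statements on overlapping intervals.

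The main obstacle I anticipate is that the continuation steps are not of uniform length. The existence time in Theorem~\ref{local wellposedness} scales like $\min\{1,\hms{\ee}{4N+1/2}{\Sigma}^{-2}\}$, whereas $\f_{2N}(r)=\hms{\e(r)}{4N+1/2}{\Sigma}^2$ is controlled by $\g_N$ only up to the factor $(1+r)$; thus at a restart time $t_k$ one has $\f_{2N}(t_k)\ls(1+t_k)\delta$ and the next step has length merely $\gs 1/(1+t_k)$. One must then verify that the times $t_k$ generated by successive restarts tend to infinity; this is immediate, since if they remained bounded by some $M$ each step would have length $\gs 1/(1+M)$ and their sum---the total elapsed time---would diverge, a contradiction. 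Hence every finite horizon is crossed after finitely many restarts and $T_\ast=\infty$ genuinely follows. I expect this bookkeeping---reconciling the a priori control of $\f_{2N}/(1+r)$ with the shrinking local time, and checking that the compatibility conditions persist at each restart---to be the delicate point; the rest is a direct invocation of the results already proved, following Theorems~11.1 and~11.2 of~\cite{book9}.
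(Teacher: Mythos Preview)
Your proposal is correct and matches the paper's approach: the paper itself gives no independent proof of this theorem but simply refers the reader to Theorems~11.1 and~11.2 of \cite{book9}, which carry out precisely the continuation argument you describe---local existence (Theorem~\ref{local wellposedness}) feeding the a priori bound (Theorem~\ref{apriori estimate}) in a bootstrap, with the shrinking-step issue handled exactly as you outline via the growth $\f_{2N}(r)\ls(1+r)\g_N(r)$ and the divergence of $\sum 1/(1+t_k)$. Your identification of this bookkeeping as the only delicate point is accurate.
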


\appendix

\makeatletter
\renewcommand \theequation {%
A.%
\ifnum\c@subsection>\z@\@arabic\c@subsection.%
\fi\@arabic\c@equation} \@addtoreset{equation}{section}
\@addtoreset{equation}{subsection} \makeatother

\section{Analytic Tools}

\subsection{Products in Sobolev Space}

We will need some estimates of the products of functions in Sobolev
spaces. Since these results have been proved in lemma A.1 and lemma
A.2 of \cite{book1}, we will present the statement of the lemmas
here without proof.
\begin{lemma}\label{Appendix product}
Let $U$ denote either $\Sigma$ or $\Omega$.
\begin{enumerate}
\item
Let $0\leq r\leq s_1\leq s_2$ be such that $s_1>n/2$. Let $f\in
H^{s_1}(U)$, $g\in H^{s_2}(U)$. Then $fg\in H^r(U)$ and
\begin{equation}
\hm{fg}{r}\ls \hm{f}{s_1}\hm{g}{s_2}
\end{equation}
\item
Let $0\leq r\leq s_1\leq s_2$ be such that $s_2>r+n/2$. Let $f\in
H^{s_1}(U)$, $g\in H^{s_2}(U)$. Then $fg\in H^r(U)$ and
\begin{equation}
\hm{fg}{r}\ls \hm{f}{s_1}\hm{g}{s_2}
\end{equation}
\item
Let $0\leq r\leq s_1\leq s_2$ be such that $s_2>r+n/2$. Let
$f\hs{-r}$, $g\hs{s_2}$. Then $fg\hs{-s_1}$ and
\begin{equation}
\hm{fg}{-s_1}\ls \hm{f}{-r}\hm{g}{s_2}
\end{equation}
\end{enumerate}
\end{lemma}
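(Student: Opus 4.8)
The plan is to prove all three estimates on the Fourier side, reducing in each case to the model domain $R^n$. If $U=\Sigma=R^2$ one works directly with the Fourier transform; if $U=\Sigma=(L_1\mathbb{T})\times(L_2\mathbb{T})$ the argument is verbatim with integrals over $R^n$ replaced by sums over the dual lattice; and if $U=\Omega$ one first applies a bounded linear extension operator $E\colon H^s(\Omega)\to H^s(R^3)$ — one fixed operator serving simultaneously for all relevant values of $s$ — and uses $\hm{fg}{r}\le\hm{(Ef)(Eg)}{r}$, since $(Ef)(Eg)$ restricts to $fg$ on $\Omega$. Throughout I would rely on two elementary facts: $|\xi|\le|\xi-\eta|+|\eta|$ gives $\langle\xi\rangle\lesssim\max\{\langle\xi-\eta\rangle,\langle\eta\rangle\}$ and hence $\langle\xi\rangle^{s}\lesssim\langle\xi-\eta\rangle^{s}+\langle\eta\rangle^{s}$ for $s\ge0$; and $\langle\cdot\rangle^{-2a}\in L^1(R^n)$ iff $2a>n$. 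With $\widehat{fg}=\hat f\ast\hat g$ and Young's convolution inequality, these reduce everything to bookkeeping of weights.

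For part (1), the key point is that $H^{s_1}(R^n)$ is a Banach algebra when $s_1>n/2$. Splitting $\langle\xi\rangle^{s_1}\lesssim\langle\xi-\eta\rangle^{s_1}+\langle\eta\rangle^{s_1}$ in $\widehat{fg}$, the two terms are pointwise dominated by $\big((\langle\cdot\rangle^{s_1}|\hat f|)\ast|\hat g|\big)(\xi)$ and $\big(|\hat f|\ast(\langle\cdot\rangle^{s_1}|\hat g|)\big)(\xi)$; taking $L^2_\xi$ norms and using $\nm{F\ast G}_{L^2}\le\nm{F}_{L^2}\nm{G}_{L^1}$ bounds them by $\hm{f}{s_1}\nm{\hat g}_{L^1}$ and $\nm{\hat f}_{L^1}\hm{g}{s_1}$, while $\nm{\hat f}_{L^1}\le\nm{\langle\cdot\rangle^{-s_1}}_{L^2}\hm{f}{s_1}\lesssim\hm{f}{s_1}$ since $2s_1>n$. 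This gives $\hm{fg}{s_1}\lesssim\hm{f}{s_1}\hm{g}{s_1}$, whence $\hm{fg}{r}\le\hm{fg}{s_1}$ (using $r\le s_1$ and $\langle\xi\rangle\ge1$) and $\hm{g}{s_1}\le\hm{g}{s_2}$ finish the case.

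Part (2) is the genuinely different regime — $s_1$ may be as small as $r$, so one cannot route through the algebra property and must instead borrow the missing derivatives from $g$, which is exactly where $s_2>r+n/2$ is used. Here I would estimate $\hm{fg}{r}$ directly, splitting $\langle\xi\rangle^{r}\lesssim\langle\xi-\eta\rangle^{r}+\langle\eta\rangle^{r}$. In the first term, pair $\langle\xi-\eta\rangle^{r}|\hat f(\xi-\eta)|$ with $\langle\eta\rangle^{-s_2}\cdot\langle\eta\rangle^{s_2}|\hat g(\eta)|$, apply Cauchy–Schwarz in $\eta$, square, integrate in $\xi$, and use $\nm{F\ast G}_{L^1}\le\nm{F}_{L^1}\nm{G}_{L^1}$ to get a bound by $\nm{\langle\cdot\rangle^{2r}|\hat f|^{2}}_{L^1}\,\nm{\langle\cdot\rangle^{-2s_2}}_{L^1}\,\hm{g}{s_2}^{2}=\hm{f}{r}^{2}\hm{g}{s_2}^{2}$, the integrability of $\langle\cdot\rangle^{-2s_2}$ being $2s_2>n$, a consequence of $s_2>r+n/2\ge n/2$. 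In the second term, write $\langle\eta\rangle^{r}|\hat g(\eta)|=\langle\eta\rangle^{r-s_2}\cdot\langle\eta\rangle^{s_2}|\hat g(\eta)|$, where $\langle\cdot\rangle^{2(r-s_2)}\in L^1$ precisely because $s_2-r>n/2$; the same Cauchy–Schwarz and Young steps give a bound by $\hm{f}{0}^{2}\hm{g}{s_2}^{2}\le\hm{f}{r}^{2}\hm{g}{s_2}^{2}$, using $r\ge0$. With $\hm{f}{r}\le\hm{f}{s_1}$, adding the two terms proves (2).

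Finally, part (3) follows from (2) by duality: for $\phi$ in a dense subclass with $\hm{\phi}{s_1}\le1$ one has $\langle fg,\phi\rangle=\langle f,g\phi\rangle$ and hence $|\langle fg,\phi\rangle|\le\hm{f}{-r}\hm{g\phi}{r}$, while the product $g\phi$ satisfies exactly the hypotheses of part (2) (with $\phi\in H^{s_1}$, $g\in H^{s_2}$), so $\hm{g\phi}{r}\lesssim\hm{\phi}{s_1}\hm{g}{s_2}\lesssim\hm{g}{s_2}$; taking the supremum over $\phi$ gives $\hm{fg}{-s_1}\lesssim\hm{f}{-r}\hm{g}{s_2}$. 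The main point requiring care is keeping track of which Sobolev index supplies the integrable weight $\langle\cdot\rangle^{-2a}$ with $2a>n$ — this is where the strict inequalities $s_1>n/2$ in (1) and $s_2>r+n/2$ in (2)–(3) get consumed — and, for $U=\Omega$, checking that one fixed extension operator serves all of $r,s_1,s_2$ at once. I do not expect a genuine obstacle: this is the classical multiplication lemma in Sobolev spaces, and the only hard part is organizing the frequency splitting so the hypotheses are used sharply.
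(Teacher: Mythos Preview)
Your argument is correct and follows the standard route: the Banach-algebra property of $H^{s_1}$ for $s_1>n/2$ gives (1), a direct Cauchy--Schwarz/Young frequency-splitting handles (2), and duality reduces (3) to (2). The one caveat worth flagging is that part (3) in the statement is only asserted on $\Sigma$ (note the notation $f\hs{-r}$), so the extension-operator discussion for $\Omega$ is not needed there; your duality argument on $R^2$ or the torus is exactly what is required.

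The paper itself does not prove this lemma: it is quoted from Guo--Tice \cite{book1} with the remark ``Since these results have been proved in lemma A.1 and lemma A.2 of \cite{book1}, we will present the statement of the lemmas here without proof.'' So there is no paper proof to compare against; what you have written is precisely the classical argument one would expect and would serve as a self-contained replacement for the citation.
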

\begin{lemma}
Suppose that $f\in C^1(\Sigma)$ and $g\hs{1/2}$. Then $fg\hs{1/2}$
and
\begin{equation}
\hm{fg}{1/2}\ls\nm{f}_{C^1}\hm{g}{1/2}
\end{equation}
\end{lemma}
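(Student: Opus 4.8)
The plan is to derive the bound from the fact that multiplication by a $C^1$ function is a bounded operator both on $L^2(\Sigma)$ and on $H^1(\Sigma)$, and then to interpolate. Throughout I read $\nm{f}_{C^1}=\lnm{f}{\infty}+\lnm{\nabla f}{\infty}$, which is the quantity that appears on the right-hand side; this makes sense for both $\Sigma=\mathbb{R}^2$ and $\Sigma=(L_1\mathbb{T})\times(L_2\mathbb{T})$.

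First I would record the two endpoint estimates for the linear operator $M_f:g\mapsto fg$. The $L^2$ bound is immediate: $\hm{fg}{0}\leq\lnm{f}{\infty}\hm{g}{0}\leq\nm{f}_{C^1}\hm{g}{0}$. The $H^1$ bound follows from the Leibniz rule $\nabla(fg)=g\,\nabla f+f\,\nabla g$, which gives $\hm{fg}{1}^2\ls\lnm{f}{\infty}^2\hm{g}{0}^2+\lnm{\nabla f}{\infty}^2\hm{g}{0}^2+\lnm{f}{\infty}^2\hm{\nabla g}{0}^2\ls\nm{f}_{C^1}^2\hm{g}{1}^2$; note that $f\in C^1(\Sigma)$ is exactly what is needed to guarantee $\nabla(fg)\in L^2$. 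Then I would invoke the standard interpolation property: since $M_f$ is bounded $L^2\to L^2$ with norm $\leq\nm{f}_{C^1}$ and $H^1\to H^1$ with norm $\ls\nm{f}_{C^1}$, it is bounded on $[L^2(\Sigma),H^1(\Sigma)]_{1/2}$ with norm bounded by the geometric mean of the two, i.e.\ $\ls\nm{f}_{C^1}^{1/2}\nm{f}_{C^1}^{1/2}=\nm{f}_{C^1}$. Using the identification $[L^2(\Sigma),H^1(\Sigma)]_{1/2}=H^{1/2}(\Sigma)$ with equivalent norms (valid for $\mathbb{R}^2$ and the torus) yields $fg\in H^{1/2}(\Sigma)$ together with the desired estimate.

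An alternative route is to work directly with the Gagliardo seminorm $[g]_{H^{1/2}(\Sigma)}^2\sim\int_\Sigma\int_\Sigma|g(x)-g(y)|^2|x-y|^{-3}\,dx\,dy$, writing $f(x)g(x)-f(y)g(y)=f(x)\big(g(x)-g(y)\big)+g(y)\big(f(x)-f(y)\big)$: the first piece contributes $\lnm{f}{\infty}^2[g]_{H^{1/2}}^2$, and for the second one splits the $x$-integral into $|x-y|\leq1$ (using $|f(x)-f(y)|\leq\lnm{\nabla f}{\infty}|x-y|$ so that $\int_{|x-y|\le1}|x-y|^{-1}\,dx<\infty$) and $|x-y|>1$ (using $|f(x)-f(y)|\leq2\lnm{f}{\infty}$ so that $\int_{|x-y|>1}|x-y|^{-3}\,dx<\infty$), giving a contribution $\ls\nm{f}_{C^1}^2\hm{g}{0}^2$. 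This lemma is elementary, so there is no serious obstacle; the only point that requires any attention is the non-integrability of the kernel $|x-y|^{-1}$ at infinity in the non-compact case, which the dyadic split above handles and which the interpolation argument sidesteps entirely — for that reason I would present the interpolation proof as the main line.
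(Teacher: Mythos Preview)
Your argument is correct: both the interpolation route and the Gagliardo-seminorm computation are standard and valid, and either one establishes the lemma in the stated generality (both $\Sigma=\mathbb{R}^2$ and the torus). The paper itself does not give a proof of this lemma at all --- it is stated in the appendix with the remark that the result is lemma~A.2 of \cite{book1} and the proof is omitted --- so there is nothing to compare against directly. Your write-up is self-contained and would serve perfectly well as the missing proof; the interpolation version is the cleaner of the two since, as you note, it sidesteps the need to split the kernel at infinity in the infinite case.
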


\subsection{Poincare-Type Inequality}

We need several Poincare-type inequality in $\Omega$. Since all
these lemmas have be proved in lemma A.10-A.13 in \cite{book9}, we
will only give the statement here without proof.
\begin{lemma}\label{appendix poincare 1}
It holds that
\begin{eqnarray}
\nm{f}_{L^2(\Omega)}^2\ls\nm{f}_{L^2(\Sigma)}^2+\nm{\p_3f}_{L^2(\Omega)}^2
\end{eqnarray}
for all $f\in H^1(\Omega)$. Also, if $f\in W^{1,\infty}(\Omega)$,
then
\begin{eqnarray}
\nm{f}_{L^{\infty}(\Omega)}^2\ls\nm{f}_{L^{\infty}(\Sigma)}^2+\nm{\p_3f}_{L^{\infty}(\Omega)}^2
\end{eqnarray}
\end{lemma}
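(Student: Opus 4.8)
The plan is to derive both inequalities from the fundamental theorem of calculus in the vertical variable $x_3$, using that $\Omega = \Sigma \times (-b_0,0)$ is a slab of fixed finite height $b_0$; the resulting constants depend only on $b_0$, which is a parameter of the problem, so they are universal in the sense of the paper's conventions.

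For the $L^2$ statement, I would first reduce to $f \in C^\infty(\overline{\Omega}) \cap H^1(\Omega)$ by density, using continuity of the two sides together with continuity of the trace map $f \mapsto f|_{\Sigma}$, which is bounded from $H^1(\Omega)$ into $H^{1/2}(\Sigma) \hookrightarrow L^2(\Sigma)$. For such $f$ and every $(x',x_3) \in \Omega$,
\begin{equation}
f(x',x_3) = f(x',0) - \int_{x_3}^{0} \p_3 f(x',s)\,\ud{s},
\end{equation}
whence $|f(x',x_3)|^2 \le 2|f(x',0)|^2 + 2\left(\int_{-b_0}^{0}|\p_3 f(x',s)|\,\ud{s}\right)^2$, and by Cauchy--Schwarz $\left(\int_{-b_0}^{0}|\p_3 f(x',s)|\,\ud{s}\right)^2 \le b_0 \int_{-b_0}^{0}|\p_3 f(x',s)|^2\,\ud{s}$. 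Integrating in $x_3$ over $(-b_0,0)$ and then in $x'$ over $\Sigma$ --- every step being pointwise in $x'$, so the horizontally infinite case $\Sigma = \mathbb{R}^2$ is handled exactly as the periodic one --- the first term yields $2b_0\nm{f}_{L^2(\Sigma)}^2$ and the second yields $2b_0^2\nm{\p_3 f}_{L^2(\Omega)}^2$, which is the claimed bound.

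For the $L^\infty$ statement, since the slab $\Omega$ is convex, any $f \in W^{1,\infty}(\Omega)$ admits a Lipschitz representative, so the identity above holds for every $(x',x_3)$; estimating directly $\left|\int_{x_3}^{0}\p_3 f(x',s)\,\ud{s}\right| \le b_0 \nm{\p_3 f}_{L^\infty(\Omega)}$ and $|f(x',0)| \le \nm{f}_{L^\infty(\Sigma)}$ gives $|f(x',x_3)| \le \nm{f}_{L^\infty(\Sigma)} + b_0\nm{\p_3 f}_{L^\infty(\Omega)}$ pointwise; taking the supremum over $\Omega$ and squaring finishes the proof. There is no genuine obstacle here; the only point deserving a word is the density/trace reduction in the $L^2$ case, which is entirely standard for the smooth slab.
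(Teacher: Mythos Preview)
Your proof is correct; the fundamental theorem of calculus in $x_3$ followed by Cauchy--Schwarz (for $L^2$) or a direct sup bound (for $L^\infty$) is exactly the standard argument for this slab Poincar\'e inequality. The paper itself does not supply a proof but simply cites \cite{book9}, so there is nothing to compare against here; your write-up is precisely what one would expect to find in that reference.
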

\begin{lemma}\label{appendix poincare 2}
It holds that $\hms{f}{0}{\Sigma}\ls\hm{\p_3f}{0}$ for $f\in
H^1(\Omega)$ such that $f=0$ on $\Sigma_b$. It also holds that
$\nm{f}_{L^{\infty}(\Sigma)}\ls\nm{\p_3f}_{L^{\infty}(\Omega)}$ for
$f\in W^{1,\infty}(\Omega)$ such that $f=0$ on $\Sigma_b$.
\end{lemma}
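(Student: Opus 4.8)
The plan is to obtain both inequalities by the fundamental theorem of calculus in the vertical variable, exploiting that $\Omega=\{x'\in\Sigma,\ -b_0<x_3<0\}$ has flat bottom $\Sigma_b=\{x_3=-b_0\}$ and flat top $\Sigma=\{x_3=0\}$, so that the vertical segment through each horizontal point $x'$ connects $\Sigma_b$ to $\Sigma$.

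First I would treat $f\in C^1(\bar\Omega)$ with $f|_{\Sigma_b}=0$. For each fixed $x'\in\Sigma$, the identity
\[
f(x',0)=\int_{-b_0}^{0}\p_3 f(x',x_3)\,\ud{x_3}
\]
together with Cauchy--Schwarz gives $|f(x',0)|^2\le b_0\int_{-b_0}^{0}|\p_3 f(x',x_3)|^2\,\ud{x_3}$. Integrating in $x'$ over $\Sigma$ and using Fubini yields
\[
\hms{f}{0}{\Sigma}^2\le b_0\int_{\Sigma}\int_{-b_0}^{0}|\p_3 f|^2\,\ud{x_3}\,\ud{x'}=b_0\,\hm{\p_3 f}{0}^2,
\]
which is the first claimed bound. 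For the $L^\infty$ statement the same representation gives directly $|f(x',0)|\le b_0\,\lnm{\p_3 f}{\infty}$ for every $x'$, hence $\lnms{f}{\infty}{\Sigma}\le b_0\,\lnm{\p_3 f}{\infty}$.

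To reach general $f\in H^1(\Omega)$ (resp. $W^{1,\infty}(\Omega)$) with vanishing trace on $\Sigma_b$, I would argue by density: approximate $f$ in $H^1(\Omega)$ by $C^1(\bar\Omega)$ functions, use continuity of the trace operators onto $\Sigma$ and $\Sigma_b$ to arrange the approximants to vanish on $\Sigma_b$ as well (a standard correction localized near the flat bottom), and pass to the limit in the estimate just proved; in the infinite case $\Sigma=\mathbb{R}^2$ one additionally approximates by compactly supported functions, which causes no trouble since the constant is uniform and only vertical derivatives appear on the right-hand side. The only mildly delicate point is this density/trace step — justifying that the pointwise fundamental-theorem-of-calculus identity is legitimate for functions whose zero boundary value on $\Sigma_b$ is understood only in the trace sense — but for the flat, smooth slab geometry here this is entirely routine, so I expect no real obstruction; everything else is a one-line computation.
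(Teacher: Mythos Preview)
Your proof is correct and is precisely the standard argument one expects for this Poincar\'e-type trace inequality on a flat slab. The paper itself does not supply a proof here at all; it simply cites the analogous lemma in \cite{book9}, so there is nothing to compare against beyond noting that your fundamental-theorem-of-calculus computation is exactly the argument underlying that reference. One minor remark: rather than going through a density argument for the $H^1$ case, you can observe directly that for $f\in H^1(\Omega)$ the slice $x_3\mapsto f(x',x_3)$ lies in $H^1((-b_0,0))$ for a.e.\ $x'$ by Fubini, is therefore absolutely continuous, and its endpoint values agree a.e.\ with the traces; this makes the identity $f(x',0)=\int_{-b_0}^0\p_3 f(x',x_3)\,\ud{x_3}$ immediate without any approximation step.
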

\begin{lemma}\label{appendix poincare 3}
It holds that $\hm{u}{1}\ls\hm{\dm u}{0}$ for all $u\in
H^1(\Omega;R^3)$ such that $f=0$ on $\Sigma_b$.
\end{lemma}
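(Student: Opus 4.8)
The plan is to reduce the inequality to a first Korn inequality on a single model brick and then to tile the slab. Set $R=(0,1)^2\times(-b_0,0)$ with bottom face $\Gamma=(0,1)^2\times\{-b_0\}$. The first step is to prove
\[
\|v\|_{H^1(R)}^2\ls\|\mathbb{D}v\|_{L^2(R)}^2\qquad\text{for all }v\in H^1(R;\mathbb{R}^3)\text{ with }v|_{\Gamma}=0 .
\]
Since $R$ is a bounded Lipschitz domain, Korn's second inequality $\|v\|_{H^1(R)}^2\ls\|v\|_{L^2(R)}^2+\|\mathbb{D}v\|_{L^2(R)}^2$ is available, so I would argue by contradiction \`a la Beale (cf.\ the proof of Lemma~2.7 in \cite{book2}): if the displayed bound failed there would be $v_n$ with $\|v_n\|_{H^1(R)}=1$ and $\|\mathbb{D}v_n\|_{L^2(R)}\to0$; Rellich yields an $L^2(R)$-convergent subsequence, and then Korn's second inequality applied to the differences $v_{n_k}-v_{n_l}$ shows the subsequence is Cauchy in $H^1(R)$, hence converges in $H^1(R)$ to some $v$ with $\|v\|_{H^1(R)}=1$, $\mathbb{D}v=0$, and (by continuity of the trace) $v|_{\Gamma}=0$. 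But $\mathbb{D}v=0$ on the connected set $R$ forces $v$ to be an infinitesimal rigid displacement $v(x)=c+\omega x$ with $\omega$ skew; vanishing of $v$ on the two-dimensional face $\Gamma$ forces $\omega e_1=\omega e_2=0$, hence $\omega=0$ and then $c=0$, contradicting $\|v\|_{H^1(R)}=1$.

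The second step is the tiling. Write $\Omega=\bigcup_{(j,k)}R_{j,k}$, where $R_{j,k}$ is the horizontal translate of $R$ over the square $(j,j+1)\times(k,k+1)$, with $(j,k)$ running over $\mathbb{Z}^2$ in the infinite case and over the appropriate finite index set in the periodic case. Each $R_{j,k}$ is a rigid translate of $R$, so the constant in the first Korn inequality above is the same for every $R_{j,k}$; moreover the bottom face of each $R_{j,k}$ lies in $\Sigma_b$, so the hypothesis $u|_{\Sigma_b}=0$ implies that $u$ vanishes on the bottom face of each brick. Applying the brick estimate to $u$ on each $R_{j,k}$ and summing,
\[
\|u\|_{H^1(\Omega)}^2=\sum_{(j,k)}\|u\|_{H^1(R_{j,k})}^2\ls\sum_{(j,k)}\|\mathbb{D}u\|_{L^2(R_{j,k})}^2=\|\mathbb{D}u\|_{L^2(\Omega)}^2 ,
\]
which is the desired inequality. (In the periodic case $\Omega$ is itself bounded, and one may alternatively run the compactness argument directly on $\Omega$ with $\Gamma$ replaced by all of $\Sigma_b$; the rigid-motion step is unchanged.)

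The main obstacle is the compactness argument on the model brick, and within it the step that the limiting infinitesimal rigid motion vanishes: this is precisely where the hypothesis is used, and where it matters that $\Sigma_b$ contributes a full two-dimensional face of each brick rather than a lower-dimensional set — which is also why the partition must be chosen so that every brick actually meets $\Sigma_b$. The only other point needing a word of care is the uniformity of the Korn constant over the infinitely many bricks $R_{j,k}$, which is immediate from translation invariance of $\mathbb{D}$ and of Lebesgue measure, so that the summation over all bricks is lossless.
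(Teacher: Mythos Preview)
Your argument is correct. The paper itself does not supply a proof of this lemma; it simply cites the corresponding result in \cite{book9}. Your compactness-on-a-brick plus tiling argument is exactly the standard route, and in fact it is the same approach the paper follows earlier (in Section~2.2.2, for the Korn inequality on the initial domain $\Omega_0$), explicitly attributed there to Beale~\cite{book2}. So your proposal is in line with both the cited source and the paper's own treatment of the analogous statement.

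One small remark on the periodic case: as you note, the unit-brick tiling need not match the periodicity lengths $L_1,L_2$, so running the compactness argument directly on the bounded periodic $\Omega$ (as you suggest in the parenthetical) is the clean way to handle it; alternatively one simply rescales the brick to have horizontal side lengths dividing $L_1$ and $L_2$. Either way the argument goes through unchanged.
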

\begin{lemma}\label{appendix poincare 4}
It holds that $\hm{f}{1}\ls\hm{\nabla f}{0}$ for all $f\in
H^1(\Omega)$ such that $f=0$ on $\Sigma_b$. Also,
$\nm{f}_{W^{1,\infty}(\Omega)}\ls\nm{\nabla f}_{L^{\infty}(\Omega)}$
for all $f\in W^{1,\infty}(\Omega)$ such that $f=0$ on $\Sigma_b$.
\end{lemma}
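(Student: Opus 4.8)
The plan is to prove this as a standard Poincar\'e inequality on the finite slab $\Omega=\{-b_0<x_3<0\}$ via the one–dimensional fundamental theorem of calculus in the vertical variable, exploiting the vanishing trace on the bottom face $\Sigma_b=\{x_3=-b_0\}$. First I would treat the $L^2$ case. For $f\in H^1(\Omega)$ with $f=0$ on $\Sigma_b$, Fubini's theorem shows that for a.e. horizontal slice $x'$ the function $s\mapsto f(x',s)$ lies in $H^1((-b_0,0))$, hence is absolutely continuous with $f(x',-b_0)=0$, so that $f(x',x_3)=\int_{-b_0}^{x_3}\p_3 f(x',s)\,ds$. Cauchy--Schwarz gives $|f(x',x_3)|^2\le b_0\int_{-b_0}^{0}|\p_3 f(x',s)|^2\,ds$; integrating in $x_3$ over $(-b_0,0)$ and then in $x'$ over $\Sigma$ yields $\hm{f}{0}^2\le b_0^2\hm{\p_3 f}{0}^2\le b_0^2\hm{\nabla f}{0}^2$. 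Adding $\hm{\nabla f}{0}^2$ to both sides gives $\hm{f}{1}^2\ls\hm{\nabla f}{0}^2$, the implied constant depending only on $b_0$, hence on $\Omega$, which is the first assertion.

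For the $W^{1,\infty}$ statement I would argue identically but estimate the vertical integral in sup norm: from the same representation, $|f(x',x_3)|\le\int_{-b_0}^{0}|\p_3 f(x',s)|\,ds\le b_0\,\nm{\p_3 f}_{L^\infty(\Omega)}\le b_0\,\nm{\nabla f}_{L^\infty(\Omega)}$, whence $\nm{f}_{L^\infty(\Omega)}\ls\nm{\nabla f}_{L^\infty(\Omega)}$ and therefore $\nm{f}_{W^{1,\infty}(\Omega)}=\nm{f}_{L^\infty(\Omega)}+\nm{\nabla f}_{L^\infty(\Omega)}\ls\nm{\nabla f}_{L^\infty(\Omega)}$. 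Since $W^{1,\infty}(\Omega)\subset C(\bar\Omega)$ on this Lipschitz slab, the trace $f|_{\Sigma_b}=0$ is literally a restriction and the fundamental theorem of calculus applies on each vertical segment.

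Alternatively, and even more briefly, the lemma follows by combining the two preceding appendix lemmas: Lemma~\ref{appendix poincare 2} gives $\hms{f}{0}{\Sigma}\ls\hm{\p_3 f}{0}\le\hm{\nabla f}{0}$ (and its $L^\infty$ analogue), and feeding this into Lemma~\ref{appendix poincare 1} yields $\hm{f}{0}^2\ls\hms{f}{0}{\Sigma}^2+\hm{\p_3 f}{0}^2\ls\hm{\nabla f}{0}^2$, whence $\hm{f}{1}^2\ls\hm{\nabla f}{0}^2$; the $W^{1,\infty}$ version is obtained by the same substitution in the sup-norm forms of those two lemmas. I expect no genuine obstacle here; the only point requiring a line of care is the justification that an $H^1$ (resp.\ $W^{1,\infty}$) function with zero trace on $\Sigma_b$ admits the fundamental-theorem-of-calculus representation on almost every (resp.\ every) vertical line, which is dispatched by Fubini together with the one-dimensional embedding $H^1((-b_0,0))\hookrightarrow C([-b_0,0])$.
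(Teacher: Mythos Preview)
Your proposal is correct. The paper does not give its own proof here but defers to Lemmas A.10--A.13 of \cite{book9}, where the argument is precisely the one-dimensional fundamental-theorem-of-calculus computation you describe; your alternative of chaining Lemmas~\ref{appendix poincare 1} and~\ref{appendix poincare 2} is equally valid and is in fact how the appendix is organized to be used.
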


\subsection{Poisson Integral}

For a function $f$ defined on $\Sigma=R^2$, the Poisson integral
$\pp f$ in $R^2\times(-\infty,0)$ is defined by
\begin{eqnarray}\label{appendix poisson integral}
\pp f(x',x_3)=\int_{R^2}\hat{f}(\xi)e^{2\pi\abs{\xi}x_3}e^{2\pi
ix'\cdot\xi}\ud{\xi}
\end{eqnarray}
where $\hat f(\xi)$ is the Fourier transform of $f(x')$ on $R^2$.
Then within the slab $R^2\times(-b,0)$, we have the following
estimate based on lemma A.5 in \cite{book9}.
\begin{lemma}
The Poisson integral satisfies that for $q\in\mathbb{N}$,
\begin{eqnarray}\label{appendix poisson integral estimate}
\hm{\nabla^q\pp
f}{0}^2\ls\nm{f}_{\dot{H}^{q-1/2}(\Sigma)}^2\\
\hm{\nabla^q\pp f}{0}^2\ls\nm{f}_{\dot{H}^{q}(\Sigma)}^2
\end{eqnarray}
where $\dot{H}^s$ denotes the usual homogeneous Sobolev space.
\end{lemma}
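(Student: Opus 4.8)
The plan is to reduce both estimates to a single Plancherel computation in the horizontal frequency variable, which is literally the special case $\epsilon=2\pi$ of the argument already carried out for Lemma~\ref{appendix poisson 1}.

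First I would record the action of the differential operators on the kernel. From $\pp f(x',x_3)=\int_{\mathbb{R}^2}\hat f(\xi)e^{2\pi\abs{\xi}x_3}e^{2\pi ix'\cdot\xi}\ud{\xi}$, a horizontal derivative $\p_j$ with $j=1,2$ brings down a factor $2\pi i\xi_j$ and $\p_3$ brings down $2\pi\abs{\xi}$; hence for a multi-index $\alpha\in\mathbb{N}^3$ with $\abs{\alpha}=q$ the $x'$-Fourier transform of $\p^{\alpha}\pp f$ equals $(2\pi i\xi_1)^{\alpha_1}(2\pi i\xi_2)^{\alpha_2}(2\pi\abs{\xi})^{\alpha_3}\hat f(\xi)e^{2\pi\abs{\xi}x_3}$, whose modulus is at most $(2\pi)^q\abs{\xi}^q\abs{\hat f(\xi)}e^{2\pi\abs{\xi}x_3}$ because $\abs{\xi_j}\le\abs{\xi}$. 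Summing over the finitely many multi-indices of order $q$, then applying Parseval in $x'$ and Fubini, I obtain
\[
\hm{\nabla^q\pp f}{0}^2\ls\int_{\mathbb{R}^2}\abs{\xi}^{2q}\abs{\hat f(\xi)}^2\left(\int_{-b}^0 e^{4\pi\abs{\xi}x_3}\ud{x_3}\right)\ud{\xi}.
\]
Next I would evaluate the inner integral exactly, $\int_{-b}^0 e^{4\pi\abs{\xi}x_3}\ud{x_3}=\dfrac{1-e^{-4\pi b\abs{\xi}}}{4\pi\abs{\xi}}$, and split it via the elementary bound $\dfrac{1-e^{-4\pi b\abs{\xi}}}{4\pi\abs{\xi}}\le\min\left\{\dfrac{1}{4\pi\abs{\xi}},\,b\right\}$. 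Substituting $\dfrac{1}{4\pi\abs{\xi}}$ yields $\hm{\nabla^q\pp f}{0}^2\ls\int_{\mathbb{R}^2}\abs{\xi}^{2q-1}\abs{\hat f(\xi)}^2\ud{\xi}\ls\nm{f}_{\dot H^{q-1/2}(\Sigma)}^2$, the first inequality; substituting $b$ yields $\hm{\nabla^q\pp f}{0}^2\ls\int_{\mathbb{R}^2}\abs{\xi}^{2q}\abs{\hat f(\xi)}^2\ud{\xi}\ls\nm{f}_{\dot H^{q}(\Sigma)}^2$, the second. For $\Sigma$ compact one simply replaces the Fourier integral by the corresponding Fourier series, and every step goes through verbatim.

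There is no substantial obstacle in this argument; it is entirely a bookkeeping of constants in one Fourier-side estimate. The one point deserving a line of care is that $\nabla^q$ stands for the full collection of order-$q$ mixed derivatives, so I would note explicitly that \emph{every} such derivative, not only $\p_3^q$, produces a symbol dominated by $(2\pi)^q\abs{\xi}^q$, which is immediate from $\abs{\xi_j}\le\abs{\xi}$ and the fact that each $\p_3$ contributes exactly $2\pi\abs{\xi}$. Since Lemma~\ref{appendix poisson 1} already contains this computation for the $\epsilon$-Poisson integral, in the write-up I would mostly point to that proof and remark that the two displayed bounds here are precisely the two halves of $\dfrac{1-e^{-4\pi b\abs{\xi}}}{4\pi\abs{\xi}}\le\min\left\{\dfrac{1}{4\pi\abs{\xi}},b\right\}$.
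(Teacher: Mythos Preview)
Your proposal is correct and is exactly the standard Fourier/Parseval computation; in this paper the lemma is not proved in-line but simply attributed to Lemma~A.5 of \cite{book9}, and your argument is precisely the $\epsilon=2\pi$ instance of the paper's own proof of Lemma~\ref{appendix poisson 1}, as you observe. The only cosmetic point is that $\epsilon=2\pi$ lies outside the range $0<\epsilon<1$ assumed there, but since your write-up redoes the integral $\int_{-b}^0 e^{4\pi\abs{\xi}x_3}\ud{x_3}$ directly this is not an issue.
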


\subsection{Riesz Potential}

For a function $f$ defined in $\Omega$, we define the Riesz
potential
\begin{eqnarray}\label{appendix riesz potential 1}
\i
f(x',x_3)=\int_{-b}^0\int_{R^2}\hat{f}(\xi,x_3)\abs{\xi}^{-\lambda}e^{2\pi
ix'\cdot\xi}\ud{\xi}\ud{x_3}
\end{eqnarray}
Similarly, for $f$ defined on $\Sigma$, we set
\begin{eqnarray}\label{appendix riesz potential 2}
\i f(x')=\int_{R^2}\hat{f}(\xi)\abs{\xi}^{-\lambda}e^{2\pi
ix'\cdot\xi}\ud{\xi}
\end{eqnarray}
We have the following lemmas to describe the product of Riesz
potential and its interaction with the horizontal derivatives as
lemma A.3 and A.4 in \cite{book9}.
\begin{lemma}\label{appendix riesz potential estimate 1}
Let $\lambda\in(0,1)$. If $f\in H^0(\Omega)$ and $g,Dg\in
H^1(\Omega)$, then
\begin{eqnarray}
\hm{\i(fg)}{0}\ls \hm{f}{0}\hm{g}{1}^{\lambda}\hm{Dg}{1}^{1-\lambda}
\end{eqnarray}
If $f\in H^0(\Sigma)$ and $g,Dg\in H^1(\Sigma)$, then
\begin{eqnarray}
\hms{\i(fg)}{0}{\Sigma}\ls
\hms{f}{0}{\Sigma}\hms{g}{1}{\Sigma}^{\lambda}\hms{Dg}{1}{\Sigma}^{1-\lambda}
\end{eqnarray}
\end{lemma}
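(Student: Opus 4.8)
The statement concerns the tangential Riesz potential $\i$, i.e. the Fourier multiplier $\abs{\xi}^{-\lambda}$ acting on the horizontal variable $x'\in R^2$, with $\lambda\in(0,1)$. The plan is to fix a vertical slice, obtain the bound slice-by-slice from two classical two-dimensional inequalities, and then reassemble in the vertical variable. One may first run the whole argument on Schwartz data and pass to the stated generality by density, so I will not worry about the a priori meaning of $\i(fg)$.

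First, for a.e. fixed $x_3$ I would apply the Hardy--Littlewood--Sobolev inequality on $R^2$: since $\half=\frac{1+\lambda}{2}-\frac{\lambda}{2}$ and $\frac{2}{1+\lambda}\in(1,2)$ precisely because $\lambda\in(0,1)$, the operator $\i$ maps $L^{2/(1+\lambda)}(R^2)$ boundedly into $L^2(R^2)$. Taking $h=fg$ and using Hölder's inequality with $\frac{1}{2}+\frac{\lambda}{2}=\frac{1+\lambda}{2}$ gives $\lnms{\i(fg)(\cdot,x_3)}{2}{R^2}\ls\lnms{f(\cdot,x_3)}{2}{R^2}\lnms{g(\cdot,x_3)}{2/\lambda}{R^2}$. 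Since $\frac{2}{\lambda}\in(2,\infty)$, the Gagliardo--Nirenberg inequality on $R^2$, which reads $\lnms{g(\cdot,x_3)}{2/\lambda}{R^2}\ls\lnms{g(\cdot,x_3)}{2}{R^2}^{\lambda}\lnms{\nabla g(\cdot,x_3)}{2}{R^2}^{1-\lambda}$ with the $2$-dimensional gradient $\nabla$ on the slice (which is the horizontal derivative $D$ in the notation of the paper), yields the slice estimate
\[\lnms{\i(fg)(\cdot,x_3)}{2}{R^2}\ls\lnms{f(\cdot,x_3)}{2}{R^2}\,\lnms{g(\cdot,x_3)}{2}{R^2}^{\lambda}\,\lnms{Dg(\cdot,x_3)}{2}{R^2}^{1-\lambda}.\]

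For $\Sigma=R^2$ there is no vertical variable, so the slice estimate is already the assertion, after bounding $\lnms{g}{2}{\Sigma}\le\hms{g}{1}{\Sigma}$ and $\lnms{Dg}{2}{\Sigma}\le\hms{Dg}{1}{\Sigma}$. For $\Omega$, I would square the slice estimate and integrate in $x_3$. The factors carrying $g$ and $Dg$ are pulled out of the integral by the uniform bound $\sup_{x_3}\lnms{v(\cdot,x_3)}{2}{R^2}^2\ls\hm{v}{0}^2+\hm{v}{0}\hm{\p_3v}{0}\ls\hm{v}{1}^2$, which follows from the fundamental theorem of calculus applied to $x_3\mapsto\lnms{v(\cdot,x_3)}{2}{R^2}^2$ (this is in the same spirit as Lemma \ref{appendix poincare 1}), used with $v=g$ and $v=Dg$; this is exactly where the hypotheses $g,Dg\in H^1(\Omega)$ are consumed. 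What remains is $\int\lnms{f(\cdot,x_3)}{2}{R^2}^2\,\ud{x_3}=\hm{f}{0}^2$, and taking square roots gives $\hm{\i(fg)}{0}\ls\hm{f}{0}\,\hm{g}{1}^{\lambda}\,\hm{Dg}{1}^{1-\lambda}$.

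The proof is essentially exponent bookkeeping once the two $R^2$ ingredients are fixed; the only points that need genuine care are checking that $\frac{2}{1+\lambda}$ and $\frac{2}{\lambda}$ lie in the admissible ranges for the Hardy--Littlewood--Sobolev and Gagliardo--Nirenberg inequalities (which is exactly the hypothesis $\lambda\in(0,1)$), and, in the $\Omega$ case, verifying that slice-uniform $L^2_{x'}$ control of $g$ and $Dg$ is furnished precisely by the two $H^1(\Omega)$ hypotheses rather than by something stronger. I do not foresee any serious obstacle beyond these checks.
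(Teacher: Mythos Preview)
Your proof is correct and follows the standard route: Hardy--Littlewood--Sobolev on $R^2$ to pass from $L^{2/(1+\lambda)}$ to $L^2$, H\"older to split the product, Gagliardo--Nirenberg to interpolate $\lnm{g}{2/\lambda}$, and in the $\Omega$ case a slice-uniform trace bound to absorb the $g$ and $Dg$ factors. The paper itself does not give a proof of this lemma but simply cites lemma~A.3 of \cite{book9}, where essentially the same argument is carried out; your exponent checks and the handling of the vertical variable are exactly what is needed.
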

\begin{lemma}\label{appendix riesz potential estimate 2}
Let $\lambda\in(0,1)$. If $f\in H^k(\Omega)$ for $k\geq1$ an
integer, then
\begin{eqnarray}
\hm{\i D^kf}{0}\ls\hm{D^{k-1}f}{0}^{\lambda}\hm{D^kf}{0}^{1-\lambda}
\end{eqnarray}
\end{lemma}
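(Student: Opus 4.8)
The plan is to transfer everything to the horizontal Fourier side, where the inequality collapses to an elementary weighted H\"older interpolation. Write $\hat f(\xi,x_3)$ for the Fourier transform of $f$ in the horizontal variable $x'\in\Sigma$, taken slicewise in $x_3$, so that the horizontal Riesz potential is the Fourier multiplier $|\xi|^{-\lambda}$. For a multi-index $\alpha\in\mathbb{N}^2$ one has $\widehat{\p^\alpha f}(\xi,x_3)=(2\pi i\xi)^\alpha\hat f(\xi,x_3)$, hence $\widehat{\i\p^\alpha f}(\xi,x_3)=(2\pi i)^{\abs{\alpha}}|\xi|^{-\lambda}\xi^\alpha\hat f(\xi,x_3)$, and by Plancherel in $x'$ together with Fubini in $x_3$,
\begin{equation*}
\hm{\i\p^\alpha f}{0}^2=(2\pi)^{2\abs{\alpha}}\int_{-b}^0\int_{\Sigma}|\xi|^{-2\lambda}|\xi^\alpha|^2|\hat f(\xi,x_3)|^2\,\ud{\xi}\,\ud{x_3}.
\end{equation*}
Summing over $|\alpha|=k$ and invoking the two-sided bound $c_m|\xi|^{2m}\le\sum_{|\alpha|=m}|\xi^\alpha|^2\le|\xi|^{2m}$ (which is just the multinomial expansion of $(\xi_1^2+\xi_2^2)^m$, each coefficient $\binom{m}{\alpha}$ lying between $1$ and $2^m$), I get
\begin{equation*}
\hm{\i D^kf}{0}^2\ls\int_{-b}^0\int_{\Sigma}|\xi|^{2k-2\lambda}|\hat f|^2\,\ud{\xi}\,\ud{x_3},
\end{equation*}
and, by the same reasoning applied to $D^{k-1}f$ and $D^kf$, also $\int_{-b}^0\int_{\Sigma}|\xi|^{2(k-1)}|\hat f|^2\ls\hm{D^{k-1}f}{0}^2$ and $\int_{-b}^0\int_{\Sigma}|\xi|^{2k}|\hat f|^2\ls\hm{D^kf}{0}^2$.

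Next I would exploit the pointwise algebraic identity
\begin{equation*}
|\xi|^{2k-2\lambda}|\hat f|^2=\bigl(|\xi|^{2(k-1)}|\hat f|^2\bigr)^{\lambda}\bigl(|\xi|^{2k}|\hat f|^2\bigr)^{1-\lambda},
\end{equation*}
which holds because $2(k-1)\lambda+2k(1-\lambda)=2k-2\lambda$ and $\lambda+(1-\lambda)=1$, and then apply H\"older's inequality with exponents $1/\lambda$ and $1/(1-\lambda)$ on the measure space $(-b,0)\times\Sigma$:
\begin{equation*}
\int_{-b}^0\int_{\Sigma}|\xi|^{2k-2\lambda}|\hat f|^2\le\Bigl(\int_{-b}^0\int_{\Sigma}|\xi|^{2(k-1)}|\hat f|^2\Bigr)^{\lambda}\Bigl(\int_{-b}^0\int_{\Sigma}|\xi|^{2k}|\hat f|^2\Bigr)^{1-\lambda}.
\end{equation*}
Chaining the displays of the previous paragraph with this one and taking square roots yields $\hm{\i D^kf}{0}\ls\hm{D^{k-1}f}{0}^{\lambda}\hm{D^kf}{0}^{1-\lambda}$, which is the claim; this also recovers lemma A.4 of \cite{book9}.

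The only point needing care is finiteness, and it is exactly where the hypothesis $k\ge1$ is used: the weight $|\xi|^{-2\lambda}|\xi^\alpha|^2$ with $|\alpha|=k$ is comparable to $|\xi|^{2k-2\lambda}$, which is bounded near $\xi=0$ since $2k-2\lambda\ge 2-2\lambda>0$, so $\i D^kf\in H^0(\Omega)$ and both interpolating integrals on the right are finite for $f\in H^k(\Omega)$. I do not anticipate a genuine obstacle here: once the estimate is placed on the Fourier side it is a two-line computation, and the argument is purely Fourier-analytic, hence insensitive to the precise geometry of $\Sigma$.
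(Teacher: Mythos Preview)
Your argument is correct: reducing to the horizontal Fourier side, using the equivalence $\sum_{|\alpha|=m}|\xi^\alpha|^2\sim|\xi|^{2m}$, and then interpolating via H\"older with exponents $1/\lambda$ and $1/(1-\lambda)$ is exactly the right way to prove this, and your remark that $2k-2\lambda>0$ handles the integrability at $\xi=0$ is the only subtlety. The paper does not supply its own proof of this lemma---it simply records the statement and cites lemma~A.4 of \cite{book9}---so there is no independent argument to compare against; what you have written is precisely the standard proof one would find behind that citation.
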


\subsection{Interpolation Estimates}

Here we record several interpolation estimate utilized in our proof.
Since they have been proved in lemma A.6-A.8 and lemma 3.18 in
\cite{book9}, we will omit the proof now.
\begin{lemma}\label{appendix interpolation 1}
For $s,q>0$ and $0\leq r\leq s$, we have the estimate
\begin{eqnarray}
\hms{f}{s}{\Sigma}\ls\hms{f}{s-r}{\Sigma}^{q/(r+q)}\hms{f}{s+q}{\Sigma}^{r/(r+q)}
\end{eqnarray}
whenever the right hand side is finite.
\end{lemma}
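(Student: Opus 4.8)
The plan is to reduce the estimate to an elementary application of H\"older's inequality on the Fourier side, exactly as in the classical Gagliardo--Nirenberg interpolation inequality. First I would recall that on $\Sigma$ the norm $\hms{f}{\sigma}{\Sigma}$ is equivalent to $\left(\int_{R^2}(1+\abs{\xi}^2)^{\sigma}\abs{\hat f(\xi)}^2\ud{\xi}\right)^{1/2}$ in the infinite case, and to the analogous sum $\left(\sum_{n}(1+\abs{n}^2)^{\sigma}\abs{\hat f(n)}^2\right)^{1/2}$ over the dual lattice in the periodic case. The two cases are handled verbatim, so I would carry out the argument for the integral version and simply remark that replacing $\int_{R^2}\,\ud\xi$ by the sum and $\abs{\xi}$ by $\abs{n}$ gives the periodic statement with no change.

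The key algebraic step is the choice of interpolation exponents $a=q/(q+r)$ and $b=r/(q+r)$. Since $q>0$ we have $q+r>0$, so these are well defined, and they satisfy $a+b=1$ together with $a(s-r)+b(s+q)=s$ (the latter is forced by the relation $ar=bq$). Consequently, pointwise in $\xi$,
\[
(1+\abs{\xi}^2)^s\abs{\hat f(\xi)}^2=\Big[(1+\abs{\xi}^2)^{s-r}\abs{\hat f(\xi)}^2\Big]^{a}\Big[(1+\abs{\xi}^2)^{s+q}\abs{\hat f(\xi)}^2\Big]^{b}.
\]
Integrating over $\xi\in R^2$ and applying H\"older's inequality with conjugate exponents $1/a$ and $1/b$ yields
\[
\hms{f}{s}{\Sigma}^2\ls\Big(\hms{f}{s-r}{\Sigma}^2\Big)^{a}\Big(\hms{f}{s+q}{\Sigma}^2\Big)^{b},
\]
and taking square roots gives precisely $\hms{f}{s}{\Sigma}\ls\hms{f}{s-r}{\Sigma}^{q/(r+q)}\hms{f}{s+q}{\Sigma}^{r/(r+q)}$. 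In particular the bound is finite whenever the right-hand side is, which also certifies $f\in H^s(\Sigma)$.

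Finally I would dispatch the degenerate endpoint: when $r=0$ one has $a=1$, $b=0$ and the asserted inequality is an identity, so the H\"older step is vacuous there. There is no genuine obstacle in this lemma; the only points requiring mild care are (i) using the inhomogeneous Bessel weight $(1+\abs{\xi}^2)^{\sigma}$ throughout, rather than the homogeneous weight $\abs{\xi}^{2\sigma}$, so that all three norms are the inhomogeneous ones in the statement, and (ii) keeping the bracketing of the factorization consistent so that, in the $r=0$ case, no expression of the form $0^0$ is ever formed — which is why I would treat $r=0$ separately at the outset.
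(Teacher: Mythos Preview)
Your argument is correct and is precisely the standard Fourier-side H\"older interpolation; the paper itself does not give a proof but simply cites the corresponding lemma in \cite{book9}, where the same classical argument underlies the result. Nothing further is needed.
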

\begin{lemma}\label{appendix interpolation 2}
Let $\pp f$ be the Poisson integral of $f$, defined on $\Sigma$. Let
$\lambda\geq0$, $q,s\in\mathbb{N}$, and $r\geq0$. Then the following
estimates hold.
\begin{enumerate}
\item
Let
\begin{eqnarray}
\theta=\frac{s}{q+s+\lambda} \quad
1-\theta=\frac{q+\lambda}{q+s+\lambda}
\end{eqnarray}
Then
\begin{eqnarray}
\hm{\nabla^q\pp f}{0}^2\ls\bigg(\hm{\i
f}{0}^2\bigg)^{\theta}\bigg(\hm{D^{q+s}f}{0}^2\bigg)^{1-\theta}
\end{eqnarray}
\item
Let $r+s>1$,
\begin{eqnarray}
\theta=\frac{r+s-1}{q+s+r+\lambda} \quad
1-\theta=\frac{q+\lambda+1}{q+s+r+\lambda}
\end{eqnarray}
Then
\begin{eqnarray}
\lnm{\nabla^q\pp f}{\infty}^2\ls\bigg(\hm{\i
f}{0}^2\bigg)^{\theta}\bigg(\hm{D^{q+s}f}{r}^2\bigg)^{1-\theta}
\end{eqnarray}
\item
Let $s>1$. Then
\begin{eqnarray}
\lnm{\nabla^q\pp f}{\infty}^2\ls\hm{D^qf}{s}^2
\end{eqnarray}
\end{enumerate}
\end{lemma}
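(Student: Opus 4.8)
The plan is to argue entirely on the Fourier side, exploiting that here $\Sigma=R^2$. Writing $\widehat{\pp f}(\xi,x_3)=\hat f(\xi)e^{2\pi|\xi|x_3}$, each component of $\nabla^q\pp f$ is a linear combination of Fourier multipliers $(2\pi i\xi')^{\beta}(2\pi|\xi|)^{q-|\beta|}e^{2\pi|\xi|x_3}$, all bounded in modulus by $C|\xi|^q e^{2\pi|\xi|x_3}$; moreover $\widehat{\i f}(\xi)=|\xi|^{-\lambda}\hat f(\xi)$, so by Parseval $\hm{\i f}{0}^2=\int_{R^2}|\xi|^{-2\lambda}|\hat f|^2\,\ud{\xi}$, $\hm{D^{q+s}f}{0}^2\ls\int_{R^2}|\xi|^{2(q+s)}|\hat f|^2\,\ud{\xi}$ and $\hm{D^{q+s}f}{r}^2\ls\int_{R^2}|\xi|^{2(q+s)}\langle\xi\rangle^{2r}|\hat f|^2\,\ud{\xi}$ (the right-hand side norms understood as $\Sigma$-norms, as in Lemma A.6 of \cite{book9}). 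Each of the three estimates then reduces to a weighted $L^2$ (resp. $L^1$) bound for $\hat f$, obtained by H\"older or Cauchy--Schwarz with an explicit weight followed by balancing exponents.

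For the first estimate, Parseval and Fubini give
\begin{equation*}
\hm{\nabla^q\pp f}{0}^2\ls\int_{R^2}|\xi|^{2q}|\hat f(\xi)|^2\Big(\int_{-b}^0 e^{4\pi|\xi|x_3}\,\ud{x_3}\Big)\ud{\xi}=\int_{R^2}|\xi|^{2q}|\hat f|^2\,\frac{1-e^{-4\pi b|\xi|}}{4\pi|\xi|}\,\ud{\xi}\ls b\int_{R^2}|\xi|^{2q}|\hat f|^2\,\ud{\xi},
\end{equation*}
using $\frac{1-e^{-4\pi bt}}{4\pi t}\le b$. With $\theta=s/(q+s+\lambda)$ one checks the identity $-2\lambda\theta+2(q+s)(1-\theta)=2q$, so the integrand factors pointwise as $|\xi|^{2q}|\hat f|^2=(|\xi|^{-2\lambda}|\hat f|^2)^{\theta}(|\xi|^{2(q+s)}|\hat f|^2)^{1-\theta}$, and H\"older with exponents $1/\theta$, $1/(1-\theta)$ gives exactly $\hm{\nabla^q\pp f}{0}^2\ls(\hm{\i f}{0}^2)^{\theta}(\hm{D^{q+s}f}{0}^2)^{1-\theta}$.

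For the $L^{\infty}$ estimates, since $x_3\le0$ we have $\lnm{\nabla^q\pp f}{\infty}\ls\int_{R^2}|\xi|^q|\hat f(\xi)|\,\ud{\xi}$. For the third estimate write $|\xi|^q|\hat f|=(|\xi|^q\langle\xi\rangle^{s}|\hat f|)\langle\xi\rangle^{-s}$ and apply Cauchy--Schwarz: $\int_{R^2}\langle\xi\rangle^{-2s}\,\ud{\xi}<\infty$ precisely because $s>1$, and the remaining factor is $\hm{D^qf}{s}$, so squaring gives the claim. For the second estimate, split $R^2$ at a free scale $\rho>0$. On $\{|\xi|\le\rho\}$, $\int|\xi|^q|\hat f|=\int|\xi|^{q+\lambda}(|\xi|^{-\lambda}|\hat f|)\ls(\int_{|\xi|\le\rho}|\xi|^{2(q+\lambda)}\,\ud{\xi})^{1/2}\hm{\i f}{0}\ls\rho^{\,q+\lambda+1}\hm{\i f}{0}$ (the frequency integral converges since $q+\lambda>-1$); on $\{|\xi|>\rho\}$, $\int|\xi|^q|\hat f|=\int|\xi|^{-(s+r)}(|\xi|^{q+s+r}|\hat f|)\ls(\int_{|\xi|>\rho}|\xi|^{-2(s+r)}\,\ud{\xi})^{1/2}\hm{D^{q+s}f}{r}\ls\rho^{\,1-s-r}\hm{D^{q+s}f}{r}$ (here $r+s>1$ is exactly what makes this integral converge). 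Choosing $\rho$ so as to equalize the two terms turns the additive bound into $\int_{R^2}|\xi|^q|\hat f|\ls\hm{\i f}{0}^{\theta}\hm{D^{q+s}f}{r}^{1-\theta}$ with $\theta=(r+s-1)/(q+s+r+\lambda)$, and squaring yields the stated inequality.

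The only genuinely delicate points are the bookkeeping of which weights keep the auxiliary one-dimensional integrals $\int_{|\xi|\le\rho}|\xi|^{2(q+\lambda)}\,\ud{\xi}$, $\int_{|\xi|>\rho}|\xi|^{-2(s+r)}\,\ud{\xi}$, $\int_{R^2}\langle\xi\rangle^{-2s}\,\ud{\xi}$ finite — which is precisely where the hypotheses $q,s\in\mathbb{N}$, $\lambda,r\ge0$, $r+s>1$ (resp. $s>1$) enter — and, for the second estimate, the scale optimization that converts the additive split into a multiplicative interpolation inequality. Everything else is routine; alternatively the statement can be quoted directly from Lemma A.6 of \cite{book9}, of which it is a restatement.
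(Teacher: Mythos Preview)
Your proof is correct and is precisely the Fourier-side argument underlying Lemma~A.6 of \cite{book9}, which the paper simply cites without reproducing; you have in fact written out the details that the paper omits, and your closing remark already identifies the reference. The frequency-splitting optimization in part~(2) and the exponent bookkeeping are all accurate.
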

\begin{lemma}\label{appendix interpolation 3}
Let $f$ be defined on $\Sigma$. Let $\lambda\geq0$. Then we have the
following estimates.
\begin{enumerate}
\item
Let $q,s\in(0,\infty)$ and
\begin{eqnarray}
\theta=\frac{s}{q+s+\lambda} \quad
1-\theta=\frac{q+\lambda}{q+s+\lambda}
\end{eqnarray}
Then
\begin{eqnarray}
\hm{D^q f}{0}^2\ls\bigg(\hm{\i
f}{0}^2\bigg)^{\theta}\bigg(\hm{D^{q+s}f}{0}^2\bigg)^{1-\theta}
\end{eqnarray}
\item
Let $q,s\in\mathbb{N}$, $r\geq0$, $r+s>1$,
\begin{eqnarray}
\theta=\frac{r+s-1}{q+s+r+\lambda} \quad
1-\theta=\frac{q+\lambda+1}{q+s+r+\lambda}
\end{eqnarray}
Then
\begin{eqnarray}
\lnm{D^q f}{\infty}^2\ls\bigg(\hm{\i
f}{0}^2\bigg)^{\theta}\bigg(\hm{D^{q+s}f}{r}^2\bigg)^{1-\theta}
\end{eqnarray}
\end{enumerate}
\end{lemma}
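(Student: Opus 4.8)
The plan is to pass to the Fourier side and reduce both estimates to elementary H\"older inequalities in the frequency variable, in complete parallel with the proof of Lemma~\ref{appendix interpolation 2}. By Plancherel's theorem, for $f$ on $\Sigma=R^2$ one has $\hm{D^qf}{0}^2\sim\int_{R^2}\abs{\xi}^{2q}\abs{\hat{f}(\xi)}^2\ud{\xi}$, $\hm{D^{q+s}f}{0}^2\sim\int_{R^2}\abs{\xi}^{2(q+s)}\abs{\hat{f}(\xi)}^2\ud{\xi}$, and $\hm{\i f}{0}^2\sim\int_{R^2}\abs{\xi}^{-2\lambda}\abs{\hat{f}(\xi)}^2\ud{\xi}$. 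If any quantity on the right-hand side of the asserted inequality is infinite there is nothing to prove, and if $\hm{\i f}{0}=0$ then $\hat{f}=0$ a.e.\ and both sides vanish, so we may assume all three integrals are positive and finite.

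For item (1), the exponent $\theta=s/(q+s+\lambda)$ is chosen precisely so that $-2\lambda\theta+2(q+s)(1-\theta)=2q$, which makes
\begin{equation}
\abs{\xi}^{2q}\abs{\hat{f}(\xi)}^2=\bigg(\abs{\xi}^{-2\lambda}\abs{\hat{f}(\xi)}^2\bigg)^{\theta}\bigg(\abs{\xi}^{2(q+s)}\abs{\hat{f}(\xi)}^2\bigg)^{1-\theta}
\end{equation}
a pointwise identity in $\xi$. Integrating over $R^2$ and applying H\"older's inequality with conjugate exponents $1/\theta$ and $1/(1-\theta)$ yields $\int\abs{\xi}^{2q}\abs{\hat{f}}^2\leq\big(\int\abs{\xi}^{-2\lambda}\abs{\hat{f}}^2\big)^{\theta}\big(\int\abs{\xi}^{2(q+s)}\abs{\hat{f}}^2\big)^{1-\theta}$, which is the claim after invoking Plancherel once more.

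For item (2), I would use that the plain Poisson integral $\pp f$ of (\ref{appendix poisson integral}) satisfies $\pp f(x',0)=f(x')$, so that $D^qf$ is the restriction of $D^q(\pp f)$ to $\Sigma$ and therefore $\lnm{D^qf}{\infty}\leq\lnm{\nabla^q\pp f}{\infty}$ (the horizontal monomials $D^q$ being among the full derivatives $\nabla^q$). Item (2) of Lemma~\ref{appendix interpolation 2}, already established, bounds $\lnm{\nabla^q\pp f}{\infty}^2$ by $\big(\hm{\i f}{0}^2\big)^{\theta}\big(\hm{D^{q+s}f}{r}^2\big)^{1-\theta}$ with the \emph{same} exponent $\theta=(r+s-1)/(q+s+r+\lambda)$, which is exactly the desired inequality. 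If a self-contained argument is wanted instead, one bounds $\lnm{D^qf}{\infty}^2\ls\big(\int_{R^2}\abs{\xi}^{q}\abs{\hat{f}(\xi)}\ud{\xi}\big)^2$ by the inverse Fourier transform, splits $R^2=\{\abs{\xi}\leq\mu\}\cup\{\abs{\xi}>\mu\}$, applies Cauchy--Schwarz on the two pieces against the weights $\abs{\xi}^{-2\lambda}$ and $(1+\abs{\xi}^2)^{r}\abs{\xi}^{2(q+s)}$ respectively, and optimizes over $\mu>0$; the hypothesis $r+s>1$ is exactly what makes the high-frequency dual integral $\int_{\abs{\xi}>\mu}(1+\abs{\xi}^2)^{-r}\abs{\xi}^{-2s}\ud{\xi}$ finite, and balancing the two powers of $\mu$ produces the stated $\theta$.

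The two computations are routine; the only point requiring genuine care is that item (1) cannot be obtained by the same reduction to the Poisson integral, since (\ref{appendix poisson integral estimate}) shows $\hm{\nabla^q\pp f}{0}^2$ controls only $\hms{f}{q-1/2}{\Sigma}^2\sim\int\abs{\xi}^{2q-1}\abs{\hat{f}}^2$ rather than $\int\abs{\xi}^{2q}\abs{\hat{f}}^2$ — the Poisson extension gains half a vertical derivative of integrability — so item (1) must be handled directly in Fourier as above. Everything else is bookkeeping of the degenerate cases and of the absolute constants absorbed into $\ls$, in particular the combinatorial constant comparing $\sum_{\abs{\alpha}=q}\abs{\xi^{\alpha}}^2$ with $\abs{\xi}^{2q}$.
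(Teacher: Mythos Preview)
Your argument is correct. The paper itself does not supply a proof of this lemma: the appendix merely records the statement and cites \cite{book9} (lemma~A.7 there), so there is no in-paper argument to compare against at the level of detail you give. Your Fourier-side H\"older computation for item~(1) and the frequency-splitting (or, equivalently, the reduction to Lemma~\ref{appendix interpolation 2} via the harmonic Poisson extension) for item~(2) are exactly the standard route and are what the cited reference does; in particular your check that $-2\lambda\theta+2(q+s)(1-\theta)=2q$ and your optimization in $\mu$ producing $\theta=(r+s-1)/(q+s+r+\lambda)$ are both right, and your remark that the Poisson-extension shortcut fails for the $L^2$ estimate because of the half-derivative gain in \eqref{appendix poisson integral estimate} is a genuine point worth keeping.
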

\begin{lemma}\label{appendix interpolation 4}
Let $f$ be defined in $\Omega$. Let $\lambda\geq0$,
$q,s\in\mathbb{N}$, and $r\geq0$. Then we have the following
estimates.
\begin{enumerate}
\item
Let
\begin{eqnarray}
\theta=\frac{s}{q+s+\lambda} \quad
1-\theta=\frac{q+\lambda}{q+s+\lambda}
\end{eqnarray}
Then
\begin{eqnarray}
\hm{D^q f}{0}^2\ls\bigg(\hm{\i
f}{0}^2\bigg)^{\theta}\bigg(\hm{D^{q+s}f}{0}^2\bigg)^{1-\theta}
\end{eqnarray}
\item
Let $r+s>1$
\begin{eqnarray}
\theta=\frac{r+s-1}{q+s+r+\lambda} \quad
1-\theta=\frac{q+\lambda+1}{q+s+r+\lambda}
\end{eqnarray}
Then
\begin{eqnarray}
\lnm{D^q f}{\infty}^2\ls\bigg(\hm{\i
f}{1}^2\bigg)^{\theta}\bigg(\hm{D^{q+s}f}{r+1}^2\bigg)^{1-\theta}
\end{eqnarray}
and
\begin{eqnarray}
\lnms{D^q f}{\infty}{\Sigma}^2\ls\bigg(\hm{\i
f}{1}^2\bigg)^{\theta}\bigg(\hm{D^{q+s}f}{r+1}^2\bigg)^{1-\theta}
\end{eqnarray}
\end{enumerate}
\end{lemma}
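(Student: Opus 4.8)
The plan is to pass to the Fourier side in the horizontal variables $x'\in R^2$, where $D^q$, $\nabla^q$ and $\i$ all act as Fourier multipliers, namely $|\xi|^q$, $|\xi|^q$ and $|\xi|^{-\lambda}$ respectively, and then to interpolate these multipliers pointwise in $\xi$. Throughout one treats $x_3$ as a parameter in $(-b,0)$ and takes the horizontal transform $\hat f(\xi,x_3)$ for each fixed $x_3$; the full $H^k(\Omega)$ norms are then recovered by integrating in $x_3$ and, where a vertical derivative is available, by the one-dimensional Sobolev embedding $H^1(-b,0)\hookrightarrow L^{\infty}(-b,0)$ or the trace inequality onto $\{x_3=0\}$.

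\textbf{Part (1).} By Parseval in $x'$, $\hms{D^qf(\cdot,x_3)}{0}{R^2}^2=\int_{R^2}|\xi|^{2q}\,|\hat f(\xi,x_3)|^2\,\ud{\xi}$. The exponent $\theta=s/(q+s+\lambda)$ is precisely the one for which the pointwise identity $|\xi|^{2q}=\big(|\xi|^{-2\lambda}\big)^{\theta}\big(|\xi|^{2(q+s)}\big)^{1-\theta}$ holds, since $-\lambda\theta+(q+s)(1-\theta)=q$. Applying H\"older's inequality in $\xi$ with conjugate exponents $1/\theta$ and $1/(1-\theta)$ yields $\int_{R^2}|\xi|^{2q}|\hat f|^2\,\ud{\xi}\ls\big(\int_{R^2}|\xi|^{-2\lambda}|\hat f|^2\,\ud{\xi}\big)^{\theta}\big(\int_{R^2}|\xi|^{2(q+s)}|\hat f|^2\,\ud{\xi}\big)^{1-\theta}$. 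Integrating this in $x_3$ over $(-b,0)$ and using H\"older once more in $x_3$ with the same pair of exponents gives $\hm{D^qf}{0}^2\ls\big(\hm{\i f}{0}^2\big)^{\theta}\big(\hm{D^{q+s}f}{0}^2\big)^{1-\theta}$, after absorbing the $x_3$-integration built into the definition of $\i f$ on $\Omega$ via Minkowski's integral inequality and Cauchy--Schwarz; on $\Sigma$ the same computation applies verbatim with the $x_3$ variable removed.

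\textbf{Part (2).} For the $L^{\infty}$ bounds I would start from $\lnms{D^qf(\cdot,x_3)}{\infty}{R^2}\le\int_{R^2}|\xi|^q\,|\hat f(\xi,x_3)|\,\ud{\xi}$ and split the integrand as $|\xi|^q|\hat f|=w(\xi)\cdot\big(|\xi|^{-\lambda}|\hat f|\big)^{\theta}\big((1+|\xi|^2)^{r/2}|\xi|^{q+s}|\hat f|\big)^{1-\theta}$, where the leftover weight $w$ is an explicit power of $|\xi|$ times $(1+|\xi|^2)^{-r(1-\theta)/2}$; with $\theta=(r+s-1)/(q+s+r+\lambda)$, the hypothesis $r+s>1$ is exactly what makes $w$ square-integrable on $R^2$, the low-frequency part being handled by a routine dyadic splitting. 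Cauchy--Schwarz in $\xi$, followed by H\"older with exponents $1/\theta$, $1/(1-\theta)$ on the remaining factor, then bounds $\int_{R^2}|\xi|^q|\hat f|\,\ud{\xi}$ by $\nm{w}_{L^2}$ times $\big(\hm{\i f}{0}^2\big)^{\theta/2}\big(\hm{D^{q+s}f}{r}^2\big)^{(1-\theta)/2}$ at each height $x_3$. For $f$ on $\Sigma$ this already gives $\lnms{D^qf}{\infty}{\Sigma}^2\ls\big(\hms{\i f}{0}{\Sigma}^2\big)^{\theta}\big(\hms{D^{q+s}f}{r}{\Sigma}^2\big)^{1-\theta}$; for $f$ on $\Omega$ one composes with $H^1(-b,0)\hookrightarrow L^{\infty}$ in $x_3$, which upgrades the two factors to $\hm{\i f}{1}$ and $\hm{D^{q+s}f}{r+1}$, and obtains the $L^{\infty}(\Sigma)$ version by using the trace estimate $\lnms{g}{\infty}{\Sigma}\ls\nm{g}_{H^1((-b,0);L^{\infty}(R^2))}$ in place of that embedding.

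\textbf{Main obstacle.} The only genuine difficulty is the bookkeeping: choosing the splitting weight so that its $|\xi|$-power is integrable exactly under the stated relation among $\theta$, $q$, $s$, $r$ and $\lambda$ --- in particular the low-frequency/boundary case, which is cleanest via a dyadic decomposition --- and correctly threading the vertical variable, including the fact that the $\Omega$ and $\Sigma$ statements differ by precisely one order of $x_3$-regularity and that $\i f$ on $\Omega$ carries its own $x_3$-integration in the definition. No ideas beyond Parseval, H\"older/Cauchy--Schwarz on the frequency side, and Sobolev/trace in $x_3$ are needed; indeed these estimates are Lemmas A.6--A.8 and Lemma 3.18 of \cite{book9}, which one may alternatively invoke directly.
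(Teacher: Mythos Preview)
The paper does not prove this lemma at all: it simply records that the estimates are established in lemmas A.6--A.8 and lemma 3.18 of \cite{book9} and omits the proof. Your proposal ends by citing exactly the same source, so at the level of what the paper actually does, you are in complete agreement.

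The Fourier-side sketch you give beyond the citation is the standard argument and is correct in outline: the pointwise multiplier identity $|\xi|^{2q}=(|\xi|^{-2\lambda})^{\theta}(|\xi|^{2(q+s)})^{1-\theta}$ with H\"older in $\xi$ and then in $x_3$ handles part (1), and for part (2) the Hausdorff--Young/Cauchy--Schwarz splitting with a weight whose $L^2(R^2)$-integrability is equivalent to $r+s>1$, followed by the one-dimensional embedding $H^1(-b,0)\hookrightarrow L^{\infty}(-b,0)$ to pass from slicewise bounds to $\Omega$ and $\Sigma$ bounds, is exactly the mechanism behind the cited lemmas. One small point of caution: the paper's displayed definition of $\i f$ on $\Omega$ carries an $x_3$-integral that appears to be a typo (the left-hand side is still a function of $x_3$); your remark about ``absorbing the $x_3$-integration built into the definition'' should not be read as an additional analytic step but simply as using the intended slicewise definition $\widehat{\i f}(\xi,x_3)=|\xi|^{-\lambda}\hat f(\xi,x_3)$, after which the H\"older-in-$x_3$ argument goes through cleanly.
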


\subsection{Continuity and Temporal Derivative}

In the following, we give two important lemmas to connect $L^2H^k$
norm and $L^{\infty}H^k$ norm.
\begin{lemma}\label{Appendix connection 1}
suppose that $u\in L^2([0,T];H^{s_1}(\Omega))$ and $\partial_tu\in
L^2([0,T];H^{s_2}(\Omega))$ for $s_1\geq s_2\geq0$ and
$s=(s_1+s_2)/2$. Then $u\in C^0([0,T];H^s(\Omega))$ and satisfies
the estimate
\begin{equation}
\|u\|_{L^{\infty}H^s}^2\leq
\|u(0)\|_{H^s}^2+\|u\|_{L^2H^{s_1}}^2+\|\partial_tu\|_{L^2H^{s_2}}^2
\end{equation}
where the $L^2H^k$ norm and $L^{\infty}H^k$ norm are evaluated in
$[0,T]$.
\end{lemma}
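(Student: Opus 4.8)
The statement is the classical Lions--Magenes interpolation-in-time lemma. The plan is to prove it by mollification in the time variable, following the standard argument for Gelfand triples. Observe first that, since $2s=s_1+s_2$ and $s_1\ge s_2\ge 0$, the Sobolev spaces $H^{s_1}(\Omega)\hookrightarrow H^{s}(\Omega)\hookrightarrow H^{s_2}(\Omega)$ form a continuous chain of dense embeddings, and — realizing the norms through the usual Fourier/Bessel-potential description, or an equivalent realization adapted to the slab $\Omega=\Sigma\times(-b_0,0)$ — one has the bilinear bound $|(v,w)_{H^s}|\le\|v\|_{H^{s_2}}\|w\|_{H^{s_1}}$ by Cauchy--Schwarz, because $\langle\xi\rangle^{2s}=\langle\xi\rangle^{s_2}\langle\xi\rangle^{s_1}$. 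This is the only structural input beyond soft functional analysis.

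First I would extend $u$ in time to a function $\tilde u$ on all of $\mathbb{R}$ (by reflection across $t=0$ and $t=T$ followed by a cutoff), so that $\tilde u\in L^2(\mathbb{R};H^{s_1})$ and $\partial_t\tilde u\in L^2(\mathbb{R};H^{s_2})$ with norms controlled by those of $u$ over $[0,T]$, and then set $u_\varepsilon=\tilde u\ast\rho_\varepsilon$ with $\rho_\varepsilon$ a standard mollifier in $t$. Each $u_\varepsilon$ is smooth in $t$ as an $H^{s_1}$-valued map, with $\partial_t u_\varepsilon=(\partial_t\tilde u)\ast\rho_\varepsilon$ smooth as an $H^{s_2}$-valued map. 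For such smooth-in-time functions the scalar map $t\mapsto\|u_\varepsilon(t)\|_{H^s}^2$ is $C^1$ with derivative $2(\partial_t u_\varepsilon(t),u_\varepsilon(t))_{H^s}$, so the bilinear bound together with Young's inequality gives $\frac{d}{dt}\|u_\varepsilon(t)\|_{H^s}^2\le\|\partial_t u_\varepsilon(t)\|_{H^{s_2}}^2+\|u_\varepsilon(t)\|_{H^{s_1}}^2$. Integrating from $0$ to $t$ and taking the supremum over $t\in[0,T]$ yields the desired inequality for each $u_\varepsilon$, with the right-hand side bounded uniformly in $\varepsilon$.

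It remains to pass to the limit $\varepsilon\to0$ and to extract the continuity statement. Standard properties of mollification give $u_\varepsilon\to u$ in $L^2([0,T];H^{s_1})$ and $\partial_t u_\varepsilon\to\partial_t u$ in $L^2([0,T];H^{s_2})$. Applying the estimate just established to the difference $u_\varepsilon-u_{\varepsilon'}$ shows that $(u_\varepsilon)$ is Cauchy in $C^0([0,T];H^s)$, hence converges there to some $v\in C^0([0,T];H^s)$; since $u_\varepsilon\to u$ in $L^2H^{s_1}\hookrightarrow L^2H^s$ as well, we get $v=u$ almost everywhere, so $u$ has a representative in $C^0([0,T];H^s)$, and in particular $u(0)$ and the pointwise values $u(t)$ are well defined and depend continuously on $t$. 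Evaluating the $\varepsilon$-estimate at this representative and using $u_\varepsilon(0)\to u(0)$ in $H^s$ (a consequence of the $C^0H^s$ convergence) delivers the claimed bound.

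The main technical obstacle is the endpoint bookkeeping: building the time-extension $\tilde u$ so that $\partial_t\tilde u\in L^2(\mathbb{R};H^{s_2})$ with no spurious boundary contributions at $t=0,T$ and with controlled norms, and then legitimately evaluating all quantities at $t=0$. The bilinear inequality $|(v,w)_{H^s}|\le\|v\|_{H^{s_2}}\|w\|_{H^{s_1}}$ is immediate on $\mathbb{R}^n$ or on a torus; on the slab $\Omega$ one should fix a concrete realization of $H^s(\Omega)$ — e.g. via even reflection in $x_3$, compatible with the homogeneous boundary conditions used elsewhere, or via a bounded extension operator — for which Parseval still applies, and this is the only place where the geometry of $\Omega$ enters. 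Alternatively the lemma can be quoted from the Lions--Magenes trace theorem for the interpolation couple $(H^{s_1},H^{s_2})$, using $[H^{s_1},H^{s_2}]_{1/2}=H^s$, with the quantitative constant tracked through that theorem's mollification proof; I would nonetheless include the self-contained argument above.
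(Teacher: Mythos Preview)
Your argument is correct and rests on the same core computation as the paper: both use the Fourier identity $\langle\xi\rangle^{2s}=\langle\xi\rangle^{s_1}\langle\xi\rangle^{s_2}$ to bound $\frac{d}{dt}\|u(t)\|_{H^s}^2$ by $\|u(t)\|_{H^{s_1}}^2+\|\partial_t u(t)\|_{H^{s_2}}^2$ and then integrate in time, after reducing to a full-space setting via a Sobolev extension. The paper's proof simply writes down this differentiation and estimate directly, whereas you supply the mollification-in-time layer that makes the pointwise differentiation of $t\mapsto\|u(t)\|_{H^s}^2$ legitimate and yields the $C^0([0,T];H^s)$ continuity as a genuine limit statement; this is more careful but not a different idea.
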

\begin{proof}
Considering the extension theorem in Sobolev space, we only need to
prove this result in the $R^n$ case. The periodic case can be
derived in a similar fashion. Using Fourier transform,
\begin{eqnarray*}
\partial_t\|u(t)\|_{H^s}^2&=&2\mathfrak{R}\bigg(\int_{R^n}\langle\xi\rangle^{2s}\hat{u}(\xi,t)\overline{\partial_t\hat{u}(\xi,t)}d\xi\bigg)
\leq
2\int_{R^n}\langle\xi\rangle^{2s}|\hat{u}(\xi,t)||\partial_t\hat{u}(\xi,t)|d\xi\\
&=&2\int_{R^n}\langle\xi\rangle^{s_1}|\hat{u}(\xi,t)|\langle\xi\rangle^{s_2}|\partial_t\hat{u}(\xi,t)|d\xi\leq
\int_{R^n}\langle\xi\rangle^{2s_1}|\hat{u}(\xi,t)|^2d\xi+\int_{R^n}\langle\xi\rangle^{2s_2}|\partial_t\hat{u}(\xi,t)|^2d\xi\\
&=&\|u(t)\|^2_{H^{s_1}}+\|\partial_tu(t)\|^2_{H^{s_2}}
\end{eqnarray*}
So integrate with respect to time on $[0,t]$
\begin{displaymath}
\|u(t)\|_{H^s}^2\leq
\|u(0)\|_{H^s}^2+\|u\|_{L^2H^{s_1}}^2+\|\partial_tu\|_{L^2H^{s_2}}^2
\end{displaymath}
\end{proof}
The following lemma shows the estimate in another direction.
\begin{lemma}\label{Appendix connection 2}
For any $u\is{k}$ within $[0,T]$, we must have $u\ts{k}$ and
satisfies the estimate
\begin{equation}
\|u\|_{L^2H^k}^2\leq T\|u\|^2_{L^{\infty}H^k}
\end{equation}
\end{lemma}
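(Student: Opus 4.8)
The statement is essentially a restatement of the elementary inclusion $L^\infty([0,T];H^k)\hookrightarrow L^2([0,T];H^k)$ on a finite interval, so the plan is short. First I would observe that for $u\is{k}$ the function $t\mapsto\hm{u(t)}{k}^2$ is (essentially) bounded on $[0,T]$ by the constant $\inm{u}{k}^2$, i.e. $\hm{u(t)}{k}^2\le\inm{u}{k}^2$ for a.e. $t\in[0,T]$. In particular this function is measurable and integrable over the finite interval $[0,T]$, which is exactly the assertion $u\ts{k}$.

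Next I would integrate this pointwise-in-time bound:
\begin{equation}
\tnm{u}{k}^2=\int_0^T\hm{u(t)}{k}^2\,\ud{t}\le\int_0^T\inm{u}{k}^2\,\ud{t}=T\,\inm{u}{k}^2,
\end{equation}
which is the desired estimate. No regularity of $\dt u$ or Fourier-side argument (as in Lemma \ref{Appendix connection 1}) is needed here; the only ingredients are the definition of the two norms and the finiteness of the time interval.

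There is no real obstacle: the single point worth a sentence of care is the measurability of $t\mapsto\hm{u(t)}{k}^2$, which follows from $u\is{k}$ together with the standard fact that the $H^k(\Omega)$-norm is a measurable function of $t$ for any element of a Bochner space $L^p([0,T];H^k(\Omega))$. Everything else is the one-line computation displayed above, so the write-up should consist of that display preceded by the remark on measurability and the a.e. bound.
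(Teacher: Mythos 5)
Your proposal is correct and matches the paper's proof, which simply says the result follows from the definitions of the two norms. The only addition on your end is the measurability remark, which is a fair bit of extra care but does not change the route taken.
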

\begin{proof}
The result is simply based on the definition of these two norms.
\end{proof}

\subsection{Extension Theorem}

The following are two extension theorems which will be used to
construct start point of iteration from initial data in proving
wellposedness of Naiver-Stokes-transport system. Since it is
identical as lemma A.5 and A.6 in \cite{book1}, we omit the proof
here.
\begin{lemma}\label{Appendix extension 1}
Suppose that $\dt^ju(0)\s{2N-2j}(\Omega)$ for $j=0,\ldots,N$, then
there exists a extension $u$ achieving the initial data, such that
\begin{displaymath}
\dt^ju\in L^2([0,\infty);H^{2N-2j+1}(\Omega))\cap
L^{\infty}([0,\infty); H^{2N-2j}(\Omega))
\end{displaymath}
for $j=0,\ldots,N$. Moreover,
\begin{eqnarray}
\sum_{j=0}^{N}\tnm{\dt^ju}{2N-2j+1}^2+\inm{\dt^ju}{2N-2j}^2\ls
\sum_{j=0}^{N}\hm{\dt^ju(0)}{2N-2j}^2
\end{eqnarray}
\end{lemma}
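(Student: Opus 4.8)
The plan is to reduce the problem to the whole space by a spatial extension and then write the temporal extension down explicitly as a Fourier multiplier carrying a parabolically scaled cutoff. Set $u_j := \partial_t^j u(0) \in H^{2N-2j}(\Omega)$ for $j = 0,\ldots,N$. First I would apply an extension operator $E\colon H^k(\Omega)\to H^k(\mathbb{R}^3)$ bounded simultaneously for all $k$ in the range $0\le k\le 2N$ — for instance Stein's extension operator, or in the periodic case a higher-order reflection in $x_3$ — to obtain $\bar u_j := E u_j \in H^{2N-2j}(\mathbb{R}^3)$ with $\|\bar u_j\|_{H^{2N-2j}(\mathbb{R}^3)} \lesssim \|u_j\|_{H^{2N-2j}(\Omega)}$. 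It then suffices to build an extension on $\mathbb{R}^3\times[0,\infty)$ with the analogous bounds and restrict it to $\Omega\times[0,\infty)$, since restriction does not increase the norms.

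Fix $\phi\in C_c^\infty(\mathbb{R})$ with $\phi\equiv 1$ on a neighbourhood of $0$ and $\operatorname{supp}\phi\subset[-1,1]$, write $\langle\xi\rangle=(1+|\xi|^2)^{1/2}$, and let $\hat f$ denote the Fourier transform of $f$ in $x\in\mathbb{R}^3$. Define the extension by
\[
\hat u(\xi,t)\;=\;\sum_{j=0}^N \frac{t^j}{j!}\,\phi\!\big(t\langle\xi\rangle^2\big)\,\hat{\bar u}_j(\xi).
\]
Since $\partial_t^m\big[\phi(t\langle\xi\rangle^2)\big]\big|_{t=0}=\langle\xi\rangle^{2m}\phi^{(m)}(0)$ equals $1$ for $m=0$ and vanishes for $m\ge 1$, the Leibniz rule gives $\partial_t^k\hat u(\xi,0)=\hat{\bar u}_k(\xi)$ for $0\le k\le N$; restricting back to $\Omega$ recovers $\partial_t^k u(0)=u_k$, so the extension achieves the prescribed initial data.

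For the estimates, differentiating in $t$ yields, with combinatorial constants $c_{m,i}$,
\[
\partial_t^m\hat u(\xi,t)=\sum_{j=0}^N\sum_{i=0}^{\min(m,j)}c_{m,i}\,\frac{t^{\,j-i}}{(j-i)!}\,\langle\xi\rangle^{2(m-i)}\,\phi^{(m-i)}\!\big(t\langle\xi\rangle^2\big)\,\hat{\bar u}_j(\xi),
\]
and on the support of $\phi^{(m-i)}(t\langle\xi\rangle^2)$ one has $t\langle\xi\rangle^2\lesssim 1$, so that for $0\le i\le j$
\[
t^{\,j-i}\langle\xi\rangle^{2(m-i)}\;=\;\big(t\langle\xi\rangle^2\big)^{j-i}\,\langle\xi\rangle^{2(m-j)}\;\lesssim\;\langle\xi\rangle^{2(m-j)}.
\]
By Plancherel this gives $\|\partial_t^m u(t)\|_{H^{2N-2m}}^2\lesssim\sum_j\int\langle\xi\rangle^{2(2N-2m)+4(m-j)}|\hat{\bar u}_j(\xi)|^2\,d\xi=\sum_j\|\bar u_j\|_{H^{2N-2j}}^2$, uniformly in $t$, hence the $L^\infty([0,\infty);H^{2N-2m})$ bound. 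For the $L^2$-in-time bound I would integrate first in $t$ and use that, for each fixed $\xi$, the integrand is supported in $\{t\lesssim\langle\xi\rangle^{-2}\}$; the substitution $s=t\langle\xi\rangle^2$ gives $\int_0^\infty\big(t^{\,j-i}\langle\xi\rangle^{2(m-i)}\big)^2\,\mathbf{1}_{\{t\langle\xi\rangle^2\lesssim 1\}}\,dt\lesssim\langle\xi\rangle^{4(m-j)-2}$, and combined with the weight $\langle\xi\rangle^{2(2N-2m+1)}$ this again produces $\langle\xi\rangle^{4N-4j}$, so $\int_0^\infty\|\partial_t^m u\|_{H^{2N-2m+1}}^2\,dt\lesssim\sum_j\|\bar u_j\|_{H^{2N-2j}}^2$. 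Summing over $m=0,\ldots,N$ and undoing the spatial extension yields the stated estimate.

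The construction is elementary, so the only real work is the bookkeeping of exponents: one must check that the parabolic cutoff $\phi(t\langle\xi\rangle^2)$ trades $t$-integrability for spatial regularity at precisely the rate matching the pairs $(2N-2j,\,2N-2j+1)$, for every cross term in the Leibniz expansion, and — in the infinite case $\Sigma=\mathbb{R}^2$ — that the chosen spatial extension operator is indeed bounded on $H^k(\Omega)$ for all $k$ in the relevant range simultaneously. I expect this routine bookkeeping, rather than any conceptual point, to be the main part of the argument; the result then coincides with Lemma A.5--A.6 of \cite{book1}.
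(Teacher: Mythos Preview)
The paper does not actually prove this lemma: it simply refers to Lemma~A.5--A.6 of \cite{book1} and omits the argument. Your construction---spatial extension to $\mathbb{R}^3$ followed by the parabolically scaled Fourier cutoff $\phi(t\langle\xi\rangle^2)$---is correct and is the standard way to build such temporal extensions; the exponent bookkeeping you outline (using $t\langle\xi\rangle^2\lesssim 1$ on the support and the substitution $s=t\langle\xi\rangle^2$) checks out and produces exactly the required $L^\infty H^{2N-2m}$ and $L^2 H^{2N-2m+1}$ bounds. This is almost certainly the same argument as in the cited reference, so there is nothing to compare.
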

\begin{lemma}\label{Appendix extension 2}
Suppose that $\dt^jp(0)\s{2N-2j-1}(\Omega)$ for $j=0,\ldots,N-1$,
then there exists a extension $p$ achieving the initial data, such
that
\begin{displaymath}
\dt^jp\in L^2([0,\infty);H^{2N-2j}(\Omega))\cap
L^{\infty}([0,\infty); H^{2N-2j-1}(\Omega))
\end{displaymath}
for $j=0,\ldots,N-1$. Moreover,
\begin{eqnarray}
\sum_{j=0}^{N-1}\tnm{\dt^jp}{2N-2j}^2+\inm{\dt^jp}{2N-2j-1}^2\ls
\sum_{j=0}^{N-1}\hm{\dt^jp(0)}{2N-2j-1}^2
\end{eqnarray}
\end{lemma}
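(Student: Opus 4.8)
The statement is the pressure analogue of Lemma \ref{Appendix extension 1}, and the construction is purely functional-analytic, so the plan is to repeat the parabolic Fourier-multiplier extension used there. Write $p_j:=\dt^jp(0)\in H^{s_j}(\Omega)$ with $s_j=2N-2j-1$ for $j=0,\dots,N-1$, and record the parabolic gap $s_i-s_j=2(j-i)$ for $i\le j$. First I would pass to the whole space: let $\mathscr{E}\colon H^s(\Omega)\to H^s(\mathbb{R}^3)$ be a bounded Sobolev (Stein) extension operator, carry out the construction on $\mathbb{R}^3\times[0,\infty)$, and restrict back to $\Omega\times[0,\infty)$, the restriction being bounded $H^s(\mathbb{R}^3)\to H^s(\Omega)$ at every level so that no constants are lost.

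The core ingredient is a single-datum extension. Fix $\phi\in\mathcal{S}(\mathbb{R})$ with $\phi(0)=1$ and $\phi^{(m)}(0)=0$ for every $m\ge1$ (for instance $\phi\equiv1$ near the origin), and for $f\in H^s(\mathbb{R}^3)$ set
\[
(\mathscr{T}f)(x,t)=\mathscr{F}^{-1}_\xi\big[\widehat f(\xi)\,\phi(t\langle\xi\rangle^2)\big].
\]
Then $\dt^m(\mathscr{T}f)(x,t)=\mathscr{F}^{-1}_\xi\big[\widehat f(\xi)\langle\xi\rangle^{2m}\phi^{(m)}(t\langle\xi\rangle^2)\big]$, so $(\mathscr{T}f)(x,0)=f$ while $\dt^m(\mathscr{T}f)(x,0)=0$ for $m\ge1$. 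Plancherel together with the substitution $\tau=t\langle\xi\rangle^2$ gives, for every $m\ge0$,
\[
\sup_{t\ge0}\|\dt^m\mathscr{T}f(t)\|_{H^{s-2m}}^2\le\|\phi^{(m)}\|_{L^\infty}^2\|f\|_{H^s}^2,\qquad
\int_0^\infty\|\dt^m\mathscr{T}f(t)\|_{H^{s-2m+1}}^2\,dt=\Big(\int_0^\infty|\phi^{(m)}(\tau)|^2\,d\tau\Big)\|f\|_{H^s}^2,
\]
both finite since $\phi$ is Schwartz. Thus $\mathscr{T}f$ extends $f$, has all positive-order time derivatives vanishing at $t=0$, and lies in the expected parabolic scale $\dt^m\mathscr{T}f\in L^2H^{s-2m+1}\cap L^\infty H^{s-2m}$.

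Now choose $\chi\in C_c^\infty([0,\infty))$ with $\chi\equiv1$ near $0$ and $\chi^{(m)}(0)=0$ for $m\ge1$, and set
\[
p(x,t)=\sum_{j=0}^{N-1}\frac{t^{j}}{j!}\,\chi(t)\,(\mathscr{T}\mathscr{E}p_j)(x,t).
\]
Compact support in time makes every summand lie in $L^2([0,\infty);H^k)\cap L^\infty([0,\infty);H^k)$ for the relevant $k$. For the initial conditions, apply $\dt^j$ and Leibniz: the $i$-th summand contributes $\sum_k\binom{j}{k}\big(\dt^k\tfrac{t^i}{i!}\big)\dt^{j-k}\!\big(\chi\,\mathscr{T}\mathscr{E}p_i\big)$, and at $t=0$ the factor $\dt^{j-k}(\chi\,\mathscr{T}\mathscr{E}p_i)(0)$ vanishes unless $j-k=0$ while $\dt^k(t^i/i!)(0)$ vanishes unless $k=i$; hence only $i=j$ survives and $\dt^jp(\cdot,0)=(\mathscr{T}\mathscr{E}p_j)(\cdot,0)|_{\Omega}=p_j$. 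For the regularity estimate, the worst spatial-regularity term from the $i$-th summand ($i\le j$) is $\dt^{j-i}\mathscr{T}\mathscr{E}p_i$, which by the single-datum bounds sits in $L^\infty H^{s_i-2(j-i)}=L^\infty H^{2N-2j-1}$ and in $L^2H^{s_i-2(j-i)+1}=L^2H^{2N-2j}$, exactly the target spaces for $\dt^jp$; summing over $i,j$ and collecting the constants $\|\phi^{(m)}\|_{L^\infty}$, $\int|\phi^{(m)}|^2$, $\|\chi\|_{C^m}$ and the Stein/restriction norms yields $\sum_{j=0}^{N-1}\tnm{\dt^jp}{2N-2j}^2+\inm{\dt^jp}{2N-2j-1}^2\ls\sum_{j=0}^{N-1}\hm{\dt^jp(0)}{2N-2j-1}^2$.

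There is no genuine obstacle here; the construction mirrors Lemma \ref{Appendix extension 1} line for line, with velocity and pressure playing symmetric roles, and it coincides with Lemma A.6 of \cite{book1}. The only delicate bookkeeping is the parabolic counting: each time derivative costs exactly two spatial derivatives in the $L^\infty H$ scale and the $L^2$-in-time norm recovers exactly one, so the data scale $s_j=2N-2j-1$ and the solution scale $s_j+1=2N-2j$ stay consistent at every order, and the profile $\phi$ and cutoff $\chi$ must vanish to infinite order at $t=0$ so that the Taylor weights $t^j/j!$ isolate precisely the $j$-th datum.
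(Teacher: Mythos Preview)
The paper itself omits this proof, citing Lemma~A.6 of \cite{book1}; your parabolic Fourier-multiplier construction is indeed the standard one used there, and the extension $p$ you build does achieve the right initial traces and does lie in the stated spaces. However, the written regularity argument has a real gap. After applying $\dt^j$ to the $i$-th summand $\frac{t^i}{i!}\chi(t)\,\mathscr{T}\mathscr{E}p_i$, the Leibniz expansion produces, for every $0\le k\le\min(i,j)$, a term $\frac{t^{i-k}}{(i-k)!}\chi(t)\,\dt^{j-k}\mathscr{T}\mathscr{E}p_i$, not only the $k=i$ term. For $k<i$ this carries $j-k>j-i$ time derivatives on $\mathscr{T}\mathscr{E}p_i$, and your single-datum bound only places $\dt^{j-k}\mathscr{T}\mathscr{E}p_i$ in $L^\infty H^{s_i-2(j-k)}$, which is $2(i-k)$ orders \emph{below} the target $H^{2N-2j-1}$; simply bounding the prefactor $t^{i-k}\chi(t)$ in $L^\infty_t$ does not recover this loss. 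You also omit the summands with $i>j$, where the datum $p_i\in H^{2N-2i-1}$ already sits below the target regularity.

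The missing observation is that powers of $t$ trade back for spatial regularity in the specific operator $\mathscr{T}$: writing
\[
t^m\,\dt^c\mathscr{T}f=\mathscr{F}^{-1}\big[\hat f(\xi)\,\langle\xi\rangle^{2(c-m)}\,\psi(t\langle\xi\rangle^2)\big],\qquad \psi(\tau)=\tau^m\phi^{(c)}(\tau)\in\mathcal{S},
\]
shows that each factor of $t$ restores exactly two spatial orders. With this, every Leibniz term has effective derivative count $(j-k)-(i-k)=j-i$ on $\mathscr{T}\mathscr{E}p_i$ (and for $i>j$ one \emph{gains} regularity), which is precisely what your phrase ``the worst term is $\dt^{j-i}\mathscr{T}\mathscr{E}p_i$'' is implicitly asserting; the terms carrying derivatives of $\chi$ are harmless since $\chi'$ is supported away from $t=0$, where the Schwartz decay of $\phi$ gives arbitrary spatial regularity. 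Once this trade-off is stated, the rest of your argument closes. An alternative that avoids the issue entirely is to replace the Taylor weights $t^j/j!$ by profiles $\phi_j$ with $\phi_j^{(k)}(0)=\delta_{jk}$ and set $p=\sum_j\mathscr{F}^{-1}[\widehat{\mathscr{E}p_j}(\xi)\langle\xi\rangle^{-2j}\phi_j(t\langle\xi\rangle^2)]$, so that no Leibniz cross-terms arise.
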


\makeatletter
\renewcommand \theequation {%
B.%
\ifnum\c@subsection>\z@\@arabic\c@subsection.%
\fi\@arabic\c@equation} \@addtoreset{equation}{section}
\@addtoreset{equation}{subsection} \makeatother

\section{Estimates for Fundamental Equations}

\subsection{Transport Estimates}

Let $\Sigma$ be either infinite or periodic. Consider the equation
\begin{equation}\label{Appendix transport equation}
\left\{
\begin{array}{ll}
\dt\e+u\cdot D\e=g &in\quad\Sigma\times(0,T)\\
\e(t=0)=\ee
\end{array}
\right.
\end{equation}
We have the following estimate of the regularity solution to this
equation, which is a particular case of a more general result proved
in proposition 2.1 of \cite{book3}. Note that the result in
\cite{book3} is stated for $\Sigma=R^2$, but the same result holds
in the periodic case as described in \cite{book7}.
\begin{lemma}\label{Appendix transport estimate}
Let $\e$ be a solution to equation (\ref{Appendix transport
equation}). Then there exists a universal constant $C>0$ such that
for any $s\geq3$
\begin{eqnarray}
\inm{\e}{s}\leq
\exp\bigg(C\int_0^t\hm{Du(r)}{3/2}\ud{r}\bigg)\bigg(\hm{\ee}{s}+\int_0^t\hm{g(r)}{s}\ud{r}\bigg)
\end{eqnarray}
\end{lemma}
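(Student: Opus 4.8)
The final statement to prove is Lemma \ref{Appendix transport estimate}, the transport estimate: for a solution $\e$ of $\dt\e + u\cdot D\e = g$ on $\Sigma\times(0,T)$ with $\e(0)=\ee$, one has, for any $s\geq 3$,
\begin{eqnarray*}
\inm{\e}{s}\leq \exp\bigg(C\int_0^t\hm{Du(r)}{3/2}\ud{r}\bigg)\bigg(\hm{\ee}{s}+\int_0^t\hm{g(r)}{s}\ud{r}\bigg).
\end{eqnarray*}

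The plan is to reduce this to the general result of \cite{book3} (Danchin-type estimates for transport equations with $\dot H^{s}$ or $H^s$ data and a divergence-free-free velocity), since the statement itself advertises that it is a particular case of proposition 2.1 there. Concretely, I would first recall the general Besov/Sobolev transport estimate: if $\e$ solves $\dt\e+u\cdot\nabla\e = g$, then $\nm{\e(t)}_{H^s}\le e^{CV(t)}\big(\nm{\ee}_{H^s}+\int_0^t e^{-CV(r)}\nm{g(r)}_{H^s}\,dr\big)$ where $V(t)=\int_0^t\nm{\nabla u(r)}_{H^{s_0}}\,dr$ with $s_0>n/2$ (here $n=2$, so $s_0=3/2$ works), provided $s\le s_0+1$ or a commutator/paraproduct argument is used for larger $s$. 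Since we only need the cruder bound with $e^{-CV(r)}$ dropped (replaced by $1$ because $V$ is increasing), this immediately gives the stated inequality once we note $\int_0^t e^{-CV(r)}\nm{g(r)}_{H^s}dr \le \int_0^t\nm{g(r)}_{H^s}dr$.

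The key steps in order: (1) State that $u\in L^1([0,t];H^{s}(\Sigma))$ with $Du\in L^1([0,t];H^{3/2}(\Sigma))$ and $g\in L^1([0,t];H^s(\Sigma))$, so the hypotheses of \cite[Prop.~2.1]{book3} are met; in particular $H^{3/2}(\Sigma)\hookrightarrow L^\infty(\Sigma)$ since $\dim\Sigma=2$, which is what controls the flow and supplies the Lipschitz bound on $u$ needed to propagate $H^s$ regularity. (2) Apply the dyadic/Littlewood–Paley energy estimate from \cite{book3}: localize the equation at frequency $2^j$, commute $u\cdot\nabla$ past the projector $\Delta_j$, use the commutator estimate $\nm{[\Delta_j,u\cdot\nabla]\e}_{L^2}\lesssim c_j 2^{-js}\nm{\nabla u}_{H^{3/2}}\nm{\e}_{H^s}$ (with $\sum c_j^2\le 1$), and run Grönwall in each shell, then sum in $j$. (3) Invoke the remark that the same result holds in the periodic case (as in \cite{book7}), since the Littlewood–Paley machinery and commutator estimates carry over verbatim to $\Sigma=(L_1\mathbb{T})\times(L_2\mathbb{T})$. (4) Conclude by monotonicity of $V(t)=C\int_0^t\nm{Du}_{H^{3/2}}$ to absorb the $e^{-CV(r)}$ weight inside the source integral, producing exactly the claimed form, and take $\sup_{0\le t\le T}$ to get the $L^\infty H^s$ norm on the left.

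The main obstacle — really the only nontrivial point — is the commutator estimate driving the $H^s$-propagation for $s\geq 3 > s_0+1 = 5/2$, i.e. showing $\nm{[\Delta_j,u\cdot\nabla]\e}_{L^2}\lesssim c_j 2^{-js}\big(\nm{\nabla u}_{H^{3/2}}\nm{\e}_{H^s} + (\nm{\nabla u}_{H^{s}}\nm{\e}_{H^{3/2}}?)\big)$; naively the top-order term $\nm{\nabla u}_{H^s}\nm{D\e}_{L^\infty}$ appears and would spoil the linearity in $\e$. The resolution, which is precisely the content of \cite[Prop.~2.1]{book3}, is a Bony paraproduct decomposition showing that the genuinely dangerous contribution reorganizes into $\nm{D\e}_{L^\infty}\nm{\nabla u}_{H^{s}}$-type terms that are in fact bounded by $\nm{\nabla u}_{H^{3/2}}\nm{\e}_{H^s}$ because $H^{3/2}\hookrightarrow L^\infty$ lets one always place $L^\infty$ on the $\nabla u$ factor when it carries $\le s_0+1$ derivatives and on $D\e$ otherwise — so the final bound is linear in $\nm{\e}_{H^s}$ with coefficient $\nm{\nabla u}_{H^{3/2}}$ alone. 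Since this is exactly what \cite{book3} proves, and the paper explicitly cites it, I would not reprove it; I would simply cite it and verify the hypotheses, then carry out steps (3)–(4) which are routine.
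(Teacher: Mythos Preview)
Your proposal is correct and follows essentially the same approach as the paper: reduce to Danchin's transport estimate (Proposition~2.1 of \cite{book3}, with the periodic extension in \cite{book7}) and verify the hypotheses via a Sobolev--Besov embedding. The paper's proof is in fact a single sentence specifying the parameter choices $p=p_2=2$, $N=2$, $\sigma=s$ and invoking $H^{3/2}\hookrightarrow B^1_{2,\infty}\cap L^\infty$; note that Danchin's hypothesis is phrased in Besov terms, so you should record the embedding into $B^1_{2,\infty}$ (not just $L^\infty$), but this is immediate from $H^{3/2}=B^{3/2}_{2,2}\hookrightarrow B^{3/2}_{2,\infty}\hookrightarrow B^1_{2,\infty}$.
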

\begin{proof}
Use $p=p_2=2$, $N=2$ and $\Sigma=s$ in proposition 2.1 of
\cite{book7}, along with the embedding $H^{3/2}\hookrightarrow
B_{2,\infty}^1\cap L^{\infty}$.
\end{proof}
\begin{lemma}\label{appendix transport estimate}
Let $\e$ be a solution to (\ref{Appendix transport equation}). Then
there is a universal constant $C>0$ such that for any $0\leq s<2$
\begin{eqnarray}
\\
\sup_{0\leq r\leq t}\hms{\e(r)}{s}{\Sigma}\leq
\exp\bigg(C\int_0^t\hms{Du(r)}{3/2}{\Sigma}\ud{r}\bigg)\bigg(\hms{\ee}{s}{\Sigma}+\int_0^t\hms{g(r)}{s}{\Sigma}\ud{r}\bigg)\nonumber
\end{eqnarray}
\end{lemma}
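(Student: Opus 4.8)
The plan is to obtain Lemma~\ref{appendix transport estimate} in the same way as Lemma~\ref{Appendix transport estimate}, namely as a particular case of the general transport regularity theorem of \cite{book3} (and its periodic counterpart \cite{book7}), but now applied in the \emph{low} regularity window $0\le s<2$ rather than $s\ge 3$. The key point is that for $\Sigma=R^2$ the admissible range of regularity indices in that theorem is $-d/p<\sigma<1+d/p$, which for $d=2$ and $p=2$ reads $-1<\sigma<2$ and therefore contains the whole interval $[0,2)$; within this range the flow of $\dt\e+u\cdot D\e=g$ satisfies an estimate of exactly the stated shape, with the Gr\"onwall exponent controlled by $\int_0^t\nm{\nabla u(r)}_{B^{1}_{2,\infty}(\Sigma)\cap L^\infty(\Sigma)}\,\ud{r}$. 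I would then invoke the two-dimensional embedding $H^{3/2}(\Sigma)\hookrightarrow B^{1}_{2,\infty}(\Sigma)\cap L^\infty(\Sigma)$ to bound $\nm{\nabla u(r)}_{B^{1}_{2,\infty}\cap L^\infty}\ls\hms{Du(r)}{3/2}{\Sigma}$, which upgrades the exponential factor to $\exp(C\int_0^t\hms{Du(r)}{3/2}{\Sigma}\,\ud{r})$ and produces precisely the claimed inequality, the data term $\hms{\ee}{s}{\Sigma}$ and the forcing term $\int_0^t\hms{g(r)}{s}{\Sigma}\,\ud{r}$ being inherited verbatim.

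If instead a self-contained argument is wanted, I would run a Littlewood--Paley energy estimate. Applying a dyadic block $\Delta_j$ to the equation gives $\dt(\Delta_j\e)+u\cdot D(\Delta_j\e)=\Delta_j g+[\,u\cdot D,\Delta_j\,]\e$; testing against $\Delta_j\e$ and integrating by parts turns the transport term into $\tfrac12\int_\Sigma(D\cdot u)\abs{\Delta_j\e}^2$, which is harmless since $\nm{D\cdot u}_{L^\infty}\ls\hms{Du}{3/2}{\Sigma}$. For the commutator one uses Bony's paraproduct decomposition of $u_iD_i\e$: the low-high paraproduct part contributes a genuine commutator whose $\ell^2_j$-sum of $L^2$ norms is $\ls\nm{\nabla u}_{L^\infty}\hms{D\e}{s-1}{\Sigma}\ls\hms{Du}{3/2}{\Sigma}\hms{\e}{s}{\Sigma}$, while the remaining paraproduct and the resonant remainder are estimated directly, and it is here that $\nabla u\in B^{1}_{2,\infty}(\Sigma)$ (not merely $\nabla u\in L^\infty$) and the strict bound $s<2$ are used. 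Summing in $j$ and applying Gr\"onwall's inequality to the resulting differential inequality $\tfrac{d}{dr}\hms{\e(r)}{s}{\Sigma}\le C\hms{Du(r)}{3/2}{\Sigma}\hms{\e(r)}{s}{\Sigma}+\hms{g(r)}{s}{\Sigma}$ closes the estimate.

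The main obstacle is the product $u\cdot D\e$ at low regularity: when $1\le s<2$ the factor $D\e$ lies only in $H^{s-1}(\Sigma)$ with $s-1<1$, which is below the multiplier/algebra threshold in two dimensions, so crude product estimates would lose a derivative. The remedy, already internal to \cite{book3}, is to trade this deficit against the extra half-derivative carried by the velocity, i.e.\ to exploit $Du\in H^{3/2}(\Sigma)\hookrightarrow B^{1}_{2,\infty}(\Sigma)\cap L^\infty(\Sigma)$ via paradifferential calculus; this is exactly why the statement measures the velocity in $\hms{Du(r)}{3/2}{\Sigma}$. The periodic case is identical, with \cite{book7} replacing \cite{book3}.
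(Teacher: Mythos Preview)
Your proposal is correct and matches the paper's intent. The paper does not give a self-contained proof here but simply refers to lemma~A.9 of \cite{book9}; that lemma, in turn, is obtained in exactly the way you describe---by invoking Danchin's transport theorem (proposition~2.1 of \cite{book3}, with \cite{book7} for the periodic case) in the low-regularity range $-1<\sigma<2$ together with the embedding $H^{3/2}(\Sigma)\hookrightarrow B^{1}_{2,\infty}(\Sigma)\cap L^\infty(\Sigma)$, just as in the proof of Lemma~\ref{Appendix transport estimate} for $s\ge 3$. Your alternative self-contained Littlewood--Paley argument is a faithful unpacking of Danchin's proof and is not needed here, but there is nothing wrong with it.
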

\begin{proof}
The same as lemma A.9 in \cite{book9}.
\end{proof}

\subsection{Elliptic Estimates}

We need two different type of elliptic estimates in our proof either
in weak sense or strong sense.
\begin{lemma}(weak solution)\label{appendix elliptic estimate 1}
We call $(u,p)\in H^1_0(\Omega)\times H^0(\Omega)$ a weak solution
to the elliptic equation
\begin{equation}\label{appendix elliptic equation 1}
\left\{
\begin{array}{ll}
-\Delta u+\nabla p=\phi\s{-1}(\Omega)&in\quad\Omega\\
\nabla\cdot u=\psi\s{0}(\Omega)&in\quad\Omega\\
u=0&on\quad\Sigma\\
u=0&on\quad\Sigma_b
\end{array}
\right.
\end{equation}
if $\nabla\cdot u=\psi$ in $\Omega$ and $(u,p)$ satisfy the weak
formulation that for arbitrary $f\in H^1_0(\Omega)$, it holds that
\begin{eqnarray}\label{appendix temp 2}
\br{\dm u:\dm f}_{H^0}+\br{p, \nabla\cdot
f}_{H^0}=\br{\phi,f}_{H^{-1}}
\end{eqnarray}
where $\br{\cdot,\cdot}_{H^0}$ represents the inner product in
$H^0(\Omega)$ and $\br{\cdot, \cdot}_{H^{-1}}$ denotes the dual
pairing between $H^1_0(\Omega)$ and $H^{-1}(\Omega)$. Then we have
the estimate
\begin{eqnarray}\label{appendix elliptic weak estimate}
\hm{u}{1}^2+\hm{p}{0}^2\ls \hm{\phi}{-1}^2+\hm{\psi}{0}^2
\end{eqnarray}
\end{lemma}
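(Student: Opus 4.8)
The plan is to run the standard a priori argument for the constant-coefficient Stokes system, using only the solvability of the divergence equation and Korn's inequality, in parallel with the $\mathcal{A}$-Stokes theory of Section 2. First I would fix, by the $\mathcal{A}=I$ case of Lemma \ref{lagrange 1} (i.e. lemma 3.3 of \cite{book2} together with its periodic modification), a field $\bar u\in H^1_0(\Omega)$ with $\nabla\cdot\bar u=\psi$ and $\hm{\bar u}{1}\ls\hm{\psi}{0}$; note that the compatibility condition $\int_\Omega\psi=\int_\Omega\nabla\cdot u=0$ needed in the periodic case is automatic for any $u\in H^1_0(\Omega)$. Then $f=u-\bar u$ lies in the divergence-free subspace $Z=\{g\in H^1_0(\Omega):\nabla\cdot g=0\}$, so inserting this $f$ into the weak formulation (\ref{appendix temp 2}) kills the pressure term and leaves $\br{\dm u:\dm u}_{H^0}=\br{\dm u:\dm\bar u}_{H^0}+\br{\phi,u-\bar u}_{H^{-1}}$. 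Estimating the right-hand side by Cauchy--Schwarz, using Korn's inequality (Lemma \ref{appendix poincare 3}) in the form $\hm{u}{1}\ls\hm{\dm u}{0}$, which is available since $u\in H^1_0(\Omega)$, together with $\hm{\bar u}{1}\ls\hm{\psi}{0}$, and absorbing $\hm{\dm u}{0}^2$ via Young's inequality, yields the velocity bound $\hm{u}{1}\ls\hm{\phi}{-1}+\hm{\psi}{0}$.

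For the pressure I would reproduce, with $\mathcal{A}=I$, the construction behind Lemmas \ref{lagrange 2}--\ref{lagrange 3} and Proposition \ref{lagrange estimate}: the functional $\lambda(f)=\br{\phi,f}_{H^{-1}}-\br{\dm u:\dm f}_{H^0}$ on $H^1_0(\Omega)$ satisfies $\lambda|_Z=0$ and $\nm{\lambda}\ls\hm{\phi}{-1}+\hm{u}{1}\ls\hm{\phi}{-1}+\hm{\psi}{0}$. The operator sending $q\s{0}(\Omega)$ to the functional $f\mapsto\br{q,\nabla\cdot f}_{H^0}$ has closed range equal to the annihilator $Z^{\perp}$, the closedness again being a consequence of Lemma \ref{lagrange 1}, so there is a unique $p\s{0}(\Omega)$ (normalized to zero mean in the periodic case, where constants are automatically excluded in the infinite case) with $\br{p,\nabla\cdot f}_{H^0}=\lambda(f)$ for all $f\in H^1_0(\Omega)$ and $\hm{p}{0}\ls\nm{\lambda}\ls\hm{\phi}{-1}+\hm{\psi}{0}$. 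Combining this identity with (\ref{appendix temp 2}) shows that $(u,p)$ solves (\ref{appendix elliptic equation 1}) in the distributional sense; in particular any weak solution coincides with this $(u,p)$, hence obeys (\ref{appendix elliptic weak estimate}).

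I expect the only nontrivial ingredient to be the quantitative solvability of $\nabla\cdot\bar u=\psi$ with $\hm{\bar u}{1}\ls\hm{\psi}{0}$ on the horizontally infinite slab, and the induced closedness of the range of the divergence operator; both are already established in the paper (Lemma \ref{lagrange 1}), so no new work is required. The remaining steps are a routine Lax--Milgram / De Rham argument, structurally identical to the $\mathcal{A}$-Stokes estimates of Section 2, and the same scheme simultaneously furnishes existence and uniqueness of the weak solution.
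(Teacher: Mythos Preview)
Your proposal is correct and follows essentially the same route as the paper: both arguments lift the divergence constraint via the Beale-type solvability of $\nabla\cdot\bar u=\psi$ in $H^1_0$ (lemma 3.3 of \cite{book2} / Lemma \ref{lagrange 1}), use Korn's inequality to control $\hm{u}{1}$ from the pressureless identity, and then recover $p$ as a Lagrange multiplier via the closed-range argument of Lemmas \ref{lagrange 2}--\ref{lagrange 3} and Proposition \ref{lagrange estimate}. The only cosmetic difference is that the paper changes unknowns to $w=u-v$ and invokes Riesz to simultaneously obtain existence, whereas you keep $u$ and test directly with $f=u-\bar u$ to get the a priori bound; the ingredients and the logic are otherwise identical.
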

\begin{proof}
An obvious modification for the proof of lemma 3.3 in \cite{book2}
implies that for any $\psi\in H^0(\Omega)$, there exists a $v\in
H^1_0(\Omega)$ such that $\nabla\cdot v=\psi$ in $\Omega$ and
satisfies the estimate $\hm{v}{1}^2\ls\hm{\psi}{0}^2$. Then we may
change to the unknown $w=u-v$, which satisfies the system
\begin{equation}
\left\{
\begin{array}{ll}
-\Delta w+\nabla p=\phi+\Delta v&in\quad\Omega\\
\nabla\cdot w=0&in\quad\Omega\\
w=0&on\quad\Sigma\\
w=0&on\quad\Sigma_b
\end{array}
\right.
\end{equation}
Then we have the new weak formulation that for arbitrary $f\in
H^1_0(\Omega)$, it is valid that
\begin{eqnarray}\label{appendix temp 1}
\br{\dm w:\dm f}_{H^0}+\br{p, \nabla\cdot
f}_{H^0}=\br{\phi,f}_{H^{-1}}-\br{\dm v:\dm f}_{H^0}
\end{eqnarray}
We may further restrict that $f$ satisfies $\nabla\cdot f=0$ as the
pressureless weak formulation that
\begin{eqnarray}
\br{\dm w:\dm f}_{H^0}=\br{\phi,f}_{H^{-1}}-\br{\dm v:\dm f}_{H^0}
\end{eqnarray}
Riesz theorem implies that there exists a pressureless weak solution
$w\in H^1_0(\Omega)$ satisfying $\nabla\cdot w=0$ and the estimate
\begin{eqnarray}
\hm{w}{1}^2\ls
\hm{\phi}{-1}^2+\hm{v}{1}^2\ls\hm{\phi}{-1}^2+\hm{\psi}{0}^2
\end{eqnarray}
Then it is well-known that pressure can be taken as the Lagrangian
multiplier for the Navier-Stokes system. Similar to proposition 2.9
in \cite{book1}, we have that there exists a $p\in H^1(\Omega)$ such
that the weak formulation (\ref{appendix temp 1}) holds and it
satisfies
\begin{eqnarray}
\hm{p}{0}^2\ls\hm{\phi}{-1}^2+\hm{\psi}{0}^2+\hm{w}{1}^2\ls\hm{\phi}{-1}^2+\hm{\psi}{0}^2
\end{eqnarray}
Therefore, $(u,p)$ satisfy the weak formulation (\ref{appendix temp
2}) and our result easily follows.
\end{proof}

\begin{lemma}(strong solution)\label{appendix elliptic estimate 2}
Suppose $(u,p)$ solve the equation
\begin{equation}\label{appendix elliptic equation 2}
\left\{
\begin{array}{ll}
-\Delta u+\nabla p=\phi\s{r-2}(\Omega)&in\quad\Omega\\
\nabla\cdot u=\psi\s{r-1}(\Omega)&in\quad\Omega\\
(pI-\dm u)e_3=\varphi\s{r-3/2}(\Sigma)&on\quad\Sigma\\
u=0&on\quad\Sigma_b
\end{array}
\right.
\end{equation}
in the strong sense. Then for $r\geq2$,
\begin{eqnarray}\label{appendix elliptic strong estimate}
\hm{u}{r}^2+\hm{p}{r-1}^2\ls
\hm{\phi}{r-2}^2+\hm{\psi}{r-1}^2+\hm{\varphi}{r-3/2}^2
\end{eqnarray}
whenever the right hand side is finite.
\end{lemma}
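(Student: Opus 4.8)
The equation \eqref{appendix elliptic equation 2} is the constant-coefficient Stokes system on the flat slab $\Omega$ carrying the inhomogeneous stress condition $(pI-\dm u)e_3=\varphi$ on $\Sigma$ and the no-slip condition on $\Sigma_b$, so the plan is to first produce a weak solution and then bootstrap its regularity. Existence of a weak pair $(u,p)\in H^1(\Omega)\times H^0(\Omega)$, together with a bound on $\hm{u}{1}^2+\hm{p}{0}^2$ by the natural weak norms of $\phi$, $\psi$, $\varphi$, is obtained exactly as in the proof of Lemma~\ref{appendix elliptic estimate 1}: choose $v\in H^1(\Omega)$ with $\nabla\cdot v=\psi$, $v|_{\Sigma_b}=0$ and $\hm{v}{1}\ls\hm{\psi}{0}$ (Beale's construction, cf. the proof of Lemma~\ref{lagrange 1}), switch to $w=u-v$, solve the pressureless weak formulation $\half\brh{\dm w,\dm f}=\brh{\phi,f}-\brs{\varphi,f}-\half\brh{\dm v,\dm f}$ for $f$ in the divergence-free subspace of $W$ by the Riesz representation theorem — coercivity being Korn's inequality, Lemma~\ref{appendix poincare 3} — and finally recover the pressure as a Lagrange multiplier as in Proposition~\ref{lagrange estimate}.

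To upgrade to $(u,p)\in H^r\times H^{r-1}$ I would induct on $r\ge2$, separating in each step a horizontal part and a vertical part. Since $\Omega$, $\Sigma$, $\Sigma_b$ are invariant under horizontal translation and $-\Delta$, $\nabla$, $\nabla\cdot$ have constant coefficients, applying $\partial^\alpha$ with $\alpha\in\mathbb{N}^2$ to \eqref{appendix elliptic equation 2} yields the same system with data $\partial^\alpha\phi$, $\partial^\alpha\psi$, $\partial^\alpha\varphi$; a standard difference-quotient argument in $x_1,x_2$ (localized away from and near $\Sigma_b$, using the base estimate) therefore controls all purely horizontal derivatives, i.e. $\hm{D^{r}u}{0}^2+\hm{D^{r-1}u}{1}^2+\hm{D^{r-1}p}{0}^2$ by the right-hand side of \eqref{appendix elliptic strong estimate}. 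For the vertical derivatives I would specialize the derivative-counting computation already performed in Proposition~\ref{elliptic estimate}, equations~\eqref{com1}–\eqref{com4}, to the constant-coefficient case $A=B=0$, $J=K=1$: the divergence equation gives $\partial_3u_3=\psi-\partial_1u_1-\partial_2u_2$, the third momentum equation gives $\partial_3p=\phi_3+\Delta u_3$, and the first two give $\partial_3^2u_i=\partial_1^2u_i+\partial_2^2u_i+\partial_ip-\phi_i$ for $i=1,2$. Differentiating these identities repeatedly in $\partial_3$ and feeding in the horizontal bounds inductively recovers every mixed derivative $\partial_3^ju$ and $\partial_3^jp$ up to the order prescribed by $r$, which combined with the horizontal bound gives \eqref{appendix elliptic strong estimate}; all product manipulations use only Lemma~\ref{Appendix product} and the Poincaré-type inequalities of Section~B.

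The genuine technical point is the base case $r=2$: obtaining $H^2\times H^1$ regularity up to the boundary $\Sigma$ that carries the inhomogeneous stress condition. Equivalently, one must verify that the boundary operator $(pI-\dm u)e_3$, paired with the interior Stokes operator, satisfies the Lopatinski--Shapiro (complementing) condition, after which the Agmon--Douglis--Nirenberg a priori estimate for elliptic boundary value systems applies near $\Sigma$; combining this with interior estimates and the no-slip estimates near $\Sigma_b$ yields the $r=2$ bound. This verification is classical and is precisely the linear theory invoked by Beale in \cite{book2}, so the cleanest route is to quote that reference for $r=2$; with the base case in hand, the horizontal induction and the algebraic recovery of vertical derivatives described above are routine.
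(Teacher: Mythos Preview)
Your proposal is correct and follows the standard route (weak solvability, horizontal regularity by translation invariance, algebraic recovery of vertical derivatives, base case via the Agmon--Douglis--Nirenberg complementing condition as in Beale \cite{book2}); the paper itself does not give a proof here but simply cites Lemma~A.15 of \cite{book1}, whose argument is essentially the one you sketch. So your write-up is more detailed than the paper's, but the underlying approach is the same.
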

\begin{proof}
The same as lemma A.15 in \cite{book1}.
\end{proof}

\end{document}